\newcommand\Includegraphics[2][]{\addvbuffer[3pt 0pt]{\includegraphics[#1]{#2}}}
\DeclareFixedFont{\ttb}{T1}{txtt}{bx}{n}{12} % for bold
\DeclareFixedFont{\ttm}{T1}{txtt}{m}{n}{12}  % for normal
\definecolor{deepblue}{rgb}{0,0,0.5}
\definecolor{deepred}{rgb}{0.6,0,0}
\definecolor{deepgreen}{rgb}{0,0.5,0}
\definecolor{dkgreen}{rgb}{0,0.6,0}
\definecolor{gray}{rgb}{0.5,0.5,0.5}
\definecolor{mauve}{rgb}{0.58,0,0.82}
\newcommand\pythonstyle{\lstset{
language=Python,
upquote=True,
frame=leftline,
aboveskip=3mm,
belowskip=3mm,
showstringspaces=false,
columns=flexible,
basicstyle={\footnotesize\ttfamily},
morekeywords={self,sage},              % Add keywords here
keywordstyle=\color{blue},
emph={MyClass,__init__},          % Custom highlighting
emphstyle=\ttb\color{deepred},    % Custom highlighting style
stringstyle=\color{dkgreen},                        % Any extra options here
showstringspaces=false,
commentstyle=\color{gray},
%numberstyle=\tiny\color{mauve},
breaklines=true,
breakatwhitespace=true,
tabsize=4,
xleftmargin=15pt,
xrightmargin=15pt,
deletekeywords={from, not},
deletekeywords=[2]{any,enumerate}
}}
\definecolor{dkgreen}{rgb}{0,0.6,0}
\definecolor{gray}{rgb}{0.5,0.5,0.5}
\definecolor{mauve}{rgb}{0.58,0,0.82}
\lstdefinestyle{python}{language=Python,
morekeywords={sage}
}
\tiny\color{mauve},
\renewcommand{\tt}{\texttt}
\def\tet{\tau}
\def\face{t}
\def\simplex{\Delta}
\theoremstyle{plain}
\newtheorem{theorem}{Theorem}%[section]
\newtheorem{lemma}[theorem]{Lemma}
\newtheorem{proposition}[theorem]{Proposition}
\newtheorem{corollary}[theorem]{Corollary}
\newtheorem{question}[theorem]{Question}
\theoremstyle{definition}
\newtheorem{definition}[theorem]{Definition}
\newtheorem*{definition*}{Definition}
\theoremstyle{remark}
\newtheorem{remark}[theorem]{Remark}
\numberwithin{equation}{section}
\definecolor{bettergreen}{rgb}{0,0.6,0.4}
\def\Q{\mathbb{Q}} 
\def\QQ{\mathbb{Q}}
\def\RR{\mathbb{R}}
\def\SS{\mathbb{S}}
\def\ZZ{\mathbb{Z}}
\def\NN{\mathbb{N}}
\def\Quad{q}
\def\v{\mathfrak{v}}
\def\t{\mathfrak{t}}
\def\a{\mathfrak{a}}
\def\tri{\mathcal{T}}
\def\simplex{\Delta}
\def\N{\mathcal{N}}
\def\R3{\mathbb{R}^3}
\def\qtons{\vv{Q}}
\def\Pqtons{P\vv{Q}}
\def\qns{Q}
\def\LT{\mathfrak{D}}
\DeclareMathOperator{\Int}{int}
\DeclareMathOperator{\im}{im}
\def\define{\textbf}
\def\del{\partial}
\def\hom{\mathcal{H}}
\DeclareMathOperator{\wt}{wt}
\def\as{\alpha}
\def\B{\mathcal{B}}
\def\e{\mathfrak{e}}
\def\Mod{\mathrm{Mod}}
\def\F{\mathcal{F}}
\def\genus{\mathrm{genus}}
\newcommand{\hull}[1]{\widehat{#1}}
\newcommand{\itoverbar}[1]{\,\overline{\!{#1}}}
\newcommand{\Mbar}{\itoverbar{M}}
\newcommand{\Nbar}{\itoverbar{N}}
\begin{document}

\title{The Thurston norm via spun-normal immersions}
\author{Daryl Cooper, Stephan Tillmann and William Worden}

%\dedication{To Craig Hodgson on the occasion of his sixty-fifth birthday}
%Add this in 2022

\begin{abstract}
A theory of transversely oriented spun-normal immersed surfaces in ideally triangulated 3--manifolds is developed in this paper, including linear functionals determining the boundary curves, Euler characteristic and homology class of these immersions. This is used to develop and implement an algorithm to compute the unit ball of the Thurston norm for cusped hyperbolic 3--manifolds of finite volume. As an application of independent interest, we give an upper bound on the minimal entropy of pseudo-Anosov maps of surfaces with number of cusps bounded linearly in genus.
\end{abstract}

\primaryclass{57K32, 57K31, 57K10}

% 57K31 Invariants of 3-manifolds (also skein modules; character varieties) 

% 57K32 Hyperbolic 3-manifolds 

% 37B40  Topological entropy 

%37E30 Dynamical systems involving homeomorphisms and diffeomorphisms of planes and surfaces 

% 57M50  General geometric structures on low-dimensional manifolds 

\keywords{3--manifold, Thurston norm, triangulation, ideal triangulation, normal surface, spun-normal surface, pseudo-Anosov, entropy, dilatation}
\makeshorttitle

%%%%%%%%%%
%%%%%%%%%%

%%%%%%%%%%

%\will{Will's comments}\\
%\stephan{Stephan's comments} (use command \texttt{$\backslash$stephan}$\{\}$)

\section{Introduction}\label{sec:intro}

Let $M$ be a cusped hyperbolic 3--manifold of finite volume. The motivation for this paper is to use an ideal triangulation of $M$ and spun-normal surfaces to give an algorithm to compute the unit ball of the Thurston norm of $M.$ A similar algorithm for closed 3--manifolds using closed normal surfaces was given by the first two authors in \cite{Cooper-norm-2009}, and finer structure results were obtained by Tollefson and Wang~\cite{tollefson96-taut}.

To begin with, let $M$ be the interior of the compact, orientable 3--manifold $\overline{M}$ with boundary a finite union of tori, and let $\tri$ be an arbitrary ideal triangulation of $M.$ Our main technical contribution is the realisation of certain transversely oriented normal $Q$--coordinates by transversely oriented immersed normal surfaces (see \Cref{sec:spun_normal}). This requires an adaptation of the \emph{neat position} of \cite{Cooper-norm-2009}. The coordinates have the property that surfaces with the same coordinate may be topologically distinct and may have different boundary slopes, but they always have the same image in second homology and the same Euler characteristic. The $Q$--coordinates form a positive linear cone $\qtons(\tri)$ defined by linear equations arising from \emph{shift conditions}, analogous to the conditions given by Dunfield and Garoufalidis~\cite{Dunfield-incompressibility-2012} in the standard setting.
Based on the discussion by Garoufalidis, Hodgson, Hoffman and Rubinstein, we describe an Euler characteristic function on the space of spun-normal surfaces. This is given in \Cref{sec:euler} using the existence of a generalised angle structure with special properties. 

We describe a linear map $\hom\co\qtons(\tri)\to H_2(\Mbar,\del\Mbar;\RR)$ in \Cref{subsec:simplicial}, which takes the coordinate of a transversely oriented immersion to the corresponding homology class. 
Under the additional assumption that $M$ is the complement of a link in a homology sphere, we show in \Cref{subsec:peripheral} how to determine the homology map using the boundary map of \cite{tillmann08-normal}. Finally, an algorithm to compute the unit ball of the Thurston norm is devised in \Cref{sec:algorithm}, resulting in our main theorem as follows:

\begin{theorem}\label{thm:main}
	Let $\Mbar$ be a compact, orientable 3-manifold with non-empty boundary and suppose that $M=\mathrm{int}(\Mbar)$ has a complete hyperbolic structure of finite volume. Suppose a $0$--efficient ideal triangulation $\tri$ of $M$ is given. Then there is an algorithm using transversely oriented normal surface theory to determine the unit ball of the Thurston norm on $H_2(\Mbar,\del \Mbar;\RR)$.
\end{theorem}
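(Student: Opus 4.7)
The plan is to reduce the computation of the Thurston norm unit ball to a linear programming problem on the rational polyhedral cone $\qtons(\tri)$ of transversely oriented spun-normal $Q$-coordinates. Since $\qtons(\tri)$ is cut out of a finite-dimensional real vector space by the finitely many linear shift equations together with nonnegativity, it is a rational polyhedral cone, and its extreme rays can be enumerated by standard methods (e.g.\ double description). Both the Euler characteristic functional $\chi$ from \Cref{sec:euler} and the homology map $\hom\co\qtons(\tri)\to H_2(\Mbar,\del\Mbar;\RR)$ from \Cref{subsec:simplicial} (made completely effective in the link-complement case by the peripheral boundary map of \Cref{subsec:peripheral}) are linear and computable directly from $\tri$.

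Given this data, I would introduce the function
\[
F(\alpha) \;=\; \inf\bigl\{ -\chi(x) \,:\, x \in \qtons(\tri),\ \hom(x)=\alpha \bigr\}
\]
on $H_2(\Mbar,\del\Mbar;\QQ)$. Since the objective and constraints are linear, $F$ is a piecewise linear, positively homogeneous, convex function, and its unit ball is a rational polytope whose facets correspond to the extreme rays of $\qtons(\tri)$ after rescaling by $-\chi$ and pushing forward by $\hom$; extracting it from the enumeration of extreme rays is a mechanical polyhedral computation.

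The substantive step is to identify $F$ with the Thurston norm $\|\cdot\|_T$. For the inequality $\|\alpha\|_T \leq F(\alpha)$: by the main technical result of \Cref{sec:spun_normal}, every $x \in \qtons(\tri)$ is realised by a transversely oriented immersed spun-normal surface $S$ representing $\hom(x)$ with $\chi(S)=\chi(x)$; since $M$ is hyperbolic no such $S$ has sphere or disk components, so $\chi_{-}(S)=-\chi(x)$, and Gabai's equality between the Thurston norm and its singular (immersed) counterpart yields $\|\hom(x)\|_T \leq -\chi(x)$. For the reverse inequality $F(\alpha) \leq \|\alpha\|_T$: a Thurston-norm minimising representative of $\alpha$ is incompressible and end-incompressible, hence admits a spun-normal representative preserving homology class and Euler characteristic, producing an $x \in \qtons(\tri)$ with $\hom(x)=\alpha$ and $-\chi(x)=\|\alpha\|_T$. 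Here the $0$-efficiency of $\tri$ is essential to exclude the vertex-linking and boundary-parallel pieces that a naive normalisation could otherwise introduce.

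The hard part is the realisation underlying $F \leq \|\cdot\|_T$: given the norm-minimising embedded surface, one must show that the spun-normal surface produced by normalisation carries a consistent transverse orientation (so that its $Q$-coordinate lies in $\qtons(\tri)$ rather than only in the unoriented normal cone), and that its image under $\hom$ is the original class. This is exactly the content of the neat-position adaptation of \Cref{sec:spun_normal}; once it is in place, the steps above yield a terminating algorithm whose output is the rational polytope $\{\alpha : F(\alpha) \leq 1\} = \{\alpha : \|\alpha\|_T \leq 1\}$.
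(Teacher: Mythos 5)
Your proposal captures the forward inequality correctly and your framing of the algorithm as a polyhedral computation over $\qtons(\tri)$ is sound, but the reverse inequality $F(\alpha)\leq\|\alpha\|_T$ has a genuine gap, and you have not addressed a whole case that the paper must treat separately. The claim that ``a Thurston-norm minimising representative of $\alpha$ is incompressible and end-incompressible, hence admits a spun-normal representative'' is false in general: Walsh's normalisation theorem (\Cref{thm:walsh}) explicitly excludes surfaces that are virtual fibers, and fibers can fail to normalise. The paper's own example of the figure-8 knot complement with the standard two-tetrahedron triangulation shows that the fiber has no embedded spun-normal representative, and the \texttt{K8a16} example in \Cref{sec:implementation and examples} shows a case where no spun-normal surface at all (embedded or immersed) realises the generator of $H_2(\Mbar,\del\Mbar)$. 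So your function $F$ can fail to equal the Thurston norm, and indeed can fail to be finite.

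The paper circumvents this in two ways that your proposal would need to incorporate. When $b_1(\Mbar)\geq 2$, the vertices of the norm ball are not top-dimensional faces, so by Thurston's fibered-face theorem a vertex class is never homological to a fiber, and a passage to finite covers (via Gabai's pullback result) shows it is never a virtual fiber either; this is the key step that lets Walsh's theorem apply to vertex minimisers, and then convexity recovers the entire ball — this is exactly the argument in \Cref{thm:ball}. When $b_1(\Mbar)=1$ this argument breaks down completely, since the two ``vertices'' of the norm ball \emph{are} the top-dimensional faces and the generator may be a fiber. The paper handles this case by Dehn filling $\Mbar$ along the homological longitude, applying the closed-manifold algorithm of \cite{Cooper-norm-2009} to the filled manifold to recover the genus, and then using a Mayer--Vietoris computation of torsion subgroups to recover the number of boundary components. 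Your proposal as written would simply not terminate with a correct answer in that case. Finally, a smaller point: your appeal to \Cref{thm:admissible integer t.o. solution gives normal immersed} implicitly needs an admissible \emph{integer} coordinate, and the resulting surface could be disconnected with non-admissible pieces; the paper's proof of \Cref{thm:ball} deals with this carefully by restricting to vertices of $\Pqtons(\tri)$ and using minimality of the integer scaling to force connectedness before invoking $\chi_-=-\chi$.
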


The unit norm ball can also be computed (in principle) from Heegaard Floer homology~\cite{Alishanti-bordered-2019, Lipshitz-computing-2014} or twisted Alexander polynomials~\cite{friedl-thurston-2015}. We are not aware of implementations of these algorithms. The implementation of our algorithm is based on \texttt{Regina}~\cite{burton12-regina} and \texttt{SnapPy}~\cite{SnapPy}. Notes on implementation and computational results can be found in \Cref{sec:implementation and examples}.

%The proof of \Cref{thm:main} follows \cite{Cooper-norm-2009} under the additional hypothesis that $\tri$ is 0--efficient. We prove that our algorithm is correct using a result of Walsh~\cite{Wal11}.

We apply our algorithm in \Cref{sec:applications} to obtain a result concerning the entropy of pseudo-Anosov maps of cusped surfaces that is of independent interest. This result gives a partial answer to a question posed to us by Eriko Hironaka, which was originally asked by Mehdi Yazdi (see \cite[Question 4.13]{Yaz20}).

Let $S=\Sigma_{g,n}$ be a surface of genus $g$ with $n$ punctures, and let $\varphi\in \Mod(S)$ be a pseudo-Anosov mapping class. Let $\lambda_\varphi$ denote the dilatation of $\varphi$, and define $l_{g,n}=\min\{\log(\lambda_\varphi)\mid \varphi\in \Mod(S) \text{ is pseudo-Anosov}\}$. 

Penner \cite{Pen91} has shown that $ l_{g,0}\asymp \frac{1}{g}$, and that for any surface with $\chi(S)<0$:
$$
\frac{\log 2}{12g-12+4n} \;\le\; l_{g,n}
$$ 
Combining this lower bound with the upper bound $l_{0,n}\le \frac{2\log(2+\sqrt{3})}{n-3}$ for $n\ge 4$ of Hironaka and Kin~\cite{HK06}, it follows that $ l_{0,n}\asymp \frac{1}{n}$. Note also that Penner's bound implies that $\frac{c_1}{|\chi(S)|}\le l_{g,n}$, for $c_1=\frac{\log 2}{6}$, as long as $\chi(S)<0$.

For fixed $g\ge 2$, Tsai \cite{Tsa09} has shown that $l_{g,n}\asymp \frac{\log n}{n}$. Moreover, Tsai \cite{Tsa09} shows that there exists a constant $c_2$  such that for any $g\ge 2$ and $n\ge 0$ one has
$$
l_{g,n}\le c_2\cdot g\frac{\log|\chi(S)|}{|\chi(S)|}
$$

\begin{figure}[h]
 	\centering
   	\includegraphics{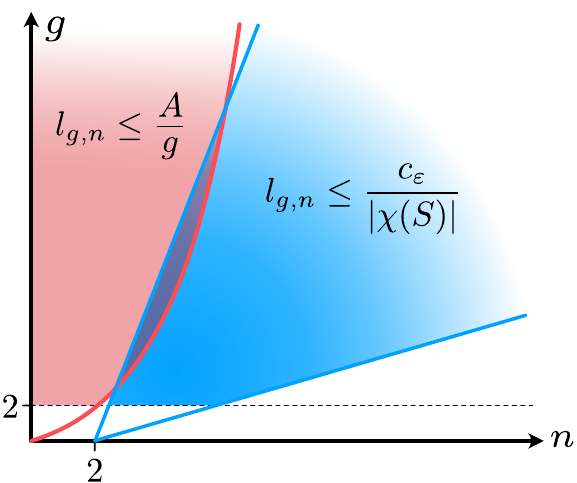}
   	\caption{Qualitative sketch of the region of the $(g,n)$-plane covered by the bound of \Cref{thm:entropy} (in blue), and the region covered by the bound of  \cite[Theorem 1.2]{Yaz20} (in red).}
   	\label{fig:regions}
\end{figure}

Work of Yazdi \cite{Yaz18} shows that for any positive real number $\alpha$, there exists a constant $c_\alpha$ such that for any $g\ge 2$, $n\ge 0$ there is the following lower bound:
$$
 \frac{c_\alpha}{g^{2+\alpha}}\frac{\log|\chi(S)|}{|\chi(S)|} \; \le \; l_{g,n}
$$
This is a strengthening of Penner's bound in the case where $n\gg g$. The above work of Tsai implies that an analogous upper bound does not hold if we allow $n$ to grow independently of $g.$ However, 
as an application of our new algorithm, we are able to make the following new observation in this line of work:

\begin{theorem}\label{thm:entropy}
Fix any $1 >  \varepsilon>0$. Then there exists a constant $c_\varepsilon$ depending on $\varepsilon$ such that for all $g\ge 2$ and for all $n$ satisfying $\varepsilon g +2 \le n \le \frac{1}{\varepsilon}g+2$ we have
$$
l_{g,n} \le \frac{c_\varepsilon}{|\chi(S)|}
$$
where $\chi(S)$ is the Euler characteristic of $S=\Sigma_{g,n}$.
\end{theorem}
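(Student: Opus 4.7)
The strategy is to realise each surface $\Sigma_{g,n}$ permitted by the hypothesis as a fiber of a fibration of a carefully chosen cusped hyperbolic 3-manifold $M$, and to bound the entropy of the pseudo-Anosov monodromy via the extension of entropy to the fibered face of the Thurston norm ball due to Fried and McMullen. Recall that for a fibered face $F$ of the unit Thurston norm ball of such an $M$, the function $\phi \mapsto \log \lambda_\phi$, defined a priori on primitive integer classes in $\Int(\cone(F))$, extends to a continuous, degree $-1$ homogeneous function on $\Int(\cone(F))$. Since $\|\phi\|=|\chi(S_\phi)|$ for primitive integer fibered $\phi$ by Thurston, the product $|\chi(S_\phi)|\log\lambda_\phi$ descends to a continuous function on $\Int(F)$, and so is bounded on every set $K\subset \Int(F)$ of compact closure.

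To reduce \Cref{thm:entropy} to this principle, apply the algorithm of \Cref{thm:main} to identify a hyperbolic 3-manifold $M$ with $b_1(M)\ge 2$ and a fibered face $F$. The spun-normal theory of \Cref{sec:spun_normal}, together with the linear homology and boundary maps of \Cref{subsec:simplicial,subsec:peripheral}, gives, for each primitive integer $\phi$ in $\cone(F)$, explicit formulas for the Euler characteristic $|\chi(S_\phi)|$ and cusp count $n_\phi$ of the fiber; the pair $(g_\phi, n_\phi)$ is then determined via $|\chi(S_\phi)|=2g_\phi-2+n_\phi$. By passing to finite covers of $M$ to densify the integer lattice in $\cone(F)$, arrange that every $(g,n)$ with $\varepsilon g+2\le n\le g/\varepsilon+2$ and $g$ sufficiently large is realised as $(g_\phi,n_\phi)$ for some primitive integer $\phi\in\Int(\cone(F))$.

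Set
\[
K_\varepsilon \;=\; \bigl\{\, \phi \in F \;\big|\; \varepsilon\, g_\phi + 2 \le n_\phi \le \tfrac{1}{\varepsilon}\, g_\phi + 2 \,\bigr\}.
\]
The crucial verification is that $K_\varepsilon$ has closure contained in $\Int(F)$: as $\phi$ approaches $\partial F$, the ratio $n_\phi/g_\phi$ leaves the compact interval $[\varepsilon,1/\varepsilon]$. This can be checked directly from the explicit output of the algorithm in spun-normal coordinates, where the boundary behaviour of the cusp count is controlled by the linear map to peripheral homology. Once compactness is in hand, the continuous function $|\chi(S_\phi)|\log\lambda_\phi$ is bounded on $K_\varepsilon$ by some constant $c_\varepsilon$, yielding the desired inequality $\log\lambda_\phi\le c_\varepsilon/|\chi(S_\phi)|$ on every $\phi \in K_\varepsilon$.

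The principal obstacle is locating a suitable $M$: one whose fibered face covers the range of ratios $n/g\in[\varepsilon,1/\varepsilon]$ and which, together with its finite covers, realises every required pair $(g,n)$. This is precisely the combinatorial problem that the algorithm of \Cref{thm:main} and its implementation in \Cref{sec:implementation and examples} are designed to solve: the unit ball is computed, the fibered face identified, and the assignment $\phi\mapsto(g_\phi,n_\phi)$ tabulated, so that the closure condition on $K_\varepsilon$ and the realisability of all pairs in the prescribed range reduce to a finite check.
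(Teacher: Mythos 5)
Your high-level strategy — use Fried's convex extension of $|\chi(F_x)|\log\lambda_x$ to the open fibered face, and bound it on a compact subset $K_\varepsilon$ whose cone contains every required $\Sigma_{g,n}$ — matches the paper's. But there is a genuine gap in how you propose to produce that compact set and the needed fibers, and the gap is exactly where the real work of the theorem lies.

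First, the paper does \emph{not} pass to finite covers to ``densify the integer lattice.'' It instead exhibits a single 3-component link (the census link \texttt{L9a46}) whose unit norm ball has a fibered face $\F$ with vertices $x_1,x_2,x_3$ represented by surfaces $S_1=\Sigma_{1,2}$, $S_2=\Sigma_{1,2}$, $S_3=\Sigma_{1,1}$ meeting the cusps in a very specific pattern: $S_1,S_2$ each have one boundary component on each of $L_1, L_2$ with distinct slopes, while $S_3$ has its only boundary on $L_3$. With coprime $p,q$ chosen so that $p+q=g$ and $|p-q|\le 4$ (an elementary number-theoretic claim proved in Proposition~\ref{prop:good_face}), the Haken sum $pS_1+qS_2+rS_3$ with $r=n-2$ has exactly two boundary components on $L_1\cup L_2$ (because coprime slopes combine into a single curve) and $r$ on $L_3$, hence is a fiber $\Sigma_{g,n}$. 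This explicit mechanism is what guarantees that \emph{every} $(g,n)$ in the prescribed range is hit, and it is also what makes the compactness of $K_\varepsilon$ transparent: $K_\varepsilon$ is the normalization of the set of $px_1+qx_2+rx_3$ with $p/q,\,p/r,\,q/r$ bounded away from $0$ and $\infty$, which in barycentric coordinates on $\F$ is manifestly compact. Your set $K_\varepsilon$ is instead defined by conditions on $g_\phi$ and $n_\phi$, which are only defined for rational classes, and your claim that ``as $\phi\to\partial\F$ the ratio $n_\phi/g_\phi$ leaves $[\varepsilon,1/\varepsilon]$'' is asserted, not proved — it is a statement about the behaviour of two discretely-defined quantities near the boundary and needs exactly the explicit vertex/boundary-slope analysis the paper supplies.

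Second, the finite-cover idea is not a harmless technical device: a cover $p\colon N\to M$ changes the manifold, hence changes the norm ball and the Fried entropy function. By Gabai, $p^*(\B_M)=\B_N\cap p^*(H^1(\Mbar))$, so the pullback of $\F$ need not be a full face of $\B_N$, the genus and cusp count of fibers behave non-trivially under covering, and $L_{\F_N}$ is a different function on a different face. You would then need a \emph{uniform} bound over all the covers you use, and a separate argument that all required $(g,n)$ are realized — none of which is addressed. The paper avoids all of this by working with a fixed manifold and using Haken sums of vertex surfaces; what remains is the verification that \texttt{L9a46} is fibered with a face of the required shape, which is done by deplumbing Hopf bands from the alternating Seifert surface (Lemma~\ref{lem:L6a46}) and by running the Thurston-norm algorithm. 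So: right framework, but the essential combinatorial content — choosing a link, controlling the cusps of the Haken sums, and proving compactness of $K_\varepsilon$ — is missing, and the substitute (finite covers) introduces obstacles rather than removing them.
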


This theorem complements recent work of Yazdi~\cite{Yaz20}, in which it is shown that there exist positive constants $A,B$, and $C$ so that for all $n\ge 1$ and $g\ge C n \log^2n$ one has $\frac{B}{g}\le l_{g,n}\le \frac{A}{g}$. Notably, the constants in Yazdi's bound are independent of $g$ and $n$, as is the constant $c_\epsilon$ in our bound. At first glance it may seem that our bound is an improvement on Yazdi's upper bound, but we note that the set of all $(g,n)$ to which both results apply is finite for any fixed $\epsilon$. \Cref{fig:regions} shows for fixed $\epsilon$ the regions of the $(g,n)$-plane on which each result applies.

%%%%%%%%%%
%https://riceuniversity.zoom.us/j/92741044036
%
%Zoom ID is 927 4104 4036

%%%%%%%%%%

\section{Transversely oriented spun-normal surfaces}\label{sec:spun_normal}

A new idea in \cite{Cooper-norm-2009} for normal surfaces in closed 3--manifolds  was the use of transversely oriented normal discs in order to define an orientation on a normal surface, and hence a map to the second homology of the 3--manifold. A key result was the construction of an immersed transversely oriented normal surface for each admissible coordinate. This was achieved through a placement of the transversely oriented normal discs that was called \emph{neat position}. In this section, we generalise this set-up to spun-normal surfaces in ideally triangulated, orientable 3--manifolds with torus cusps. As in \cite{tillmann08-normal}, with a little more notation and additional cases, the results of this section can be generalised to arbitrary ends and non-orientable 3--manifolds.
All manifolds and maps are understood to be PL in this paper.

%%%%%%%%%%

\subsection{Ideal triangulation}
\label{sec:Ideal triangulation}

The notation of \cite{Jaco-0-efficient-2003, tillmann08-normal} is used in this paper. 
Let $M$ be the interior of an orientable compact manifold with non-empty boundary consisting of a finite union of pairwise disjoint tori. 
An \define{ideal triangulation}, $\tri,$ of $M$ consists of the union $\widetilde{\Delta}=\cup_i \simplex_i^3$ of pairwise disjoint regular Euclidean 3--simplices of edge length 100, a set $\Phi$ of isometries between pairs of faces, the natural quotient map $p\co \widetilde{\Delta} \to \widetilde{\Delta} / \Phi = P,$ and a homeomorphism $h \co P \setminus P^{(0)} \to M,$ between $M$ and the complement of the 0--skeleton in $P.$ The quotient space $P$ is usually called a \define{pseudo-manifold} and the \define{end-compactification} of $M.$

For brevity, we refer to a 3--manifold $M$ imbued with an ideal triangulation $\tri=(\widetilde{\Delta}, \Phi, h)$ as an \define{ideally triangulated 3--manifold}. Throughout, we assume that $P$ and $M$ are \emph{oriented}, that $h$ is orientation preserving, and that all singular 3--simplices in $P$ are oriented coherently and the tetrahedra in $\widetilde{\Delta}$ are given the induced orientation. There is a natural bijection between tetrahedra in $\widetilde{\Delta}$ and singular 3--simplices in $P$.

A \define{normal corner} in a 1--simplex is an interior point of a 1--simplex. A \define{normal arc} in a 2--simplex is a properly embedded arc whose boundary consists of normal corners on distinct edges of the 2--simplex. A \define{normal disc} in a 3--simplex is a properly embedded triangle or quadrilateral whose vertices are normal corners on different edges of the 3--simplex. The boundary edges of a normal disc are thus normal arcs on distinct faces of the 3--simplex. The normal discs (arcs, corners) in $P$ are the images of normal discs (arcs, corners) in $\widetilde{\Delta}.$

A \define{normal isotopy} (resp.\thinspace \define{homotopy}) of $P$ is an isotopy (resp.\thinspace homotopy) keeping each (singular) simplex of each dimension invariant.

Let $S$ be a surface and $f\co S \to M$ be a proper immersion. We say that $f$ (or $S$) is \define{normal} with respect to $\tri$ if $f$ is transverse to the 2--skeleton of $\tri$ and the pull-back of the intersection with the 2--skeleton is a cell decomposition of $S$ consisting of triangles or quadrilaterals. In particular, each such 
triangle or quadrilateral in $S$ maps to a normal triangle or quadrilateral in $M,$ and the cell decomposition of each connected component of $S$ has at most finitely many quadrilaterals (but possibly infinitely many triangles). We often indicate that a normal surface has infinitely many triangles by calling it a \define{spun-normal surface}.

A \define{transversely oriented normal immersion} $(S, \nu)\to M$ is a normal immersion $f\co S\to M$ with a transverse orientation $\nu_i$ of each connected component $S_i$ of $S$. Whence each normal disc in $S$ inherits a transverse orientation. We now describe a special set of transversely oriented normal discs in a 3--simplex, before we return to a global analysis of such normal immersions.

%%%%%%%%%%

\subsection{Neat position in a 3--simplex}

Let $\simplex^3$ be an oriented regular Euclidean 3--simplex of edge length 100. We now describe a special set consisting of infinitely many normal discs with transverse orientation in $\simplex^3.$ The concept of neat position naturally leads to a generalisation to transversely oriented spun-normal surfaces of the \emph{shift} that Dunfield-Garoufalidis~\cite{Dunfield-incompressibility-2012} introduced for ordinary spun-normal surface coordinates.

%%%%%%%%%%

\subsubsection{Transversely oriented normal discs}

A \define{transversely oriented normal disc} $(D, \nu_D)$ in $\simplex^3$ is a normal disc $D$ with a \define{transverse orientation} $\nu_D$. We regard $\nu_{D}$ as a function on the components of $\simplex^3\setminus D$ that sends one component to $+1$ and the other to $-1.$ The former component is denoted by $\simplex^3(D, \nu_D)$ and the maximal subsimplex of $\simplex^3$ contained in it is \define{dual} to $(D, \nu_D)$ and denoted $\sigma(D, \nu_D)$. Analogous notions and notation apply to \define{transversely oriented normal arcs} on 2--simplices and \define{transversely oriented normal corners} on 1--simplices. Moreover, the transverse orientation of $D$ induces transverse orientations on the normal arcs and normal corners in the boundary of $D.$

Two transversely oriented normal discs are equivalent if there is a normal isotopy of $\simplex^3$ that takes one disc with its transverse orientation to the other. An equivalence class is called a \define{transversely oriented disc type}. There is a natural bijective correspondence between transversely oriented normal disc types and non-empty, proper subsimplices of $\Delta.$ Hence there are $14$ equivalence classes. If $\sigma(D, \nu_D)$ is a 0--simplex (resp.\thinspace 2--simplex), then  $(D, \nu_D)$ is a \define{small} (resp.\thinspace \define{large}) \define{triangle}. 

When we refer to a \define{normal isotopy class} it is often convenient to be able to refer to it \emph{as a} corner, arc or disc in the given simplex. As $\simplex^3$ is a regular Euclidean simplex, a canonical choice is to represent the class of a normal corner by the barycentre of a 1--simplex, and the class of any other normal object as the Euclidean convex hull of its vertices, all placed at barycentres of 1--simplices. In particular, a normal triangle type is represented by an equilateral triangle and a normal quadrilateral type by a square. \emph{From now on, whenever we refer to a normal isotopy class, we refer to its canonical representative.}

%%%%%%%%%%

\subsubsection{Neat position}
\label{sec:neat in simplex}

We say that $(D, \nu_D)$ is at distance $k$ from $\sigma(D, \nu_D)$ if $D$ is the intersection of $\simplex^3$ with a plane and each vertex of $D$ is at distance $k$ from a 0--simplex of $\sigma(D, \nu_D)$, see \Cref{fig:distance_k}.

\begin{figure}[h]
    \centering
    \begin{subfigure}{.35\textwidth}
    	\vspace{.2cm}\quad\\
        \includegraphics[scale=.7]{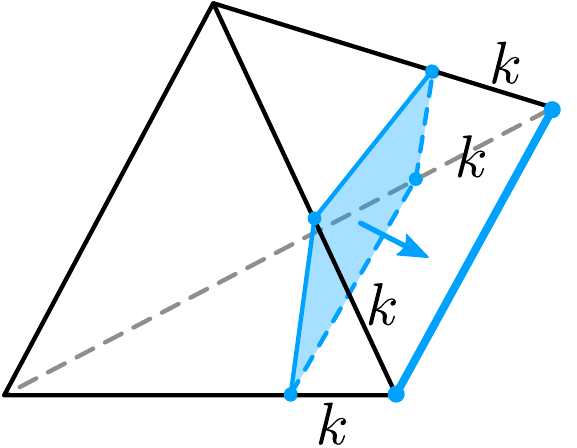}\\
        \vspace{.4cm}\quad\\
        \includegraphics[scale=.7]{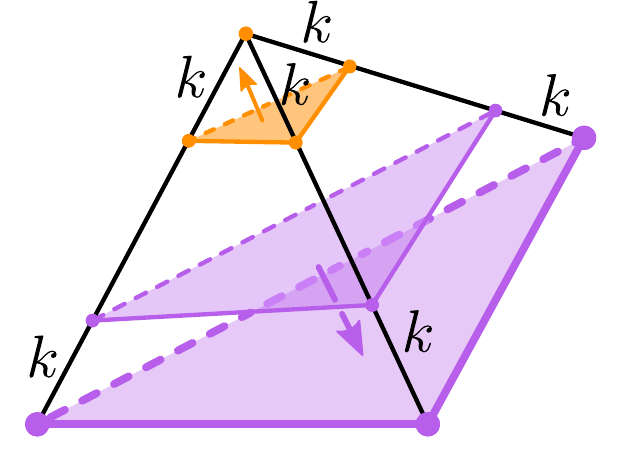}
        \caption{Discs at distance $k$.}
        \label{fig:distance_k}
    \end{subfigure}
    \begin{subfigure}{.63\textwidth}
        \includegraphics[scale=.9]{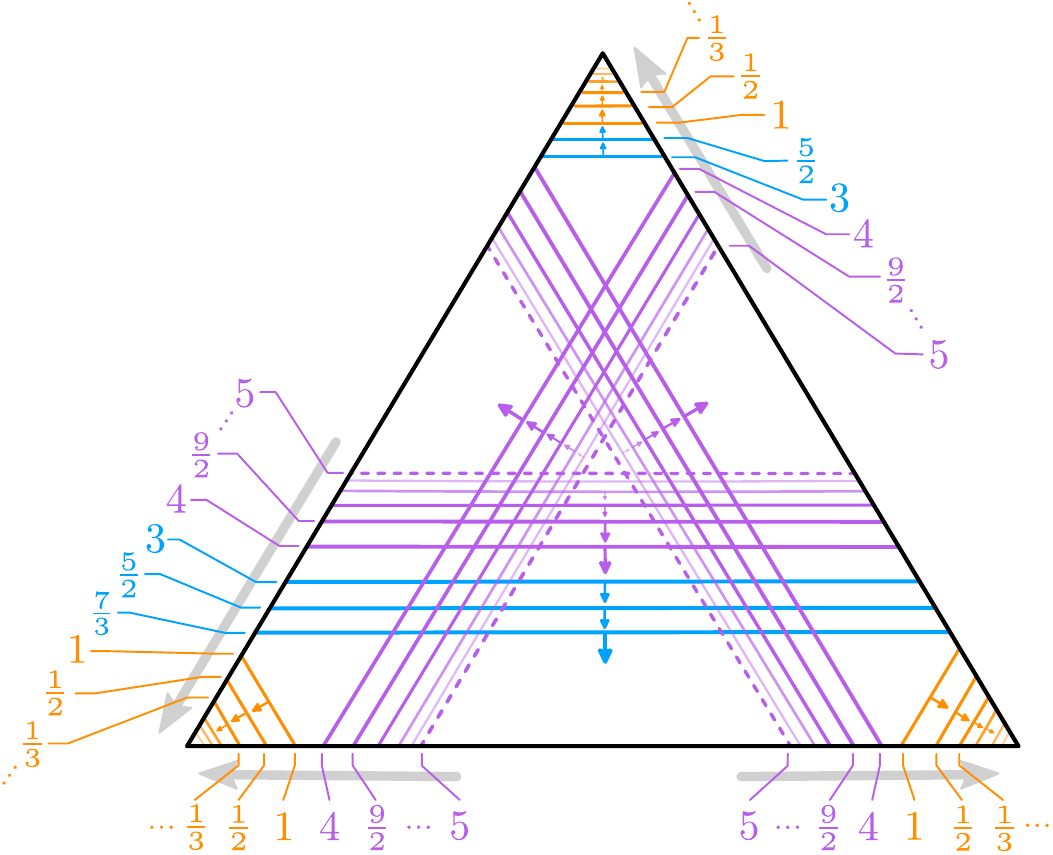}
    	\caption{Neat arcs on a 2--simplex. There are 5 quads in this tetrahedron.}
        \label{fig:distances_all}
    \end{subfigure}
     \caption{The placement of transversely oriented normal discs is shown. In (b), gray arrows indicate the direction of measurement and the transverse orientation of corners, and lower left labels are symmetric with lower right labels (not shown). Similarly, upper right labels are symmetric with upper left labels (not shown). Dotted lines indicate the simplices at distance 5 that large triangles limit to.}
    \label{fig:distances}
\end{figure}

Dual to each of the 0--simplices of $\simplex^3,$ we now place small triangles in $\simplex^3$ at distances $\frac{1}{n}$ from the 0--simplex for each $n \in \NN$ (here we use the convention that the smallest natural number is 1).

Dual to each of the 2--simplices of $\simplex^3,$ we place large triangles in $\simplex^3$ at distances $5-\frac{1}{n}$ from the 2--simplex for each $n \in \NN.$

Last, we pick a pair $\simplex^1_1$ and $\simplex^1_2$ of opposite 1--simplices of $\simplex^3,$ and for each $\simplex^1_i$ we choose a positive integer (possibly zero) $n_i$. If $n_i\neq 0,$ we place $n_i$ transversely oriented quadrilaterals dual to $\simplex^1_i$ at distance $2+\frac{1}{n}$ from the 1--simplex for each $1\le n \le n_i.$

The infinite collection of transversely oriented normal discs thus placed in $\simplex^3$ is said to be in \define{neat position}. Note that on each 1--simplex, the transversely oriented corners of discs are within distance 5 from its endpoints, and the transverse orientation of all corners is towards the closest vertex. The transversely oriented corners with the same orientation have two accumulation points along a 1--simplex, namely a vertex of $\Delta$ and the corner at distance 5 from the vertex. See \Cref{fig:distances_all}.

On each 2--simplex, we have one infinite family of transversely oriented normal arcs for each proper non-empty subsimplex. The short arcs accumulate on the dual 0--simplex, and the long arcs accumulate on the long arc at distance 5 from the dual 1--simplex.

Let $\simplex^2$ be a 2--simplex of $\simplex^3.$ 
We describe a natural labelling scheme for the transversely oriented normal arcs on $\simplex^2$ with the elements of $\ZZ.$ Non-negative labels are given to short arcs and negative labels to long arcs. Consider the collection of all short arcs dual to a vertex $\simplex^0$ of $\simplex^2.$ The short arc furthest from $\simplex^0$ is given label 0, the next furthest label 1, etc. Similarly, consider the collection of all long arcs dual to an edge $\simplex^1$ of $\simplex^2.$ The long arc closest to $\simplex^1$ is given label $-1$, the second closest is given label $-2, $ etc. Note that if no quadrilateral in $\simplex^3$ has a short (resp. long) arc of the given type, then the short (resp. long) arc with label $n\in \ZZ$ is at distance $\frac{1}{n+1}$ (resp. $5+\frac{1}{n}$) from its dual simplex.

%\begin{figure}
% 	\centering
%   	\includegraphics[width=.85\textwidth]{signs}
%   	\caption{}
%   	\label{fig:signs}
%\end{figure}

\begin{figure}[h]
\centering
  \begin{subfigure}{.5\textwidth}
    \centering
        \includegraphics[scale=.55]{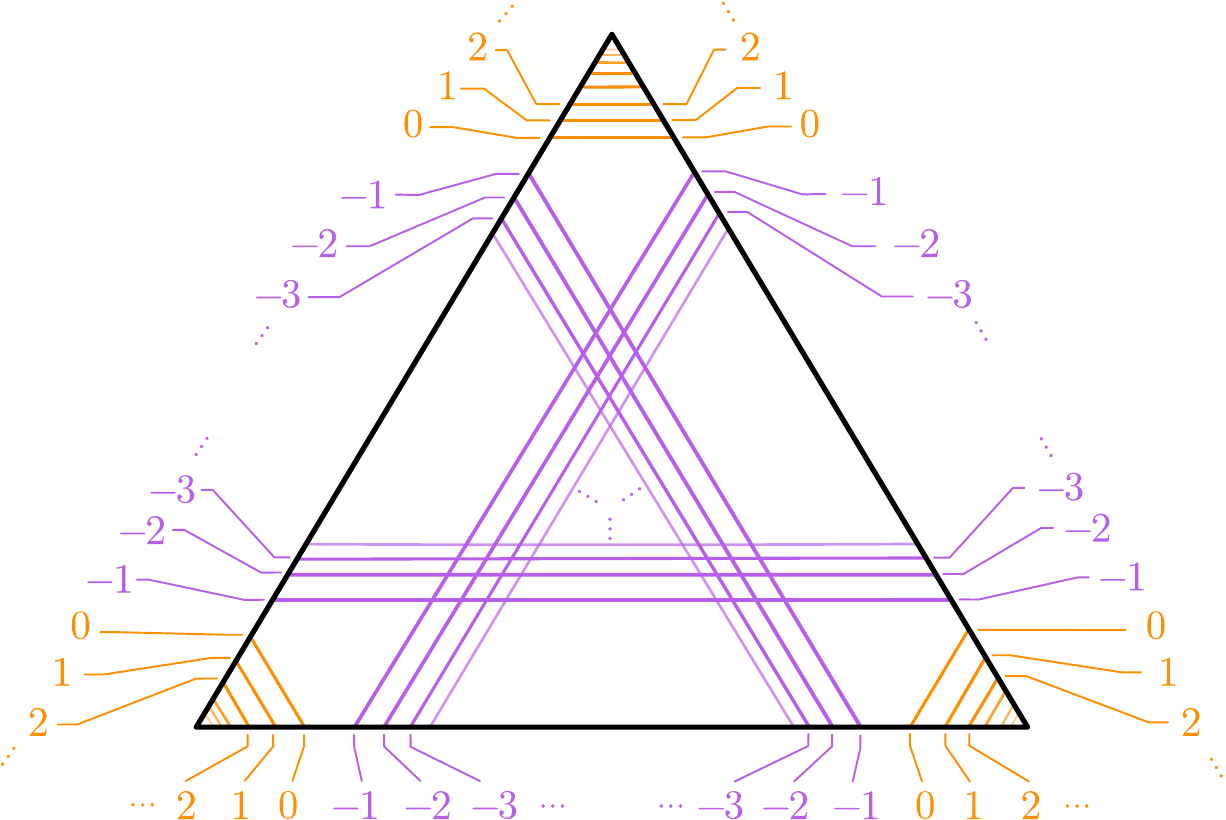}
        \caption{The labelling scheme for transversely oriented corners}
        \label{fig:signs_labels}
    \end{subfigure}
    \hspace{0.5cm}
    \begin{subfigure}{.45\textwidth}
   \centering
        \includegraphics[scale=.55]{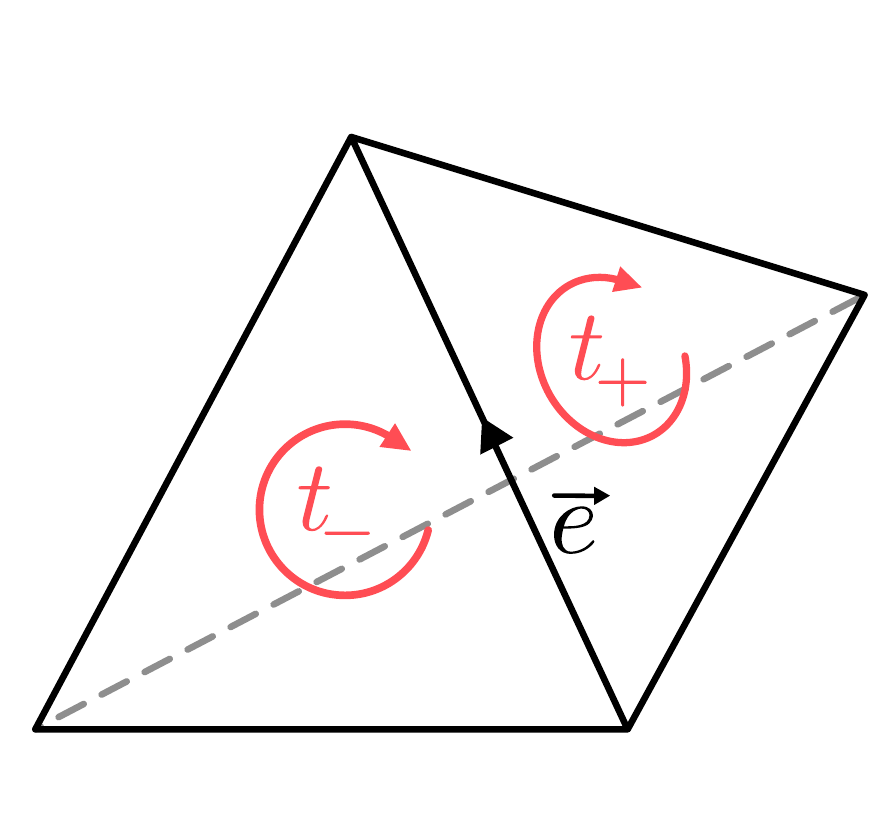}
        \caption{The positive and negative faces $\face_+$ and $\face_-$}
        \label{fig:signs}
    \end{subfigure}
          \caption{Setting up the translation for an oriented edge $\vec{{e}}$}
    \label{fig:signs_translation}
\end{figure}

The labelling scheme now also induces labels to transversely oriented corners on a 1--simplex \emph{relative to} a 2--simplex, as shown in \Cref{fig:signs_labels}. Note that for each normal isotopy class of transversely oriented normal corner, there is a bijection between the corners of this type and the integers, and that the labels of the transversely oriented normal corners along the edge respect the natural order of the integers. The labels of transversely oriented corners depend on the reference 2--simplex, as can be seen from the cases shown in \Cref{fig:translations}.

So for a given 1--simplex $e$ of $\simplex^3$, there are two labellings for each transversely oriented corner $(c, \nu_c)$. As the labels respect the linear order of the integers, the labellings for each transversely oriented corner differ by a translation $T_{\vv{e}} \co \ZZ \to \ZZ$ which depends only on the normal isotopy class of the normal corner and the following orientation convention. The transverse orientation $\nu_c$ gives $e$ an orientation, and we denote the thus oriented edge by $\vv{e}$. The orientation of $\simplex^3$ induces orientations on the 2--simplices containing $e,$ and these in turn induce opposite orientations on $e.$ The face $\face_+$ that induces $\nu_c$ is the positive face, and the other, $\face_-$, is the negative face for $\vv{e}.$ See \Cref{fig:signs}. The translation $T_{\vv{e}}$ takes the labelling with respect to the positive face to the labelling with respect to the negative face. The different cases, where the translation is by a positive integer, a negative integer or zero are shown in \Cref{fig:translations}. This is a natural generalisation of the shift of \cite{Dunfield-incompressibility-2012}.

There is an alternative interpretation of $T_{\vv{e}}$ in terms of \emph{slope} and \emph{weight} associated with quadrilateral disc types, 
%which is indicated in \Cref{fig:signs}(b) and (c) and 
known to readers familiar with Tollefson's $Q$--matching equations~\cite{tollefson98-quadspace, tillmann08-normal}. We discuss this viewpoint in \Cref{sec:matching}.

\begin{figure}[h]
    \centering
    \begin{subfigure}{.27\textwidth}
        \includegraphics[scale=1.1]{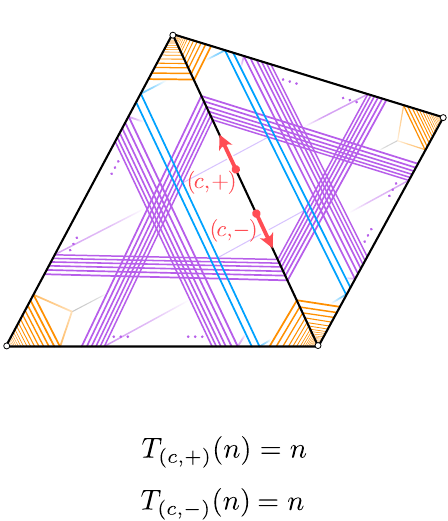}
        \caption{Zero translation}
        \label{fig:Zero_translation}
    \end{subfigure}
    \begin{subfigure}{.38\textwidth}
        \includegraphics[scale=1.1]{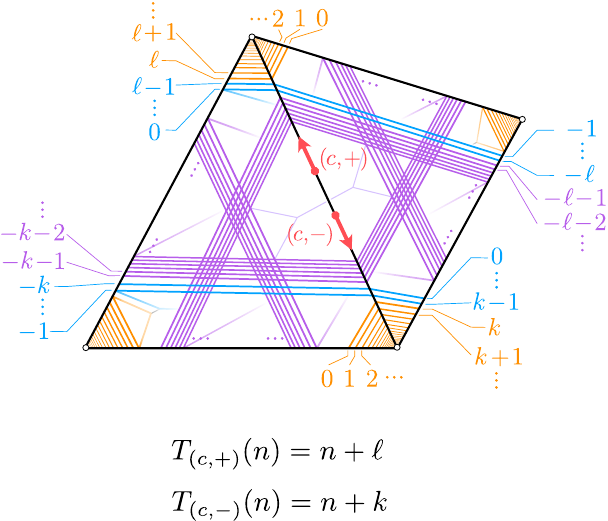}
    	\caption{Positive translation}
        \label{fig:Positive_translation}
    \end{subfigure}
    \begin{subfigure}{.33\textwidth}
    	\includegraphics[scale=1.1]{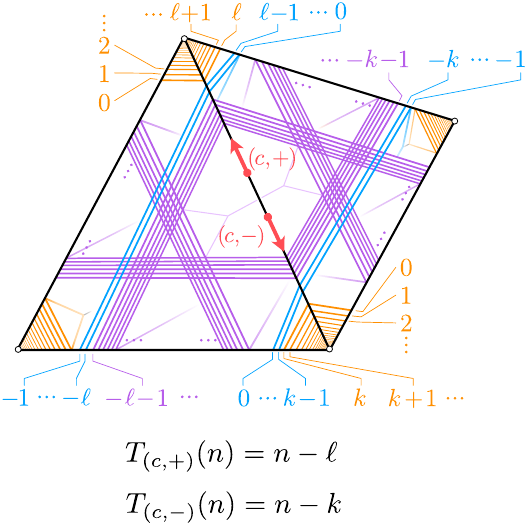}
    	\caption{Negative translation}
        \label{fig:Negative_translation}
    \end{subfigure}
    \caption{Translations}
    \label{fig:translations}
\end{figure}

Suppose $\tet$ is an arbitrary Euclidean 3--simplex and $\mathcal{F}$ is an infinite collection of transversely oriented normal discs in $\tet.$ Then $\mathcal{F}$ is said to be in \define{neat position} in $\tet$ if there is a homeomorphism 
$h\co \tet \to \simplex^3$ that takes the $k$--skeleton of $\tet$ to the $k$--skeleton of $\simplex^3$ for each $k\in \{0, 1, 2\}$, and has the property that $h(\mathcal{F})$ is in neat position in $\simplex^3.$

This completes the local consideration for a single 3--simplex.

%%%%%%%%%%

\subsection{The matching equations}

This section defines the matching equations for transversely oriented quadrilaterals, and the solutions space $\qtons(\tri)$ as well as the projective solution space $\Pqtons(\tri)$. It is also shown how neat position naturally gives transversely oriented immersions for certain vectors in $\qtons(\tri).$ In addition, linear functionals determining the number and slopes of the ends of the immersions are described and the example of the figure-8 knot complement illustrates a few subtle points.

%%%%%%%%%%

\subsubsection{Neat position of normal subset in pseudo-manifold}
\label{sec:Neat position of normal subset in pseudo-manifold}

Now suppose $p\co \widetilde{\Delta} \to \widetilde{\Delta} / \Phi = P$ is the quotient map described above, and each 3--simplex $\simplex^3_k$ in $\widetilde{\Delta}$ contains an infinite collection $\mathcal{F}_k$ of transversely oriented normal discs in neat position. 

Note that if no $\mathcal{F}_k$ contains transversely oriented normal quadrilaterals, then we can apply the convention of \Cref{sec:neat in simplex} to each $\simplex^3_k$ and choose isometries as face pairings. Hence in $P$ we obtain infinite collections of embedded vertex linking surfaces with orientation towards the dual vertices, and  infinite collections of immersed surfaces that are homotopic to vertex linking surfaces with orientation away from the dual vertices.

In the presence of quadrilateral discs, we need to apply isotopies to the $\mathcal{F}_k$ so that they glue up across 2--simplices.
Suppose $\varphi\in \Phi$ is a face pairing between faces $t_i$ and $t_j$ of the 3--simplices $\simplex^3_i$ and $\simplex^3_j$ in $\widetilde{\Delta}.$ After possibly pre-composing $\varphi$ with a normal isotopy, we may assume that $\mathcal{F}_i\cap t_i$ is mapped bijectively to $\mathcal{F}_j\cap t_j.$ Indeed, this isotopy is essentially determined by the labelling scheme for the transversely oriented normal arcs, and 
we may assume that this normal isotopy is 
\begin{enumerate}
\item[(a)] supported in a small neighbourhood of the face $t_i$ of $\simplex^3_i,$ and
%\item[(b)] fixes the barycentre of each simplex, and
\item[(b)] is the identity on each 1--simplex except for the distance 5 neighbourhood of its endpoints.
\end{enumerate}
It follows that there is an isotopy of $\widetilde{\Delta}$ supported outside of a small neighbourhood $\text{nhd}(\widetilde{\Delta}^{(1)})$ of the 1--skeleton of $\widetilde{\Delta}$  such that the image of $\cup_k\mathcal{F}_k \setminus \text{nhd}(\widetilde{\Delta}^{(1)})$ is the image of an immersed transversely oriented surface in $P \setminus p(\text{nhd}(\widetilde{\Delta}^{(1)})).$

Under what conditions can we ensure that the isotopies can be chosen to agree on the 1--skeleton?
When we identify two faces of 3--simplices in $\widetilde{\Delta}$, the natural bijection between the labelling schemes for transversely oriented normal arcs is the identity map. As we go around an edge of a 3--simplex, we have described a translation that gives a shift in this coordinate system arising from the quadrilaterals. Hence as we go around an edge in $P,$ we obtain a successive application of translations that have to combine to the identity as we return to the initial starting point. To state this more formally, 
let $e$ be an edge in $P$ and suppose $p^{-1}(e) = \cup_{j=1}^k e_j \subset \widetilde{\Delta}^{(1)}.$ An orientation $\vv{e}$ induces an orientation $\vv{e}_j$ of each $e_j.$ Then we require that $$T_{\vv{e}}(n) := T_{\vv{e}_k}\circ T_{\vv{e}_{k-1}} \circ \ldots T_{\vv{e}_{1}}(n) = n$$ for one (and hence all) $n \in \ZZ.$

We will now describe this as a system of equations in the coordinate system used in computation.

%%%%%%%%%%

\subsubsection{Matching equations}\label{sec:matching}

Let $\square$ and $\vv{\square}$ respectively denote the set of all normal isotopy classes
of normal quadrilaterals in $\tri$ and transversely oriented normal quadrilaterals in $\tri$,
so that $|{\square}| = 3t$ and $|\vv{\square}| = 6t$ where $t$ is the number of tetrahedra in $\tri$.

There is a natural map $\vv{\square} \to \square$ which amounts to forgetting the transverse orientation.

We identify $\RR^{\square}$ with $\RR^{3t}$ and $\RR^{\vv{\square}}$ with $\RR^{6t}.$
Given a transversely oriented normal surface $(S, \nu),$ let $\vv{x}(S, \nu) \in \RR^{\vv{\square}} = \RR^{6t}$
denote the integer vector for which each coordinate $\vv{x}(S, \nu)(q, \mu)$
counts the number of transversely oriented normal quadrilaterals in $S$ of type $(q, \mu) \in \vv{\square}$.

Similarly, let $x(S , \nu) = x(S) \in \RR^\square = \RR^{3t}$ denote the integer vector for which each coordinate $x(S)(q)$ counts the number of quadrilateral discs in $S$ of type $q \in \square$.

The map $\vv{\square} \to \square$ induces a natural map $\RR^{\vv{\square}} \to \RR^{\square}$ giving 
$\vv{x}(S, \nu) \mapsto x(S).$

The \define{transversely oriented normal $Q$--coordinate} $\vv{x}(S, \nu)$ satisfies the
following two algebraic conditions.

First, $\vv{x}(S, \nu)$ is admissible.
A vector $x \in \RR^\square$ is \define{admissible} if
$x \ge 0$, and for each tetrahedron $x$ is non-zero
on at most one of its three quadrilateral types. 
This reflects the fact that an embedded surface cannot contain
two different types of quadrilateral in the same tetrahedron.
We say that $\vv{x}\in \RR^{\vv{\square}} $ is admissible if its image  $x \in \RR^\square$ is admissible.

Second, $\vv{x}(S, \nu)$ satisfies a linear equation for each ideal
edge $e$ in $M$ and each orientation chosen on this edge. We call this a \define{$\vv{Q}$--matching equation}. 
The equation arises from the fact that  the surface $S$ meets $e$ locally in a 2--disc, and each transversely oriented normal disc in $S$ induces the same orientation on $e.$ Moreover, the link of $e$ is like an equator for a neighbourhood of $e,$ and the boundary of the 2--disc crosses this equator from north to south
as often as it crosses from south to north.
We now give the precise form of these equations.
Recall that $M$ is oriented and all tetrahedra are given
the induced orientation; see \cite[\S2.9]{tillmann08-normal} for
details.

\begin{figure}[h]
    \centering
    \begin{subfigure}{.33\textwidth}
        \includegraphics[scale=1]{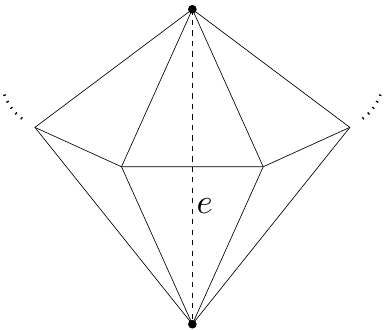}
        \caption{The abstract neighbourhood $B(e)$}
        \label{fig:matchingquadbdry}
    \end{subfigure}
    \begin{subfigure}{.32\textwidth}
        \includegraphics[scale=1]{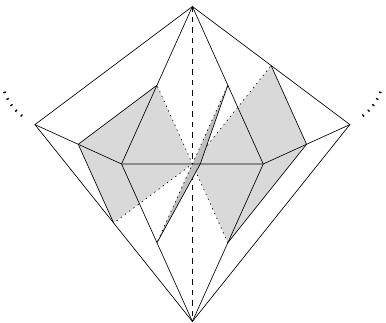}
    	\caption{Positive slope $+1$}
        \label{fig:matchingquadpos}
    \end{subfigure}
    \begin{subfigure}{.33\textwidth}
    	\includegraphics[scale=1]{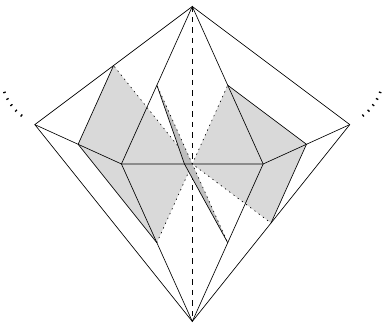}
    	\caption{Negative slope $-1$}
        \label{fig:matchingquadneg}
    \end{subfigure}
    \caption{Slopes of quadrilaterals.}
    \label{fig:slopes}
\end{figure}

Consider the collection $\mathcal{C}$ of all ideal tetrahedra meeting at an edge $e$ in $M$ (including $k$ copies of a tetrahedron $\tau$ if $e$ occurs $k$ times as an edge in $\tau$).  We form the \define{abstract neighbourhood $B(e)$} of $e$ by pairwise identifying faces of tetrahedra in $\mathcal{C}$ such that there is a well defined quotient map from $B(e)$ to the neighbourhood of $e$ in $M$; see \Cref{fig:matchingquadbdry} for an illustration.  Then  $B(e)$ is a ball (possibly with finitely many points missing on its boundary).  We think of the (ideal) endpoints of $e$ as the poles of its boundary sphere, and the remaining points as positioned on the equator. The ball $B(e)$ obtains a natural orientation from $M$, and as in \cite[\S2.4]{tillmann08-normal} we give the equator the orientation induced from the southern hemisphere.

Let $\tau$ be a tetrahedron in $\mathcal{C}$. The boundary square of a transversely oriented normal quadrilateral of type $(q, \mu)$ in $\tau$ meets the equator of $\partial B(e)$ if and only it has a transversely oriented corner on $e$. In this case, it has a slope ${s(q,e) = \pm1}$ of a well--defined sign on $\partial B(e)$ which is independent of the orientation of $e$ and the transverse orientation of $q.$ Refer to \Cref{fig:matchingquadpos,fig:matchingquadneg}, which show quadrilaterals with \define{positive} and \define{negative slopes} respectively. If $D$ is a connected transversely oriented normal surface in $B(e)$ that meets $e$, then $D$ is a disc. Indeed, $D$ is the join of $\partial D$ to $D\cap e$. Now $\partial D$ can be oriented such that each slope of a quadrilateral in $D$ corresponds to the signed intersection number of $\partial D$ with the equator. In particular, the total sum is zero.

Since tetrahedra in $M$ can appear with multiplicity in $B(e)$, the same is true for normal quadrilateral discs. 
We thus define the \define{total weight} $\wt_{\vv{e}}(q, \nu)$ of $q$ at $\vv{e}$ as a sum over contributions from the four corners of the quadrilateral discs $q.$ If a corner of $q$ is not on $e,$ then its contribution to $\wt_{\vv{e}}(q, \nu)$ is zero. If a corner of $q$ is on $e$ but the transverse orientation of $q$ does not induce the orientation on $\vv{e}$ then the contribution is zero. If a corner is on $e$ and the transverse orientation of $q$ induces the orientation on $\vv{e}$ then the contribution is $s(q,e).$

Given the oriented edge $\vv{e}$ in $M,$ the $\vv{Q}$--matching equation of $\vv{e}$
is then defined by 
\[0 = \sum_{(q, \nu)\in \vv{\square}}\; \wt_{\vv{e}}(q, \nu)\;\vv{x}(q, \nu)\]
where $\vv{x}(q, \nu)$ is the modulus in $\vv{x}$ of the $(q,\nu)$ quadrilateral type, i.e., $\vv{x}(q, \nu)$ counts the number of $(q,\nu)$ quads in $\vv{x}$. We emphasise again that each edge is counted twice in this setting, once with each orientation.

The set of all $\vv{x}\in \RR^{\vv{\square}}$ with the property that
(i)~$\vv{x}\ge 0$ and (ii)~$\vv{x}$ satisfies the $\vv{Q}$--matching equations is denoted
$\qtons(\tri)$ and called the \define{transversely oriented normal surface cone}. This
naturally is a polyhedral cone, but
the set of all admissible $\vv{x}\in \RR^{\vv{\square}}$
typically meets $\qtons(\tri)$ in a non-convex set.

The projective solution space $\Pqtons(\tri)$ is the intersection of $\qtons(\tri)$ with the standard simplex in $\RR^{\vv{\square}}.$

\begin{theorem}\label{thm:admissible integer t.o. solution gives normal immersed}
For each $\vv{x}\in \RR^{\vv{\square}}$ with the properties that $\vv{x}$ has integral coordinates, is admissible, and satisfies the $\vv{Q}$--matching equations, there is a (possibly
non-compact) properly immersed transversely oriented normal surface $(S, \nu)\to M$ such that $\vv{x} = \vv{x}(S, \nu).$ Moreover, $S$ is a topologically finite surface and we may choose a compact core of $S$ such that the immersion is an embedding on each end. Conversely, if $(S, \nu)\to M$ is a transversely oriented normal immersion, then its coordinate $\vv{x}(S, \nu)$ satisfies the $\vv{Q}$--matching equations.
\end{theorem}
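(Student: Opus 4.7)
The proof has two directions. For the forward direction, the plan is to build the immersed surface directly from $\vv{x}$ by placing transversely oriented normal discs in neat position in each tetrahedron and gluing across faces. In each 3-simplex $\tau$ of $\tri$, place $\vv{x}(q,\nu)$ copies of each transversely oriented quadrilateral type $(q,\nu)$ at neat-position distances from the dual 1-simplex, together with the full infinite family of small triangles dual to each vertex and the full family of large triangles dual to each face, as in \Cref{sec:neat in simplex}. Admissibility of $\vv{x}$ guarantees that only one underlying quadrilateral type per tetrahedron appears, so the resulting collection $\mathcal{F}_\tau$ consists of pairwise disjoint transversely oriented discs in neat position.

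The key technical step --- and I expect it to be the main obstacle --- is the identification of the $\vv{Q}$-matching equation at an oriented edge $\vv{e}$ with the condition that the composed shift $T_{\vv{e}} \co \ZZ \to \ZZ$ on integer labels of transversely oriented corners equals the identity. This is the slope/weight reinterpretation flagged at the end of \Cref{sec:matching}: a quadrilateral corner on $e$ whose transverse orientation induces $\vv{e}$ contributes its slope $s(q,e) = \pm 1$ to the shift, so the total shift around $\vv{e}$ is exactly $\sum_{(q,\nu)} \wt_{\vv{e}}(q,\nu)\vv{x}(q,\nu)$. Granting this, the matching equations force every composed shift to vanish, and each face pairing in $\Phi$ can then be pre-composed with a normal isotopy --- supported near the face and equal to the identity outside the distance-$5$ neighbourhoods of the 1-skeleton --- that carries the labels on one side to the matching labels on the other. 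Gluing the $\mathcal{F}_\tau$ along these adjusted face pairings produces a PL properly immersed transversely oriented surface $f \co (S,\nu) \to M$ with $\vv{x}(S,\nu) = \vv{x}$.

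Topological finiteness of $S$ follows because $\vv{x}$ has only finitely many nonzero coordinates: only finitely many cells of $S$ are quadrilaterals or large triangles, and past a finite level each end of $S$ consists of a stack of small triangles all dual to a single lift of a vertex of $P$, which is embedded by neat position and maps injectively onto a cylindrical neighbourhood of the corresponding vertex link. Truncating each such stack at some finite level produces a compact core whose complementary ends are embedded, giving the required end-embedding. For the converse, given a transversely oriented normal immersion $(S,\nu) \to M$, the preimage in $S$ of a small neighbourhood of an ideal edge $e$ is a disjoint union of discs, each meeting $e$ transversely once. The boundary of such a disc is a meridian on $\partial B(e) \setminus \{\text{poles}\}$ and therefore has zero signed intersection with the equator, so the slopes of its corners on $e$ sum to zero. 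Summing over those discs whose transverse orientation agrees with $\vv{e}$ yields $\sum_{(q,\nu)} \wt_{\vv{e}}(q,\nu)\vv{x}(q,\nu) = 0$, which is the $\vv{Q}$-matching equation for $\vv{e}$.
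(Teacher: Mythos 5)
Your overall skeleton matches the paper: build the surface from discs in neat position, observe that the $\vv{Q}$-matching equation at $\vv{e}$ is exactly the condition that the composed shift $T_{\vv{e}}$ is the identity, and then glue across face pairings after adjusting by normal isotopies supported near the 1-skeleton. The converse argument you give is also essentially the one the paper intends. However, the forward direction has a genuine gap at the point where you claim properness and topological finiteness, and it is precisely the point the paper spends most of its effort on.

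You assert that ``only finitely many cells of $S$ are quadrilaterals or large triangles'' because $\vv{x}$ has finitely many nonzero coordinates, and you conclude that past a finite level each end consists only of small triangles. This is false in general: $\vv{x}$ counts only quadrilaterals, not triangles, and a component containing quadrilaterals can perfectly well have ends made of infinitely many \emph{large} triangles. The figure-eight example in \Cref{sec:example fig8} exhibits exactly this---the immersed thrice-punctured sphere has two ``inward'' ends, which by construction consist of large triangles. The consequence is serious: in neat position the large triangles dual to a face are placed at distances $5 - \tfrac{1}{n}$ from the face, so an infinite stack of them accumulates not at an ideal vertex but on a vertex-linking surface $F_\v$ at distance $95$ from the vertex, which sits in the \emph{interior} of $M$. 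The naive gluing therefore does not give a proper map, and your claimed compact core with embedded ends does not exist as stated. The paper resolves this with an explicit normal homotopy (parametrising each edge ending at $\v$ by $[0,100]$ and stretching the interval $[5-\varepsilon,5]$ onto $[5-\varepsilon,100]$) that reparametrises only the corners of the large triangles outside a compact core so that those ends accumulate at the $0$-skeleton of $P$; only after this push-off is the map a proper immersion with embedded ends. You also omit the step of discarding components of the glued surface that contain no quadrilaterals (pure vertex-linking or pure $F_\v$-type surfaces), without which $\vv{x}(S,\nu)=\vv{x}$ still holds but $S$ has infinitely many components.
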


The analogous result for standard coordinates of \emph{closed} transversely oriented normal surfaces is \cite[Proposition 8]{Cooper-norm-2009}.
The analogous result for unoriented ${Q}$--matching equations of \cite{Kang-normal-2005, tillmann08-normal}  postulates the existence of a \emph{unique} (possibly non-compact) properly embedded normal surface up to normal isotopy with a given $Q$--coordinate. In particular, if the surface $(S, \nu)$ in the conclusion of the above theorem can be chosen to be embedded, then it is obtained from the unique embedded representative of $x(S)$ by choosing a transverse orientation on it. The example in \Cref{sec:example fig8} shows that there may be no embedded transversely oriented normal surface satisfying the conclusion of the theorem. Indeed, in that example $(S, \nu)$ is an immersed thrice punctured sphere but the unique representative of $x(S)$ is a once-punctured Klein bottle. \Cref{cor:euler} shows that the (possibly immmersed) representative of $\vv{x}(S, \nu)$ and the unique embedded representative of $x(S)$ have the same Euler characteristic, and we describe in \Cref{sec:implementation} how we determine the topology of these surfaces.

\begin{proof}
Let $\vv{x}\in \RR^{\vv{\square}}$ have the properties that $\vv{x}$ has integral coordinates, is admissible, and satisfies the $\vv{Q}$--matching equations. The key observation is that the $\vv{Q}$--matching equation for $\vv{e}$ is equivalent to the translation $T_{\vv{e}}$ being the identity map. This follows from the definition of slopes and weights of quadrilateral discs.

As in \Cref{sec:Neat position of normal subset in pseudo-manifold}, we therefore construct an infinite normal subset in neat position in the end-compactification $P$ of $M.$ The face pairings give identifications between edges of normal discs, and hence a map from a transversely oriented surface $(S, \nu) \to M \subset P$ that is an immersion.
Since the translation associated to each oriented edge in $P$ is the identity, the cell decomposition of $S$ is locally finite.

Now discard any component of $S$ that does not contain any quadrilateral discs, and denote the resulting surface again by $S.$ Hence $S$ is non-empty has finitely many components. If $S$ contains only finitely many large triangles, then $(S, \nu) \to M$ is a properly immersed surface and clearly an embedding of each end (after choosing a suitable compact core).

Hence assume $S$ contains infinitely many large triangles. These triangles accumulate on an immersed surface $F$ consisting only of large triangles. We now describe an explicit homotopy of $(S, \nu) \to M$ that takes these triangles to triangles accumulating at the 0--skeleton of $P.$ 
Each component $F_\v$ of $F$ can be viewed as the immersion of a vertex linking surface at distance $95$ from a vertex $\v\in P^{(0)}.$ Choose a compact core of $S$ that contains all quadrilateral discs. A normal homotopy of the map $(S, \nu) \to M$ that fixes the compact core and all small triangles is uniquely determined by describing where the corners of large triangles are mapped. The large triangles are then mapped to the Euclidean triangles with these endpoints.
%Recall that each large disc in each 3--simplex is labelled by a negative integer. 
%Suppose $n_0<0$ is the smallest integer label of a large disc in the compact core of $S.$ 
There is $1> \varepsilon>0$ such that $N_\varepsilon(F_\v)$ only contains large triangles not contained in the compact core. An oriented edge $\vv{e}$ ending in $\v$ is naturally parameterised by $[0,100]$ with $100$ corresponding to $\v.$ There is a homotopy of $\vv{e}$ which is the identity on $[0, 5 - \varepsilon],$ takes $[5 - \varepsilon, 5]$ to $[5 - \varepsilon, 100]$ by the linear map $x\mapsto \frac{1}{\varepsilon}[(95+\varepsilon)x - 95 (5-\varepsilon)]$ and maps $[5, 100]$ to $100.$  This is the homotopy we apply to each corner $(c, \nu_c)$ of a large triangle, where $c\in e$ and $\nu_c$ induces the \emph{opposite} orientation of $\vv{e}.$ The resulting map is a proper immersion, and it is an embedding of each end since any two large triangles in $S$ close to a vertex will have pairwise disjoint interior.

This proves the claim that each solution is realised by a proper immersion with embedded ends. The converse part follows from the description of the matching equations.
\end{proof}

%%%%%%%%%%

\begin{remark}[(Notation)]
We usually write $\widetilde{\Delta} = \{ \tau_i\}$ and label the vertices of each $\tau_i$ by $0_i, 1_i, 2_i, 3_i$ with the natural order corresponding to the chosen orientation. A quadrilateral type partitions the vertices into two pairs, and a transverse orientation is specified by only indicating the pair of vertices on the positive side. Thus, the transversely oriented normal quadrilateral types in $\tau_i$ are written in the form $q^i_{ab},$ where {$0\le a< b\le  3.$} 

%%% the notation commented out below is not actually used anywhere!
%The corresponding normal quadrilateral type in $\tau_i$ is written in the form $q^i_{ab/cd},$ where $\{a, b, c, d\} = \{0, 1, 2, 3\}$, {$a<b$, $c<d$}. Moreover, 
We simplify notation by writing $q_{ab}$ 
%and $q_{ab/cd}$ 
in sums over all simplices or if $\tau_i$ is understood.
\end{remark}

%%%%%%%%%%

\subsection{Boundary curve map}
\label{sec:boundary curve map}

Following \cite{tillmann08-normal}, it is straightforward to determine the transversely oriented boundary curves of a transversely oriented normal surface. The construction is essentially the same as in \cite{tillmann08-normal}, except that the boundary arcs of the quadrilaterals are partitioned into two sets. One set, the \emph{inward boundary}, corresponds to the ends of the surface that consist of large triangles, and the other set, the \emph{outward boundary}, corresponds to the ends that consist of small triangles.

The methods of \cite{tillmann08-normal} are applied independently to each set. Thus we obtain two boundary curve maps. The image of each is a union of pairwise disjoint embedded simple closed curves in $\del\Mbar$, but the two sets may intersect non-trivially, as happens in the example given in the next section. 

%[Give details.] [Highlight that now have TWO transverse orientations: one from the spinning direction and one from the orientation. This gives two sets of embedded curves on vertex link, each set has consistent spinning, but the curves from different sets may intersect and if two curves from different sets are disjoint, they may spin in opposite directions. We have an example of the former --- The latter should not be hard to find!
For each vertex $\v$ of $\tri$, let $N_\v$ be an open regular neighborhood of $\v$, and let $B_\v=\del \Nbar_\v$. We may assume that the closures of the vertex neighbourhoods are pairwise disjoint.
Hence  identify $\Mbar$ with $M\setminus (\cup_\v N_\v)$, so that the vertex links $B_\v$ are components of $\del \Mbar$. Fix $\v$ and let $\gamma$ be an oriented normal closed curve in $B_\v$. We start by constructing two linear functionals $w_\gamma^+$ and $w_\gamma^-$ on $\qtons(\tri)$ by defining them on quadrilaterals, then extending linearly. To this end, let $\Delta^3$ be a tetrahedron in $\tri$, and let $q$ be a quadrilateral in $\Delta^3$. For any normal arc $\a$ in a face of $\Delta^3$ there is a natural transverse orientation, pointing into the triangle cut off by $\a$. A normal arc $\a$ in $\del q$ is in the \define{outward boundary} $\del q^+$ if the two transverse orientations on $\a$ agrees with the transverse orientation on $q$, and is in the \define{inward boundary} $\del q^-$ otherwise. Now orient $q$ in such a way that the new orientation, taken with the transverse orientation of $q$, induces a left-handed orientation on $M$ (see \Cref{fig:intersection_map}). This orientation of $q$ induces an orientation on $\del q$, and in particular on $\a$. Let $\t$ be the unique normal triangle in $\Delta^3\cap \del\Mbar$ whose boundary contains an arc $\a_\t$ normally isotopic to $\a$. Orient $\a_\t$ so that its orientation agrees with that of $\a$. Recall that for such an arc $\a_\t$ and curve $\gamma$, the algebraic intersection number $\iota(\a_\t,\gamma)$ is $+1$ if $\a_\t$ crosses $\gamma$ from left to right, and $-1$ otherwise (see \Cref{fig:intersection}).
Now define
$$
w^+_\gamma(q) = \sum_{\a \in \del q^+} \iota(\a_\t,\gamma)\qquad \text{and}\qquad
w^-_\gamma(q) = \sum_{\a \in \del q^-} \iota(\a_\t,\gamma)
$$
where the algebraic intersection number is defined with respect to the transverse orientation on $B_\v$ pointing away from $\v$. Note that if $\t$ is not a triangle in the triangulation of $B_\v$, then $\iota(\gamma,\a_\t)=0$ since $\gamma\subset B_\v$. Letting $\vv{x}\in \qtons(\tri)$ and extending the above definition linearly gives
$$
w^+_\gamma(\vv{x}) = \sum_{\Delta^
3\subset \tri} \left(\sum_{0\le i<j\le 3} \vv{x}(q_{ij})w^+_\gamma(q_{ij})\right) \qquad \text{and}\qquad
w^-_\gamma(\vv{x}) = \sum_{\Delta^
3\subset \tri} \left(\sum_{0\le i<j\le 3} \vv{x}(q_{ij})w^-_\gamma(q_{ij})\right)
$$

\begin{figure}
    \begin{subfigure}{.7\textwidth}
    \centering
        \includegraphics[scale=2.2]{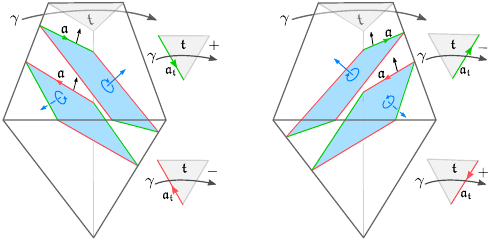}
        \caption{Inward and outward boundary intersections}
        \label{fig:intersection_map}
    \end{subfigure}
    \begin{subfigure}{.29\textwidth}
    	\centering
    	\vspace{1cm}\quad\\
        \includegraphics[scale=2.2]{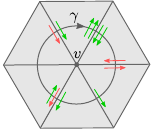}\\
        \vspace{1cm}\quad
    	\caption{A vertex loop}
        \label{fig:matching_loop}
    \end{subfigure}
    \caption{(a): The outward boundary $\del q^+$ of $q$ is shown in green, and the inward boundary $\del q^-$ is shown in red. If $\a_\t$ is in $\del q^+$, a curve $\gamma$ crossing through $\t$ contributes $+1$ to $w_\gamma^+(q)$ if it enters via the edge $\a_\t$, and contributes $-1$ if it leaves via $\a_\t$. Similarly for $\a_\t$ in $\del q^-$, but with signs switched. (b): For a loop around a vertex $v$ of the triangulation of $B_\v$, we must have $w_\gamma^+(\vec{x})= w_\gamma^-(\vec{x}) = 0$.
    }
    \label{fig:intersection}
\end{figure}

where $\vv{x}(q_{ij})$ is the modulus in $\vv{x}$ of the $q_{ij}$ quadrilateral type (see \Cref{sec:matching}). If $v$ is a vertex in the triangulation of $B_\v$ and $\gamma$ is a small oriented circle around $v$, then for any $\vv{x}\in\qtons(\tri)$ we have $w^+_\gamma(\vv{x})=w^-_\gamma(\vv{x})=0$. This is because for this choice of $\gamma$, these are exactly the two $\qtons$-matching equations around the 1-simplex $\Delta^1$ of $\tri$ containing $v$. This fact is illustrated in \Cref{fig:matching_loop}, where blue arrows correspond to normal arcs in $\del q^+$ and blue arrows correspond to arcs in $\del q^-$ (for the quadrilateral $q$ containing the arc). It follows that for any closed curve $\gamma$ in $B_\v$, $w_\gamma(\vv{x})$ depends only on the homotopy class of $\gamma$. 

Let $(\mu_\v,\lambda_\v)$ be closed normal curves representing a basis for $H_1(B_{\v};\ZZ)$. Define maps $w^+_\v,w^-_\v:\qtons(\tri)\to \RR^2$ by
$$
w^+_\v(\vv{x})=(-w^+_{\lambda_\v}(\vv{x}),w^+_{\mu_\v}(\vv{x})) \qquad \text{and}\qquad
w^-_\v(\vv{x})=(-w^-_{\lambda_\v}(\vv{x}),w^-_{\mu_\v}(\vv{x}))
$$
Now consider the canonical isomorphism $\ZZ^2\cong H_1(B_\v;\ZZ)$ which assigns to a vector $(x,y)\in \ZZ^2$ the class ${[\gamma]\in H_1(B_\v;\ZZ)}$ satisfying $\iota(\gamma,\lambda_\v)=-x$ and $\iota(\gamma,\mu_\v)=y$. This isomorphism extends uniquely to an isomorphism $\phi_\v:\RR^2\to H_1(B_\v;\RR)$. The map $\phi_\v$ takes basis vectors $(1,0)$ and $(0,1)$ to $\mu_\v$ and $\lambda_\v$, respectively. Finally, we define maps $\del^+_\v,\del^-_\v:\qtons(\tri)\to H_1(B_\v;\RR)$ by $\del^+_\v=\phi_\v\circ w^+_\v$, $\del^-_\v=\phi_\v\circ w^-_\v$, and define the \define{outward and inward boundary curve maps} $\del^+,\del^-: \qtons(\tri)\to H_1(\del\Mbar;\RR)$ to be the respective direct sums over the vertices $\v$ of $\tri$ of the maps $\del^+_\v$ and the maps $\del^-_\v$. That is, 
$$
\del^+ := \oplus_\v \del^+_\v: \qtons(\tri)\to \oplus_\v H_1(B_\v;\RR)=H_1(\del \Mbar;\RR)
$$
and similarly for $\del^-$. 
It follows from the construction that $\del^+$ and $\del^-$ are linear maps. The following proposition, which is a corollary of \cite[Proposition 3.3]{tillmann08-normal}, shows that these maps are appropriately named:
\begin{proposition}\label{prop:bdy_map}
	Let $\vv{x}\in \qtons(\tri)$, and let $(f,\nu_S):S\to M$ be the transversely oriented normal immersion given by \Cref{thm:admissible integer t.o. solution gives normal immersed}. Fix a vertex $\v$ of $\tri$, and suppose $\del_\v^+(\vv{x})= k_1(p_1\mu_\v+ q_1\lambda_\v)$ and $\del_\v^-(\vv{x})= k_2(p_2\mu_\v+ q_2\lambda_\v)$, where $\gcd(p_i,q_i)=1$ for $i=1,2$. Then there are $k_i$ components $\beta$ of $f^{-1}(f(S)\cap B_\v)$ such that $[f(\beta)] = p_i\mu_\v+ q_i\lambda_\v$ in $H_1(B_\v;\RR)$, and $S$ has $k_1+k_2$ cusps on the cusp of $M$ corresponding to $\v$.
\end{proposition}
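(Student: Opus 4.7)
The plan is to reduce the statement to \cite[Proposition 3.3]{tillmann08-normal}, the analogous result for unoriented $Q$-coordinates, by applying its argument separately to the two classes of ends of $S$ above $\v$.

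The first step is to analyse the ends produced above $\v$ by the neat position construction of \Cref{thm:admissible integer t.o. solution gives normal immersed}. After choosing a suitable compact core of $S$, each cusp above $\v$ is a half-infinite annular end that $f$ embeds onto an essential simple closed curve in $B_\v$. These ends fall into two classes, distinguished by whether the transverse orientation $\nu$ near the cusp points toward $\v$ (as for an end built from the towers of small triangles dual to $\v$) or away from $\v$ (as for an end built from the towers of large triangles that have been pushed to $\v$ by the explicit homotopy in the proof of \Cref{thm:admissible integer t.o. solution gives normal immersed}). Step one is to verify that these two classes correspond respectively to the ends detected by $\del^+_\v$ and $\del^-_\v$.

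The second step is a local identification. For each quadrilateral type $q$ and each oriented normal curve $\gamma \subset B_\v$,
\[
\sum_{\a\in\del q}\iota(\a_\t,\gamma) \;=\; w^+_\gamma(q) + w^-_\gamma(q),
\]
so our functionals split the unoriented boundary weight of \cite{tillmann08-normal} into two complementary halves. A local check in one model tetrahedron shows that the partition $\del q = \del q^+ \sqcup \del q^-$ matches the split of the ends of $S$ incident to $q$ into the two transverse-orientation classes, and that the sign conventions (left-handed orientation on $q$, outward-pointing transverse orientation on $B_\v$) combine so that for each arc $\a \in \del q^\pm$ belonging to an end $E$, the contribution $\iota(\a_\t,\gamma)$ equals the signed intersection $\iota(f(\del E),\gamma)$. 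Summing over all quadrilaterals and using linearity of $w^\pm_\gamma$ in $\vv{x}$ gives
\[
w^+_\gamma(\vv{x}) \;=\; \sum_{E} \iota(f(\del E),\gamma), \qquad w^-_\gamma(\vv{x}) \;=\; \sum_{E'} \iota(f(\del E'),\gamma),
\]
where the first sum ranges over the ends in the first class and the second over the ends in the second. The isomorphism $\phi_\v$ converts signed intersections into homology classes, so $\del^+_\v(\vv{x})$ equals the total class in $H_1(B_\v;\RR)$ of the union of first-class boundary curves, and analogously for $\del^-_\v(\vv{x})$. Since each end's boundary curve is an embedded curve of primitive homology class, writing $\del^+_\v(\vv{x}) = k_1(p_1\mu_\v + q_1\lambda_\v)$ with $\gcd(p_1,q_1) = 1$ forces exactly $k_1$ parallel first-class ends of slope $(p_1,q_1)$, and analogously $k_2$ second-class ends of slope $(p_2,q_2)$, for a total of $k_1+k_2$ cusps above $\v$.

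The main obstacle will be the sign-and-orientation bookkeeping in step two: verifying in a single oriented model tetrahedron that the partition $\del q = \del q^+ \sqcup \del q^-$ coincides with the split of incident ends into the two transverse-orientation classes, and that the chosen conventions make $\iota(\a_\t,\gamma)$ equal (rather than opposite to) $\iota(f(\del E),\gamma)$. This is a finite local check; once it is in place, the rest of the proof is linearity together with the argument of \cite{tillmann08-normal} applied to each class.
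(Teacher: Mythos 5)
Your proposal takes essentially the same route as the paper. The paper's proof sets $\triangle^\pm$ to be the unions of small and large normal triangles in $f(S)$, notes that $\triangle^+$ is embedded with all of its boundary arcs lying in $\bigcup_q \del q^+$, restricts to the components $\triangle^+_\v$ isotopic into $B_\v$, and identifies $\del^+_\v(\vv{x})$ with the class of the resulting 1-chain of arcs (with the $\triangle^-$ case handled symmetrically). Your two classes of ends — transverse orientation toward versus away from $\v$ — are precisely the ends coming from small and large triangles, your identity $w^+_\gamma + w^-_\gamma = \sum_{\a\in\del q}\iota(\a_\t,\gamma)$ expresses exactly the splitting the paper exploits, and both proofs lean on \cite[Proposition 3.3]{tillmann08-normal} and the embeddedness of $\triangle^\pm_\v$ to pass from the homology class of the 1-chain to the count of cusps. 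The paper is more compressed in the sign-and-orientation bookkeeping that you flag as the main obstacle, but the underlying argument is the same.
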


\begin{proof}
Let $\triangle_+$ denote the union of all small normal triangles in $f(S)$, and $\triangle^-$ the union of all large normal triangles. It follows from the neat position construction that $\triangle^+$ is embedded, and that every normal arc in $\del \triangle^+$ is in $\bigcup_{q\in \vv{\square}} \del q^+$. Fix a vertex $\v$ of $\tri$, and let $\triangle_\v^+$ consist of the connected components of $\triangle^+$ that are isotopic into $B_\v$. The boundary $\del \triangle_\v^+$ of $\triangle_\v^+$ consists of oriented arcs $\a$ along which quadrilaterals glue to small triangles. Each such arc is normally isotopic to an arc $\a_\t$ in $B_\v$, and these arcs taken together form a 1-chain $c$. It follows from the construction of $\del_\v^+$ that $\del_\v^+(\vv{x})=[c]\in H_1(B_\v;\RR)$, where $[c]$ is the class of $c$.
\end{proof}

%%%%%%%%%%

\subsection{Example: figure-8 knot}
\label{sec:example fig8}

The figure-8 knot complement $M=\SS^3\setminus 4_1$ has Seifert surface a once punctured torus $S$. Since $M$ is fibered with fiber $S$, \Cref{thm:walsh} does not guarantee the existence of an embedded spun-normal surface isotopic to $S$. Indeed, a simple calculation shows that there is no such spun-normal surface for the standard two-tetrahedron triangulation (see \cite{Kang-normal-2003}). However, we demonstrate here that there is a properly immersed spun-normal surface $F$ that is homologous to $S$, which is a thrice-punctured sphere (hence it is norm-minimizing). We discovered this using the computer program \texttt{Tnorm}, which is described in \Cref{sec:implementation and examples}. On the other hand, in \Cref{sec:implementation and examples} we also find a triangulation of the figure-8 knot complement by \emph{five} tetrahedra, with respect to which the fiber \emph{is} realized as an embedded spun-normal surface.

\begin{figure}[h]
    \begin{subfigure}{.49\textwidth}
    \centering
        \includegraphics[scale=1.3]{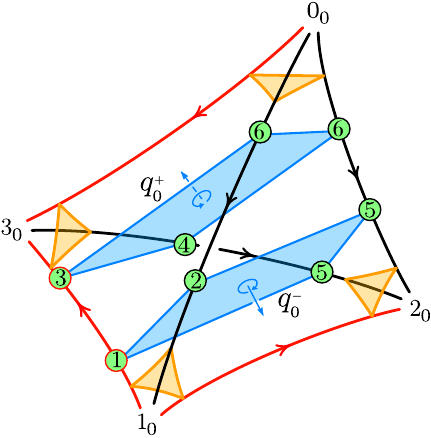}
        \caption{Tetrahedron 0}
        \label{fig:tet_left}
    \end{subfigure}
    \begin{subfigure}{.49\textwidth}
    \centering
        \includegraphics[scale=1.3]{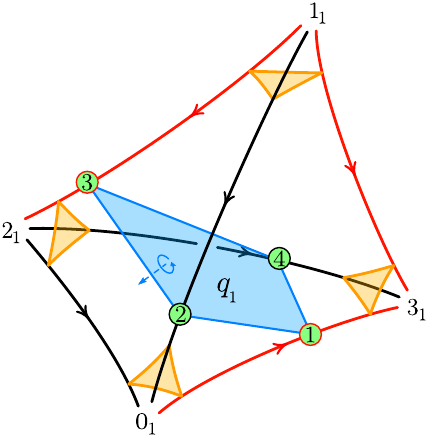}
    	\caption{Tetrahedron 1}
        \label{fig:tet_left}
    \end{subfigure}
    \caption{The figure-8 knot complement is triangulated by the two tetrahedra shown, with face pairings determined by edge colors and orientations. }
    \label{fig:tets}
\end{figure}

\Cref{fig:tets} shows Thurston's famous triangulation of the figure-8 knot complement by two tetrahedra, which we denote by $\tri_8$. Denote by $\tet_0$ the tetrahedron to the left and by $\tet_1$ the tetrahedron to the right. There are two edge classes, which are colored red and black in the figure, with orientations assigned to uniquely determine face gluings. Let $\vv{x}\in\RR^{\vv{\square}}$ be the vector with $\vv{x}(q_{03}^0)=\vv{x}(q_{12}^0)=\vv{x}(q_{02}^1)=1$, and $\vv{x}(q_{ij}^k)=0$ for all other quadrilaterals. Going forward, we denote the two quadrilaterals of $\tet_0$ by $q_0^+$ and $q_0^-$, and the quadrilateral of $\tet_1$ by $q_1$. These are shown in \Cref{fig:tets}, with transverse orientations indicated. 
%The abstract neighborhood $A$ of the black edge of $\tri_8$ contains multiple copies of each of the three quadrilaterals. 
The first important thing to note is that the $Q$-matching equations are satisfied: for each transverse orientation, there are two quadrilaterals of positive slope and two quadrilaterals of negative slope. Thus $\vv{x}\in\qtons(\tri_8)$, and $\vv{x}$ is admissible by definition. 

The identifications of the transversely oriented normal corners of the discs can either be determined by examination of the abstract edge neighbourhood, or from the induced triangulations of the transversely oriented vertex links. In \Cref{fig:tets} we have indicated the identifications by labelling the corners of the quadrilaterals arbitrarily with $1\le i \le 6$. This uniquely determines the resulting properly immersed normal surface $F$ obtained by adding infinitely many triangles in neat position. 
The three quadrilaterals are glued as shown in \Cref{fig:glued_quads}, and the resulting surface is a thrice-punctured sphere.

\begin{figure}
	\begin{subfigure}{0.49\textwidth}
		\centering
		\vspace{.6cm}\quad\\
		\includegraphics[scale=1.3]{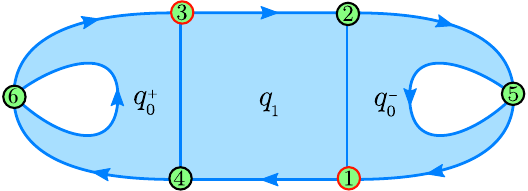}
		\vspace{.6cm}\quad
		\caption{Quadrilaterals in $F$} 
		\label{fig:glued_quads}
	\end{subfigure}
	\begin{subfigure}{0.49\textwidth}
		\centering
		\includegraphics[width=5cm]{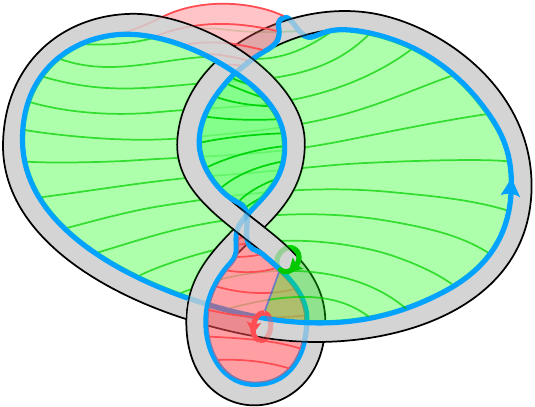}
		\caption{Viewed as immersed spanning disc for the knot} 
		\label{fig:glued_with_tris}
	\end{subfigure}
	\caption{The properly immersed thrice-punctured spun-normal sphere $F$}
	\label{fig:normal_surface}
\end{figure}

Let $\Mbar$ be the manifold with boundary satisfying $\mathrm{int}(\Mbar)=M$. The boundary slopes of the three boundary components of $F$ are determined as follows. Each quadrilateral of $\vv{x}$ can be oriented so that the given orientation, taken with the transverse orientation on the quad, induces a left-hand orientation on $\Mbar$. This in turn induces an orientation on the boundary of each quad. Let $T$ be the triangulation of $\del\Mbar$ induced by $\tri$. $T$ consists of $8$ triangles, which we have drawn near the corners of the tetrahedra in \Cref{fig:tets}, and which glue up into the fundamental domain of $\del\Mbar$ shown in \Cref{fig:meridians}. Each oriented edge of a quadrilateral is normally isotopic to an edge of the triangulation $T$, so the quadrilateral edges induce oriented curves on $\del\Mbar$. One of these curves is shown in \Cref{fig:boundary}, and the other two are shown in \Cref{fig:meridians}. Since the meridian and the longitude of the cusp of the figure-8 knot are the curves $\mu$ and $\lambda$ indicated in \Cref{fig:boundary} (see \cite[Section 4.6]{thurston78-lectures}), the three boundary curves of $F$ in these coordinates are $2\mu+\lambda$ (shown in \Cref{fig:boundary}), and two copies of $-\mu$ (\Cref{fig:meridians}). Thus $[\del F]=[\lambda]$ as elements of $H_1(\del\Mbar)$, so $F$ is homologous to the Seifert surface $S$ (see \Cref{subsec:peripheral} for homology map details).

\begin{figure}[h]
    \centering
    \begin{subfigure}{.49\textwidth}
        \includegraphics[scale=1.3]{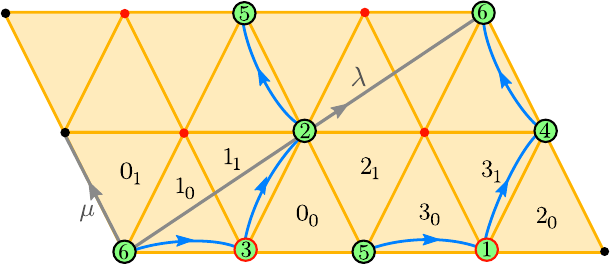}
        \caption{The boundary curve $2\mu + \lambda$}
        \label{fig:boundary}
    \end{subfigure}
    \begin{subfigure}{.49\textwidth}
    	\vspace{.5cm}\quad\\
        \includegraphics[scale=1.3]{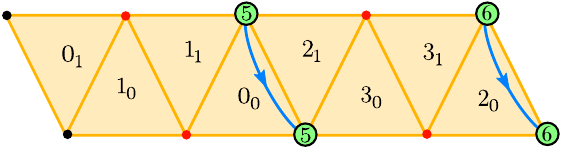}\\
        \vspace{.5cm}\quad
    	\caption{Two parallel $-\mu$ boundary curves}
        \label{fig:meridians}
    \end{subfigure}
    \caption{The triangulation $T$ of the boundary $\partial \Mbar$ of $M$, shown with the three boundary curves of $F$. }
    \label{fig:bdys}
\end{figure}

%%%%%%%%%%
%%%%%%%%%%

\section{Euler characteristic and generalised angle structures}\label{sec:euler}

We describe how one can determine the Euler characteristic of a transversely oriented normal immersion from its quadrilateral coordinate using certain angle structures that have \emph{vanishing peripheral holonomy}. This material relies on \cite{neumann-combinatorics-1992, Luo-angle-2008, GHHR16}.

%%%%%%%%%%%%%%%%%%%%%%%%%%%%

\subsection{Angle structures and rotational holonomy}

Let $e\in \tri$ be an edge in $M$ and $q\in \square$ be a quadrilateral type, contained in the 3--simplex $\simplex^3.$ 
We now define the index $i(e,q).$ Informally, this is the number of times $q$ \emph{faces} $e.$ 
If $e$ is not incident with $\simplex^3$, then we define $i(e,q)=0.$ Otherwise, we let $i(e,q)$ be the number of edges in the preimage of $e$ in $\widetilde{\Delta}$ that are incident with the preimage of $\simplex^3$ and disjoint from the lift of $q$. It follows that $i(e,q)\in\{0,1,2\}$. 

\begin{definition}
	A \define{generalized angle structure} on a triangulated manifold $(M,\tri)$ is a function $\as:\square \to \RR$ such that for each tetrahedron $\simplex^3\in \tri$
	\begin{equation}\label{eq:pi}
	\sum_{q\subset \simplex^3} \as(q)=\pi
	\end{equation}
	and for every edge $e\in \tri$
	\begin{equation}\label{eq:2pi}
	\sum_{q\in\square} i(\e,q)\as(q)=2\pi
	\end{equation}
	
	%Similarly, a \define{tangential angle structure} on $(M,\tri)$ is a function $\overline{\as}:\sq \to \RR$ such that for each tetrahedron $\tet\in \tri$ and each edge $\e\in\tri$
	%$$
	%\sum_{q\in \sigma} \overline{\as}(q)=0=\sum_{q\in\sq} i(\e,q)\overline{\as}(q).
	%$$
	
	In the case that $\alpha(q)\in[0,\pi]$ for all $q\in \square$, we say that $\alpha$ is a \define{semi-angle structure}. If $\alpha(q)\in(0,\pi)$ for all $q\in \square$, then $\alpha$ is a \define{strict angle structure}.
\end{definition}

We can alternatively think of an angle structure $\as$ as an assignment of dihedral angles to the tetrahedra of $\tri$:
if $e$ is an edge of $\simplex^3$, then the dihedral angle at $e$ is $\as(q)$, where $q\subset \simplex^3$ is the quadrilateral type disjoint from $e.$ It follows that opposite edges of a 3--simplex have the same dihedral angle. Given a normal surface $S$, we then get an angle structure on $S$ induced by $\as$ as follows. If $t$ is a normal triangle in a tetrahedron $\simplex^3$, then the dihedral angles of $\simplex^3$ given by $\as$ induces angle assignments on $t$: the vertex of $t$ meeting the edge $e$ of $\simplex^3$ is assigned the angle at $e$. In the same way we also get angle assignments for vertices of normal quads. It follows from \Cref{eq:pi} that the three dihedral angles meeting a vertex of $\simplex^3$ sum to $\pi$ (equivalently, the total angle in any triangle of $S$ is $\pi$), and it follows from \Cref{eq:2pi} that the total dihedral angle around an edge of $\simplex^3$ is $2\pi$ (equivalently, the total angle around a vertex of $S$ is $2\pi$).

For each vertex $\v$ of $M$ let $N_\v$ be an open neighborhood of $\v$, chosen so that for distinct vertices the neighborhoods are disjoint, and so that $B_\v:=\del N_\v$ is a vertex linking normal surface. Let $M^c=M\setminus \bigcup N_\v$, so that $\del M^c=\bigcup B_\v$. For any isotopy class of simple closed curves on $B_\v$, there is a representative $\gamma$ that is \define{normal} with respect to the triangulation of $B_\v$ by normal triangles. That is, for any normal triangle $t$ of $B_\v$, $\gamma\cap t$ is a properly embedded arc cutting off a corner of $t$.

\begin{definition}
Let $\gamma$ be an oriented simple closed normal curve on $B_\v$ consisting of normal arcs $a_1,\dots,a_k$.  The \define{rotational holonomy} of $\as$ with respect to $\gamma$ is defined to be
$$
h_\gamma(\as)=\sum_{i=1}^k \epsilon_i \alpha_i
$$
where $\alpha_i$ is the angle associated to the corner of the triangle cut off by $a_i$, and $\epsilon_i=1$ if $a_i$ cuts off a corner to its left, $\epsilon_i=-1$ if $a_i$ cuts off a corner to its right.
\end{definition}

We remark that if complex shape parameters are associate to the ideal tetrahedra \cite{thurston78-lectures, Neumann-volumes-1985}, then the rotational holonomy of a peripheral curve $\gamma$ is the imaginary part of the logarithm of the derivative of its holonomy. If $h_\gamma(\as)=0$ for every normal curve $\gamma$ in $B_\v$ and every vertex $\v$, then we say that $\as$ has \define{vanishing peripheral rotational holonomy}. 

The rotational holonomy has the following natural properties. 
\begin{enumerate}
\item $h_{-\gamma}(\as) = -h_\gamma(\as),$ where $-\gamma$ denotes $\gamma$ with opposite orientation;
\item $h_{\gamma+\sigma}(\as) \equiv  h_{\gamma}(\as) + h_{\sigma}(\as)$ modulo $2\pi$, where addition of curves is oriented Haken sum.
\end{enumerate}

The following proposition is stated in \cite{GHHR16}. We supply some details that are not explicitly mentioned in the proof and which are of particular interest for implementation.

\begin{proposition}\cite[Proposition A.2]{GHHR16}\label{prop:Gara1}
Let $\tri$ be an ideal triangulation of a manifold $M$ with torus cusps. Then there exists a generalized angle structure on $\tri$ with vanishing peripheral rotational holonomy.
\end{proposition}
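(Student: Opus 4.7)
The plan is a linear algebra argument following Neumann's approach \cite{neumann-combinatorics-1992}. Consider the linear map
$$\Phi \colon \RR^\square \longrightarrow \bigoplus_{\simplex^3 \in \tri} \RR \;\oplus\; \bigoplus_{e \in \tri} \RR \;\oplus\; \bigoplus_{\v} \RR^2$$
whose three blocks send an angle assignment $\as$ to the tetrahedron sums $\sum_{q \subset \simplex^3}\as(q)$, the edge sums $\sum_q i(e,q)\as(q)$, and the peripheral rotational holonomy pair $(h_{\mu_\v}(\as), h_{\lambda_\v}(\as))$ for fixed generators $(\mu_\v, \lambda_\v)$ of $H_1(B_\v;\ZZ)$ at each cusp $\v$. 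The proposition asserts that the target vector $v = (\pi,\ldots,\pi,\, 2\pi,\ldots,2\pi,\, 0,\ldots,0)$ lies in the image of $\Phi$; equivalently, $v$ annihilates $\ker \Phi^\top$.

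I would split the argument in two. First, produce a generalized angle structure $\as_0$ satisfying \eqref{eq:pi} and \eqref{eq:2pi} while disregarding the peripheral block. This is a consistency check on the inhomogeneous system of tetrahedron and edge equations; the only row relations are the global identity $\sum_e(\text{edge eqn}) = 2\sum_{\simplex^3}(\text{tet eqn})$---consistent with the target because the number of edges equals the number of tetrahedra (a consequence of $\chi(P)=c$ for the end-compactification $P$)---together with one cusp-localized relation per cusp $\v$, each pairing trivially with the target by a combinatorial Gauss--Bonnet identity using $\chi(B_\v)=0$. Second, let $W_0 \subset \RR^\square$ be the kernel of the first two blocks of $\Phi$ (the ``closed'' angle assignments) and $\Pi \colon W_0 \to \bigoplus_\v \RR^2$ the restriction of the peripheral block. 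Assuming $\Pi$ is surjective, choose $\tilde\as \in W_0$ with $\Pi(\tilde\as)$ equal to the peripheral holonomy vector of $\as_0$, and take $\as := \as_0 - \tilde\as$; this satisfies all three required conditions.

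The main obstacle is the surjectivity of $\Pi$, which is essentially Neumann's combinatorial duality \cite{neumann-combinatorics-1992}: the rotational holonomy pairing $H_1(\del\Mbar;\RR) \times W_0 \to \RR$, $(\gamma,\as)\mapsto h_\gamma(\as)$, is non-degenerate, so the induced map $H_1(\del\Mbar;\RR) \to W_0^*$ is injective and $\Pi$ is surjective by dualization. A constructive alternative would exhibit, for each basis curve $\gamma \in \{\mu_\v,\lambda_\v\}$ represented by a normal closed curve on $B_\v$, an explicit $\as_\gamma \in W_0$ with $\as_\gamma(q)$ a signed count of normal corners cut off by $\gamma$ that are assigned the angle $\as(q)$; verifying $\as_\gamma \in W_0$ and computing $\Pi(\as_\gamma)$ requires careful bookkeeping of the indices $i(e,q)$, the vertex-link triangulations of each $B_\v$, and the sign conventions in $h_\gamma(\as)$.
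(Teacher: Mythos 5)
Your proposal takes a genuinely different formal route from the paper: you set up a single linear map whose image you want to hit, verify consistency of the inhomogeneous edge/tetrahedron block by Euler-characteristic counting, and then use Neumann's combinatorial duality to argue that the peripheral block can be corrected. The paper instead delegates all of the linear algebra to the cited proof of [GHHR16, Prop.~A.2], which already produces a generalised angle structure $\as$ with $h_{\mu_\v}(\as)=h_{\lambda_\v}(\as)=0$ on a basis, and then focuses entirely on the remaining step: upgrading vanishing on a basis to vanishing on every peripheral normal curve.

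This remaining step is where your proposal has a genuine gap, and it is precisely the point the paper's proof flags as the content that needs to be supplied. Your argument achieves $h_{\mu_\v}(\as)=h_{\lambda_\v}(\as)=0$ for fixed generators, and you treat the pairing $(\gamma,\as)\mapsto h_\gamma(\as)$ as a well-defined bilinear pairing on $H_1(\del\Mbar;\RR)\times W_0$. But the definition of vanishing peripheral rotational holonomy quantifies over \emph{all} normal curves $\gamma$, and the rotational holonomy is a priori only additive under oriented Haken sum modulo $2\pi$ (property (2) stated just before the proposition). So $h_{\mu_\v}(\as)=h_{\lambda_\v}(\as)=0$ yields only $h_{p\mu_\v+q\lambda_\v}(\as)\equiv 0\pmod{2\pi}$ for an arbitrary normal representative of $p\mu_\v+q\lambda_\v$, not $h_{p\mu_\v+q\lambda_\v}(\as)=0$. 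Equivalently, you are implicitly asserting that $h_{(\cdot)}(\as)$ descends to a linear functional on $H_1(B_\v;\RR)$, and that is exactly the claim that requires Neumann's argument [pp.~248--249] together with the hypothesis that the basis is represented by \emph{simple} normal curves. The paper's proof supplies both ingredients: it isolates the assertion $h_{p\mu_\v+q\lambda_\v}(\as)=0$ for all $p,q$, notes it follows from Neumann's argument provided the basis representatives are simple, and then gives an explicit construction (boundary curves of regular neighbourhoods of curves $l_\v$, $m_\v$ in the $1$-skeleton of $B_\v$) guaranteeing such simple normal representatives. Your proposal would need to incorporate this simplicity condition on the representatives of $\mu_\v,\lambda_\v$ and then justify, as the paper does via Neumann, that the rotational holonomy really is a well-defined linear functional on $H_1$ for such a basis; without this, achieving $\Pi(\as)=0$ does not yield vanishing peripheral rotational holonomy in the sense required.
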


For example, if $M$ admits a complete hyperbolic structure and $\tri$ is a geometric ideal triangulation of $M$, then the angle structure induced by the hyperbolic structure has vanishing peripheral rotational holonomy.

\begin{proof}
The proof of \cite[Proposition A.2]{GHHR16} provides a generalised angle structure $\as$ with the following property. For each peripheral torus $B_\v$, one has a basis $\{\lambda_\v, \mu_\v\}$ with the property that $h_{\mu_\v}(\as) = h_{\lambda_\v}(\as)=0.$ To complete the proof of the proposition, it needs to be shown that $h_{p\mu_\v+q\lambda_\v}(\as) =0$ for all $p, q\in \ZZ.$

The argument given in \cite[pp.248-249]{neumann-combinatorics-1992} implies that this is the case so long as the normal curves representing the basis elements are \emph{simple}.\footnote{SnapPy will always choose such representatives for a basis when given a triangulation or a link diagram.} One way to choose such a basis algorithmically is as follows. Choose simple closed oriented curves $l_\v$ and $m_\v$ in the 1--skeleton of $B_\v$ with the property that $B_\v \setminus (l_\v \cup m_\v)$ is a disc and $l_\v \cap m_\v$ consists of a single vertex. The boundary of a small regular neighbourhood $N(l_\v)$ (resp. $N(m_\v)$)  of $l_\v$ (resp. $m_\v$) consists of two normal curves, and we give these curves the orientation induced from the orientation of $N(l_\v)$ as a subsurface of $B_\v.$ We then choose $\lambda_\v$ (resp. $\mu_\v$) to be the boundary component representing the same class in $H_1(B_\v)$ as $l_\v$ (resp. $m_\v$). 
\end{proof}

\subsection{Combinatorial Gauss--Bonet and Euler characteristic}\label{sec:as_compact}

Let $S$ be a surface with cell
decomposition $C$. Here we allow $S$ to be non-compact and to have boundary, and we allow the decomposition $C$ to have infinitely many cells.  Suppose further that there is a combinatorial angle structure on $(S,C)$, that is, a function that assigns to each corner $c$ of each cell in $S$ a real number $a(c)$, called the interior angle at $c.$ We note that an interior angle can be negative or zero.
Following Thurston~\cite[Chapter 13]{thurston78-lectures}, the
following combinatorial quantities are defined:

\begin{itemize}
\item If $c$ is a $n$--gon in $C$, its \define{combinatorial hyperbolic
    area} $A(c)$ is defined to be $(n-2)\pi -$(sum of interior angles
  of $c$).
\item For each vertex $v$ in $C$ in the interior of $S$, define its
  \define{combinatorial cone curvarture} to be $K(v) = 2\pi -$(sum of
  angles at $v$).
\item For each boundary curve $s$ of $S$, define a \define{combinatorial
    boundary curvature} by $K_{g}(s) = \sum_{v \in s} \pi - $(angle
  sum at $v$).
\end{itemize}

In the case when $S$ is compact and $C$ is finite, we have the following, which is a direct consequence of the definitions.

\begin{theorem}[Combinatorial Gau\ss--Bonnet formula]\label{thm:comb_GB}
  Let $S$ be a compact surface with a finite cell decomposition $C$
  and an angle structure. Then:
  \begin{equation}
   2 \pi \chi (S) = \sum_{v \in int(S)} K(v) - \sum_{c\in C} A(c) + 
     \sum_{s\subseteq \partial S} K_{g}(s)
  \end{equation}
\end{theorem}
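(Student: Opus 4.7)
The plan is to unpack each of the three quantities on the right-hand side in terms of angle sums and combinatorial data, show that all angles cancel, and then identify what is left with $2\pi(V-E+F)$ via a standard double-counting of edges around faces together with the fact that the boundary of $S$ is a disjoint union of circles.

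More concretely, let $V_{\text{int}}$ and $V_\partial$ denote the numbers of interior and boundary vertices, $E_{\text{int}}$ and $E_\partial$ the numbers of interior and boundary edges, and $F$ the number of 2--cells. Let $\Theta_{\text{int}}$ and $\Theta_\partial$ be the sums of interior angles at interior and boundary vertices respectively, and let $\Theta = \Theta_{\text{int}} + \Theta_\partial$ be the total angle sum over all corners of all cells. First I would compute
\[
\sum_{v \in \text{int}(S)} K(v) = 2\pi V_{\text{int}} - \Theta_{\text{int}},\qquad \sum_{s \subseteq \partial S} K_g(s) = \pi V_\partial - \Theta_\partial,
\]
directly from the definitions of $K$ and $K_g$. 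For the area term, I would use that each $n$-gon $c$ contributes $(n_c - 2)\pi$ and double-count edge incidences around faces:
\[
\sum_{c \in C} n_c \;=\; 2 E_{\text{int}} + E_\partial,
\]
since each interior edge borders two 2--cells and each boundary edge borders exactly one. This yields
\[
\sum_{c \in C} A(c) \;=\; \pi(2E_{\text{int}} + E_\partial) - 2\pi F - \Theta.
\]

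Substituting these three formulas into the right-hand side of the Gauss--Bonnet identity, the $\Theta_{\text{int}}$ and $\Theta_\partial$ contributions combine with $+\Theta$ from the area term and cancel completely. What remains is the purely combinatorial quantity
\[
2\pi V_{\text{int}} + \pi V_\partial - 2\pi E_{\text{int}} - \pi E_\partial + 2\pi F.
\]
The final step is to use that $\partial S$ is a disjoint union of simple closed curves cellulated by alternating vertices and edges, so $V_\partial = E_\partial$. This lets us rewrite $\pi V_\partial - \pi E_\partial = 2\pi V_\partial - 2\pi E_\partial$ in the expression above, so that it collapses to $2\pi(V - E + F) = 2\pi \chi(S)$, as required.

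The argument is essentially a bookkeeping exercise, so there is no serious obstacle; the only place to be careful is the accounting for boundary edges and vertices, ensuring that the factor $\pi$ (rather than $2\pi$) in $K_g$ and the absence of a contribution from $\sum_c A(c)$ at boundary edges match correctly. Since the statement allows $C$ to be infinite in the general setup, I would emphasise at the outset that finiteness of $C$ (and compactness of $S$) is used precisely to guarantee that all of the sums appearing above converge absolutely, so that the rearrangements of $\Theta$ are justified.
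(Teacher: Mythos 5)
Your proof is correct and is exactly the bookkeeping the paper has in mind; the paper merely asserts the theorem is "a direct consequence of the definitions" and gives no further details. Your two counting identities $\sum_c n_c = 2E_{\text{int}}+E_\partial$ and $V_\partial = E_\partial$, together with the cancellation of the total angle sum $\Theta$, are precisely what make the right-hand side collapse to $2\pi(V-E+F)=2\pi\chi(S)$.
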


Now suppose $M$ is the interior of an orientable compact manifold with non-empty boundary consisting of a union of tori, and $\tri$ an ideal triangulation with generalised angle structure.

If $f\co S\to M$ is a normal immersion, then the Euler characteristic of $S$ can be computed using the induced angle structure on $S$. If $S$ is non-compact, choose a compact core $\overline{S}$ of $S$ with the property that the annuli in the closure of $S \setminus \overline{S}$ only contain normal triangles.
The combinatorial area of each triangle is zero, and the combinatorial area of a quadrilateral $\Quad$ in a tetrahedron $\tet$ is equal to twice the angle $\as(\Quad)$ of the edge of $\tet$ which it misses. Moreover, the angle sum at each vertex in the cell decomposition equals $2\pi$. Thus one obtains:

\begin{equation}\label{eq:EC}
2 \pi \chi (S) = -\sum_{c\in C} A(c) + 
     \sum_{s\subseteq \partial \overline{S}} K_{g}(s) = -2 \sum_{\Quad\in C} \as(\Quad)+ 
     \sum_{s\subseteq \partial \overline{S}} K_{g}(s)
\end{equation}
where $C$ is the cell decomposition of $S$ into normal disks. For $S$ a closed surface we therefore have

\begin{equation}
\chi (S) = -\frac{1}{\pi} \sum_{\Quad\in C} \as(\Quad)
\end{equation}

Let $\as$ be a generalized angle structure on $\tri$ with vanishing peripheral rotational holonomy, as provided by \Cref{prop:Gara1}.
 For each transversely oriented normal quad $q\in \square$, define $\chi^\ast(\Quad)=-\frac{1}{\pi}\alpha(\Quad)$, and extend linearly over $\qns(\tri)$ to get a map $\chi^\ast:\qns(\tri)\to \RR$. 
 
The proof of the following proposition now follows by observing that the boundary curvature always vanishes due to the vanishing peripheral rotational holonomy:

\begin{proposition}\cite[Proposition A.3]{GHHR16}\label{prop:Gara2}
Let $\chi^\ast:\qns(\tri)\to \RR$ be the linear map defined above. If $f:S\to M$ is a spun normal immersion, then $\chi^\ast(x(S))=\chi(S)$. Specifically,
\begin{equation}
\chi (S) = \sum_{\Quad\in \square} \chi^\ast(\Quad) \cdot x(S)(q)
\end{equation}
\end{proposition}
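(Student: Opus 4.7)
The plan is to combine the combinatorial Gauss--Bonnet formula (\Cref{thm:comb_GB}), applied to a carefully chosen compact core of $S$, with the vanishing peripheral rotational holonomy of $\alpha$. By \Cref{thm:admissible integer t.o. solution gives normal immersed}, we may choose a compact core $\overline{S}\subset S$ so that every annular end $S\setminus\overline{S}$ is embedded in a cusp neighbourhood and consists only of normal triangles. The induced cell decomposition on $\overline{S}$ inherits an angle structure from $\alpha$. Grouping quadrilaterals by type gives $\sum_{Q\in C}\alpha(Q)=\sum_{q\in\square}\alpha(q)\,x(S)(q)$, so equation~(3.1) becomes
$$2\pi\chi(S)=-2\sum_{q\in\square}\alpha(q)\,x(S)(q)+\sum_{s\subseteq\partial\overline{S}}K_g(s).$$
Thus it suffices to prove that every boundary curvature term $K_g(s)$ vanishes.

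For the boundary terms, fix $s\subset\partial\overline{S}$; it bounds an annular end spinning into some cusp $B_\v$. Produce a second curve $s'\subset S$ obtained from $s$ by pushing it further into the end across only normal triangles, and let $A\subset S$ be the annulus cobounded by $s$ and $s'$. By the two defining conditions of a generalised angle structure, every triangle of $A$ has combinatorial area zero (its three angles sum to $\pi$) and every interior vertex of $A$ has zero combinatorial cone curvature (dihedral angles around the corresponding edge of $\tri$ sum to $2\pi$). Applying \Cref{thm:comb_GB} to $A$ (which has $\chi(A)=0$) yields $K_g(s)+K_g(s')=0$, so with the orientations induced from $\partial\overline{S}$ we obtain $K_g(s')=K_g(s)$ for every such pushed-off curve.

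Now identify $s'$, when pushed deep into the end, with a normal curve $\gamma$ in the vertex link $B_\v$: each normal arc of $s'$ on $S$ lies in a triangle $t$ that is normally isotopic to a triangle $t_B\subset B_\v$, and this correspondence preserves the corner being cut off, hence preserves the assigned $\alpha$-angle and the left/right sign. Therefore the sum defining $K_g(s')$ coincides with $h_\gamma(\alpha)$ as introduced in the definition of rotational holonomy. By \Cref{prop:Gara1}, the generalised angle structure $\alpha$ has vanishing peripheral rotational holonomy, so $h_\gamma(\alpha)=0$, forcing $K_g(s)=0$. Substituting into the displayed equation above and dividing by $2\pi$ gives $\chi(S)=-\tfrac{1}{\pi}\sum_{q}\alpha(q)\,x(S)(q)=\chi^\ast(x(S))$, as required. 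The main technical obstacle is the careful bookkeeping of signs and orientations needed to equate $K_g(s')$ with $h_\gamma(\alpha)$; the computation is local on each normal triangle, but one must verify that the convention turning $\partial\overline{S}$ into an oriented curve agrees with the convention used to define the $\epsilon_i$ in the rotational holonomy.
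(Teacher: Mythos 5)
Your proof follows the same route the paper takes (and that \cite{GHHR16} takes): apply the combinatorial Gau\ss--Bonnet formula (\Cref{thm:comb_GB}) to a compact core of $S$ whose annular ends contain only normal triangles, identify the boundary curvature terms with the peripheral rotational holonomy of $\alpha$, and then invoke \Cref{prop:Gara1} to conclude they vanish; the paper states exactly this in the sentence preceding the proposition. Your flagged ``technical obstacle'' --- verifying that $K_g(s')$ agrees with $h_\gamma(\alpha)$ after passing from a simplicial boundary curve to a normal curve in the vertex link, with compatible sign conventions, and that the boundary curvature depends on the side of the surface rather than the orientation of $s$ --- is genuinely where the work lies, but this is equally suppressed in the paper's one-line proof.
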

We remark that the result was only stated for embeddings in \cite{GHHR16}, but the proof also applies to immersions.
%\begin{comment}
%\begin{proof}
%
%
%Let $f:S\to M$ be a spun normal immersion. To compute $\chi(S)$, we will apply \Cref{thm:comb_GB} to a good truncation of $f(S)$. To simplify notation, we will identify $S$ with its image $f(S)$, and denote the truncation $S^c$. The normal discs of $S^c$ give it a cell decomposition $C$ with an angle structure induced by $\as$. By \Cref{lem:zero_K} and our choice of angle structure $\as$ we have $\sum_{\gamma\in \partial S_t}K_\as(\gamma)=0$. Since $\as$ is an angle structure, the total angle around an edge of $\tri$ must be $2\pi$, and hence the total angle around an interior vertex $v$ of $S^c$ is also $2\pi$. Therefore \Cref{thm:comb_GB} gives
%\begin{align*}
%\chi(S^c)&=\frac{1}{2\pi}\left( \sum_{v \in int(S)} K(v) - \sum_{c\in C} A(c) + 
%     \sum_{\gamma\subseteq \partial S_t} K_{\as}(\gamma)\right)\\
%     &= \frac{1}{2\pi} \left(0-\sum_{q\in C}2\alpha(q) + 0\right)
%     =-\frac{1}{\pi}\sum_{q\in C}\alpha(q)=\chi^\ast(x^\nu(f,\nu_f)).
%\end{align*}
%The second equality follows from the fact that all triangles have combinatorial area 0, and quads have combinatorial area $2\pi-(2\pi-2\alpha(q))=2\alpha(q)$. Noting that $\chi(S)=\chi(S^c)$, the result follows.
%	
%\end{proof}
%\end{comment}
The following corollary is immediate.

\begin{corollary}\label{cor:euler}
	The map $\chi^\ast:\qtons(\tri)\to \RR$, defined by $\chi^\ast (\vv{x}(S,\nu_S))=\chi^\ast(x(S))$, where $x(S)$ is obtained from $\vv{x}(S,\nu_S)$ by forgetting orientation, satisfies: if $(f,\nu_S):S\to M$ is a transversely oriented spun normal (unbranched) immersion, then $\chi^\ast(\vv{x}(S,\nu_S))=\chi(S)$.	
\end{corollary}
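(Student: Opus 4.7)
The plan is to reduce the statement directly to Proposition~\ref{prop:Gara2} by showing that $\chi^\ast$ on $\qtons(\tri)$ factors through the natural forgetful map $\RR^{\vv{\square}}\to\RR^{\square}$. First I would verify that the formula $\chi^\ast(\vv{x}(S,\nu_S))=\chi^\ast(x(S))$ does indeed define a well-defined linear map on $\qtons(\tri)$. The forgetful map $\vv{\square}\to\square$ induces a linear map $\pi\co\RR^{\vv{\square}}\to\RR^{\square}$ (already noted in Section~\ref{sec:matching}). Moreover, for any admissible integral $\vv{x}\in\qtons(\tri)$ realised by $(f,\nu_S)$, the vector $\pi(\vv{x})=x(S)$ automatically satisfies the (unoriented) $Q$--matching equations and is admissible, so $\pi$ restricts to a map $\qtons(\tri)\to\qns(\tri)$. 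Hence the composition $\chi^\ast\circ\pi\co\qtons(\tri)\to\RR$ is a well-defined linear map, which is precisely the one the corollary denotes $\chi^\ast$.

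Next I would unwind the definitions on a given transversely oriented spun normal immersion $(f,\nu_S)\co S\to M$. Forgetting the transverse orientation yields an ordinary spun normal immersion $f\co S\to M$ whose $Q$--coordinate in $\RR^\square$ is exactly $\pi(\vv{x}(S,\nu_S))=x(S)$, simply because each oriented quadrilateral of type $(q,\mu)$ contributes one unit to the $q$--entry of $x(S)$ regardless of its transverse orientation. Applying Proposition~\ref{prop:Gara2} to the (ordinary) spun normal immersion $f$ then gives $\chi^\ast(x(S))=\chi(S)$.

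Combining the two observations yields
\[
\chi^\ast(\vv{x}(S,\nu_S))\;=\;\chi^\ast(x(S))\;=\;\chi(S),
\]
which is the desired conclusion. No step is a genuine obstacle: the only mild point is confirming that Proposition~\ref{prop:Gara2} is applicable to immersions rather than just embeddings (as remarked in the text immediately preceding the corollary), and that the transverse orientation plays no role in the Euler characteristic computation because the combinatorial Gauss--Bonnet argument in~\eqref{eq:EC} depends only on the cell decomposition of $S$ and the dihedral angles of the ambient tetrahedra, both of which are insensitive to $\nu_S$.
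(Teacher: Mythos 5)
Your proposal is correct and follows the same route the paper intends when it declares the corollary ``immediate'': the map factors through the forgetful map $\RR^{\vv{\square}}\to\RR^{\square}$, and Proposition~\ref{prop:Gara2} (as extended to immersions by the remark preceding the corollary) applied to the underlying unoriented spun-normal immersion gives $\chi^\ast(x(S))=\chi(S)$. Your added care in checking that $\pi$ sends $\qtons(\tri)$ into $\qns(\tri)$ and that the transverse orientation is invisible to the Gauss--Bonnet computation is exactly the content elided by the word ``immediate.''
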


%
%
%The following two lemmata are immediate consequences of \Cref{prop:Gara2}.
%
%\begin{lemma}
%Assume that $(M; \tri)$ admits a semi--angle structure. Then every immersed normal surface $S$ in $(M; \tri)$ has non--positive Euler characteristic.
%\end{lemma}
%
%
%
%\begin{lemma}
%Assume that $(M; \tri)$ admits a strict angle structure. Then every immersed normal surface $S$ in $(M; \tri)$ has negative Euler characteristic.
%\end{lemma}

%%%%%%%%%%
%%%%%%%%%%

\section{Homology map and the algorithm}
\label{sec:Homology map}

Throughout this section, $M$ is the interior of the compact, orientable 3--manifold $\overline{M}$ with boundary a finite union of tori, and $\tri$ an ideal triangulation of $M.$
We first define the homology map $\hom\co\qtons(\tri)\to H_2(\Mbar,\del\Mbar;\RR)$. 
This is essentially the simplicial homology map described in \cite{Cooper-norm-2009}, adapted to our setting.
For a link complement in a rational homology 3--sphere, we can pick out a canonical basis and use the positive and negative boundary maps defined in \Cref{sec:boundary curve map} to define the homology map. This is inspired by the examples of link complements in the 3--sphere analysed by Thurston~\cite{Thurston-norm-1986}.
With the homology map and our definitions in hand, \Cref{thm:ball} is \Cref{thm:main} with a more precise statement. 

%%%%%%%%%%%%%%%%%%%%%%%%%%%%%%%%%%%%%%%

\subsection{Simplicial homology map}\label{subsec:simplicial}

For each vertex $\v$ of the pseudo-manifold $P$ of \Cref{sec:Ideal triangulation}, let $N_\v$ be an open regular neighborhood of $\v$, and let $B_\v=\del \Nbar_\v$. As before, we assume that the regular neighborhoods are pairwise disjoint and identify $\Mbar = P \setminus N,$ where $N = \bigcup_{\v\in P^{(0)}} N_\v.$ We also let $N' \subset N$ be a smaller regular neighbourhood of the 0-skeleton with the property that $\overline{N'} \subset \Int(N).$ 

We have isomorphisms $H_2(P;\RR)\xrightarrow{g_1} H_2(P,N;\RR)\xrightarrow{g_2} H_2(\Mbar,\del\Mbar;\RR)$, the first coming from the long exact sequence of the pair $(P,N)$ and the fact that each connected component of $N$ retracts to a point, and the second coming from the excision theorem and the fact that each connected component of $N\setminus N'$ retracts to a vertex link.

The triangulation $\tri$ gives $P$ the structure of a simplicial complex, and using this we can compute the homology group $H_2(P,\RR)$. Let $C_2(P)$ be the 2-dimensional chain group, which is generated by oriented triangular faces of the two-skeleton of $\tri$. 

\begin{remark}
The homology map for closed 3--manifolds in \cite{Cooper-norm-2009} uses the fact that a closed normal immersion only has finitely many discs and hence one can homotope a transversely oriented immersion in the direction of the transverse orientation into the 2--skeleton. Small triangles map to vertices, quadrilaterals to edges, and the homology map thus
 essentially factors through a projection onto the coordinates of the large triangles. Here, we construct a different map using only the quadrilateral coordinates --- we map a quadrilateral dual to an edge of a 3--simplex to the two faces of the simplex containing that edge, taken with appropriate signs. It is interesting to note that this new map can also be used for closed 3--manifolds. The upshot is that one can determine the homology class of a closed transversely oriented normal immersion in a closed 3--manifold from the small triangles alone, from the large triangles alone, and from the quadrilaterals alone. In particular, if such a surface represents a non-trivial homology class, it must have at least one disc of each type.
\end{remark}

Let $\Delta^3$ be a tetrahedron in $\tri$, with vertices labeled $v_i$, $0\le i\le 3$. Recall that $q_{ij}$, $0\le i<j\le 3$, is the transversely oriented normal quad that separates the edge $e_{ij}$ between $v_i$ and $v_j$ from the opposite edge, and such that the transverse orientation on $q_{ij}$ points toward $e_{ij}$. Let $t_i$ be the face of $\Delta^3$ across from $v_i$. 
First we define $\varphi\co\qtons(\tri)\to C_2(P)$ for oriented quads $q_{ij}\in\vv{\square}$ by 
$$\varphi(q_{ij})=\sum_{k\in\{0,\dots,3\}\setminus\{i,j\}}\epsilon_k t_k$$
where $\epsilon_k=1$ if the orientation on $q_{ij}$ (induced by the transverse orientation and the orientation on $M$) agrees with the orientation on $t_k\in C_2(P)$, and $\epsilon_k=-1$ otherwise. We now define $\varphi$ on all of $\qtons(\tri)$ be extending linearly and restricting.

Let $\del_i$ be the chain map from the simplicial chain group $C_i(P)$ to $C_{i-1}(P)$, $i=2,3$, and let $h\co\ker\del_2 \to \ker \del_2 /\im \del_3=H_2(P;\RR)$ be the canonical homomorphism.

The heart of the proof of the following proposition is a construction of Segerman~\cite{Seg09}, which shows that an \emph{embedded} normal surface can be isotoped such that it coincides in $\Mbar$ with a surface made up of twisted squares that appears in work of Yoshida~\cite{Yoshida-ideal-1991}. The twisted squares and the normal quadrilaterals of the isotopic surfaces are encoded by the same $Q$--coordinate, and the correspondence is shown in \Cref{fig:twisted-square}. The map from transversely oriented twisted squares to oriented triangles is then straightforward.

\begin{figure}[h]
    \centering
    \begin{subfigure}{.24\textwidth}
        \includegraphics[width=4cm]{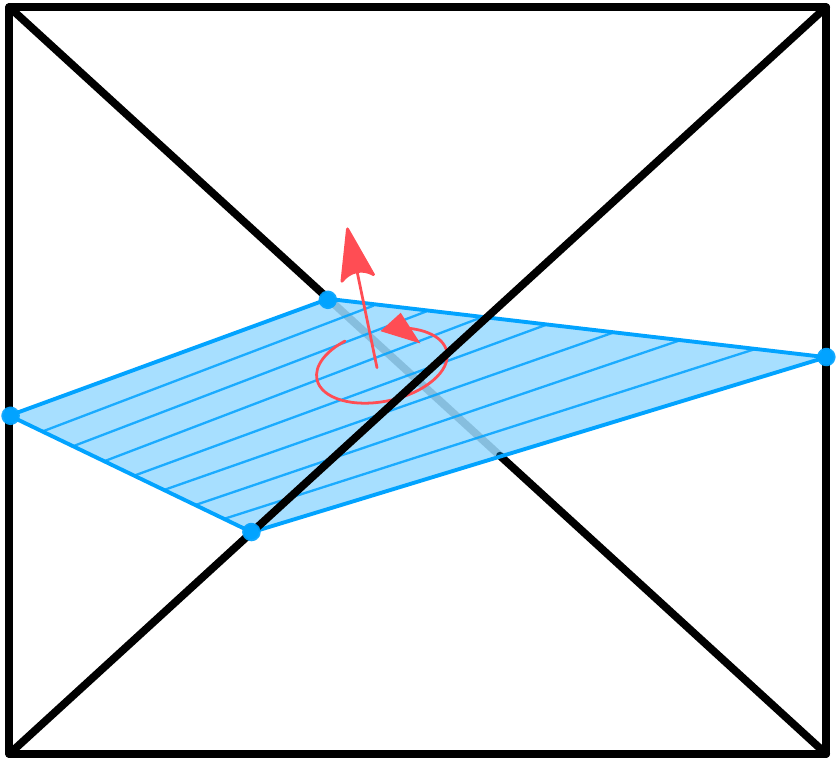}
        \caption{}
        \label{fig:ts1}
    \end{subfigure}
    \begin{subfigure}{.24\textwidth}
        \includegraphics[width=4cm]{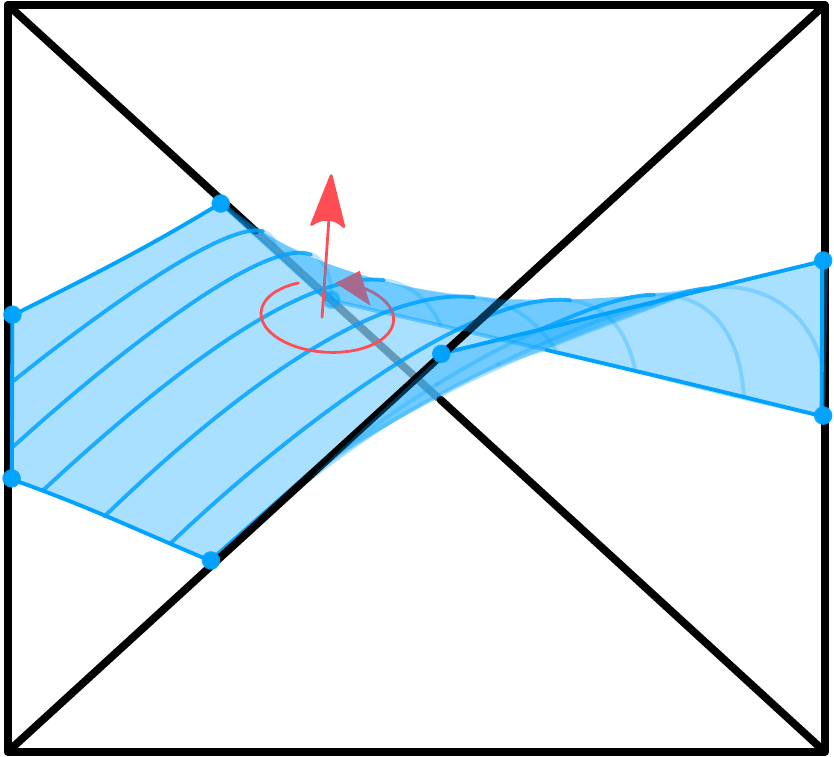}
    	\caption{}
        \label{fig:ts2}
    \end{subfigure}
        \begin{subfigure}{.24\textwidth}
        \includegraphics[width=4cm]{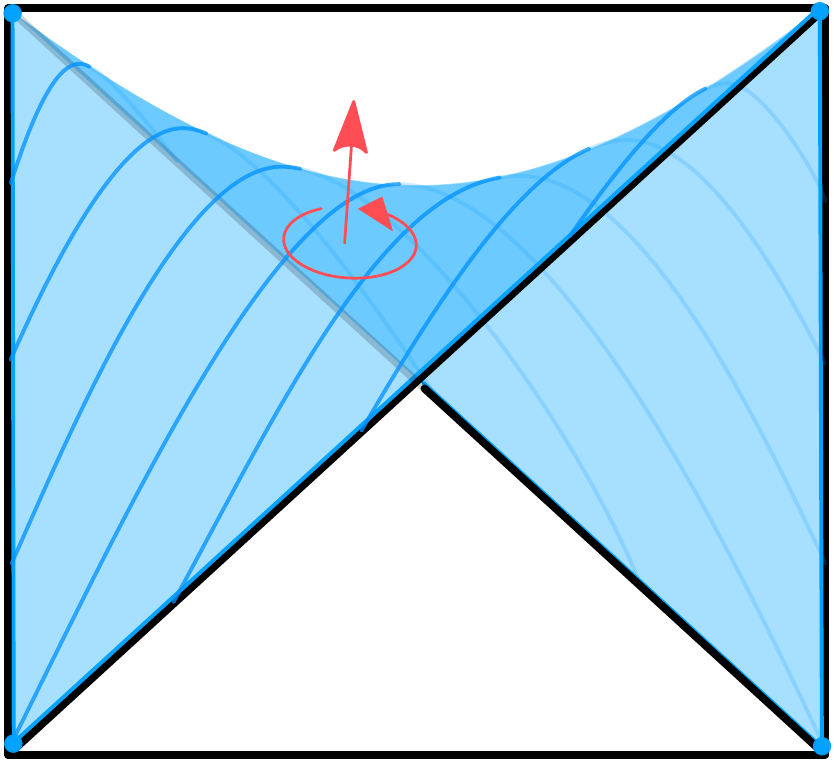}
        \caption{}
        \label{fig:ts3}
    \end{subfigure}
    \begin{subfigure}{.24\textwidth}
        \includegraphics[width=4cm]{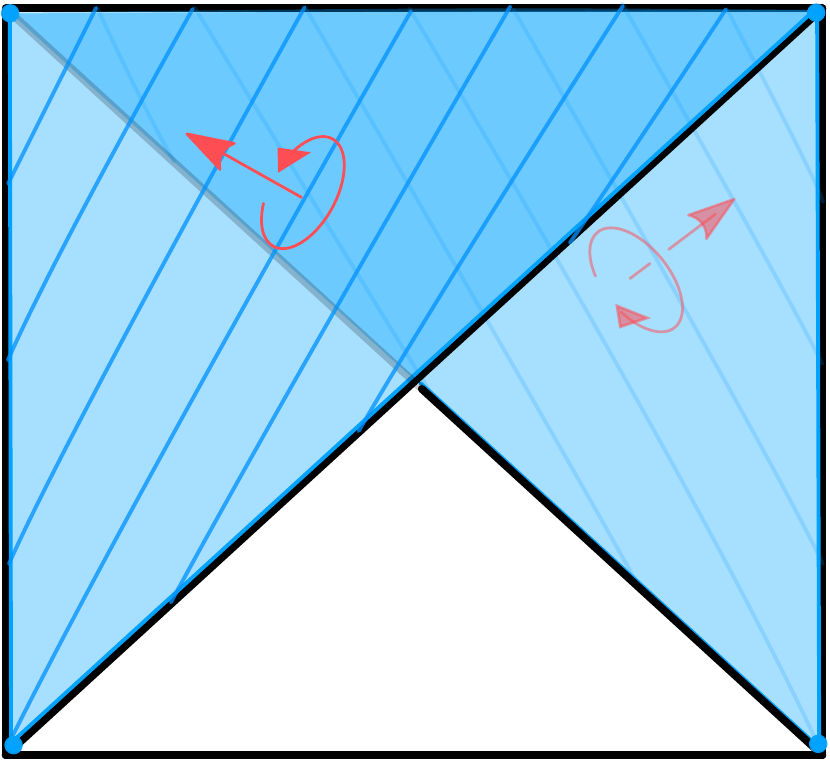}
    	\caption{}
        \label{fig:ts4}
    \end{subfigure}
    \caption{From transversely oriented normal quadrilaterals via twisted squares to oriented triangles}
    \label{fig:twisted-square}
\end{figure}

\begin{proposition}
	 Let $\Mbar$ be a manifold with non-empty torus boundary and hyperbolic interior. Let $\hom=g_2\circ g_1 \circ h\circ\varphi$, with $\varphi$, $h$, $g_1$, and $g_2$ as defined above. Then $\hom\co\qtons(\tri)\to H_2(\Mbar,\del\Mbar;\RR)$ is a linear map and satisfies the following property: If $(f,\nu_S)\co S\to M$ is a transversely oriented spun-normal immersion, then $f_\ast ([S])=\hom(\vv{x}(S,\nu_S))$. Moreover, if $b_1(\Mbar)\ge 2$, then $\varphi$ is surjective.
\end{proposition}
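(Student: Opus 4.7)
The plan is to verify three things in order: well-definedness and linearity of $\hom$, the homological identity for a transversely oriented spun-normal immersion, and surjectivity under the hypothesis $b_1(\Mbar) \ge 2$.

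\emph{Well-definedness and linearity.} Since $h$, $g_1$, $g_2$ are homomorphisms and $\varphi$ is linear by construction, it suffices to show $\varphi(\vv{x}) \in \ker \del_2$ for every $\vv{x} \in \qtons(\tri)$. I would fix an edge $e \in \tri^{(1)}$ and compute the coefficient of $e$ in $\del_2 \varphi(\vv{x})$. Each face $t_k$ contributing to this coefficient comes from a quadrilateral $q_{ij}$ in a tetrahedron incident to $e$ with $e = e_{ij} \subset t_k$, and the sign with which $e$ appears in $\del_2(\epsilon_k t_k)$ is determined by the orientation convention relating $\nu$, the orientation of $M$, and the simplicial orientation of $t_k$. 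Unpacking the weight $\wt_{\vv{e}}(q, \nu)$ from \Cref{sec:matching}, the coefficient of $e$ becomes (up to a common sign) the left-hand side of the $\vv{Q}$-matching equation at $\vv{e}$; the matching equations therefore force $\varphi(\vv{x}) \in \ker \del_2$, so $\hom$ is a well-defined linear map.

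\emph{Homology identity.} Given $(f, \nu_S)\co S \to M$, I apply Segerman's twisted-square construction~\cite{Seg09} locally in each tetrahedron, pushing each normal triangle into the neighbourhood $N$ of its dual vertex of $P$ and homotoping each quadrilateral to a twisted square in the union of the two $2$-simplices $t_k, t_l$ containing the dual edge $e_{ij}$. Since the homotopy is local to each normal disc, it extends to each sheet of the immersion independently. Decomposing the twisted square into its two triangular halves in $t_k$ and $t_l$ and reading off the induced orientations from $\nu$ together with the orientation of $M$, as in \Cref{fig:twisted-square}, yields exactly $\epsilon_k t_k + \epsilon_l t_l = \varphi(q_{ij})$. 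Summing over all quadrilaterals produces a $2$-cycle in $C_2(P)$ representing $f_\ast([S])$ (after discarding the collapsed triangles in $N$, which vanish modulo boundaries in $C_2(P, N)$), and applying $g_1$ and $g_2$ gives the desired identity.

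\emph{Surjectivity and main obstacle.} Hyperbolicity of $M$ excludes essential spheres and tori, so by \Cref{thm:walsh} any integer class $\alpha \in H_2(\Mbar, \del\Mbar; \ZZ)$ of positive Thurston norm is realised by a norm-minimising embedded surface isotopic to a spun-normal surface. Equipping this surface with a transverse orientation produces an element of $\qtons(\tri)$ mapping to $\alpha$ under $\hom$, by the homology identity just established. When $b_1(\Mbar) \ge 2$, such classes can be chosen to form a $\QQ$-basis of $H_2(\Mbar, \del\Mbar; \QQ)$, and linearity together with $\QQ$-density then gives surjectivity. The main obstacle is the sign bookkeeping in the homology identity: one must verify on the nose that the orientation induced on each half of the twisted square by $\nu$ and the orientation of $M$ matches the sign $\epsilon_k$ in the definition of $\varphi(q_{ij})$, and that the same local recipe works for every sheet of a self-intersecting immersion. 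Once this compatibility is established, the remaining arguments are essentially formal.
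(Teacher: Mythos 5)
Your overall strategy is the same as the paper's --- Segerman's twisted--square construction for the homology identity and Walsh's theorem for surjectivity. Your well-definedness argument is a genuine alternative, though: you propose to check $\varphi(\vv{x})\in\ker\del_2$ directly by identifying the edge coefficients of $\del_2\varphi(\vv{x})$ with the $\vv{Q}$--matching equations, whereas the paper argues geometrically that the twisted squares assemble into a closed immersed surface $X$, so the $2$--chain has vanishing boundary. The algebraic route is self-contained and avoids appealing to \Cref{thm:admissible integer t.o. solution gives normal immersed} first, but the bookkeeping is subtler than your sketch suggests: a fixed edge $e$ receives contributions to $\del_2\varphi(\vv{x})$ not only from the four quadrilateral types with a corner on $e$ (which are the ones entering the two matching equations for $\pm\vv{e}$) but also from the quadrilateral type dual to $e$ (which is disjoint from $e$ and appears in the matching equation for neither orientation). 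You would need to check that the dual--quad contributions cancel pairwise --- they do, because the two faces of $\varphi(q)$ meeting along the dual edge carry compatible twisted--square orientations and hence equal signs $\epsilon_k=\epsilon_l$, while the simplicial orientations on those faces induce opposite orientations on the shared edge --- before the matching equations finish the job.

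Two steps in your argument do not hold as written. First, your assertion that Segerman's homotopy is ``local to each normal disc'' is incorrect: the isotopy that converts a normal quadrilateral into a twisted square must be coordinated across the discs of a sheet sharing a face, and it is built around edges, not discs. The correct statement --- and the one the paper supplies --- is that the isotopy decomposes into pieces each supported in $\Int(B(e))\setminus N$ for an abstract edge neighbourhood $B(e)$, and each component of $S$ intersected with such a neighbourhood is an \emph{embedded} disc, so Segerman's move applies component by component there even though $S$ is globally only immersed. Locality per disc would not give you the required coherence across faces. Second, your surjectivity step invokes \Cref{thm:walsh} for ``any integer class of positive Thurston norm,'' but Walsh's theorem explicitly excludes virtual fibers; a class over an open fibered face has a fiber as its norm-minimiser, and that fiber need not normalise (the figure-eight fiber in \Cref{sec:example fig8} is exactly such a case). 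The hypothesis $b_1(\Mbar)\ge 2$ is what rescues the argument: it forces the vertices of the norm ball to lie off the top-dimensional, hence off the fibered, faces, so Walsh applies to them and they span $H_2(\Mbar,\del\Mbar;\RR)$. You mention $b_1\ge 2$ only in passing, but it is precisely where the argument bites.
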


\begin{proof}
	Let $\vv{x}\in \qtons(\tri)$ and let $(S,\nu_S)$ be the properly immersed normal surface given by \Cref{thm:admissible integer t.o. solution gives normal immersed}. If $S$ is an embedding, then by \cite[Theorem 3.1]{Seg09}, $S$ can be isotoped to a surface $S'$ so that in $P\setminus L$ it agrees with the corresponding twisted square surface, also described in \cite{Seg09}, and such that all triangles of $S'$ are in $L$. Note that although \cite{Seg09} does not explicitly address non-embedded surfaces, it is nonetheless the case that such a homotopy also exists for properly immersed surfaces. Indeed, the isotopy described in \cite{Seg09} can be realized as a composition of isotopies, each of which is the identity outside of $A=\mathrm{int}(B(e))\setminus N$ for some edge $e$, where $B(e)$ is the abstract neighborhood $B(e)$ of $e$ (as defined in \Cref{sec:matching}). Components of $S\cap A$ are embedded, so within $A$ we can isotope a component $F$ of $S\cap A$ exactly as shown in \cite[Figure 4]{Seg09}.  From the isotopy in \cite{Seg09} that describes how to obtain $S'$ from $S$, we see that a quadrilateral $q$ of $S$ is isotoped to a twisted square $s$ that can be assumed to be isotopic to $\varphi(q)$ inside $P\setminus N$. Hence within $P\setminus N$, $S'$ is homotopic to some orientation consistent gluing $X$ of the double-triangles of $\varphi(\vv{x})$, and $\varphi(\vv{x})\in \ker \del_2$ since $X$ has no boundary. 
	
It follows that in the relative homology group $H_2(P,L;\RR)$, $g_1\circ h\circ \varphi(\vv{x})=f_\ast([S])$. 
Hence when considered in $H_2(\Mbar,\del\Mbar;\RR)$, $f_\ast([S])=\hom(\vv{x})=\hom(\vv{x}(S,\nu_S))$.
	
If $b_1(\Mbar)\ge 2$, the vertices of the norm ball in $H_2(\Mbar,\del\Mbar)$ are realizable as embedded spun-normal surfaces. Thus the points $\hom(\vv{x})$ such that $\vv{x}\in\qtons(\tri)$ span $H_2(\Mbar,\del\Mbar;\RR)$, and so the images $\phi(\vv{x})$ span $\ker \del_2$, and it follows that $\varphi$ is surjective.
\end{proof}

%%%%%%%%%%

%%%%%%%%%%

\subsection{Peripheral homology map}\label{subsec:peripheral}

We are now interested in the situation where the homology map factors through the boundary map.

The long exact sequence of the pair $(\Mbar,\del\Mbar)$ gives a homomorphism $\varphi:H_2(\Mbar,\del\Mbar;\RR)\to H_1(\del\Mbar;\RR)$, which for a surface $S\subset \Mbar$ representing a class $[S]\in H_2(\Mbar,\del \Mbar;\RR)$, maps $[S]$ to $[\del S]\in H_1(\del\Mbar;\RR)$. Combined with the homology map, we obtain
\[ 
\varphi\circ\hom: \qtons(\tri)\to H_2(\Mbar,\del\Mbar;\RR)\to H_1(\del\Mbar;\RR)
\]
We also let $\del :\qtons(\tri)\to H_1(\del\Mbar;\RR)$ be the sum of the maps $\del^+,\del^-:\qtons(\tri)\to H_1(\del\Mbar;\RR)$ defined in \Cref{sec:boundary curve map}. That is, we define $\del(\vv{x})=\del^+(\vv{x})+\del^-(\vv{x})$. The definitions of the maps imply $\varphi\circ\hom = \del.$

Using the universal coefficient theorem and Poincar\'e-Lefschetz duality, we have $H_2(\Mbar,\del\Mbar;\RR) \cong H^1(\Mbar;\RR) \cong H_1(\Mbar;\RR).$
The standard half-lives half-dies argument implies for homology with rational coefficients that the image of the map $H_1(\del\Mbar;\RR) \to H_1(\Mbar;\RR)$ induced by the inclusion map has rank $k,$ where $k$ is the number of boundary components of $\Mbar.$ Hence $\varphi$ is an isomorphism onto its image if $H_1(\del\Mbar;\RR) \to H_1(\Mbar;\RR)$ is a surjection. 

So assume that $H_1(\del\Mbar;\RR) \to H_1(\Mbar;\RR)$ is a surjection. 
In particular, we can make an identification $\Mbar=S_{\QQ}\setminus \N(L)$, where $S_\QQ$ is a rational homology sphere, and $L=L_1\cup\dots\cup L_k$ is a link in  $S_\QQ$. Here, $\N(L)$ is an open regular neighbourhood of $L$, and we let $\N(L_i)$ denote the connected component that is a regular neighbourhood of $L_i$ and $B_i$ denote the frontier of $\N(L_i).$

%Now define $\hom:\qtons(\tri)\to H_2(\Mbar,\del \Mbar;\RR)$ by $\hom=\varphi^{-1}\circ p\circ\del$. 

Consider $M_i = S_\QQ\setminus \N(L_i).$ Then $\partial M_i = B_i.$
The half-lives half-dies argument implies that there is a basis $\langle \mu_i, \lambda_i \rangle= H_1(B_i, \mathbb{Z})$ with the property that $\mu_i$ maps to an element of infinite order whilst $\lambda_i$ maps to an element of finite order under the inclusion map to $H_1(M_i, \mathbb{Z}).$ In particular, $\lambda_i$ is uniquely determined up to sign, whilst $\mu_i$ is only well-defined up to sign and a power of $\lambda_i.$ 
It follows from the Mayer-Vietoris sequence that $\mu_i$ may be chosen as the geometric meridian for $L_i$ and that $\lambda_i$ is isotopic to the core curve $L_i$ of $\N(L_i).$

The set $\{\mu_1,\lambda_1,\dots,\mu_k,\lambda_k\}$ is a basis for $H_1(\del\Mbar;\RR)$. 
Let $p:H_1(\del\Mbar;\RR)\to \Lambda \subset H_1(\del\Mbar;\RR)$ be the projection to the subspace spanned by $\lambda_1,\dots, \lambda_k$. We emphasise that this depends on the identification of $M$ as a link complement in a rational homology sphere.

Suppose the order of  $\lambda_i$ is $n_i$ in $H_1(M_i, \mathbb{Z}).$ A construction due to Stallings~\cite{Stallings-fibering-1962} produces a properly embedded, connected, oriented surface $S_i$ in $M_i = S_\QQ\setminus \N(L_i)$ with $[\partial S_i] = n_i \lambda_i.$ We may isotope this surface to be transverse to all other components $L_j$ of $L$. Hence we obtain a properly embedded surface $\mathring{S}_i=S_i\cap \Mbar$ in $\Mbar$ with the property that $\partial \mathring{S}_i$ is the union of $n_i$ copies of $\lambda_i$ on $B_i$ and a number of meridians on the other boundary components of $\Mbar.$ 
Hence $p\circ \varphi([\mathring{S}_i])=n_i \lambda_i$. In particular, the subspace in $H_2(\Mbar,\del\Mbar;\RR)$ spanned by the classes represented by $\mathring{S}_1,\ldots,\mathring{S}_k$ is mapped onto $\Lambda$ by $p\circ\varphi.$ This implies that $\langle [\mathring{S}_1],\ldots,[\mathring{S}_k] \rangle$ has dimension $k$, and since the dimensions of $\Lambda$ and $H_2(\Mbar,\del\Mbar;\RR)$ are both $k$, $p\circ \varphi$ is an isomorphism. In particular:
\[ 
H_2(\Mbar,\del\Mbar;\RR) = \langle [\mathring{S}_1],\ldots,[\mathring{S}_k] \rangle  \cong \langle \lambda_1,\ldots,\lambda_k\rangle = \Lambda 
\]
We therefore have an isomorphism $\psi\co\Lambda \to H_2(\Mbar,\del\Mbar;\RR)$ of $\RR$-vector spaces given by 
$\psi(\lambda_i) = \frac{1}{n_i}[\mathring{S}_i]$, and it follows from this discussion that $\hom = \psi\circ p\circ\del.$

%\begin{proposition}
%	Let $\Mbar=S_\QQ\setminus \N(L)$ be the exterior of a link in a rational homology sphere. Then the map $
%	\hom:\qtons(\tri)\to H_2(\Mbar,\del \Mbar;\RR)$ defined by $\hom := \varphi^{-1} \circ p\circ \del$ is linear and satisfies the following property: If $(f,\nu_S):S\to M$ is a transversely oriented spun-normal immersion, then $f_\ast ([S])=\hom(\vv{x}(S,\nu_S))$. Furthermore, if the link $L$ has at least 2 components and $M$ admits a complete hyperbolic structure of finite volme, then $\hom$ is surjective.
%\end{proposition}
%
%\begin{proof}
%The map $\hom$ is linear by construction. For the second claim, it follows from \Cref{prop:bdy_map} and the definition of $\psi$ that $p\circ \del(\vv{x}(S,\nu_S)) = \psi\circ f_\ast([S])$, so that $\hom(\vv{x}(S,\nu_S))=\psi^{-1} \circ p\circ \del(S) = f_\ast([S])$. If $L$ has two or more components, then $\dim(H_2(\Mbar,\del \Mbar;\RR))\ge 2$, so surfaces representing the vertices of the unit ball of the Thurston norm are not fibres and hence are found as spun-normal surfaces by \cite{Wal11}. Since the norm is not degenerate, each element of $H_2(\Mbar,\del \Mbar;\RR)$ is a linear combination with non-negative coefficients of these vertices. Surjectivity of the map now follows as $\hom$ is linear and $\qtons(\tri)$ is a cone.
%\end{proof}

%%%%%%%%%%

\subsection{The algorithm}
\label{sec:algorithm}

For correctness of our algorithm, we use the following result.

\begin{theorem}[Walsh]\cite[Theorem 1.6]{Wal11} \label{thm:walsh}
Let $M$ be an atoroidal, acylindrical, irreducible, compact 3-manifold with torus cusps, and let $\tri$ be an ideal triangulation of $M$ such that no edge of $\tri$ is isotopic into a cusp. Let $S\subset M$ be a properly embedded, 2-sided, incompressible surface that is not a virtual fiber. Then there is an embedded spun normal surface in $\tri$ that is properly isotopic to $S$.
\end{theorem}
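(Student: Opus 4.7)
The plan is to adapt the classical Haken normalization procedure to the ideal triangulation setting by first truncating the cusps, normalizing in the resulting compact piece, and then spinning the boundary curves back into the cusps. First I would choose horoball neighborhoods $\N_\v$ of each ideal vertex small enough that each $\N_\v$ meets each tetrahedron in a vertex link, so that $\Mbar_0 = M \setminus \bigcup_\v \N_\v$ is a compact 3-manifold whose induced cell decomposition refines (after subdivision) to a genuine triangulation $\tri_0$ extending the vertex-link triangulation on each torus $B_\v = \partial \Nbar_\v$. After an ambient isotopy I may assume $S$ meets each $B_\v$ transversely in a disjoint collection of simple closed curves, and I set $S_0 = S \cap \Mbar_0$, a properly embedded $2$-sided surface with boundary in $\partial \Mbar_0$.

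Next I would verify that $S_0$ is incompressible and boundary-incompressible in $\Mbar_0$: incompressibility is inherited from the incompressibility of $S$, while a boundary compression of $S_0$ would yield, after surgery, either a compression of $S$, a torus isotopic into a cusp (contradicting atoroidality of $M$), or an essential annulus between cusps or between a cusp and itself (contradicting acylindricity). Standard Haken normalization applied to $S_0$ in the irreducible manifold $\Mbar_0$ with triangulation $\tri_0$ then produces an embedded normal surface $S_0'$ properly isotopic to $S_0$, whose boundary curves on each $B_\v$ are closed normal curves in the vertex-link triangulation. The hypothesis that no edge of $\tri$ is isotopic into a cusp is what guarantees that these vertex-link triangulations have no degeneracies and that the normalization moves terminate cleanly near the boundary.

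Finally, to reconstruct a spun-normal surface in $M$ isotopic to $S$, I would extend each boundary component $\gamma$ of $S_0'$ on a torus $B_\v$ by an annular collar consisting of an infinite family of parallel normal triangles spiralling into the cusp, the triangle types being dictated by the corners of tetrahedra that $\gamma$ visits in succession. Consistency of the collars across shared edges of $\tri$ reduces precisely to the $\vv{Q}$--matching equations around those edges, and these are automatic because $\gamma$ is a closed normal curve on $B_\v$. The main obstacle, and the point at which the not-a-virtual-fiber hypothesis is indispensable, is verifying that this extension produces an embedded surface rather than one with self-intersections: if $S$ were a virtual fiber, some finite cover would lift $S$ to a fiber sweeping out the entire manifold, and an attempt to spin infinitely many parallel triangles into one cusp would necessarily collide with other sheets of the surface. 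For a non-virtual-fiber essential $S$, I would invoke a geometric compactness argument — for example, a pleated-surface limit in the complete hyperbolic structure, which exists by geometrization under the given hypotheses — to show that the spiralling triangles converge to a proper embedding, producing the desired spun-normal representative. I expect this last step to be the principal technical hurdle, as it requires a delicate interplay between the topology of essential surfaces, the geometry of cusped hyperbolic $3$-manifolds, and the combinatorics of $\tri$ near the cusps.
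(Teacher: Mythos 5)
The paper does not prove this theorem: it is quoted from \cite[Theorem 1.6]{Wal11} and used as a black box in the algorithm, so there is no in-paper proof to compare against. Judging your sketch on its own, the strategy is plausible in outline but has a gap in the truncate--normalize--spin pipeline. The compact core of a spun-normal surface in $\tri$, intersected with the truncated manifold, is normal with respect to the cell decomposition by \emph{truncated tetrahedra}: each piece is a quadrilateral or one of finitely many triangles coming from an ideal tetrahedron. Haken normalization applied to a simplicial refinement $\tri_0$ produces a surface whose pieces are normal discs of the small simplices of $\tri_0$; there is no reason the union of such pieces inside a truncated tetrahedron is a normal disc of that coarser cell, nor that the boundary curves on each $B_\v$ are normal with respect to the vertex-link triangulation induced by $\tri$ rather than by its refinement. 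Without this, the quad coordinate in $\RR^\square$ is not even defined, so the claim that the $\vv{Q}$-matching equations are automatic is premature; they are indeed equivalent to the boundary being a closed normal curve in the induced vertex-link triangulation (see \Cref{sec:boundary curve map}), but only once the surface is already normal relative to the truncated tetrahedra. You would need either to normalize directly with respect to the truncated-tetrahedron cell structure, which is not classical Haken theory, or to isotope the $\tri_0$-normal surface rel boundary into this coarser normal form, and neither step is argued.

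The second gap is the one you flag at the end. The substance of the theorem lives entirely in the hypotheses ``no edge of $\tri$ is isotopic into a cusp'' and ``$S$ is not a virtual fiber,'' and your sketch uses neither beyond informal remarks. The idea that spinning a virtual fiber would ``collide with other sheets'' is a heuristic, not an argument: a compact essential surface has a finite core, and whether the spun ends embed is a global question about how the spinning directions of all boundary components on each $B_\v$ interact, not a local accumulation issue. The pleated-surface limit is gestured at but not carried out, and it is not explained how the hyperbolic geometry would be made compatible with the combinatorics of $\tri$ near the cusps. What remains in the sketch --- boundary-incompressibility from atoroidality and acylindricity, Haken normalization in a compact irreducible manifold, spinning closed normal boundary curves --- is the routine part. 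Walsh's theorem is precisely the assertion that the two hard hypotheses let one resolve the non-routine issues, and that is the content missing from the proposal.
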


For $M$ a 3-manifold, let $b_1(M)$ denote the first Betti number.

\begin{theorem}\label{thm:ball}
	Let $\Mbar$ be a 3-manifold with non-empty torus boundary such that $M=\mathrm{int}(\Mbar)$ is hyperbolic, and let $\tri$ be an ideal triangulation of $M$ that is $0$-efficient. Let $\Pqtons(\tri)$ be the projective solution space and let $B$ be the finite set of points
	
	$$
	\left\{
	\frac{v}{|\chi^\ast(v)|}: v \text{ is an admissible vertex of $\Pqtons(\tri)$ with } \chi^\ast(v)<0
	\right\}
	$$

If $b_1(M)\ge 2$, then the convex hull $\hull{\hom(B)}$ of $\hom(B)$ is the unit ball $\B$ of the Thurston norm on $H_2(\Mbar,\del \Mbar;\RR)$, where $\hom$ is the homology map defined in \Cref{sec:Homology map}.
\end{theorem}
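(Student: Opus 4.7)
The plan is to establish the identity as two set containments $\hull{\hom(B)} \subseteq \B$ and $\B \subseteq \hull{\hom(B)}$, exploiting throughout that $\chi^\ast$ and $\hom$ are both linear on $\qtons(\tri)$.

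For the first containment I would start with an arbitrary $b = v/|\chi^\ast(v)| \in B$, clear denominators to obtain a positive integer multiple $w = Nv \in \qtons(\tri)\cap\ZZ^{\vv{\square}}$, and invoke \Cref{thm:admissible integer t.o. solution gives normal immersed} to realise $w$ as $\vv{x}(S,\nu)$ for a properly immersed transversely oriented normal surface $(f,\nu)\co S \to M$. By \Cref{cor:euler} one has $\chi(S) = \chi^\ast(w) = -N|\chi^\ast(v)|$, while by construction of the homology map $f_\ast[S]=\hom(w)=N\hom(v)$. Using $0$-efficiency of $\tri$ together with hyperbolicity of $M$ to control components of $S$ with non-negative Euler characteristic (only spheres, discs, peripheral annuli or tori can arise, each contributing trivially to $\hom$), one obtains $\chi_-(S)\le |\chi^\ast(w)|$, so that the Thurston norm of $N\hom(v)$ is bounded above by $N|\chi^\ast(v)|$; dividing by $N$ gives that $b$ has Thurston norm at most $1$, and convexity of $\B$ finishes this direction.

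For the reverse containment it suffices to place each vertex $\alpha$ of $\B$ inside $\hull{\hom(B)}$. I would pick $n\in\ZZ_{>0}$ with $n\alpha \in H_2(\Mbar,\del\Mbar;\ZZ)$ and a Thurston norm-minimising oriented properly embedded incompressible surface $S$ representing $n\alpha$ with $\chi_-(S)=n\|\alpha\|$. The assumption $b_1(M)\ge 2$ forces vertices of $\B$ to avoid the interiors of fibred faces, so I may take $S$ not a virtual fibre and apply \Cref{thm:walsh} to isotope $S$ to a spun-normal embedding; equipping it with any transverse orientation produces an admissible vector $\vv{x}(S,\nu) \in \qtons(\tri)$ with $\hom(\vv{x}(S,\nu))=n\alpha$ and $\chi^\ast(\vv{x}(S,\nu))=-n\|\alpha\|$. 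Since $\vv{x}(S,\nu)$ lies in the convex sub-cone of $\qtons(\tri)$ picked out by the quadrilateral types occurring in $S$, one can decompose $\vv{x}(S,\nu)=\sum_i s_i v_i$ with $s_i\ge 0$ and each $v_i$ an admissible vertex of $\Pqtons(\tri)$; on setting $r_i = s_i |\chi^\ast(v_i)|$ for the rays with $\chi^\ast(v_i)<0$, one obtains
\[
n\alpha \;=\; \sum_{\chi^\ast(v_i)<0} r_i\,\frac{\hom(v_i)}{|\chi^\ast(v_i)|}, \qquad \sum_i r_i \;=\; n\|\alpha\|,
\]
exhibiting $\alpha/\|\alpha\|$ as a convex combination of points of $\hom(B)$.

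The main obstacle I anticipate is controlling the vertex rays $v_i$ with $\chi^\ast(v_i)\ge 0$, which are deliberately excluded from $B$ but may a priori appear in the decomposition. The strategy will be to use $0$-efficiency of $\tri$ together with the atoroidality, acylindricity and irreducibility of $M$ to show that every such ray corresponds either to a vertex-linking or peripheral piece, or to a compressible piece, and in each case has vanishing image under $\hom$, so that it can be excised from the sum without altering the homology class. A secondary technical point will be verifying that a vertex of the Thurston norm ball never forces $S$ into Walsh's excluded ``virtual fibre'' case when $b_1(M)\ge 2$, which I expect to reduce to the observation that fibred classes populate only the interiors of top-dimensional fibred faces.
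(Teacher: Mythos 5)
Your proposal follows the same two-containment skeleton as the paper, decomposes the same way at the same points, and invokes the same tools (Theorem~\ref{thm:admissible integer t.o. solution gives normal immersed}, Corollary~\ref{cor:euler}, Walsh's normalisation, vertex decomposition of $\Pqtons(\tri)$, $0$-efficiency and $\del$-incompressibility to control pieces with $\chi^\ast\ge 0$). So the approach is the same as the paper's. There is one genuine point of divergence, and one imprecision worth flagging.

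In the first containment the paper takes the \emph{smallest} positive scale $t$ with $x=tv$ integral and shows that $S$ must be \emph{connected}: if $S=\sqcup_i S_i$ with more than one component carrying quadrilaterals, projecting $\vv{x}(S,\nu)=\sum_i\vv{x}(S_i,\nu_i)$ to $\Pqtons(\tri)$ would express the vertex $v$ as a non-trivial convex combination, forcing each $\vv{x}(S_i)$ to be a multiple of $v$; minimality of $t$ then leaves a single component. With connectedness in hand and $\chi(S)=\chi^\ast(x)<0$, one gets $\chi_-(S)=-\chi(S)$ for free, and the argument never has to meet spheres or discs. You instead allow $S$ disconnected and propose to handle the potential extraneous components. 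That can be made to work, but your inequality \(\chi_-(S)\le|\chi^\ast(w)|\) is stated in the wrong direction in general: for a disconnected $S$ one always has \(\chi_-(S)=-\sum_{\chi(S_i)<0}\chi(S_i)\ge -\chi(S)=|\chi^\ast(w)|\), with strict inequality precisely when some component has $\chi(S_i)>0$. So it is not enough that spheres or discs are ``trivial under $\hom$''; they must be \emph{ruled out}, and only components with $\chi=0$ (peripheral tori/annuli) may be discarded without harm. The needed input is exactly $0$-efficiency (no normal $2$-spheres) and $\del$-incompressibility (no essential discs), which you do cite, but the way you frame it (``contribute trivially to $\hom$'') does not close the gap; the contribution to $\chi^\ast$ is the issue.

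The same point recurs in your second containment: when decomposing $\vv{x}(S,\nu)=\sum s_i v_i$ you must show $\chi^\ast(v_i)\le 0$ for all $i$ (so that dropping the $\chi^\ast=0$ pieces leaves $\chi^\ast(\hat v)=\chi^\ast(v)$ unchanged and the convex-coefficient bookkeeping $\sum_i r_i=n\|\alpha\|$ holds), not merely that rays with $\chi^\ast(v_i)\ge 0$ are $\hom$-trivial. Finally, on the virtual fibre exclusion: you correctly identify that Thurston's theorem puts fibred classes in the interiors of top-dimensional faces, but for \emph{virtual} fibres the paper additionally pulls the norm ball back to a finite cover via Gabai's \(p^\ast(\B_M)=\B_N\cap p^\ast(H^1(\Mbar))\) to conclude that vertices cannot lift to the interior of a top-dimensional face of $\B_N$; your outline flags this as a ``secondary technical point'' but does not supply this step, which is needed to apply Walsh's theorem.
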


The proof of the theorem is almost identical to the proof of the corresponding theorem in \cite{Cooper-norm-2009} and given here for completeness.

\begin{proof}

First we show that $\hull{\hom(B)}\subset \B$. Since $\B$ is convex, it is enough to show that  $\hom(B)\subset \B$. Let $v\in B$, and observe that $v$ has rational coordinates since the vertices of $\Pqtons(\tri)$ have integral coordinates. Let $t>0$ be minimal so that $x=t\cdot v$ is an integral point. Since $v\subset \vv{Q}(\tri)$ and is admissible, $x\in \vv{Q}(\tri)$ is admissible. By \Cref{thm:admissible integer t.o. solution gives normal immersed}, there is a transversely oriented normal immersion $(f,\nu_S):S\to M$ such that $\vv{x}(S,\nu_S)=x$, and by \Cref{prop:bdy_map} we have $\hom(x) = f_\ast([S])\in H_2(\Mbar,\del \Mbar)$. Since $S$ is immersed, \Cref{cor:euler} gives $\chi^\ast(x)=\chi(S)$.

If $S$ is not connected, then $x$ is the sum over all components $S_i\subset S$ of $\vv{x}(S_i,\nu_{S_i})\in \qtons(\tri)$. Since $v$ is a vertex, it is not a non-trivial convex combination of points in $\Pqtons(\tri)$. On the other hand,

$$
\vv{x}(S,\nu_S)=\sum_i \vv{x}(S_i,\nu_{S_i})
$$

It follows that $\vv{x}(S,\nu_S)$ is a multiple of $\vv{x}(S_i,\nu_{S_i})$, where $S_i$ is any component of $S$. By minimality of $t$ it follows that $S$ is connected. Since $\chi(S)=\chi^\ast(x)=t\chi^\ast(v)<0$, we have $-\chi_-(S)=\chi(S)=\chi^\ast(x)$. By \cite[Corollary 6.18]{Gabai-foliations-1983}, the Thurston norm for embedded surfaces is equal to the norm for singular surfaces, so 
$$
\|\hom(x)\| \le \chi_-(S)=|\chi(S)|=|\chi^\ast(x)|
$$
from which it follows that
$$
\|\hom(v)\|= \frac{\|\hom(x)\|}{|\chi^\ast(v)|}\le 1
$$

Since $\hull{\hom(B)}$ and $\B$ are both convex, to show that $\B\subset\hull{\hom(B)}$ it is enough to show that the vertices of $\B$ are in $\hull{\hom(B)}$. To this end, let $y\in\B$, and let $S$ be a norm minimizing transversely oriented surface so that $y=[S]/|\chi(S)|$. Since $b_1(M)\ge 2$, vertices of $\B$ are not top-dimensional faces, so $S$ does not lie over a top-dimensional face. Thus by \cite[Theorem 3]{Thurston-norm-1986}, $S$ is not homological to a fiber. Now suppose that $p:N\to M$ is a finite cover, and denote the respective Thurston norm balls by $\B_N$ and $\B_M$. Identifying $H_2(\Mbar,\del \Mbar)$ with $H^1(\Mbar)$, \cite[Corollary 6.13]{Gabai-foliations-1983} implies that $p^\ast(\B_M)=\B_N\cap p^\ast(H^1(\Mbar))$, so vertices of $\B_M$ do not lift to top-dimensional faces of $\B_N$ (again using that $b_1(M)\ge 2$). It follows then that $S$ cannot be homological to a virtual fiber. Thus by \Cref{thm:walsh} $S$ can be isotoped into spun-normal form. 

Let $x=\vv{x}(S,\nu_S)$. The ray through $x$ intersects $\Pqtons(\tri)$ in a point $v$, which is admissible since $x$ is. The point $v$ can be expressed as a convex linear combination of vertices of $\Pqtons(\tri)$, so that we have $v=\sum t_i v_i$ where each $v_i$ is a vertex and $0\le t_i\le 1$. For each $v_i$, let $\bar{v}_i$ be the smallest multiple of $v_i$ that is an integral point. Since $\tri$ is 0-efficient, none of the $\bar{v}_i$ are normal 2-spheres. Since $M$ is hyperbolic it is $\del$-incompressible, so any disk in $M$ is isotopic into a cusp. It follows that $\chi^\ast(\bar{v}_i)\ne 1$, and consequently that $\chi^\ast(v_i)\le 0$ for all $i$. Since $v$ is admissible, each $v_i$ is also admissible. If $\chi^\ast(v_i)=0$, then $\bar{v}_i$ is the normal coordinate of a connected, immersed surface of zero Euler characteristic, so $\hom(\bar{v}_i)=0$ since $M$ is atoroidal.

Let $\hat{v}=\sum t_i \hat{v}_i$, where $\hat{v}_i=v_i$ if $\chi^\ast(v_i)<0$ and $\hat{v}_i=0$ otherwise. Then $\hom(\hat{v})=\hom(v)$, $\chi^\ast(\hat{v})=\chi^\ast(v)$ and each non-zero $\hat{v}_i$ is a rational multiple of a point in $B$. If follows that $\hat{v}_i/|\chi^\ast(\hat{v}_i)|\in B$, and therefore that $\hom(\hat{v}_i/|\chi^\ast(\hat{v}_i|)=\hom(\hat{v}_i)
/|\chi^\ast(\hat{v}_i)| \in \hom(B)$. 
%Consequently, $\sum t_i\cdot \hom(\hat{v}_i)/|\chi^\ast(\hat{v}_i)|\in \hull{\hom(B)}$. 
Since $\chi^\ast(\hat{v})=\sum t_i\cdot\chi^\ast(\hat{v}_i)$ and $\chi^\ast(\hat{v}_i)<0$ for all $i$, we obtain $1= \sum t_i\cdot\frac{|\chi^\ast(\hat{v}_i)|}{|\chi^\ast(\hat{v})|}$. Thus we have 

$$
\sum t_i\cdot\frac{|\chi^\ast(\hat{v}_i)|}{|\chi^\ast(\hat{v})|}\frac{\hom(\hat{v}_i)}{|\chi^\ast(\hat{v}_i)|}=\sum t_i\cdot \frac{\hom(\hat{v}_i)
}{|\chi^\ast(\hat{v})|}\in \hull{\hom(B)}
$$
which implies that
$$
y=\frac{[S]}{|\chi(S)|}=\frac{\hom(x)}{|\chi^\ast(x)|}=\frac{\hom(v)}{|\chi^\ast(v)|}=\frac{\hom(\hat{v})}{|\chi^\ast(\hat{v})|}=\sum t_i\cdot \frac{\hom(\hat{v}_i)
}{|\chi^\ast(\hat{v})|}\in \hull{\hom(B)}
$$
We thus have $\B\subset \hull{\hom(B)}$, which concludes the proof.
\end{proof}

%\begin{theorem}\label{thm:main}
%	Let $\Mbar$ be a 3-manifold non-empty torus boundary such that $M=\mathrm{int}(\Mbar)$ is hyperbolic. Then there is an algorithm to determine the Thurston norm unit ball in $H_2(\Mbar,\del \Mbar;\RR)$.
%\end{theorem}

We are now ready to prove \Cref{thm:main}, which was stated in \Cref{sec:intro}.

\begin{proof}[Proof of \Cref{thm:main}]
Let $b_1(M)$ denote the first betti number of $M$. If $b_1(M)\ge 2$, then \Cref{thm:ball} provides an algorithm via normal surfaces. Now suppose that $b_1(M)=1$. Note that \Cref{thm:ball} fails for 1-dimensional homology because in that case the top-dimensional faces are vertices, and therefore vertices of the norm ball can be virtual fibers, which are not always realizeable as normal surfaces. Also note that for the 1-dimensional case the norm ball is determined by two vertices $\pm t \in \RR \cong H_2(\Mbar,\del \Mbar;\RR)$.

Since $\Mbar$ has non-empty boundary and $b_1(M)=1$, it must have a single boundary component $T$. Let $\gamma\in H_1(T;\ZZ)$ be the homological longitude (i.e., the unique curve trivial in $H_1(\Mbar;\QQ)$). The curve $\gamma$ is the boundary of a connected surface $S$ representing a non-trivial class in $H_2(\Mbar,\del\Mbar;\RR)$. Our goal is to determine $|\chi(S)|=\|[S]\|_T$, which we will do by Dehn filling along the boundary of $S$ then applying the algorithm for closed manifolds of Cooper--Tillmann \cite{Cooper-norm-2009}. 

Let $\Mbar_\gamma$ be the Dehn filling of $\Mbar$ along $\gamma$. Let $\tau$ be the order of the torsion subgroup of $H_1(\Mbar)$, and let $\tau_\gamma$ be the order of the torsion subgroup of $H_1(\Mbar_\gamma)$. In order to use the Thurston norm computations for $\Mbar_\gamma$ to get the norm of $[S]$, we will need to know how many boundary components $S$ has. We claim that $|\del S|=\tau/\tau_\gamma$. This is because if $|\del S|=k$, then the homological longitude $\gamma$ has order $k$ in $H_1(\Mbar,\ZZ)$, since $k$ copies of it bound a 2-cycle, and $\gamma$ is killed under Dehn filling. To ensure that Dehn filling does not kill any other part of the torsion subgroup, we can use the Mayer--Vietoris sequence of $M_\gamma=M\cup \mathbf{T}$, where $\mathbf{T}$ is the filling solid torus. The relevant part of the sequence is
$$
0\to H_2(\Mbar_\gamma)\xrightarrow{\,\,\del\,\,} H_1(\del \Mbar)\xrightarrow{\,\,\varphi\,\,} H_1(\Mbar)\oplus H_1(\mathbf{T})\xrightarrow{\,\,\psi\,\,} H_1(\Mbar_\gamma)
$$
With $\overline{S}$ denoting the closed surface in $\Mbar_\gamma$ resulting from capping off $\del S$, we have 
$$\ker\varphi=\im(\del)=\del([\overline{S}])=\del S=k\cdot \gamma \in H_1(\del\Mbar)\cong \ZZ\langle \gamma\rangle\times\ZZ\langle\lambda\rangle
$$ 
for appropriately chosen $\lambda\in H_1(\del \Mbar)$. Thus 
$\ker\psi=\im\varphi = \ZZ\langle(\lambda,-\lambda)\rangle\times \ZZ_k\langle(\gamma,0)\rangle
$. 
Letting $\psi_0$ be the restriction of $\psi$ to $H_1(\Mbar)\oplus \{0\}$, we get $\ker\psi_0 = \ZZ_k$, since it must be a subgroup of both $\ker \psi$ and $H_1(\Mbar)$.

We can now complete the proof. After Dehn filling along $\gamma$, \cite{Cooper-norm-2009} allows us to find $|\chi(\overline{S})|$, and hence we can compute $\genus(\overline{S})=\genus(S):= g$. Then $|\chi(S)|=2g-2+|\del S|=2g-2+\tau/\tau_\gamma$.
%
%Let $\Delta_M$ be the Alexander polynomial of $M$. By classical work of Neuwirth and a generalization by Friedl--Kim \cite{FK06}, if $M$ is fibered then $\Delta_M$ is monic and $\|[S]\|_T=\deg(\Delta_M)-1$. If, on the other hand, $M$ is not fibered, then 
\end{proof}

%%%%%%%%%%

%%%%%%%%%%

\section{Implementation and examples}
\label{sec:implementation and examples}

\subsection{Implementation}
\label{sec:implementation}

The algorithm described in \Cref{sec:algorithm}, along with the algorithm for closed manifolds given in \cite{Cooper-norm-2009}, has been implemented in the computer program \texttt{Tnorm} \cite{tnorm20}, available at \url{https://pypi.org/project/tnorm/}. In its current form, \texttt{Tnorm} requires working installations of both \texttt{Regina} and \texttt{SnapPy} inside \texttt{Sage}. We hope to remove the dependence on \texttt{Sage} in a future release. 

In addition to computing the Thurston norm ball of a hyperbolic 3-manifold, \tt{Tnorm} has several other features that are useful when working with spun-normal surfaces. Most notably \tt{Tnorm} can check for embeddedness, connectedness, and orientability, and can compute the Euler characteristic and boundary slopes of a spun-normal surface. These features are also implemented for transversely oriented spun-normal surfaces. Although Regina can compute boundary slopes of (non-transversely oriented) spun-normal surfaces, none of the other features listed are currently implemented in Regina for spun-normal surfaces. Thus \tt{Tnorm} is useful as a companion to \tt{Regina} if one is interested in studying these surfaces. 

To compute the boundary slopes of a spun-normal surface, \tt{Tnorm} uses the basis for $H_1(\del M;\ZZ)$ computed by \tt{SnapPy}. If $M = \SS^3\setminus L$, $L=\cup_i L_i$, is a link complement and \tt{SnapPy} has a link diagram for $L$ (for example, this is the case if $M$ is a link from one of \tt{SnapPy}'s link censuses), then this is the \emph{knot-theoretic basis}. That is, the basis used for the component $L_i$ will consist of the meridian and the longitude that is trivial in $H_1(\SS^3\setminus L_i)$. Otherwise, provided a hyperbolic structure is found for $M$, \tt{SnapPy} defaults to a \emph{geometric basis} consisting of the two shortest simple closed curves on each cusp cross-section.

Before moving on to some examples, we say a bit more about how we check that a transversely oriented normal surfaces is embedded. Given $\vv{x}\in\qtons(\tri)$, we would like to determine if there exists a transversely oriented normal surface $(S,\nu)$ that is embedded and such that $\vv{x}=\vv{x}(S,\nu)$. Thus we begin by fixing $\vv{x}\in \qtons(\tri)$, then obtain the normal quadrilateral coordinate $x$ by forgetting orientations on $\vv{x}$. There is a unique embedded normal surface $S'$ with quadrilateral coordinate $x$ by \cite{Kang-normal-2005, tillmann08-normal}. For a connected component $S_i'$ of $S'$, we attempt to put a transverse orientation on $S_i'$ by putting an orientation on some initial quadrilateral $q_0$, then iteratively assign compatible orientations to all quadrilaterals adjacent to an oriented quadrilateral, until either all quadrilaterals are oriented, or some quadrilateral is adjacent to two oriented quadrilaterals with incompatible orientations. If successful, we get two transverse orientations $\pm \nu_i'$ for each connected component, and hence $2^k$ possible orientations on $S'$. If among these $2^k$ orientations on $S'$ there is an orientation $\nu'$ such that $\vv{x}=\vv{x}(S', \nu')$, then $(S',\nu')$ is an embedded representative for $\vv{x}$. If none of the orientations found match the orientation for $\vv{x}$, or if $S'$ is not orientable, then since $S'$ is the unique embedded normal surface with quadrilateral coordinate $x$, $\vv{x}$ cannot have an embedded representative. For if it did, then such a surface would be an orientable embedded normal surface with quadrilateral coordinate $x$ but not normally isotopic to $S'$, contradicting uniqueness of $S'$. 

%In practice we first check whether it is possible that the ends of a transversely oriented spun normal surface $(S,\nu)$ with $\vv{x}=\vv{x}(S,\nu)$ can be embedded, since this is a necessary condition and is easy to check. Note that although \Cref{thm:admissible integer t.o. solution gives normal immersed} asserts that each end of $S$ is embedded, it can still happen that two (or more) ends of $S$ intersect each other. This happens precisely when $S$ has both inward and outward oriented ends at a cusp of $M$, and the outward slope is not equal to the negative of the inward slope. 

\subsection{Examples}

\texttt{Tnorm} can be used as an application with a graphical user interface via the terminal command 
\begin{center}
\texttt{\$ sage -python -m tnorm.app}
\end{center}
%ST: deleted comma after app
or can be imported as a module in a \texttt{Sage} session. Here we will demonstrate some of the features of \tt{Tnorm} with some examples, using the command line interface.

We start with the figure-8 knot complement. The initial function \tt{tnorm.load()} will take as input any input that \tt{SnapPy} will accept, as well as \tt{SnapPy} manifolds and triangulations, and \tt{Regina} triangulations. After loading the figure-8 knot complement (a.k.a., \tt{`4\_1'}), we see that \tt{Tnorm} confirms what we found in \Cref{sec:example fig8}. Namely, the generator of its second homology (rel boundary) is represented by an immersed thrice-punctured sphere. There is an outward boundary component with slope $2\mu+\lambda$, and two inward boundary components each with slope $-\mu$. Since the outward boundary slope and the negative of the inward boundary are not equal, the union of the ends of the surface cannot be normally homotoped to be embedded. Note that although there are other transversely oriented normal surfaces that represent the same homology class, \tt{Tnorm} will always check for an embedded normal surface to represent the primitive integer point over a vertex of the norm ball.

\begin{python}
sage: import tnorm
sage: W=tnorm.load('4_1')
Enumerating quad transversely oriented normal surfaces (qtons)... Done.
sage: B=W.norm_ball
Computing Thurston norm unit ball... Done.
sage: B.vertices()
[Vertex 0: represented by S_0,3 at (-1), mapped from immersed qtons with index 0,
 Vertex 1: represented by S_0,3 at (1), mapped from immersed qtons with index 1]
sage: W.boundary_slopes(1)  # compute the boundary slopes of qtons with index 1
{'outward':[(2, 1)], 'inward':[(-2, 0)]}
sage: W.is_embedded(1), W.ends_embedded(1)
False, False
\end{python}

In the case of the immersed thrice-punctured sphere realizing the homology class of the fiber of the figure-8 knot, the ends are not embedded, and this obstructs embeddedness of the thrice-punctured sphere. We also find examples where the norm minimizer is not realized by an embedded spun normal surface, but the ends are embedded. In particular this is the case for census knot \tt{L7a1} with the census triangulation:
\begin{python}
sage: W=tnorm.load('K7a1',quiet=True); B=W.norm_ball
sage: B.vertices()
[Vertex 0: represented by (1/3)*S_2,1 at (-1/3), mapped from immersed qtons with index 6,
 Vertex 1: represented by (1/3)*S_2,1 at (1/3), mapped from immersed qtons with index 1]
sage: W.ends_embedded(6), W.is_embedded(6)
True, False
\end{python}

Both of the above examples are fibered knots, as we would expect given \Cref{thm:walsh}, which guarantees that surfaces that are not fibers or virtual fibers can be realized as embedded normal surfaces. In fact, we find that among the first 268 hyperbolic knots in the census of Hoste and Thistlethwaite, there is no fibered knot with fiber that is realized by an embedded normal surface in the census triangulation. On the other hand, it is possible that for some other triangulation the fiber can be normalized. For example, although the census triangulation for the figure-8 knot complement does not contain an embedded spun-normal fiber (see \Cref{sec:example fig8}), we demonstrate below another triangulation in which the fiber can be normalized while staying embedded:

\begin{python}
sage: import tnorm; import regina; import snappy
sage: path=[(2, 1), (2, 1), (2, 6), (2, 5), (1, 5), (2, 9), (1, 0)]
sage: M=snappy.Manifold('K4a1'); T=regina.Triangulation3(M._to_string())
sage: for dim,index in path:
....:     T.pachner(T.face(dim,index))
sage: T.size()   #number of tetrahedra in T
5
sage: N=snappy.Manifold(T.snapPea())
sage: N.set_peripheral_curves('shortest') #compute a nice peripheral basis
sage: W=tnorm.load(N); B=W.norm_ball
sage: B.vertices()
[Vertex 0: represented by S_1,1 at (-1), mapped from embedded qtons with index 10,
 Vertex 1: represented by S_1,1 at (1), mapped from embedded qtons with index 22]
\end{python}

This example gives us some hope that the following question could have a positive answer:

\begin{question}
For a fibered knot complement $M=\SS^3\setminus K$ or fibered once-cusped hyperbolic 3--manifold $M$, is there always some ideal triangulation of $M$ such that the fiber is realized as an embedded spun-normal surface?
\end{question}

As discussed in the proof of \Cref{thm:main}, when $M$ is a knot complement it may happen that the generator of $H_2(M;\del M)$ is not realized by \emph{any} norm-minimizing transversely oriented normal surface (immersed or embedded). In fact this is the case for the census knot \tt{K8a16} with the census triangulation. Nonetheless, \tt{Tnorm} is still able to determine the norm ball, which it does by computing the genus of the knot using the method described in the proof of \Cref{thm:main}:

\begin{python}
sage: W=tnorm.load('K8a16',quiet=True); B=W.norm_ball
sage: B.vertices()
[Vertex 0: represented by (1/5)*S_3,1 at (-1/5,), not represented by any qtons,
 Vertex 1: represented by (1/5)*S_3,1 at (1/5,), not represented by any qtons]
\end{python}

\begin{figure}[h]
    \begin{subfigure}{.233\textwidth}
    \centering
        \includegraphics[scale=.2098]{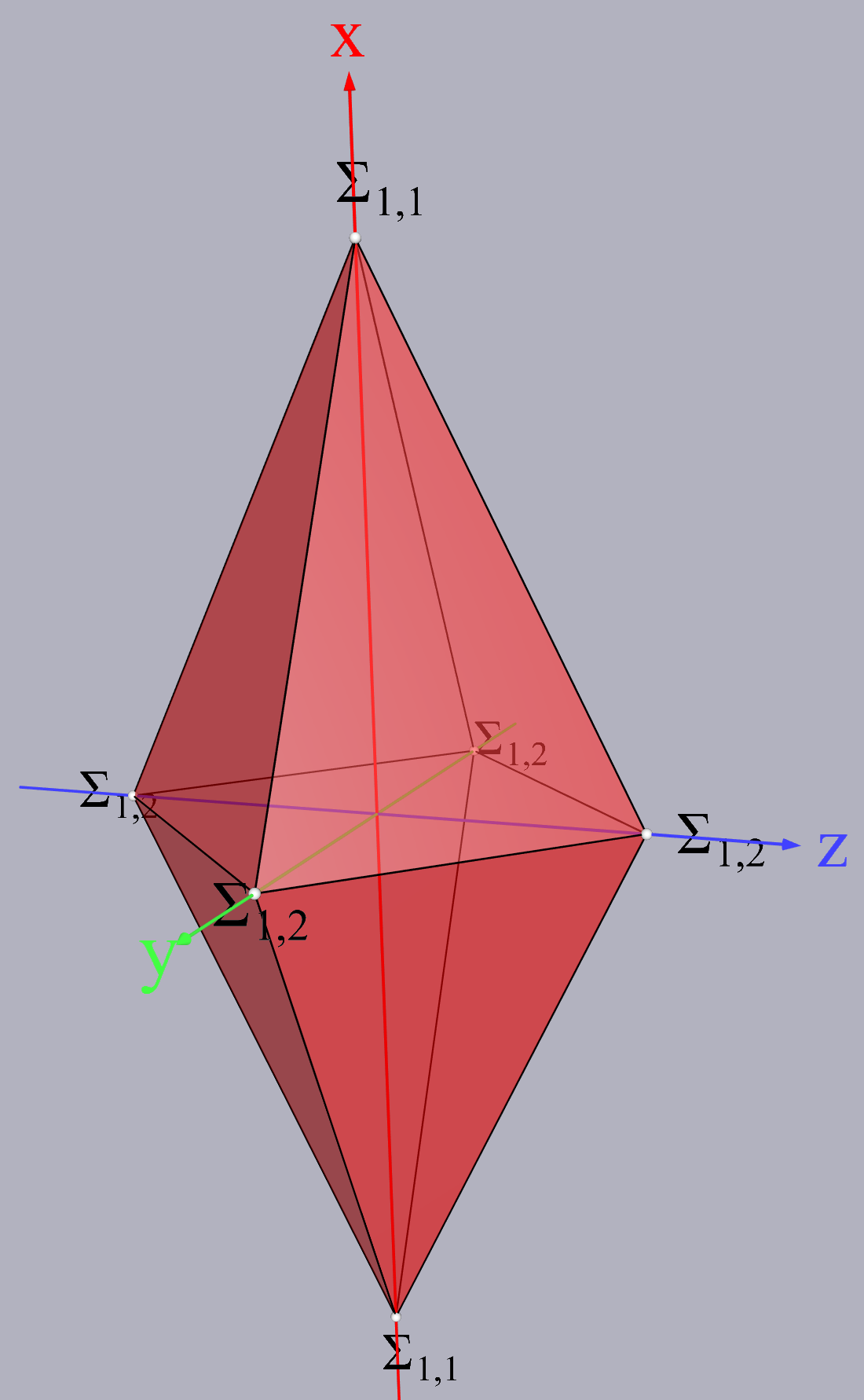}
        \caption{\tt{L9a46}}
        \label{fig:L9a46}
    \end{subfigure}
    \begin{subfigure}{.342\textwidth}
    	\centering
        \includegraphics[scale=.2]{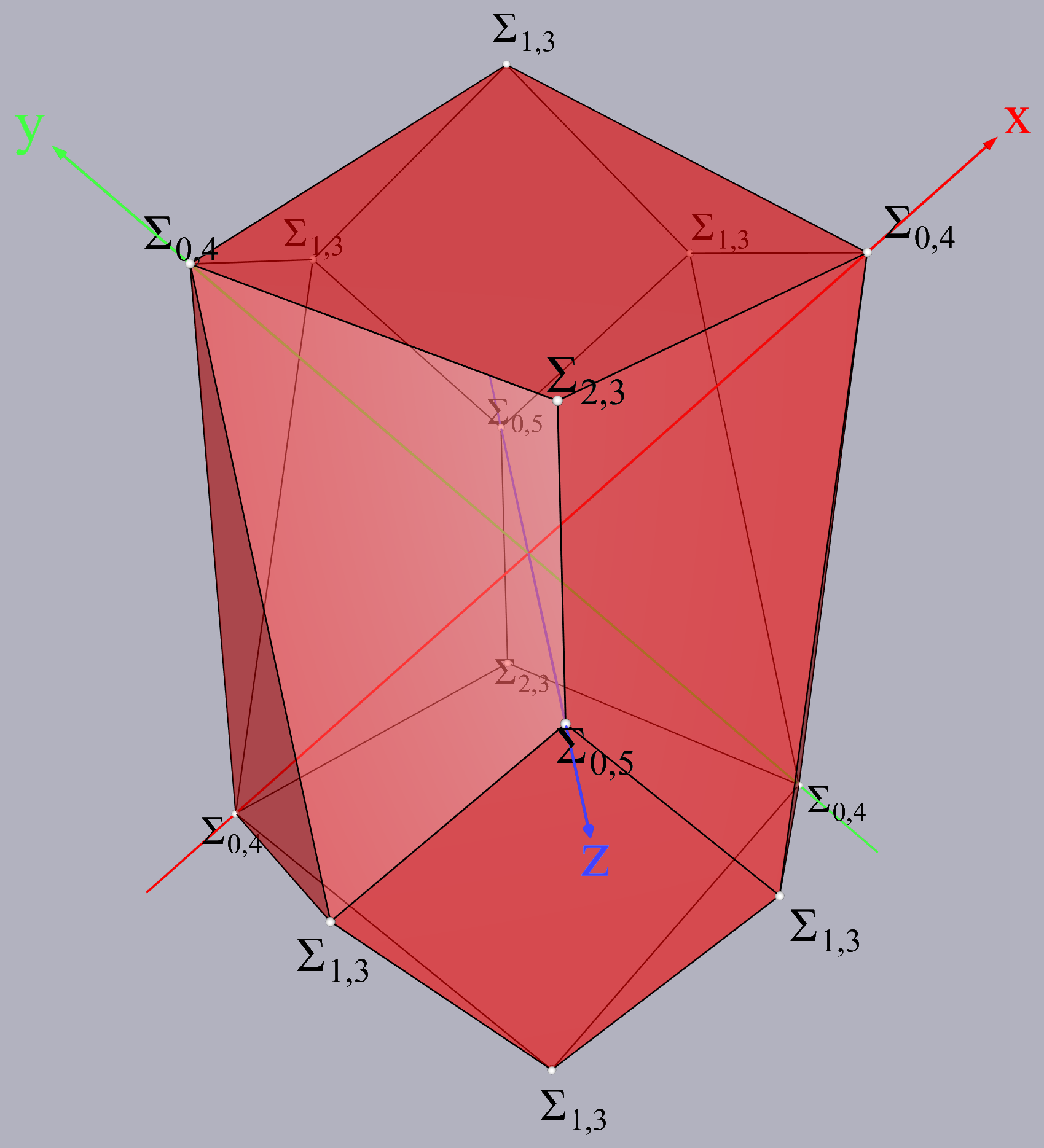}
    	\caption{\tt{L10a159}}
        \label{fig:L10a159}
    \end{subfigure}
    \begin{subfigure}{.385\textwidth}
    	\centering
        \includegraphics[scale=.2111]{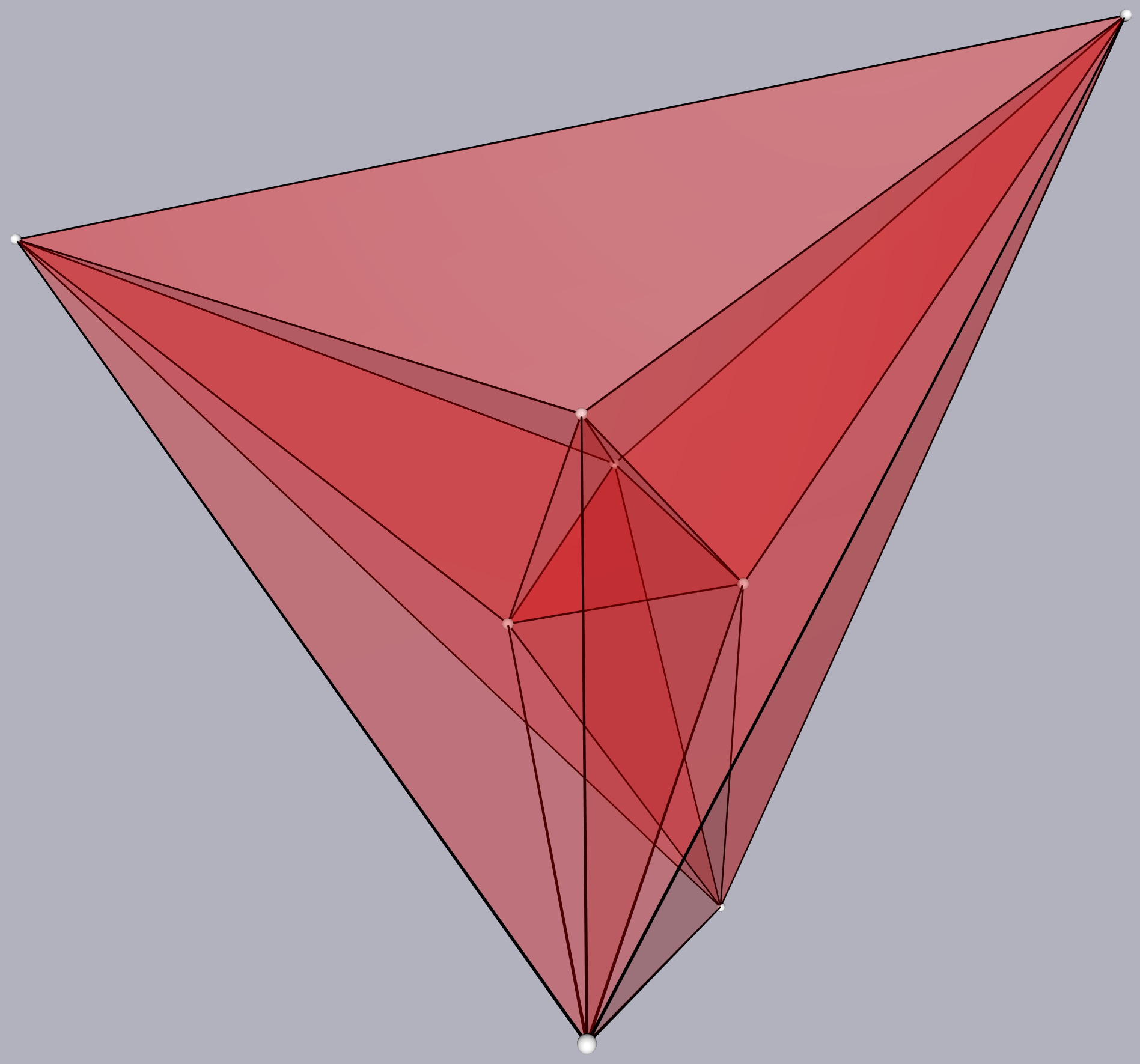}
    	\caption{\tt{L9a55} (Schlegel projection)}
        \label{fig:L9a55}
    \end{subfigure}
    \caption{Three Thurston norm unit balls computed by \tt{Tnorm}.}
    \label{fig:norm_balls}
\end{figure}

The above phenomenon appears to be rare---in particular, in the census of ideal triangulations of the first 268 hyperbolic knot complements, only eight have no spun-normal surface realizing a generator of $H_2(M;\del M)$. If a hyperbolic manifold $M$ is not the complement of a knot in a homology sphere, then the primitive integer homology class over a Thurston norm ball vertex will always be realized by some embedded normal surface. Norm balls for a few hyperbolic link complements are shown in \Cref{fig:norm_balls}. These are produced by \tt{Tnorm} as 3D manipulables, viewable in any modern web browser. The first two, in \Cref{fig:L9a46,fig:L10a159}, are three component links (and so their norm balls are 3-dimensional). For these two examples the vertices of the norm balls are labelled by the topological type of the normal surface realizing the primitive integer class lying above the vertex. The norm ball in \Cref{fig:L9a55} is the 4-component census link \tt{L9a55}. The norm ball for this link is 4-dimensional, and the figure shows a Schlegel projection of the 4-dimensional polytope to $\RR^3$. All vertices of the norm ball are realized by normal surfaces that are thrice-punctured spheres, but we have suppressed the labels in this case.

For a non-hyperbolic manifold $M$ the Thurston norm may only be a semi-norm and the normalisation result of incompressible surfaces due to Walsh~\cite{Wal11} does not apply. However, if every surface in $M$ that is non-trivial in homology has negative Euler characteristic, then the Thurston norm is a genuine norm. If, in addition, all vertices of the norm ball are realizable as immersed normal surfaces, then \tt{Tnorm} will correctly compute the norm ball. In the below example we compute the Thurston norm ball for the non-hyperbolic manifold obtained by Dehn filling the third cusp of \tt{L12n1738} along its longitude. Since the filling is along a boundary slope, the resulting manifold has internal homology, so \tt{Tnorm} uses the simplicial homology map (see \Cref{sec:Homology map}).

\begin{python}
sage: W=tnorm.load('L12n1738(0,0)(0,0)(0,1)')
sage: B=W.norm_ball; B.vertices()
[Vertex 0: represented by S_0,3 at (1, 0, 0), mapped from embedded qtons with index 15,
 Vertex 1: represented by S_0,3 at (0, 1, 0), mapped from embedded qtons with index 222,
 Vertex 2: represented by (1/2)*S_2,0 at (0, 0, 1/2), mapped from embedded qtons with index 193,
 Vertex 3: represented by (1/2)*S_2,0 at (0, 0, -1/2), mapped from embedded qtons with index 155,
 Vertex 4: represented by S_0,3 at (0, -1, 0), mapped from embedded qtons with index 148,
 Vertex 5: represented by S_0,3 at (-1, 0, 0), mapped from embedded qtons with index 9]	
\end{python}

We finish this section with a demonstration of how \tt{Tnorm} can be used to compute topological properties of spun-normal surfaces enumerated by \tt{Regina}. In the example below, we find that the spun-normal surface $S$ is a non-orientable connected surface with two boundary components and Euler characteristic -5. When we take the Haken sum of $S$ with the thrice-punctured sphere $R$, we get a connected, non-orientable surface $F$ with three boundary components. The \define{spinning slopes} of these surfaces are the boundary slopes described in \cite{tillmann08-normal}, for which the orientation on a boundary component is determined by the direction that it spins into the cusp, rather than on an orientation of the surface. For an oriented surface such as $R$, we can also compute the \define{oriented boundary slopes}, for the which the orientation on the boundary is induced by the orientation on the surface.

\begin{python}
sage: import regina; import snappy
sage: M=snappy.Manifold('L13n124'); T=regina.SnapPeaTriangulation(M._to_string())
sage: NS=regina.NormalSurfaces.enumerate(T,regina.NS_QUAD,regina.NS_VERTEX)
sage: S=NS.surface(500); R=NS.surface(127)
sage: tnorm.extras.is_connected(S), tnorm.extras.is_connected(R)
(True, True)
sage: tnorm.extras.is_orientable(S), tnorm.extras.is_orientable(R)
(False, True)
sage: tnorm.extras.euler_char(S), tnorm.extras.euler_char(R)
(-5, -1)
sage: tnorm.extras.num_boundary_comps(S), tnorm.extras.num_boundary_comps(R)
(2, 3)
sage: tnorm.extras.spinning_slopes(S), tnorm.extras.spinning_slopes(R)
([(-2, 1), (-8, 1)], [(0, 1), (-2, 0)])
sage: F=tnorm.extras.haken_sum(R,S)
sage: tnorm.extras.is_connected(F), tnorm.extras.is_orientable(F)
(True, False)
sage: tnorm.extras.euler_char(F), tnorm.extras.num_boundary_comps(F)
(-6, 3)
sage: tnorm.extras.spinning_slopes(F)
[(-2, 2), (-10, 1)]
sage: tnorm.extras.oriented_boundary_slopes(R)[0]
{'outward': [(0, 1), (-1, 0)], 'inward': [(0, 0), (1, 0)]}
\end{python}

%iLLLMQcbcfehhgghebpqkrgsg

%%%%%%%%%%%
%%%%%%%%%%

\section{An application}
\label{sec:applications}

Our proof of \Cref{thm:entropy} relies on the following result of Fried \cite{Fri82}, for which we need some notation. Let $\F_\Q$ be the set of rational points on a fibered face of a Thurston norm unit ball. For $x\in \F_\Q$, let $F_x$ be the fiber over $x$, and let $\lambda_x$ be the dilatation of the associated monodromy. Let $L_\F:\F_\Q \to \RR$ be defined by $L_\F(x)=|\chi(F_x)|\log(\lambda_x)$.
\begin{theorem}[Fried \cite{Fri82}]
The function $L_\F$ extends to a continuous convex function on $\F$ going to infinity toward the boundary of $\F$.
\end{theorem}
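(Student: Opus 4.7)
I would prove the theorem by extending $L_\F$ from the rational points of the fibered face to the entire face via a continuous convex function on the open fibered cone. The key reformulation is that $L_\F$ on $\F_\Q$ agrees with the degree-$(-1)$ homogeneous extension of $\log\lambda$ from primitive integer classes to $C_\F := \RR_{>0}\cdot\F$, restricted to the affine slice $\{\|x\|_T = 1\} = \F$; this identification follows at rational points from $|\chi(F_x)| = \|\alpha_x\|_T$ for $\alpha_x$ the primitive integer class on the ray through $x$, together with $\log\lambda_{n\alpha} = \tfrac{1}{n}\log\lambda_\alpha$ for $n \in \ZZ_{\ge 1}$ (the fibration associated to $n\alpha$ wraps the primitive one $n$ times, so its first-return map has $n$-th power equal to $\varphi_\alpha$).

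The plan follows Fried's original suspension-flow approach. Fix any primitive integer class $\alpha_0 \in C_\F$ and suspend the pseudo-Anosov monodromy $\varphi_{\alpha_0}$ to obtain a topologically transitive pseudo-Anosov flow $\phi$ on $M$. By Fried's cross-section theorem, $C_\F$ coincides with the open dual cone of cohomology classes strictly positive on every closed orbit of $\phi$, and each primitive integer $\alpha\in C_\F$ is represented by a closed $1$-form $\omega_\alpha$ whose level sets are cross-sections of $\phi$ whose first-return map realises the monodromy $\varphi_\alpha$. Choose the $\omega_\alpha$ to depend linearly on $\alpha$; then apply Bowen--Ruelle thermodynamic formalism to $\phi$ (which admits a Markov coding) and define $h\colon C_\F\to\RR$ implicitly by the pressure equation $P_\phi\bigl(-h(\alpha)\,\omega_\alpha(X_\phi)\bigr) = 0$, where $X_\phi$ is the flow vector field. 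Continuity and strict monotonicity of topological pressure in the potential make $h$ well-defined and continuous on $C_\F$, while the variational principle combined with Abramov's formula identifies $h(\alpha) = \log\lambda_\alpha$ at primitive integer classes.

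Convexity of $h$ on $C_\F$ follows from convexity of pressure in the potential combined with the linear dependence of $\omega_\alpha$ on $\alpha$: standard implicit differentiation shows that $1/h$ is a positive concave function (homogeneous of degree $1$) on $C_\F$, so $h$ itself is convex there, and hence $L_\F$ is convex on $\F$. For the boundary blow-up, observe that $\partial C_\F$ consists of classes $\alpha_*$ with $\omega_{\alpha_*}(X_\phi)$ vanishing along some closed orbit $\gamma$ of $\phi$ (namely, any orbit whose homology class pairs to zero with $\alpha_*$). For $\alpha$ near $\alpha_*$, invariant measures concentrating near $\gamma$ push the pressure $P_\phi(-h\omega_\alpha(X_\phi))$ above zero for every bounded $h$, so the solution of the pressure equation must diverge. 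Restricting to $\F$ delivers a continuous convex extension of $L_\F$ with $L_\F\to\infty$ on $\partial\F$.

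The main obstacle will be rigorously justifying the boundary blow-up: the intuitive argument above requires a careful lower bound on the pressure along the degenerating family of potentials, and the clean version uses specific properties of the pseudo-Anosov flow's symbolic dynamics (in particular, that closed orbits are dense and support the measure of maximal entropy densely in weak-$\ast$ topology). A cleaner alternative, which bypasses this analytic step entirely, is McMullen's Teichm\"uller polynomial $\Theta_\F$: the dilatation $\lambda_\alpha$ is the largest real root of the specialization $\Theta_\F^\alpha$, continuity and convexity of $\log\lambda$ on $C_\F$ follow from Perron--Frobenius applied to an $\alpha$-twisted non-negative transition matrix associated with a Markov partition of $\varphi_{\alpha_0}$, and the blow-up on $\partial C_\F$ emerges as the largest root coalesces with subleading ones as the specialization becomes singular.
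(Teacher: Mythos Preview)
The paper does not supply a proof of this statement: it is quoted verbatim as a theorem of Fried \cite{Fri82} and used as a black box in the proof of \Cref{thm:entropy}. There is therefore no ``paper's own proof'' to compare your proposal against.

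That said, your sketch is a faithful outline of the standard argument and is broadly in the spirit of Fried's original paper. A few remarks. Your reformulation---that $L_\F$ is the restriction to the norm-one slice of the degree $(-1)$ homogeneous extension of $\alpha\mapsto\log\lambda_\alpha$---is correct and is exactly the right way to set things up. Fried's own proof works with the flow zeta function rather than directly with the pressure equation, but the two are equivalent via the Parry--Pollicott/Bowen--Ruelle machinery you invoke; what Fried actually proves is that $1/\log\lambda$ extends to a continuous concave degree-one function on the cone vanishing on the boundary, from which your statement follows by the elementary fact that the reciprocal of a positive concave function is convex. Your convexity paragraph has this right. The boundary blow-up, as you correctly flag, is the delicate step, and your heuristic (classes on $\partial C_\F$ pair to zero with some closed orbit, forcing the pressure solution to diverge) is the right idea; making it rigorous does require the density of periodic orbits for the pseudo-Anosov flow, which is available. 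The McMullen Teichm\"uller-polynomial alternative you mention is also valid and is in some ways cleaner, though it postdates Fried and is logically independent of the result being quoted here.
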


We prove \Cref{thm:entropy} by finding a particular fibered link for which the Thurston unit norm ball has a fibered face containing many fibers. The following proposition describes precisely the properties such a link (and its unit norm ball) needs to have.

\begin{proposition}\label{prop:good_face}
Let $M=\SS^3\setminus (L=L_1\sqcup L_2\sqcup L_3)$ be the complement of a fibered 3-component link, and let $\F$ be a fibered face of the Thurston norm unit ball for $M$ with vertices $x_1,x_2$ and $x_3$ having the following properties:
\begin{itemize}
\item[(1)] $2x_1,2x_2$ and $x_3$ have norm minimizing representatives $S_1=\Sigma_{1,2}$, $S_2=\Sigma_{1,2}$, and $S_3=\Sigma_{1,1}$, respectively.
\item[(2)] $S_1$ and $S_2$ each have a single boundary component on $L_1$, and a single boundary component on $L_2$, and no boundary component on $L_3$.
\item[(3)] The slopes for $S_1$ and $S_2$ are distinct on $L_1$ and on $L_2$.
\item[(4)] $S_3$ has no boundary components on $L_1$ or $L_2$, and has one boundary component on $L_3$.
\end{itemize}
Then for any $0<\varepsilon<1$, there is a compact subset $K_\varepsilon\subset\F$ such that $\RR^+K_\varepsilon$ contains a fiber $S_{g,n}$ for every $g\ge 2$ and every $n$ satisfying $\varepsilon g + 2\le n\le \frac{g}{\varepsilon}+2$.
\end{proposition}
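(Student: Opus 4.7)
The plan is to parametrize integer classes in the interior of the fibered cone $\RR^+\F$ by triples $(b_1,b_2,b_3)\in\ZZ_{>0}^3$ via
\[y=y_{b_1,b_2,b_3}:=b_1(2x_1)+b_2(2x_2)+b_3 x_3,\]
compute the topological type of the corresponding primitive fiber $F_y$ in terms of $(b_1,b_2,b_3)$, and then show that every $(g,n)$ satisfying $\varepsilon g+2\le n\le g/\varepsilon+2$ is realised by a triple whose normalised image $y/\|y\|$ lies in a common compact set $K_\varepsilon\subset\mathrm{int}\,\F$.

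By property (1), each of $x_1,x_2,x_3$ has Thurston norm $1$, so linearity of the norm on the fibered cone gives $|\chi(F_y)|=\|y\|=2b_1+2b_2+b_3$ whenever $y$ is primitive. Let $\alpha_i,\beta_i$ denote the primitive boundary slopes of $S_i$ on the cusps above $L_1$ and $L_2$ for $i=1,2$, and let $\gamma_3$ denote the primitive boundary slope of $S_3$ on the cusp above $L_3$; these are well defined by (2) and (4). The boundary class of $y$ on the three cusps is $b_1\alpha_1+b_2\alpha_2$, $b_1\beta_1+b_2\beta_2$, and $b_3\gamma_3$, respectively, so the numbers of boundary components of $F_y$ on these three cusps are the corresponding $\ZZ$-divisibilities $n_1(b_1,b_2)$, $n_2(b_1,b_2)$, and $b_3$. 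Consequently $F_y\cong\Sigma_{g_y,n_y}$ with
\[n_y=n_1+n_2+b_3,\qquad g_y=1+b_1+b_2-\tfrac{1}{2}(n_1+n_2).\]

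The main technical step is to arrange $n_1(b_1,b_2)=n_2(b_1,b_2)=1$ while keeping $b_1,b_2$ balanced. By property (3), the classes $\alpha_1,\alpha_2$ are primitive and non-parallel in the first homology of the cusp above $L_1$, so after a change of basis we may write $\alpha_1=(1,0)$ and $\alpha_2=(p,q)$ with $\gcd(p,q)=1$; a direct computation then gives $n_1=\gcd(b_1+pb_2,\,qb_2)$, which equals $1$ exactly when $\gcd(b_1,b_2)=1$ and $b_1+pb_2\not\equiv 0\pmod r$ for every prime $r\mid q$. An analogous condition involving a fixed integer $q'$ controls $n_2$. An elementary sieve over the finitely many primes dividing $qq'$ shows that the proportion of pairs $(b_1,b_2)$ with $b_1+b_2=g$, $b_1,b_2\ge g/3$, and satisfying all these constraints is bounded below by a positive constant depending only on the link, so such a pair exists for all $g$ larger than some link-dependent threshold $g_0$. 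Finitely many exceptional $g<g_0$ can be handled individually and absorbed into the constant $c_\varepsilon$ of \Cref{thm:entropy}.

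Given $(g,n)$ in the target range, set $b_3=n-2\in[\varepsilon g,g/\varepsilon]$ and pick $(b_1,b_2)$ as in the previous paragraph with $b_1+b_2=g$, so that $F_y\cong\Sigma_{g,n}$ by the formulas above. The barycentric coordinates of $y/\|y\|$ with respect to $x_1,x_2,x_3$ are
\[\left(\tfrac{2b_1}{2g+b_3},\ \tfrac{2b_2}{2g+b_3},\ \tfrac{b_3}{2g+b_3}\right),\]
all of which are bounded below by a positive constant $\delta_\varepsilon$ depending only on $\varepsilon$ thanks to $b_3\in[\varepsilon g,g/\varepsilon]$ and $b_1,b_2\ge g/3$. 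Taking $K_\varepsilon=\{(r_1,r_2,r_3)\in\F\colon r_i\ge\delta_\varepsilon\text{ for }i=1,2,3\}$ yields the required compact subset of $\mathrm{int}\,\F$. The principal difficulty lies in the sieving step: once the link is fixed the congruence obstructions are explicit and mild, but verifying them uniformly across both cusps $L_1,L_2$ and simultaneously across all relevant $g$ — while also ensuring primitivity of $y$ in $H_2(\Mbar,\partial\Mbar;\ZZ)$ so that $F_y$ is the connected primitive fiber — requires some care.
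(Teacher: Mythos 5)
Your proposal takes a genuinely different route from the paper, so let me compare and then point out where your argument has a gap.

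\textbf{What the paper does.} The paper begins with an elementary Claim: for every $g\ge 2$ there are coprime positive integers $p,q$ with $p+q=g$ and $|p-q|\le 4$, proved by a short case analysis on $g\bmod 4$. It then sets $S'=pS_1+qS_2$, \emph{asserts} that $S'$ has exactly one boundary component on each of $L_1$ and $L_2$ (citing that $S_1,S_2$ have distinct slopes on these cusps and $\gcd(p,q)=1$), and sets $F=S'+(n-2)S_3$. Linearity of the norm on the fibered face, together with the boundary-component count, pins down the genus. Connectedness is handled by the clean observation that $F$ has a single boundary curve on $L_1$ and therefore cannot be a multiple of a fiber. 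Finally, $|p-q|\le 4$ and $n-2\in[\varepsilon g,g/\varepsilon]$ force the normalised class into an explicit compact $K_\varepsilon$. Your proposal replaces this with a parametrisation by $(b_1,b_2,b_3)$ and attempts a sieve over the primes dividing $qq'$ to find pairs $(b_1,b_2)$ making the boundary classes primitive, and replaces the $|p-q|\le 4$ control with the weaker $b_1,b_2\ge g/3$.

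\textbf{Where your argument has a gap.} Your sieve claim --- that the proportion of pairs $(b_1,b_2)$ with $b_1+b_2=g$, $b_1,b_2\ge g/3$, satisfying all the congruence constraints is bounded below by a positive constant depending only on the link --- is not correct in general, and the failure is not confined to finitely many exceptional $g$. Writing $\alpha_1=(1,0)$, $\alpha_2=(p,q)$ as you do, the constraint $b_1+pb_2\not\equiv 0\pmod r$ with $b_2\equiv -b_1\pmod r$ (since $b_1+b_2=g$) reads $b_1(1-p)\equiv -pg\pmod r$. If some prime $r$ divides both $q$ and $1-p$ and also divides $g$, then the left side is $\equiv 0$ and the right side is $\equiv 0$, so the congruence $b_1+pb_2\equiv 0\pmod r$ holds for \emph{every} $b_1$; no admissible pair exists for such $g$, and infinitely many $g$ are excluded. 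So the sieve cannot be absorbed into a finite list of exceptions, and your construction does not, as written, produce fibers of every genus $g\ge 2$.

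\textbf{What each approach buys.} You have actually put your finger on a real subtlety that the paper's proof glosses over: under the hypothesis ``distinct slopes'' alone, $\gcd(p,q)=1$ does \emph{not} imply that $p\alpha_1+q\alpha_2$ is primitive, so the paper's one-line assertion about the boundary component count implicitly uses the stronger condition that $\alpha_1,\alpha_2$ span $H_1$ of the cusp torus (and likewise on $L_2$). That stronger condition does hold for the link \texttt{L9a46} used in the application (the slopes are $(0,1),(-1,0)$ and $(-1,0),(0,1)$, each pair a basis), so the proof of \Cref{thm:entropy} is fine; but \Cref{prop:good_face} as stated would be cleaner with hypothesis (3) strengthened to ``the slopes form a basis of $H_1$ of the cusp''. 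Under that strengthened hypothesis your sieve becomes vacuous ($q=\pm1$, no primes to sieve), $n_1=n_2=\gcd(b_1,b_2)$, and your argument reduces essentially to the paper's, except that you would still want something like the paper's Claim 1 to guarantee $b_1,b_2\ge g/3$ together with coprimality. As it stands, the paper's approach is shorter and, modulo the implicit basis assumption, fully correct; your approach identifies the underlying number-theoretic issue honestly but the sieve step does not close it.
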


\begin{proof}

Suppose we have such a link $L$ and fibered face $\F$. We first prove the following:

\emph{Claim 1:} For every integer $g\ge 2$, there are positive integers $p,q$ such that $p+q=g$, $\gcd(p,q)=1$, and $|p-q|\le 4$.

\emph{Proof of Claim 1:} 
Given $g$, choose $p$ and $q$ as follows. If $g=2$, set $p=q=1$. If $g$ is odd, let $p=\frac{g-1}{2}+1$ and let $q=\frac{g-1}{2}$, so that $q=p+1$ and hence $\gcd(p,q)=1$. If $g\equiv 0\mod 4$, choose $p=\frac{g}{2}-1$ and $q=\frac{g}{2}+1$, so that $q=p+2$ and $p$ and $q$ are both odd (since $\frac{g}{2}$ is even). Then $\gcd(p,q)\mid 2$, but since $p$ and $q$ are odd we must have $\gcd(p,q)=1$. If $g\equiv 2\mod 4$, choose $p=\frac{g}{2}-2$ and $q=\frac{g}{2}+2$, so that $q=p+4$. Then $\gcd(p,q)\mid 4$, but $\frac{g}{2}$ is odd, so $p$ and $q$ must also be odd and hence $\gcd(p,q)=1$ is the only possibility, thus concluding the proof of Claim 1.

Fix $0<\varepsilon <1$, and define $K_\varepsilon\subset \F$ as follows:
$$
K_\varepsilon:= \left\{x\in \F \,\,\Big |\,\, x= \frac{px_1+qx_2+rx_3}{|px_1+qx_2+rx_3|},\,\, where \,\,\, \frac{\varepsilon}{6}\le \frac{p}{r},\frac{q}{r},\frac{p}{q}\le \frac{6}{\varepsilon}\right\}
$$
Since $H_2(M,\del M; \RR)\cong \RR^3$ and $x_1,x_2,x_3$ are linearly independent, we may identify $\frac{x_i}{\|x_i\|}$, $i=1,2,3$,  with the standard basis of $\RR^3$, so that $\F$ is the standard 2-simplex. The conditions in the definition of $K_\varepsilon$ say that the projection of the ray determined by $x=px_1+qx_2+rx_3$ to each coordinate plane has slope bounded away from $0$ and $\infty$, so that the intersection of this ray with $\F$ stays a bounded distance (in the $\RR^3$ metric) from the boundary of $\F$. It follows that $K_\varepsilon$ is a compact subset of $\F$.

We first show that $\RR^+\F$ contains a fiber $F=\Sigma_{g,n}$ for every $g\ge 2$ and $n\ge 3$. To this end, fix such a pair $(g,n)$ and choose $p,q$ as in Claim 1. Let $S'=pS_1+qS_2$. Since $S_1$ and $S_2$ have distinct slopes on $L_1$ and $L_2$ and $\gcd(p,q)=1$, $S'$ has exactly two boundary components (one on $L_1$ and one on $L_2$). Since the Thurston norm is linear over the fibered face we have
$$
2+2\cdot \genus(S')-2=|\chi(S')|=\|S'\|=p\|S_1\|+q\|S_2\|=2p+2q \implies \genus(S')=p+q=g
$$
Letting $F=S'+rS_3$ with $r=n-2$, we compute similarly
$$
2\cdot\genus(F)-2+|\del F|=|\chi(F)|=\|F\|=\|S'\|+r\|S_3\|=2p+2q+r
$$
Since the two boundary components of $S'$ are on $L_1$ and $L_2$, and the $r$ boundary components of $rS_3$ are on $L_3$ (i.e., disjoint from the boundary of $S'$), $F$ has a total of $r+2=n$ boundary components, which in turn implies that the genus of $F$ is also $g=p+q$. Since there is only one boundary component on, e.g, $L_1$, $F$ cannot be a multiple of a fiber, and must therefore be a fiber. Since $p,q,r$ are positive, $F$ is in the interior of the face $\F$.

Now let $F$ be as above, and assume that $n$ satisfies $\varepsilon g + 2\le n\le \frac{g}{\varepsilon}+2$. Since $\varepsilon<1$ and $|p-q|\le 4$, we immediately have $\frac{\varepsilon}{6}\le \frac{1}{5}\le \frac{p}{q}\le 5\le \frac{6}{\varepsilon}$. Using $r=n-2$ and $g=p+q$ we have
$$
\varepsilon g + 2\le n\le \frac{g}{\varepsilon}+2 \implies \varepsilon\le\frac{p+q}{r}\le \frac{1}{\varepsilon} \implies \frac{p}{r},\frac{q}{r}\le \frac{1}{\varepsilon}\le \frac{6}{\varepsilon}
$$
Since $|p-q|\le 4$ we have $p\le q+4\le 5q $ and $q\le p+4\le 5p$. This gives
$$
\varepsilon \le \frac{p+q}{r}\le \frac{6p}{r} \implies \frac{\varepsilon}{6}\le \frac{p}{r}
$$
and similarly for $\frac{q}{r}$. It follows that $F\in \RR^+K_\varepsilon$ and the proposition is proved.
\end{proof}

\begin{figure}
 	\centering
   	\includegraphics[width=.85\textwidth]{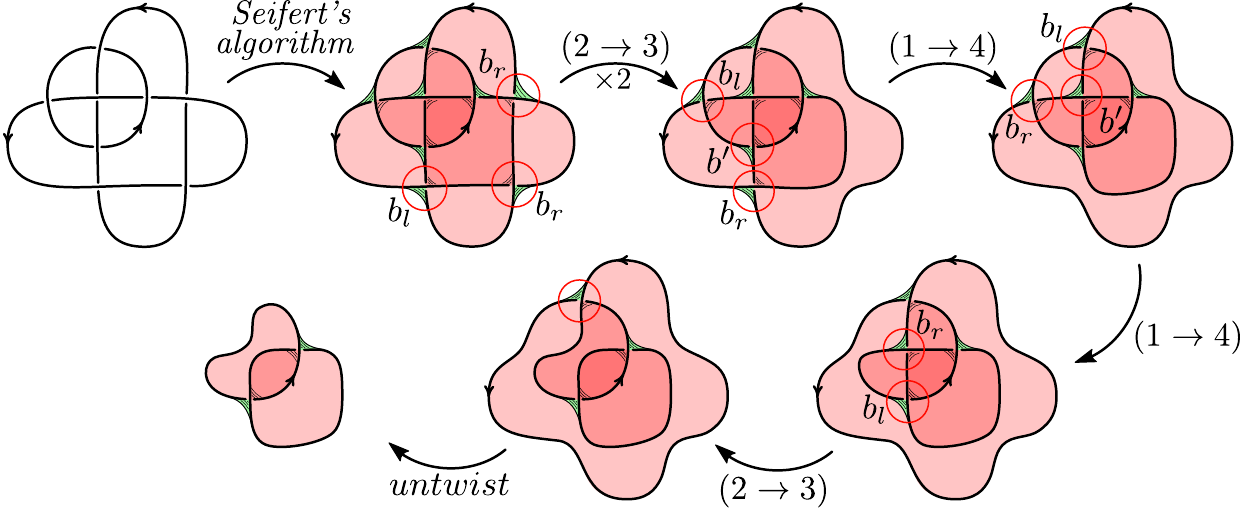}
   	\caption{We successively deplumb Hopf bands from the Seifert surface for the oriented link \tt{L9a46}, until we are left with a single Hopf band.}
   	\label{fig:fibered}
\end{figure}

\begin{lemma}\label{lem:L6a46}
Let $L$ be the census link \texttt{L9a46}, pictured in \Cref{fig:fibered} (upper left). Then $L$ is fibered, and at least one face of its Thurston norm unit ball for $\SS^3\setminus L$ satisfies the conditions of \Cref{prop:good_face}.
\end{lemma}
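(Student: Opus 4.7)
The proof splits into three parts: (i) verifying that $L = \texttt{L9a46}$ is fibered, (ii) computing the Thurston norm unit ball of $M = \SS^3\setminus L$, and (iii) checking that some top-dimensional face of this ball has the combinatorial structure prescribed by Proposition \ref{prop:good_face}.

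For (i), I would use the standard fact that Hopf-plumbing preserves fiberedness, together with the sequence of deplumbings depicted in \Cref{fig:fibered}. The figure exhibits the Seifert surface of $L$ as a Hopf plumbing onto a simpler Seifert surface; after finitely many deplumbings one is left with a single Hopf band, which is the fiber of the standard fibration of the Hopf link complement. Reversing the sequence shows that the original Seifert surface is a fiber of a fibration $M\to S^1$.

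For (ii), I would feed the census triangulation of $L$ to \texttt{Tnorm}; the output, displayed in \Cref{fig:L9a46}, provides the vertices of the Thurston norm unit ball together with the topological types of the embedded spun-normal surfaces realizing the primitive integer classes above each vertex.

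For (iii), I would read off from the \texttt{Tnorm} output a top-dimensional face $\F$ whose three vertices $x_1,x_2,x_3$ satisfy condition (1): the first two are represented at $2x_i$ by embedded spun-normal surfaces of type $\Sigma_{1,2}$ and the third by an embedded spun-normal surface of type $\Sigma_{1,1}$. I would then use \texttt{Tnorm}'s boundary-slope functionality (justified by \Cref{prop:bdy_map}) to verify conditions (2)--(4): that $S_1$ and $S_2$ each meet $L_1$ and $L_2$ in one boundary curve apiece and are disjoint from $L_3$, with their slopes on $L_1$ and on $L_2$ distinct; and that $S_3$ meets $L_3$ in a single boundary curve and is disjoint from $L_1\cup L_2$. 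Finally, to see that $\F$ is a \emph{fibered} face, I would express the homology class $[F]$ of the fiber found in (i) in terms of $x_1,x_2,x_3$ and check that it lies in the open cone $\RR^+\Int(\F)$; by Thurston \cite{Thurston-norm-1986}, the set of homology classes represented by fibers is the union of the open cones over certain top-dimensional faces, so this identifies $\F$ as one of those fibered faces.

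The main obstacle is the matching step in (iii): translating the abstract fiber $F$ from the Hopf-plumbing description of (i) into explicit coordinates on $H_2(\Mbar,\del\Mbar;\RR)$ in the basis used by \texttt{Tnorm}. One way to carry this out is to compute the boundary slopes of $F$ on each component of $\del \Mbar$ directly from the Seifert-surface picture and compare them with the slopes of $x_1,x_2,x_3$ (and hence with a candidate positive rational combination). Alternatively, since each vertex class is realised by an embedded spun-normal surface, one can exhibit $F$ as an explicit positive integer combination of $S_1,S_2,S_3$ by a Haken-sum argument and read off the combinatorial data from \texttt{Tnorm}.
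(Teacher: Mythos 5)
Your proof plan follows the paper's structure for parts (i) and (ii): both use the deplumbing sequence in \Cref{fig:fibered} to certify fiberedness (the paper explicitly invokes Gabai's criterion \cite[Theorem 5.1]{Gab86-detecting} for alternating links, which is equivalent to your "Hopf plumbing preserves fiberedness" argument for this purpose), and both read the norm ball and its boundary-slope data off the \texttt{Tnorm} computation.

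The genuine difference is in part (iii), where you identify a "main obstacle" that the paper's argument does not actually face. You propose to locate the fibered face $\F$ explicitly by expressing the homology class of the fiber $F$ from the deplumbing picture in the \texttt{Tnorm} coordinates and checking it lies over the interior of $\F$; you correctly note that this coordinate-matching step is nontrivial. The paper avoids it entirely by exploiting the symmetry of the norm ball: the ball is an octahedron whose faces all have vertex set $\pm x_1,\pm x_2,\pm x_3$ (for some choice of signs), and the reported surfaces $S_1,S_2,S_3$ realize the required topological types and boundary slopes at those vertices, so conditions (1)--(4) of \Cref{prop:good_face} hold for \emph{every} top-dimensional face. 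Since $L$ is fibered, \emph{some} face is fibered, and there is nothing to match up. Your route would still work, but it asks you to solve a harder problem (pinning down which face is fibered) than the lemma requires; if you keep your approach you will need to actually carry out the boundary-slope comparison or the Haken-sum decomposition you sketch, whereas the symmetry observation makes that step unnecessary.
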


\begin{proof}
First we show that $L$ is fibered. We will use a criterion of Gabai, given in \cite[Theorem 5.1]{Gab86-detecting}, which states that an oriented alternating link $L$ is fibered if and only if the surface $R$ obtained by Seifert's algorithm is a plumbing of Hopf bands. This means that $R$ can be obtained by taking successive Murasugi 4-sums of Hopf bands. Or, equivalently, that we can successively remove Hopf bands from $R$ by desumming, until we obtain a disjoint union of Hopf bands. For a discussion of Murasugi sums see \cite{Gab86-detecting}. Note that for a link, Seifert's algorithm depends on the orientations of the components. If we choose the wrong orientation on $L$ we may get a Seifert surface which is not a plumbing of Hopf bands, but it may be that $L$ is fibered with respect to some other orientation. With the orientation shown in the upper left of \Cref{fig:fibered}, we obtain the Seifert surface $R$ shown at upper middle-left of the same figure. In general, a Seifert surface is some number of disks joined by twisted bands. If two such bands join the same two Seifert disks and twist in the same direction, then we say they are \define{parallel}. We first show the following:

\emph{Claim 1:} If $b_l$ and $b_r$ are parallel bands of a Seifert surface $R$ that are separated by at most one other band $b'$, then we can deplumb a Hopf band $H$ from $R$ in such a way that either $B_l$ or $B_r$ is removed from $R$.

\emph{Proof of Claim 1:} Refer to \Cref{fig:deplumb}. In this figure we remove $b_r$. In general, whether we can remove $b_r$ or $b_l$ depends on the direction of the twist of $b_r$ and $b_l$, and on the position of $b'$. Since we will only need the case shown in \Cref{fig:deplumb}, we will not discuss the other cases (though they are similar). For our case of interest, we (1) slide $b'$ out of the way (down and to the left), then (2) slide $b_r$ into position to be desummed, then (3) deplumb $b_r$, and finally we (4) slide $b'$ back to its original position.

With the claim proved, we can now show that $L$ is fibered by following the steps of \Cref{fig:fibered}. In the first step (after Seifert's algorithm), we identify three parallel bands, and remove two of them (i.e., we do steps $(2\to 3)$ of \Cref{fig:deplumb} twice). In the next two steps we identify three bands $b_r$, $b_l$, and $b'$, and remove $b_r$ using steps $(1\to 4)$ of \Cref{fig:deplumb}. Finally, we remove another parallel band, then untwist the last band. We are left with a Hopf band, as desired.

Finally, the claim that the Thurston norm ball for $\SS^3\setminus L$ has a face satisfying the conditions of \Cref{prop:good_face} is proved by explicitly computing the norm ball. Using the implementation described in \Cref{sec:implementation and examples}, we find that the norm ball is an octahedron, with vertices $x_1,x_2,x_3$ as shown in \Cref{fig:L9a46}, in which $x_1$ and $x_2$ are the respective vertices on the positive $y$- and $z$-axes, and $x_3$ is the vertex on the positive $x$-axis. The points $2x_1$ and $2x_2$ lying over $x_1$ and $x_2$ are represented by surfaces $S_1$ and $S_2$, both of which are twice-puntured tori. The vertex $x_3$ is represented by a once-punctured torus $S_3$. The boundary slopes of $S_1$ are $[(0,1),(-1,0),(0,0)]$, the boundary slopes of $S_2$ are $[(-1,0),(0,1),(0,0)]$, and the boundary slopes of $S_3$ are $[(0,0),(0,0),(0,1)]$. Since every face has vertices $\pm x_1, \pm x_2, \pm x_3$ for some choice of signs, all faces of the octahedron satisfy conditions (1) though (4) of \Cref{prop:good_face}. Since the link is fibered, at least one of these faces is a fibered face, so the hypotheses of \Cref{prop:good_face} are satisfied. 
\end{proof}

\begin{figure}
 	\centering
   	\includegraphics[width=.95\textwidth]{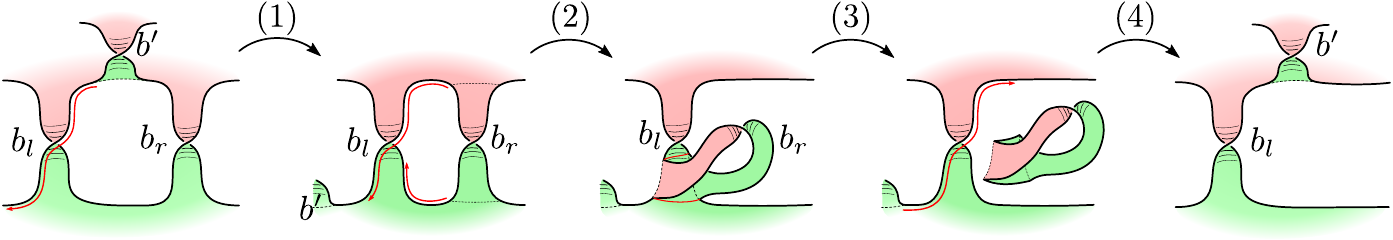}
   	\caption{Deplumbing a Hopf band.}
   	\label{fig:deplumb}
\end{figure}

We are now ready to prove \Cref{thm:entropy}. 

\begin{proof}[Proof of \Cref{thm:entropy}.]
By \Cref{lem:L6a46} there is a fibered link satisfying the conditions of \Cref{prop:good_face}. For $\varepsilon>0$ fixed, let $c_\varepsilon$ be the maximum of Fried's function $L_\F$ on $K_\varepsilon$. By \Cref{prop:good_face}, for every $g\ge 2$ and $n$ satisfying $\varepsilon g + 2\le n\le \frac{g}{\varepsilon}+2$, $\RR^+K_\varepsilon$ contains an element $x$ represented by a fiber $F\cong \Sigma_{g,n}$. Thus 
$$l_{g,n}\le \log(\lambda_x)=\frac{L_\F(x)}{|\chi(F)|}\le \frac{c_\varepsilon}{|\chi(F)|}$$
This is the desired upper bound.
\end{proof}

%%%%%%%%%%

%%%%%%%%%%%

\subsection*{Acknowledgements} 

The authors thank Eriko Hironaka, Craig Hodgson, Saul Schleimer, Henry Segerman and Mehdi Yazdi for comments and discussions related to this work.
Research of the third author is supported in part under the Australian Research Council's ARC Future Fellowship FT170100316.

%%%%%%%%%%%%%%%%%%%
%
%   Bibliography
%
%%%%%%%%%%%%%%%%%%%

\bibliographystyle{amsplain}
\bibliography{thurston}

\providecommand{\bysame}{\leavevmode\hbox to3em{\hrulefill}\thinspace}
\providecommand{\MR}{\relax\ifhmode\unskip\space\fi MR }
% \MRhref is called by the amsart/book/proc definition of \MR.
\providecommand{\MRhref}[2]{%
  \href{http://www.ams.org/mathscinet-getitem?mr=#1}{#2}
}
\providecommand{\href}[2]{#2}
\begin{thebibliography}{10}

\bibitem{Alishanti-bordered-2019}
Akram Alishahi and Robert Lipshitz, \emph{Bordered {F}loer homology and
  incompressible surfaces}, Ann. Inst. Fourier (Grenoble) \textbf{69} (2019),
  no.~4, 1525--1573. \MR{4010864}

\bibitem{burton12-regina}
Benjamin~A. Burton, \emph{Computational topology with {R}egina: algorithms,
  heuristics and implementations}, Geometry and topology down under, Contemp.
  Math., vol. 597, Amer. Math. Soc., Providence, RI, 2013, pp.~195--224.
  \MR{3186674}

\bibitem{Cooper-norm-2009}
Daryl Cooper and Stephan Tillmann, \emph{The {T}hurston norm via normal
  surfaces}, Pacific J. Math. \textbf{239} (2009), no.~1, 1--15.

\bibitem{SnapPy}
Marc Culler, Nathan~M. Dunfield, Matthias Goerner, and Jeffrey~R. Weeks,
  \emph{Snap{P}y, a computer program for studying the geometry and topology of
  $3$-manifolds}, Available at \url{http://snappy.computop.org} (05/20/2021).

\bibitem{Dunfield-incompressibility-2012}
Nathan~M. Dunfield and Stavros Garoufalidis, \emph{Incompressibility criteria
  for spun-normal surfaces}, Trans. Amer. Math. Soc. \textbf{364} (2012),
  no.~11, 6109--6137. \MR{2946944}

\bibitem{Fri82}
David Fried, \emph{Flow equivalence, hyperbolic systems and a new zeta function
  for flows}, Commentarii Mathematici Helvetici \textbf{57} (1982), 237--259.

\bibitem{friedl-thurston-2015}
Stefan Friedl and Stefano Vidussi, \emph{The {T}hurston norm and twisted
  {A}lexander polynomials}, J. Reine Angew. Math. \textbf{707} (2015), 87--102.
  \MR{3403454}

\bibitem{Gabai-foliations-1983}
Davd Gabai, \emph{Foliations and the topology of {$3$}-manifolds}, J.
  Differential Geom. \textbf{18} (1983), no.~3, 445--503.

\bibitem{Gab86-detecting}
David Gabai, \emph{Detecting fibred links in $\mathbb{S}^3$}, Commentarii
  Mathematici Helvetici \textbf{61} (1986), 519--555.

\bibitem{GHHR16}
Stavros Garoufalidis, Craig~D. Hodgson, Neil~R. Hoffman, and J.~Hyam
  Rubinstein, \emph{The 3{D} index and normal surfaces}, Illinois Journal of
  Mathematics \textbf{60} (2016), 289--352.

\bibitem{HK06}
Eriko Hironaka and Eiko Kin, \emph{A family of pseudo-{A}nosov braids with
  small dilatation}, Algebraic and Geometric Topology \textbf{6} (2006), no.~2,
  699--738.

\bibitem{Jaco-0-efficient-2003}
William Jaco and J.~Hyam Rubinstein, \emph{{$0$}-efficient triangulations of
  3-manifolds}, J. Differential Geom. \textbf{65} (2003), no.~1, 61--168.
  \MR{2057531}

\bibitem{Kang-normal-2003}
Ensil Kang, \emph{Normal surfaces in the figure-8 knot complement}, J. Knot
  Theory Ramifications \textbf{12} (2003), no.~2, 269--279. \MR{1967244}

\bibitem{Kang-normal-2005}
\bysame, \emph{Normal surfaces in non-compact 3-manifolds}, J. Aust. Math. Soc.
  \textbf{78} (2005), no.~3, 305--321.

\bibitem{Lipshitz-computing-2014}
Robert Lipshitz, Peter~S. Ozsv\'{a}th, and Dylan~P. Thurston, \emph{Computing
  {$\widehat{HF}$} by factoring mapping classes}, Geom. Topol. \textbf{18}
  (2014), no.~5, 2547--2681. \MR{3285222}

\bibitem{Luo-angle-2008}
Feng Luo and Stephan Tillmann, \emph{Angle structures and normal surfaces},
  Trans. Amer. Math. Soc. \textbf{360} (2008), no.~6, 2849--2866. \MR{2379778}

\bibitem{neumann-combinatorics-1992}
Walter~D. Neumann, \emph{Combinatorics of triangulations and the
  {C}hern-{S}imons invariant for hyperbolic {$3$}-manifolds}, Topology '90
  ({C}olumbus, {OH}, 1990), Ohio State Univ. Math. Res. Inst. Publ., vol.~1, de
  Gruyter, Berlin, 1992, pp.~243--271. \MR{1184415}

\bibitem{Neumann-volumes-1985}
Walter~D. Neumann and Don Zagier, \emph{Volumes of hyperbolic three-manifolds},
  Topology \textbf{24} (1985), no.~3, 307--332. \MR{815482}

\bibitem{Pen91}
R.C. Penner, \emph{Bounds on least dilatations}, Proceedings of the American
  Mathematical Society \textbf{113} (1991), no.~2, 443--450.

\bibitem{Seg09}
Henry Segerman, \emph{On spun-normal and twisted squares surfaces}, Proc. Amer.
  Math. Soc. \textbf{137} (2009), no.~12, 4259--4273.

\bibitem{Stallings-fibering-1962}
John Stallings, \emph{On fibering certain {$3$}-manifolds}, Topology of
  3-manifolds and related topics ({P}roc. {T}he {U}niv. of {G}eorgia
  {I}nstitute, 1961), Prentice-Hall, Englewood Cliffs, N.J., 1962, pp.~95--100.
  \MR{0158375}

\bibitem{thurston78-lectures}
William~P. Thurston, \emph{The geometry and topology of 3-manifolds}, Lecture
  notes, Princeton University, 1978.

\bibitem{Thurston-norm-1986}
\bysame, \emph{A norm for the homology of {$3$}-manifolds}, Mem. Amer. Math.
  Soc. \textbf{59} (1986), no.~339, i--vi and 99--130. \MR{823443}

\bibitem{tillmann08-normal}
Stephan Tillmann, \emph{Normal surfaces in topologically finite 3-manifolds},
  Enseign. Math. (2) \textbf{54} (2008), 329--380.

\bibitem{tollefson98-quadspace}
Jeffrey~L. Tollefson, \emph{Normal surface {$Q$}-theory}, Pacific J. Math.
  \textbf{183} (1998), no.~2, 359--374.

\bibitem{tollefson96-taut}
Jeffrey~L. Tollefson and Ningyi Wang, \emph{Taut normal surfaces}, Topology
  \textbf{35} (1996), no.~1, 55--75.

\bibitem{Tsa09}
Chia-Yen Tsai, \emph{The asymptotic behavior of least pseudo-{A}nosov
  dilatations}, Geometry and Topology \textbf{13} (2009), no.~4, 2253--2278.

\bibitem{Wal11}
Genevieve~S. Walsh, \emph{Incompressible surfaces and spunnormal form},
  Geometriae Dedicata \textbf{151} (2011), 221--231.

\bibitem{tnorm20}
William Worden, \emph{Tnorm (computer software)}, Available at
  {\url{https://pypi.org/project/tnorm}}, Version 1.0.1 (9/8/2021).

\bibitem{Yaz18}
Mehdi Yazdi, \emph{Lower bound for dilatations}, Journal of Topology
  \textbf{11} (2018), 602--614.

\bibitem{Yaz20}
\bysame, \emph{Pseudo--{A}nosov maps with small stretch factors on punctured
  surfaces}, Algebraic and Geometric Topology \textbf{20} (2020), 2095--2128.

\bibitem{Yoshida-ideal-1991}
Tomoyoshi Yoshida, \emph{On ideal points of deformation curves of hyperbolic
  {$3$}-manifolds with one cusp}, Topology \textbf{30} (1991), no.~2, 155--170.
  \MR{1098911}

\end{thebibliography}

\bigskip

%%%%%%%%%%%%%%%%%%%%%%%%%%%

\address{Department of Mathematics, University of California Santa Barbara, CA 93106, USA}
\email{cooper@math.ucsb.edu}

\address{School of Mathematics and Statistics, The University of Sydney, NSW 2006, Australia} 
\email{stephan.tillmann@sydney.edu.au} 

\address{Department of Mathematics, Rice University, Houston, TX 77005, USA}
\email{william.worden@rice.edu}

\Addresses

%%%%%%%%%%

%\begin{comment}

\appendix
\section{Norm balls of some 2-component links}
\label{sec:appendix}

In this appendix we give the Thurston norm balls for the first one-hundred hyperbolic 2-component links in the census of Hoste and Thistlethwaite, along with the isometry group of the link complement. See \Cref{sec:implementation and examples} for notes on implementation and more detailed examples.

\footnotesize 
 \begin{tabular}{|c|c|c|c|} 
 \hline 
 Link & Norm Ball & Link & Norm Ball \\ 
 \hline 
\quad & \multirow{6}{*}{\Includegraphics[width=1.8in]{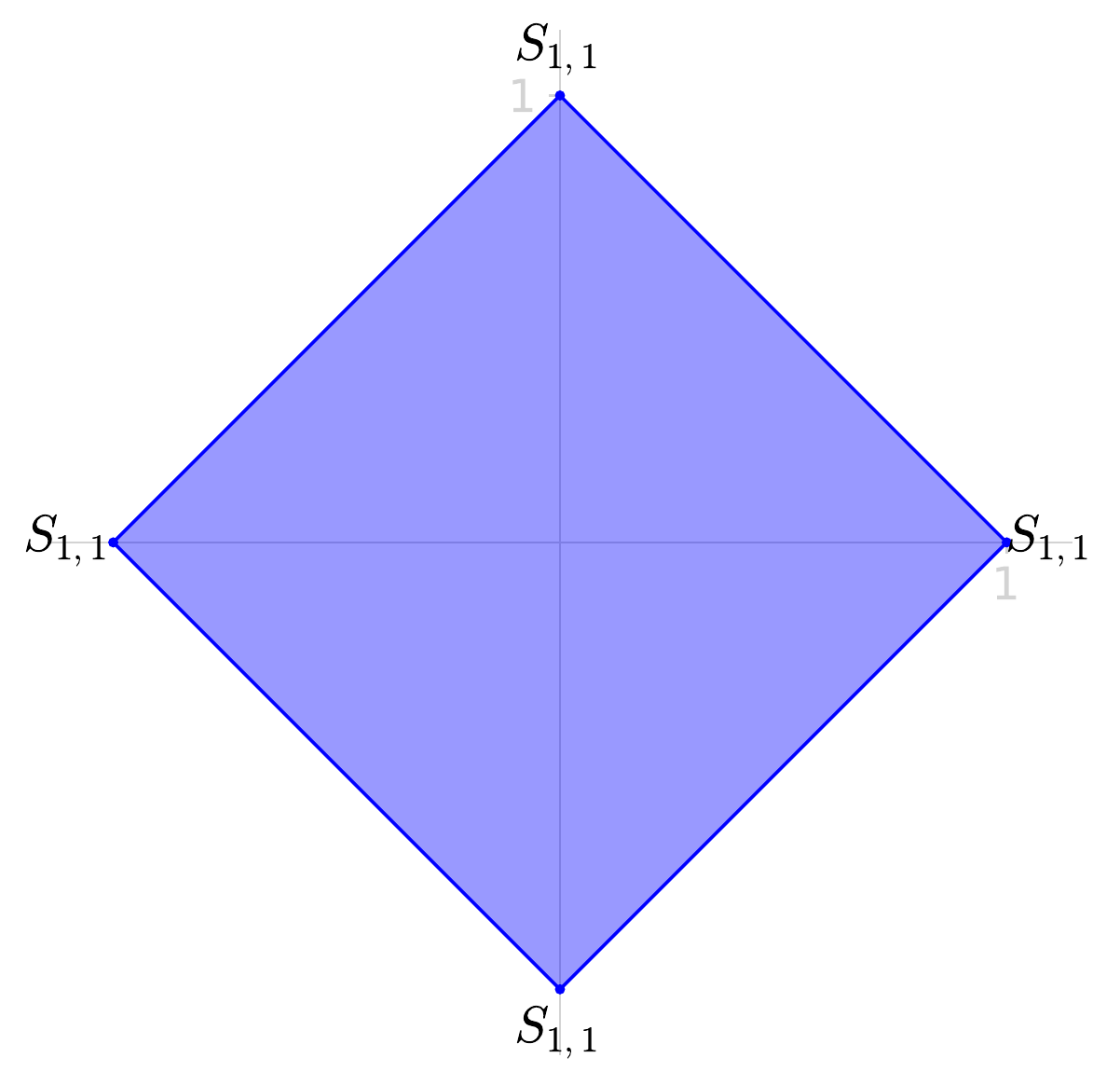}} & \quad & \multirow{6}{*}{\Includegraphics[width=1.8in]{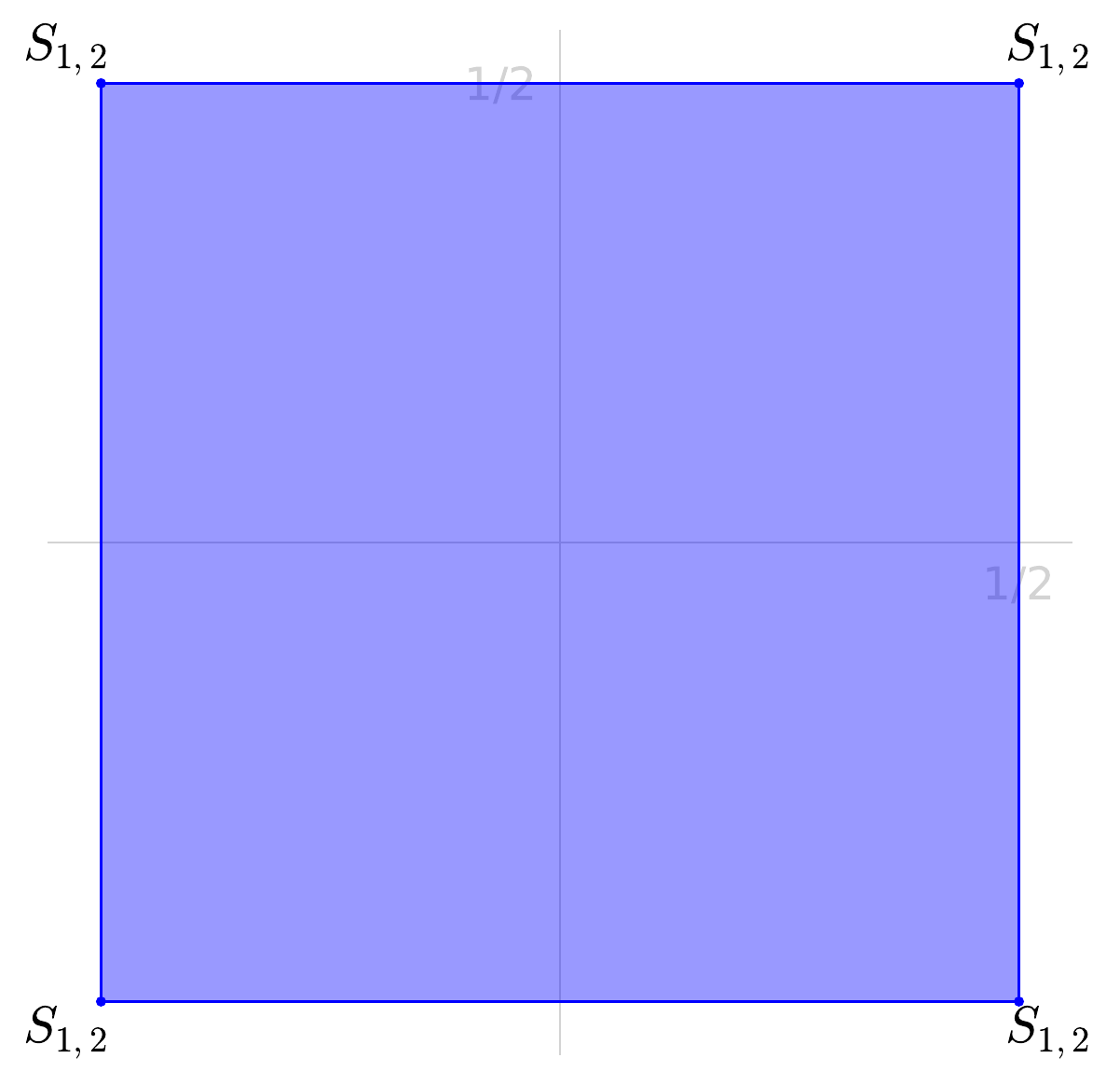}} \\ 
 $L=5^{{2}}_{{1}}$ & & $L=6^{{2}}_{{2}}$ & \\ 
 \quad & & \quad & \\ $\mathrm{Isom}(\mathbb{S}^3\setminus L) = D_4$ & & $\mathrm{Isom}(\mathbb{S}^3\setminus L) = D_4$ & \\ 
 \quad & & \quad & \\ 
 \includegraphics[width=1in]{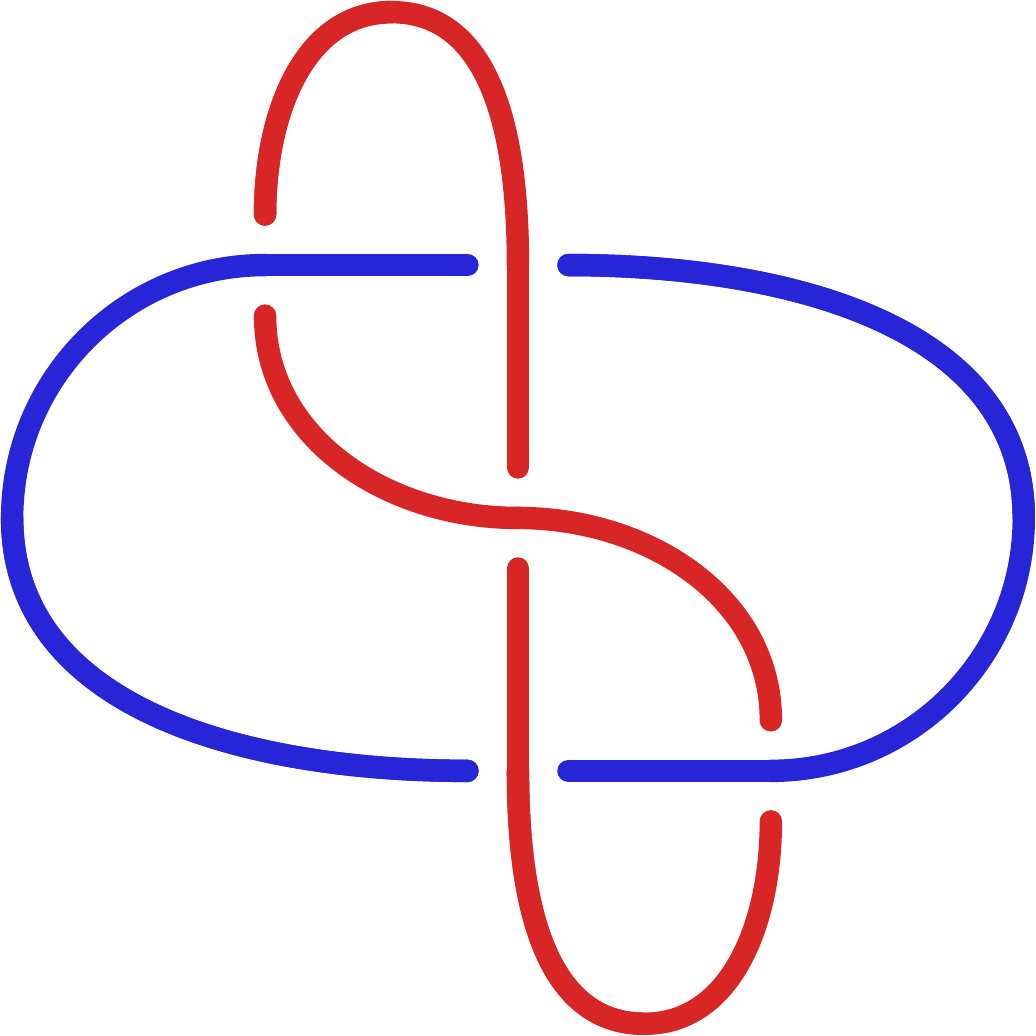}  & & \includegraphics[width=1in]{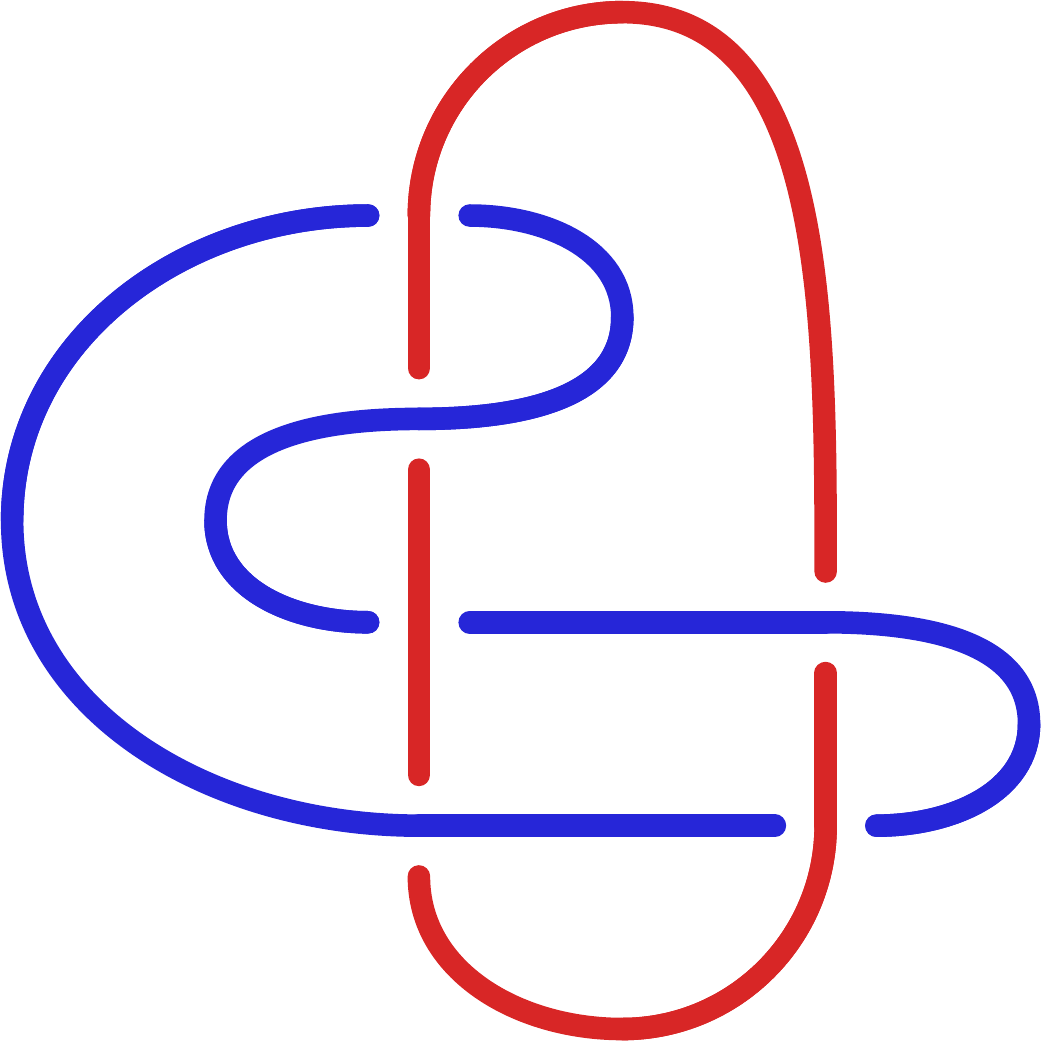} & \\ 
 \quad & & \quad & \\ 
 \hline  
\quad & \multirow{6}{*}{\Includegraphics[width=1.8in]{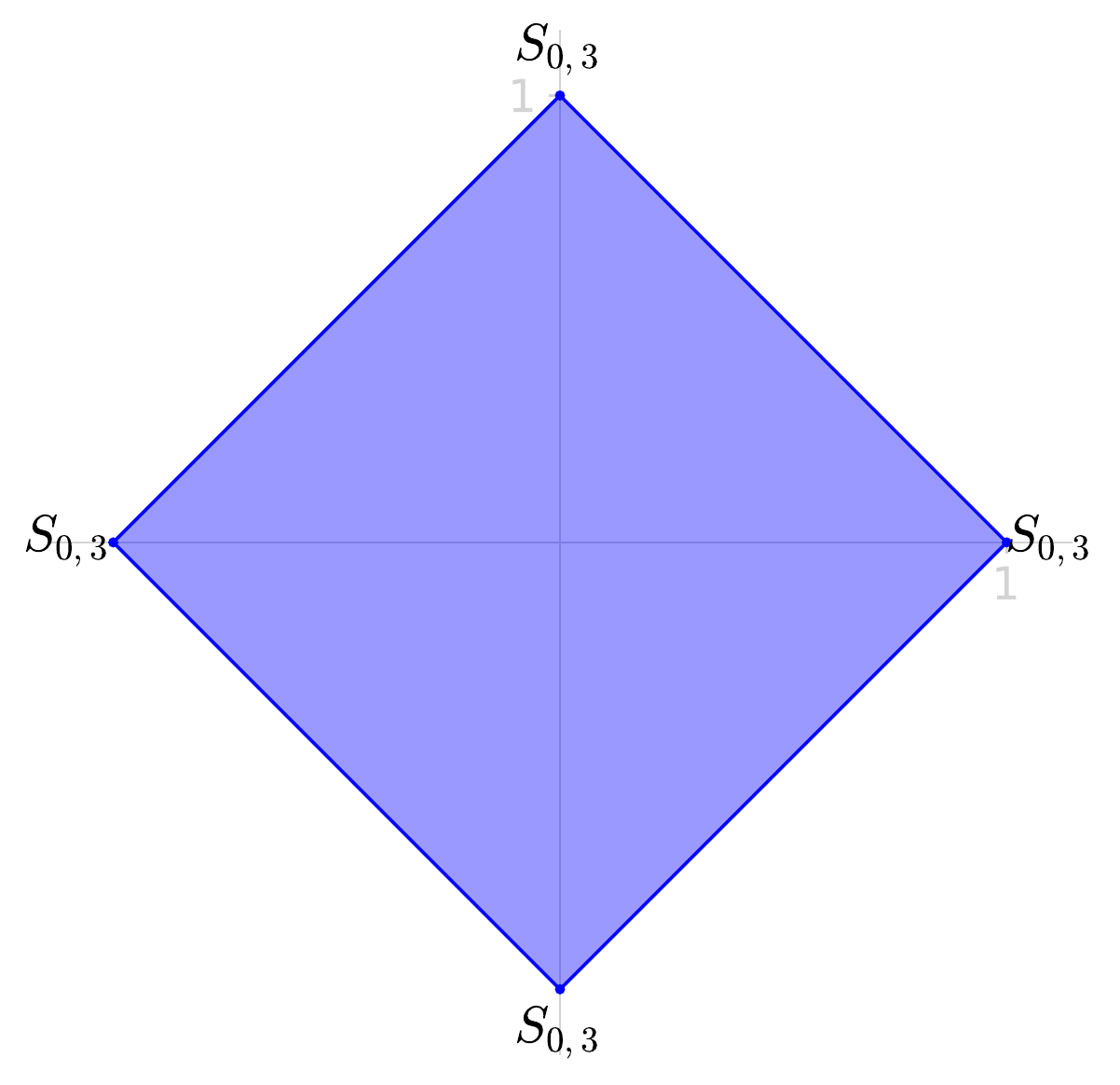}} & \quad & \multirow{6}{*}{\Includegraphics[width=1.8in]{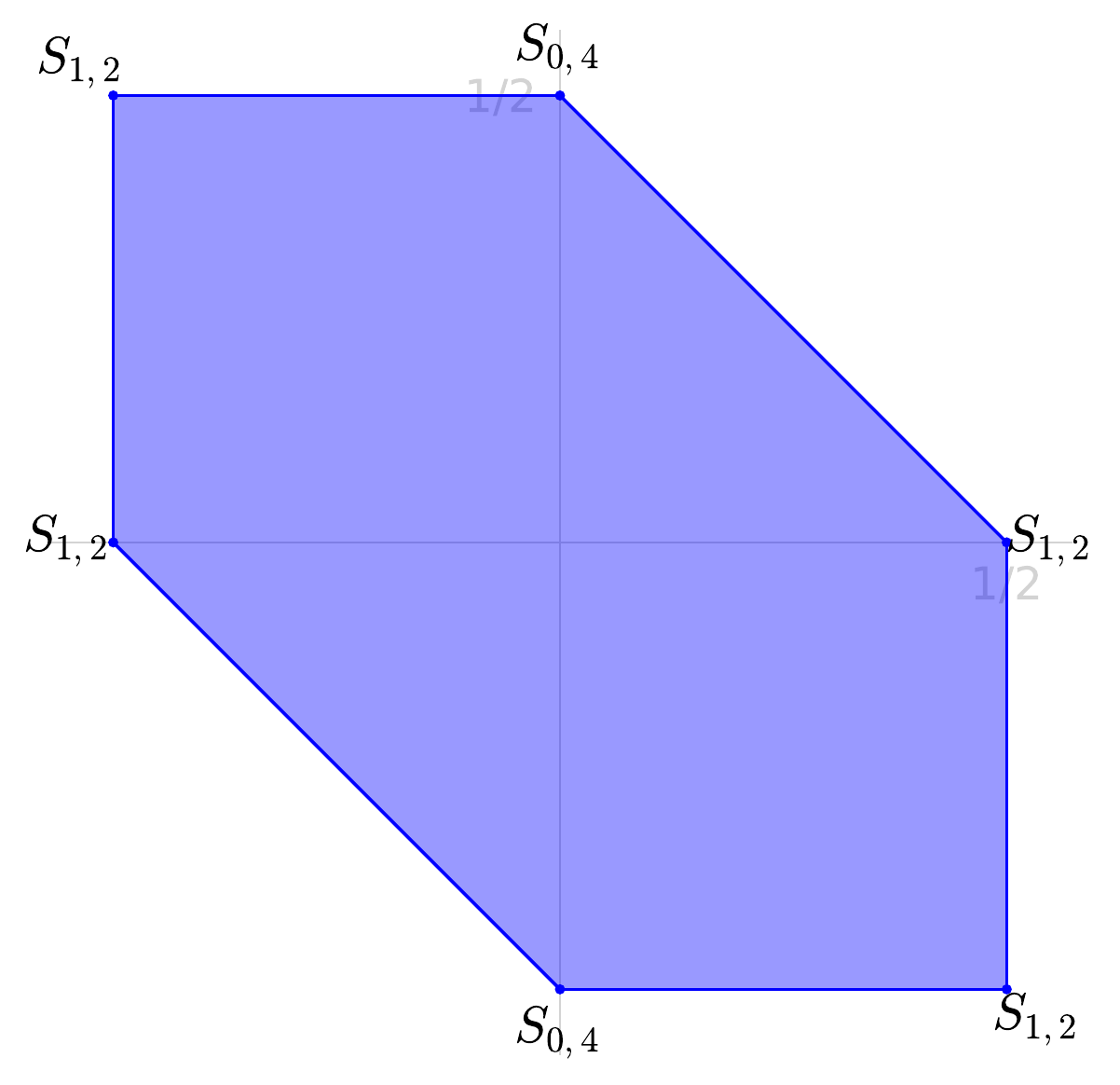}} \\ 
 $L=6^{{2}}_{{3}}$ & & $L=7^{{2}}_{{1}}$ & \\ 
 \quad & & \quad & \\ $\displaystyle\mathrm{Isom}(\mathbb{S}^3\setminus L) = \bigoplus_{i=1}^3 \mathbb{Z}$ & & $\mathrm{Isom}(\mathbb{S}^3\setminus L) = \mathbb{{Z}}_2\oplus\mathbb{{Z}}_2$ & \\ 
 \quad & & \quad & \\ 
 \includegraphics[width=1in]{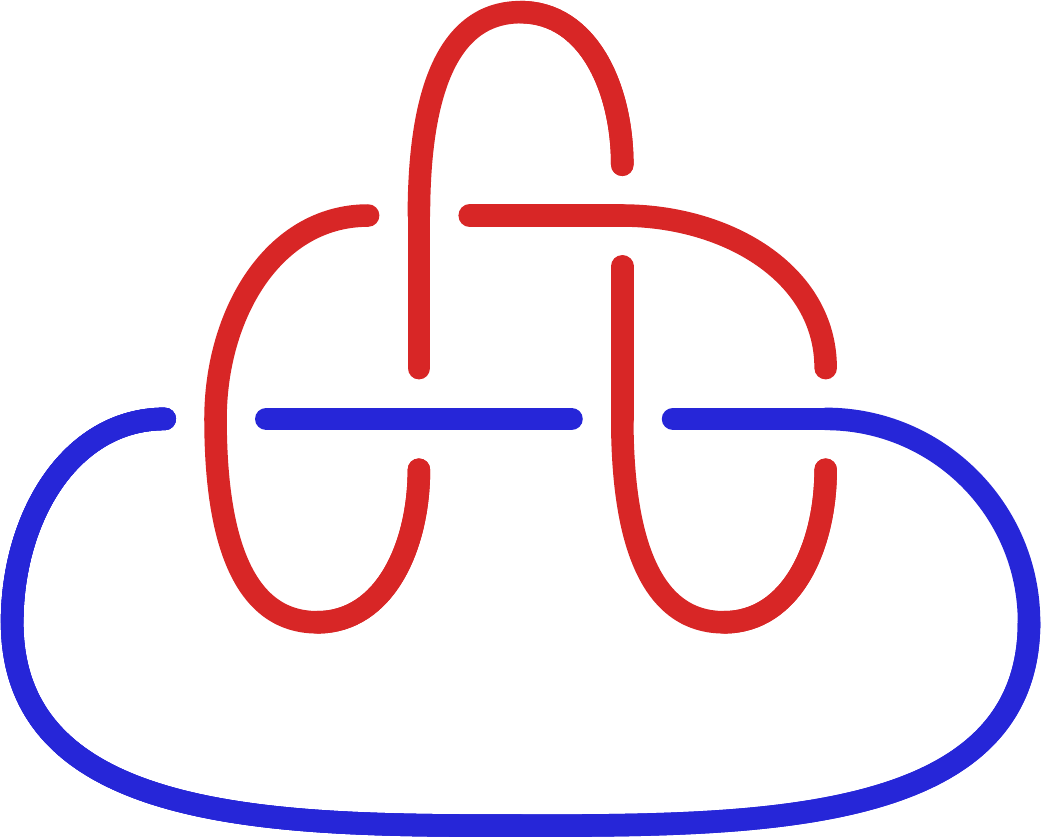}  & & \includegraphics[width=1in]{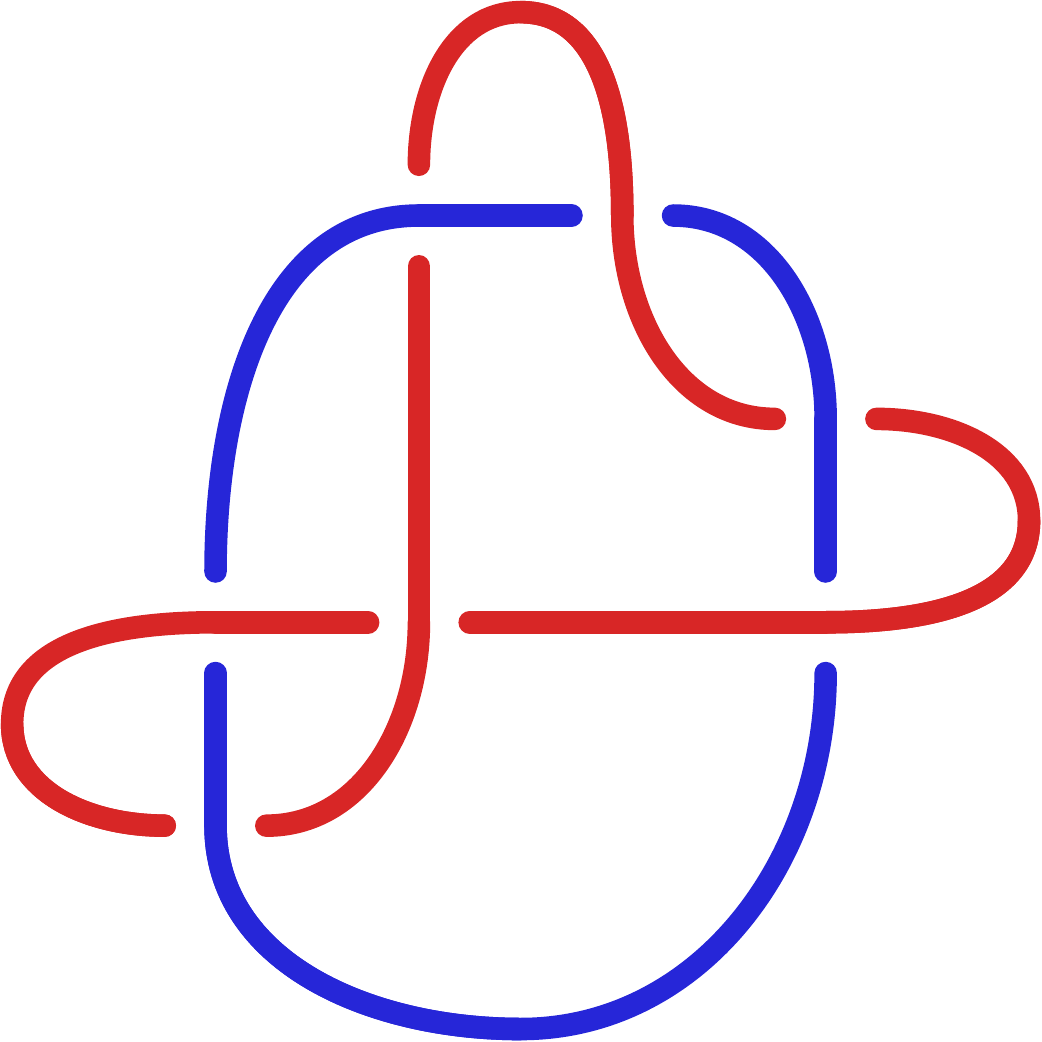} & \\ 
 \quad & & \quad & \\ 
 \hline  
\quad & \multirow{6}{*}{\Includegraphics[width=1.8in]{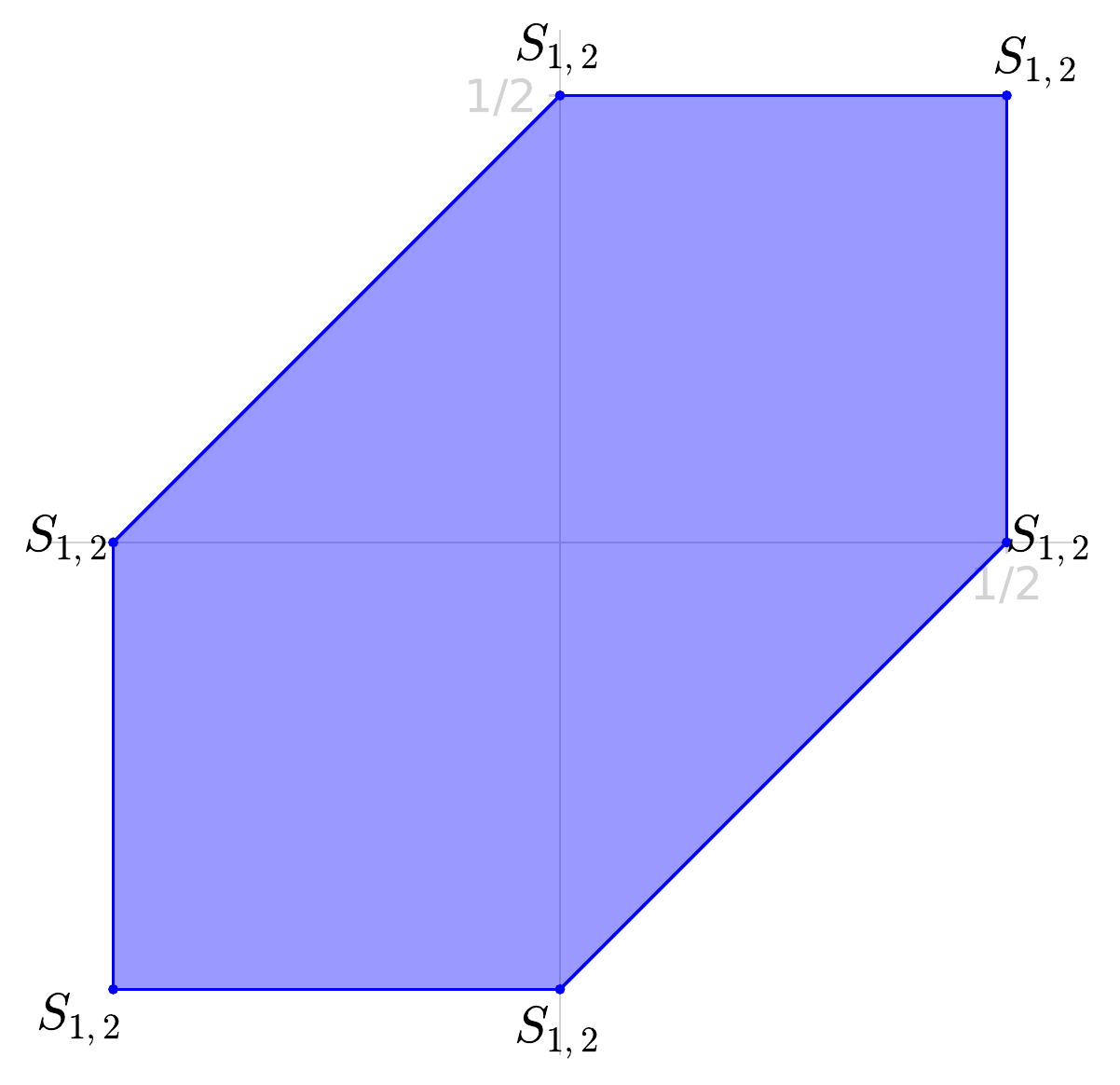}} & \quad & \multirow{6}{*}{\Includegraphics[width=1.8in]{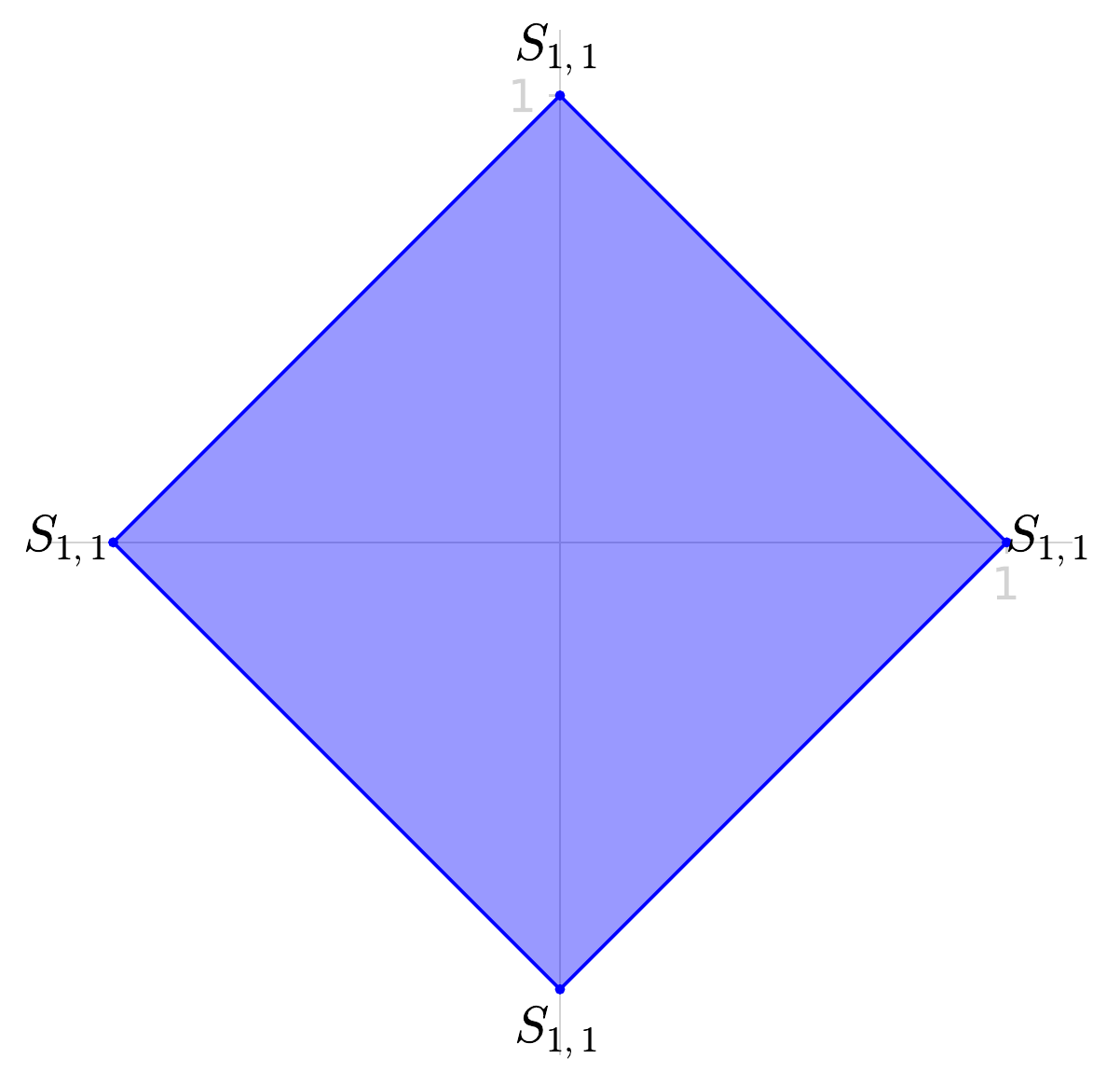}} \\ 
 $L=7^{{2}}_{{2}}$ & & $L=7^{{2}}_{{3}}$ & \\ 
 \quad & & \quad & \\ $\mathrm{Isom}(\mathbb{S}^3\setminus L) = \mathbb{{Z}}_2\oplus\mathbb{{Z}}_2$ & & $\mathrm{Isom}(\mathbb{S}^3\setminus L) = D_4$ & \\ 
 \quad & & \quad & \\ 
 \includegraphics[width=1in]{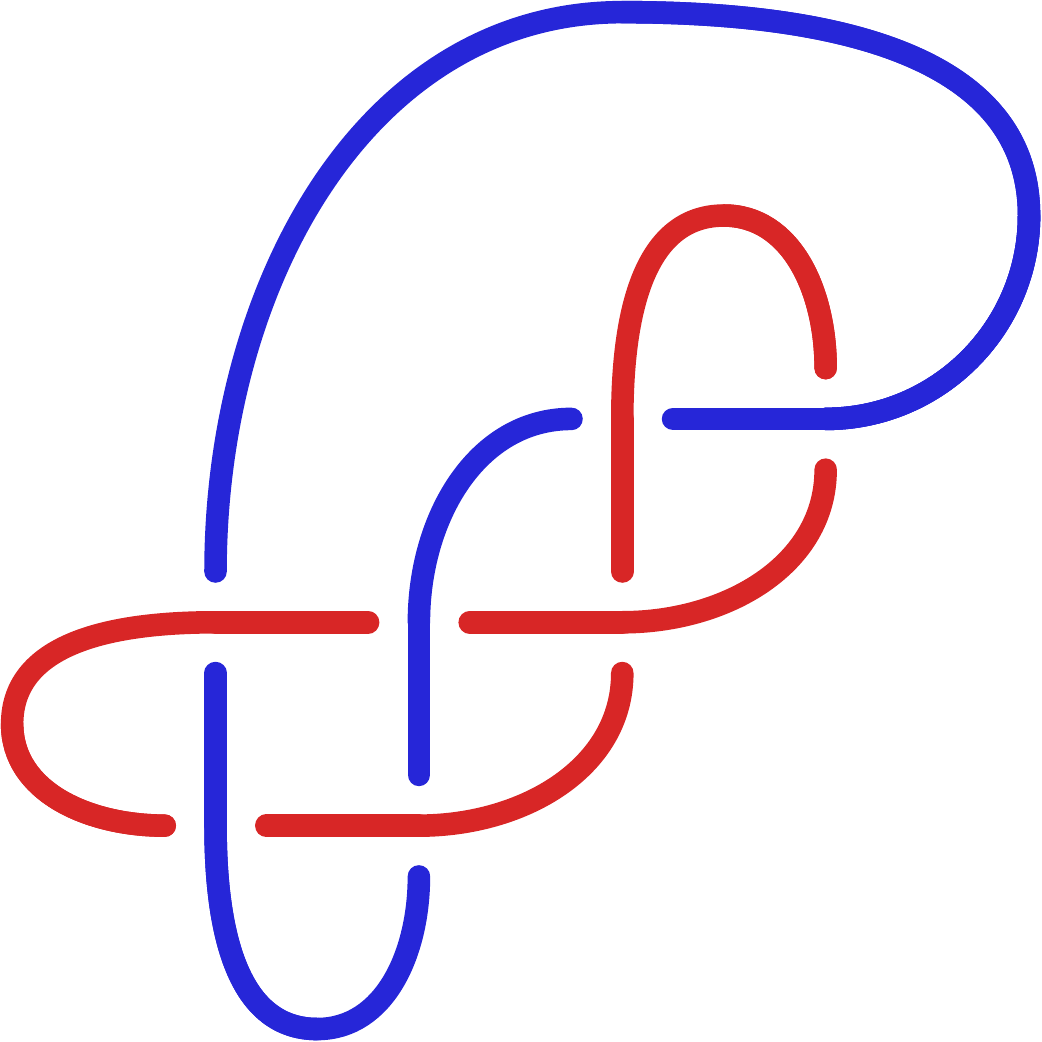}  & & \includegraphics[width=1in]{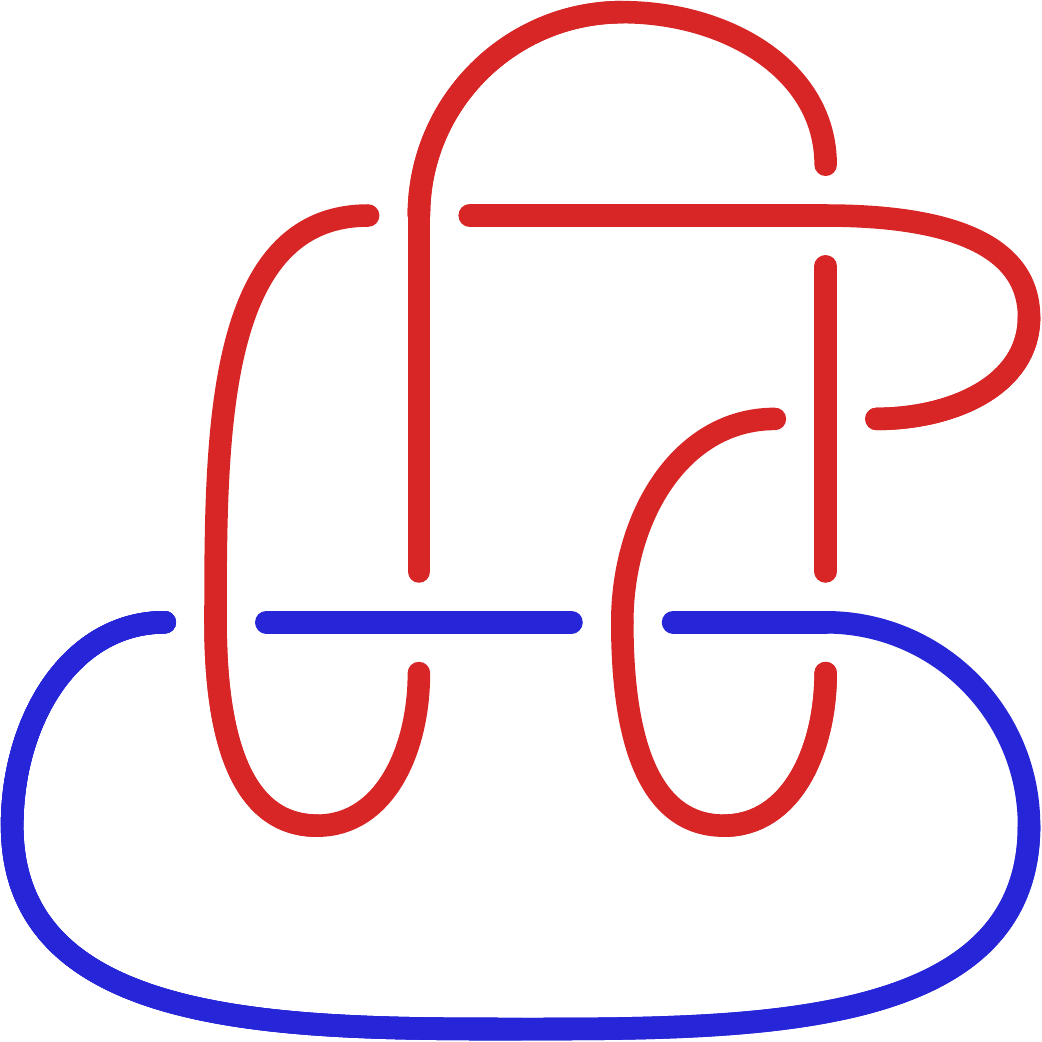} & \\ 
 \quad & & \quad & \\ 
 \hline  
\end{tabular} 
 \newpage \begin{tabular}{|c|c|c|c|} 
 \hline 
 Link & Norm Ball & Link & Norm Ball \\ 
 \hline 
\quad & \multirow{6}{*}{\Includegraphics[width=1.8in]{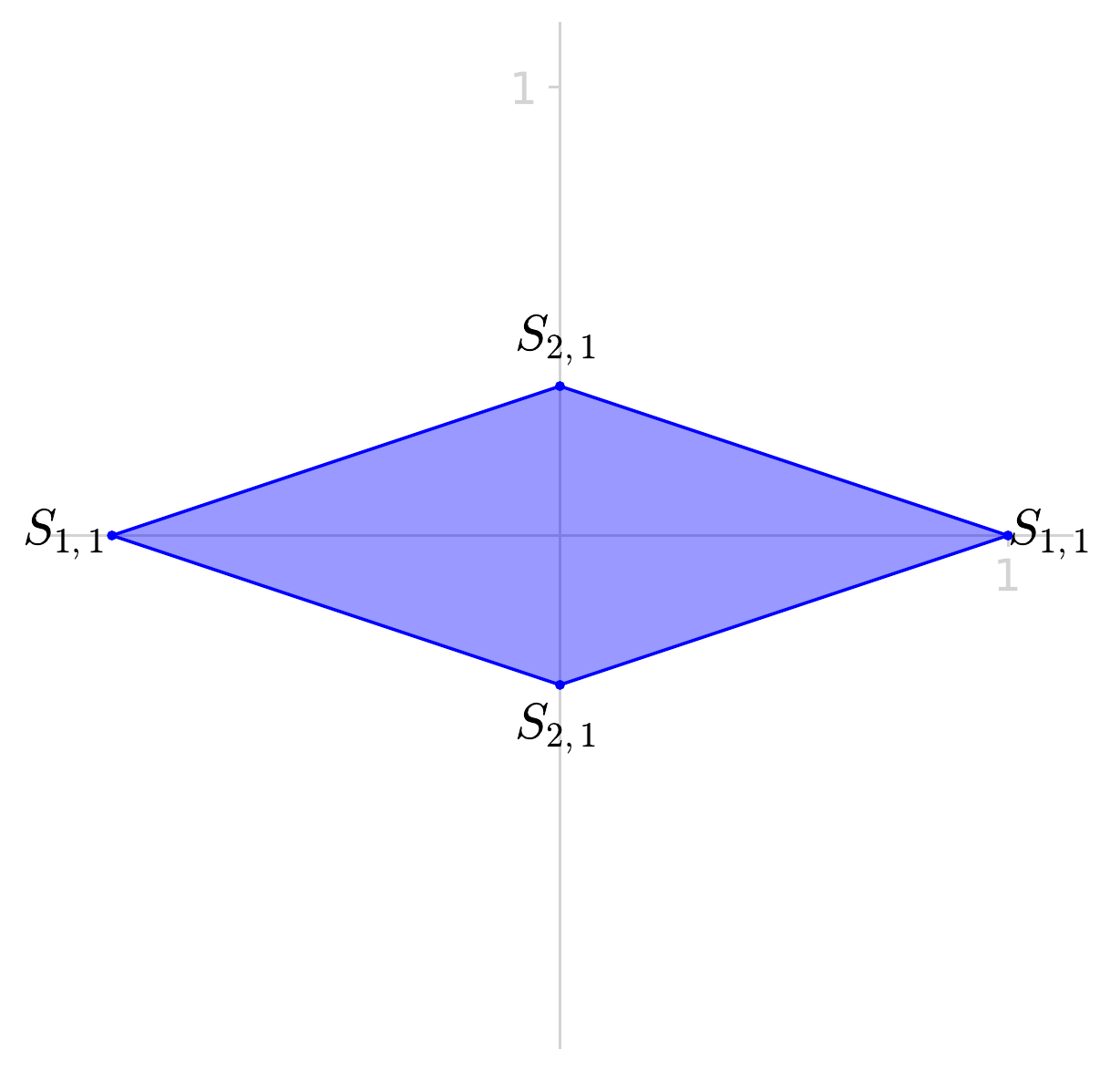}} & \quad & \multirow{6}{*}{\Includegraphics[width=1.8in]{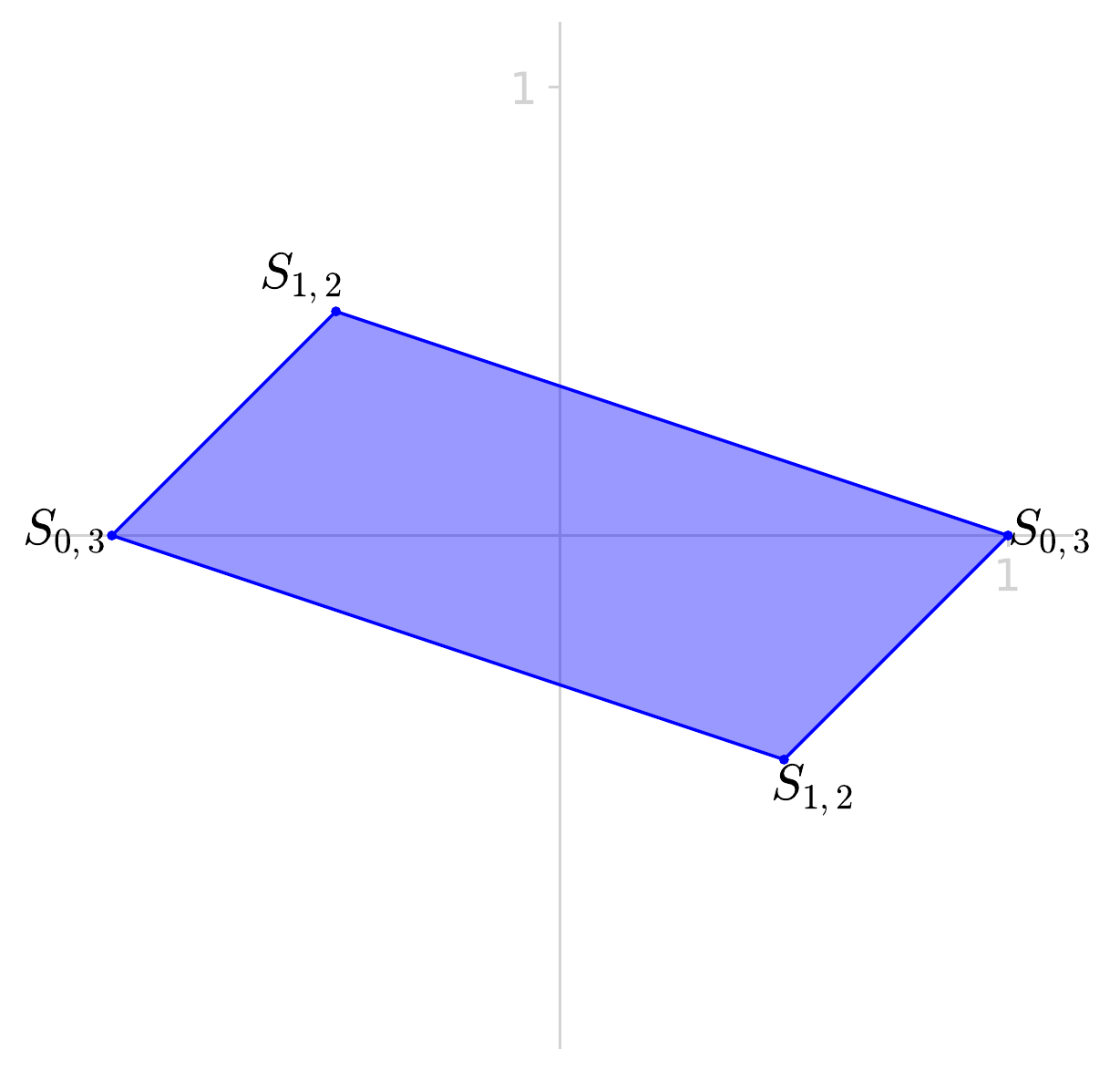}} \\ 
 $L=7^{{2}}_{{4}}$ & & $L=7^{{2}}_{{5}}$ & \\ 
 \quad & & \quad & \\ $\mathrm{Isom}(\mathbb{S}^3\setminus L) = \mathbb{{Z}}_2\oplus\mathbb{{Z}}_2$ & & $\mathrm{Isom}(\mathbb{S}^3\setminus L) = \mathbb{{Z}}_2\oplus\mathbb{{Z}}_2$ & \\ 
 \quad & & \quad & \\ 
 \includegraphics[width=1in]{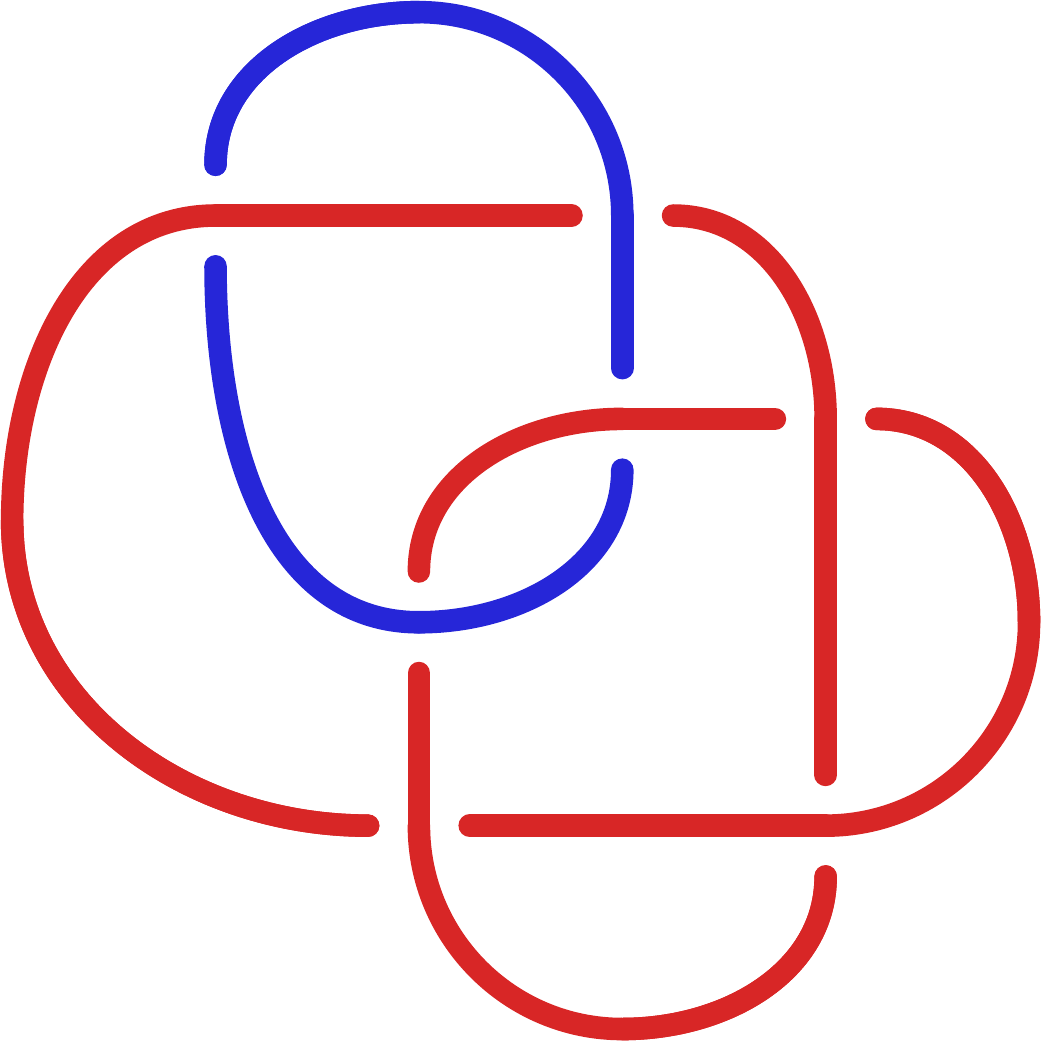}  & & \includegraphics[width=1in]{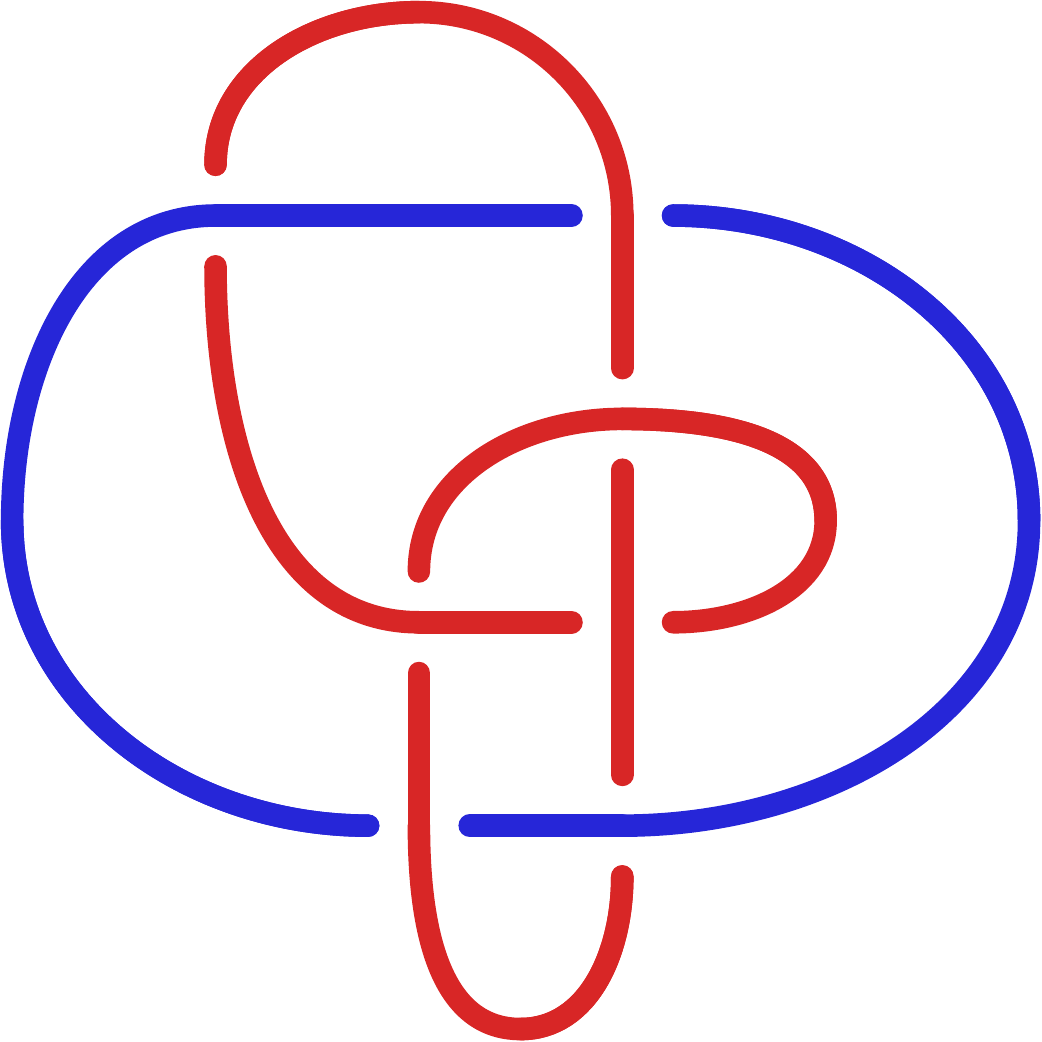} & \\ 
 \quad & & \quad & \\ 
 \hline  
\quad & \multirow{6}{*}{\Includegraphics[width=1.8in]{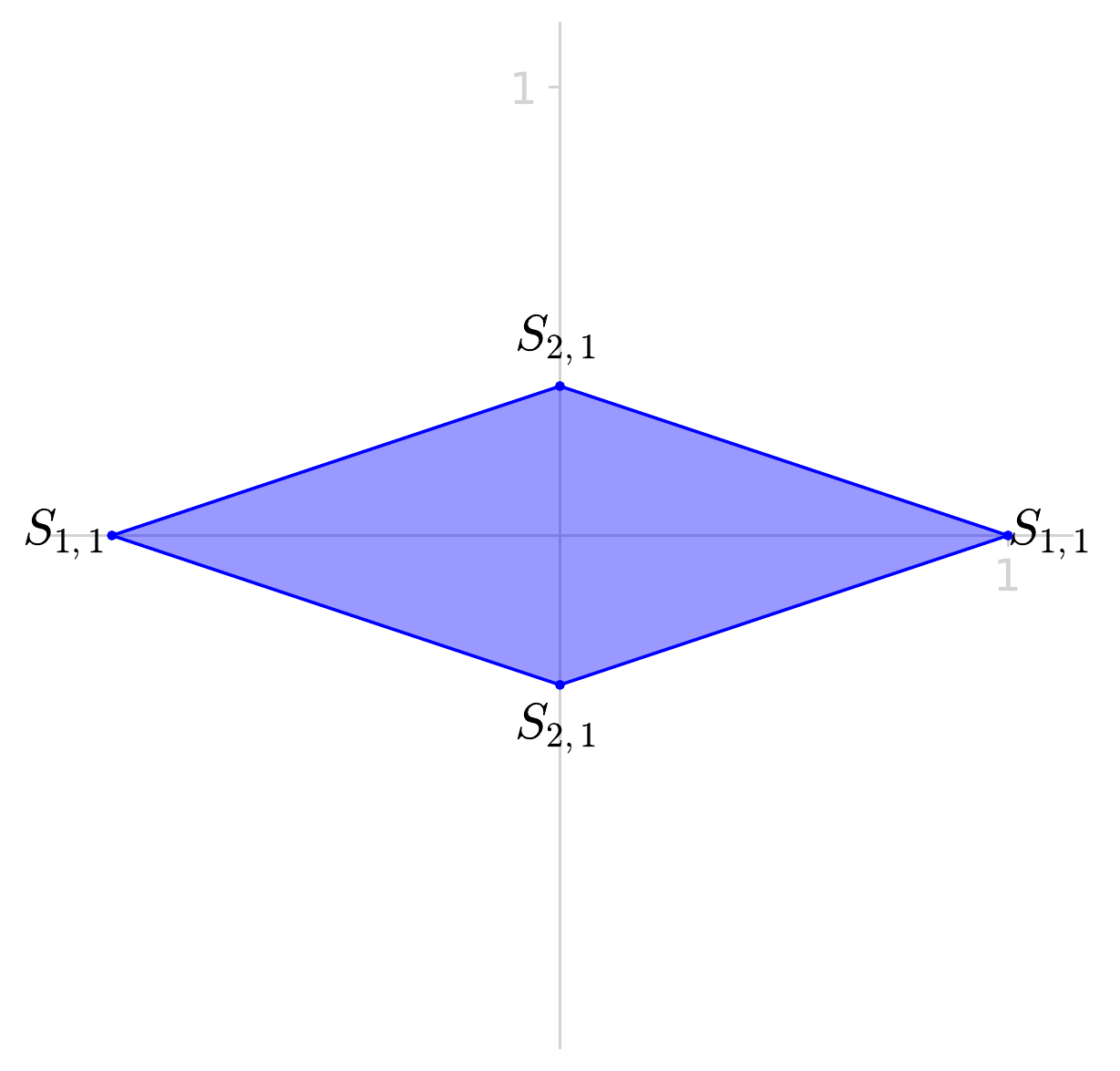}} & \quad & \multirow{6}{*}{\Includegraphics[width=1.8in]{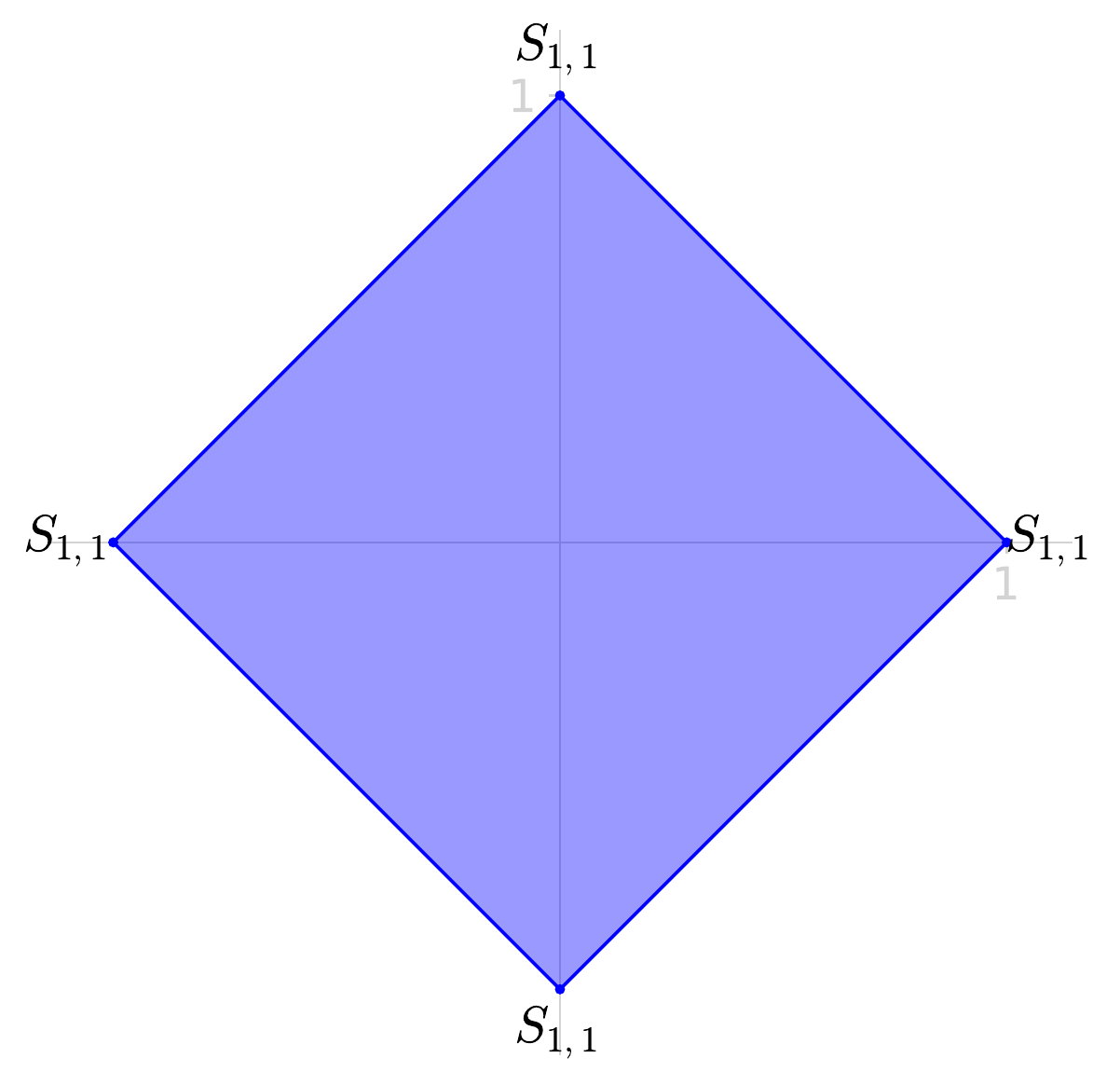}} \\ 
 $L=7^{{2}}_{{6}}$ & & $L=7^{{2}}_{{8}}$ & \\ 
 \quad & & \quad & \\ $\mathrm{Isom}(\mathbb{S}^3\setminus L) = \mathbb{{Z}}_2\oplus\mathbb{{Z}}_2$ & & $\mathrm{Isom}(\mathbb{S}^3\setminus L) = D_4$ & \\ 
 \quad & & \quad & \\ 
 \includegraphics[width=1in]{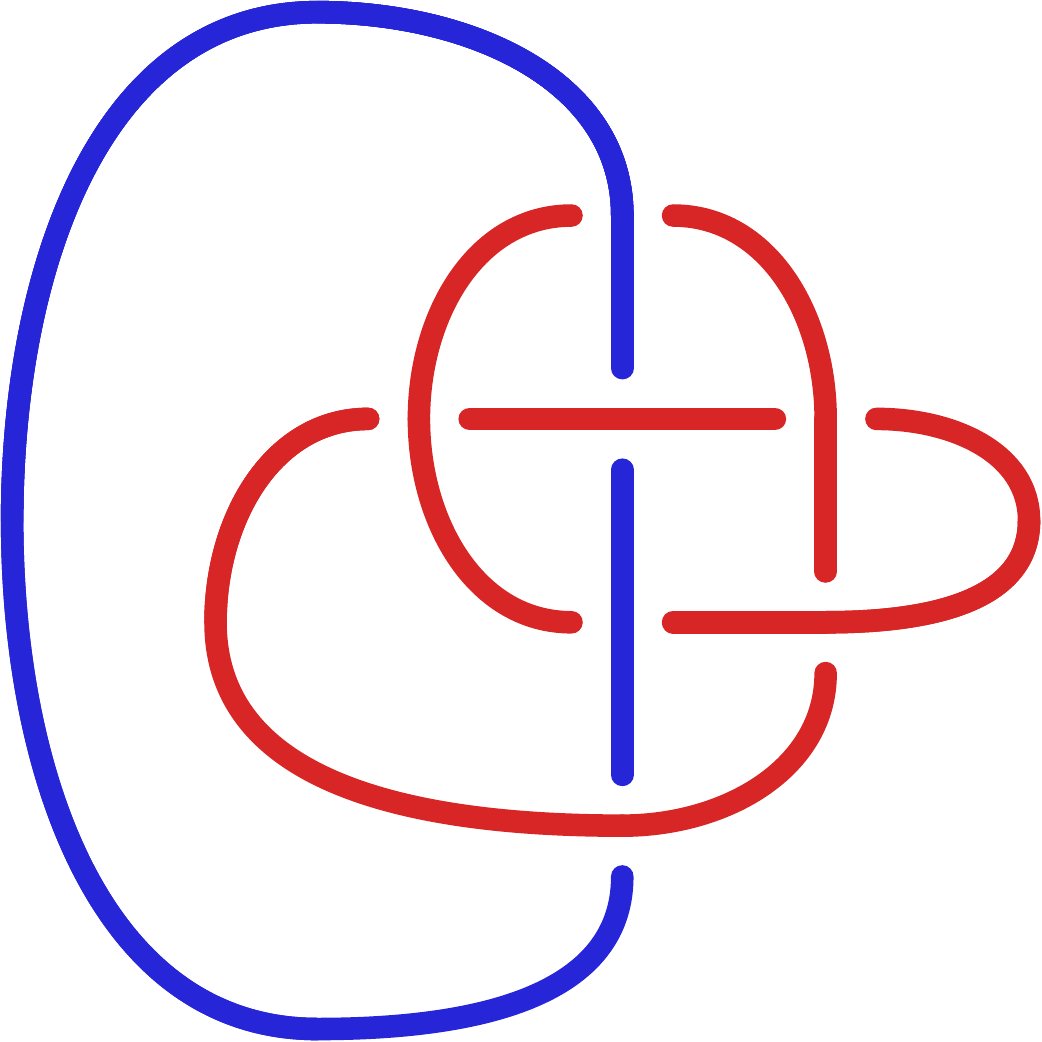}  & & \includegraphics[width=1in]{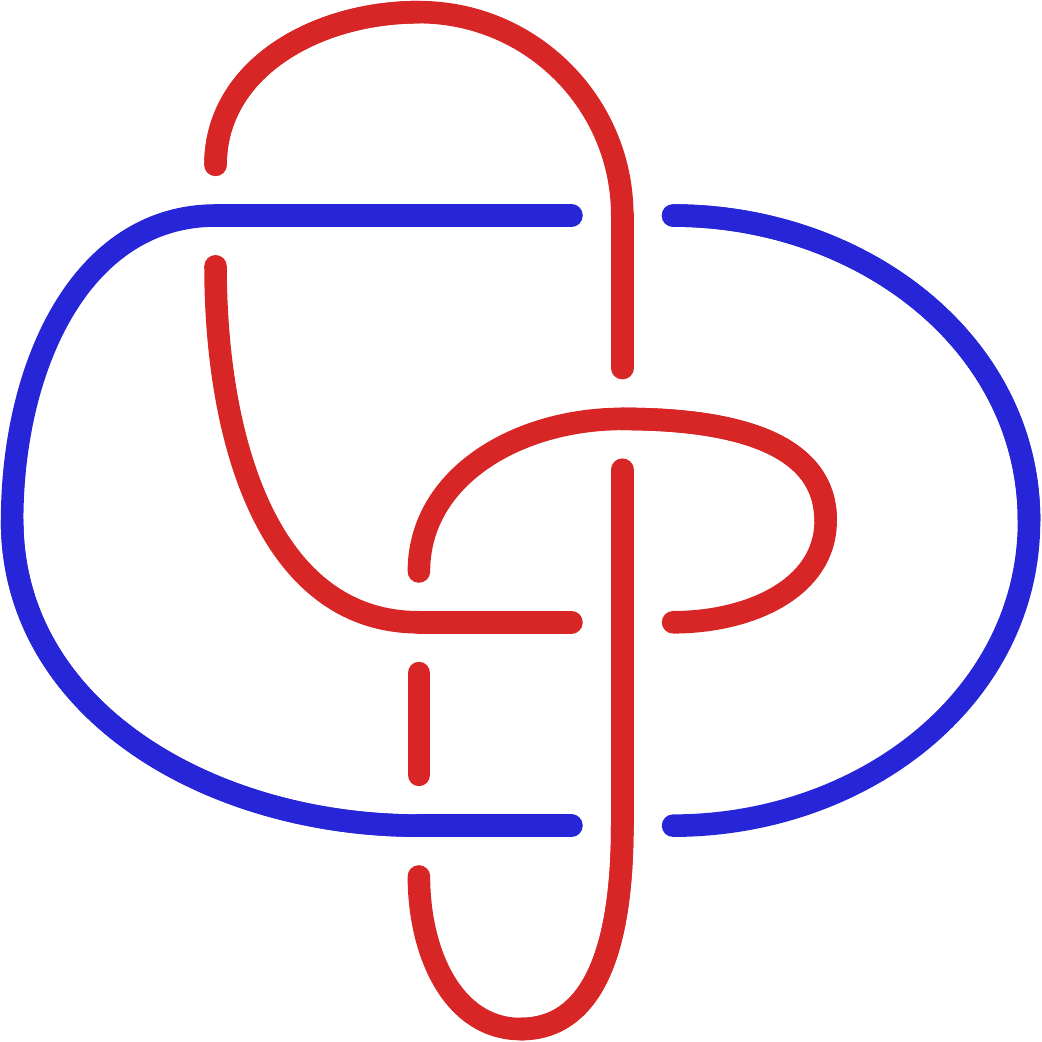} & \\ 
 \quad & & \quad & \\ 
 \hline  
\quad & \multirow{6}{*}{\Includegraphics[width=1.8in]{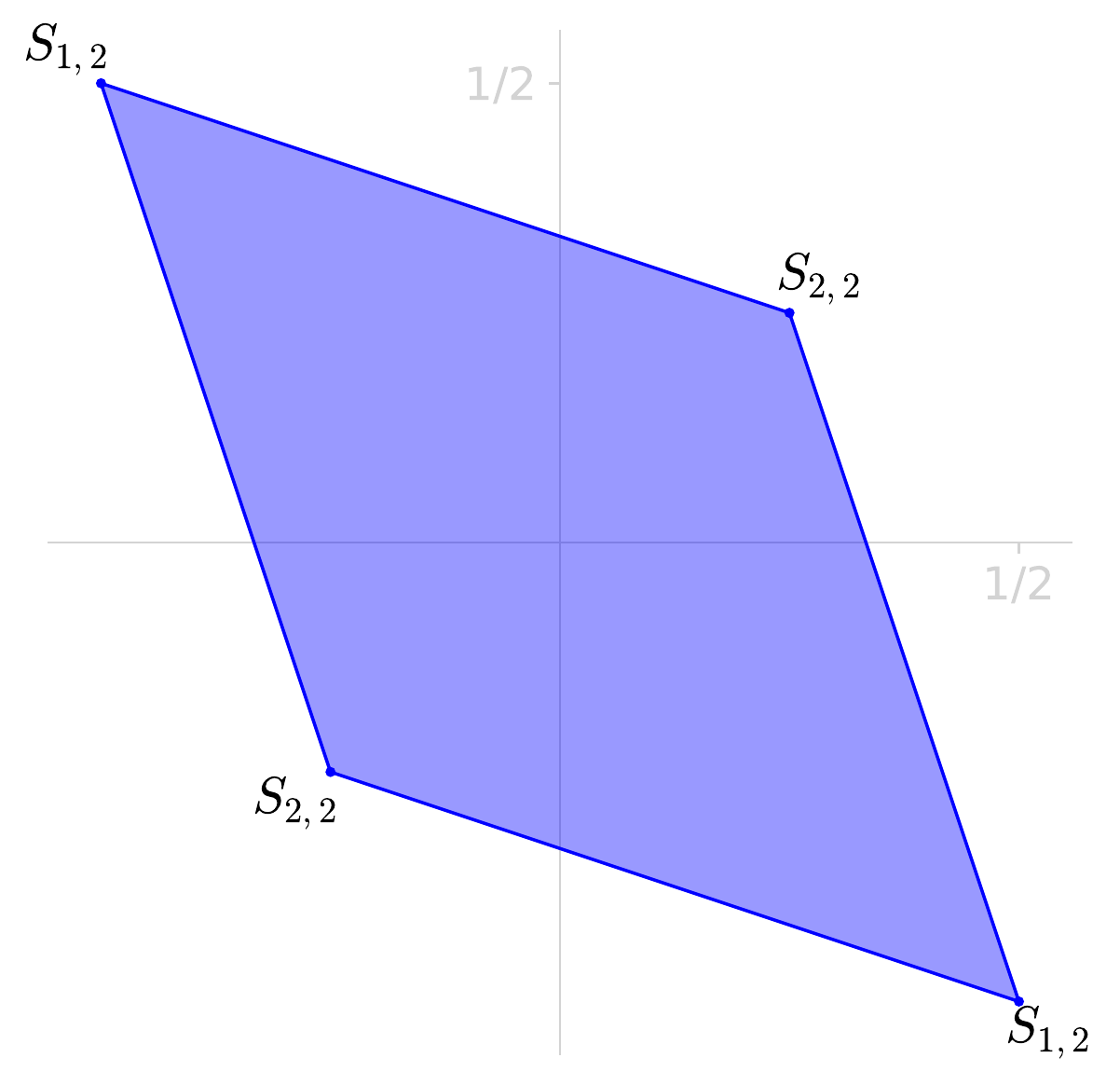}} & \quad & \multirow{6}{*}{\Includegraphics[width=1.8in]{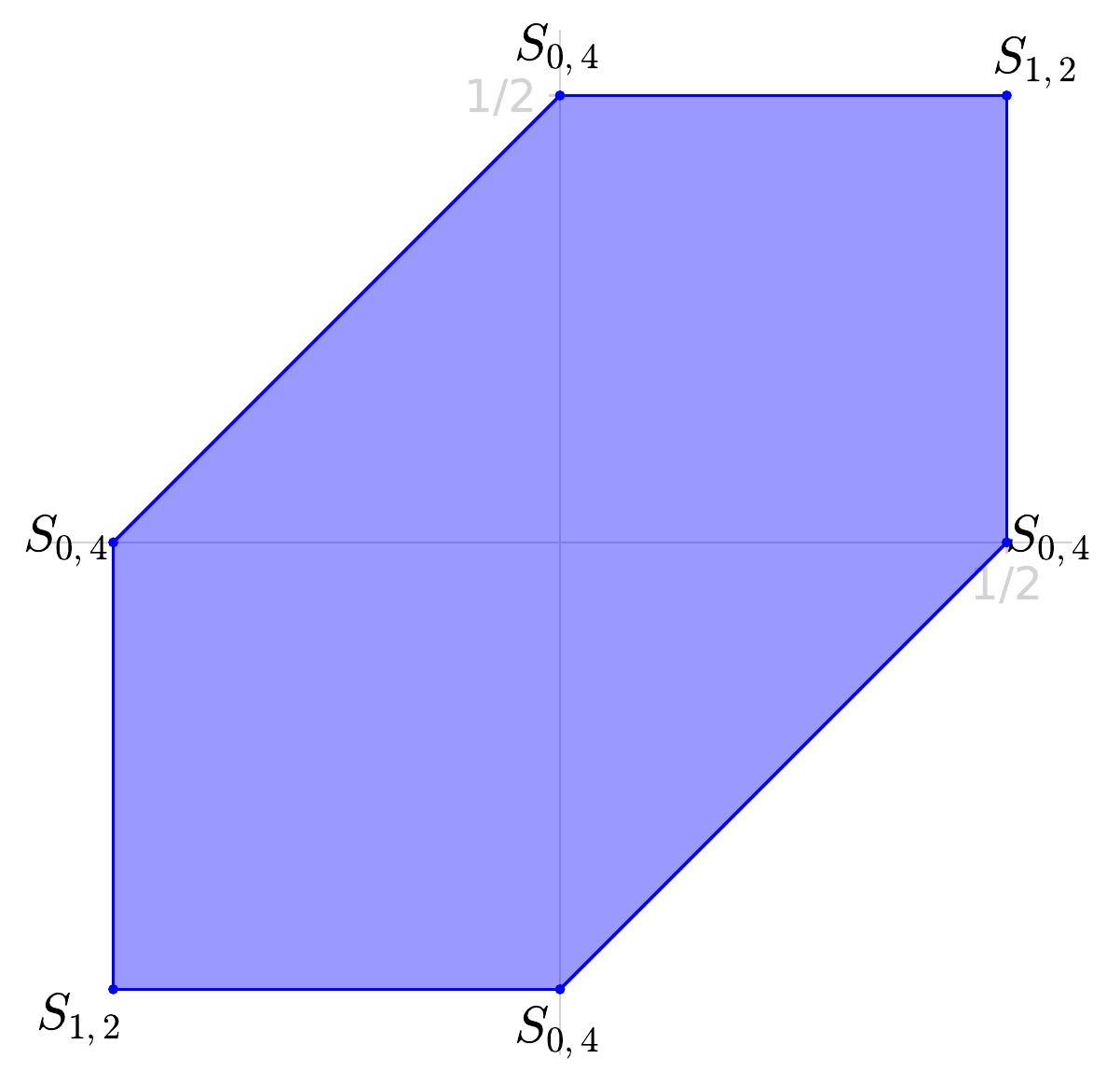}} \\ 
 $L=8^{{2}}_{{2}}$ & & $L=8^{{2}}_{{3}}$ & \\ 
 \quad & & \quad & \\ $\mathrm{Isom}(\mathbb{S}^3\setminus L) = \mathbb{{Z}}_2\oplus\mathbb{{Z}}_2$ & & $\mathrm{Isom}(\mathbb{S}^3\setminus L) = \mathbb{{Z}}_2\oplus\mathbb{{Z}}_2$ & \\ 
 \quad & & \quad & \\ 
 \includegraphics[width=1in]{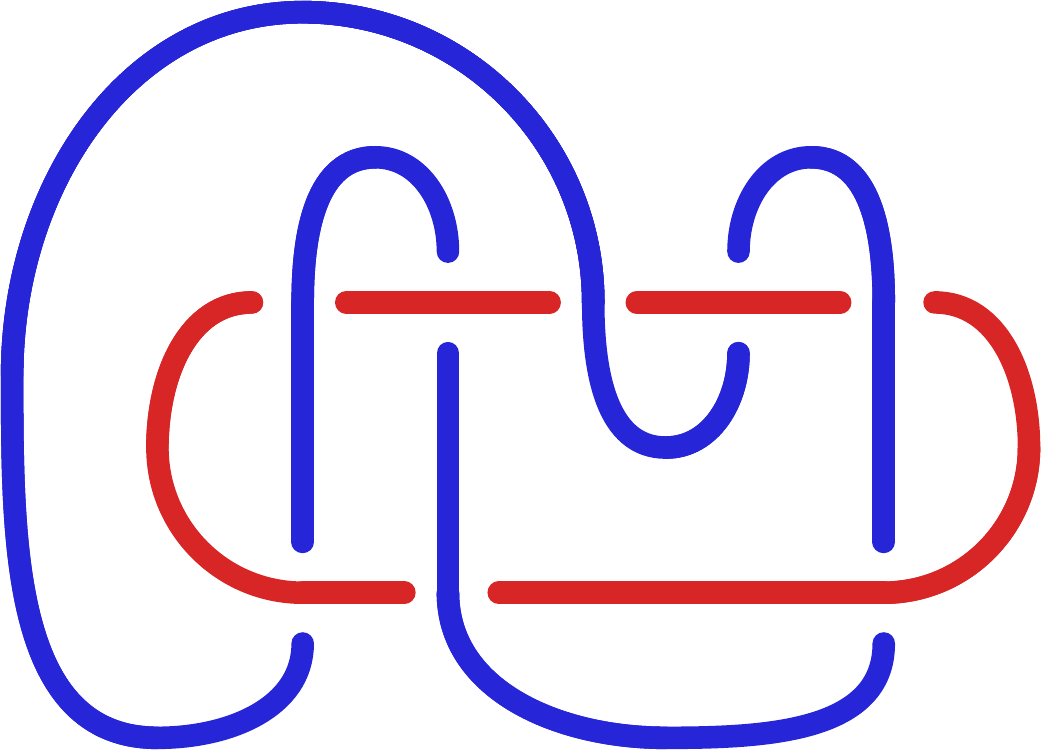}  & & \includegraphics[width=1in]{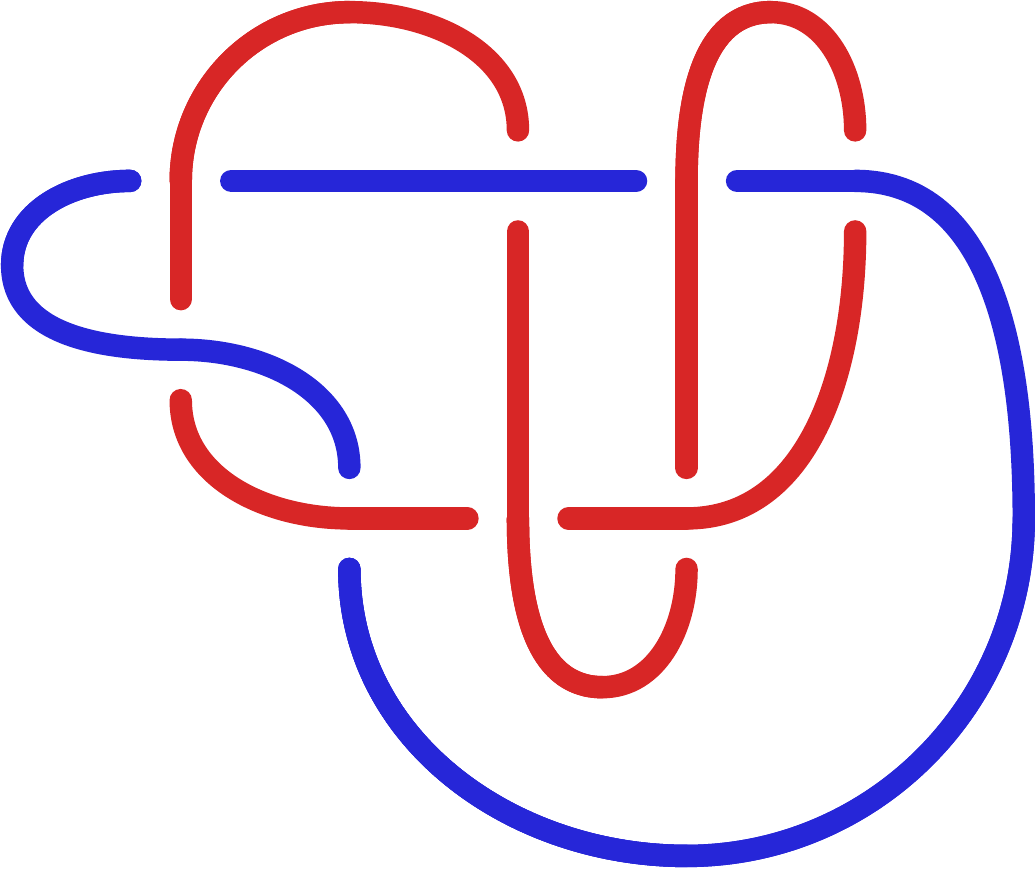} & \\ 
 \quad & & \quad & \\ 
 \hline  
\quad & \multirow{6}{*}{\Includegraphics[width=1.8in]{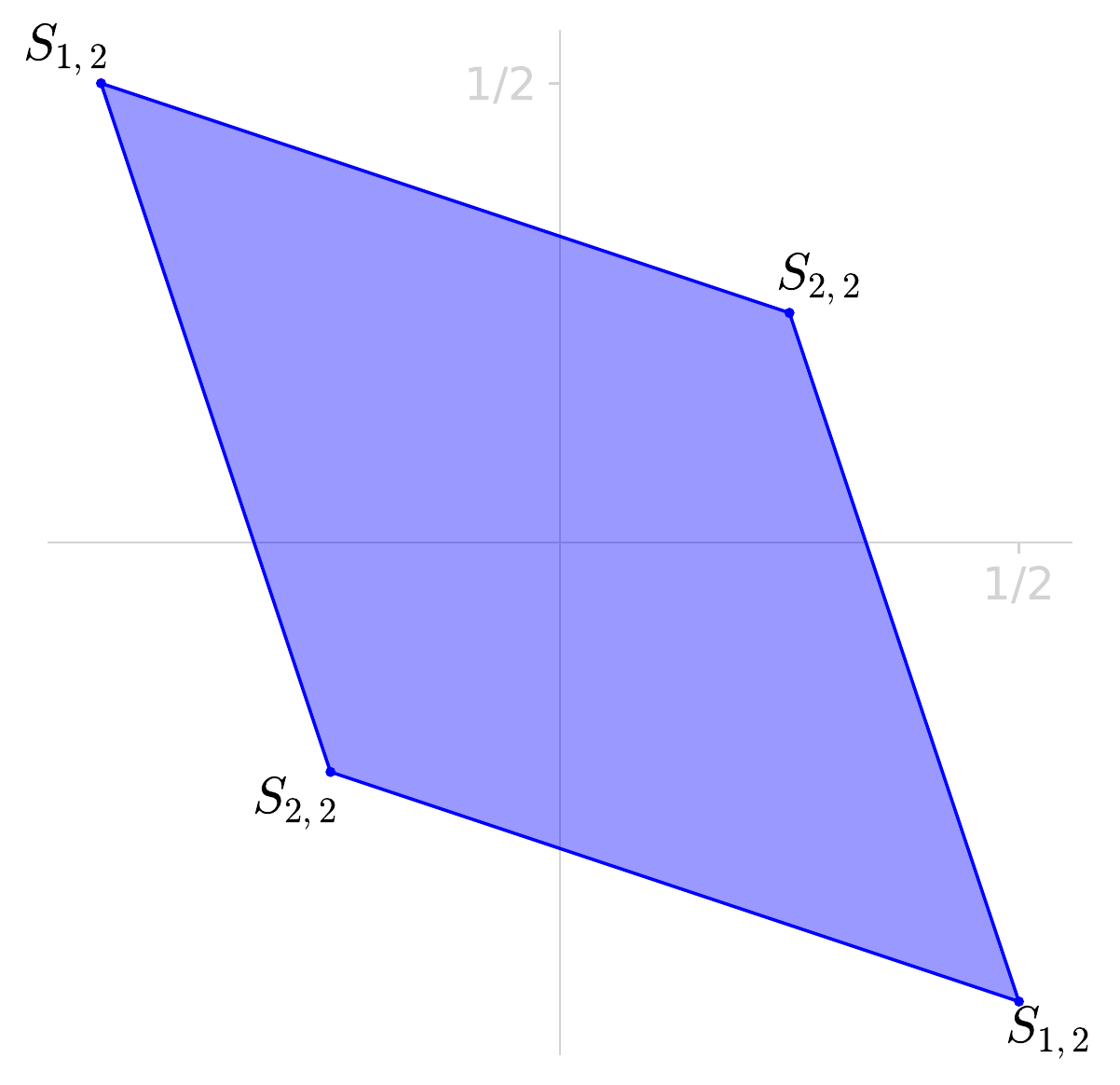}} & \quad & \multirow{6}{*}{\Includegraphics[width=1.8in]{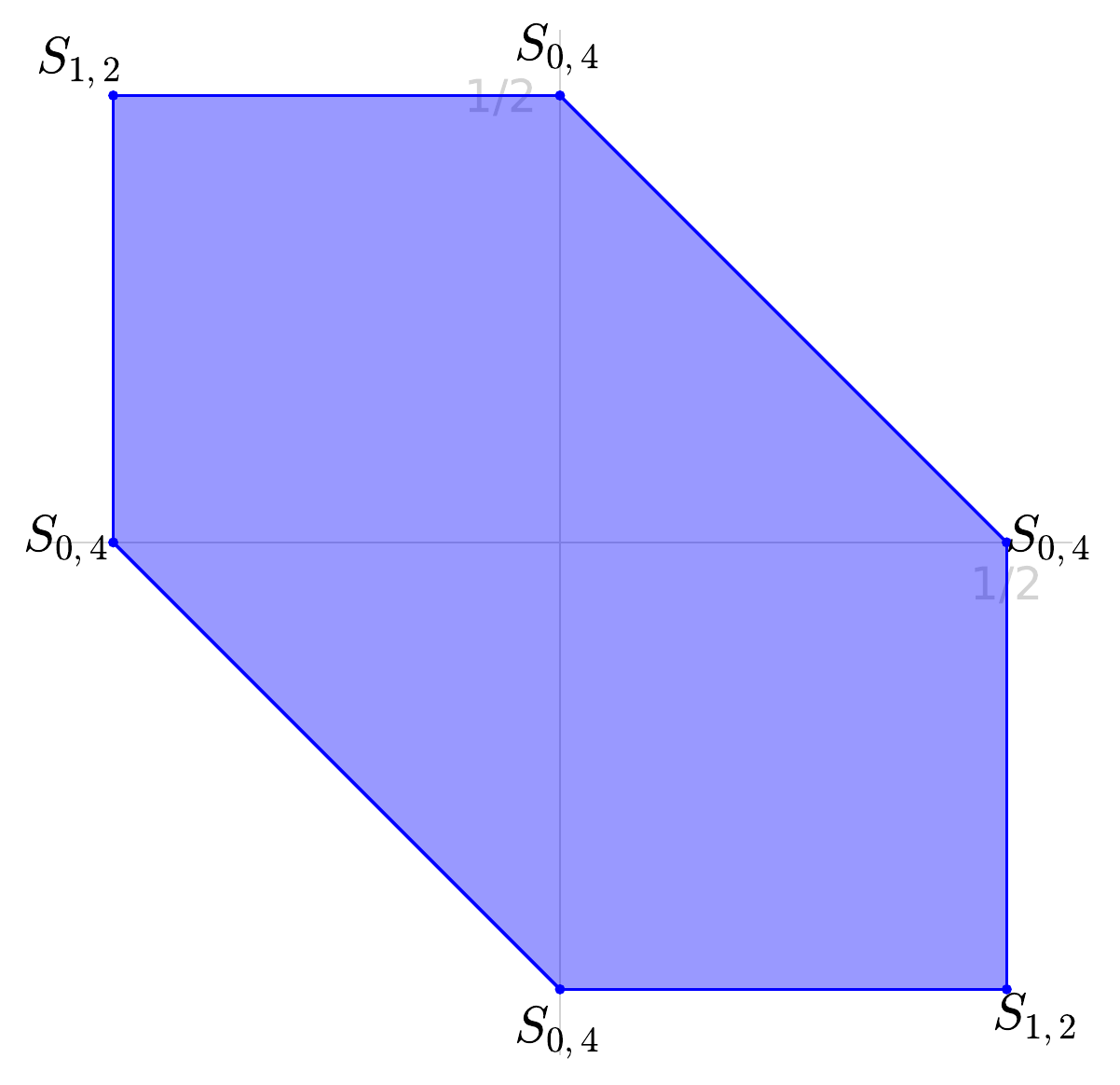}} \\ 
 $L=8^{{2}}_{{4}}$ & & $L=8^{{2}}_{{5}}$ & \\ 
 \quad & & \quad & \\ $\mathrm{Isom}(\mathbb{S}^3\setminus L) = \displaystyle\bigoplus_{i=1}^3 \mathbb{Z}$ & & $\mathrm{Isom}(\mathbb{S}^3\setminus L) = \mathbb{{Z}}_2\oplus\mathbb{{Z}}_2$ & \\ 
 \quad & & \quad & \\ 
 \includegraphics[width=1in]{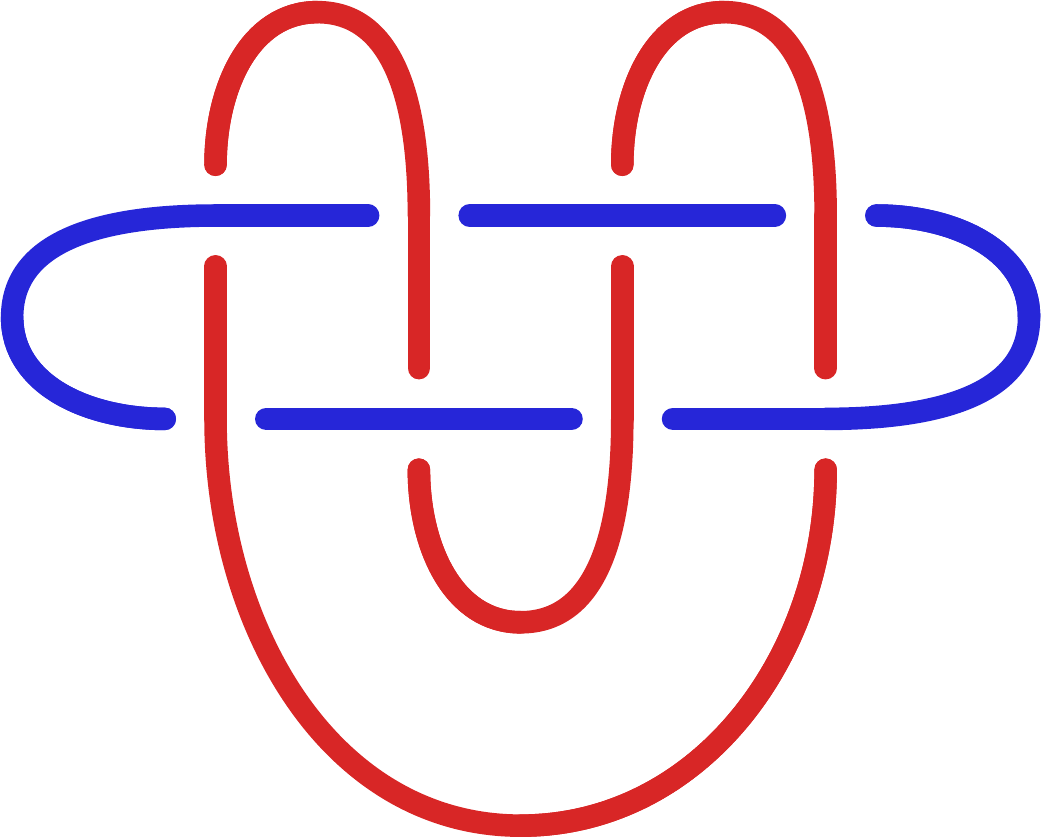}  & & \includegraphics[width=1in]{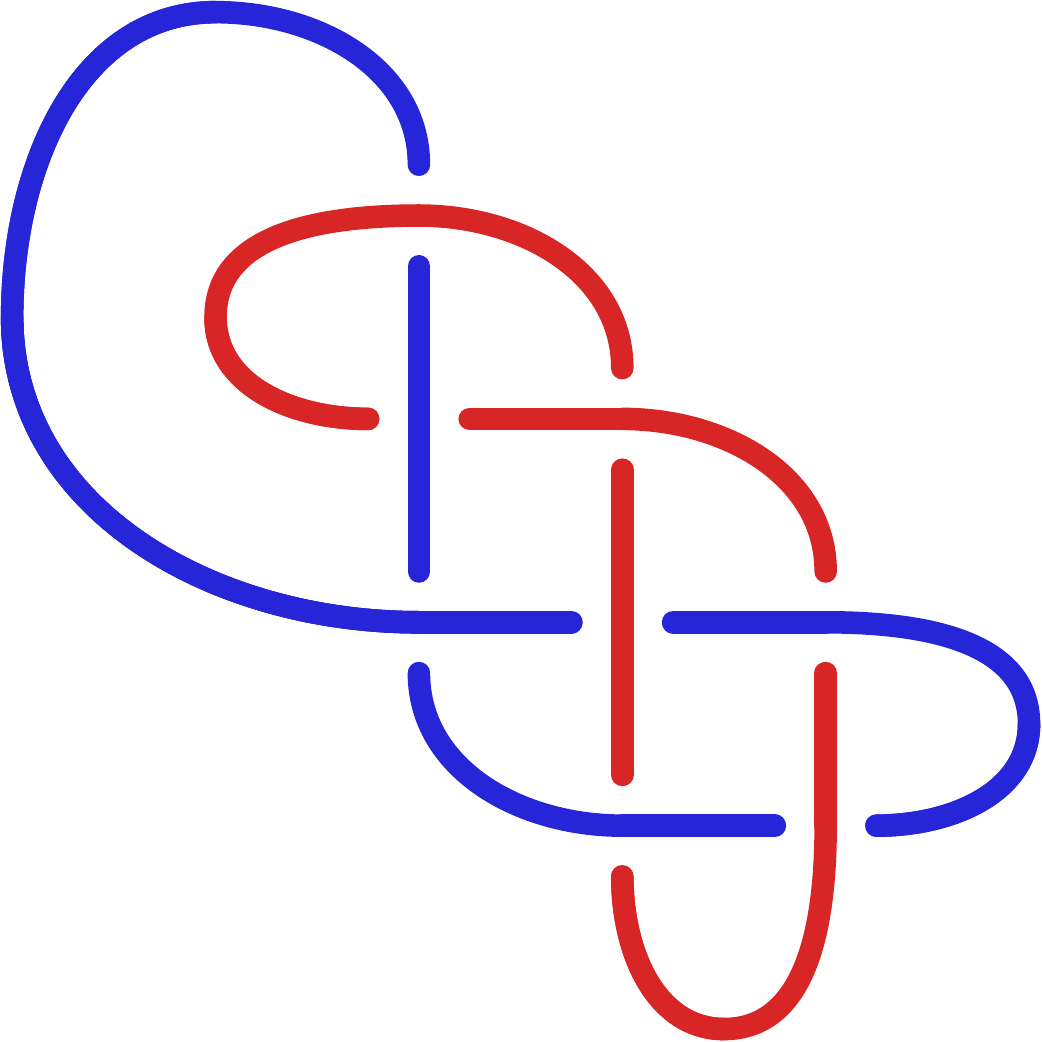} & \\ 
 \quad & & \quad & \\ 
 \hline  
\quad & \multirow{6}{*}{\Includegraphics[width=1.8in]{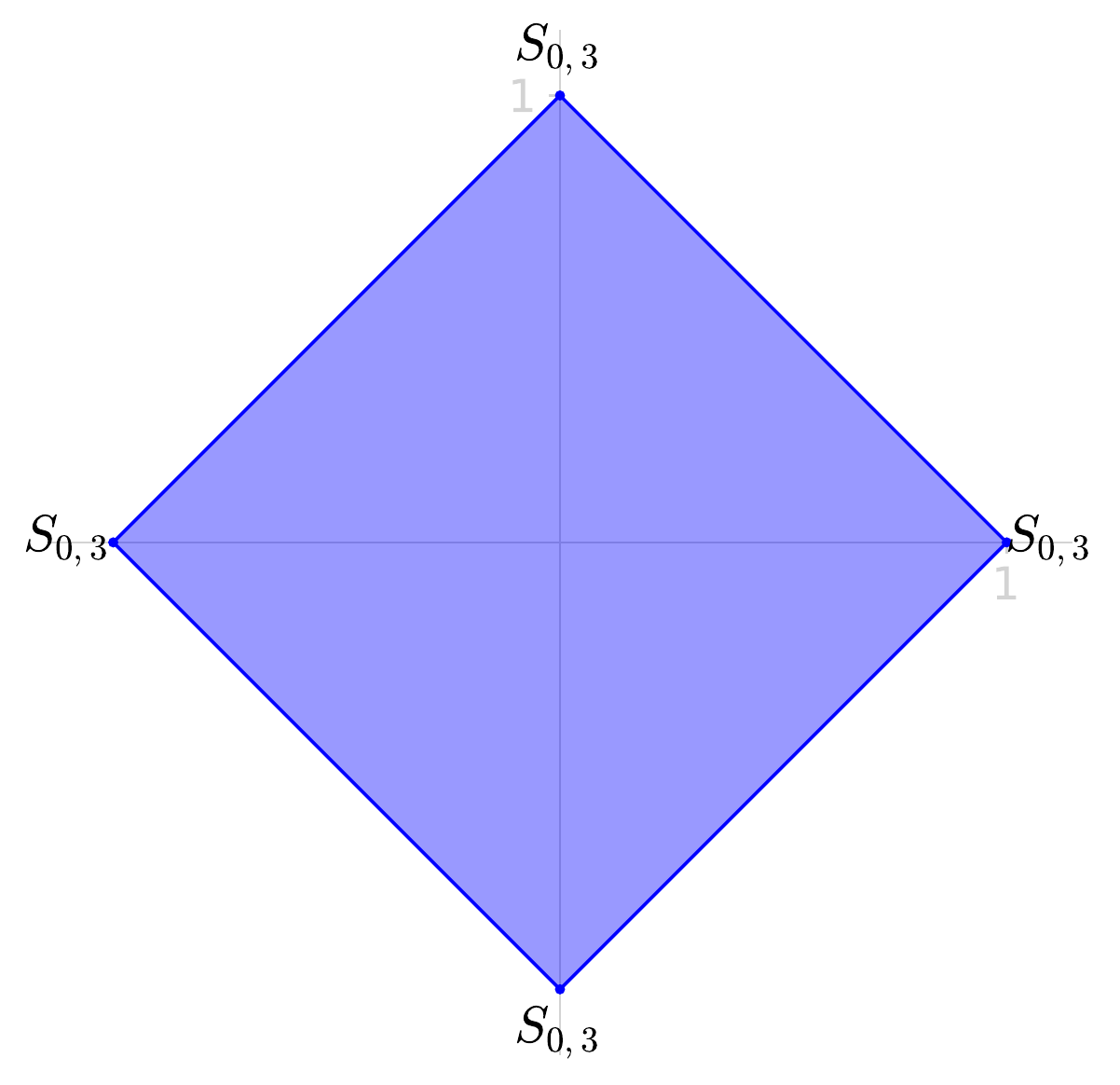}} & \quad & \multirow{6}{*}{\Includegraphics[width=1.8in]{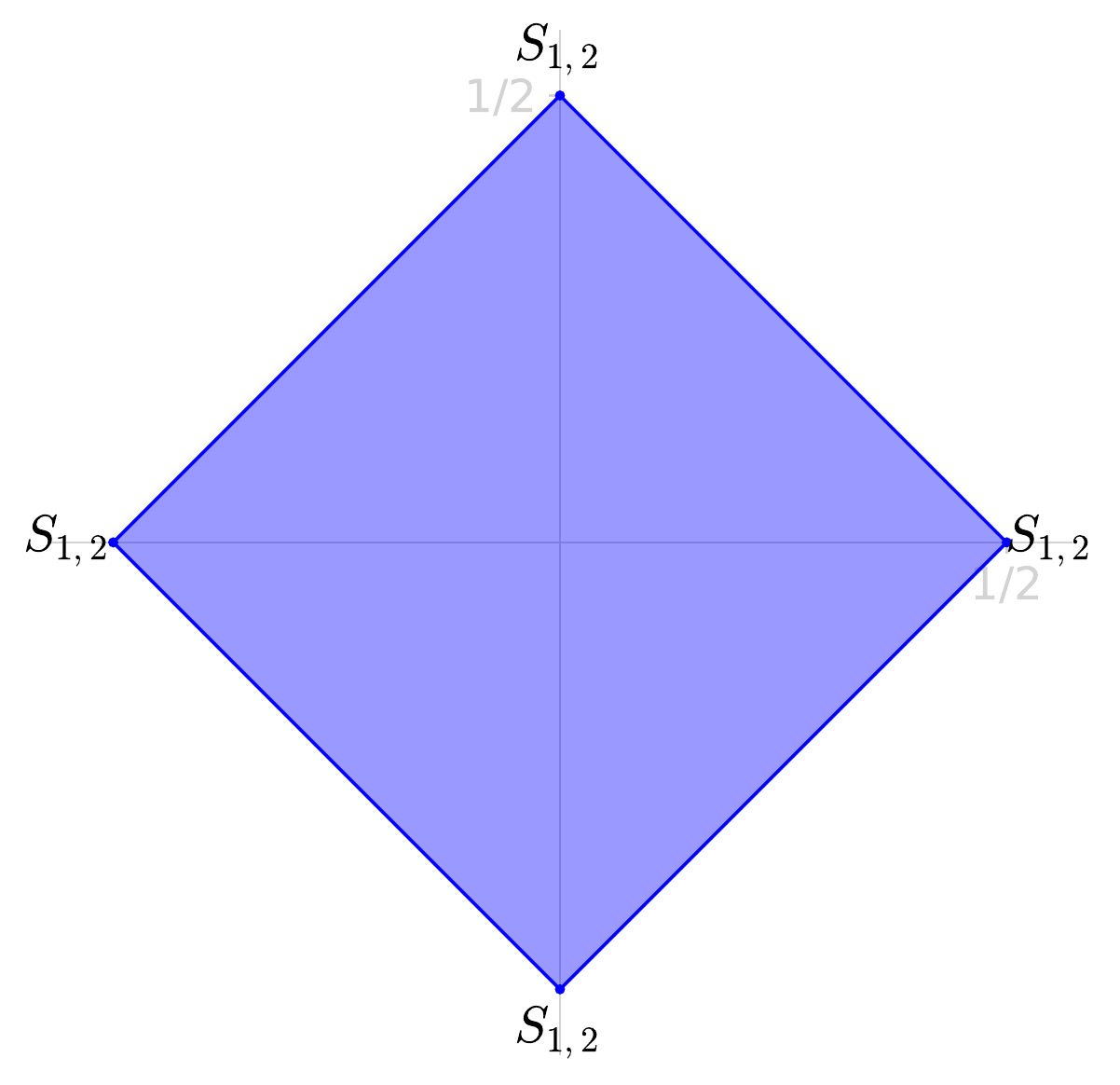}} \\ 
 $L=8^{{2}}_{{6}}$ & & $L=8^{{2}}_{{7}}$ & \\ 
 \quad & & \quad & \\ $\mathrm{Isom}(\mathbb{S}^3\setminus L) = \displaystyle\bigoplus_{i=1}^3 \mathbb{Z}$ & & $\mathrm{Isom}(\mathbb{S}^3\setminus L) = \displaystyle\bigoplus_{i=1}^3 \mathbb{Z}$ & \\ 
 \quad & & \quad & \\ 
 \includegraphics[width=1in]{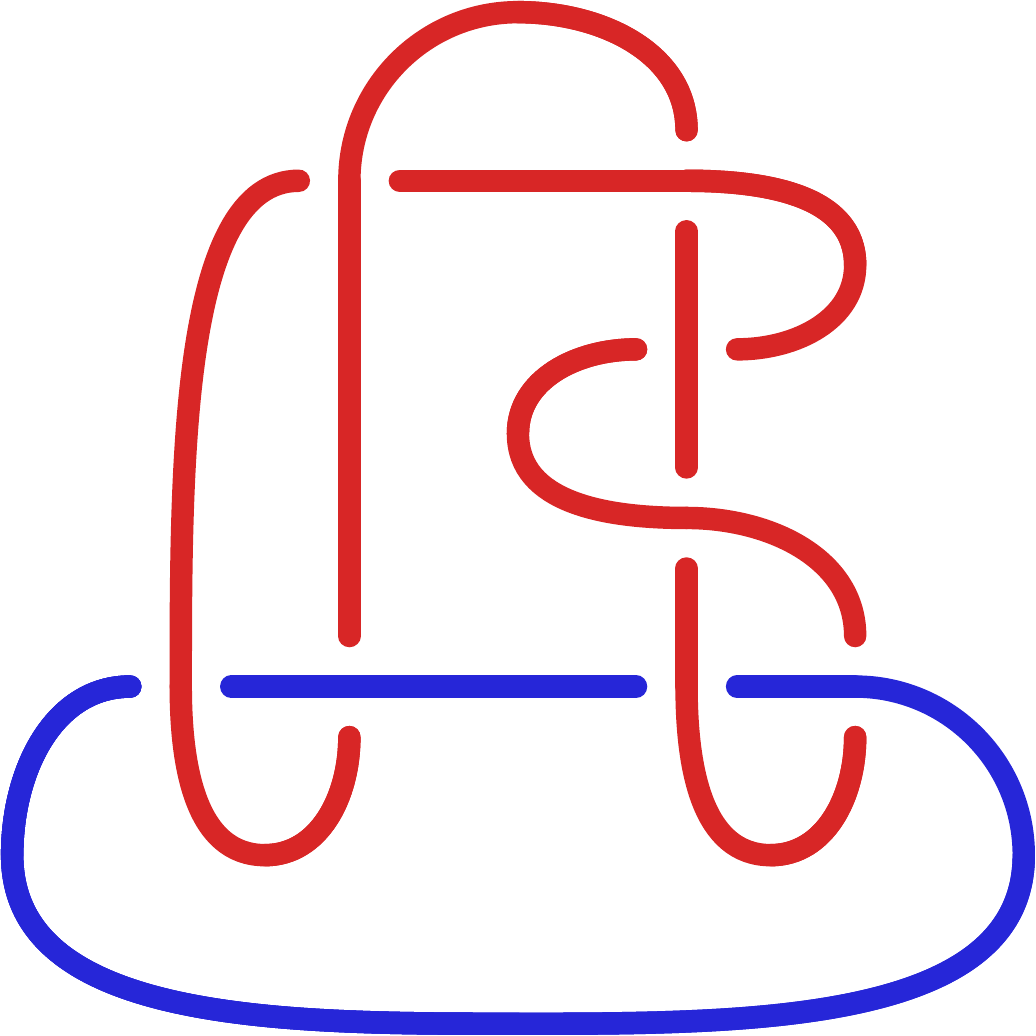}  & & \includegraphics[width=1in]{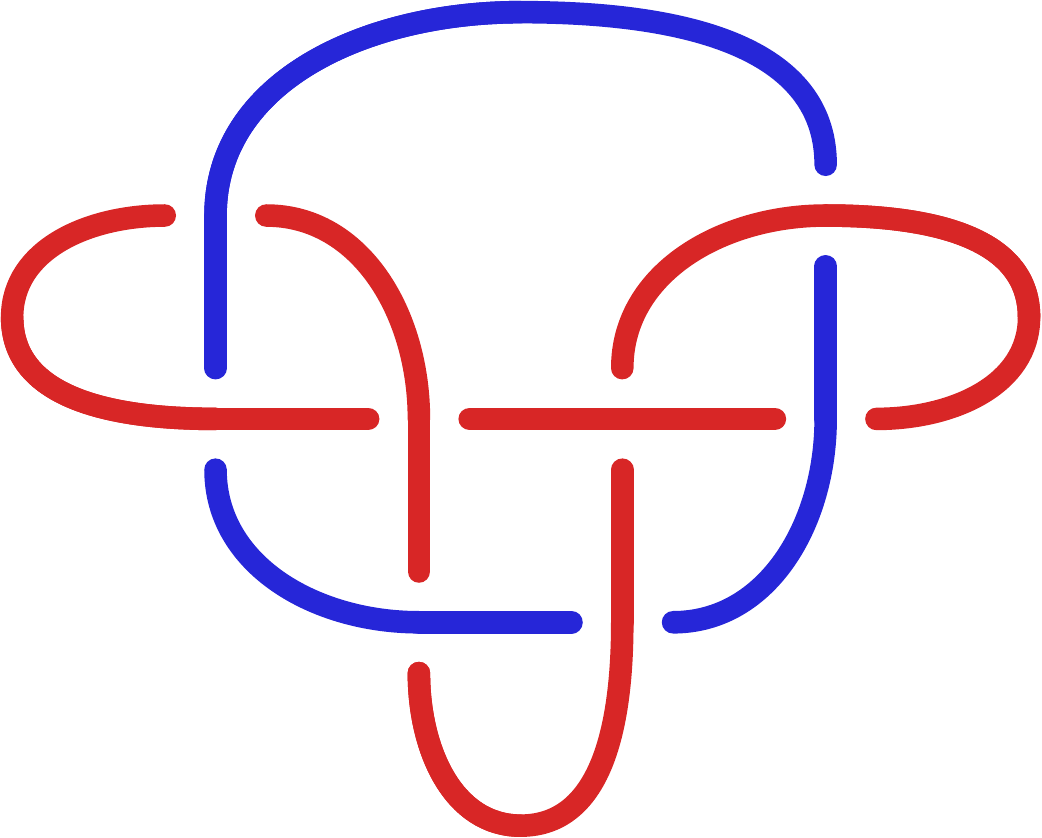} & \\ 
 \quad & & \quad & \\ 
 \hline  
\end{tabular} 
 \newpage \begin{tabular}{|c|c|c|c|} 
 \hline 
 Link & Norm Ball & Link & Norm Ball \\ 
 \hline 
\quad & \multirow{6}{*}{\Includegraphics[width=1.8in]{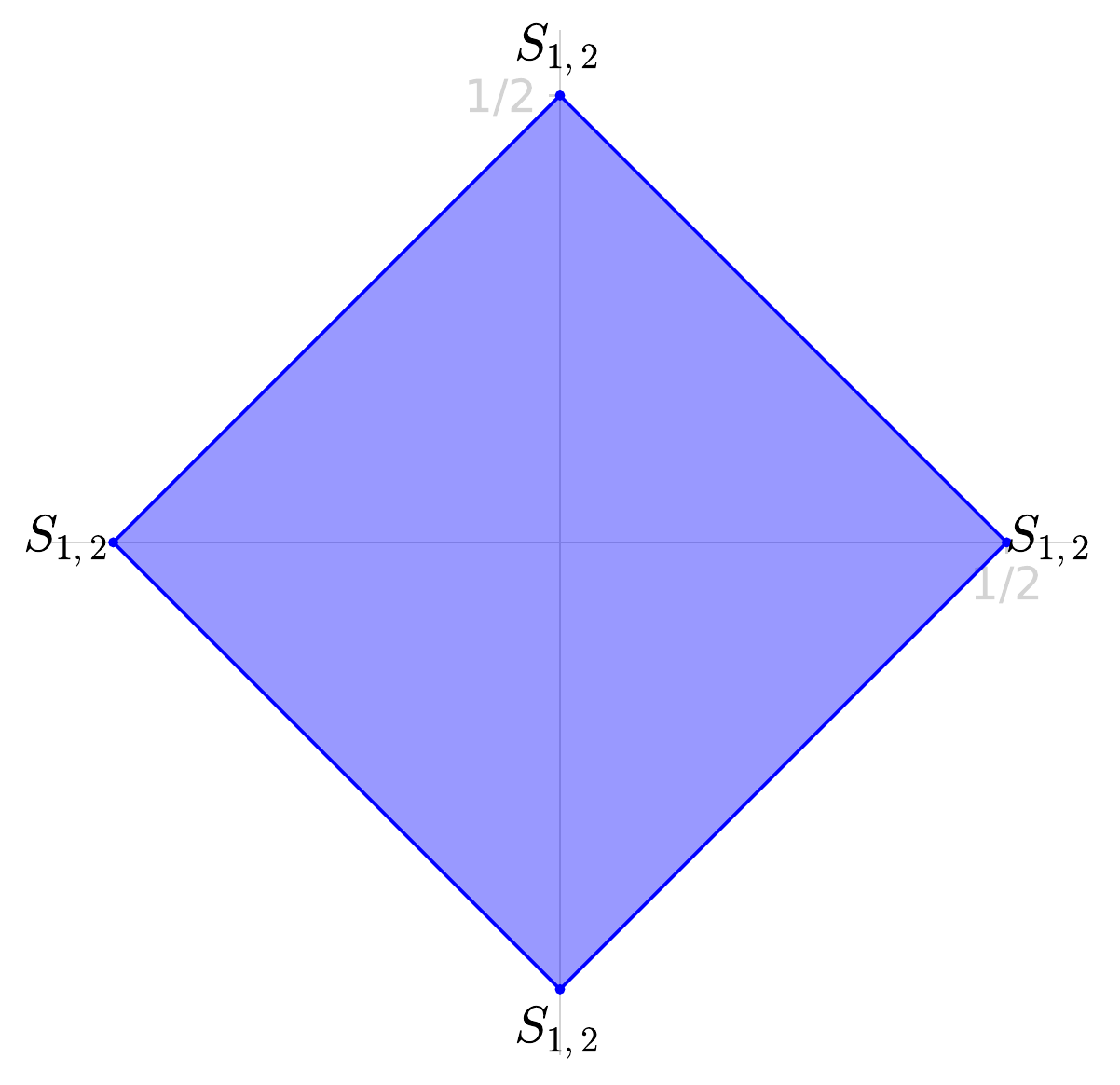}} & \quad & \multirow{6}{*}{\Includegraphics[width=1.8in]{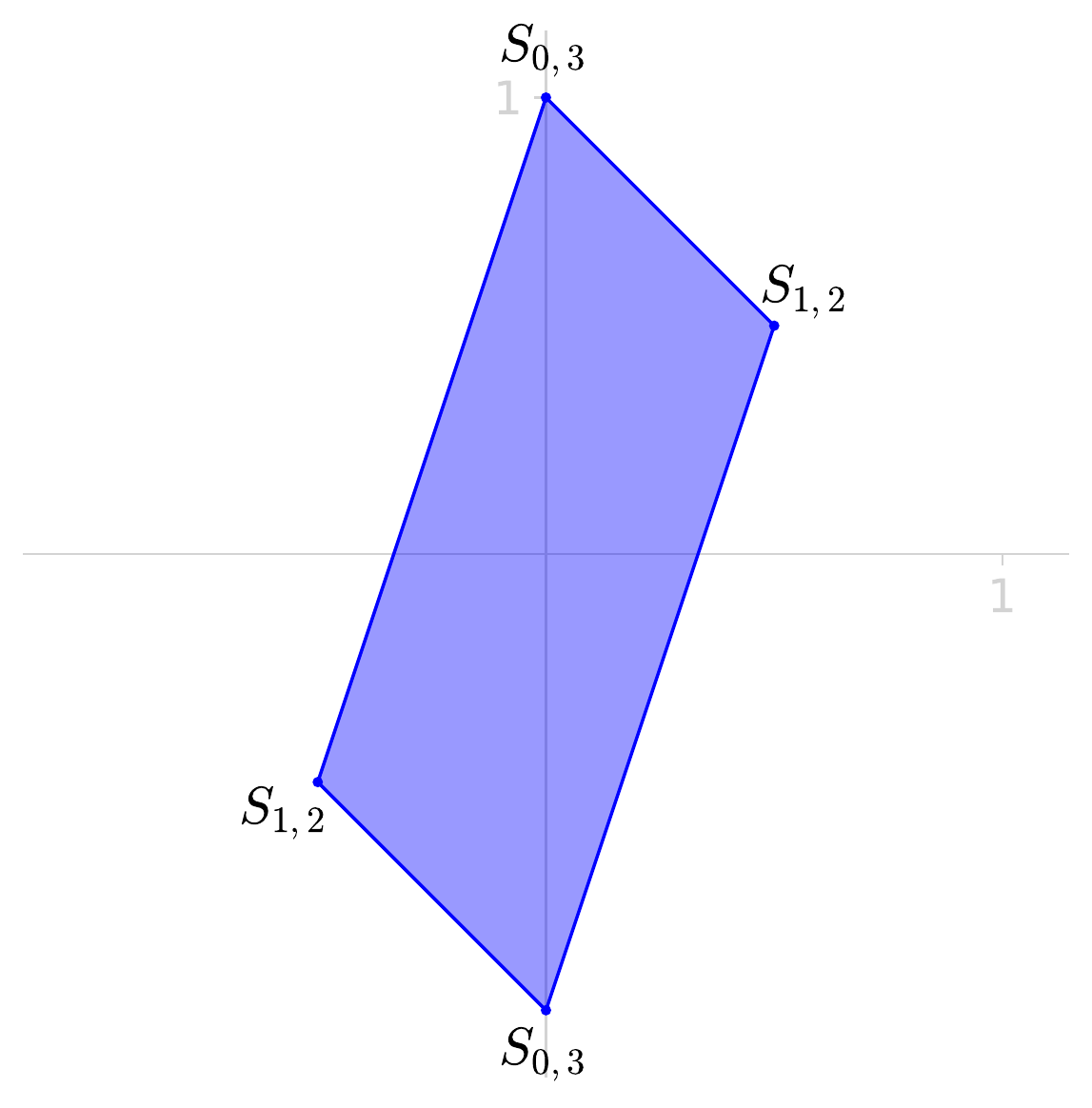}} \\ 
 $L=8^{{2}}_{{8}}$ & & $L=8^{{2}}_{{9}}$ & \\ 
 \quad & & \quad & \\ $\mathrm{Isom}(\mathbb{S}^3\setminus L) = D_4$ & & $\mathrm{Isom}(\mathbb{S}^3\setminus L) = \mathbb{{Z}}_2\oplus\mathbb{{Z}}_2$ & \\ 
 \quad & & \quad & \\ 
 \includegraphics[width=1in]{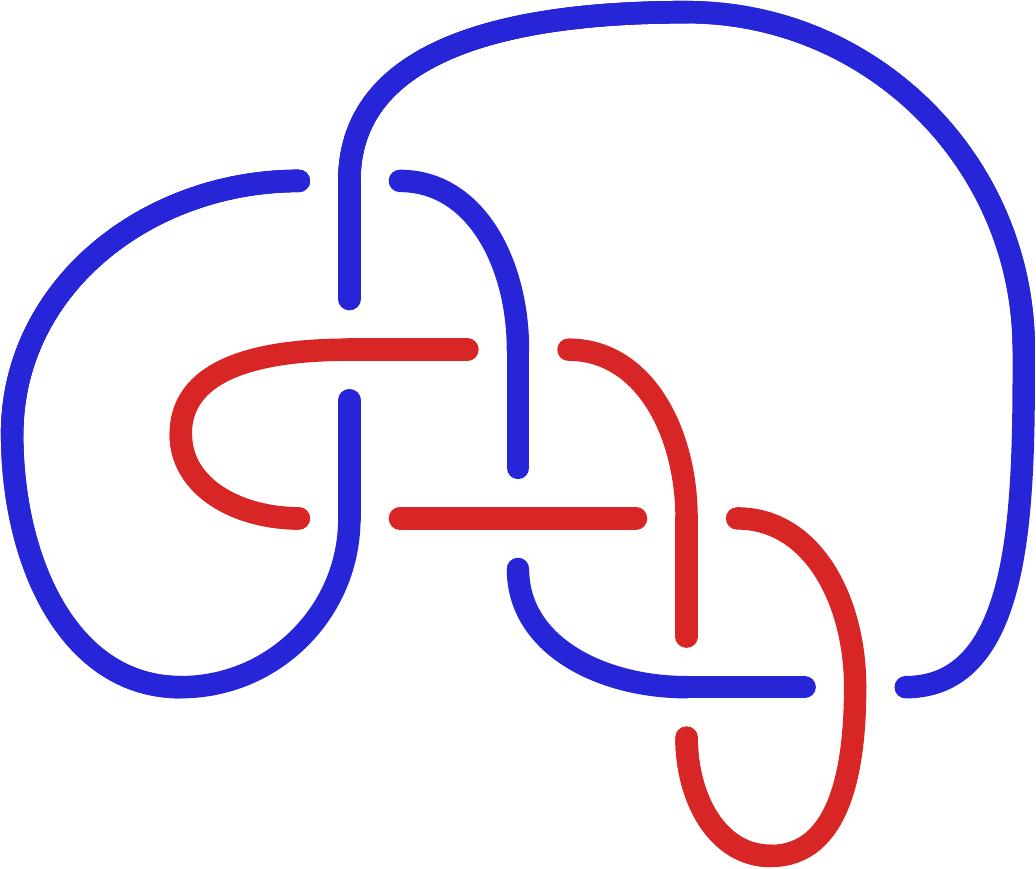}  & & \includegraphics[width=1in]{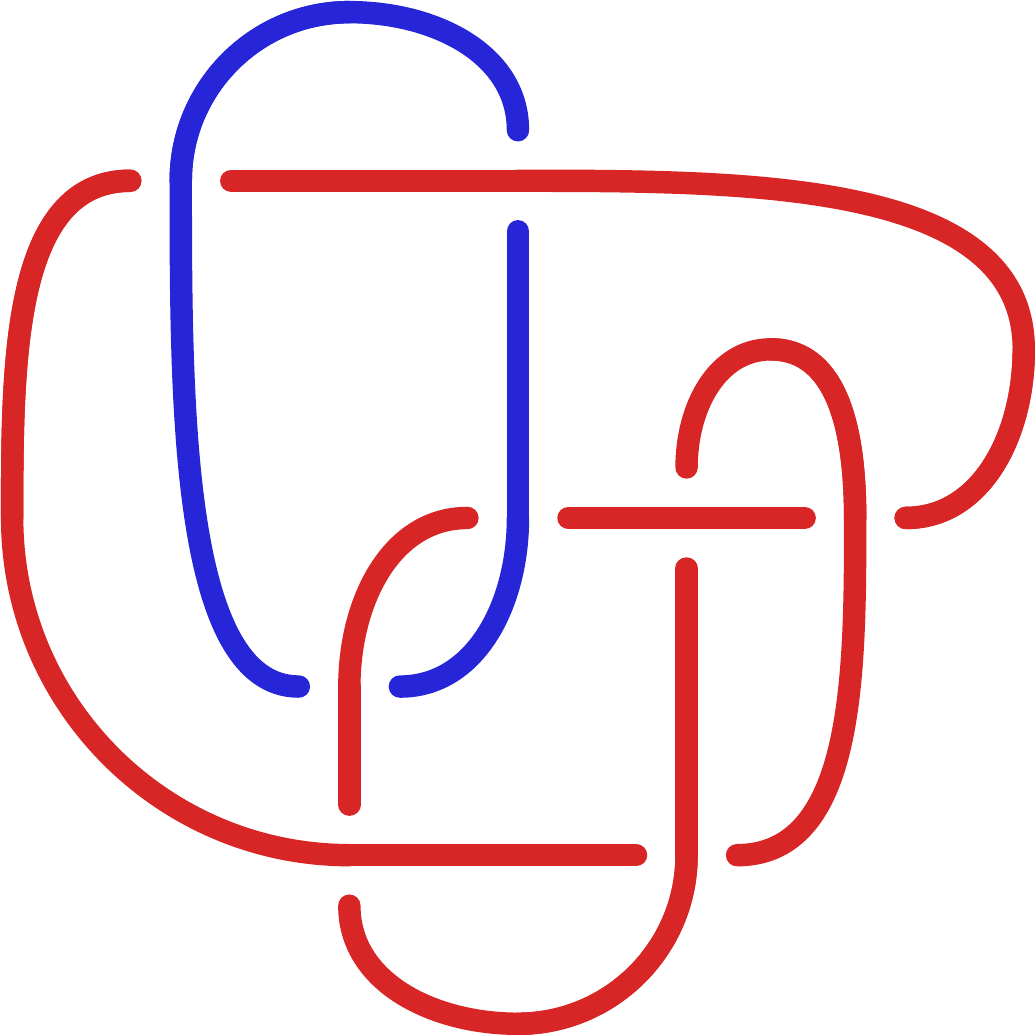} & \\ 
 \quad & & \quad & \\ 
 \hline  
\quad & \multirow{6}{*}{\Includegraphics[width=1.8in]{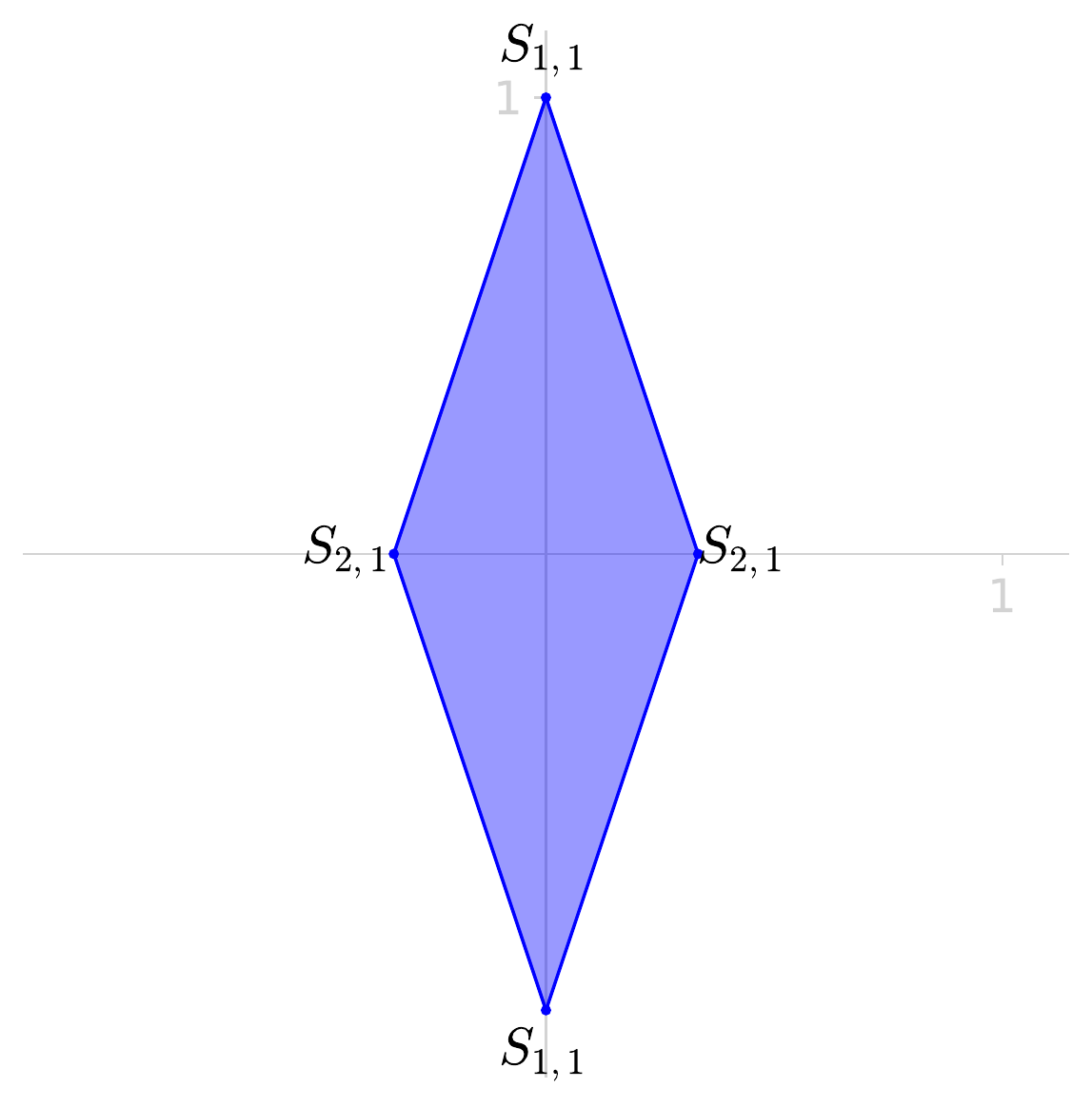}} & \quad & \multirow{6}{*}{\Includegraphics[width=1.8in]{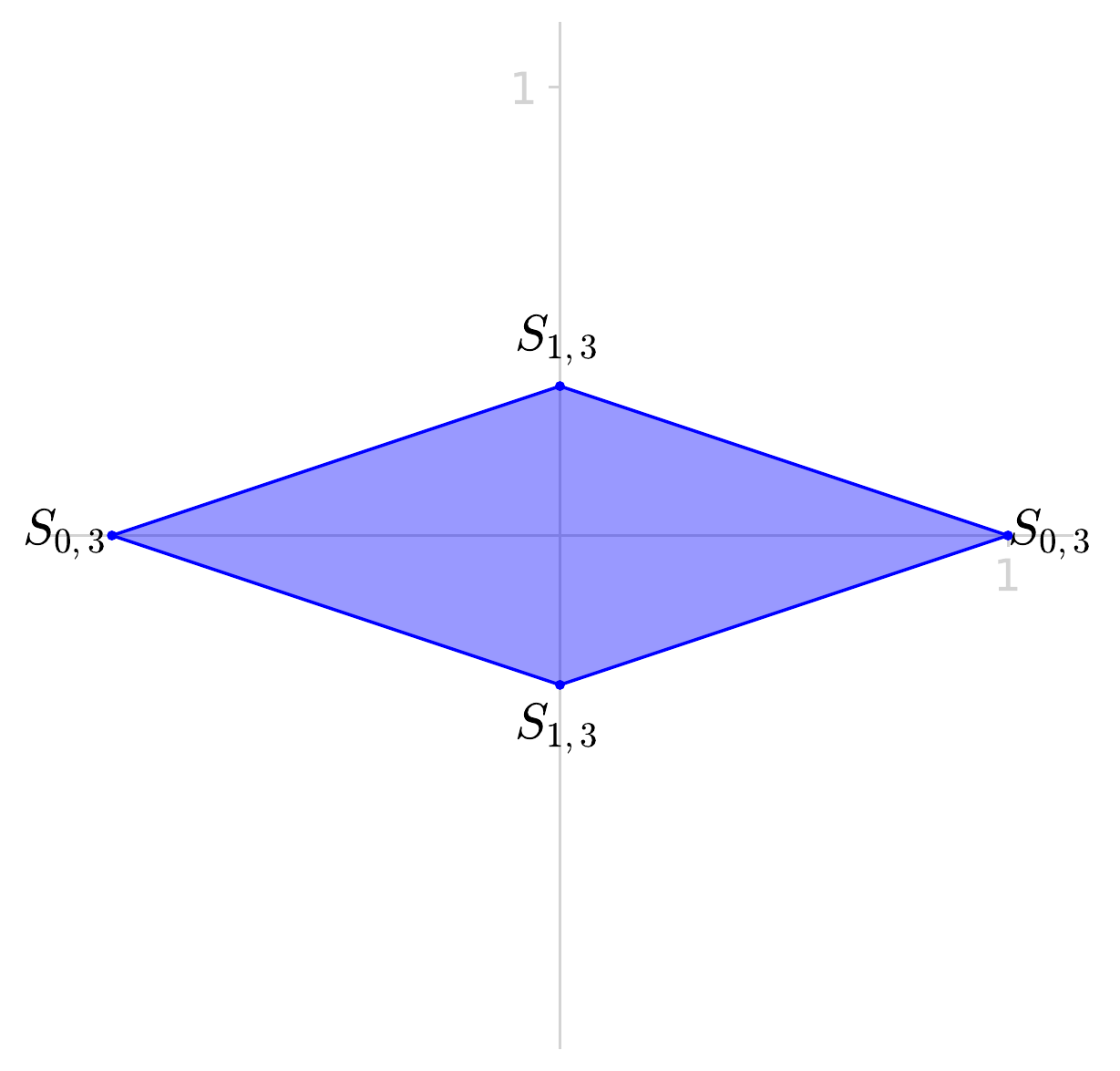}} \\ 
 $L=8^{{2}}_{{10}}$ & & $L=8^{{2}}_{{11}}$ & \\ 
 \quad & & \quad & \\ $\mathrm{Isom}(\mathbb{S}^3\setminus L) = \mathbb{{Z}}_2\oplus\mathbb{{Z}}_2$ & & $\mathrm{Isom}(\mathbb{S}^3\setminus L) = \mathbb{{Z}}_2\oplus\mathbb{{Z}}_2$ & \\ 
 \quad & & \quad & \\ 
 \includegraphics[width=1in]{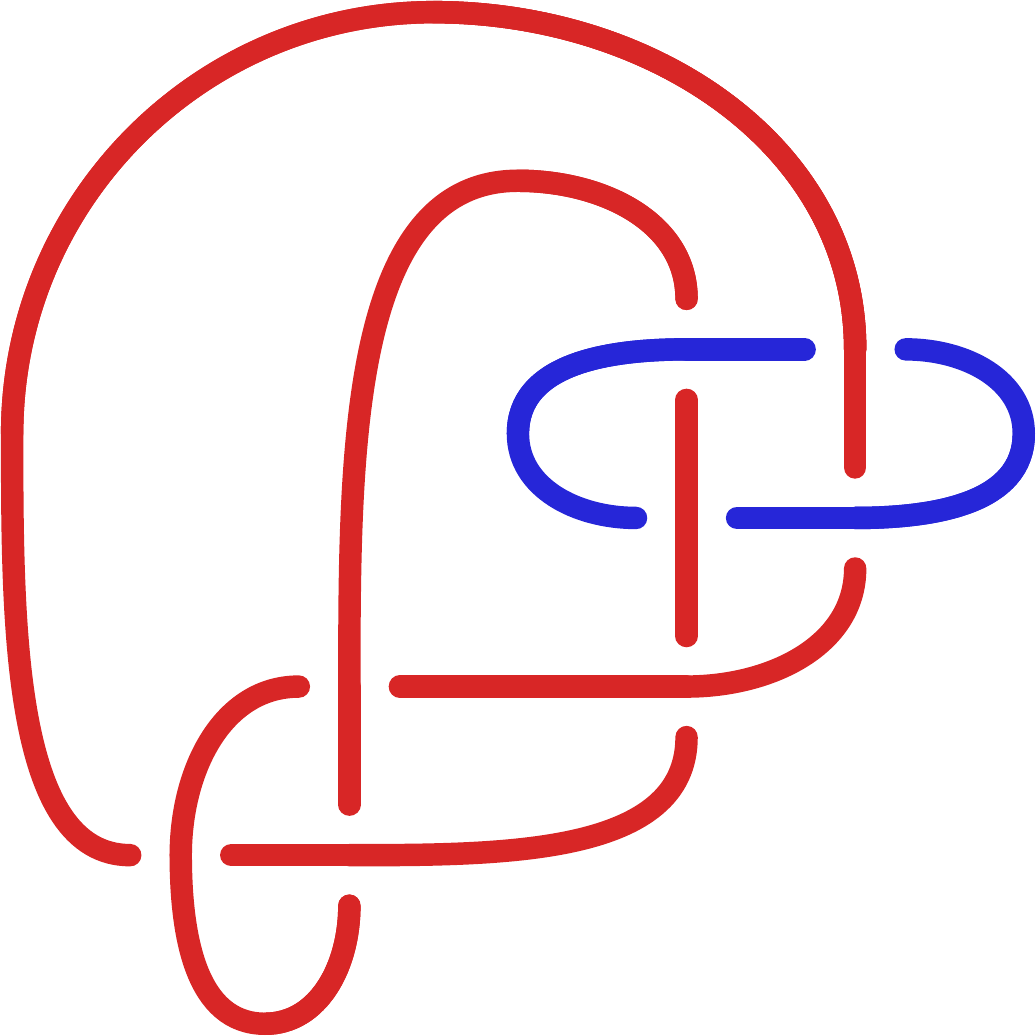}  & & \includegraphics[width=1in]{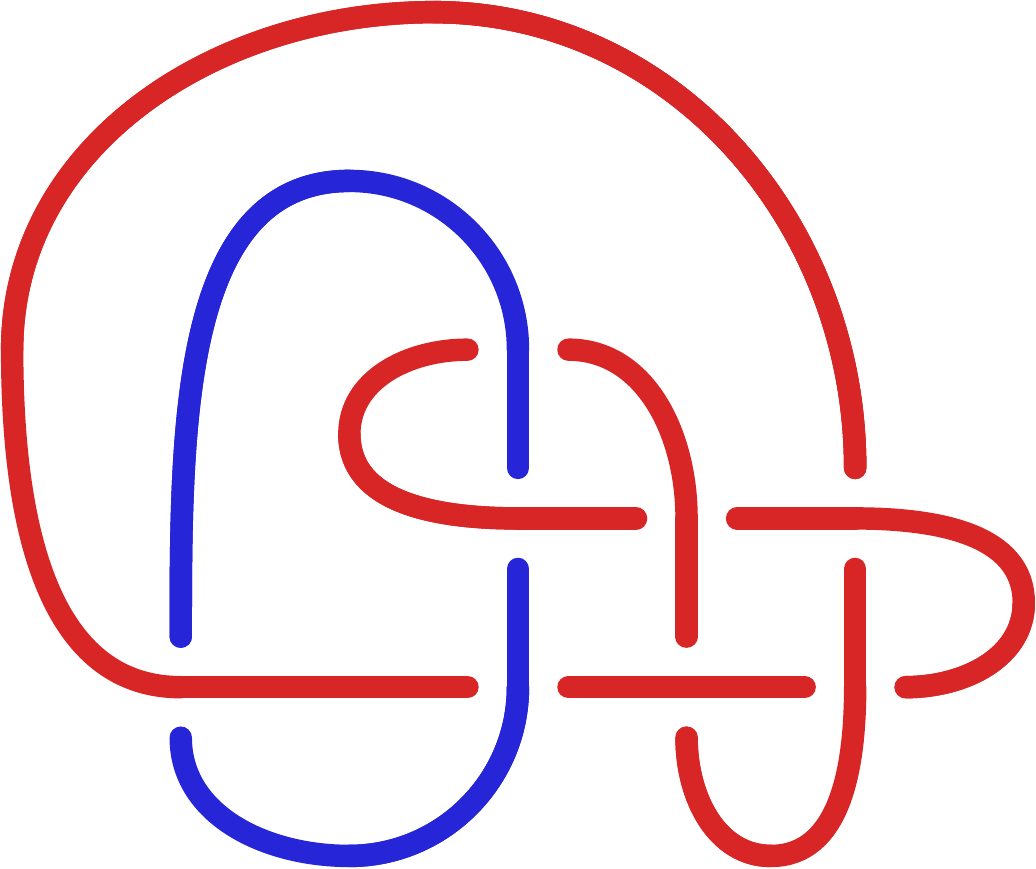} & \\ 
 \quad & & \quad & \\ 
 \hline  
\quad & \multirow{6}{*}{\Includegraphics[width=1.8in]{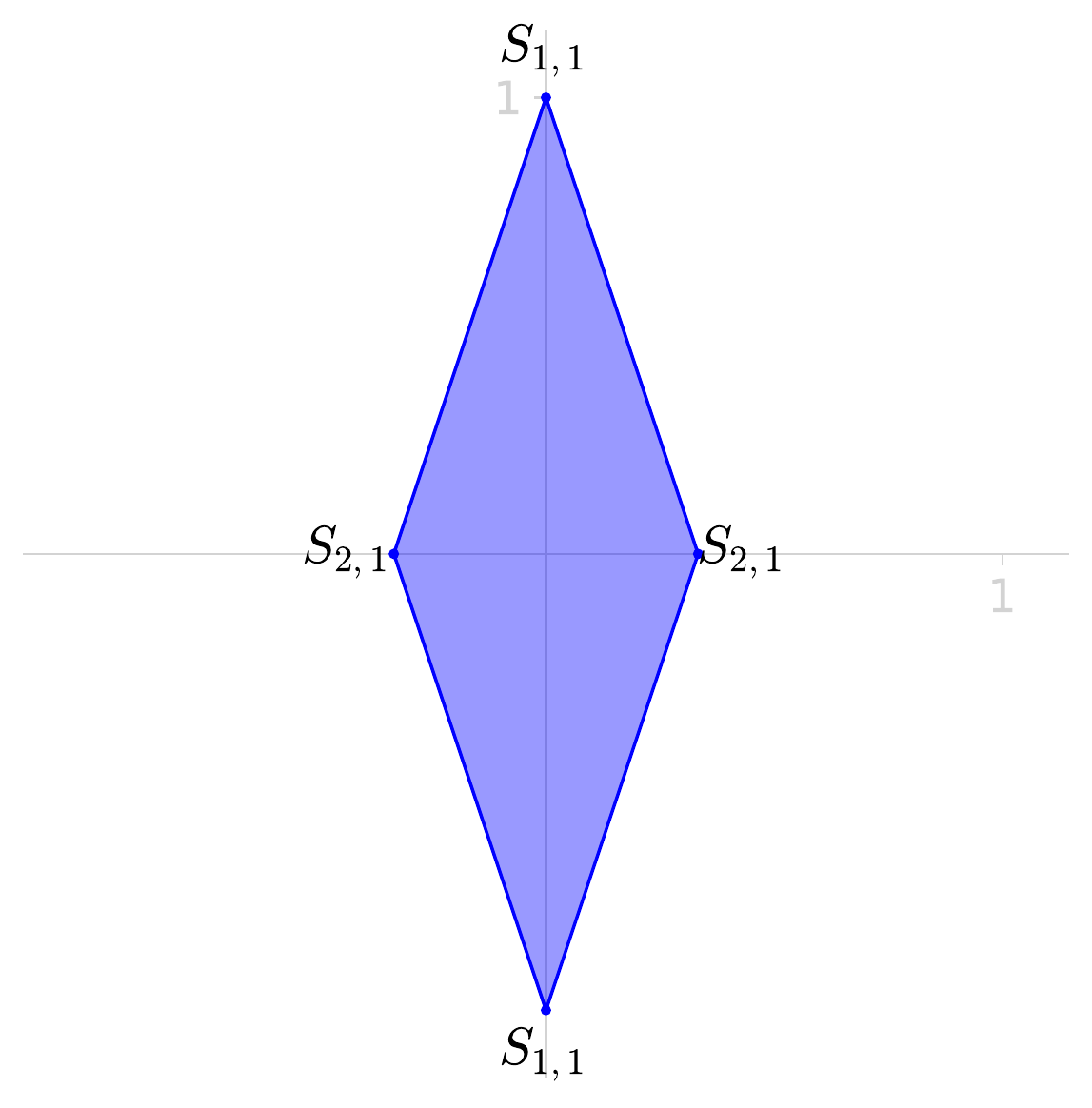}} & \quad & \multirow{6}{*}{\Includegraphics[width=1.8in]{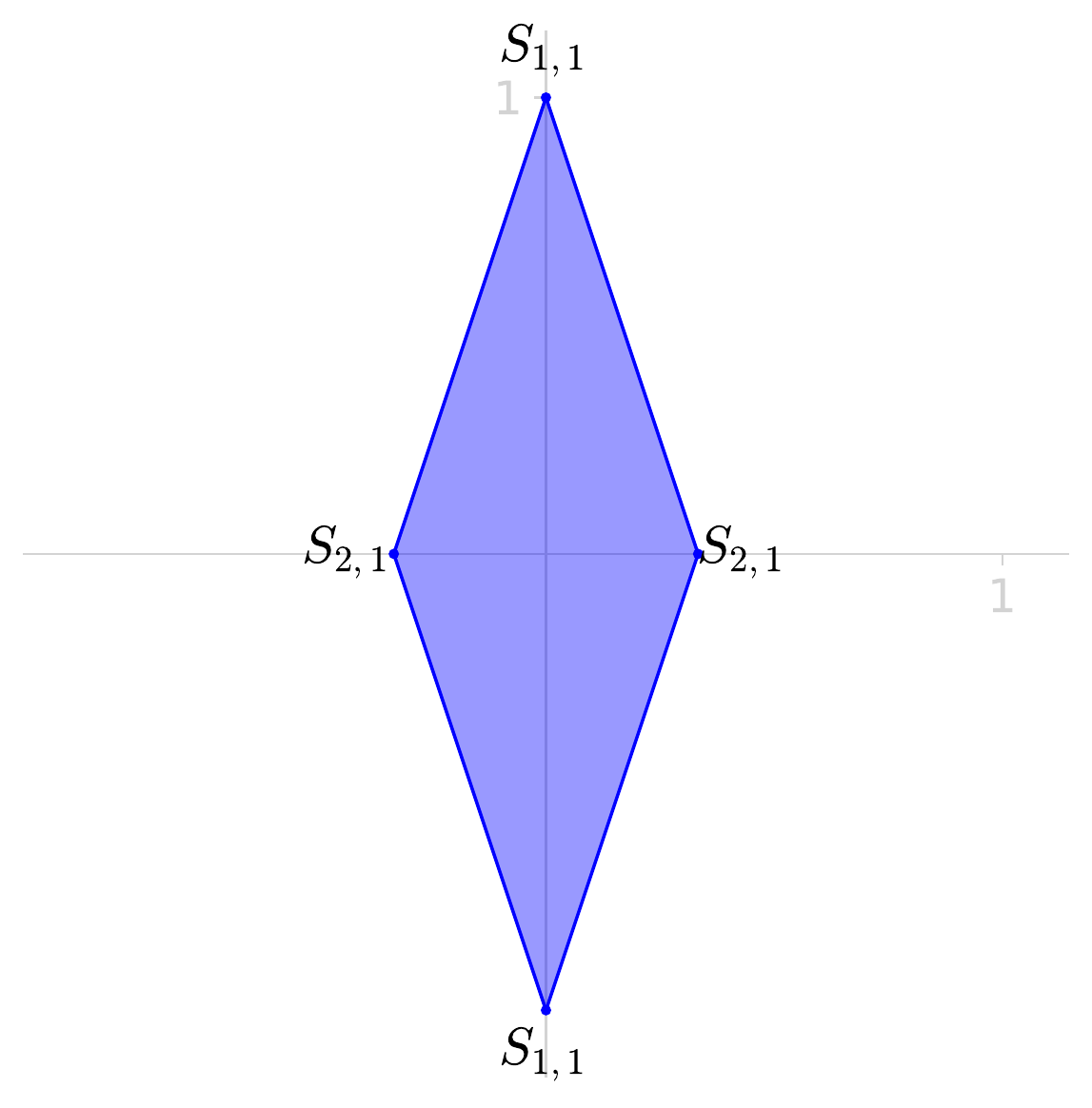}} \\ 
 $L=8^{{2}}_{{12}}$ & & $L=8^{{2}}_{{13}}$ & \\ 
 \quad & & \quad & \\ $\mathrm{Isom}(\mathbb{S}^3\setminus L) = \mathbb{{Z}}_2\oplus\mathbb{{Z}}_2$ & & $\mathrm{Isom}(\mathbb{S}^3\setminus L) = \mathbb{{Z}}_2\oplus\mathbb{{Z}}_2$ & \\ 
 \quad & & \quad & \\ 
 \includegraphics[width=1in]{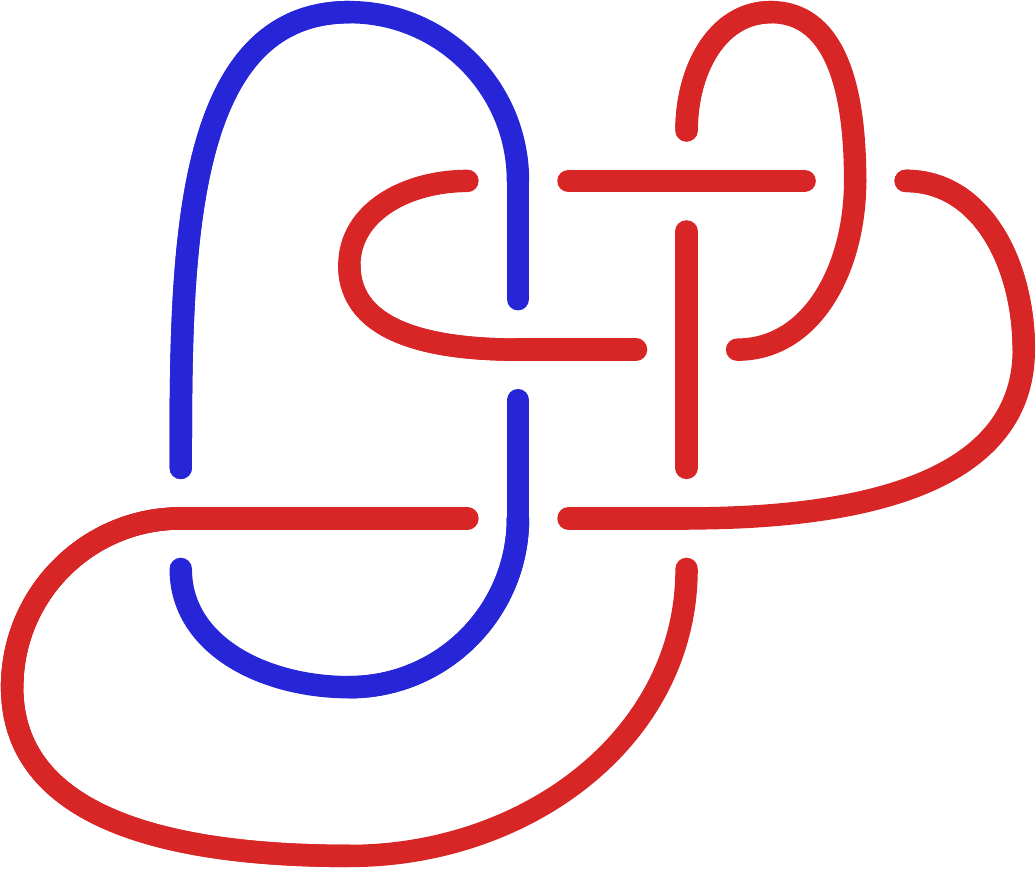}  & & \includegraphics[width=1in]{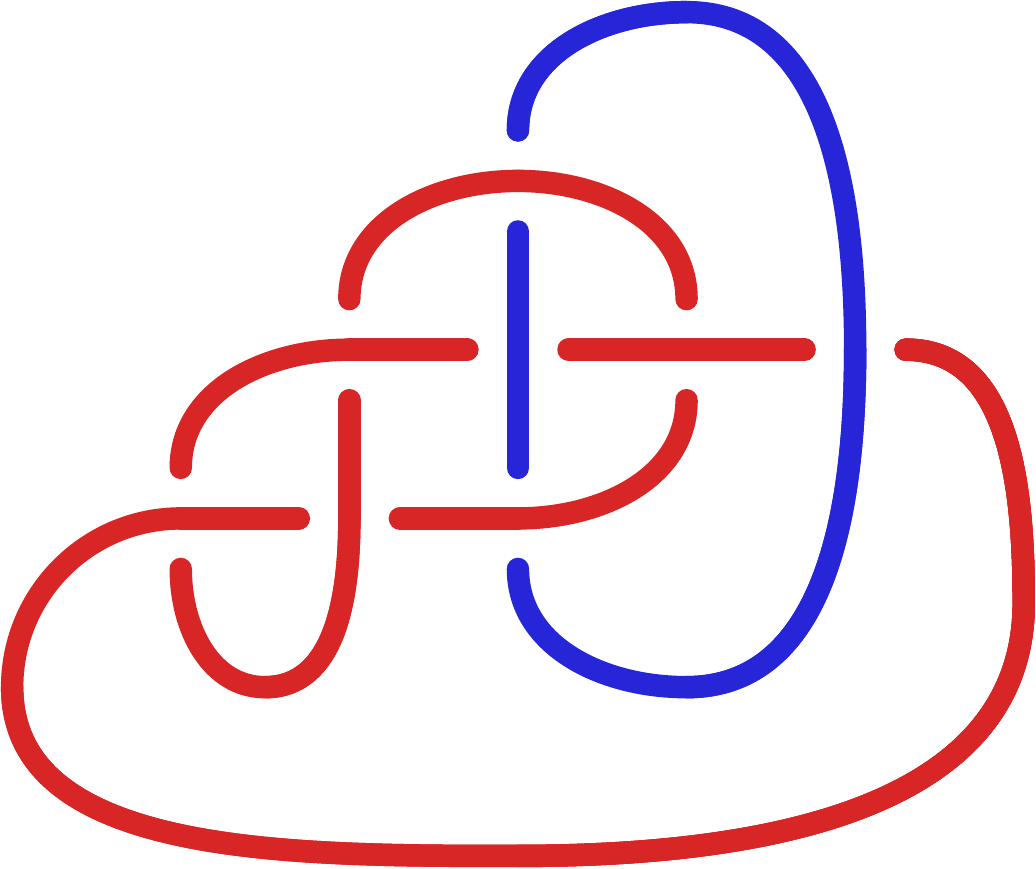} & \\ 
 \quad & & \quad & \\ 
 \hline  
\quad & \multirow{6}{*}{\Includegraphics[width=1.8in]{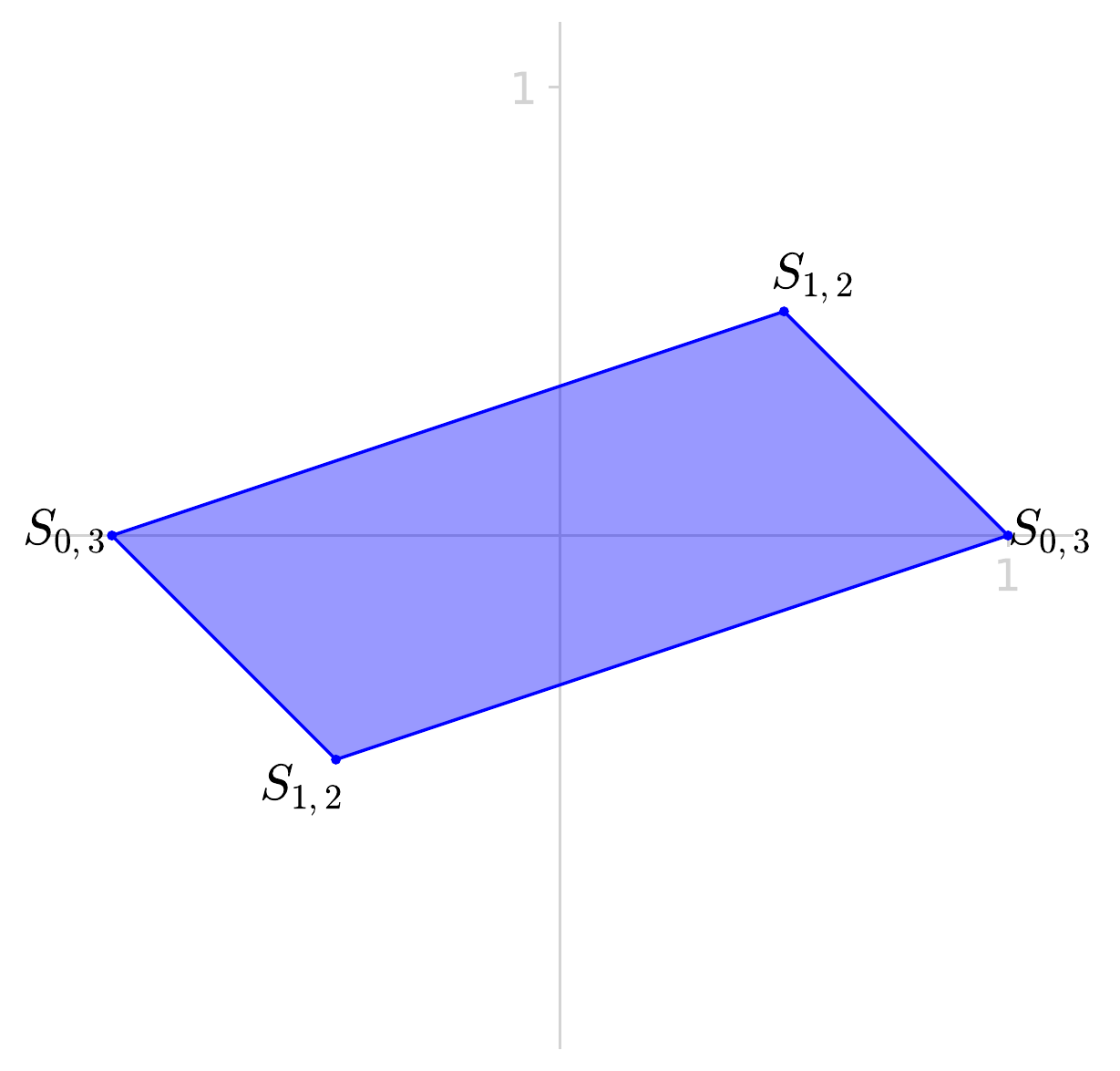}} & \quad & \multirow{6}{*}{\Includegraphics[width=1.8in]{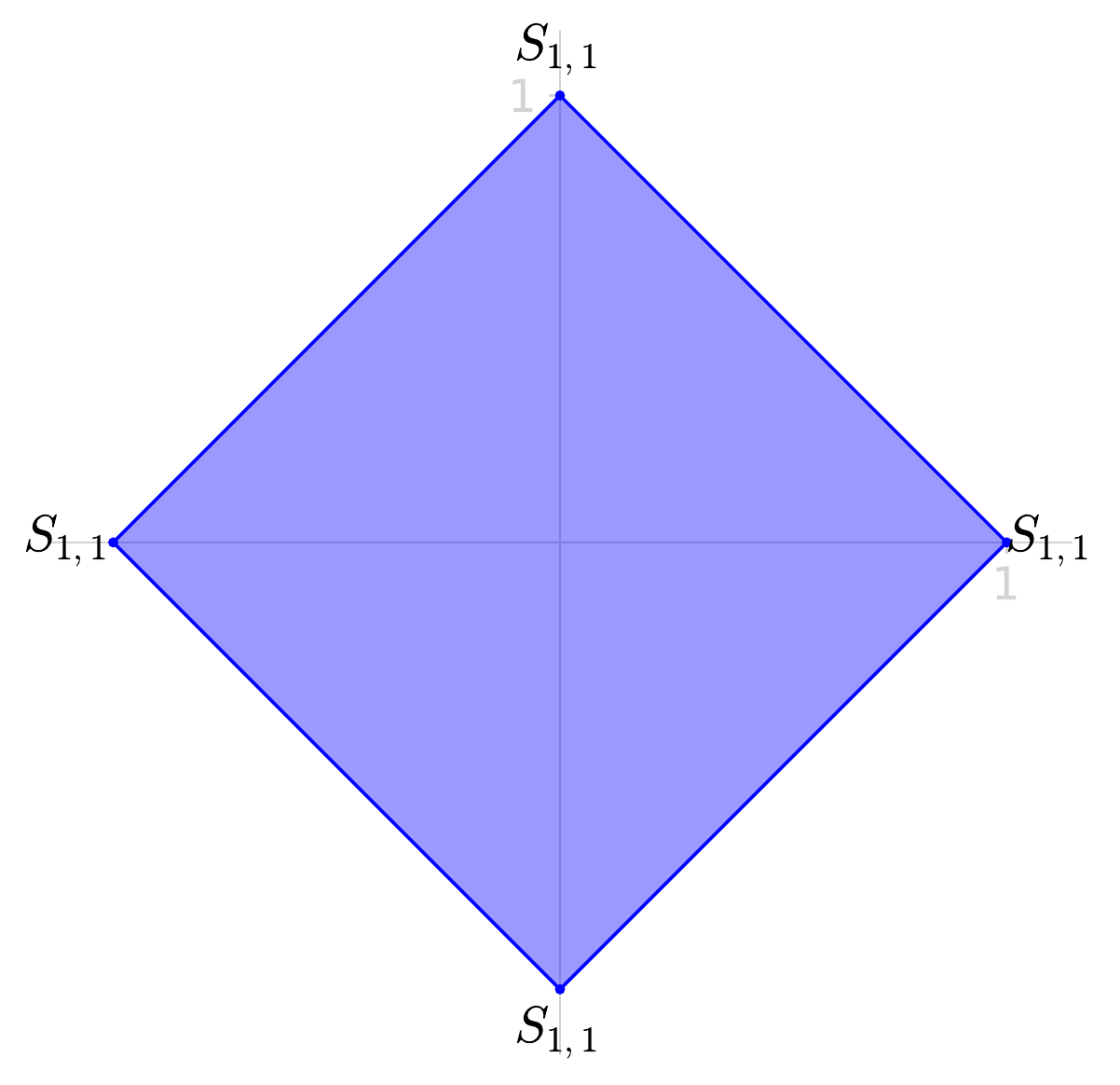}} \\ 
 $L=8^{{2}}_{{14}}$ & & $L=8^{{2}}_{{15}}$ & \\ 
 \quad & & \quad & \\ $\mathrm{Isom}(\mathbb{S}^3\setminus L) = D_4$ & & $\mathrm{Isom}(\mathbb{S}^3\setminus L) = D_4$ & \\ 
 \quad & & \quad & \\ 
 \includegraphics[width=1in]{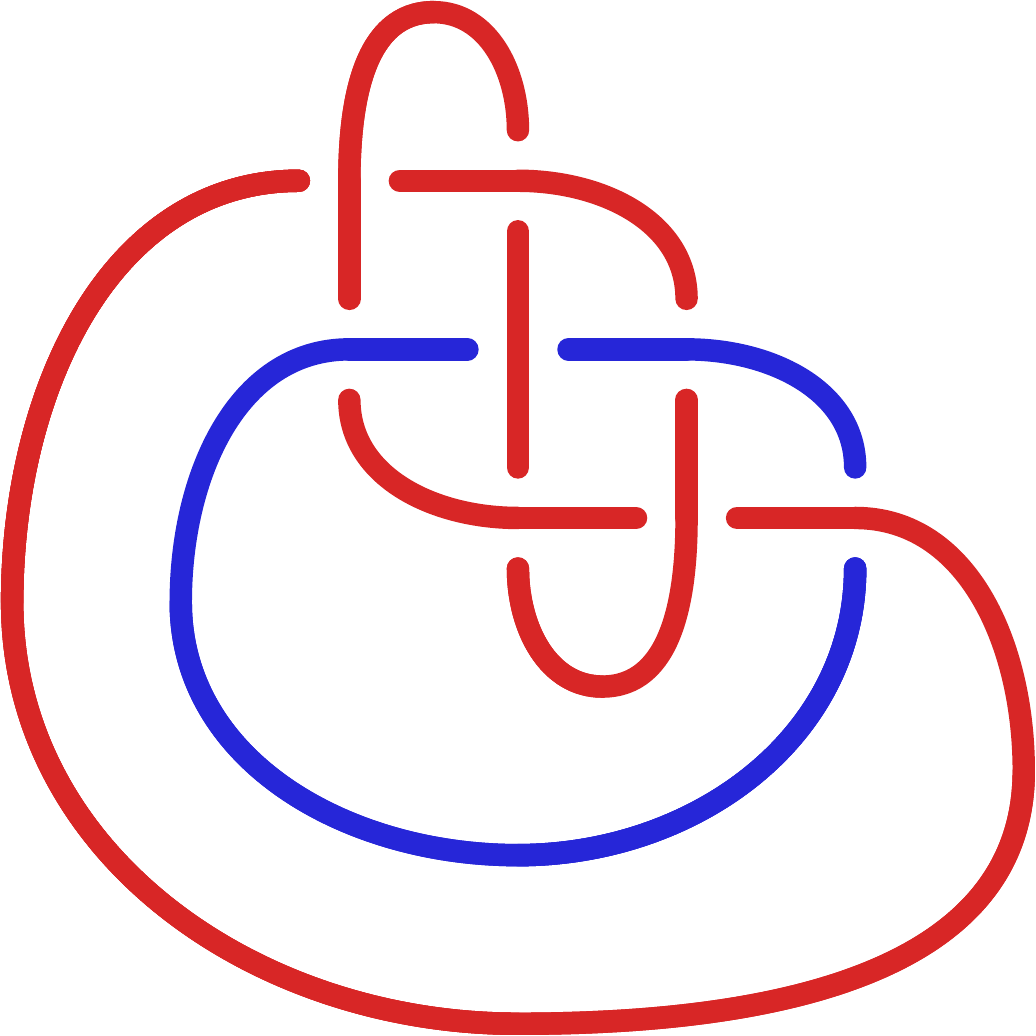}  & & \includegraphics[width=1in]{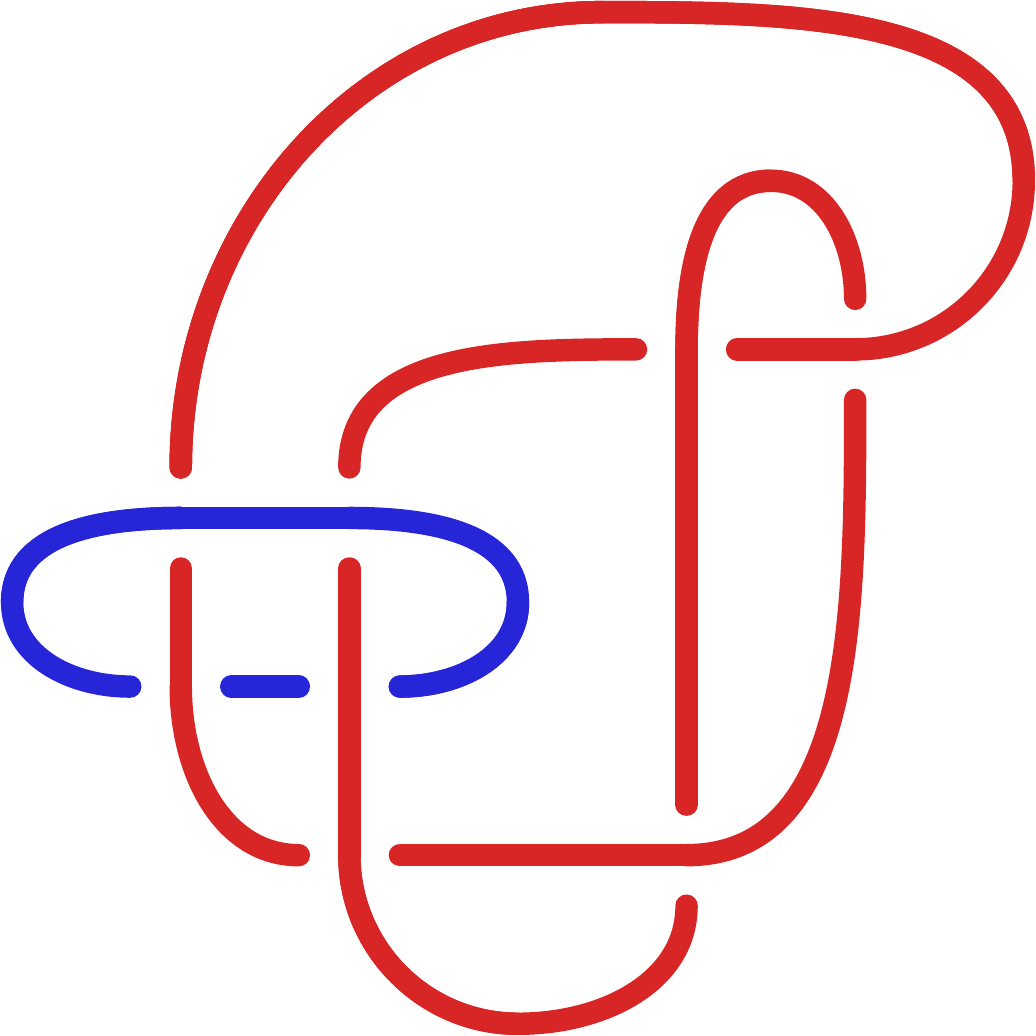} & \\ 
 \quad & & \quad & \\ 
 \hline  
\quad & \multirow{6}{*}{\Includegraphics[width=1.8in]{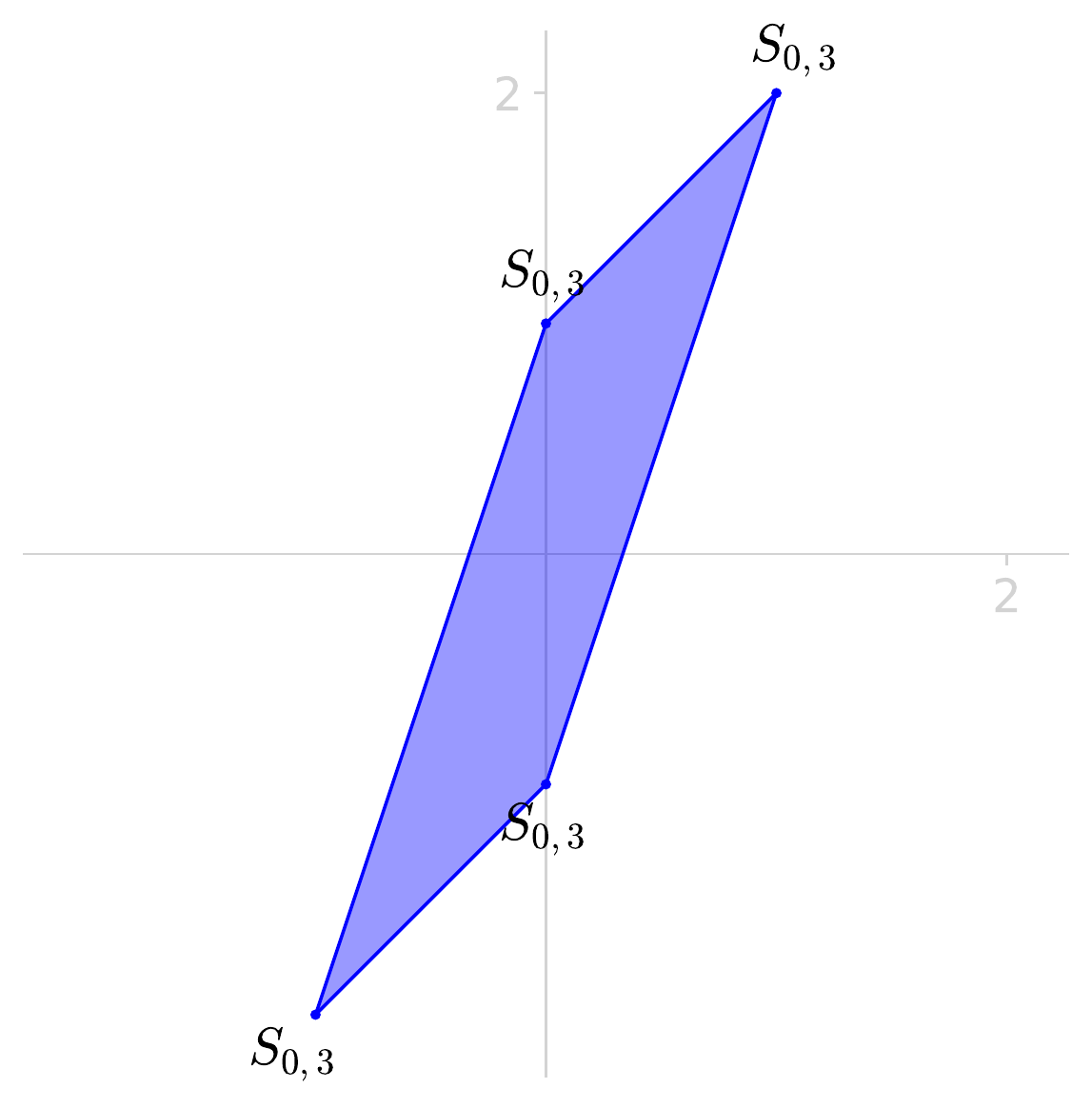}} & \quad & \multirow{6}{*}{\Includegraphics[width=1.8in]{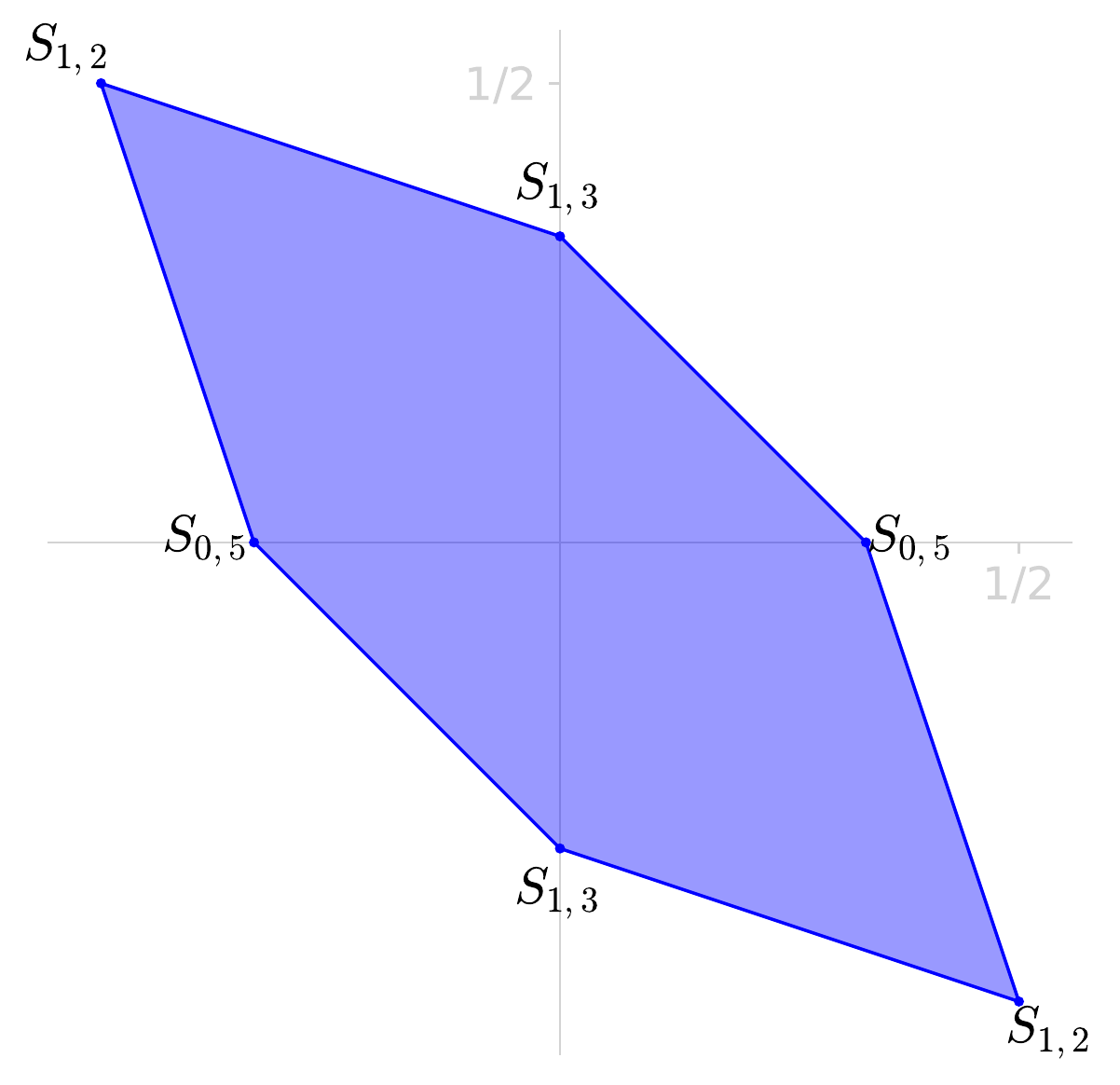}} \\ 
 $L=8^{{2}}_{{16}}$ & & $L=9^{{2}}_{{1}}$ & \\ 
 \quad & & \quad & \\ $\mathrm{Isom}(\mathbb{S}^3\setminus L) = \displaystyle\bigoplus_{i=1}^3 \mathbb{Z}$ & & $\mathrm{Isom}(\mathbb{S}^3\setminus L) = \mathbb{{Z}}_2\oplus\mathbb{{Z}}_2$ & \\ 
 \quad & & \quad & \\ 
 \includegraphics[width=1in]{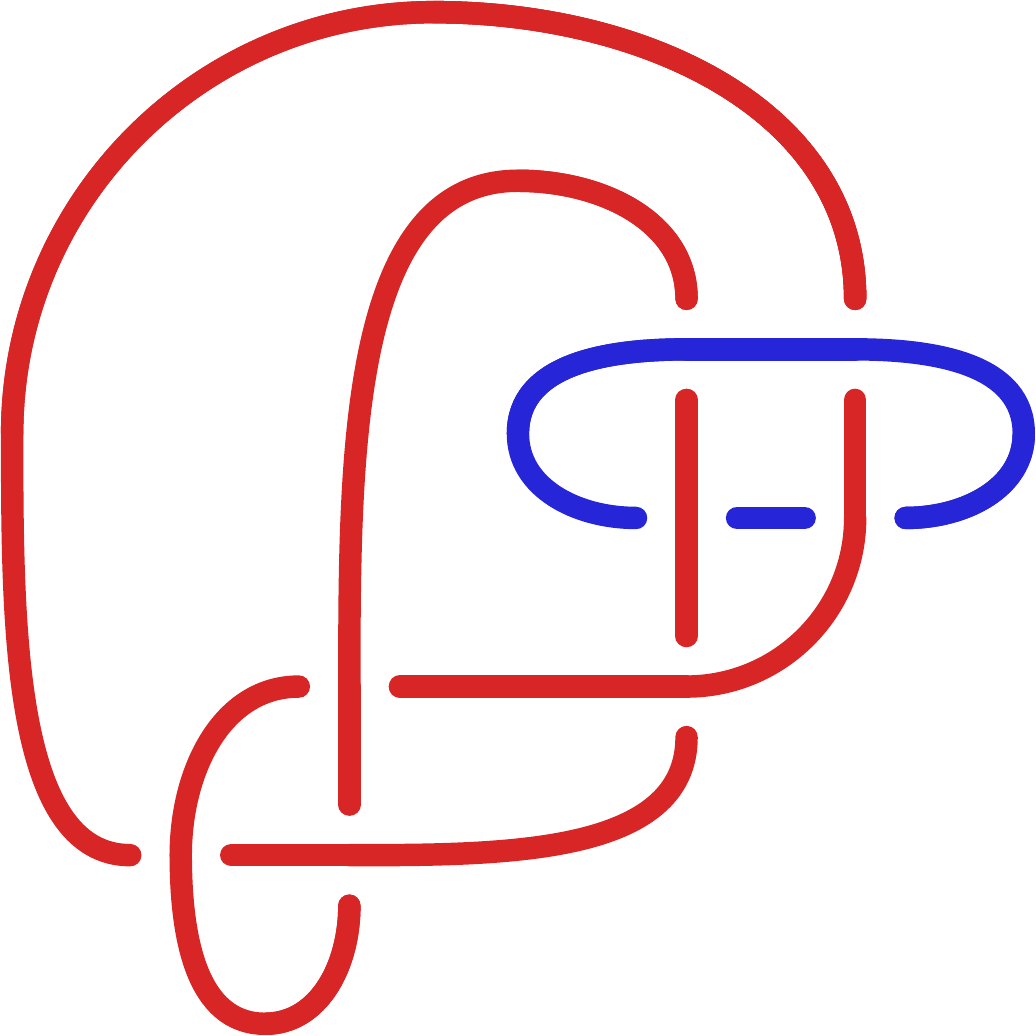}  & & \includegraphics[width=1in]{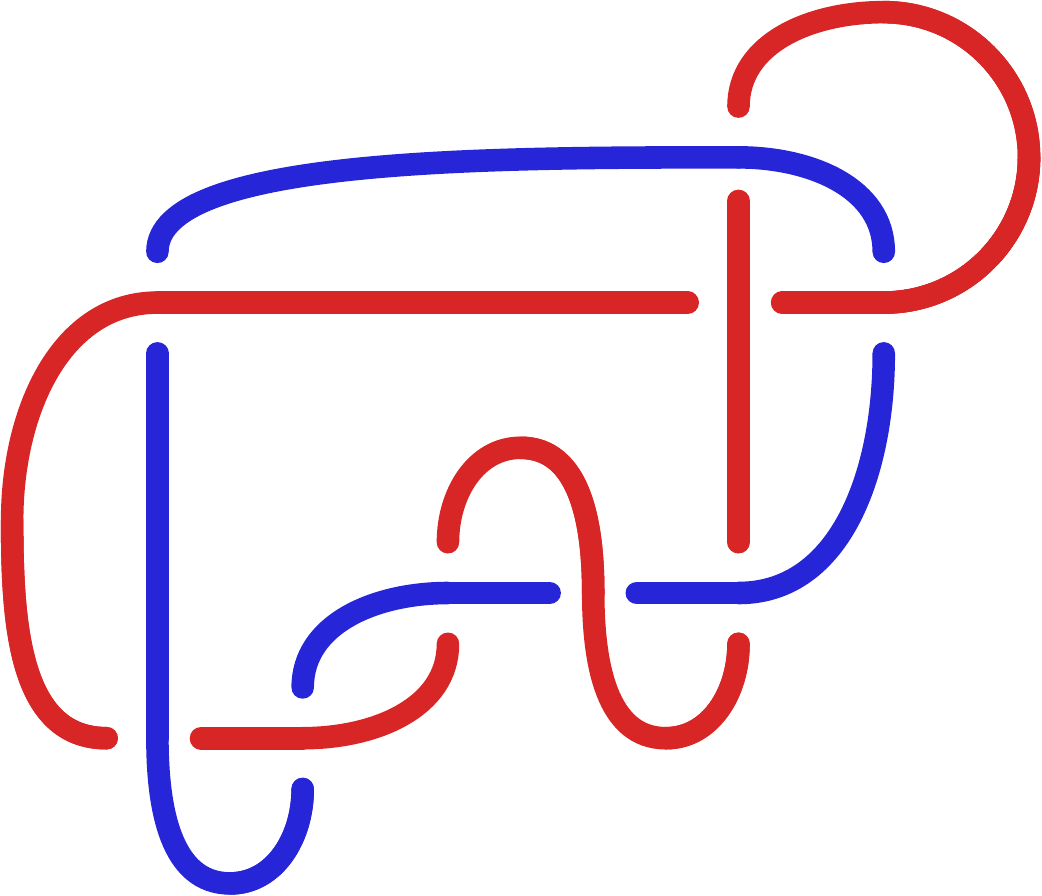} & \\ 
 \quad & & \quad & \\ 
 \hline  
\end{tabular} 
 \newpage \begin{tabular}{|c|c|c|c|} 
 \hline 
 Link & Norm Ball & Link & Norm Ball \\ 
 \hline 
\quad & \multirow{6}{*}{\Includegraphics[width=1.8in]{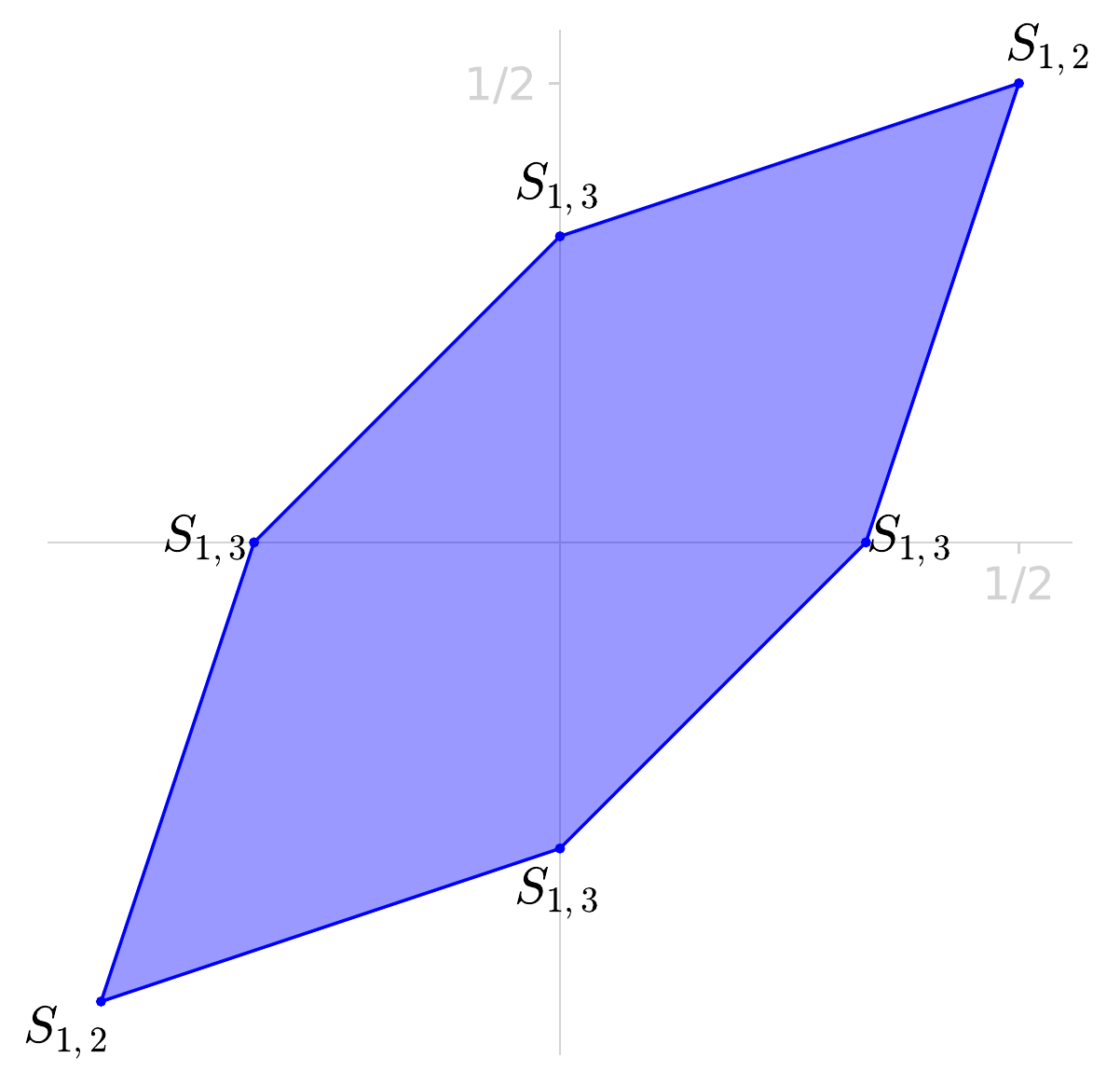}} & \quad & \multirow{6}{*}{\Includegraphics[width=1.8in]{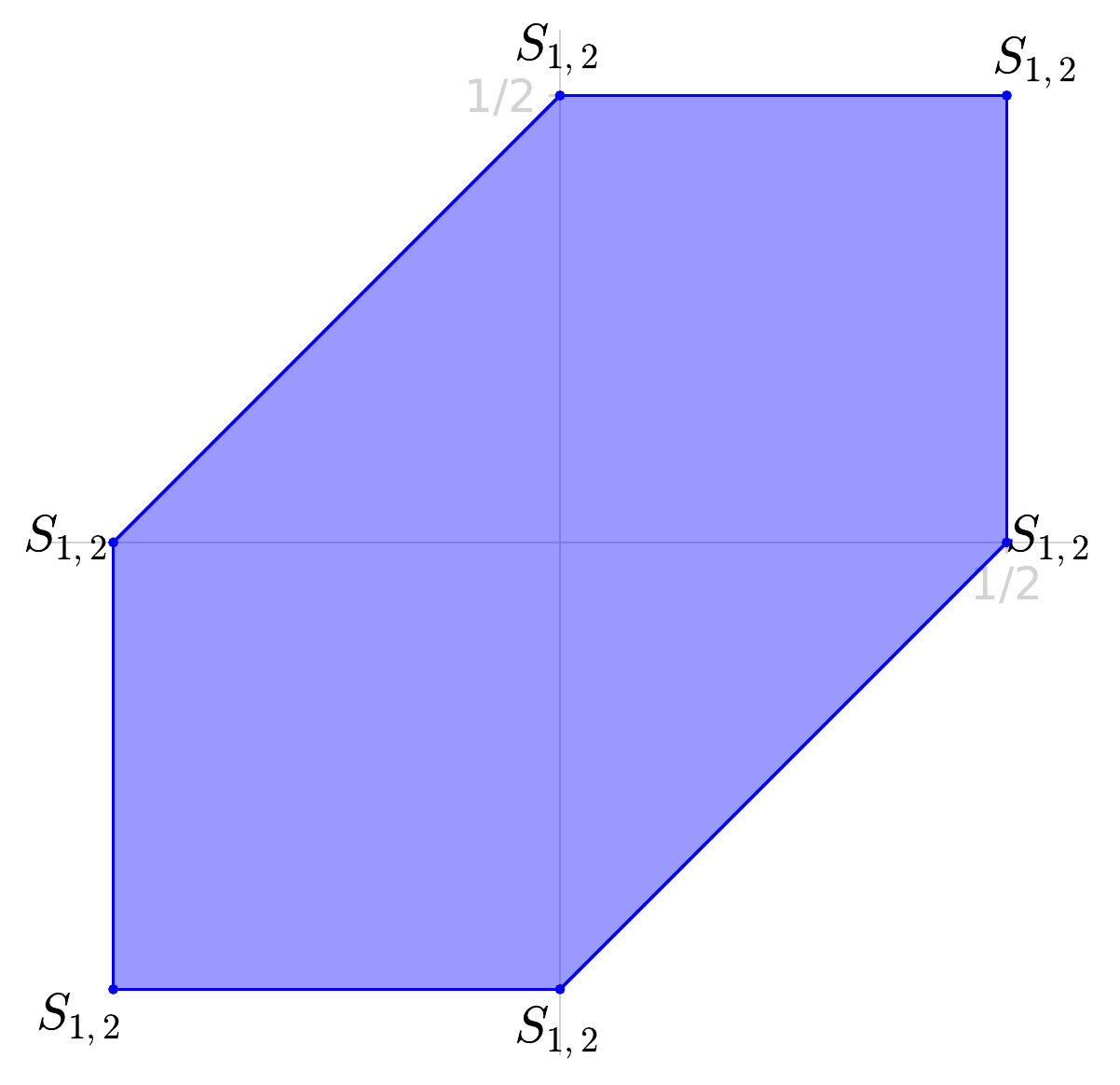}} \\ 
 $L=9^{{2}}_{{2}}$ & & $L=9^{{2}}_{{3}}$ & \\ 
 \quad & & \quad & \\ $\mathrm{Isom}(\mathbb{S}^3\setminus L) = \mathbb{{Z}}_2\oplus\mathbb{{Z}}_2$ & & $\mathrm{Isom}(\mathbb{S}^3\setminus L) = \mathbb{{Z}}_2\oplus\mathbb{{Z}}_2$ & \\ 
 \quad & & \quad & \\ 
 \includegraphics[width=1in]{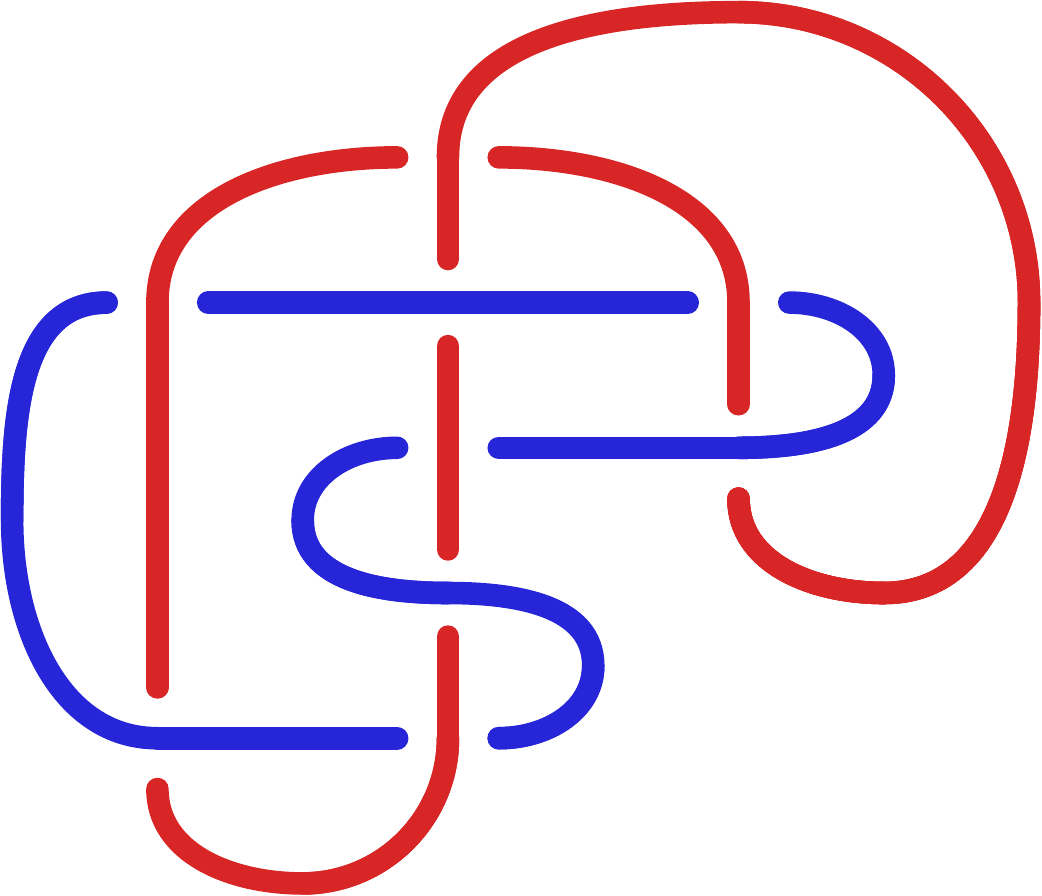}  & & \includegraphics[width=1in]{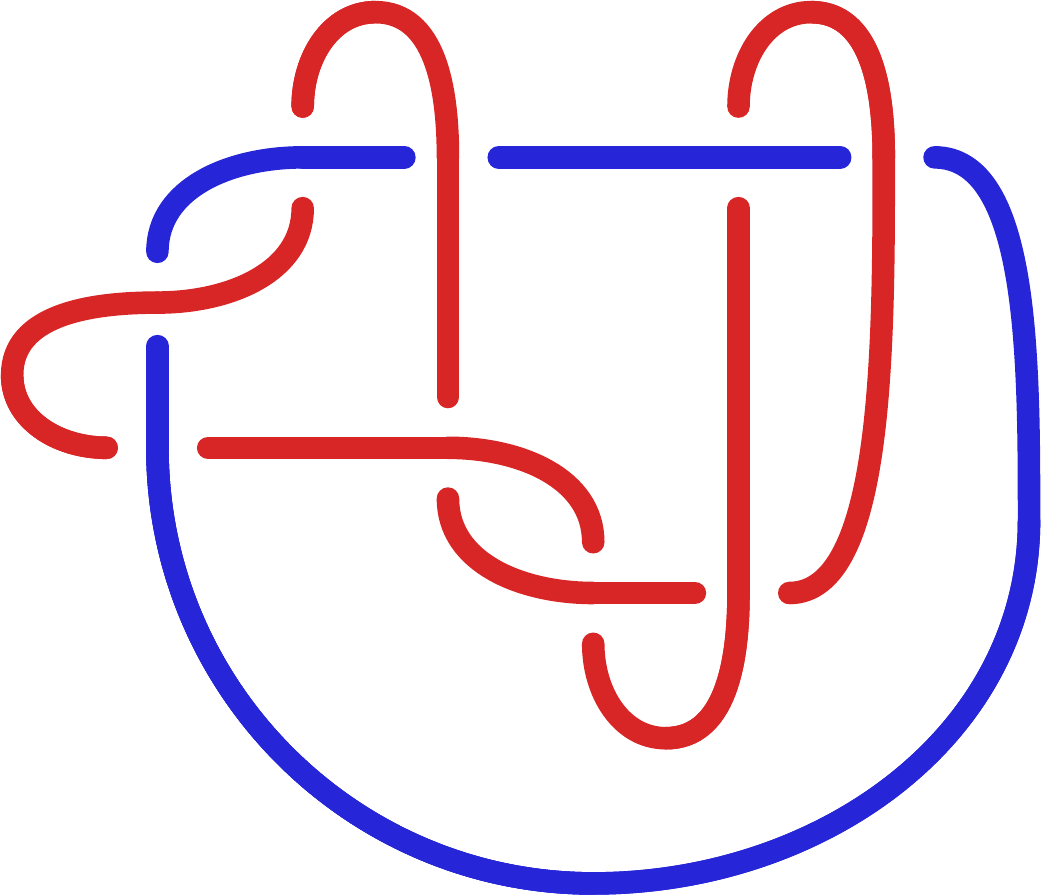} & \\ 
 \quad & & \quad & \\ 
 \hline  
\quad & \multirow{6}{*}{\Includegraphics[width=1.8in]{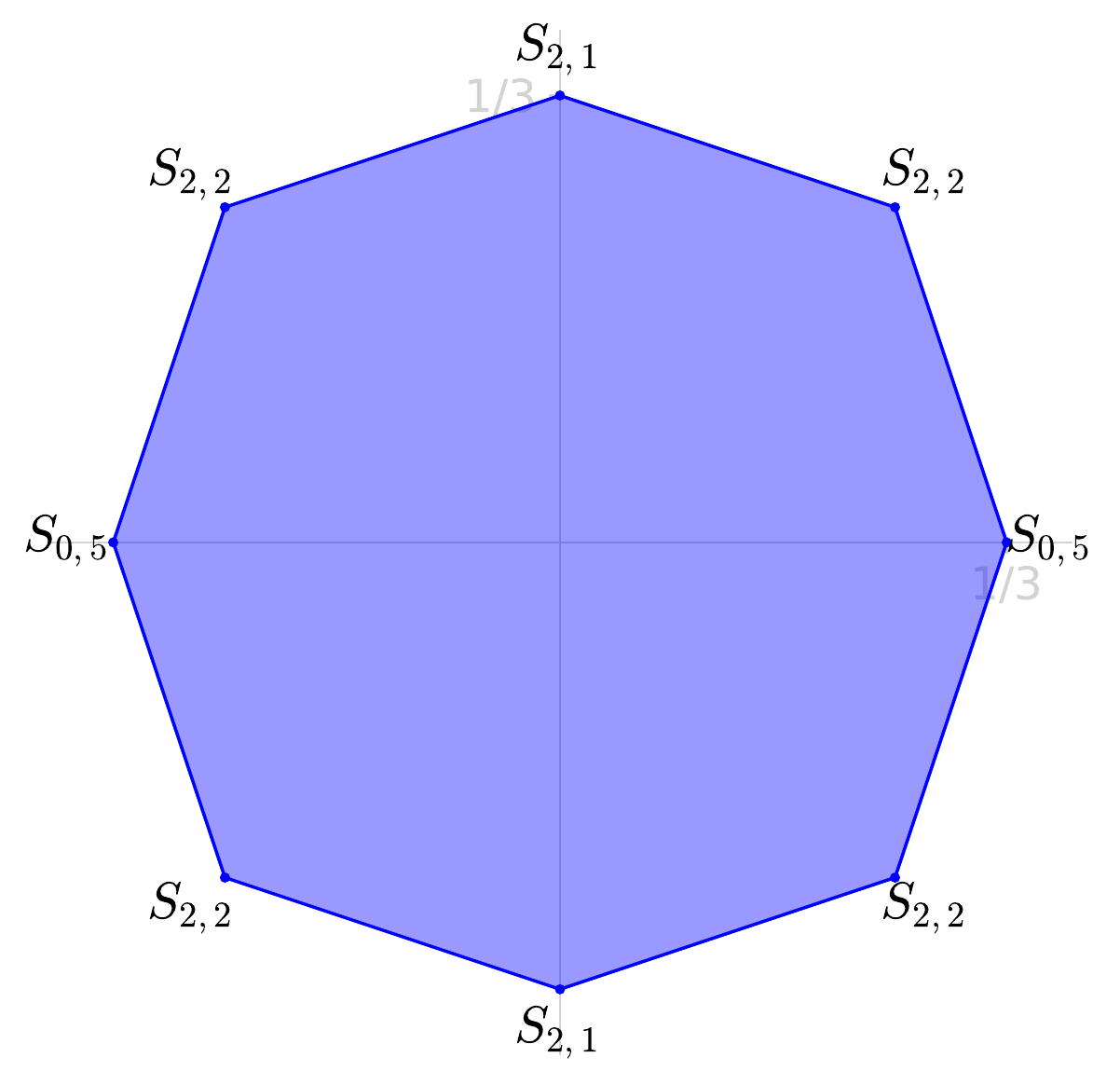}} & \quad & \multirow{6}{*}{\Includegraphics[width=1.8in]{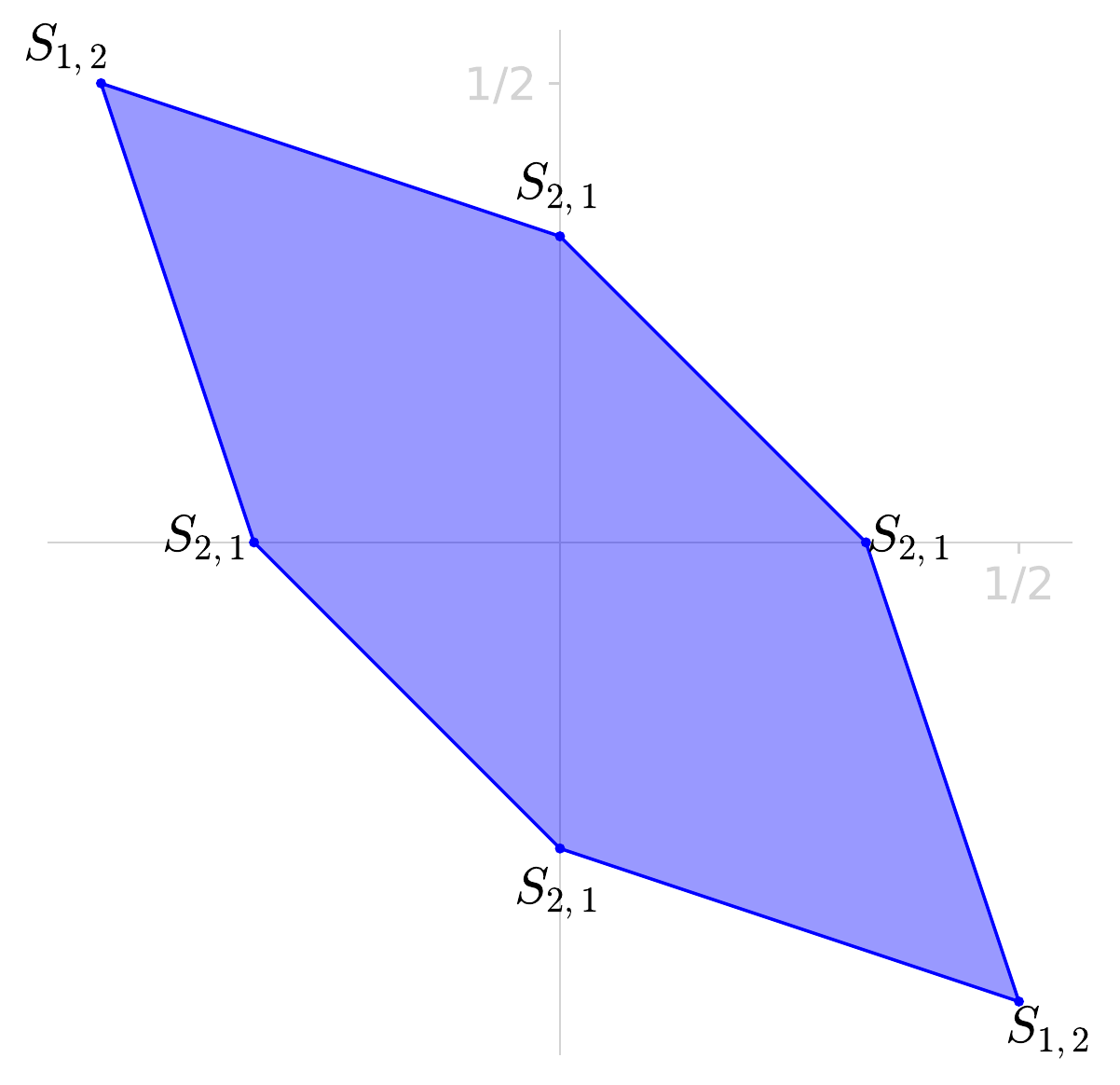}} \\ 
 $L=9^{{2}}_{{4}}$ & & $L=9^{{2}}_{{5}}$ & \\ 
 \quad & & \quad & \\ $\mathrm{Isom}(\mathbb{S}^3\setminus L) = D_4$ & & $\mathrm{Isom}(\mathbb{S}^3\setminus L) = \mathbb{{Z}}_2\oplus\mathbb{{Z}}_2$ & \\ 
 \quad & & \quad & \\ 
 \includegraphics[width=1in]{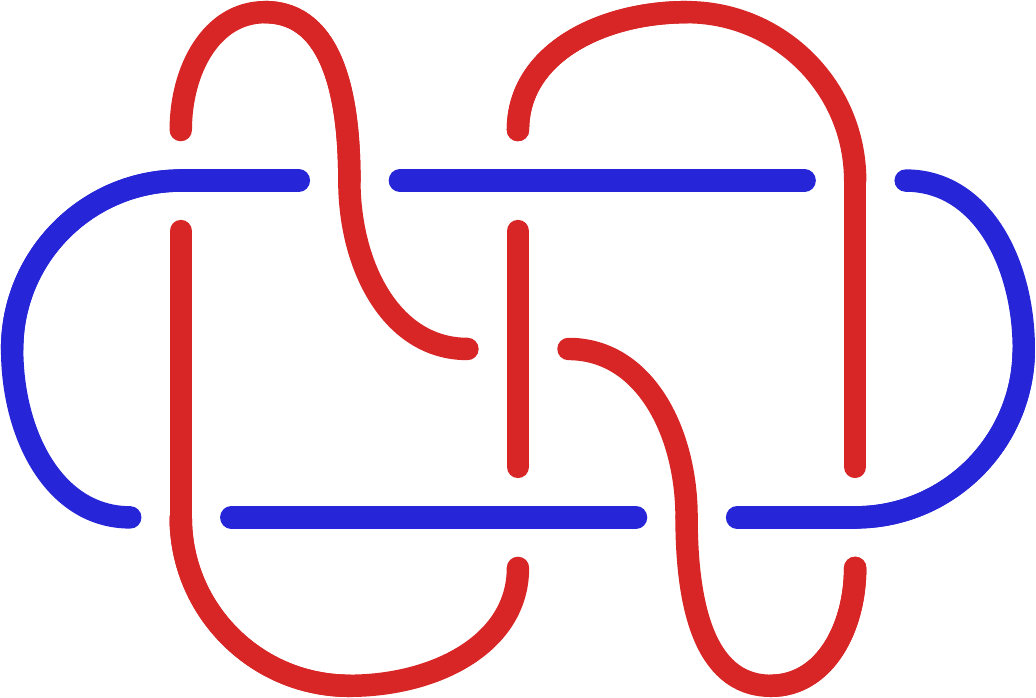}  & & \includegraphics[width=1in]{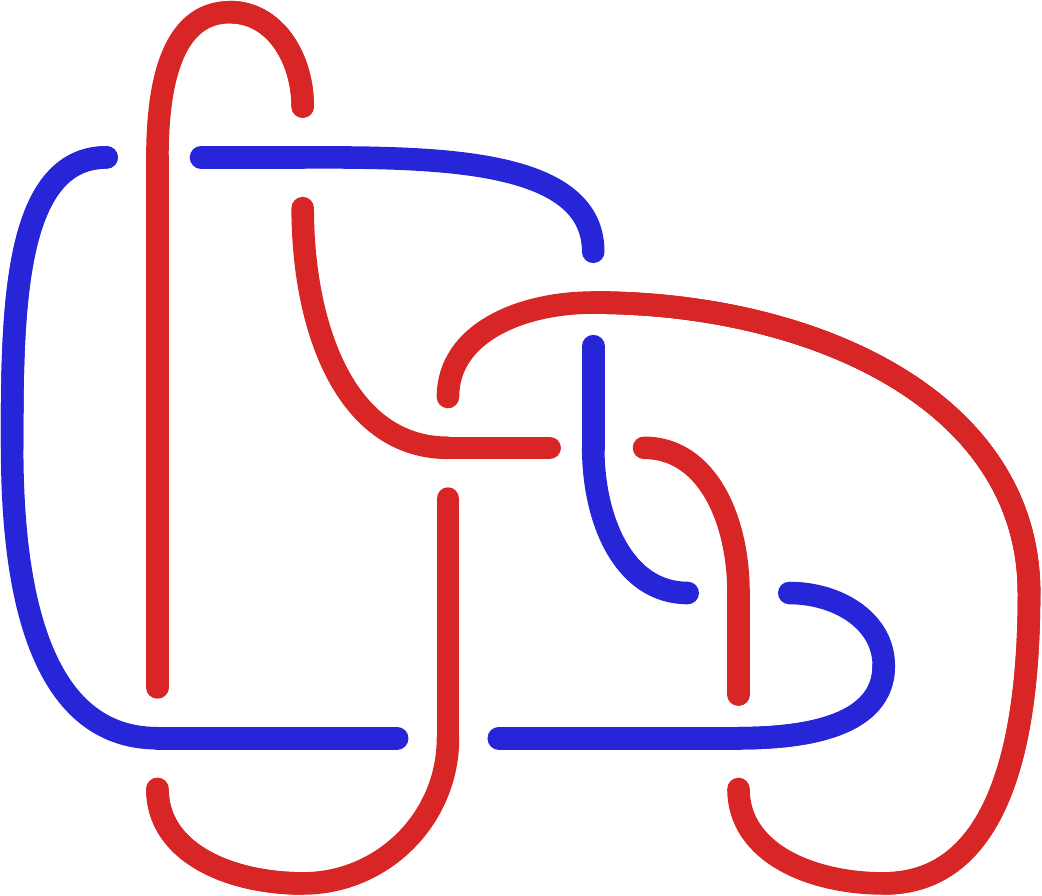} & \\ 
 \quad & & \quad & \\ 
 \hline  
\quad & \multirow{6}{*}{\Includegraphics[width=1.8in]{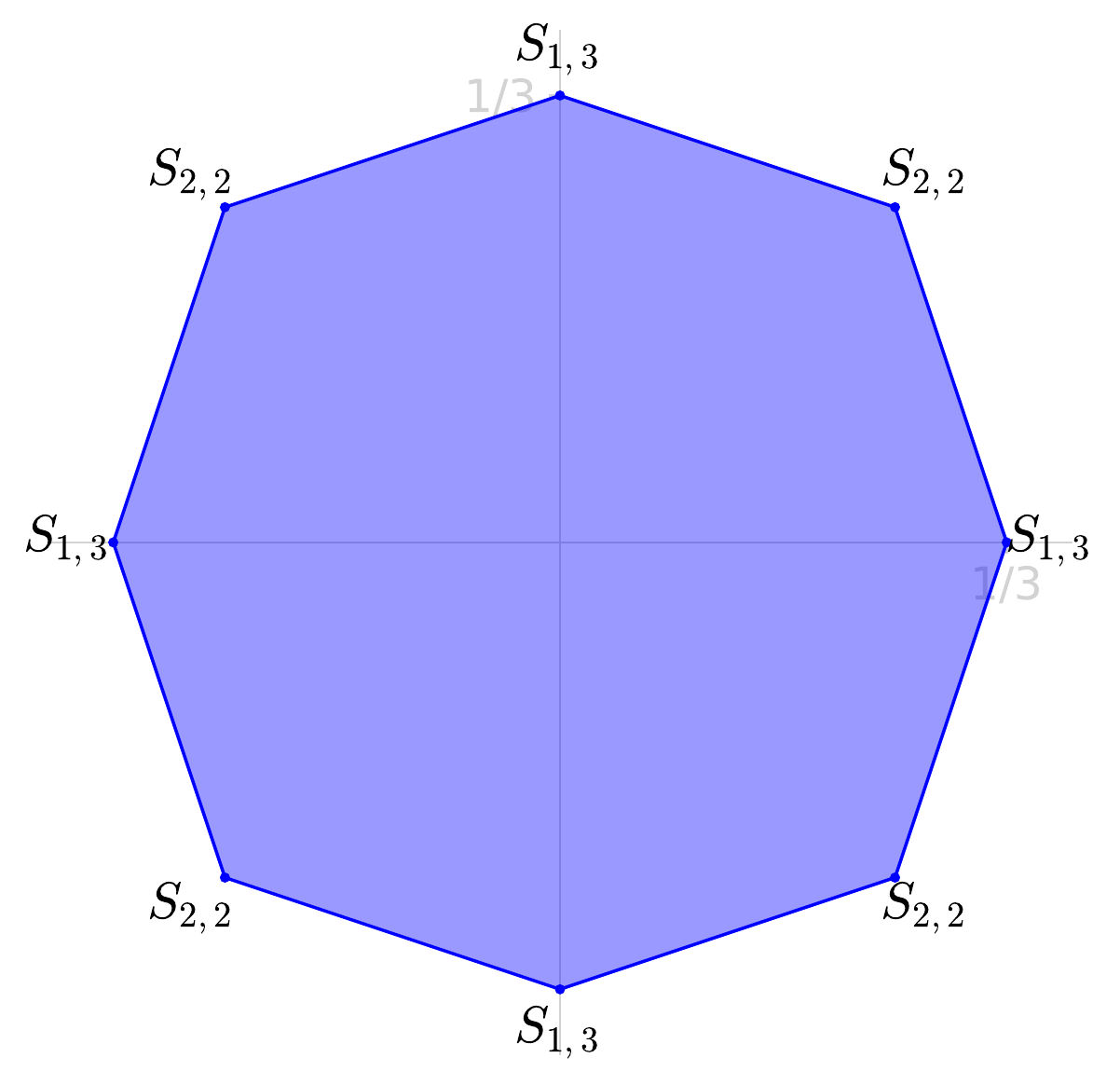}} & \quad & \multirow{6}{*}{\Includegraphics[width=1.8in]{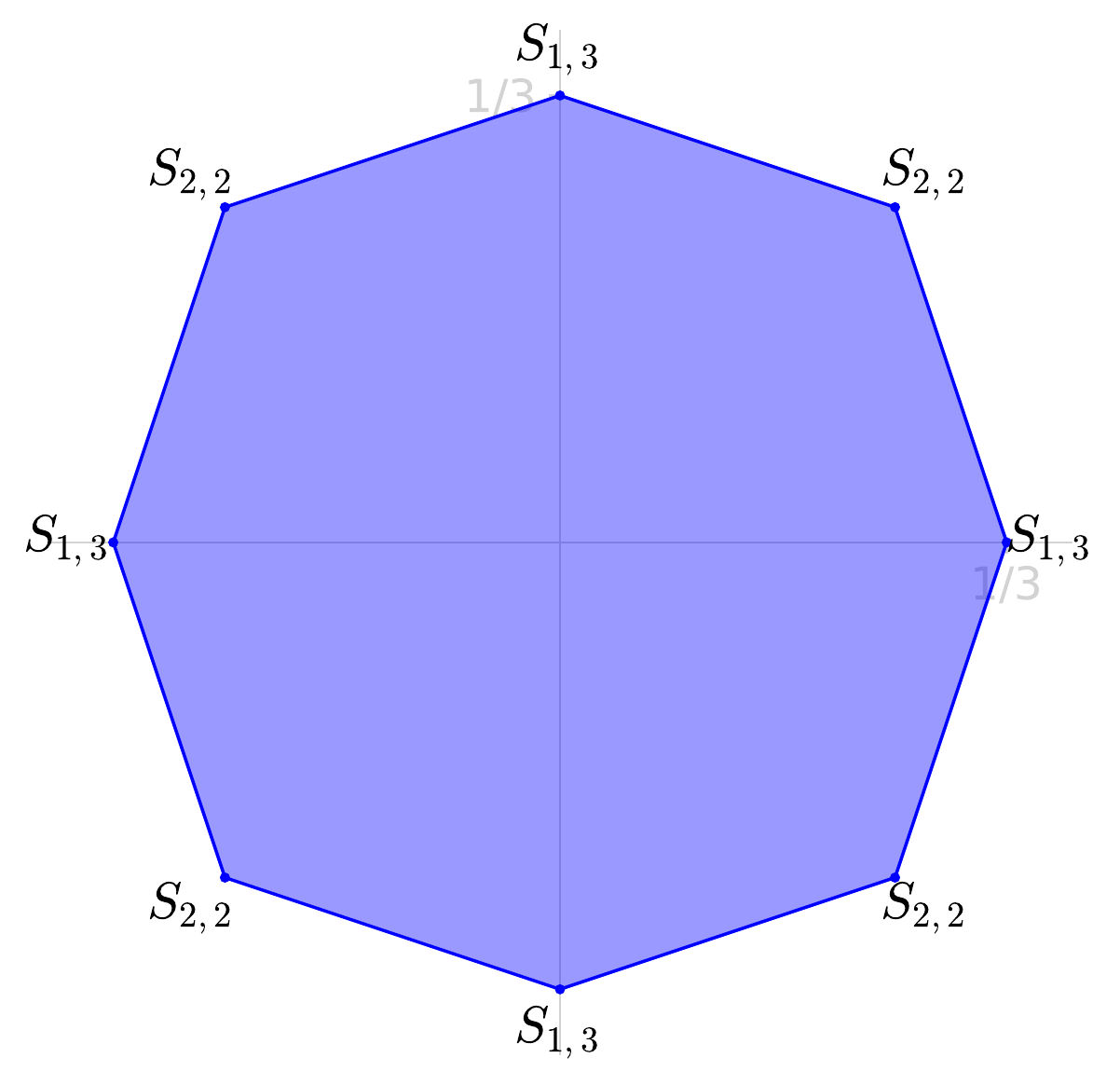}} \\ 
 $L=9^{{2}}_{{6}}$ & & $L=9^{{2}}_{{7}}$ & \\ 
 \quad & & \quad & \\ $\mathrm{Isom}(\mathbb{S}^3\setminus L) = \mathbb{{Z}}_2\oplus\mathbb{{Z}}_2$ & & $\mathrm{Isom}(\mathbb{S}^3\setminus L) = \mathbb{{Z}}_2\oplus\mathbb{{Z}}_2$ & \\ 
 \quad & & \quad & \\ 
 \includegraphics[width=1in]{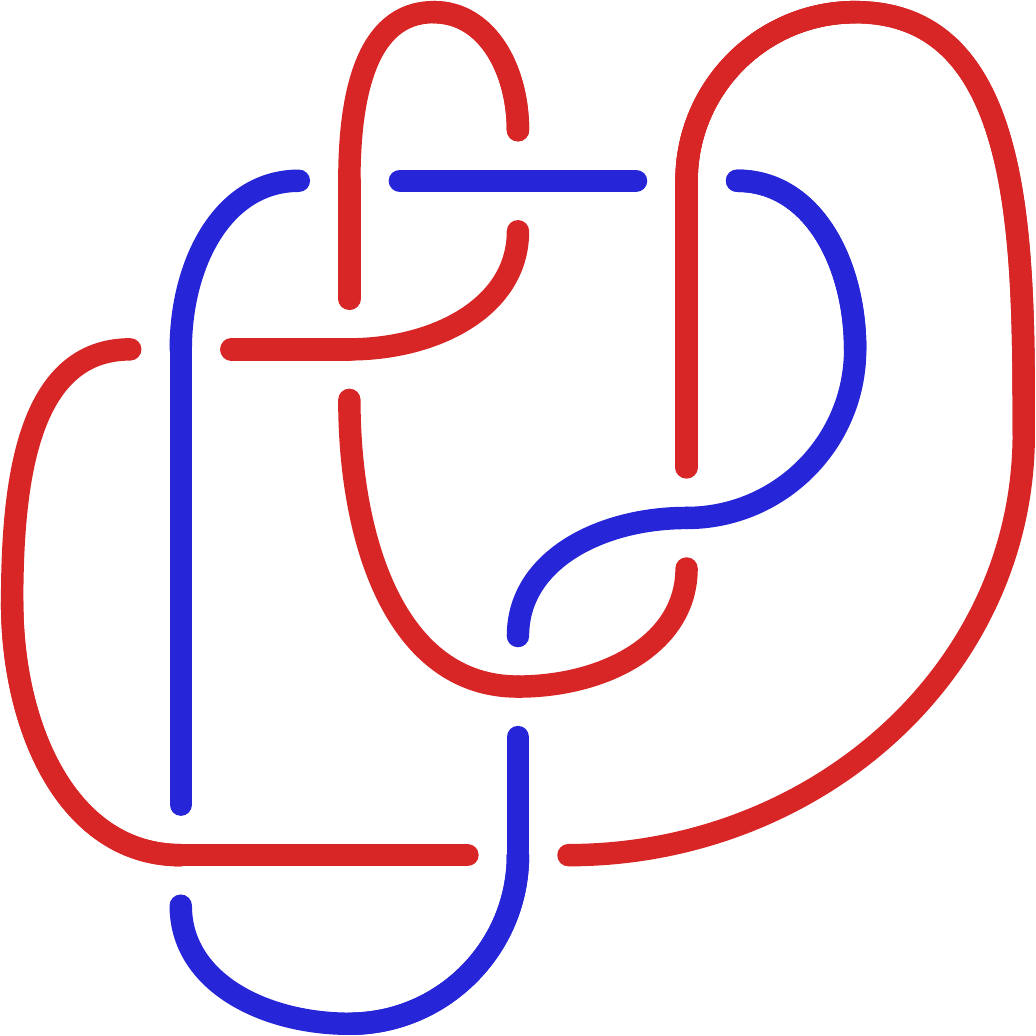}  & & \includegraphics[width=1in]{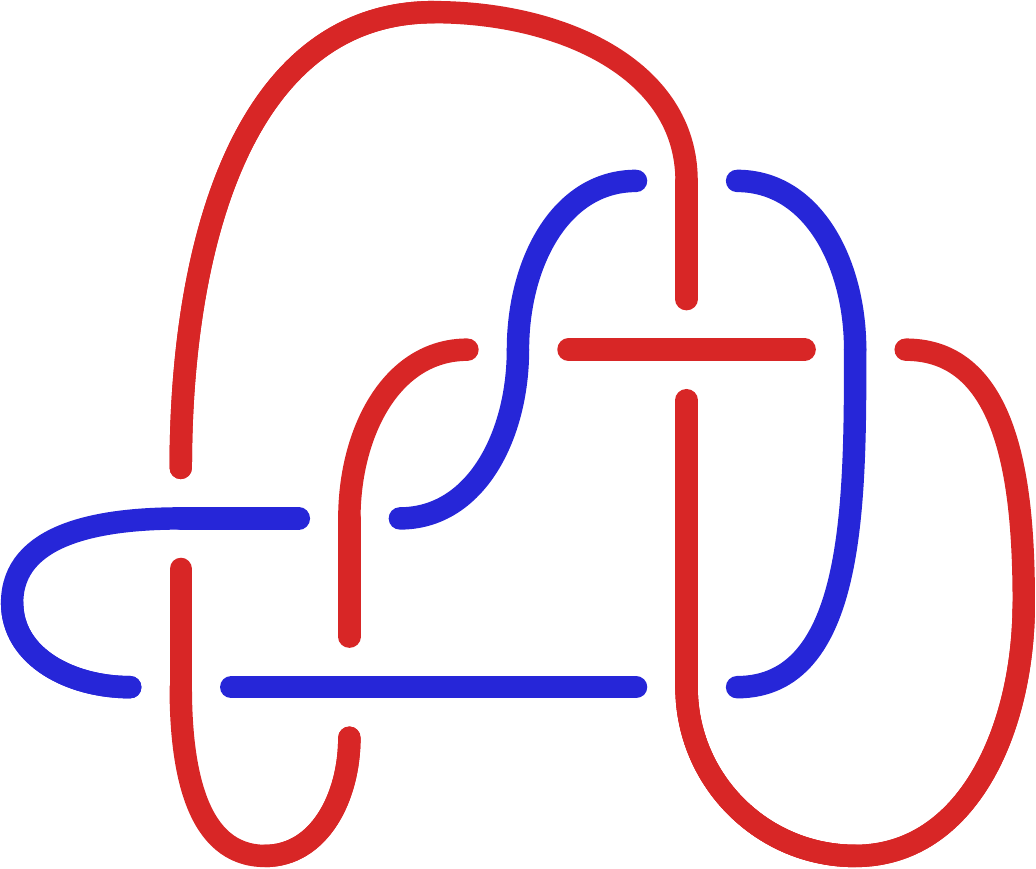} & \\ 
 \quad & & \quad & \\ 
 \hline  
\quad & \multirow{6}{*}{\Includegraphics[width=1.8in]{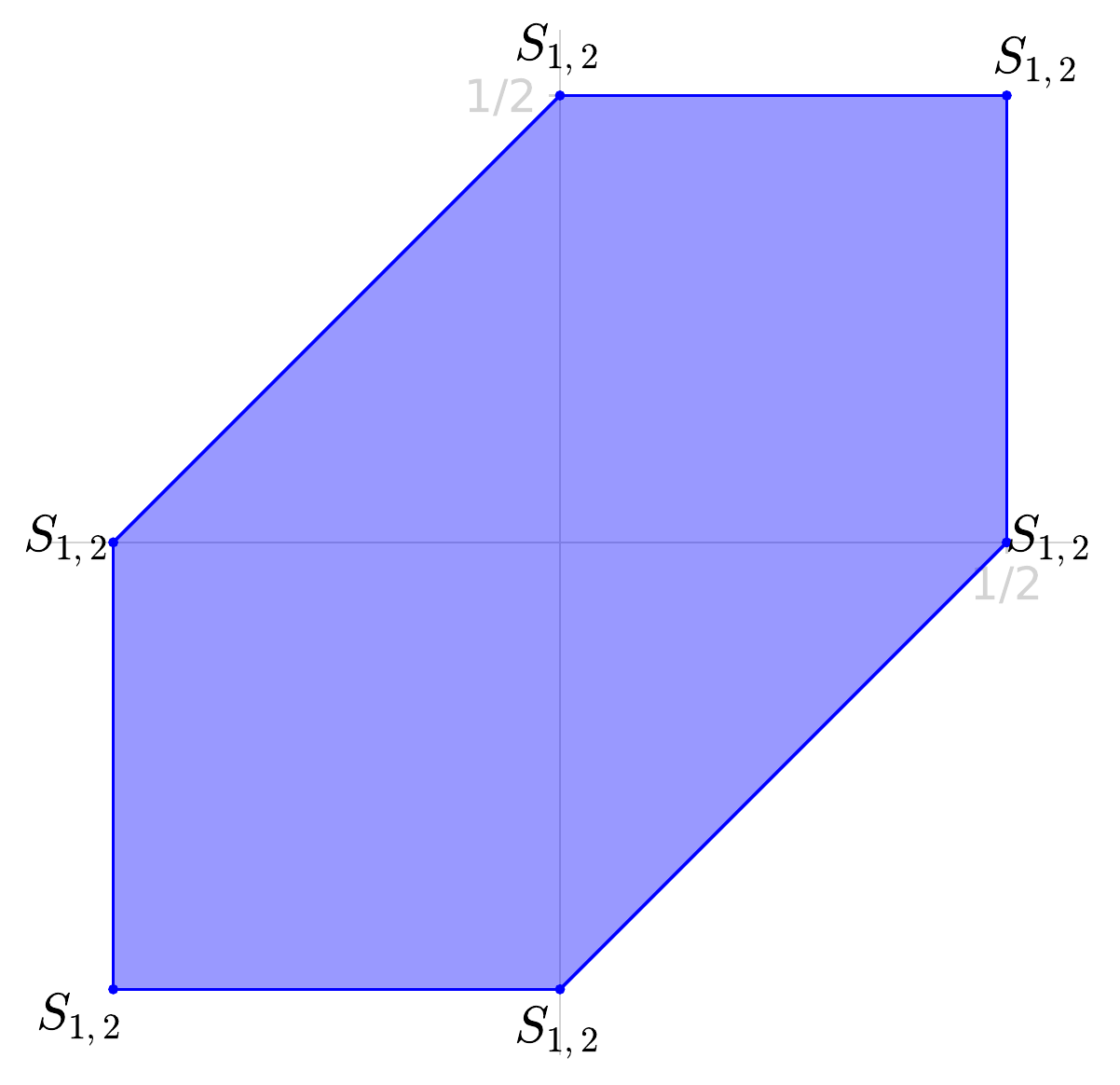}} & \quad & \multirow{6}{*}{\Includegraphics[width=1.8in]{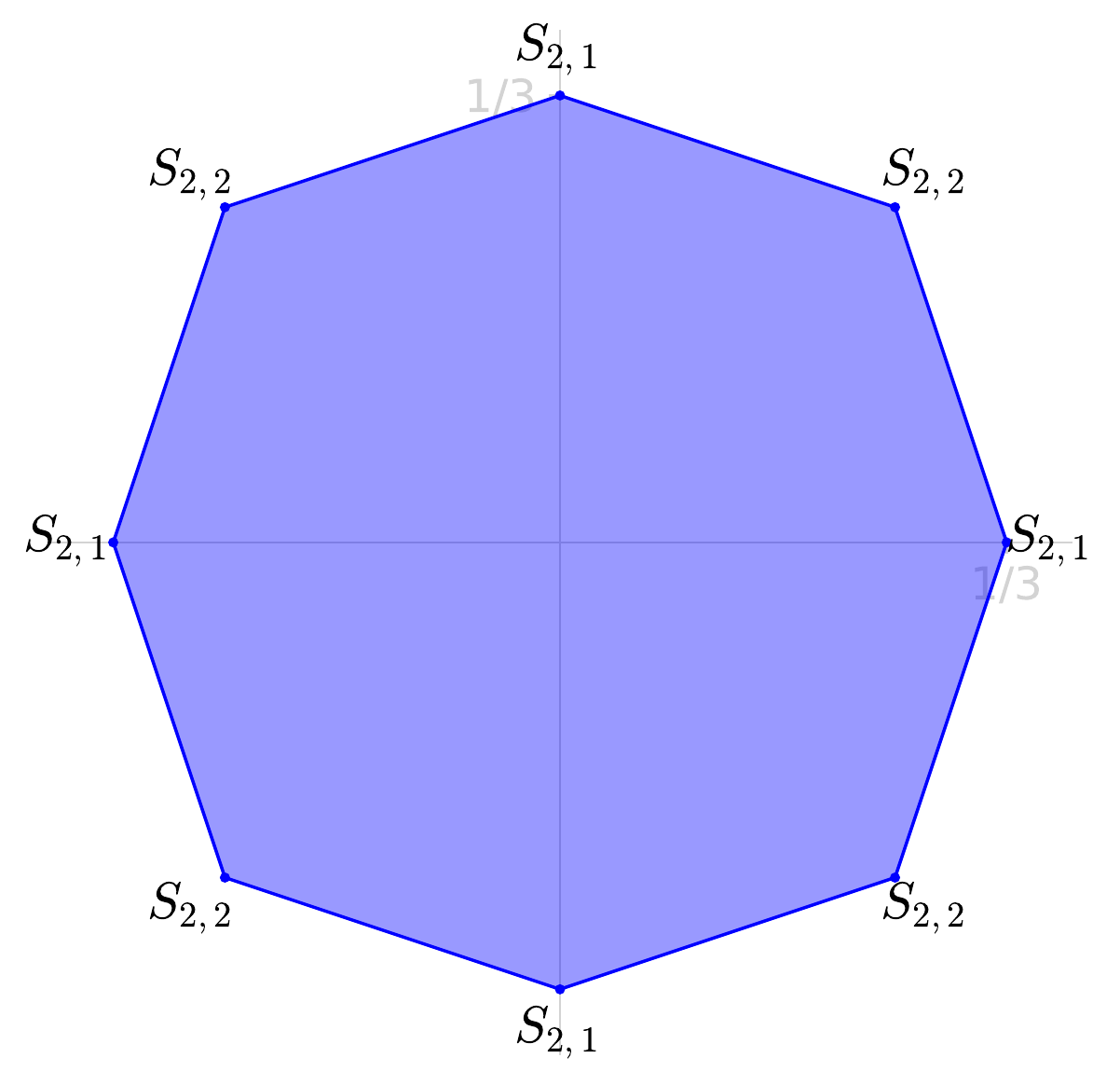}} \\ 
 $L=9^{{2}}_{{8}}$ & & $L=9^{{2}}_{{9}}$ & \\ 
 \quad & & \quad & \\ $\mathrm{Isom}(\mathbb{S}^3\setminus L) = \mathbb{{Z}}_2\oplus\mathbb{{Z}}_2$ & & $\mathrm{Isom}(\mathbb{S}^3\setminus L) = D_4$ & \\ 
 \quad & & \quad & \\ 
 \includegraphics[width=1in]{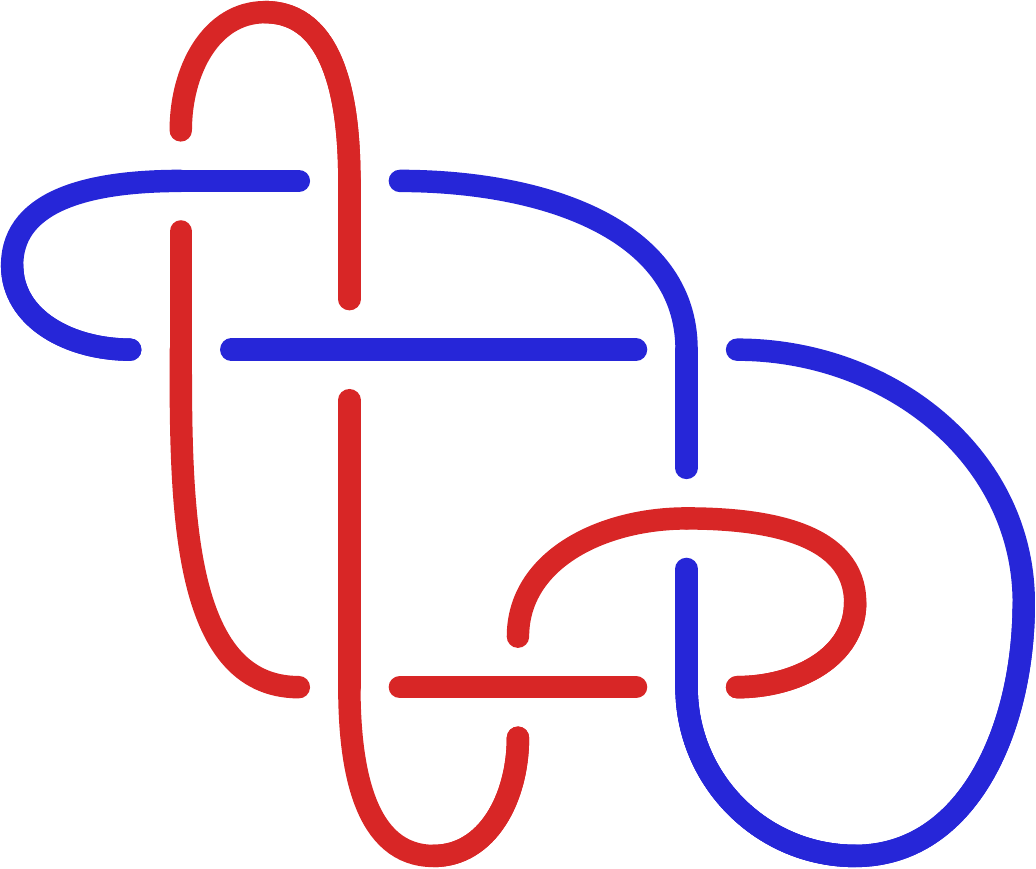}  & & \includegraphics[width=1in]{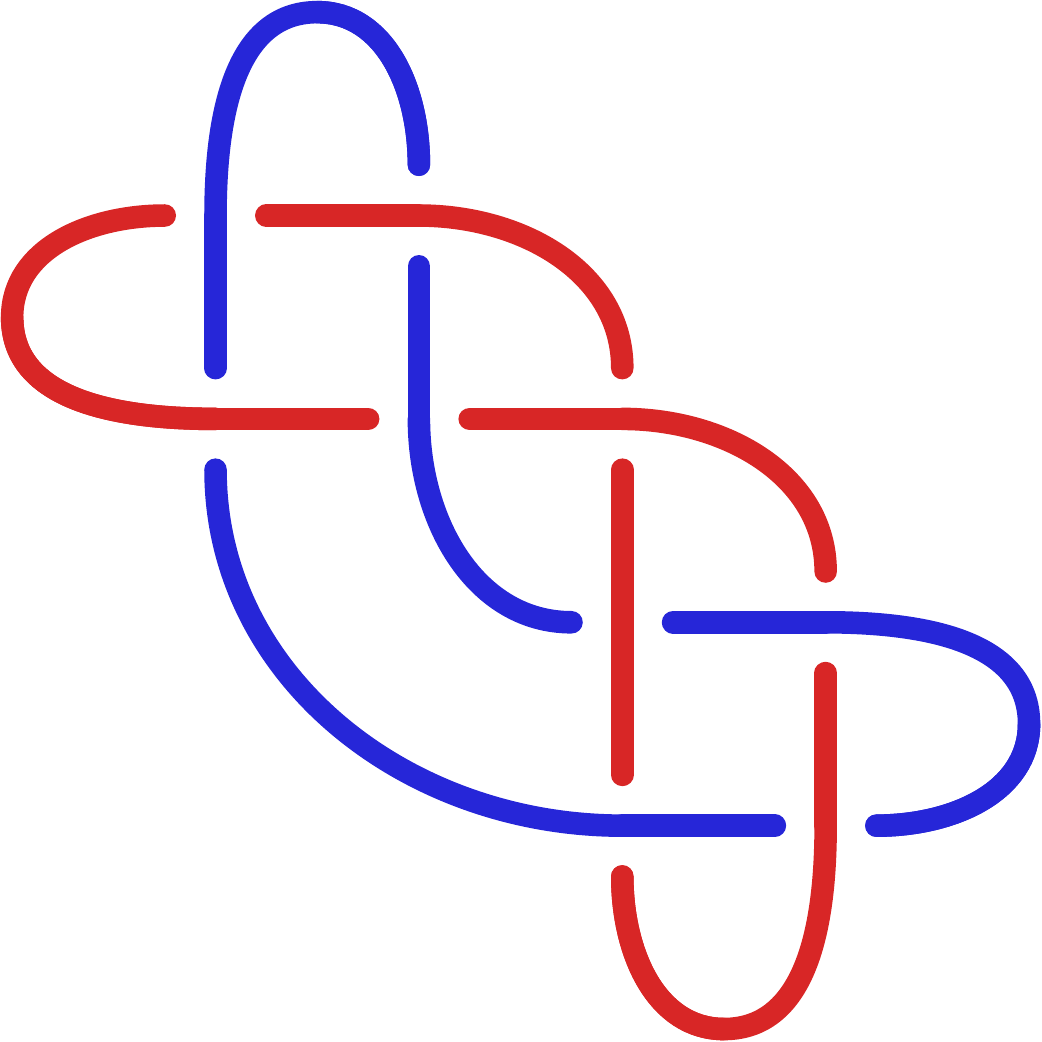} & \\ 
 \quad & & \quad & \\ 
 \hline  
\quad & \multirow{6}{*}{\Includegraphics[width=1.8in]{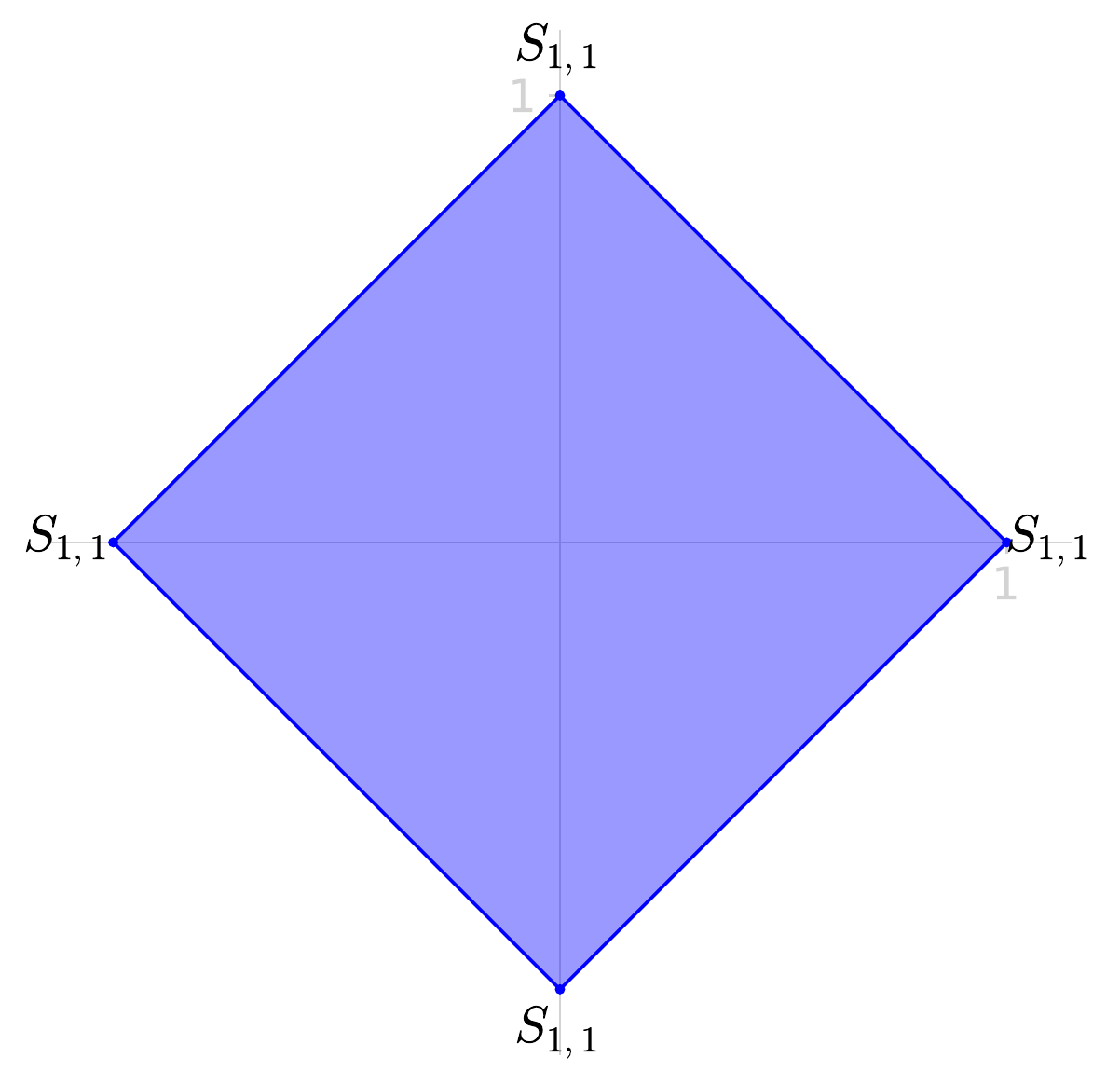}} & \quad & \multirow{6}{*}{\Includegraphics[width=1.8in]{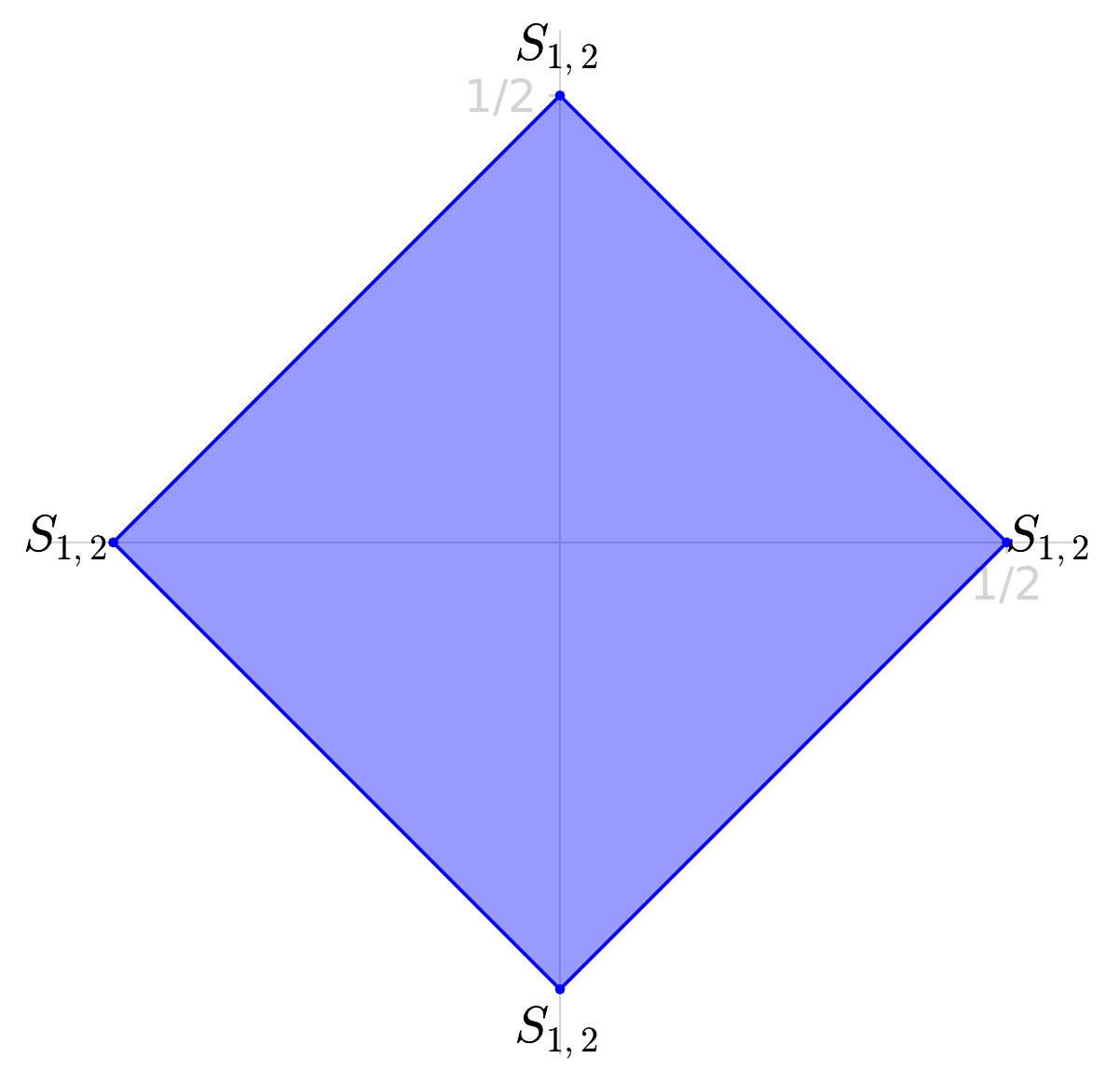}} \\ 
 $L=9^{{2}}_{{10}}$ & & $L=9^{{2}}_{{11}}$ & \\ 
 \quad & & \quad & \\ $\mathrm{Isom}(\mathbb{S}^3\setminus L) = D_4$ & & $\mathrm{Isom}(\mathbb{S}^3\setminus L) = \mathbb{{Z}}_2\oplus\mathbb{{Z}}_2$ & \\ 
 \quad & & \quad & \\ 
 \includegraphics[width=1in]{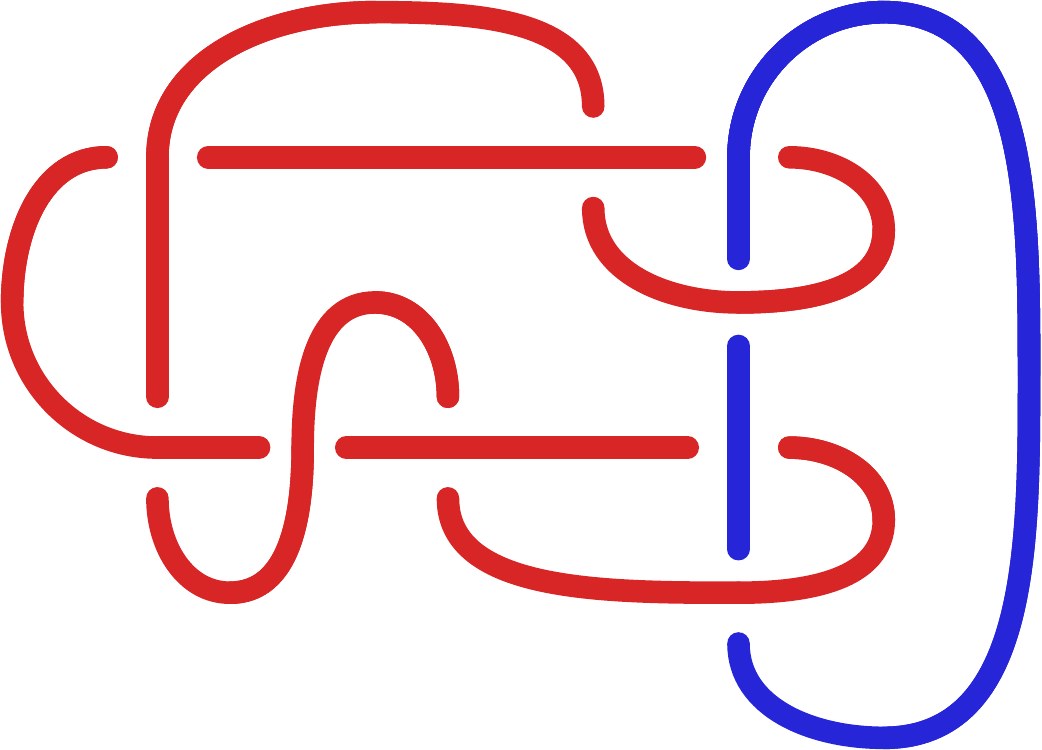}  & & \includegraphics[width=1in]{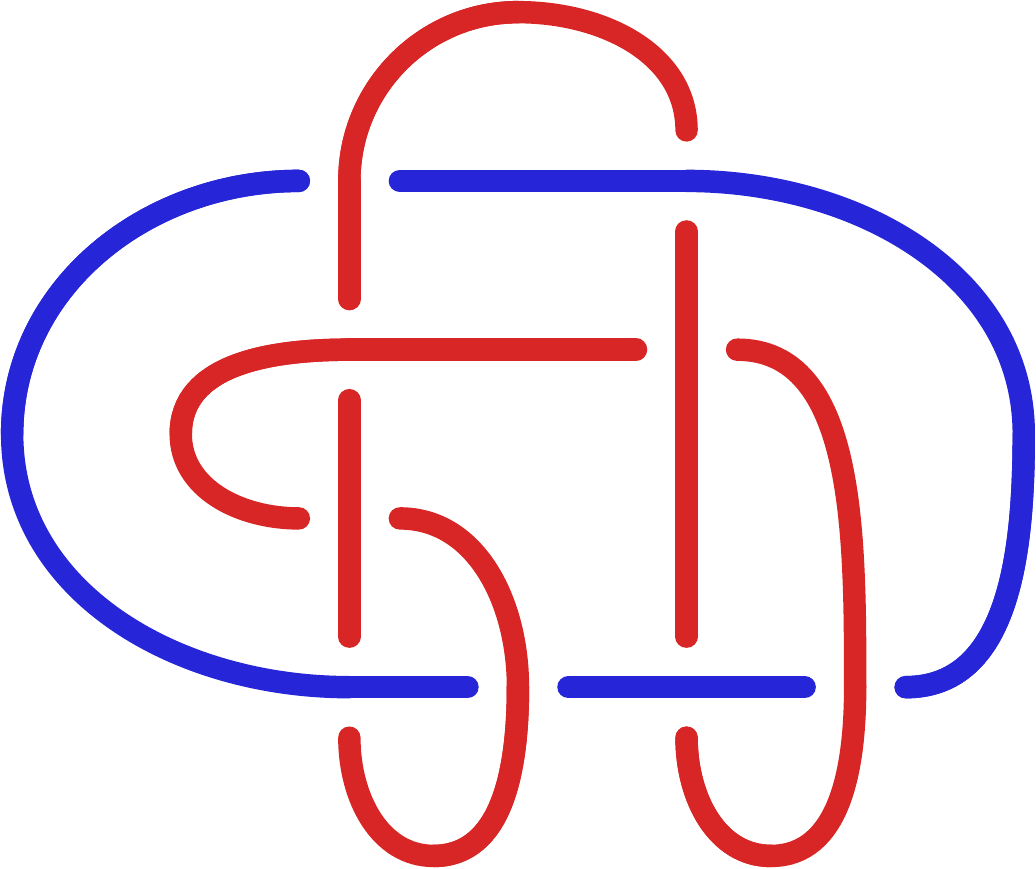} & \\ 
 \quad & & \quad & \\ 
 \hline  
\end{tabular} 
 \newpage \begin{tabular}{|c|c|c|c|} 
 \hline 
 Link & Norm Ball & Link & Norm Ball \\ 
 \hline 
\quad & \multirow{6}{*}{\Includegraphics[width=1.8in]{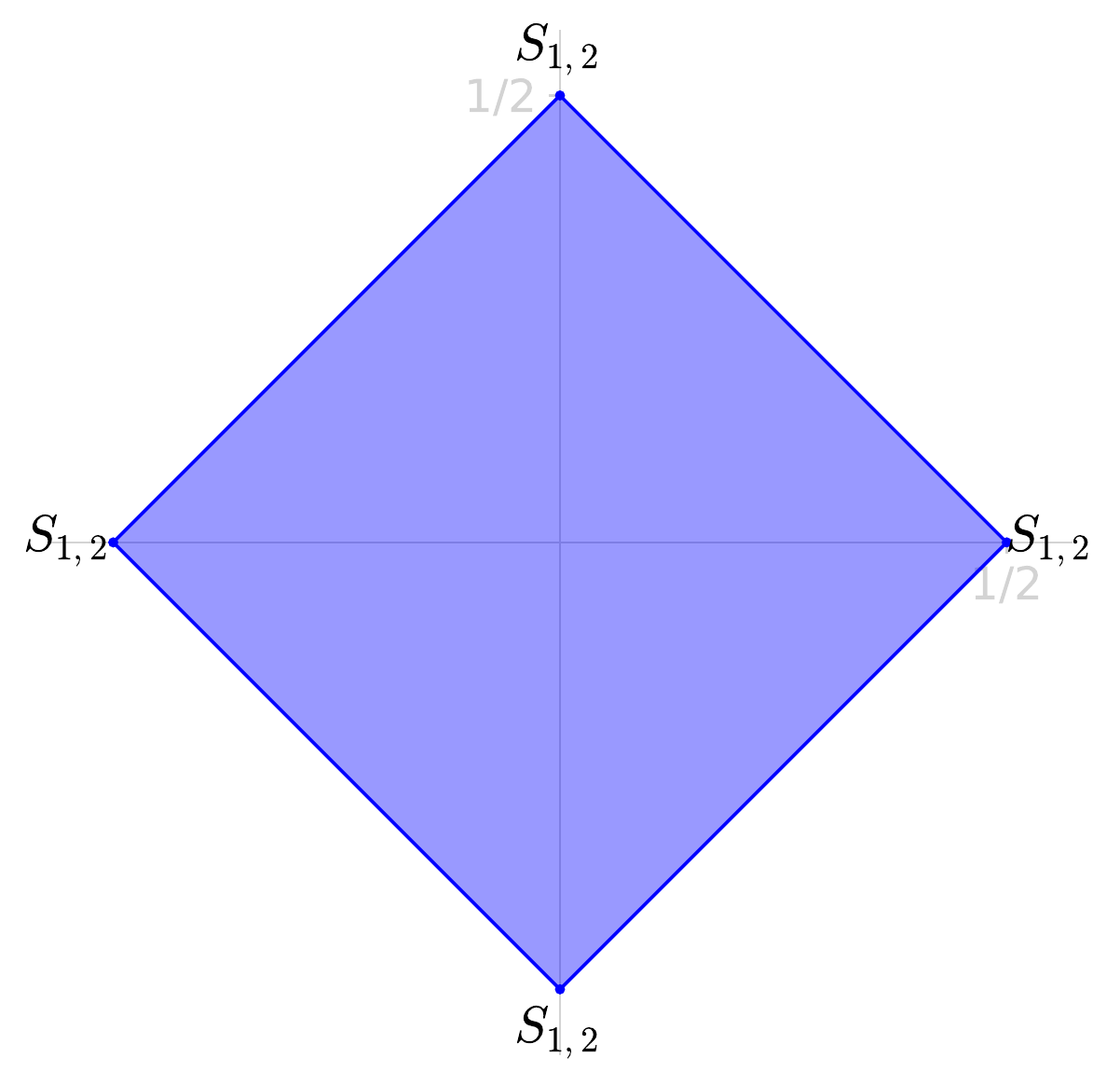}} & \quad & \multirow{6}{*}{\Includegraphics[width=1.8in]{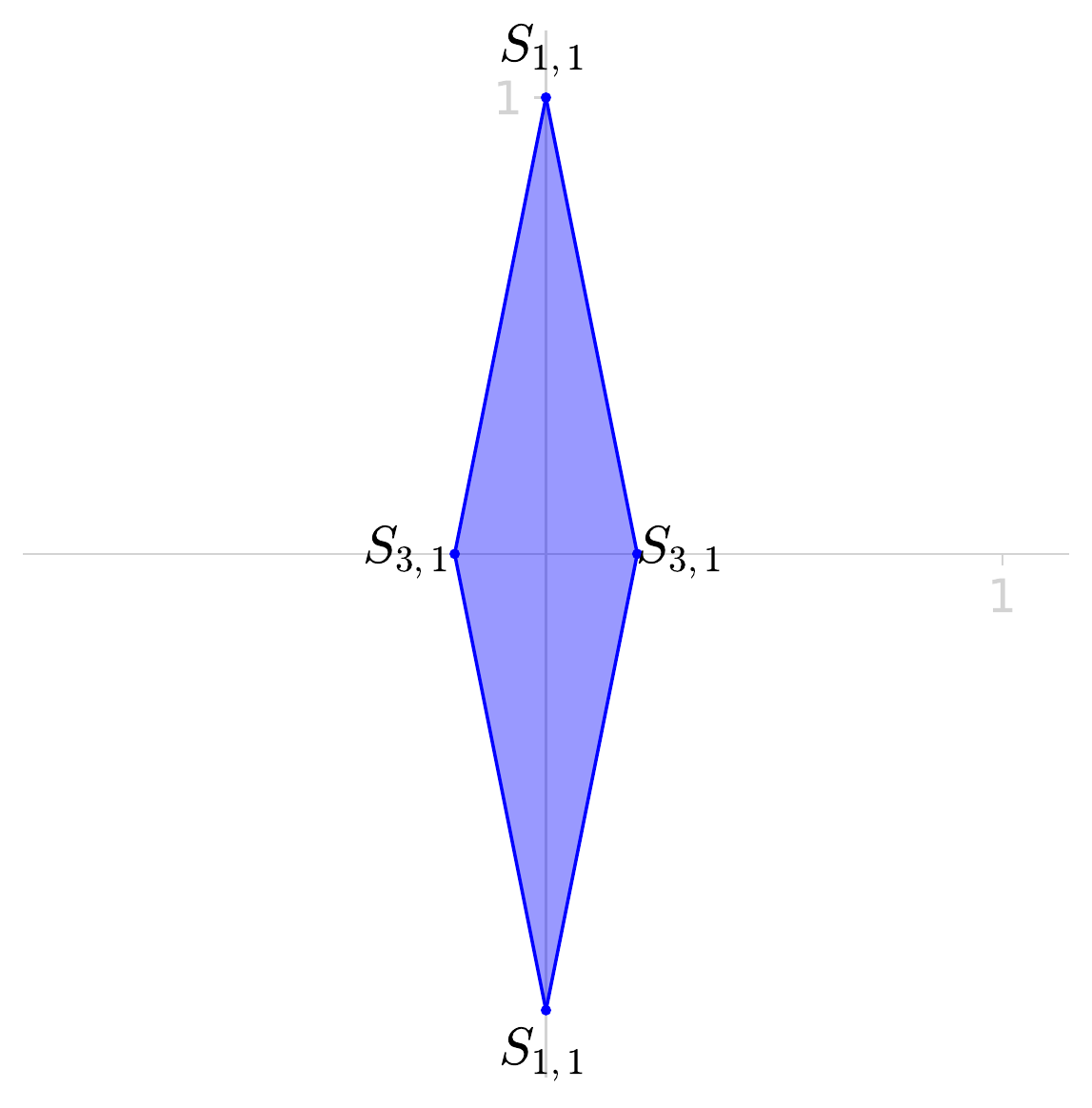}} \\ 
 $L=9^{{2}}_{{12}}$ & & $L=9^{{2}}_{{13}}$ & \\ 
 \quad & & \quad & \\ $\mathrm{Isom}(\mathbb{S}^3\setminus L) = \mathbb{{Z}}_2\oplus\mathbb{{Z}}_2$ & & $\mathrm{Isom}(\mathbb{S}^3\setminus L) = \mathbb{{Z}}_2\oplus\mathbb{{Z}}_2$ & \\ 
 \quad & & \quad & \\ 
 \includegraphics[width=1in]{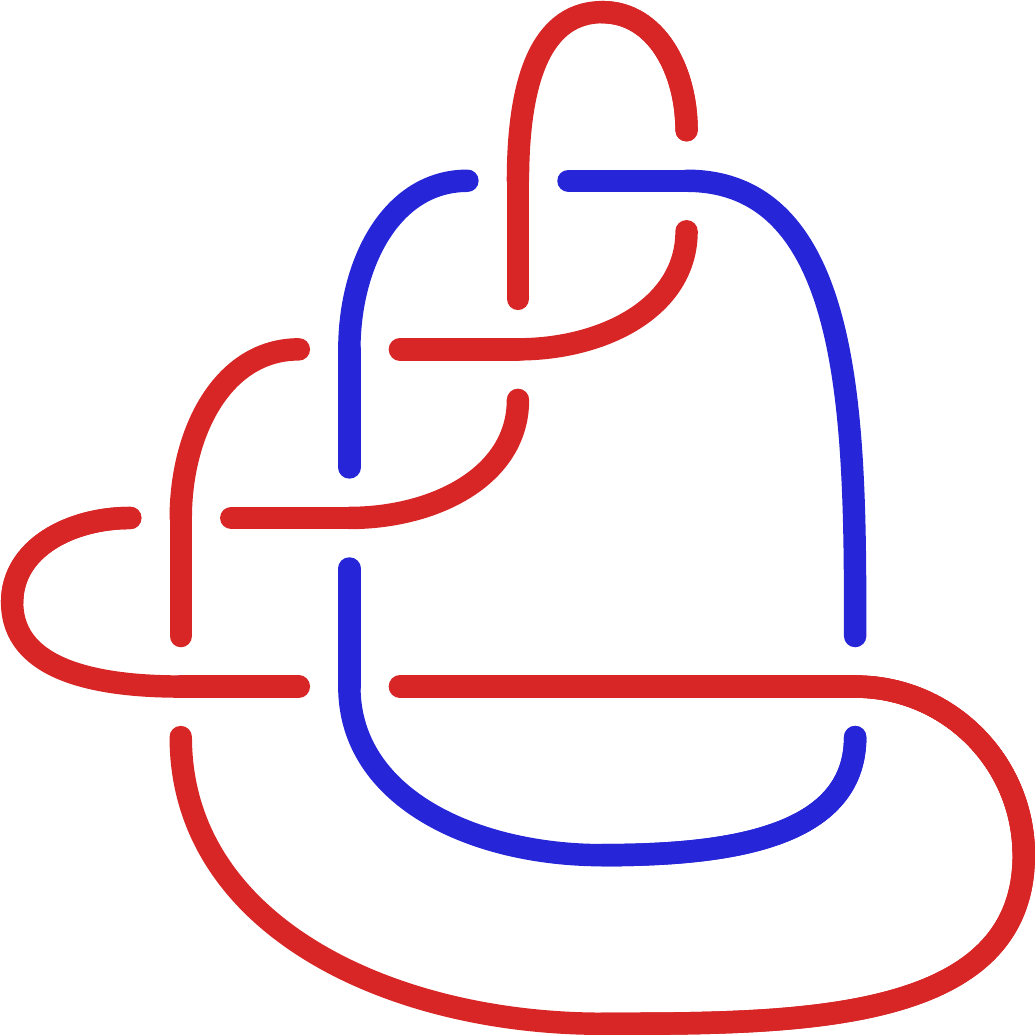}  & & \includegraphics[width=1in]{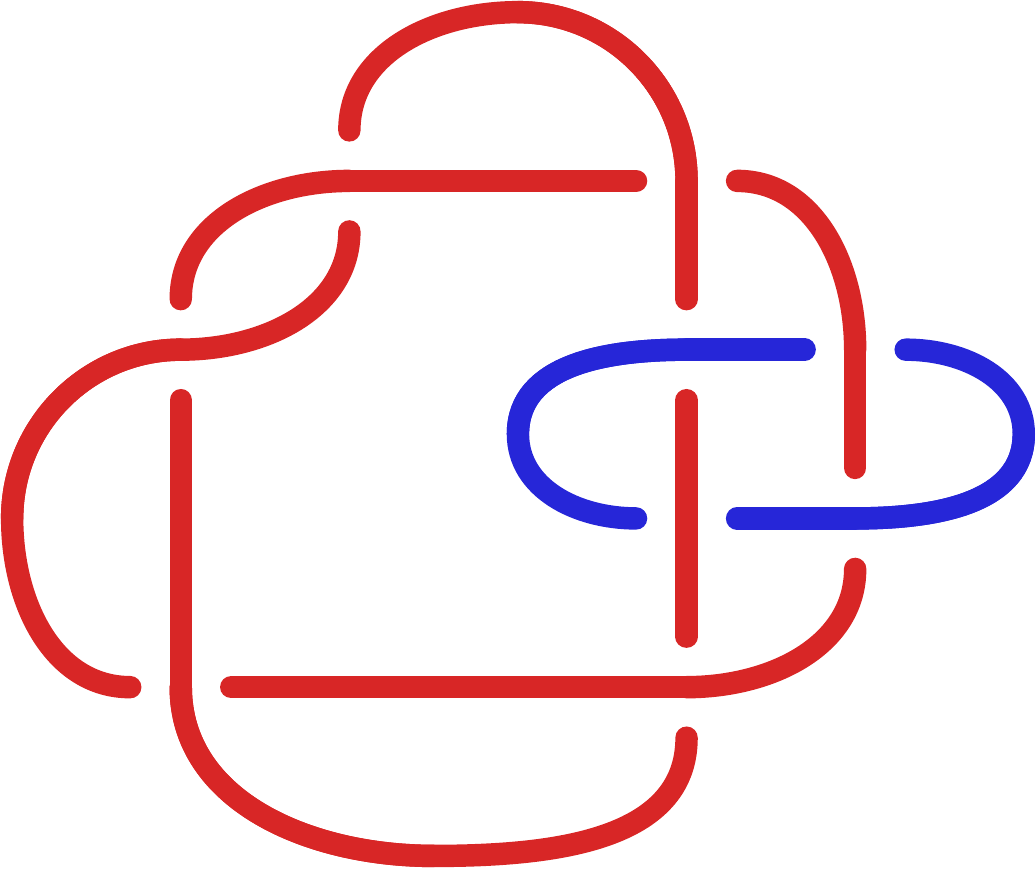} & \\ 
 \quad & & \quad & \\ 
 \hline  
\quad & \multirow{6}{*}{\Includegraphics[width=1.8in]{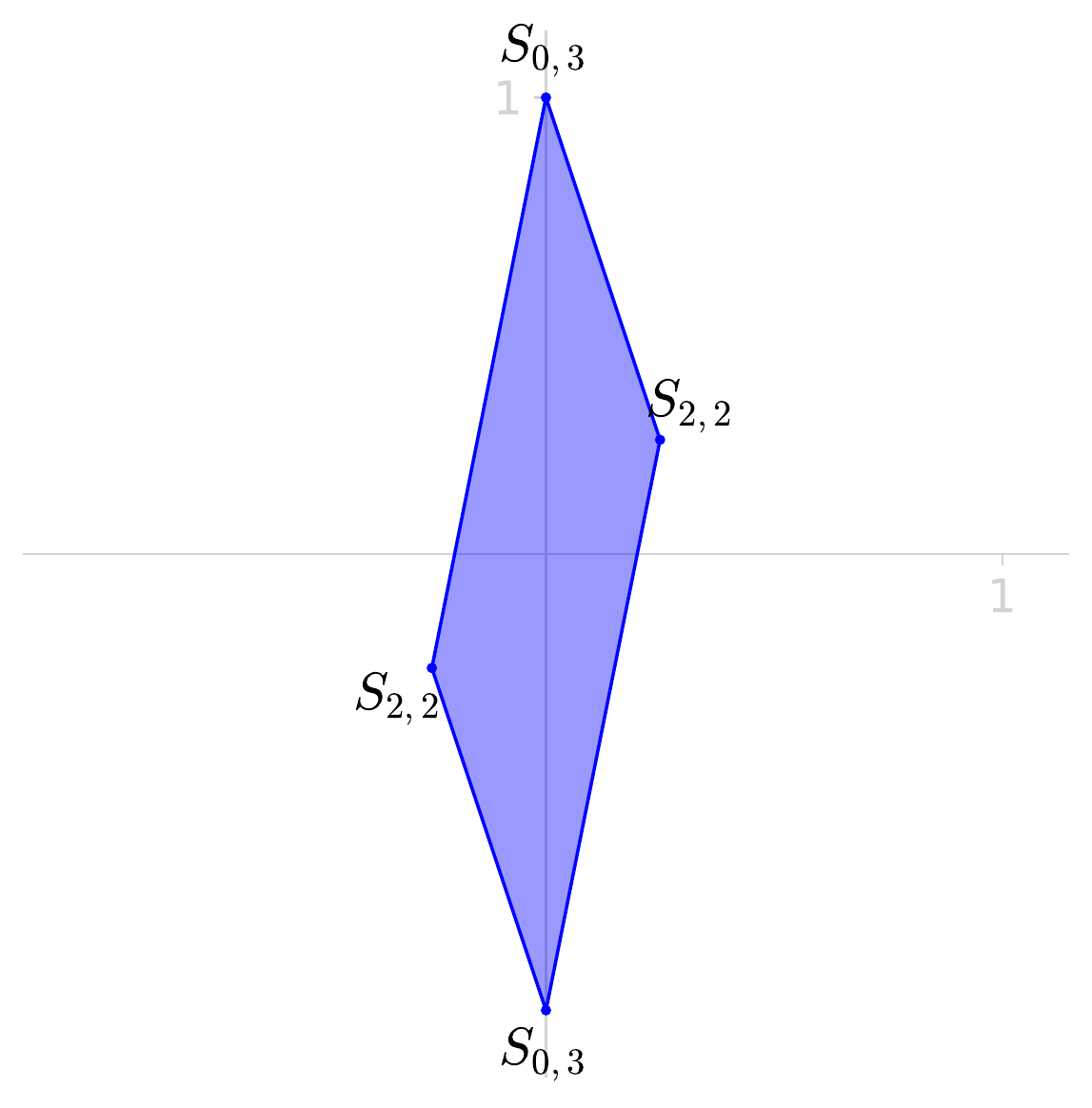}} & \quad & \multirow{6}{*}{\Includegraphics[width=1.8in]{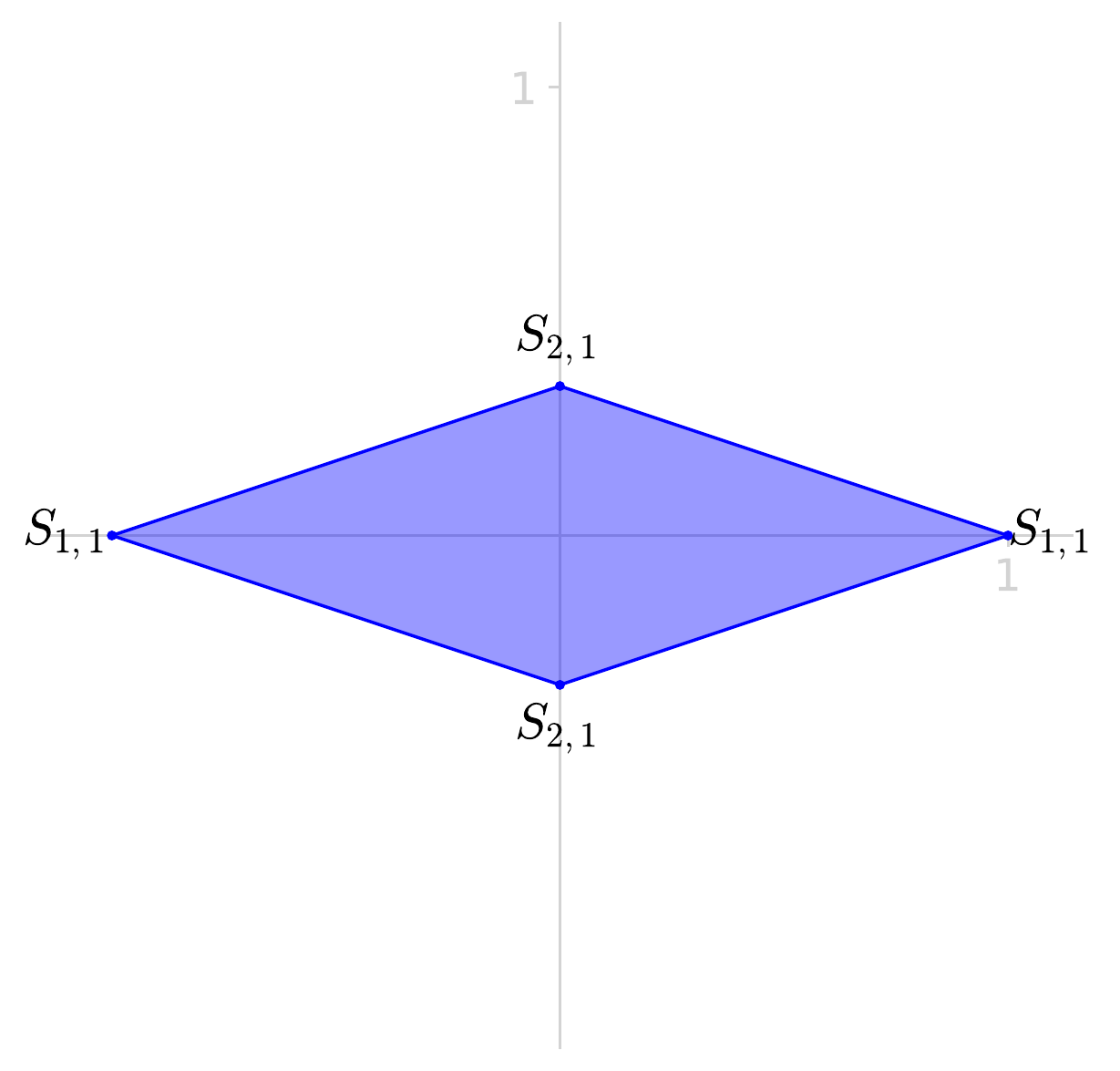}} \\ 
 $L=9^{{2}}_{{14}}$ & & $L=9^{{2}}_{{15}}$ & \\ 
 \quad & & \quad & \\ $\mathrm{Isom}(\mathbb{S}^3\setminus L) = \mathbb{{Z}}_2\oplus\mathbb{{Z}}_2$ & & $\mathrm{Isom}(\mathbb{S}^3\setminus L) = \mathbb{{Z}}_2\oplus\mathbb{{Z}}_2$ & \\ 
 \quad & & \quad & \\ 
 \includegraphics[width=1in]{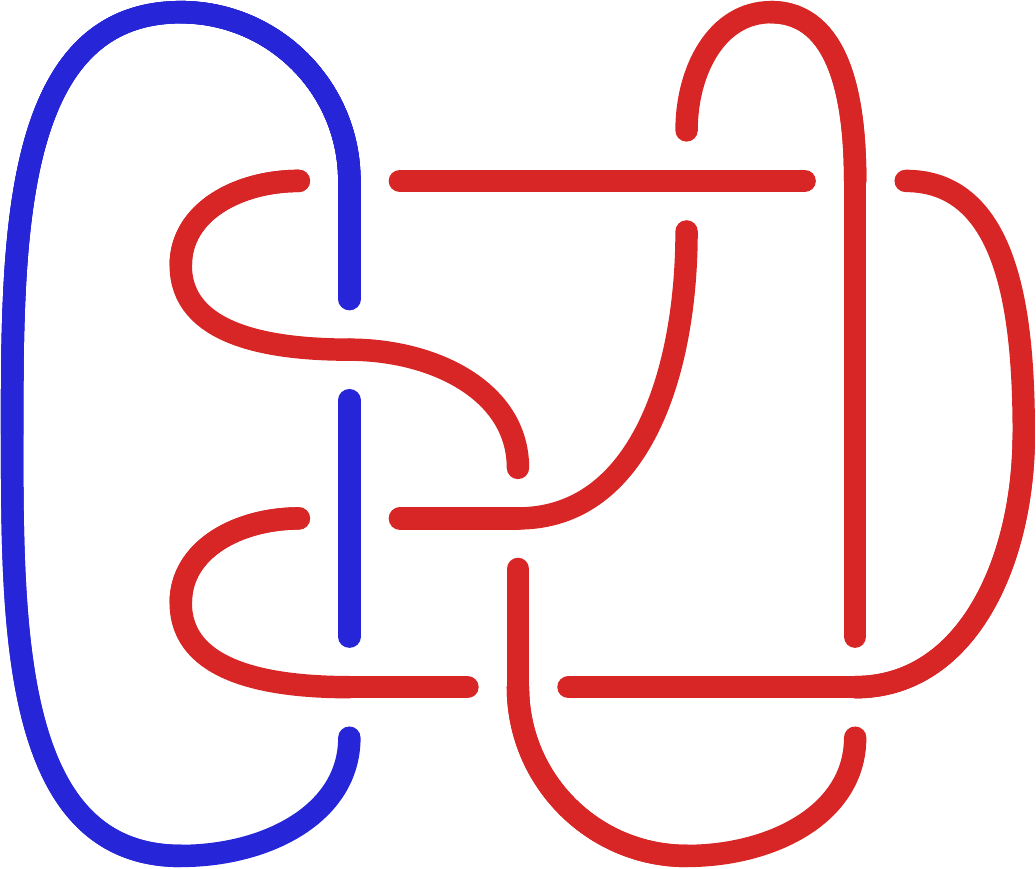}  & & \includegraphics[width=1in]{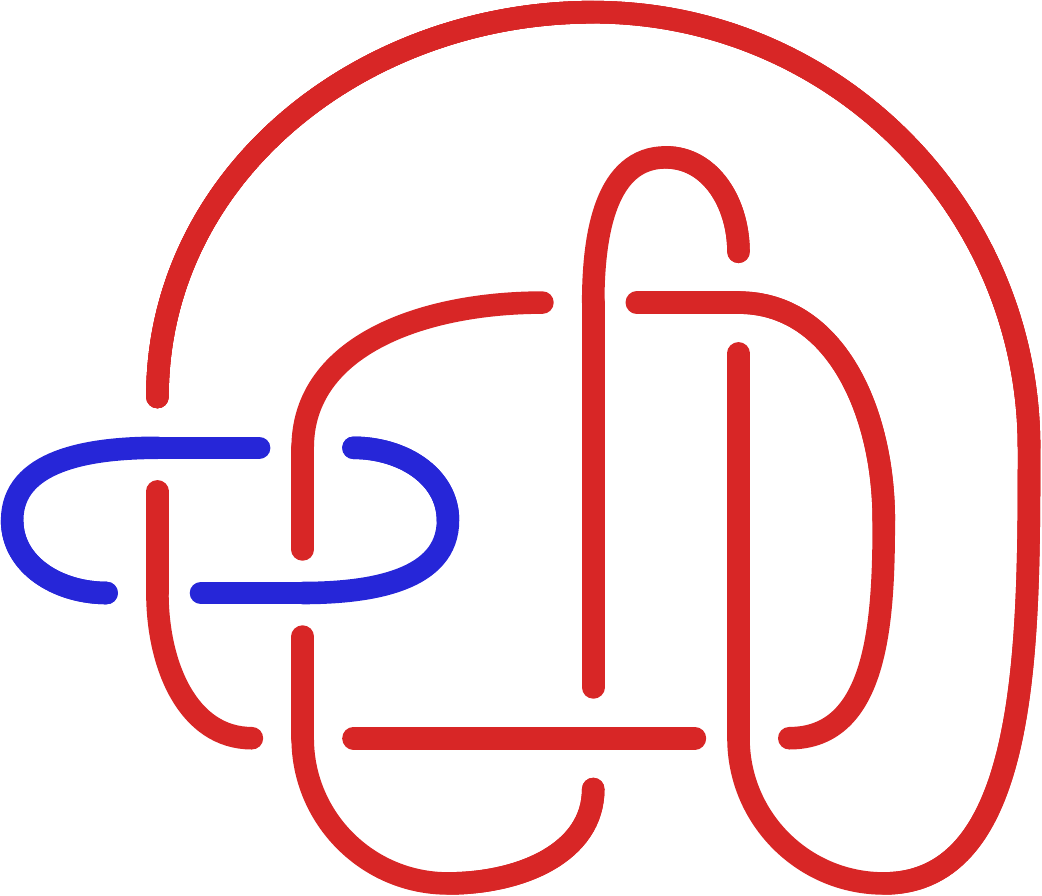} & \\ 
 \quad & & \quad & \\ 
 \hline  
\quad & \multirow{6}{*}{\Includegraphics[width=1.8in]{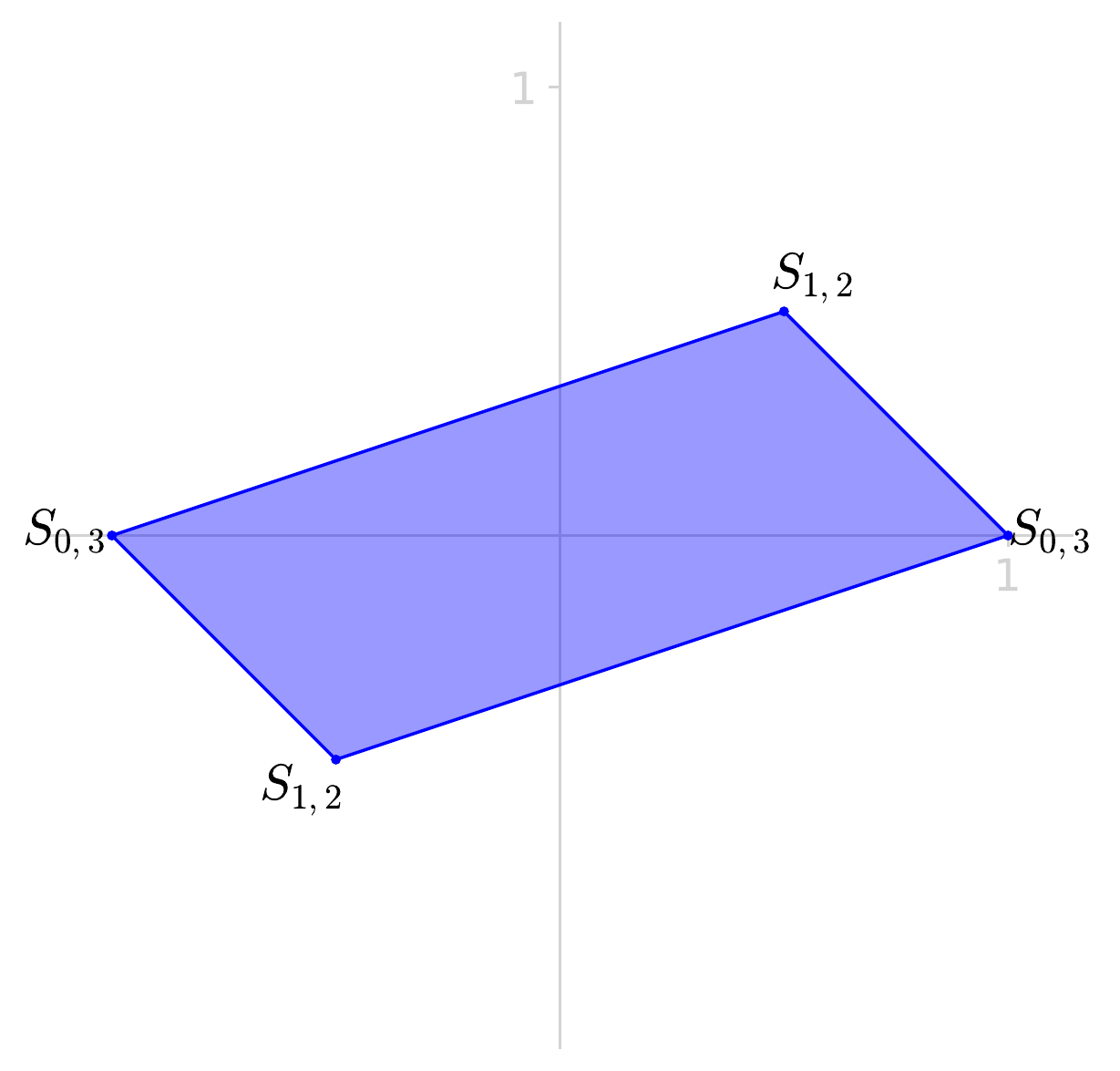}} & \quad & \multirow{6}{*}{\Includegraphics[width=1.8in]{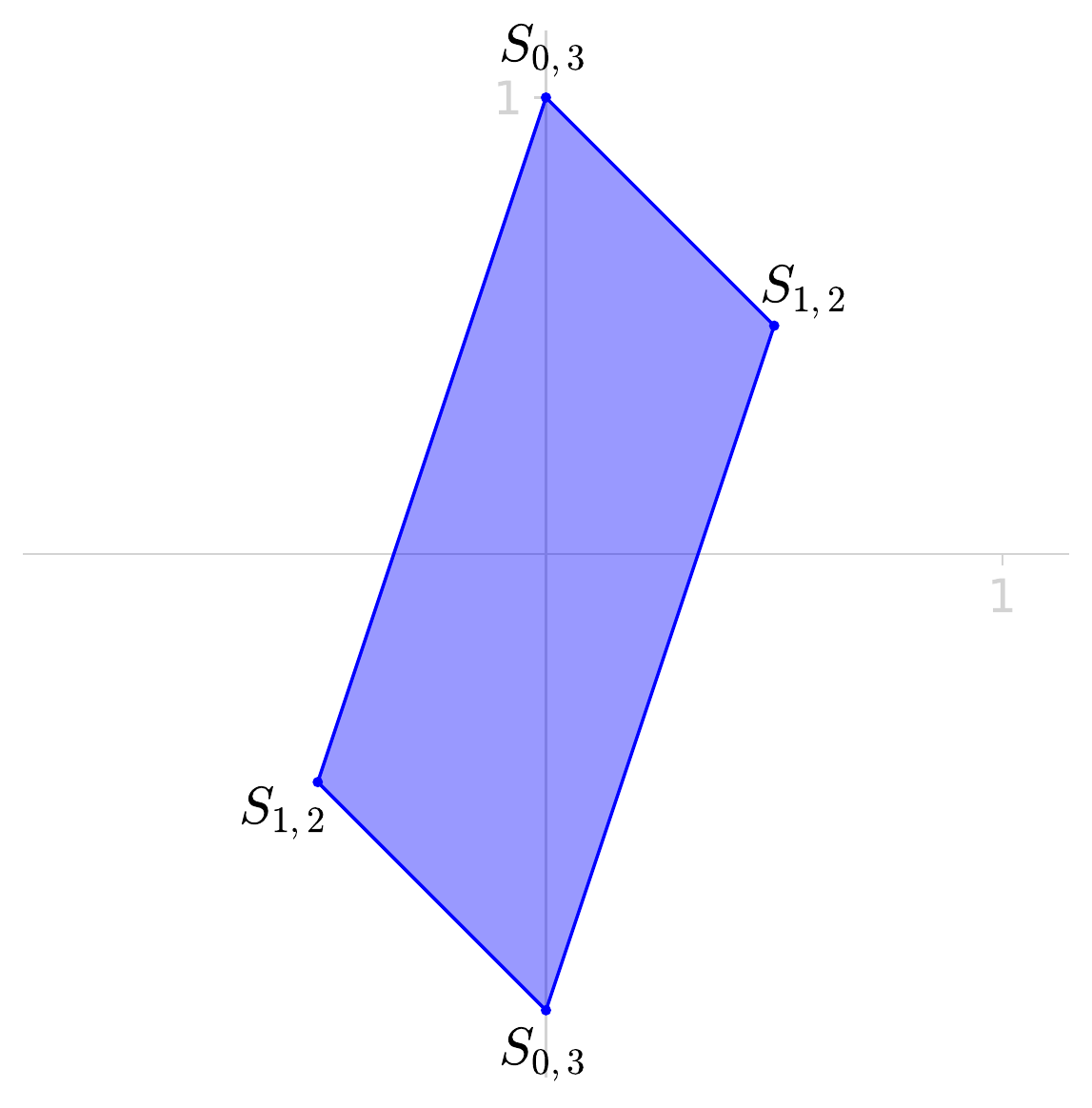}} \\ 
 $L=9^{{2}}_{{16}}$ & & $L=9^{{2}}_{{17}}$ & \\ 
 \quad & & \quad & \\ $\mathrm{Isom}(\mathbb{S}^3\setminus L) = \mathbb{{Z}}_2\oplus\mathbb{{Z}}_2$ & & $\mathrm{Isom}(\mathbb{S}^3\setminus L) = \mathbb{{Z}}_2\oplus\mathbb{{Z}}_2$ & \\ 
 \quad & & \quad & \\ 
 \includegraphics[width=1in]{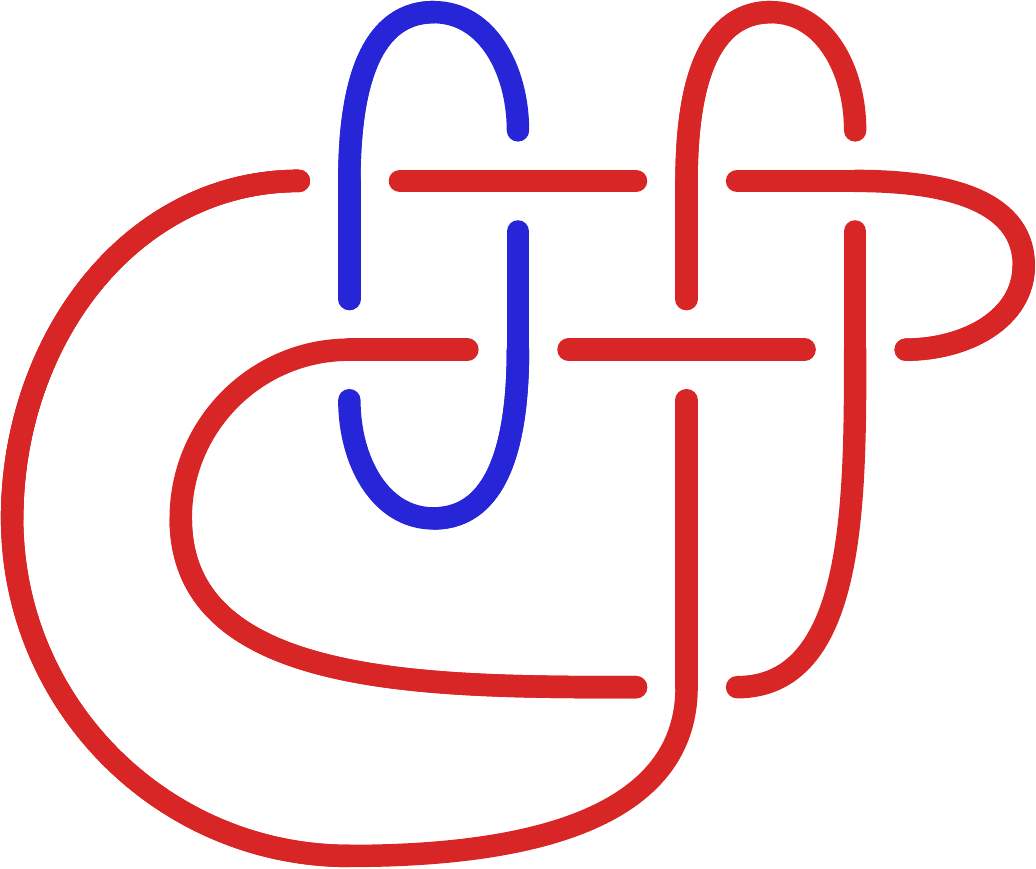}  & & \includegraphics[width=1in]{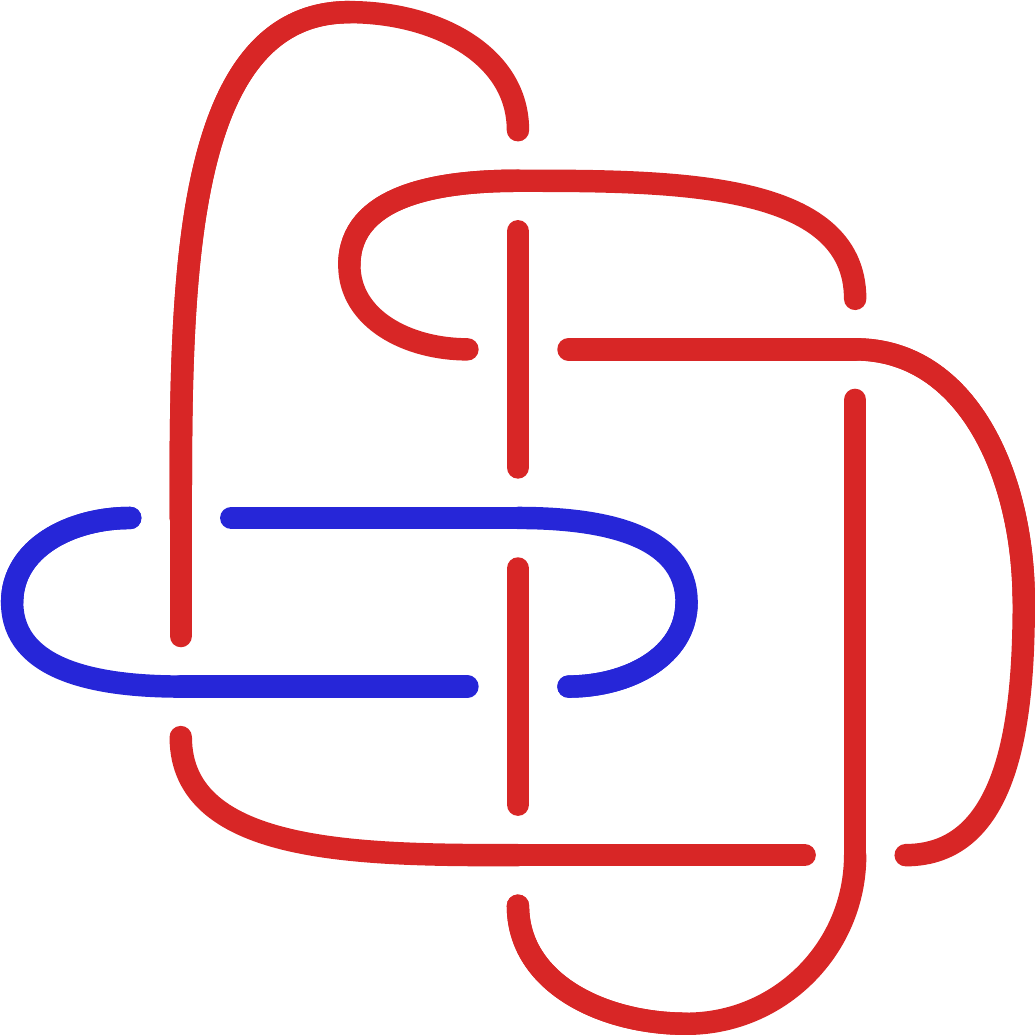} & \\ 
 \quad & & \quad & \\ 
 \hline  
\quad & \multirow{6}{*}{\Includegraphics[width=1.8in]{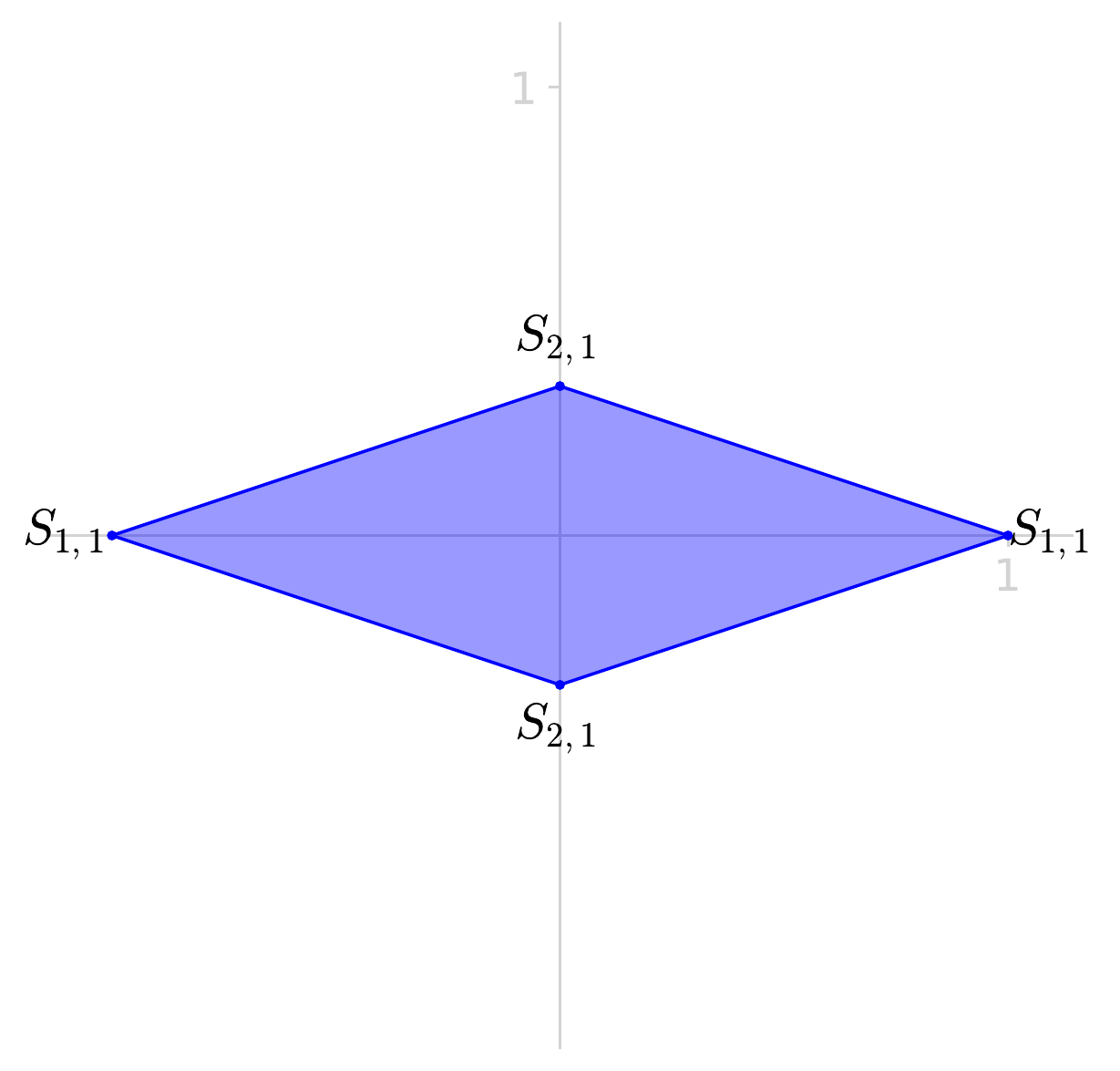}} & \quad & \multirow{6}{*}{\Includegraphics[width=1.8in]{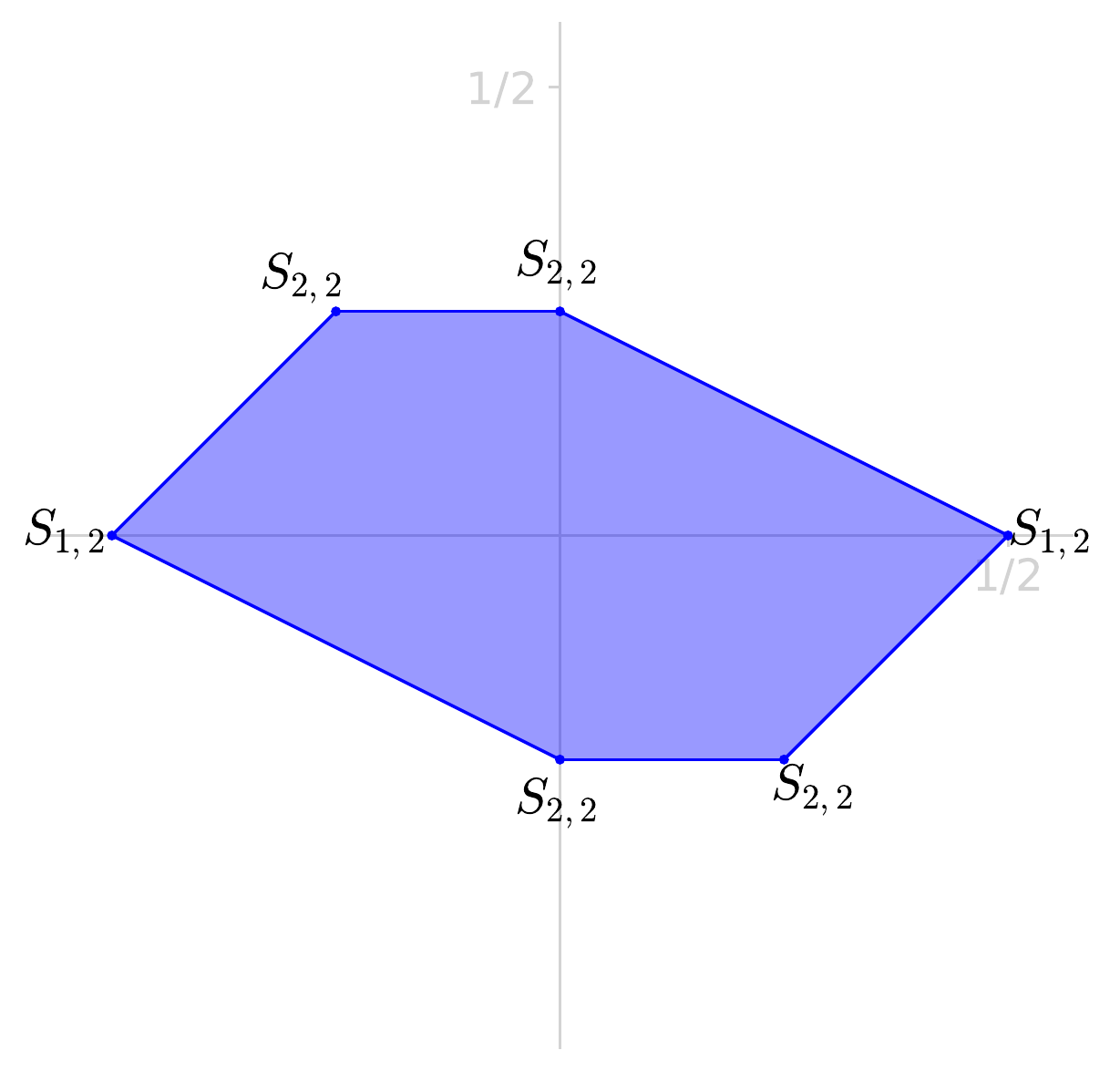}} \\ 
 $L=9^{{2}}_{{18}}$ & & $L=9^{{2}}_{{19}}$ & \\ 
 \quad & & \quad & \\ $\mathrm{Isom}(\mathbb{S}^3\setminus L) = \mathbb{{Z}}_2\oplus\mathbb{{Z}}_2$ & & $\mathrm{Isom}(\mathbb{S}^3\setminus L) = \mathbb{{Z}}_2$ & \\ 
 \quad & & \quad & \\ 
 \includegraphics[width=1in]{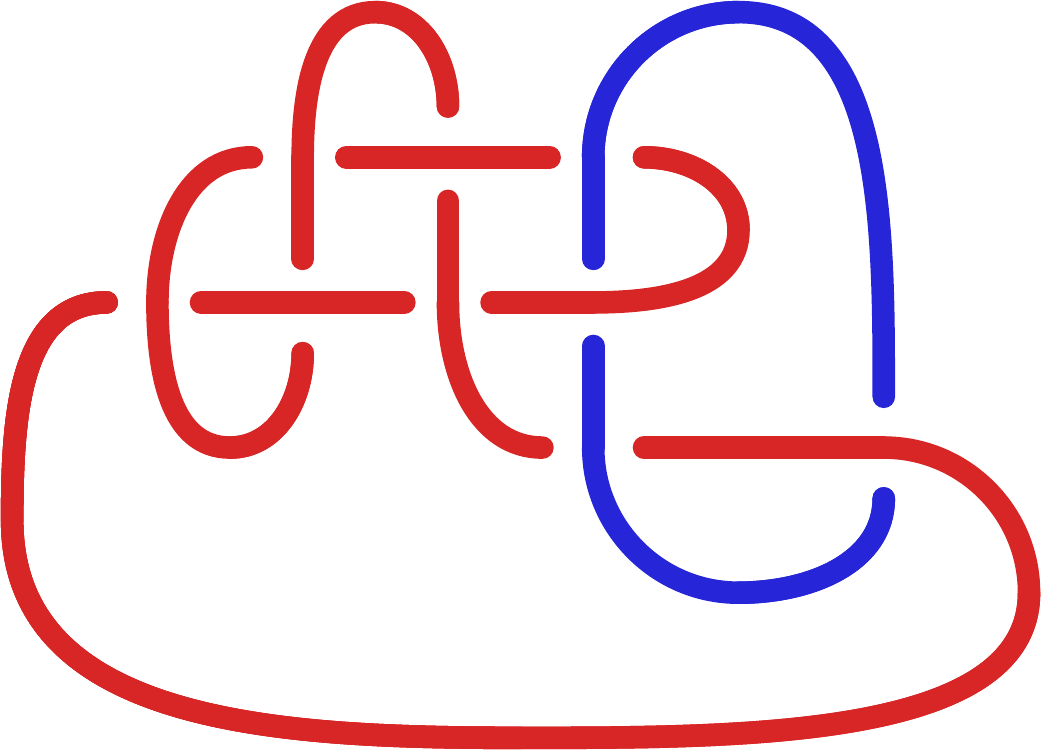}  & & \includegraphics[width=1in]{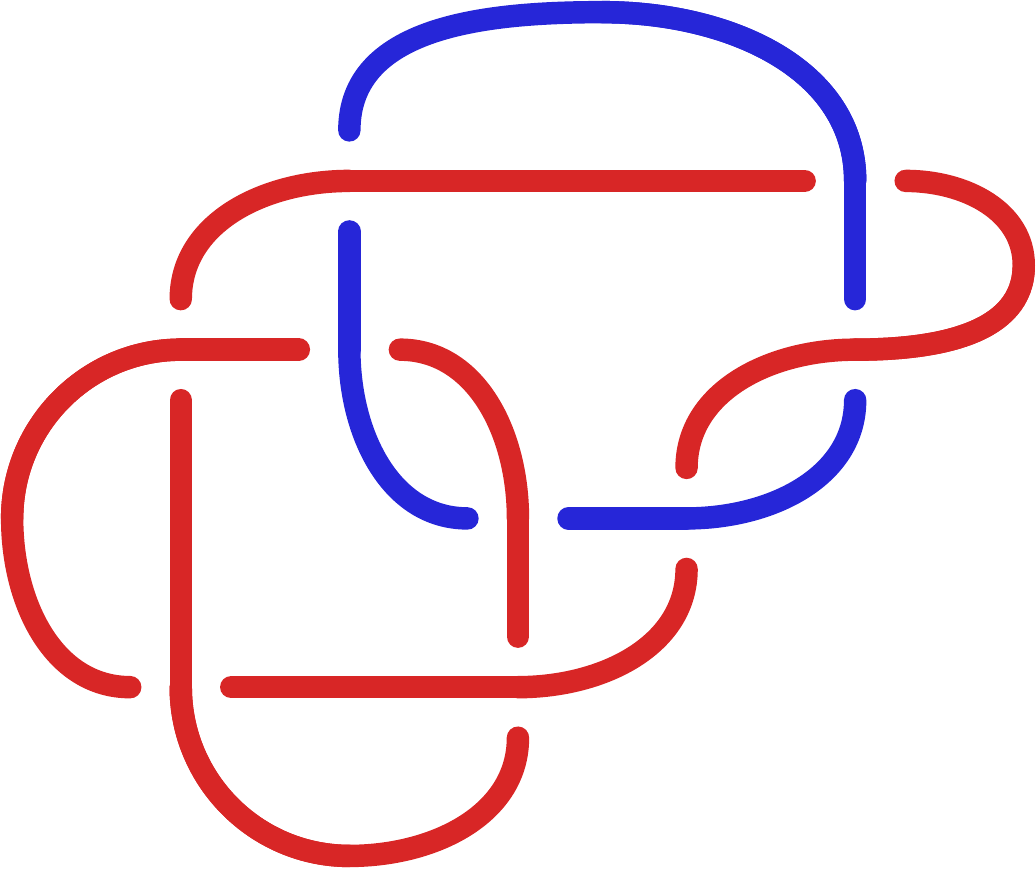} & \\ 
 \quad & & \quad & \\ 
 \hline  
\quad & \multirow{6}{*}{\Includegraphics[width=1.8in]{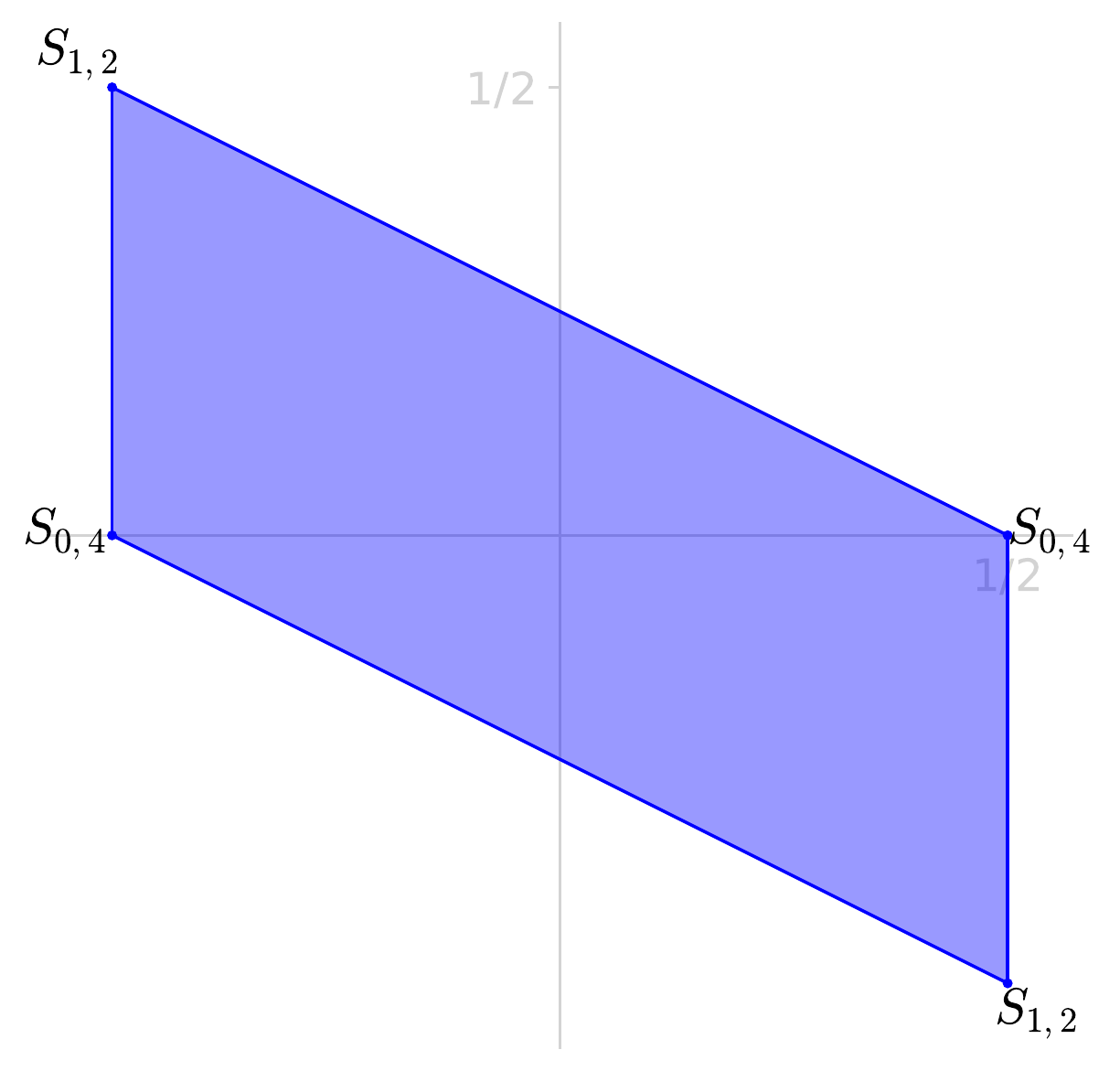}} & \quad & \multirow{6}{*}{\Includegraphics[width=1.8in]{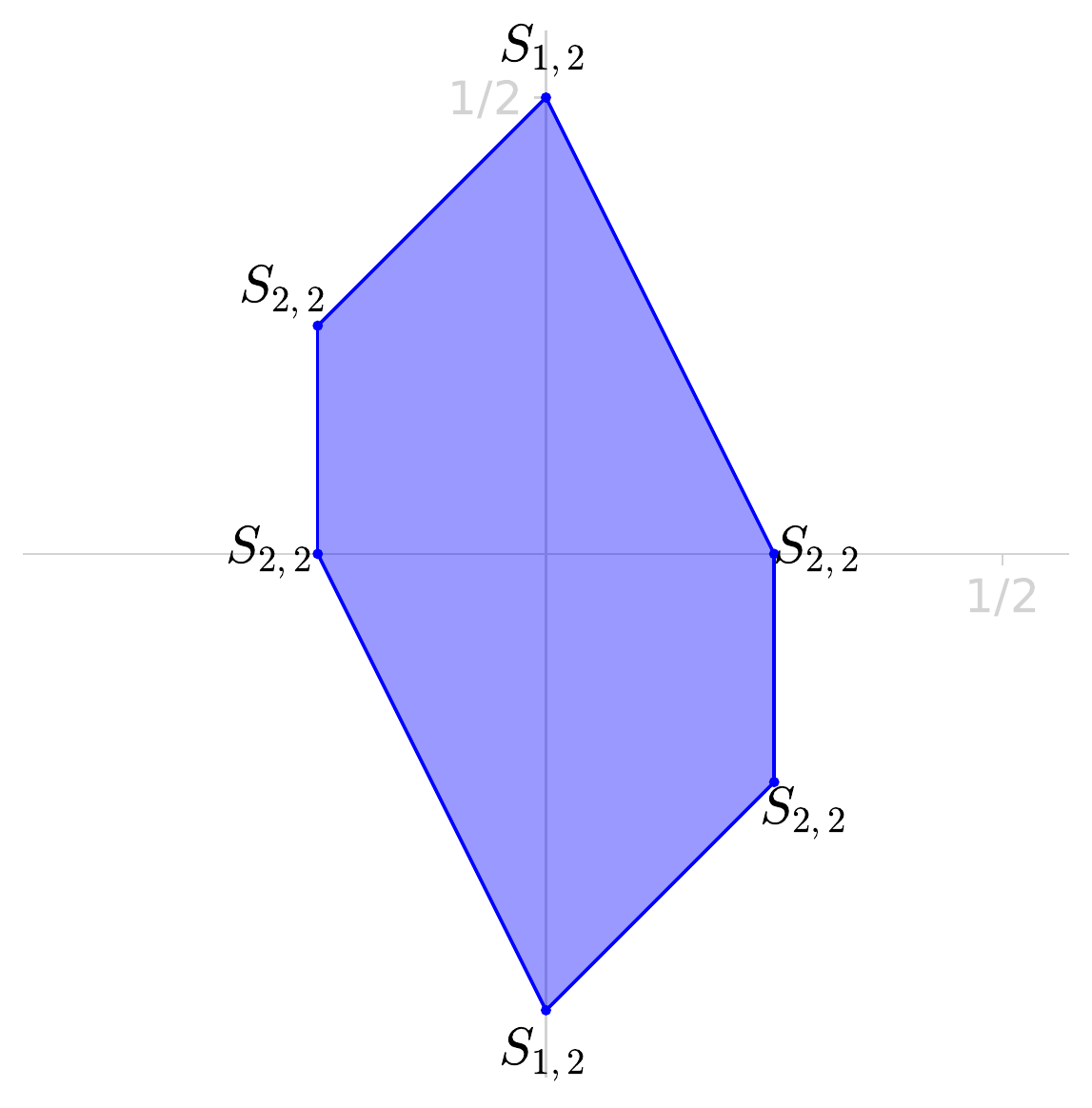}} \\ 
 $L=9^{{2}}_{{20}}$ & & $L=9^{{2}}_{{21}}$ & \\ 
 \quad & & \quad & \\ $\mathrm{Isom}(\mathbb{S}^3\setminus L) = \mathbb{{Z}}_2$ & & $\mathrm{Isom}(\mathbb{S}^3\setminus L) = \mathbb{{Z}}_2$ & \\ 
 \quad & & \quad & \\ 
 \includegraphics[width=1in]{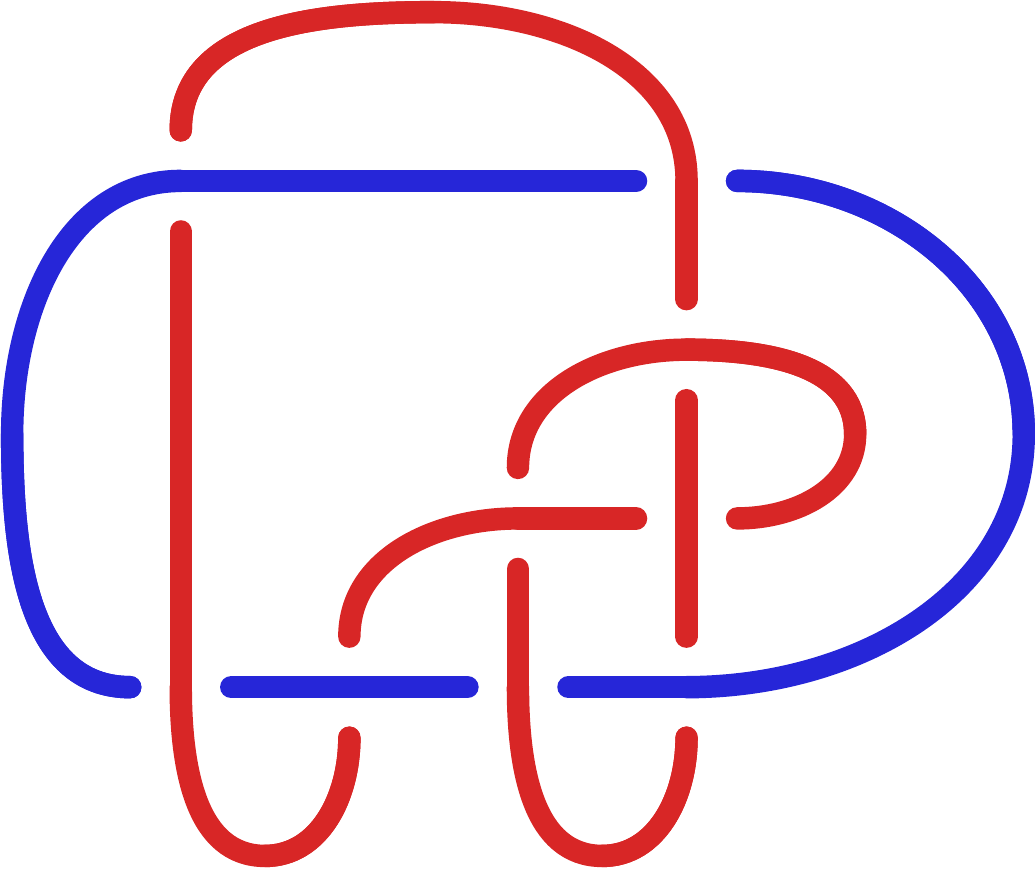}  & & \includegraphics[width=1in]{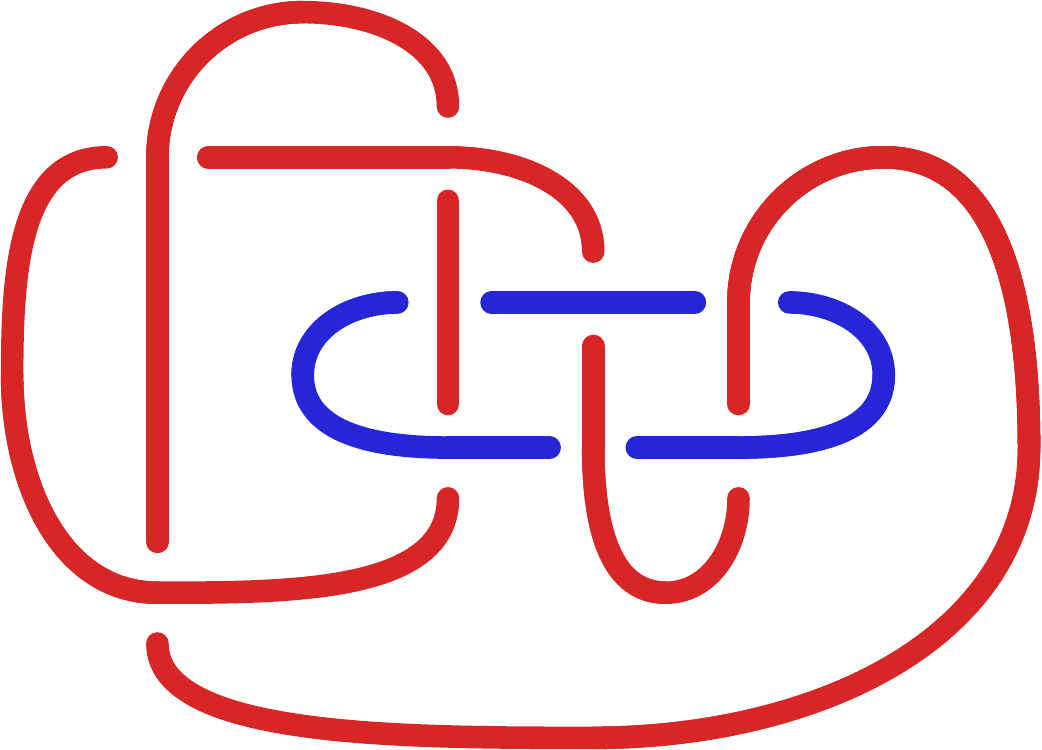} & \\ 
 \quad & & \quad & \\ 
 \hline  
\end{tabular} 
 \newpage \begin{tabular}{|c|c|c|c|} 
 \hline 
 Link & Norm Ball & Link & Norm Ball \\ 
 \hline 
\quad & \multirow{6}{*}{\Includegraphics[width=1.8in]{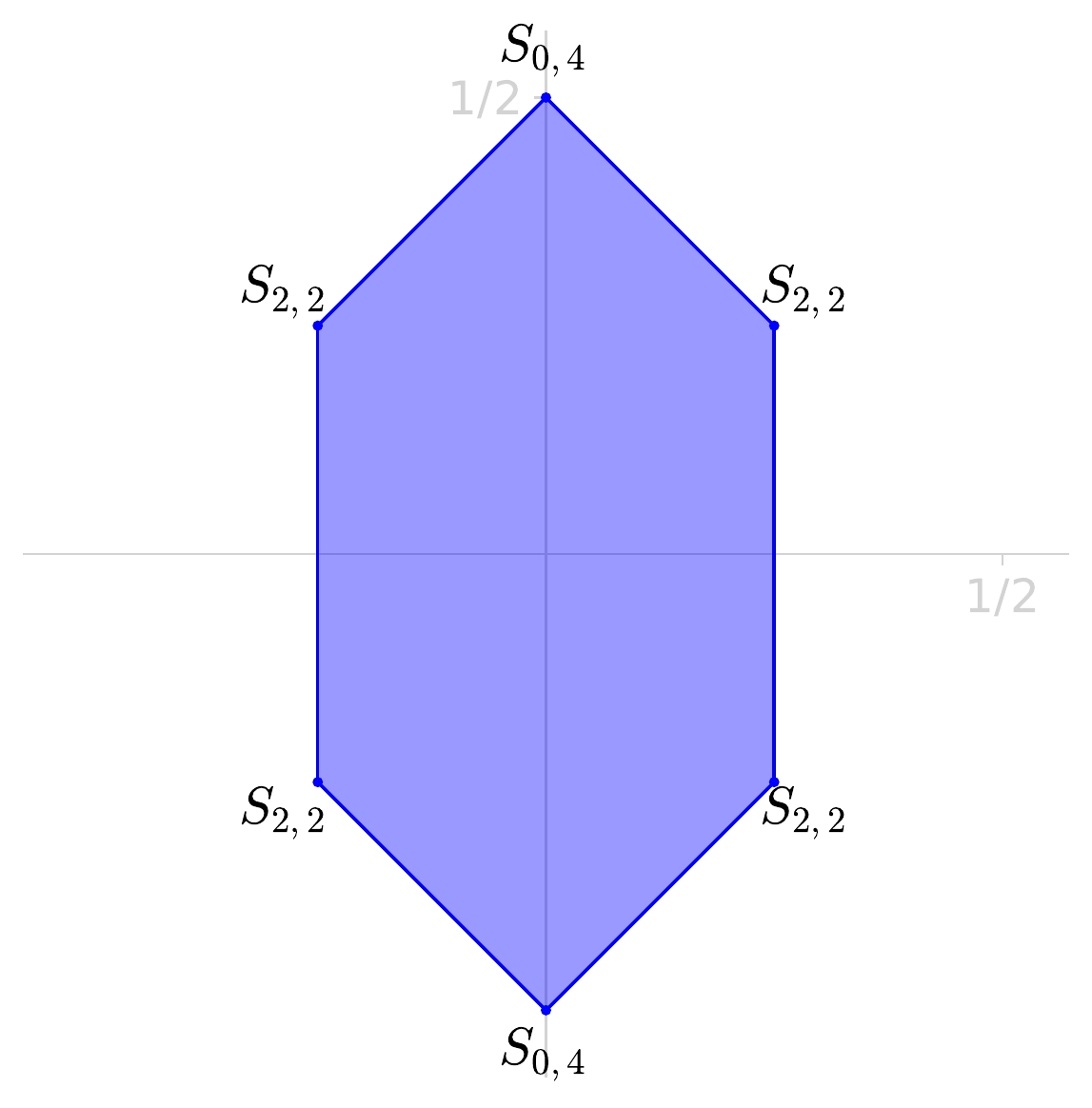}} & \quad & \multirow{6}{*}{\Includegraphics[width=1.8in]{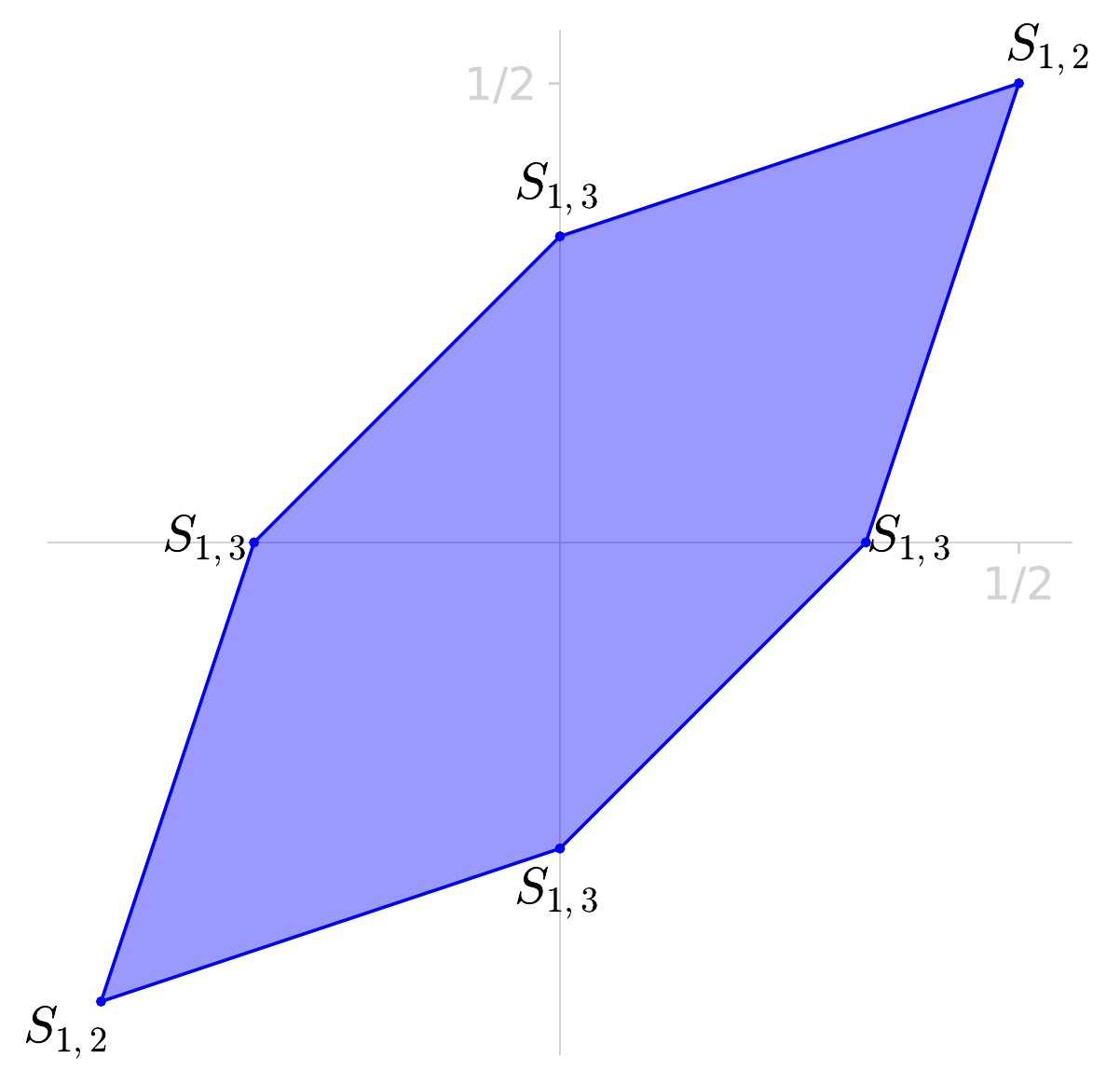}} \\ 
 $L=9^{{2}}_{{22}}$ & & $L=9^{{2}}_{{23}}$ & \\ 
 \quad & & \quad & \\ $\mathrm{Isom}(\mathbb{S}^3\setminus L) = \mathbb{{Z}}_2$ & & $\mathrm{Isom}(\mathbb{S}^3\setminus L) = \mathbb{{Z}}_2\oplus\mathbb{{Z}}_2$ & \\ 
 \quad & & \quad & \\ 
 \includegraphics[width=1in]{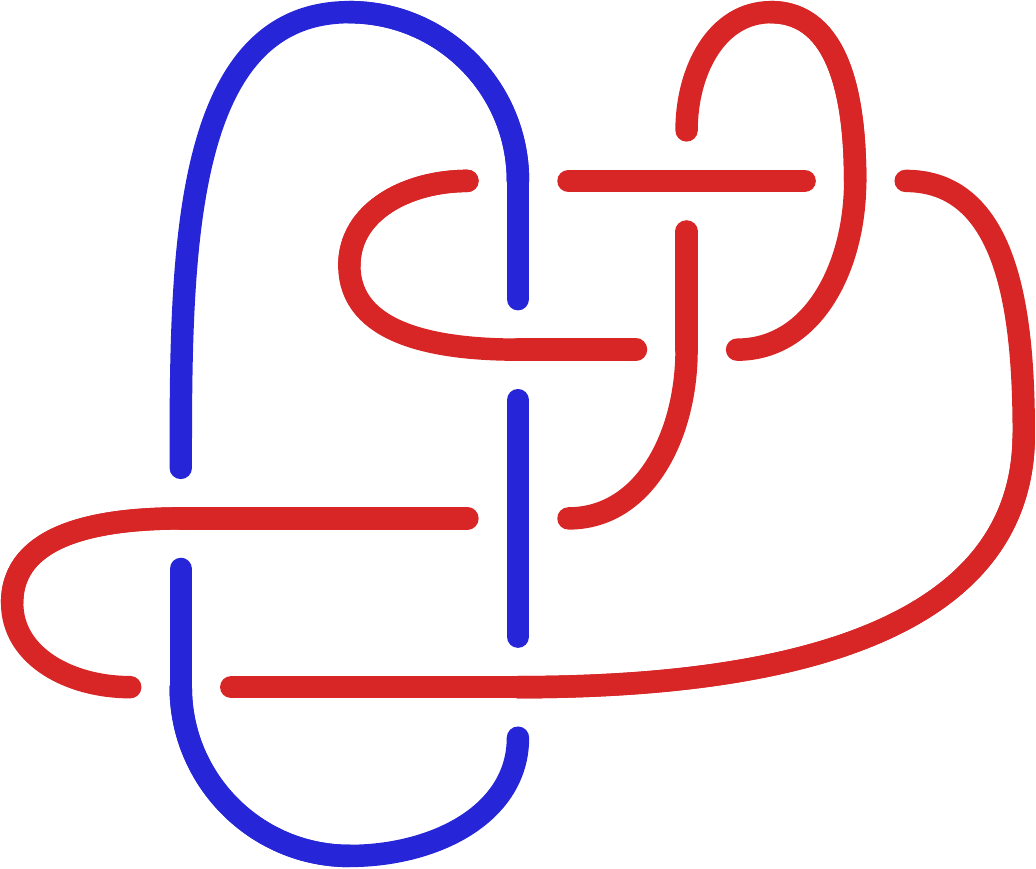}  & & \includegraphics[width=1in]{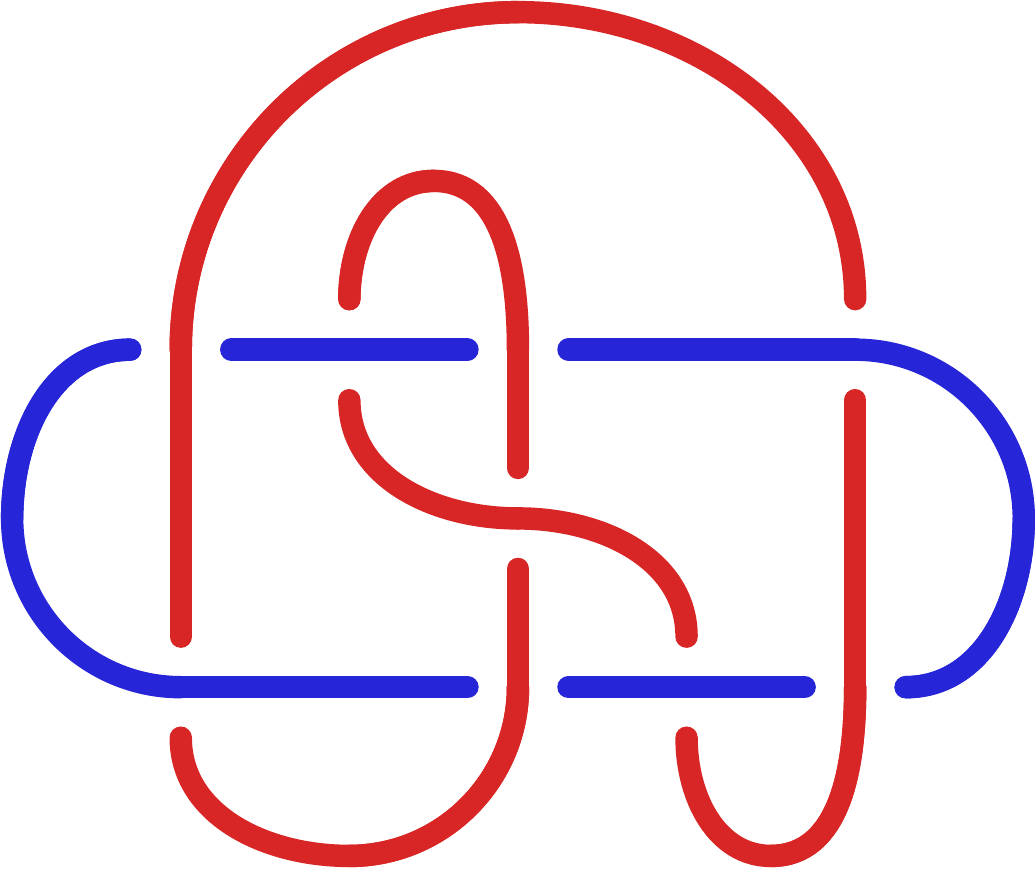} & \\ 
 \quad & & \quad & \\ 
 \hline  
\quad & \multirow{6}{*}{\Includegraphics[width=1.8in]{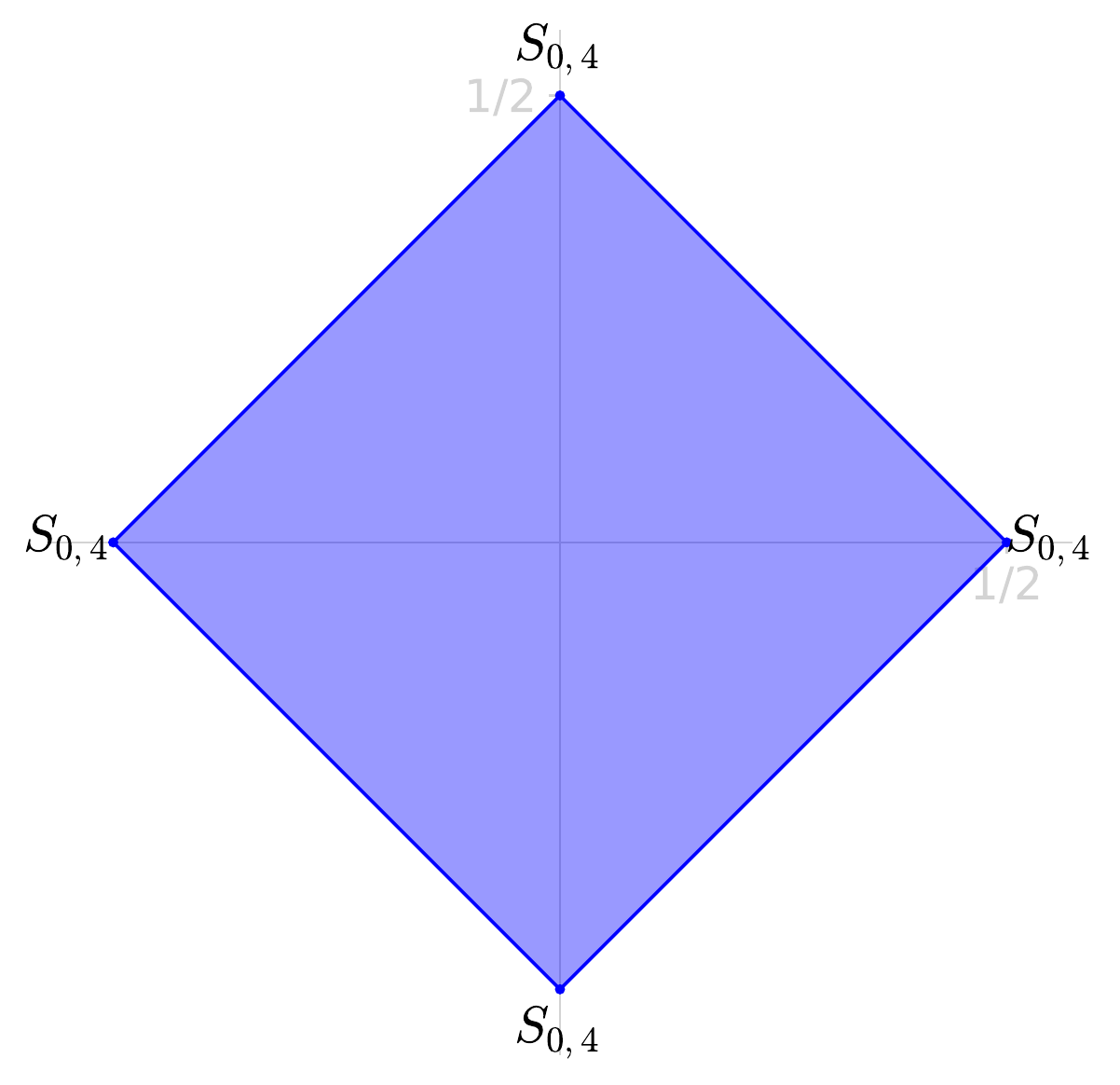}} & \quad & \multirow{6}{*}{\Includegraphics[width=1.8in]{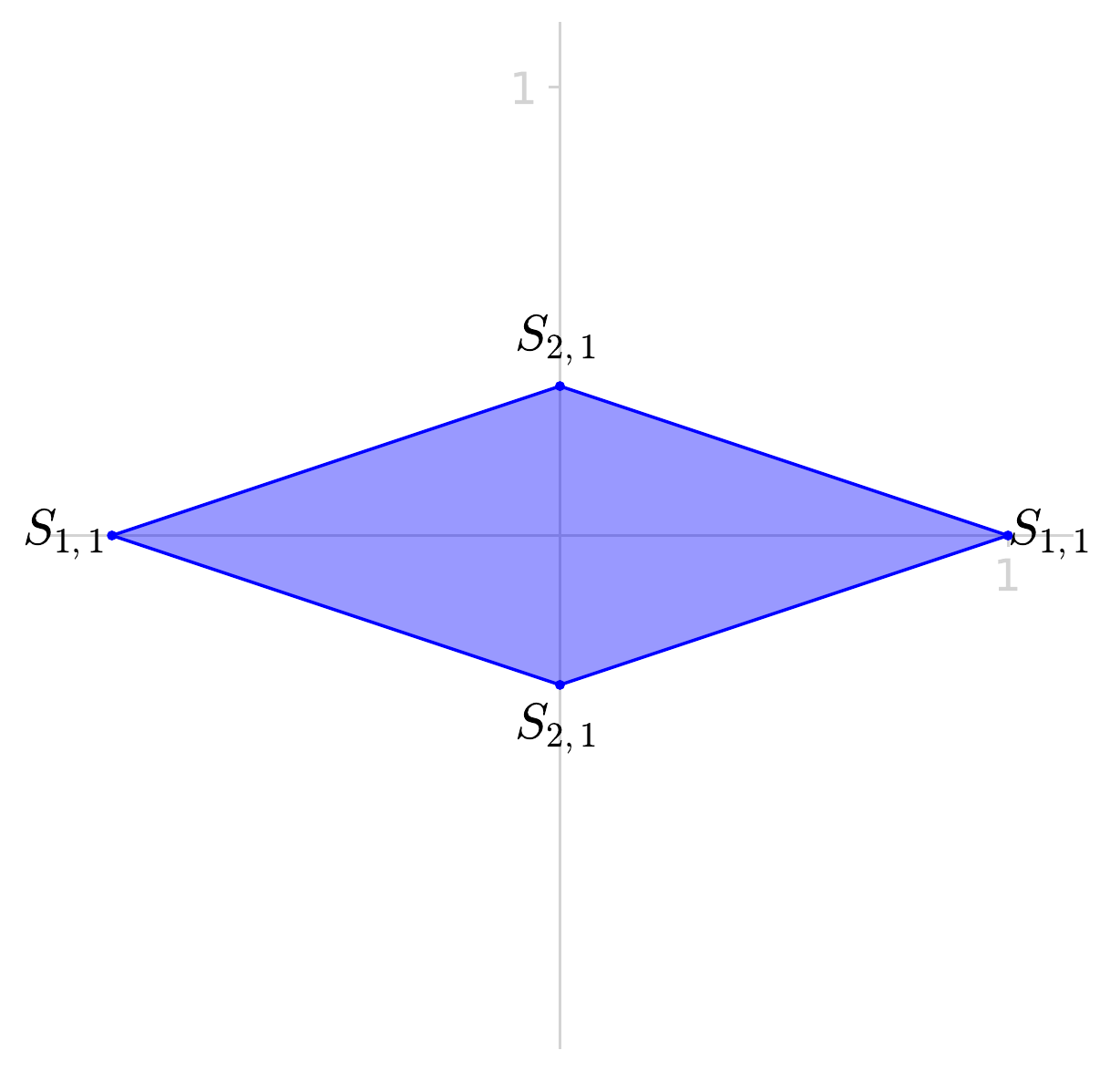}} \\ 
 $L=9^{{2}}_{{24}}$ & & $L=9^{{2}}_{{25}}$ & \\ 
 \quad & & \quad & \\ $\mathrm{Isom}(\mathbb{S}^3\setminus L) = D_6$ & & $\mathrm{Isom}(\mathbb{S}^3\setminus L) = \mathbb{{Z}}_2\oplus\mathbb{{Z}}_2$ & \\ 
 \quad & & \quad & \\ 
 \includegraphics[width=1in]{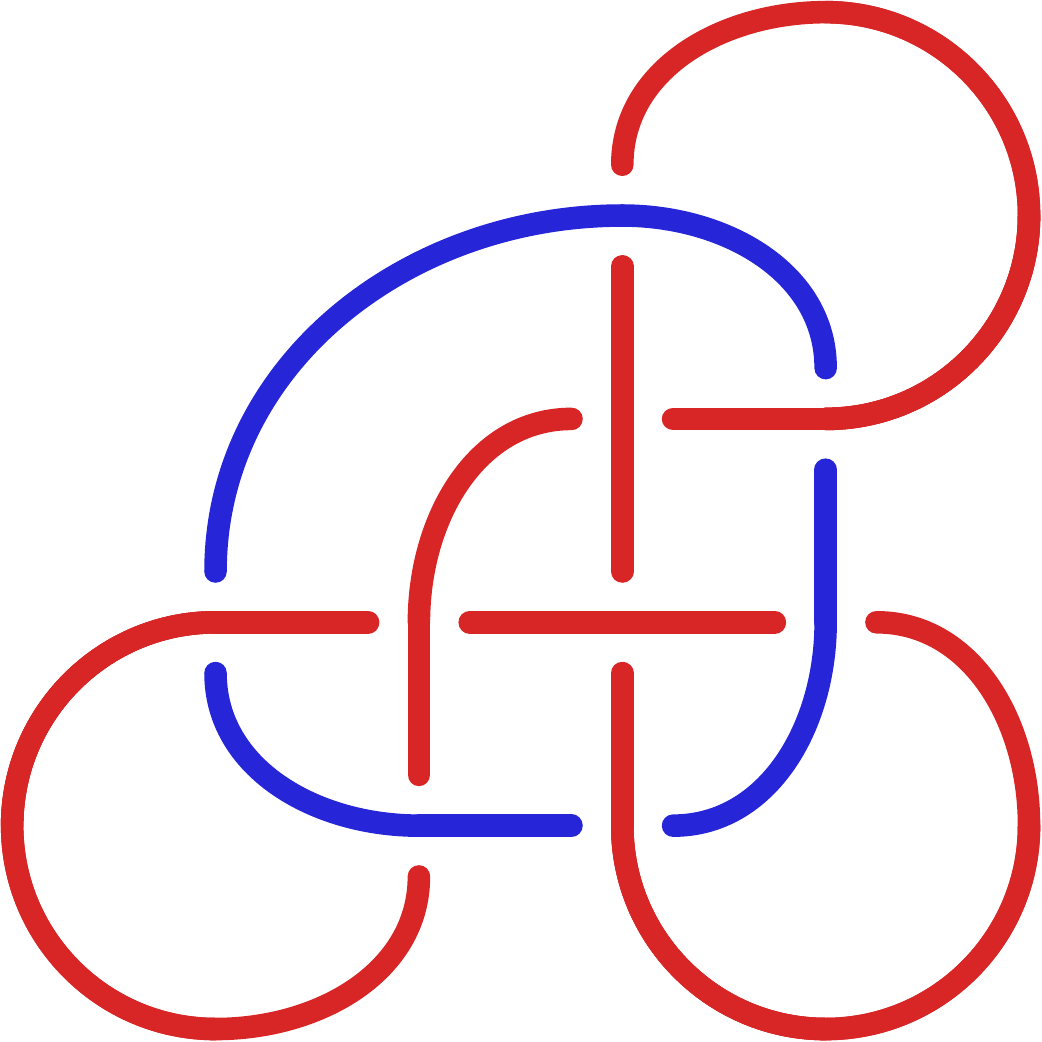}  & & \includegraphics[width=1in]{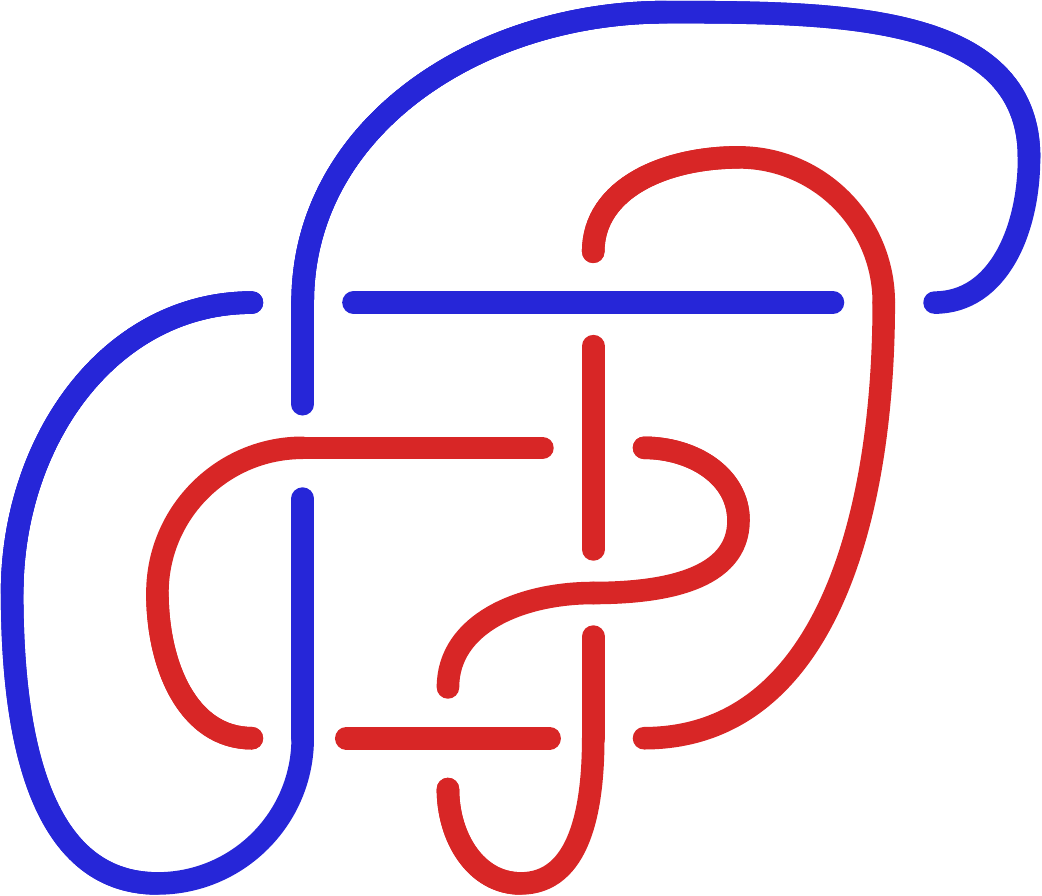} & \\ 
 \quad & & \quad & \\ 
 \hline  
\quad & \multirow{6}{*}{\Includegraphics[width=1.8in]{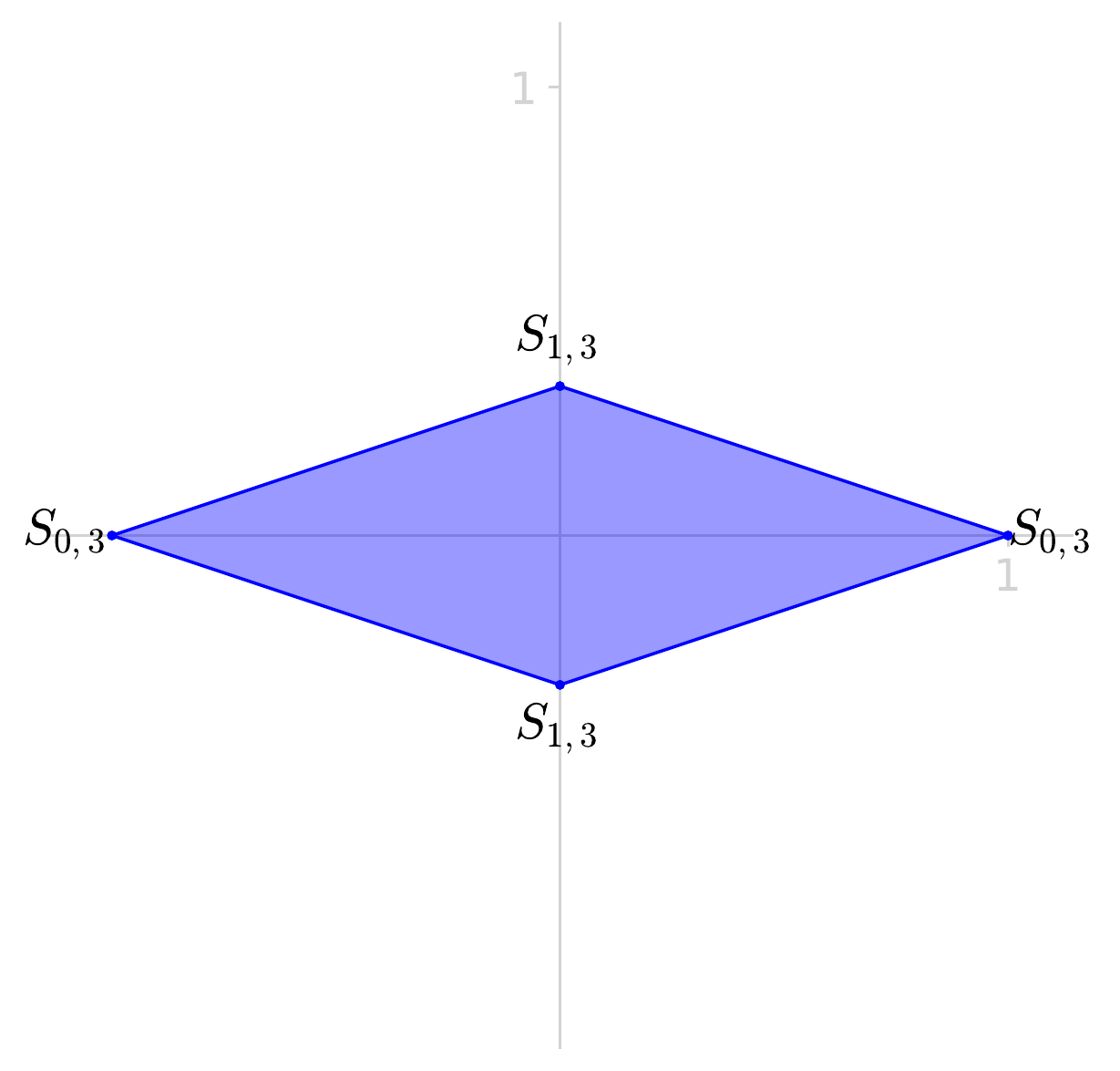}} & \quad & \multirow{6}{*}{\Includegraphics[width=1.8in]{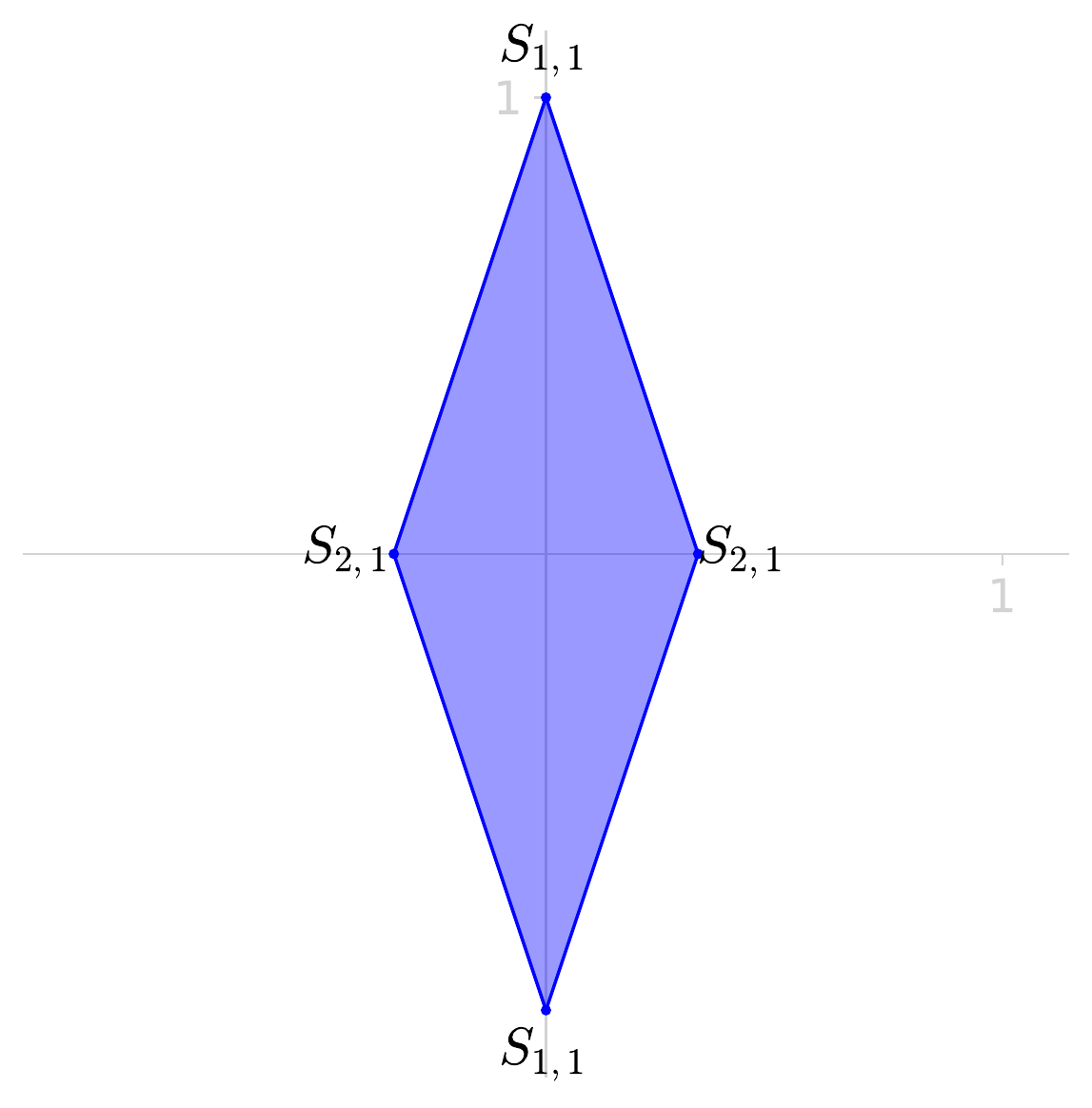}} \\ 
 $L=9^{{2}}_{{26}}$ & & $L=9^{{2}}_{{27}}$ & \\ 
 \quad & & \quad & \\ $\mathrm{Isom}(\mathbb{S}^3\setminus L) = \mathbb{{Z}}_2\oplus\mathbb{{Z}}_2$ & & $\mathrm{Isom}(\mathbb{S}^3\setminus L) = \mathbb{{Z}}_2\oplus\mathbb{{Z}}_2$ & \\ 
 \quad & & \quad & \\ 
 \includegraphics[width=1in]{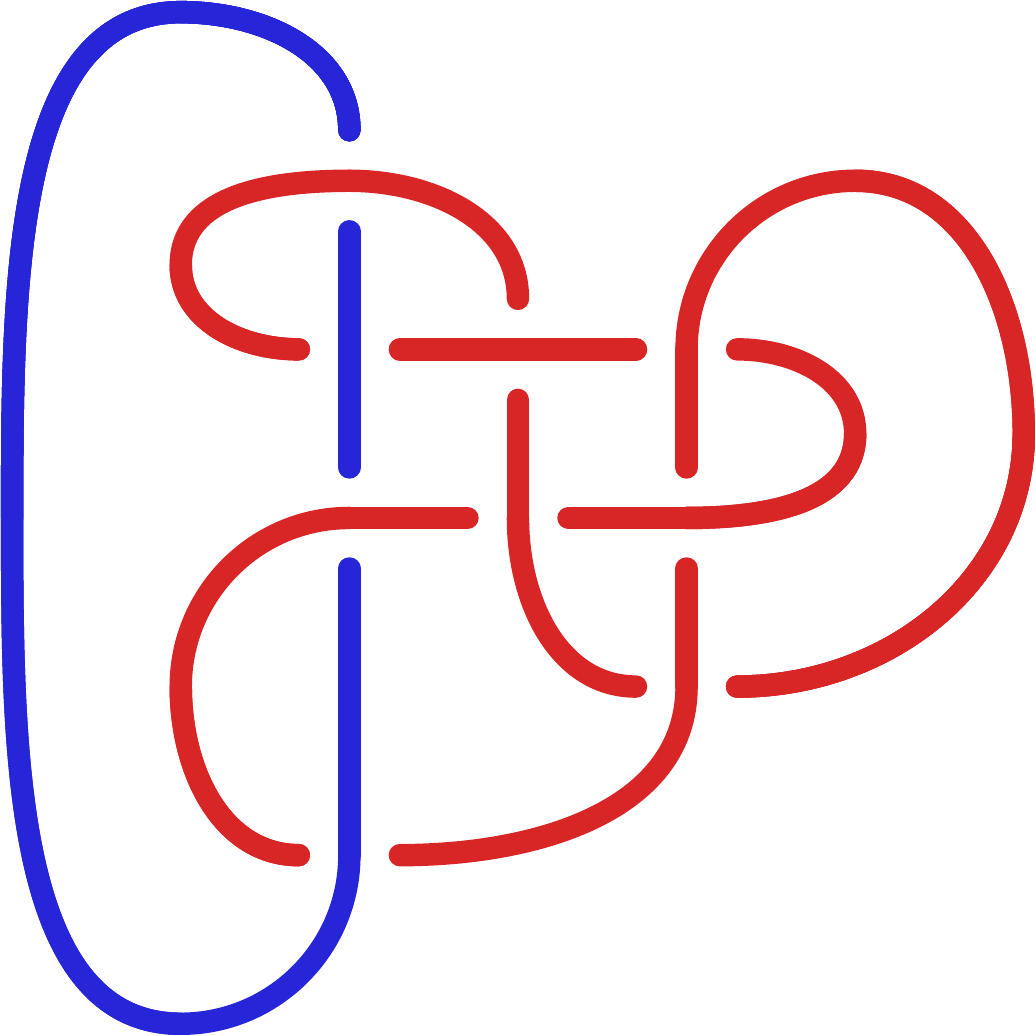}  & & \includegraphics[width=1in]{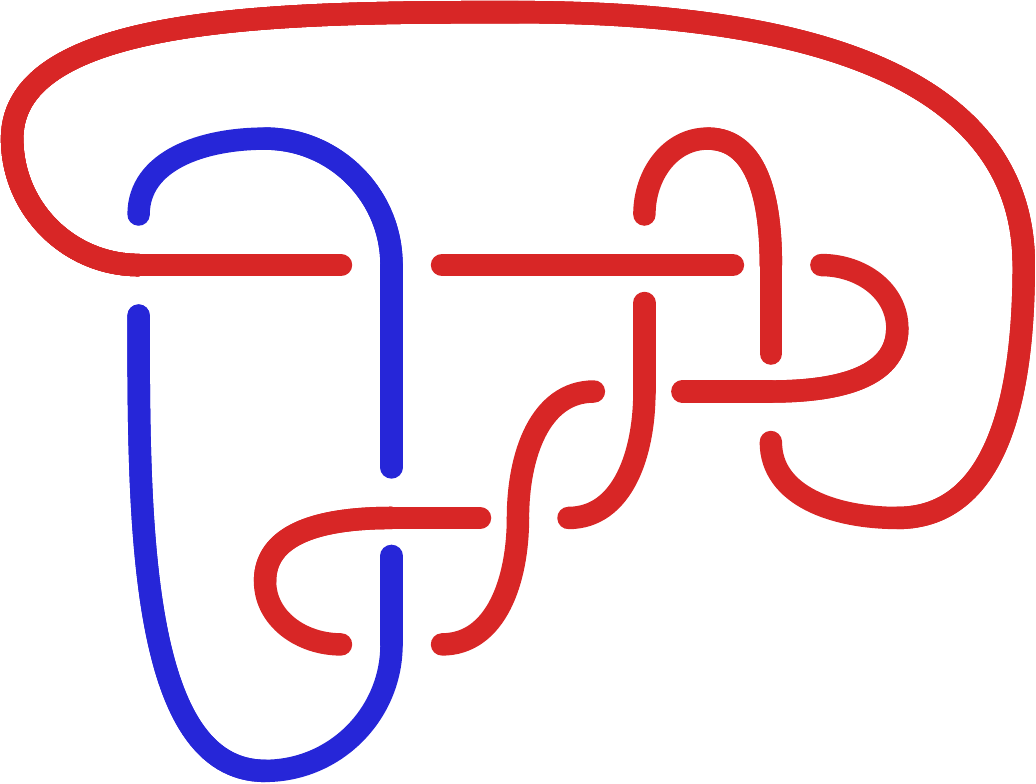} & \\ 
 \quad & & \quad & \\ 
 \hline  
\quad & \multirow{6}{*}{\Includegraphics[width=1.8in]{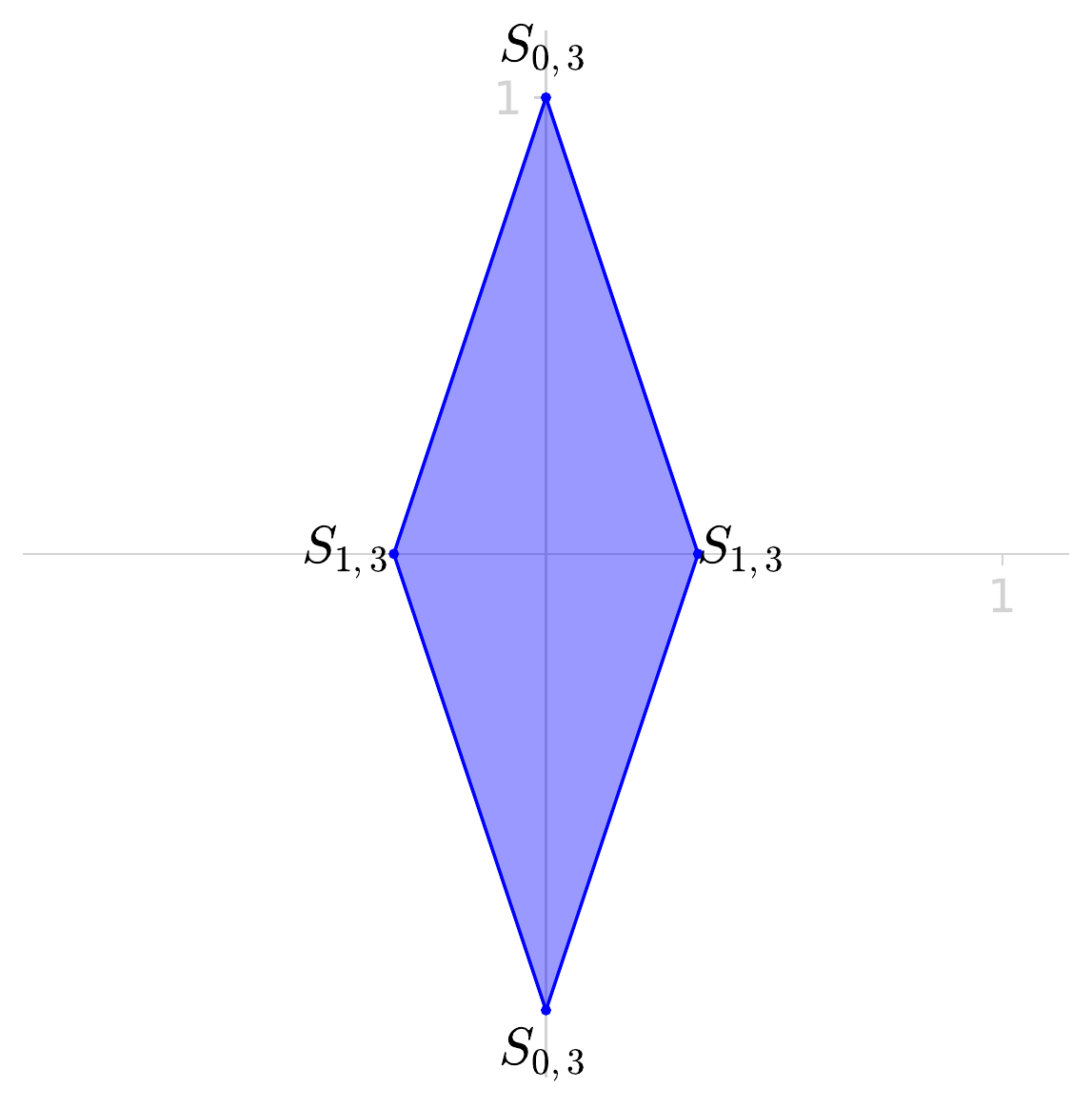}} & \quad & \multirow{6}{*}{\Includegraphics[width=1.8in]{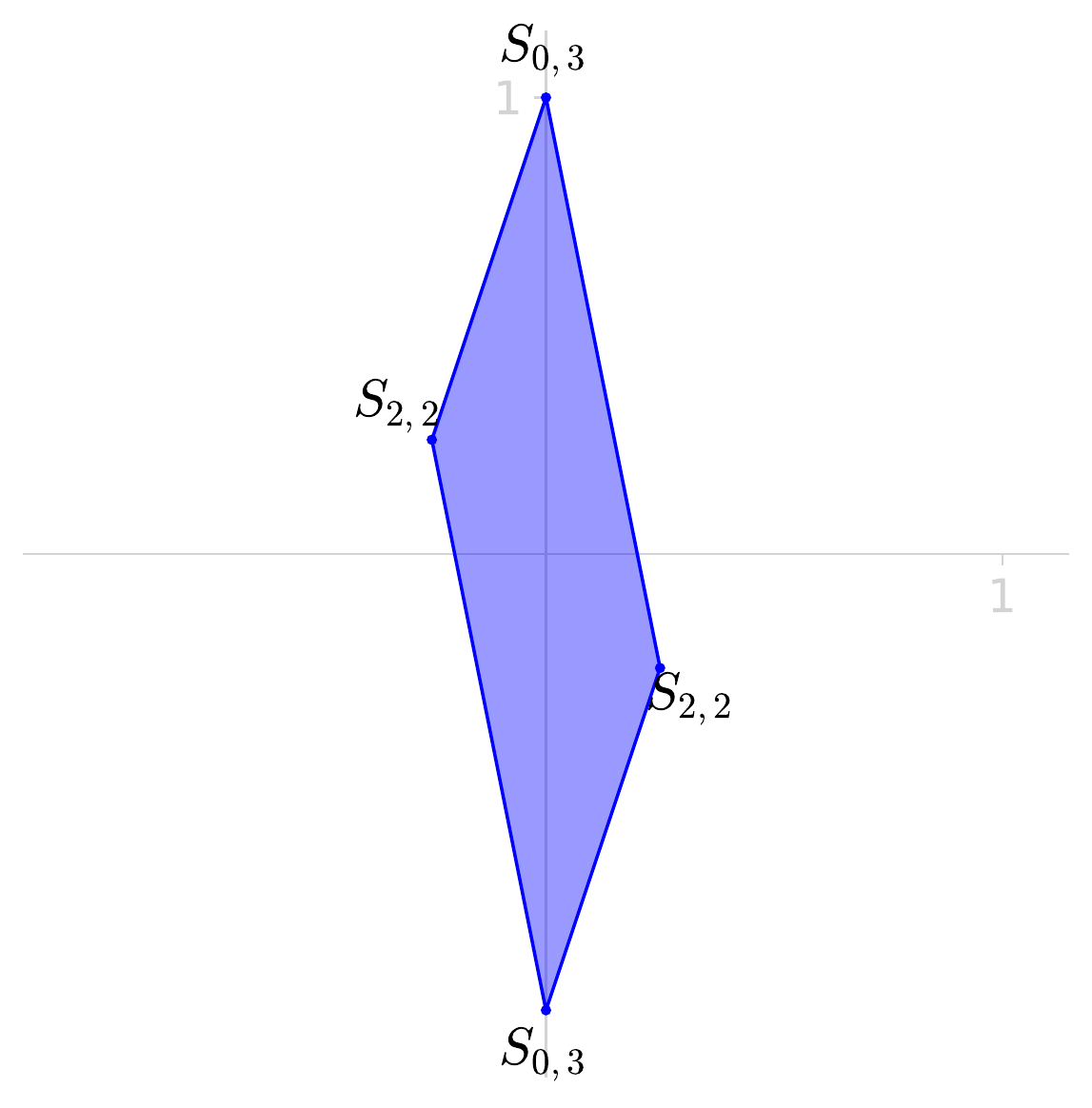}} \\ 
 $L=9^{{2}}_{{28}}$ & & $L=9^{{2}}_{{29}}$ & \\ 
 \quad & & \quad & \\ $\mathrm{Isom}(\mathbb{S}^3\setminus L) = \mathbb{{Z}}_2\oplus\mathbb{{Z}}_2$ & & $\mathrm{Isom}(\mathbb{S}^3\setminus L) = \mathbb{{Z}}_2$ & \\ 
 \quad & & \quad & \\ 
 \includegraphics[width=1in]{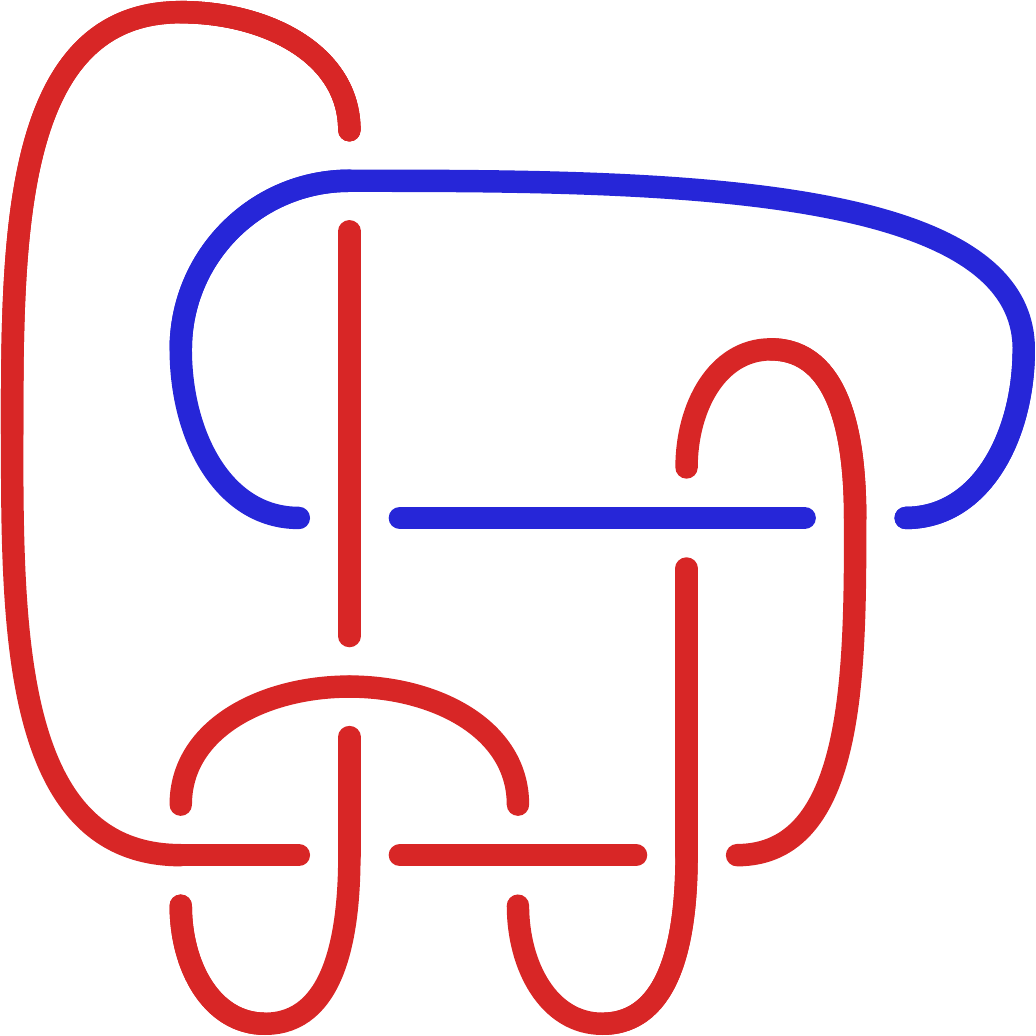}  & & \includegraphics[width=1in]{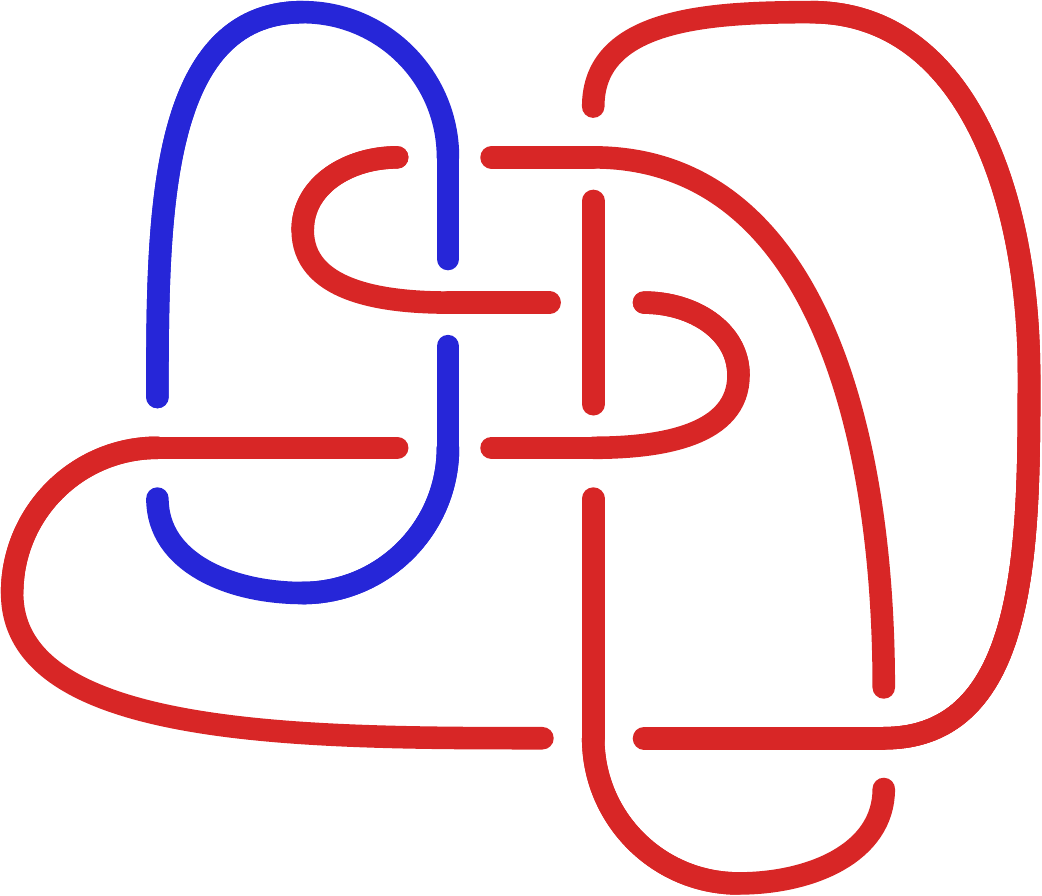} & \\ 
 \quad & & \quad & \\ 
 \hline  
\quad & \multirow{6}{*}{\Includegraphics[width=1.8in]{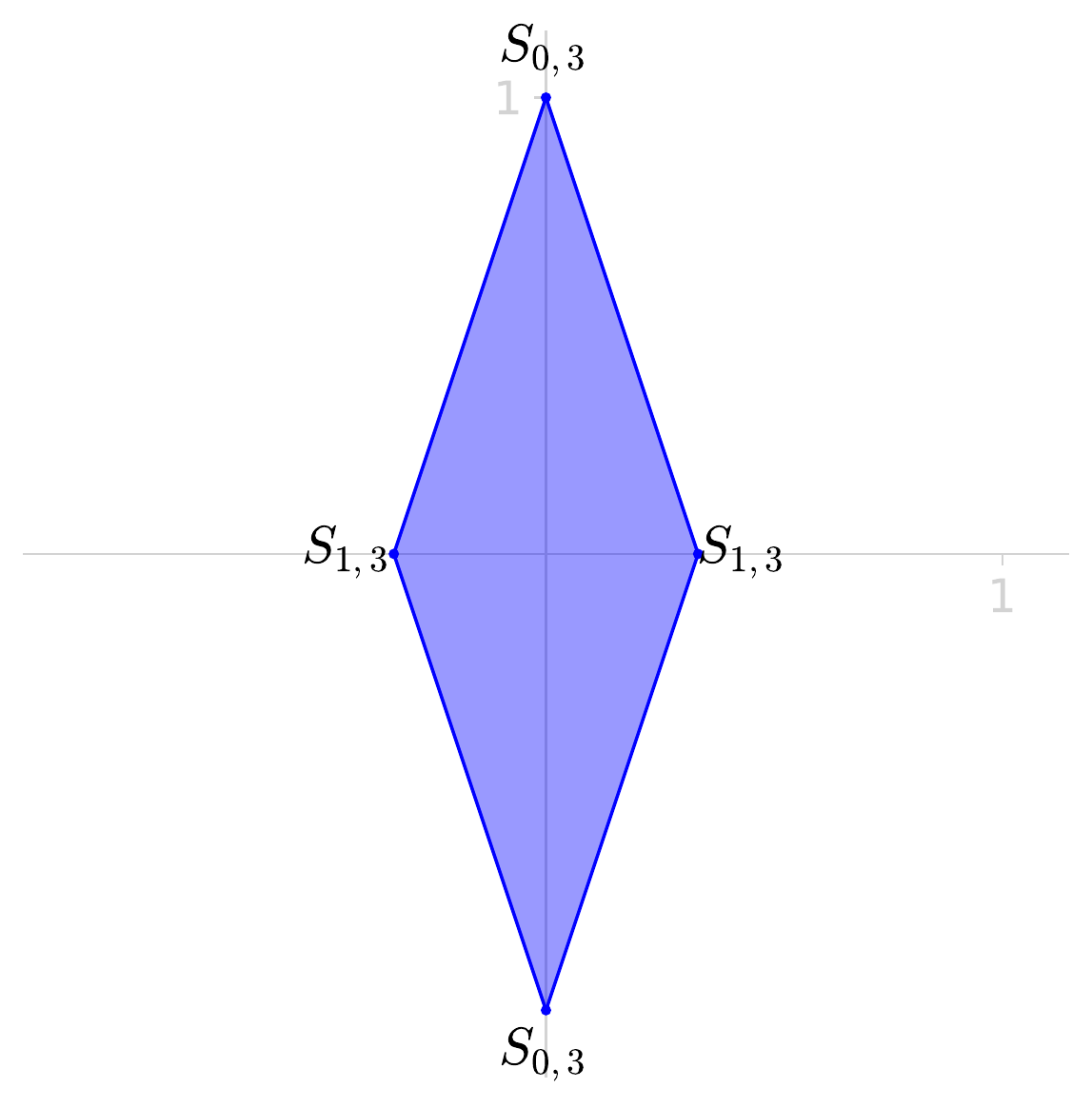}} & \quad & \multirow{6}{*}{\Includegraphics[width=1.8in]{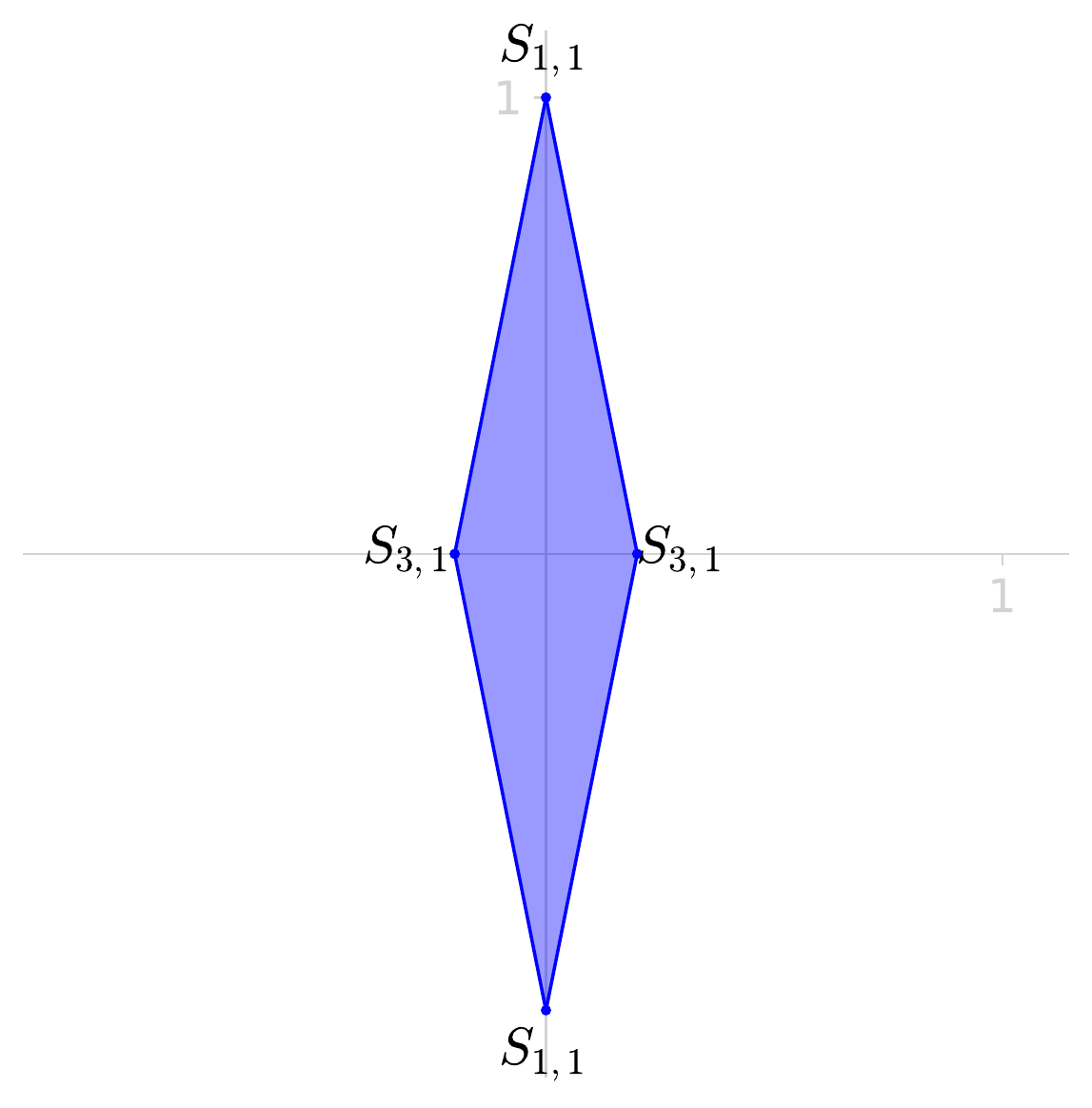}} \\ 
 $L=9^{{2}}_{{30}}$ & & $L=9^{{2}}_{{31}}$ & \\ 
 \quad & & \quad & \\ $\mathrm{Isom}(\mathbb{S}^3\setminus L) = \mathbb{{Z}}_2$ & & $\mathrm{Isom}(\mathbb{S}^3\setminus L) = \mathbb{{Z}}_2\oplus\mathbb{{Z}}_2$ & \\ 
 \quad & & \quad & \\ 
 \includegraphics[width=1in]{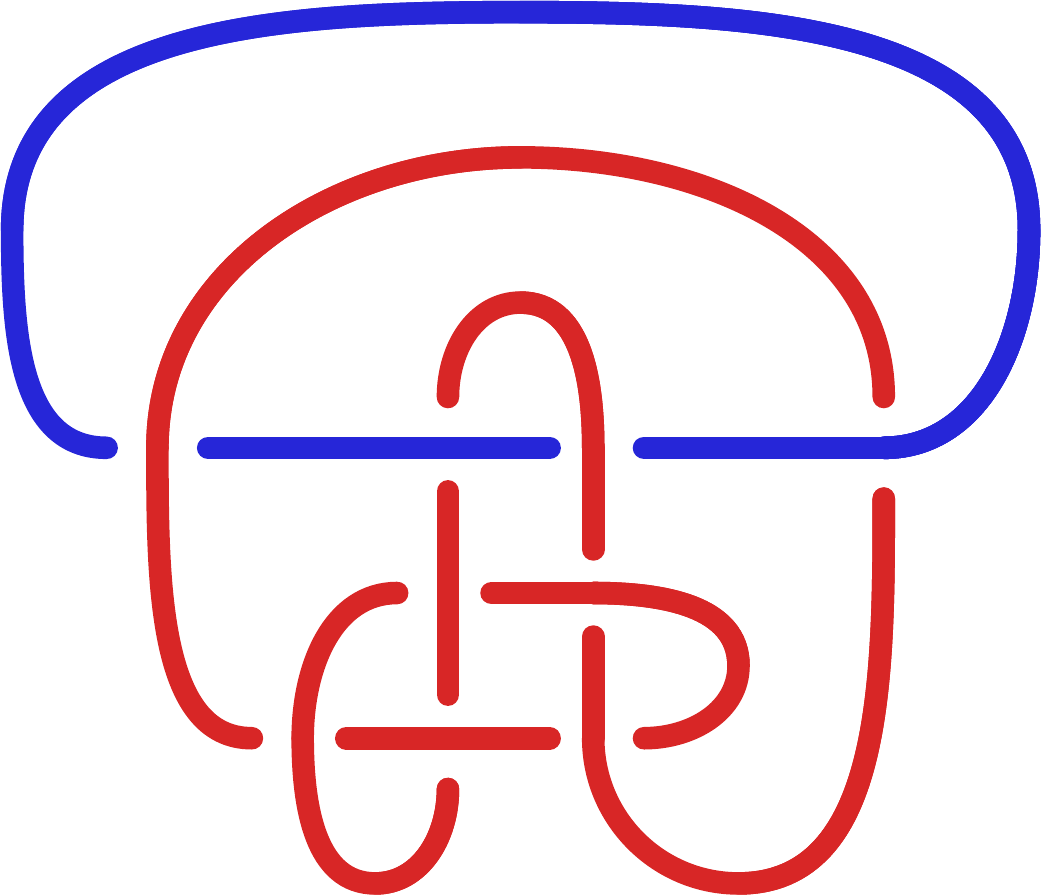}  & & \includegraphics[width=1in]{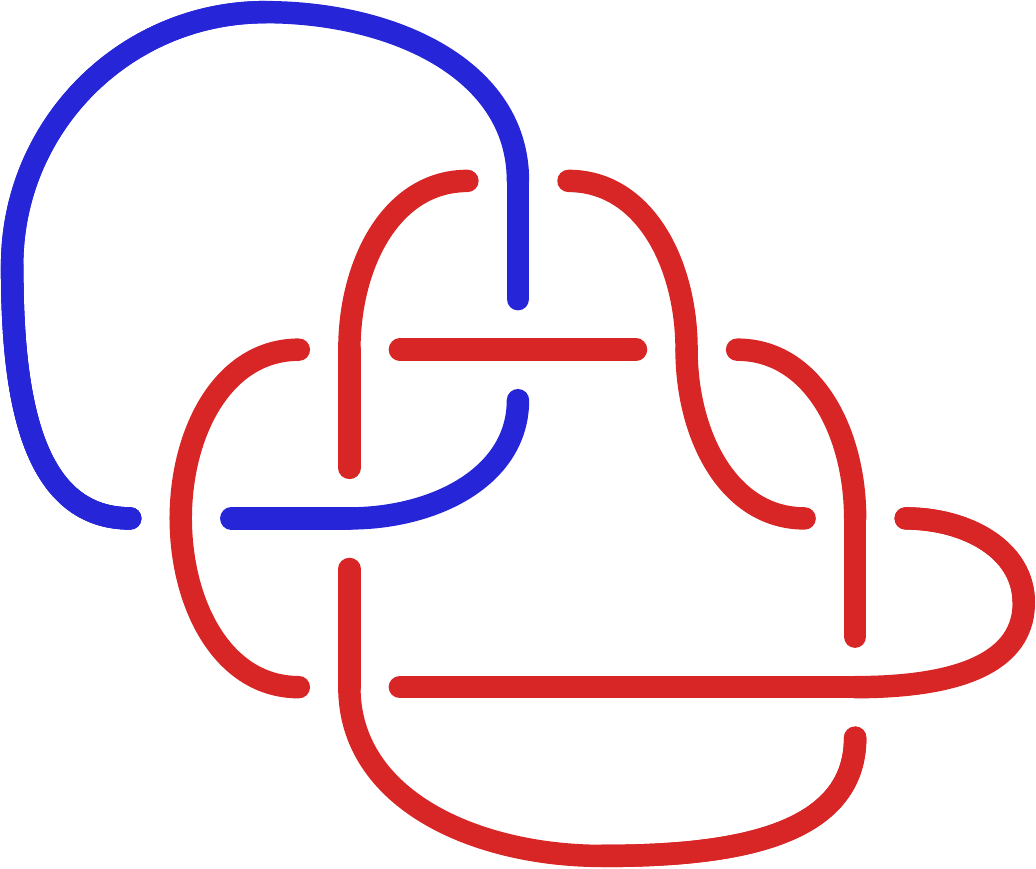} & \\ 
 \quad & & \quad & \\ 
 \hline  
\end{tabular} 
 \newpage \begin{tabular}{|c|c|c|c|} 
 \hline 
 Link & Norm Ball & Link & Norm Ball \\ 
 \hline 
\quad & \multirow{6}{*}{\Includegraphics[width=1.8in]{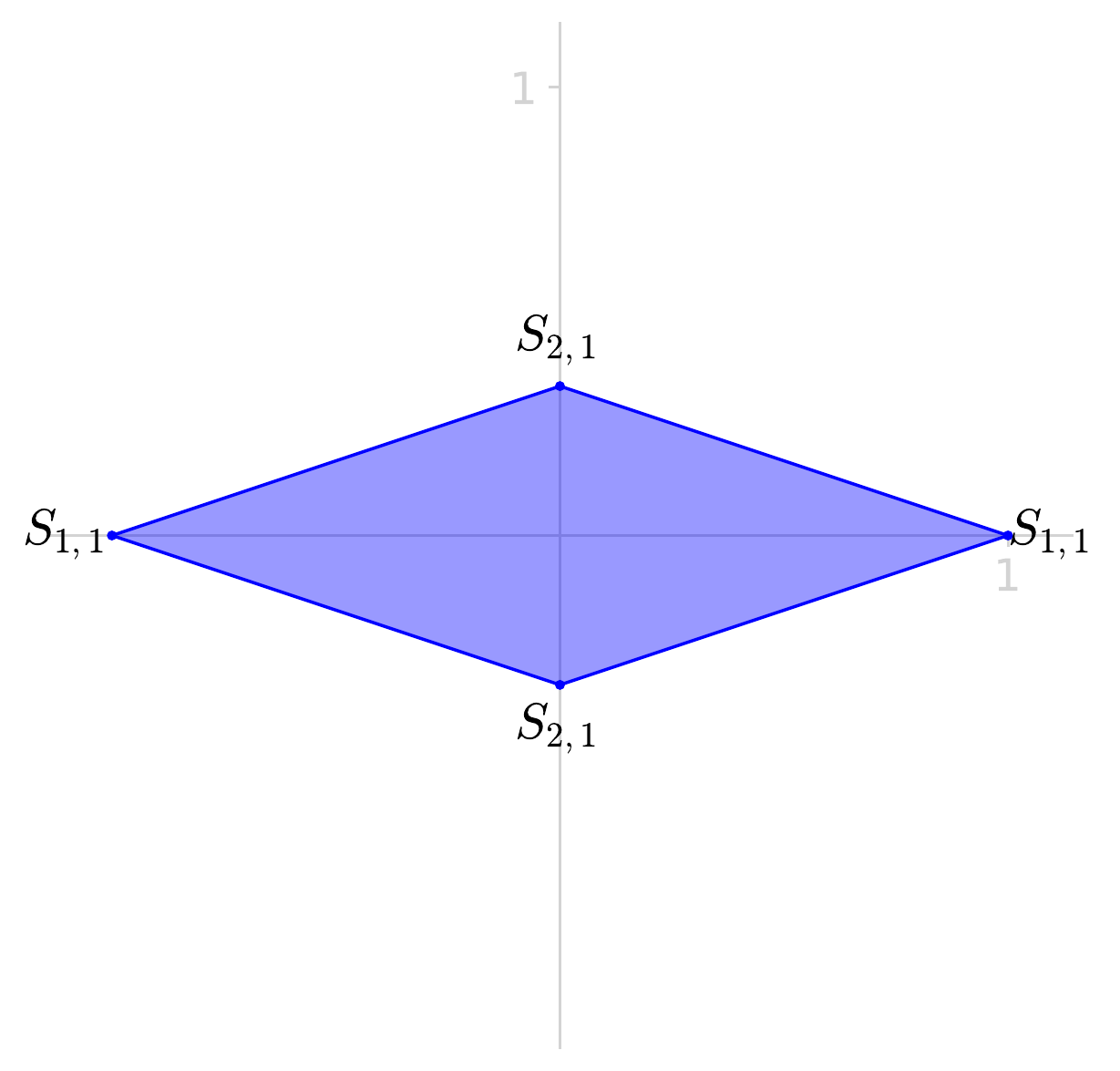}} & \quad & \multirow{6}{*}{\Includegraphics[width=1.8in]{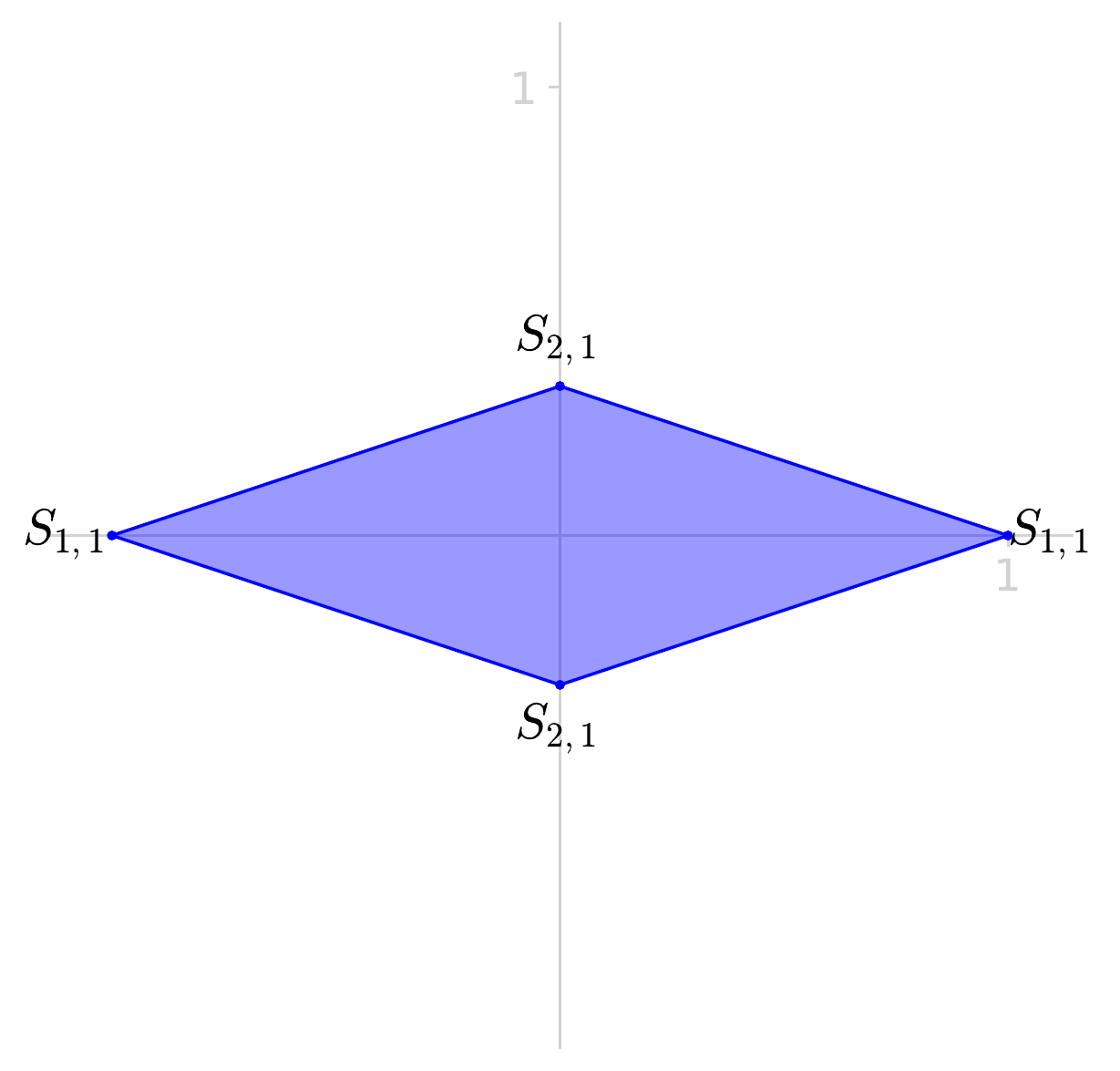}} \\ 
 $L=9^{{2}}_{{32}}$ & & $L=9^{{2}}_{{33}}$ & \\ 
 \quad & & \quad & \\ $\mathrm{Isom}(\mathbb{S}^3\setminus L) = \mathbb{{Z}}_2\oplus\mathbb{{Z}}_2$ & & $\mathrm{Isom}(\mathbb{S}^3\setminus L) = \mathbb{{Z}}_2\oplus\mathbb{{Z}}_2$ & \\ 
 \quad & & \quad & \\ 
 \includegraphics[width=1in]{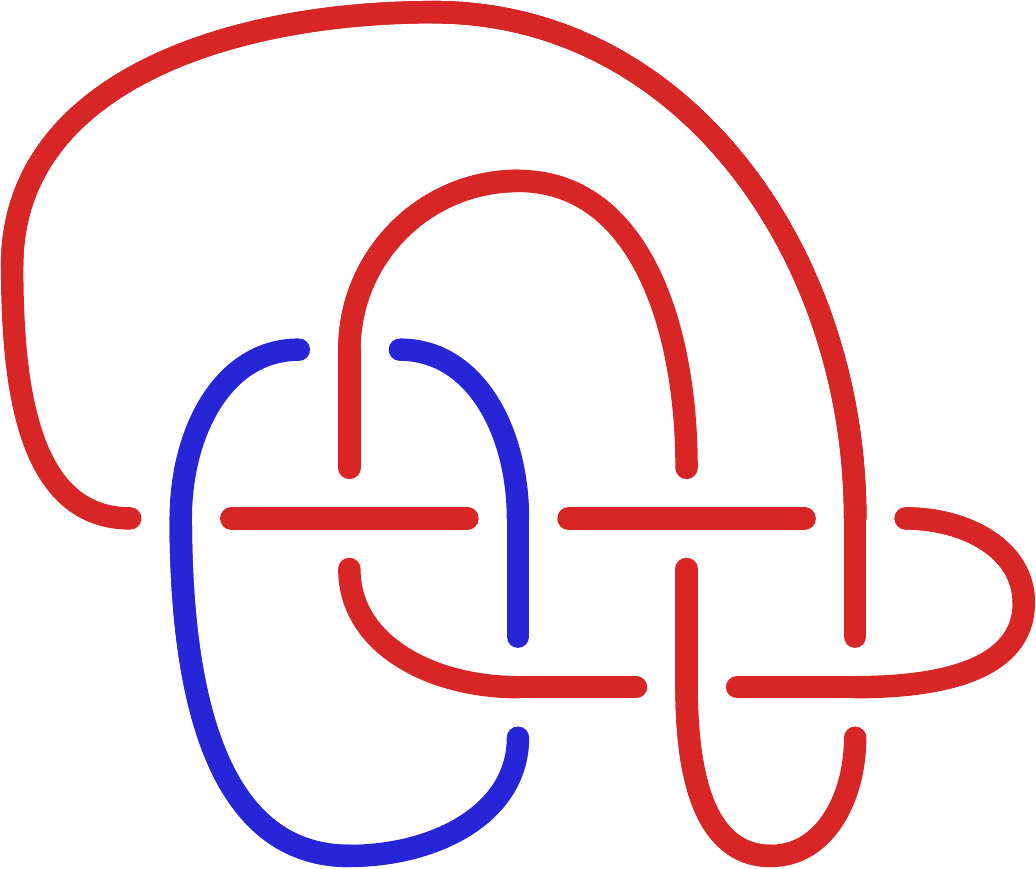}  & & \includegraphics[width=1in]{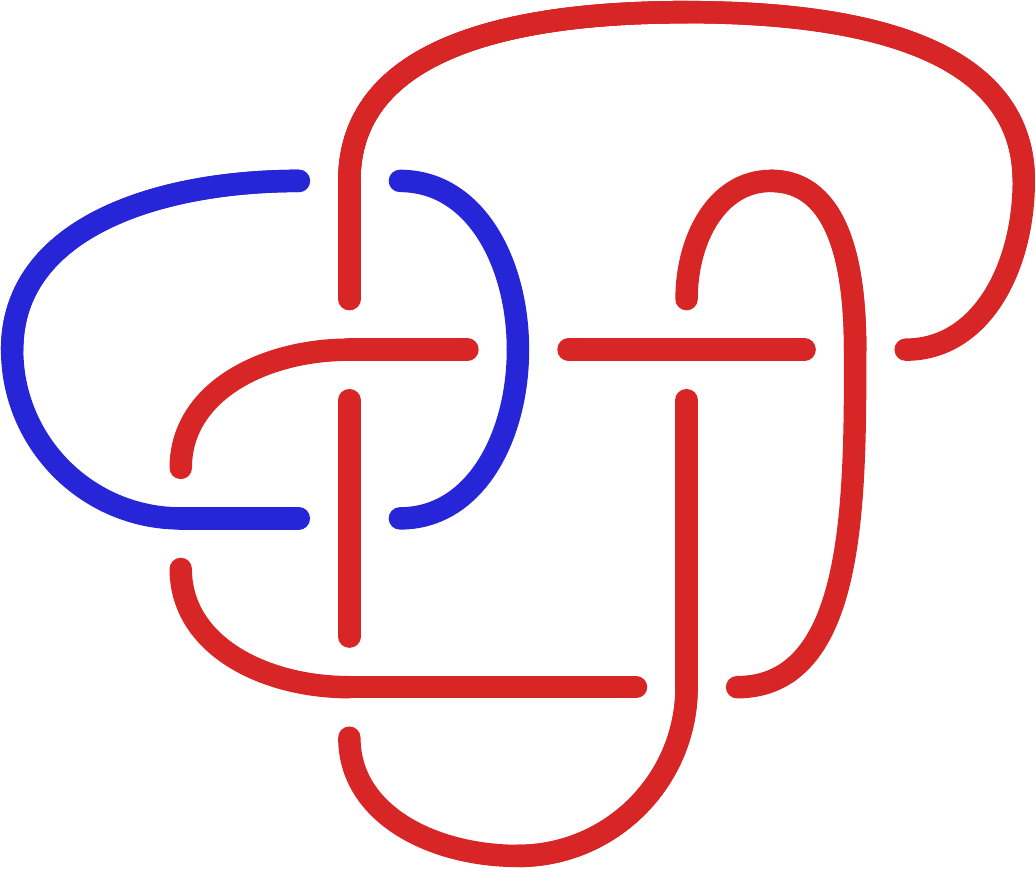} & \\ 
 \quad & & \quad & \\ 
 \hline  
\quad & \multirow{6}{*}{\Includegraphics[width=1.8in]{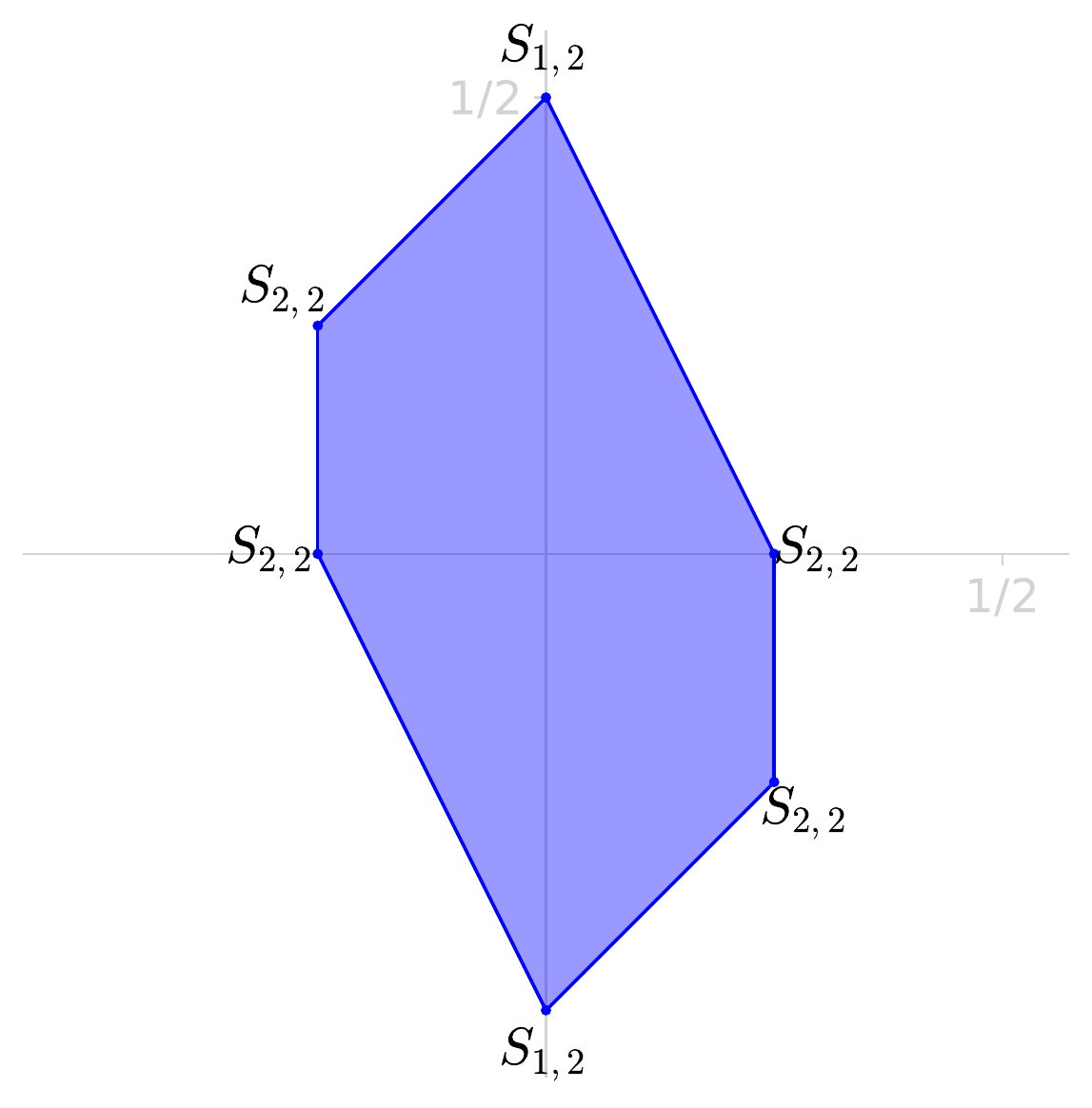}} & \quad & \multirow{6}{*}{\Includegraphics[width=1.8in]{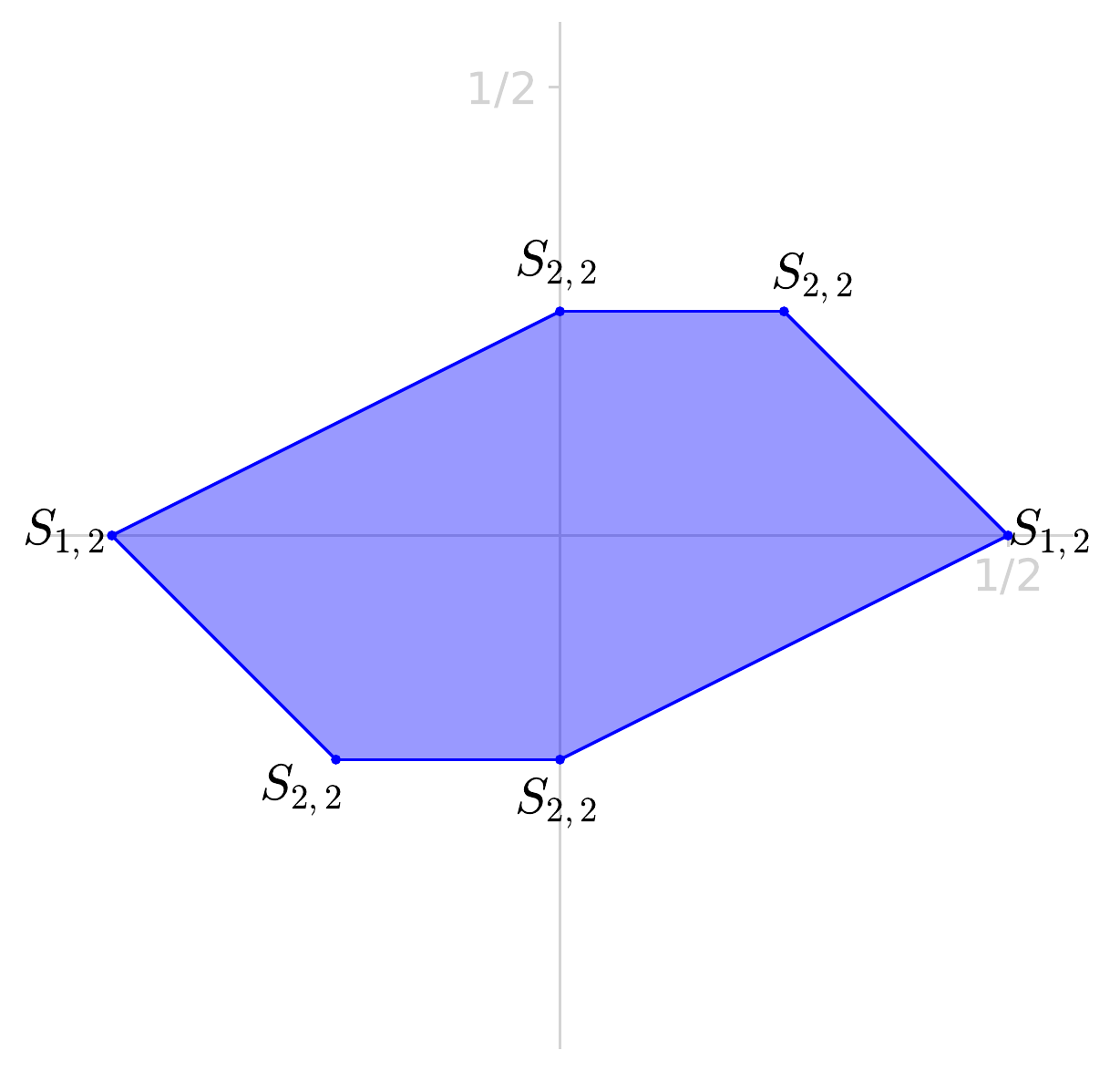}} \\ 
 $L=9^{{2}}_{{34}}$ & & $L=9^{{2}}_{{35}}$ & \\ 
 \quad & & \quad & \\ $\mathrm{Isom}(\mathbb{S}^3\setminus L) = 0$ & & $\mathrm{Isom}(\mathbb{S}^3\setminus L) = 0$ & \\ 
 \quad & & \quad & \\ 
 \includegraphics[width=1in]{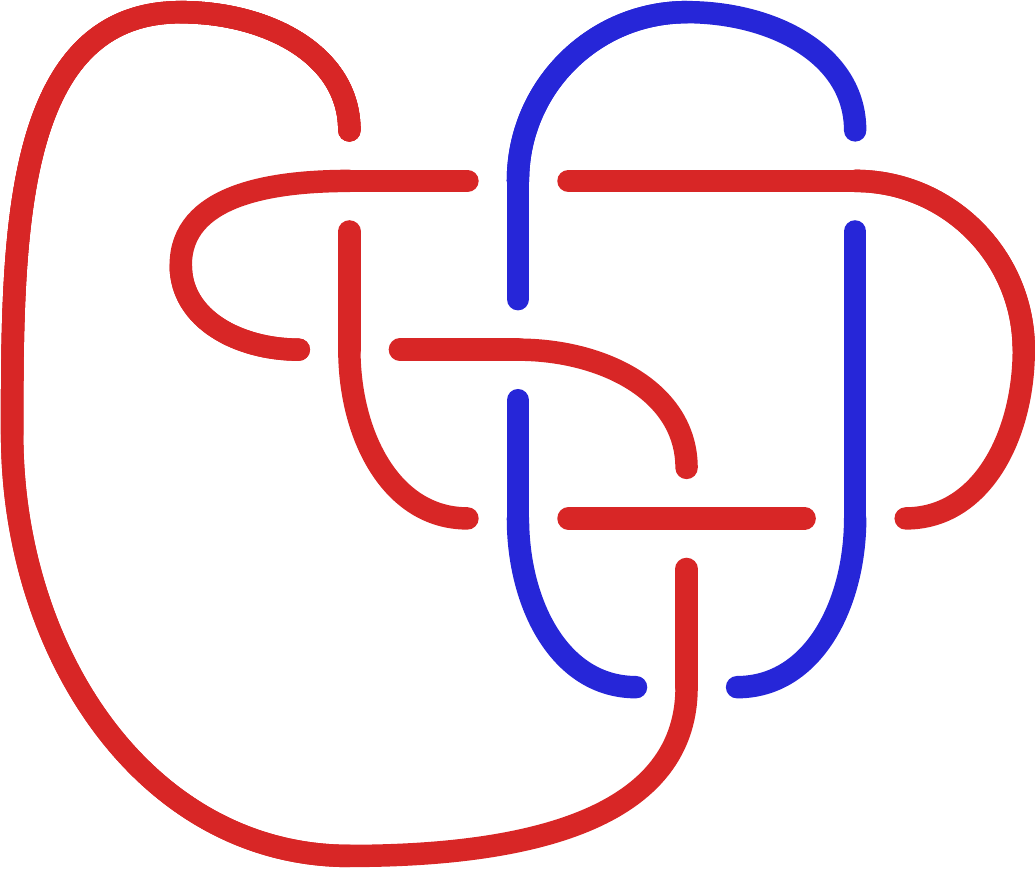}  & & \includegraphics[width=1in]{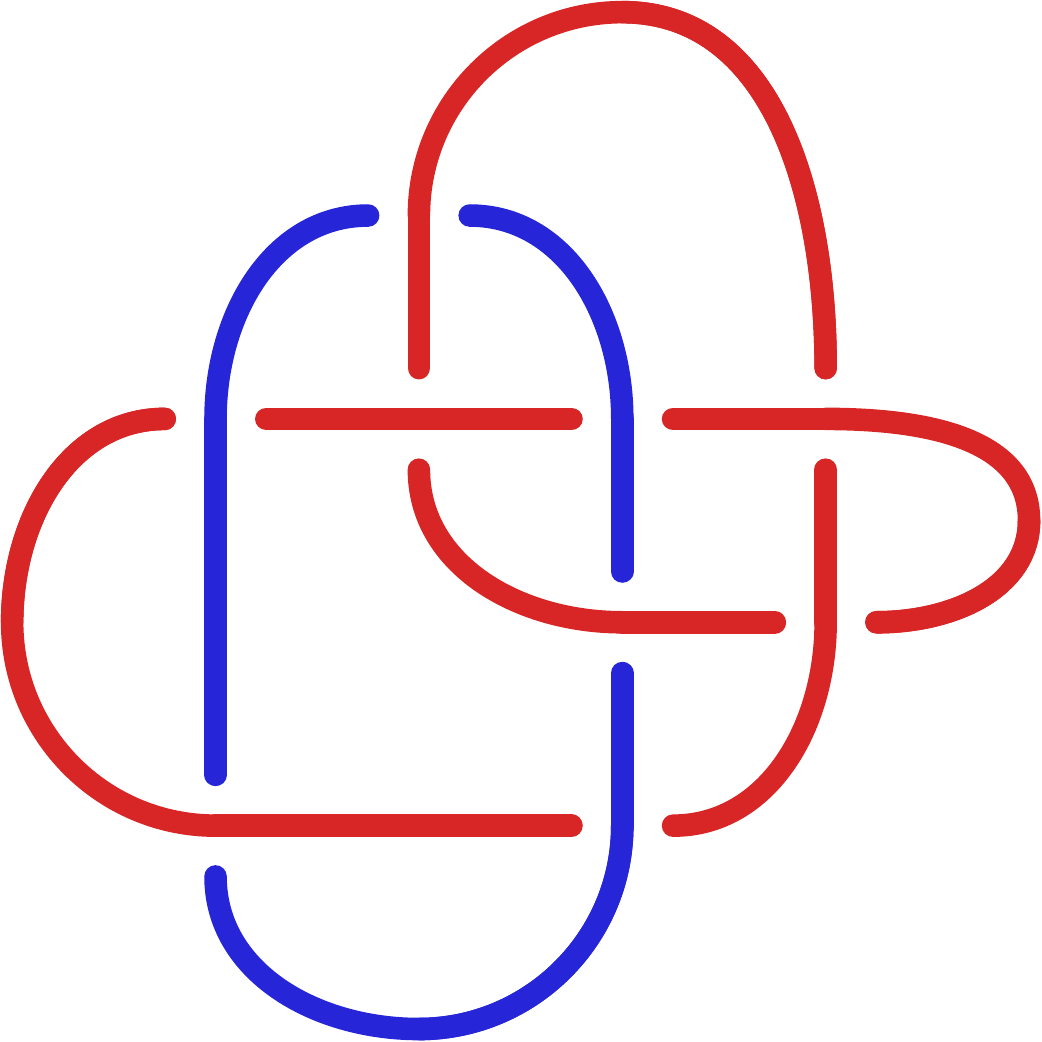} & \\ 
 \quad & & \quad & \\ 
 \hline  
\quad & \multirow{6}{*}{\Includegraphics[width=1.8in]{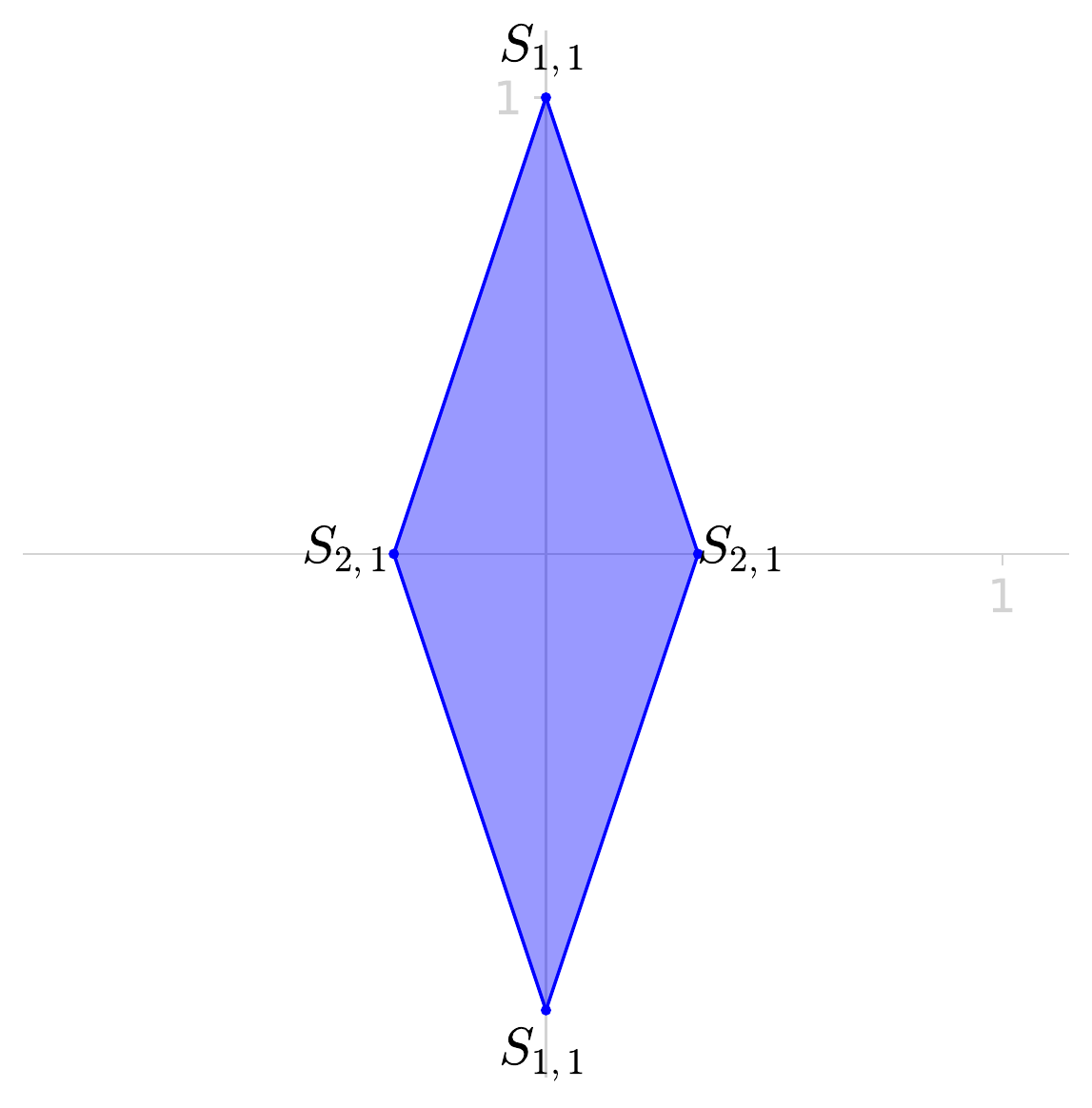}} & \quad & \multirow{6}{*}{\Includegraphics[width=1.8in]{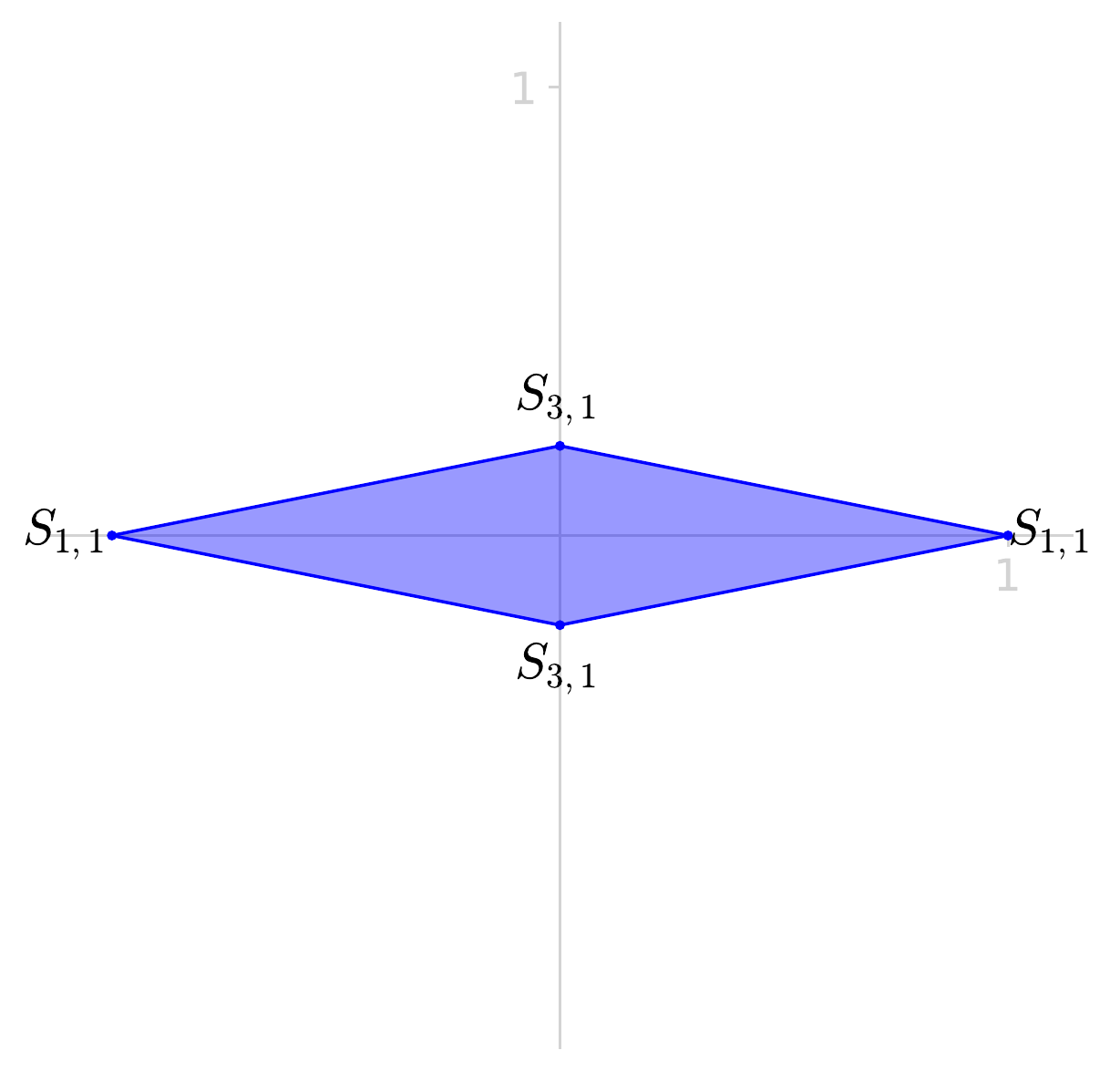}} \\ 
 $L=9^{{2}}_{{36}}$ & & $L=9^{{2}}_{{37}}$ & \\ 
 \quad & & \quad & \\ $\mathrm{Isom}(\mathbb{S}^3\setminus L) = \mathbb{{Z}}_2\oplus\mathbb{{Z}}_2$ & & $\mathrm{Isom}(\mathbb{S}^3\setminus L) = \mathbb{{Z}}_2\oplus\mathbb{{Z}}_2$ & \\ 
 \quad & & \quad & \\ 
 \includegraphics[width=1in]{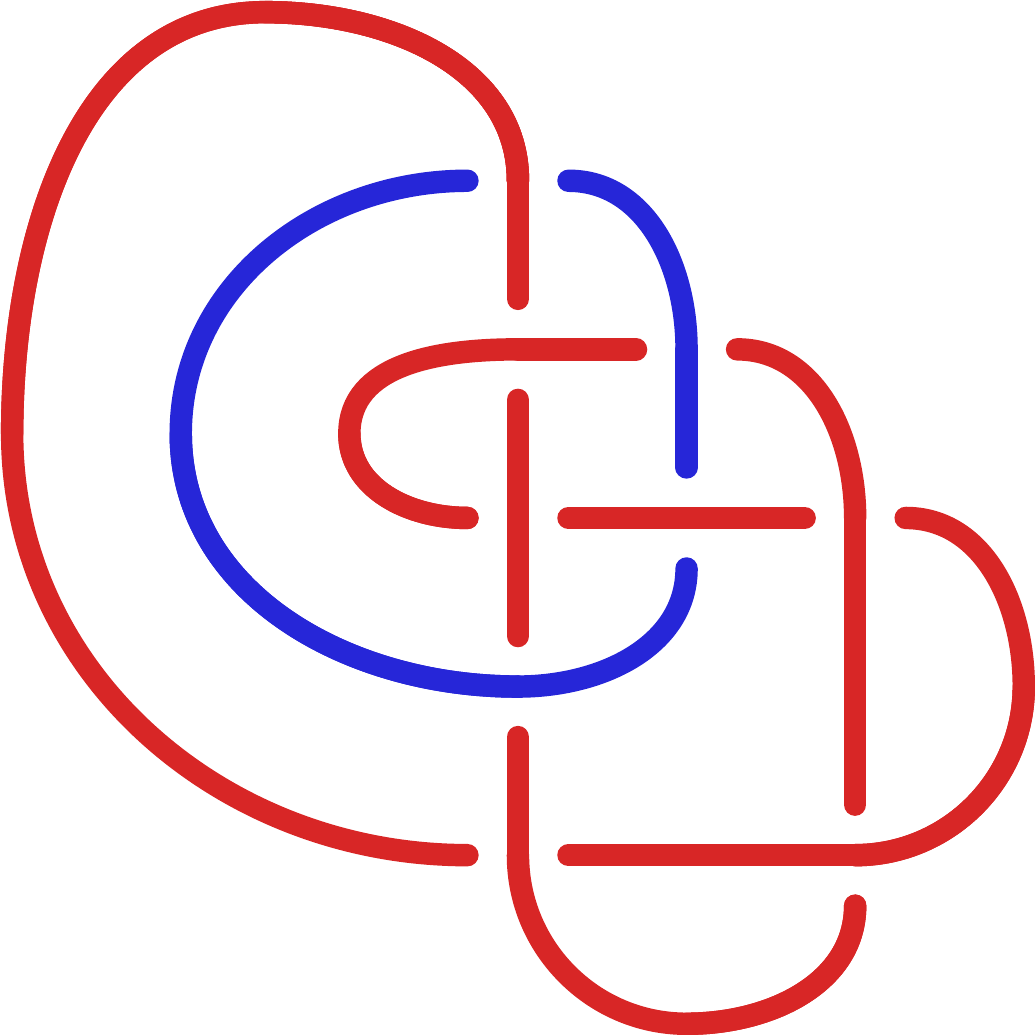}  & & \includegraphics[width=1in]{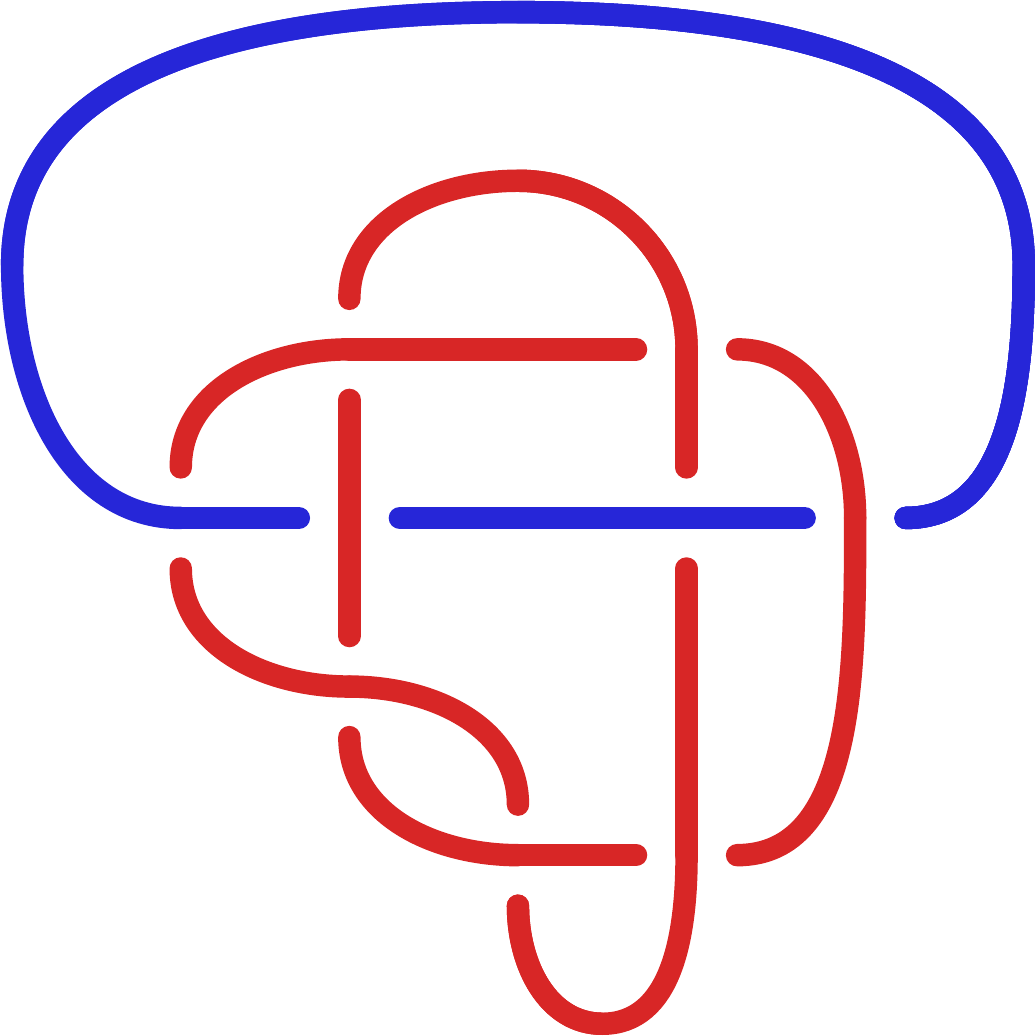} & \\ 
 \quad & & \quad & \\ 
 \hline  
\quad & \multirow{6}{*}{\Includegraphics[width=1.8in]{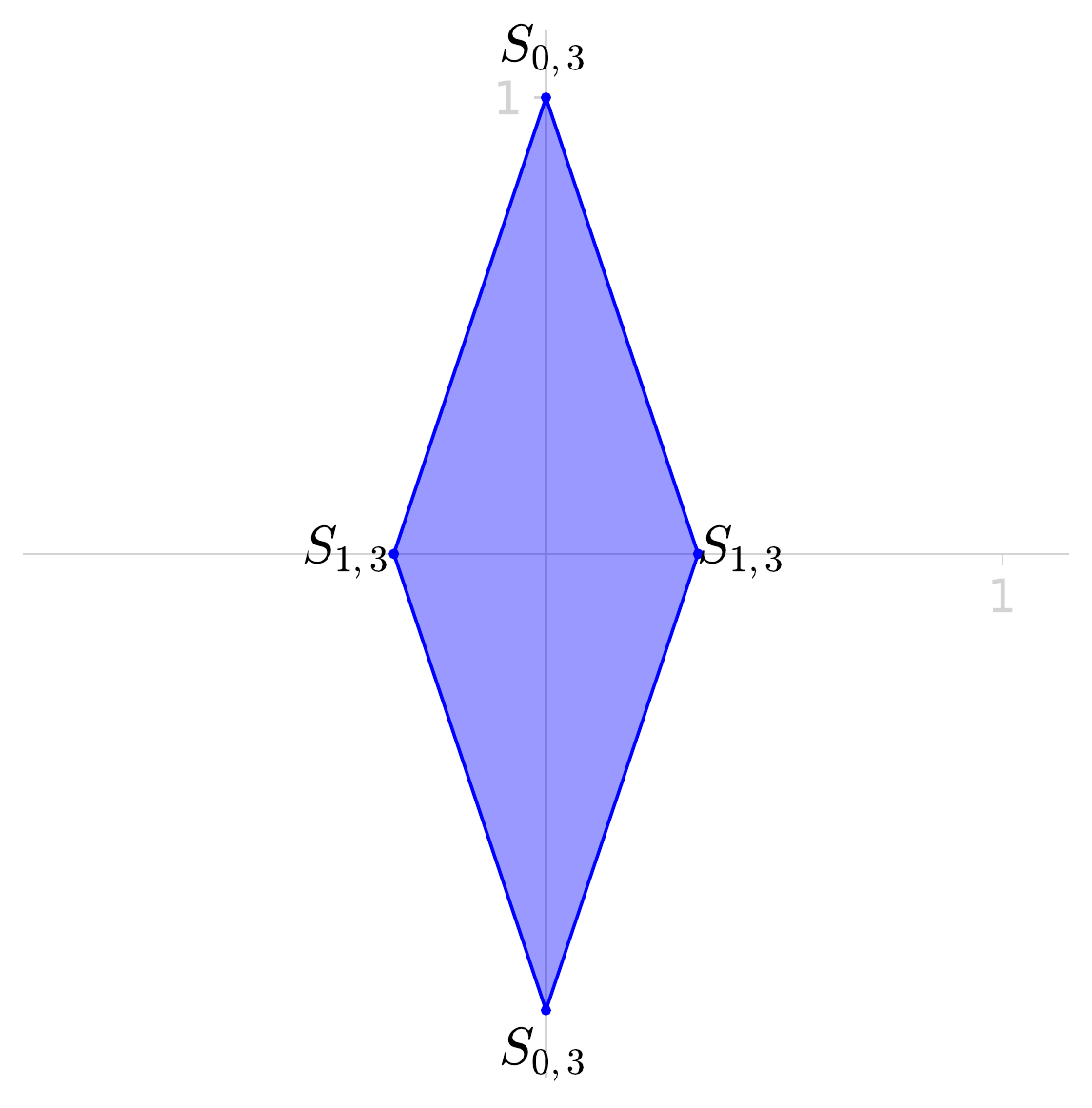}} & \quad & \multirow{6}{*}{\Includegraphics[width=1.8in]{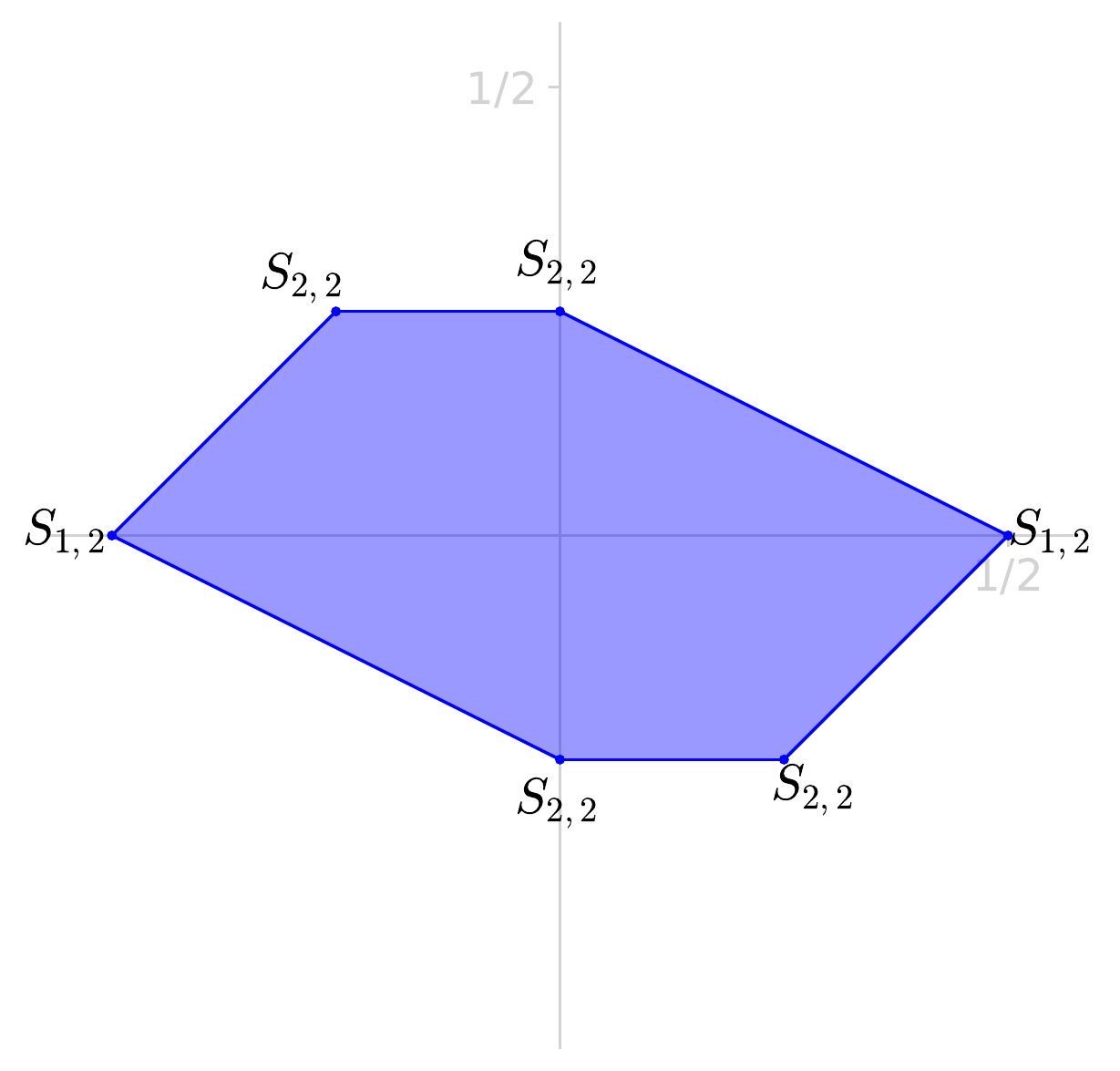}} \\ 
 $L=9^{{2}}_{{38}}$ & & $L=9^{{2}}_{{39}}$ & \\ 
 \quad & & \quad & \\ $\mathrm{Isom}(\mathbb{S}^3\setminus L) = \mathbb{{Z}}_2\oplus\mathbb{{Z}}_2$ & & $\mathrm{Isom}(\mathbb{S}^3\setminus L) = 0$ & \\ 
 \quad & & \quad & \\ 
 \includegraphics[width=1in]{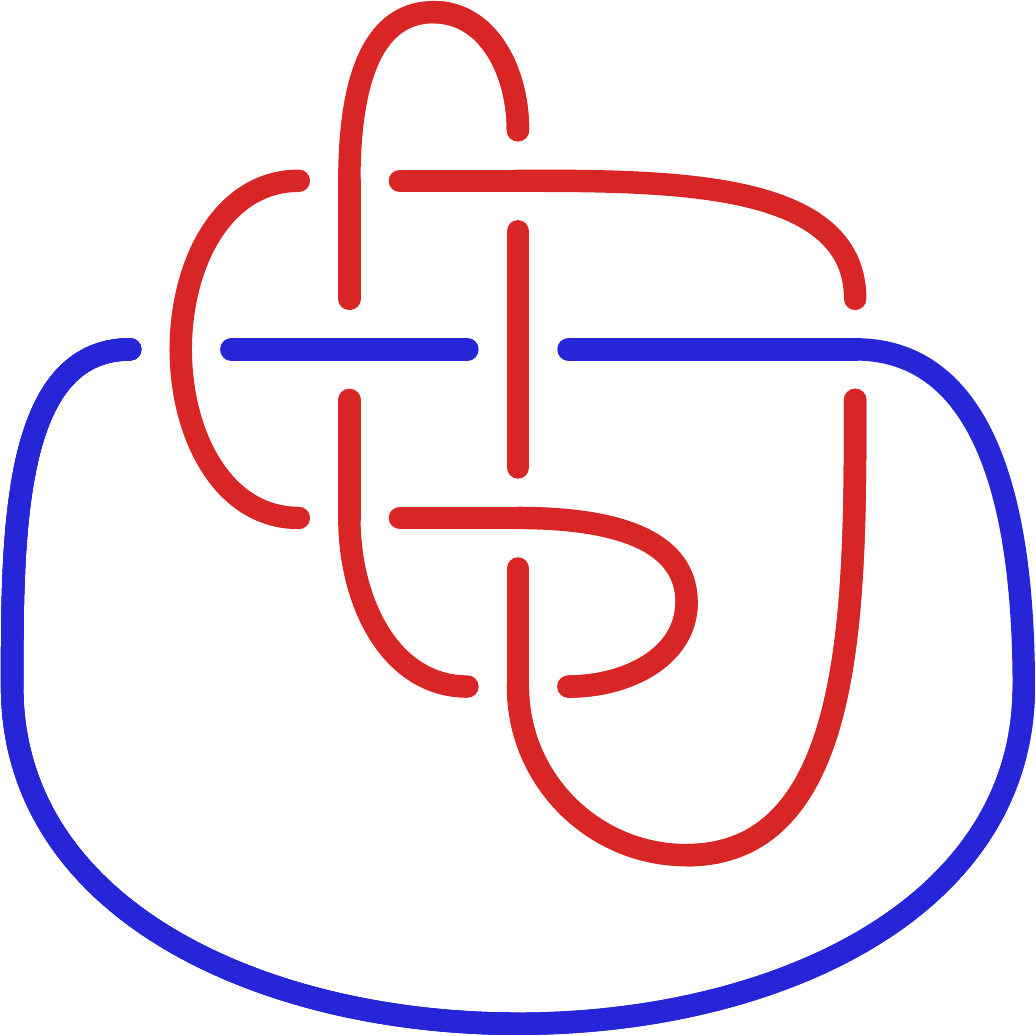}  & & \includegraphics[width=1in]{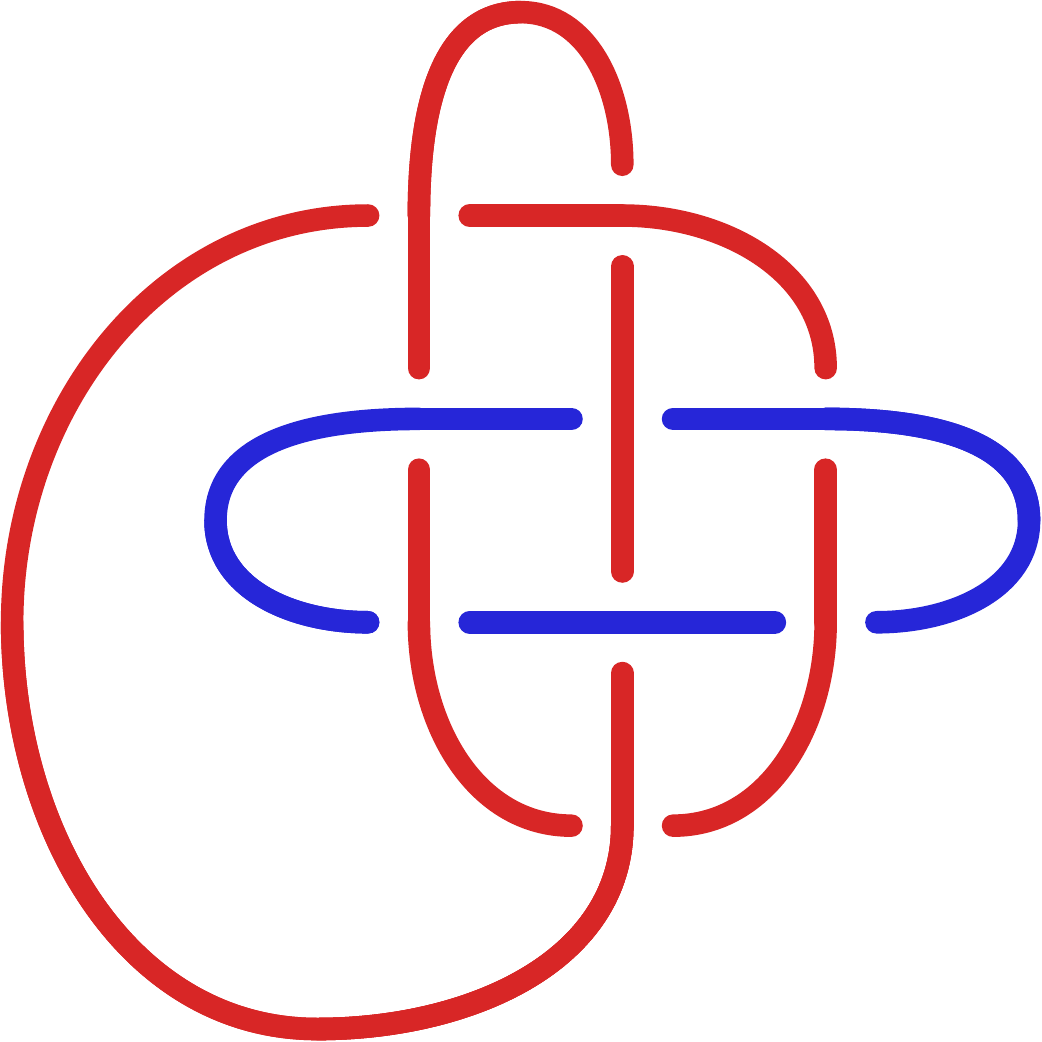} & \\ 
 \quad & & \quad & \\ 
 \hline  
\quad & \multirow{6}{*}{\Includegraphics[width=1.8in]{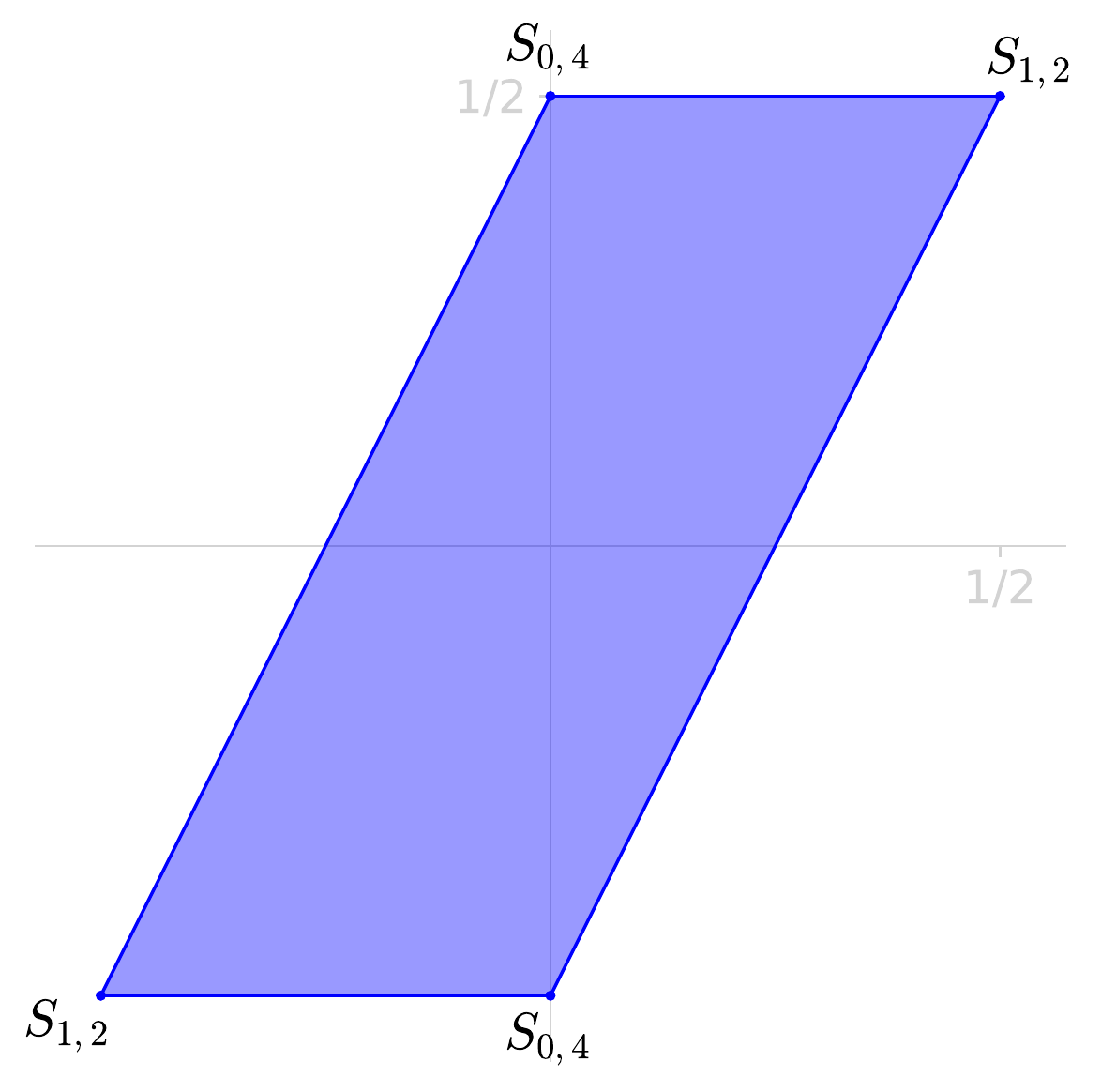}} & \quad & \multirow{6}{*}{\Includegraphics[width=1.8in]{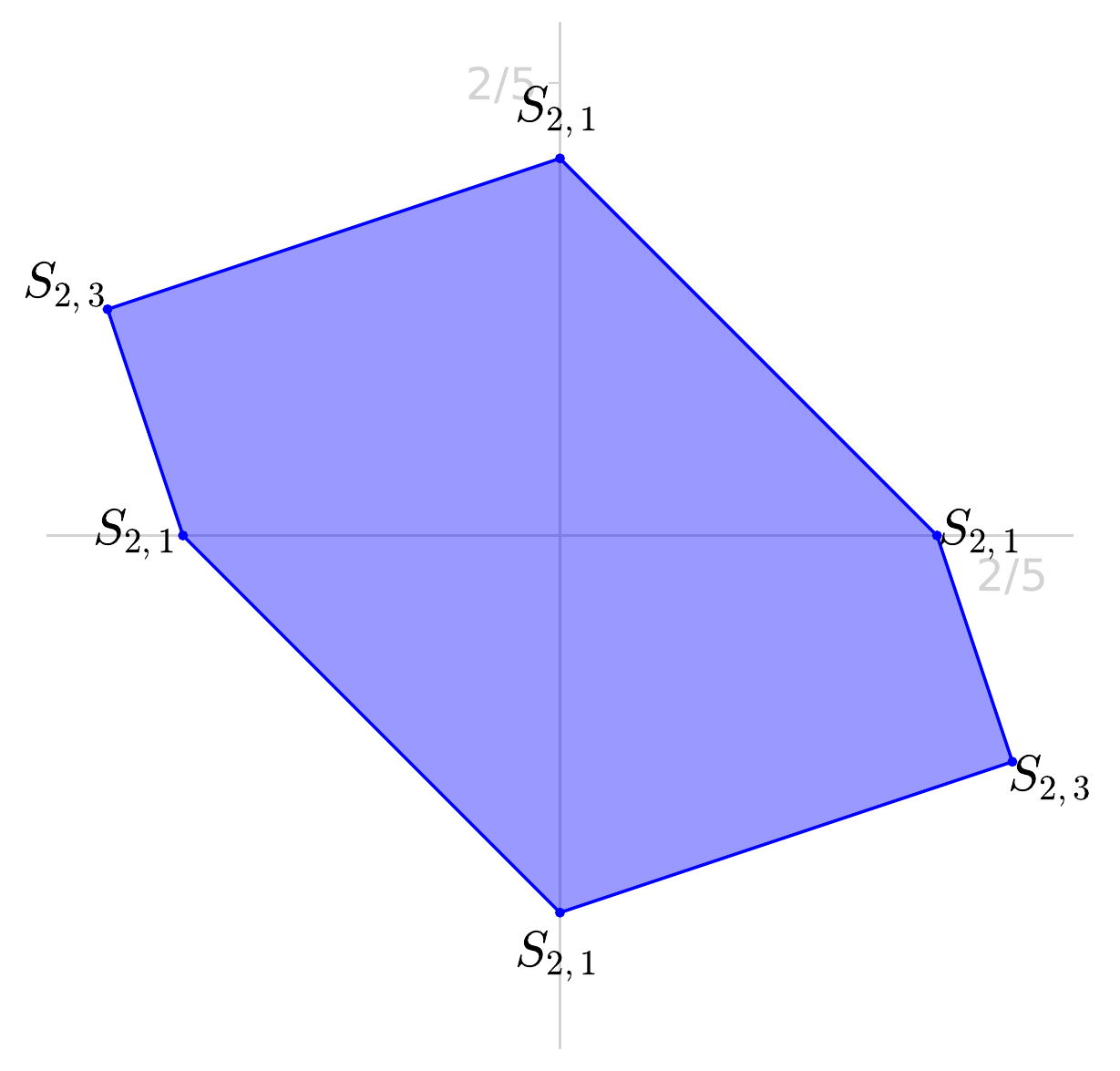}} \\ 
 $L=9^{{2}}_{{40}}$ & & $L=9^{{2}}_{{41}}$ & \\ 
 \quad & & \quad & \\ $\mathrm{Isom}(\mathbb{S}^3\setminus L) = D_3$ & & $\mathrm{Isom}(\mathbb{S}^3\setminus L) = \mathbb{{Z}}_2$ & \\ 
 \quad & & \quad & \\ 
 \includegraphics[width=1in]{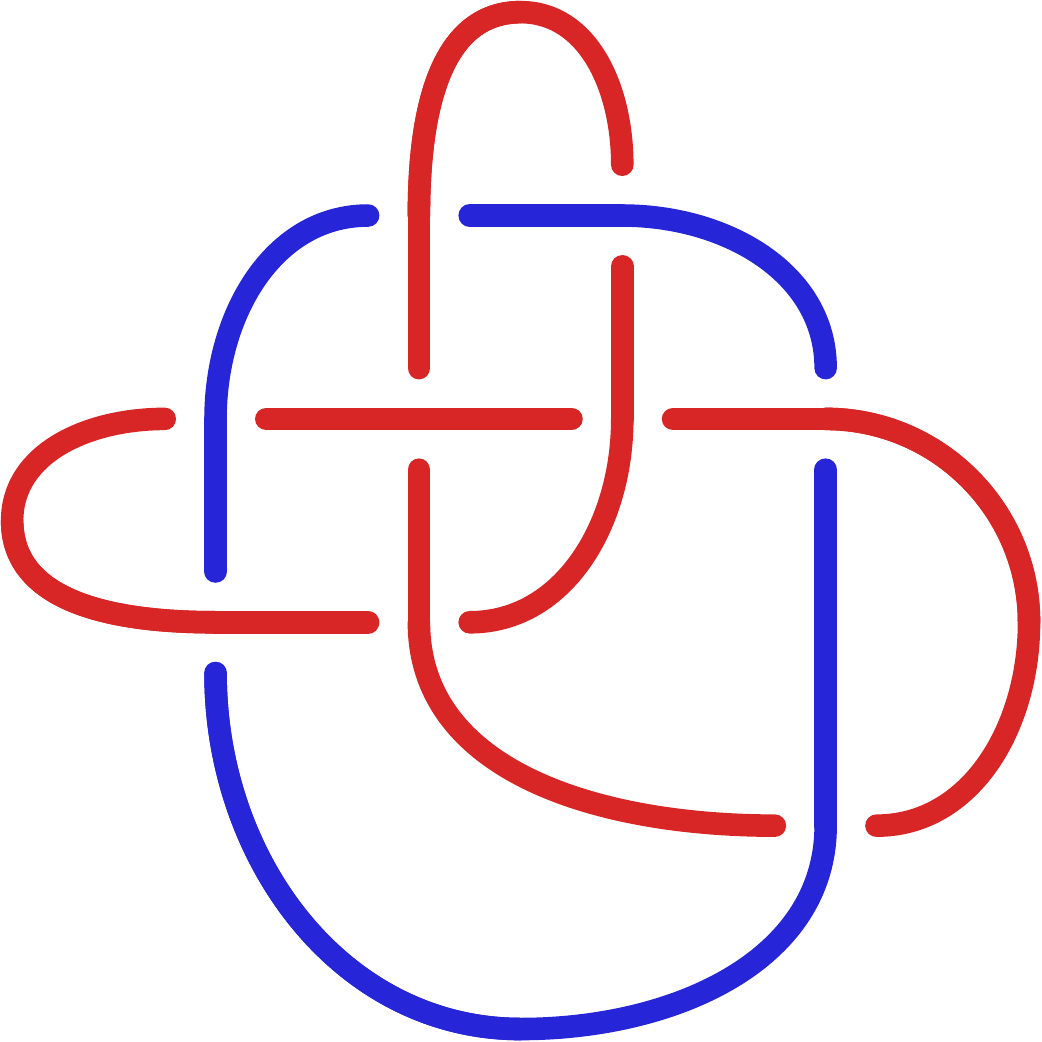}  & & \includegraphics[width=1in]{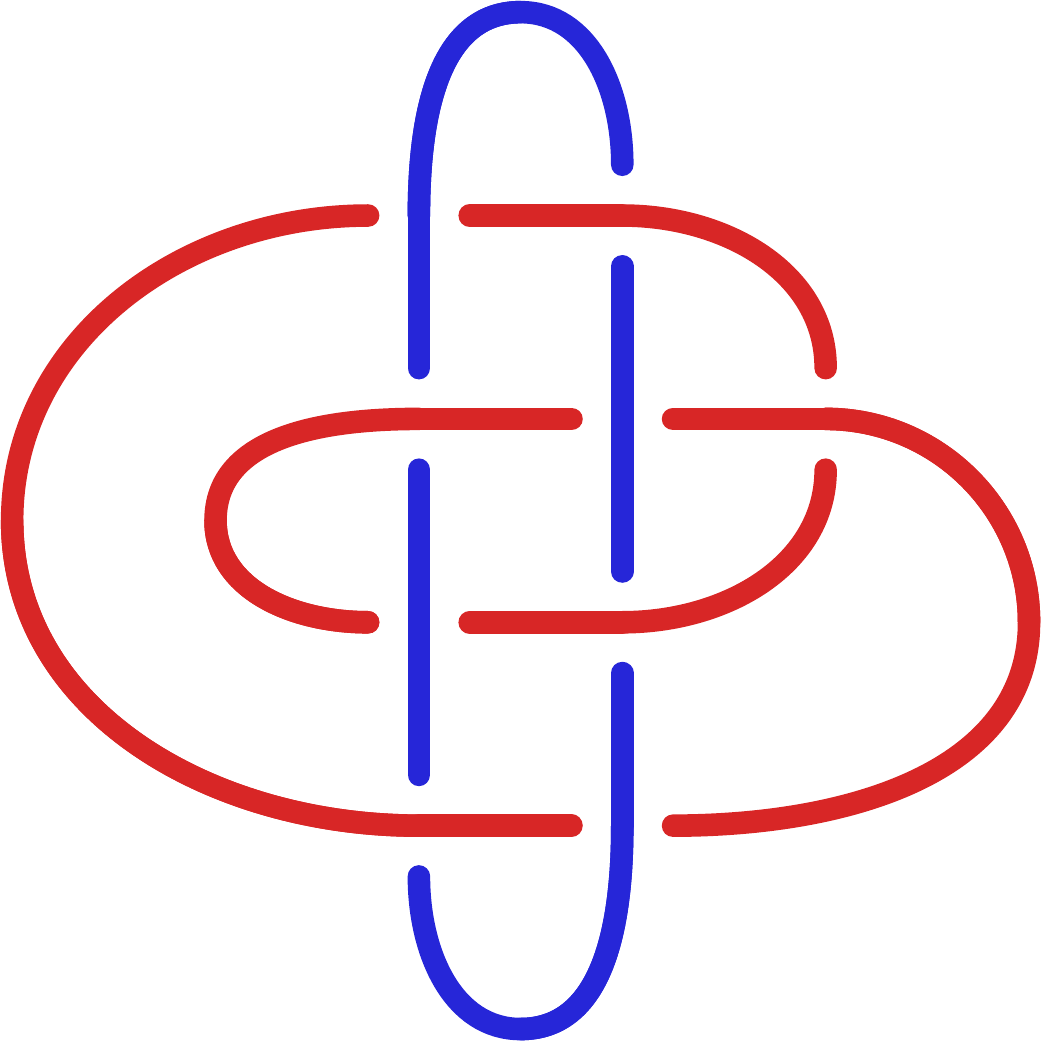} & \\ 
 \quad & & \quad & \\ 
 \hline  
\end{tabular} 
 \newpage \begin{tabular}{|c|c|c|c|} 
 \hline 
 Link & Norm Ball & Link & Norm Ball \\ 
 \hline 
\quad & \multirow{6}{*}{\Includegraphics[width=1.8in]{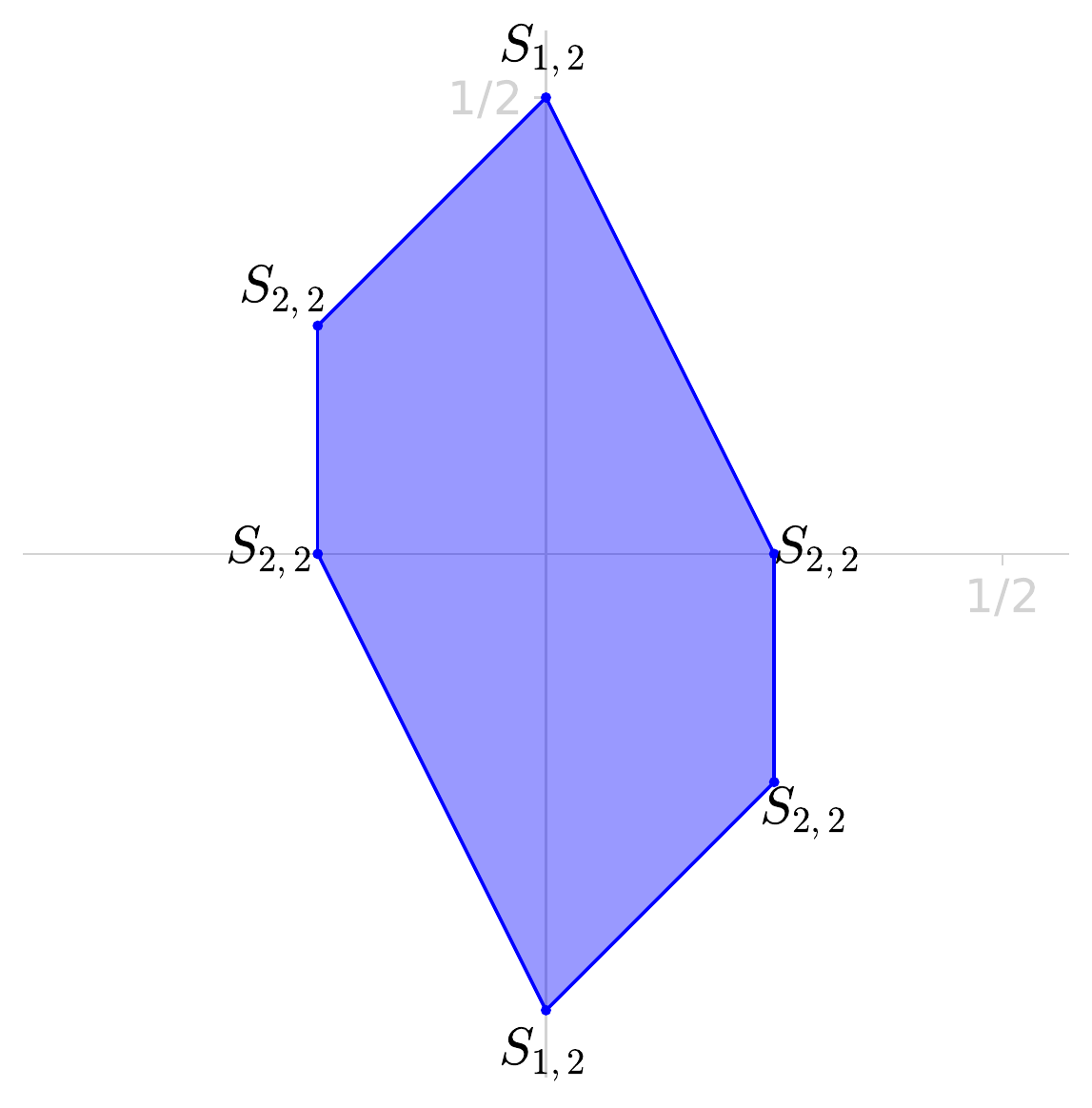}} & \quad & \multirow{6}{*}{\Includegraphics[width=1.8in]{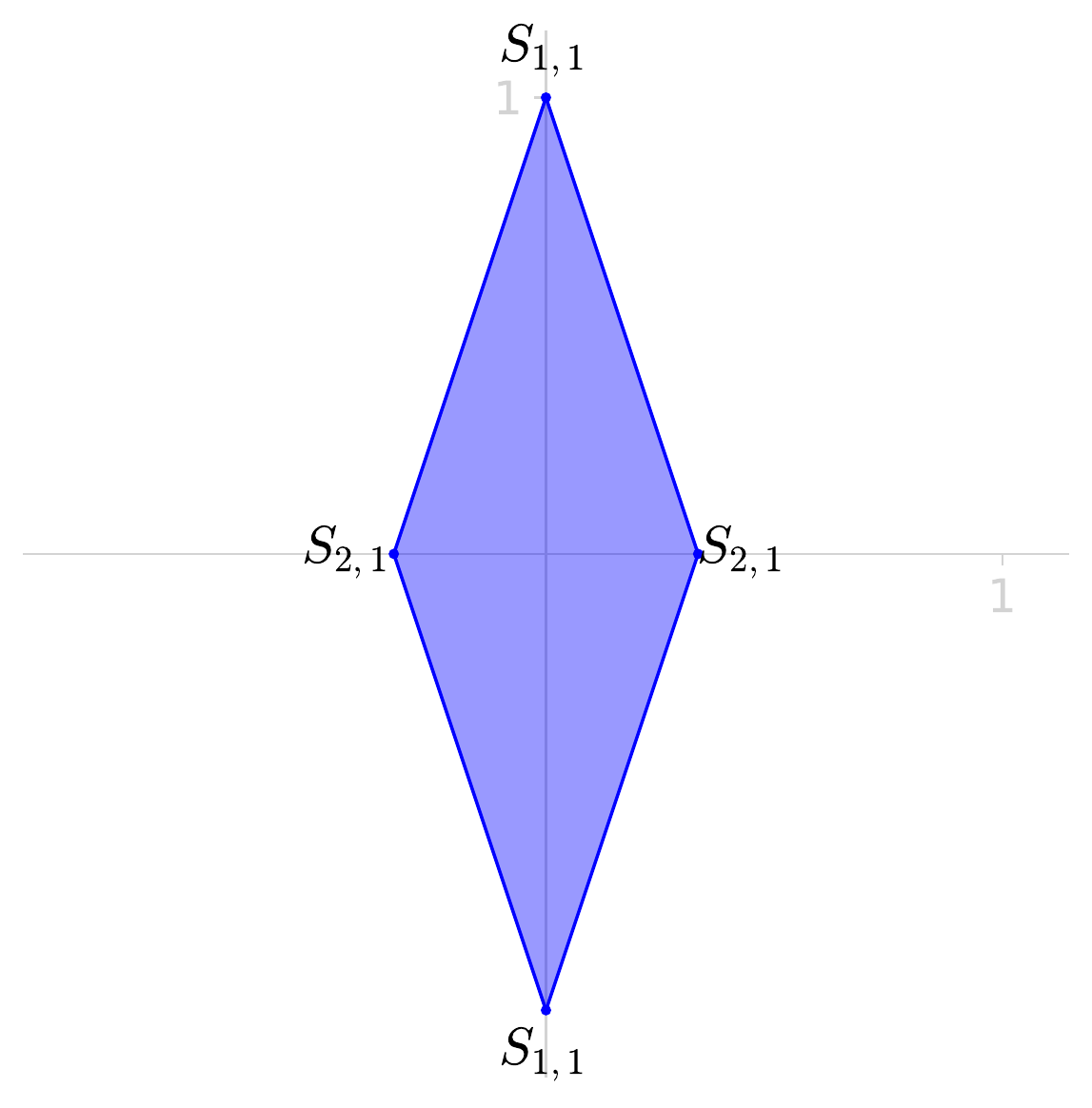}} \\ 
 $L=9^{{2}}_{{42}}$ & & $L=9^{{2}}_{{44}}$ & \\ 
 \quad & & \quad & \\ $\mathrm{Isom}(\mathbb{S}^3\setminus L) = \mathbb{{Z}}_2$ & & $\mathrm{Isom}(\mathbb{S}^3\setminus L) = \mathbb{{Z}}_2\oplus\mathbb{{Z}}_2$ & \\ 
 \quad & & \quad & \\ 
 \includegraphics[width=1in]{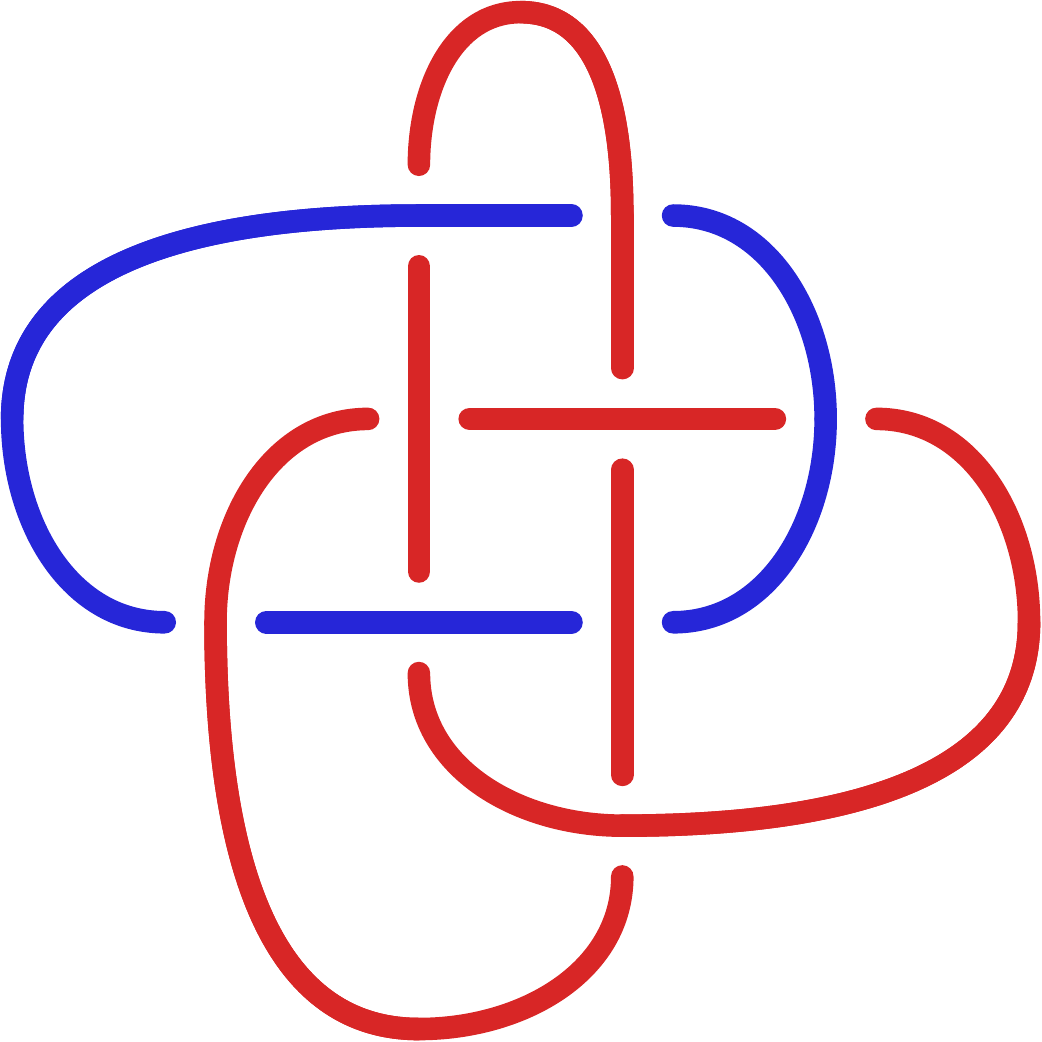}  & & \includegraphics[width=1in]{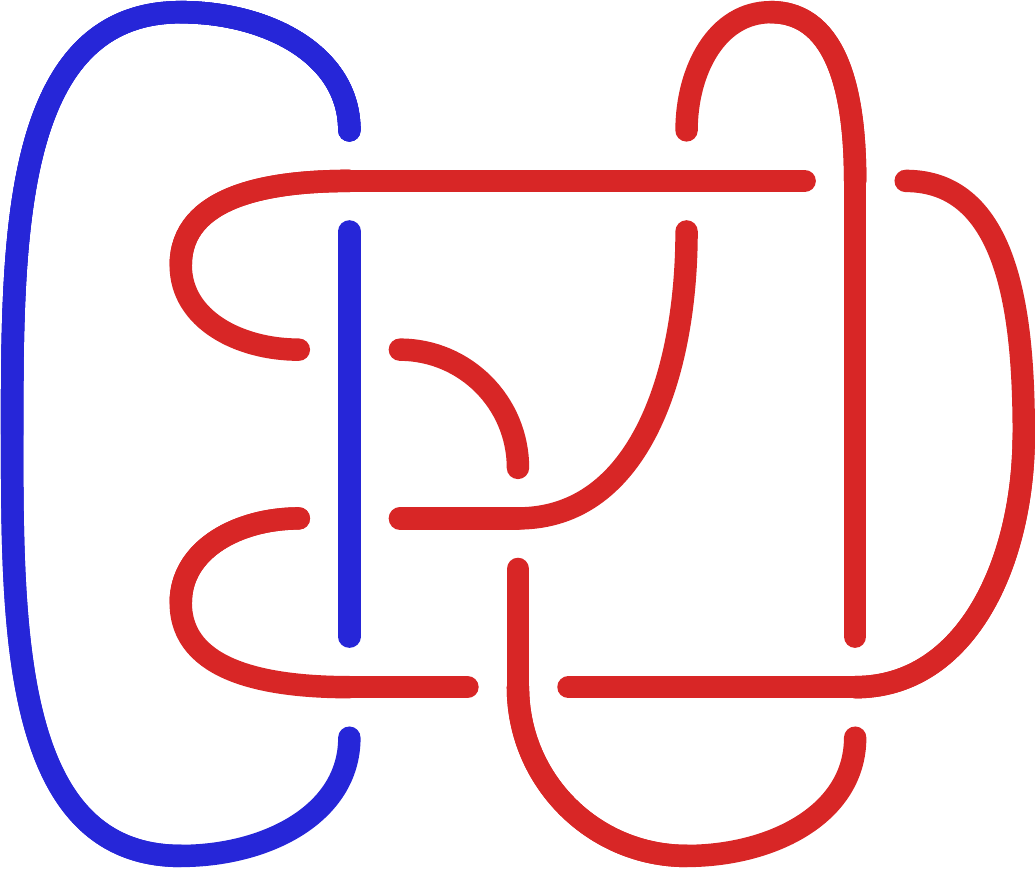} & \\ 
 \quad & & \quad & \\ 
 \hline  
\quad & \multirow{6}{*}{\Includegraphics[width=1.8in]{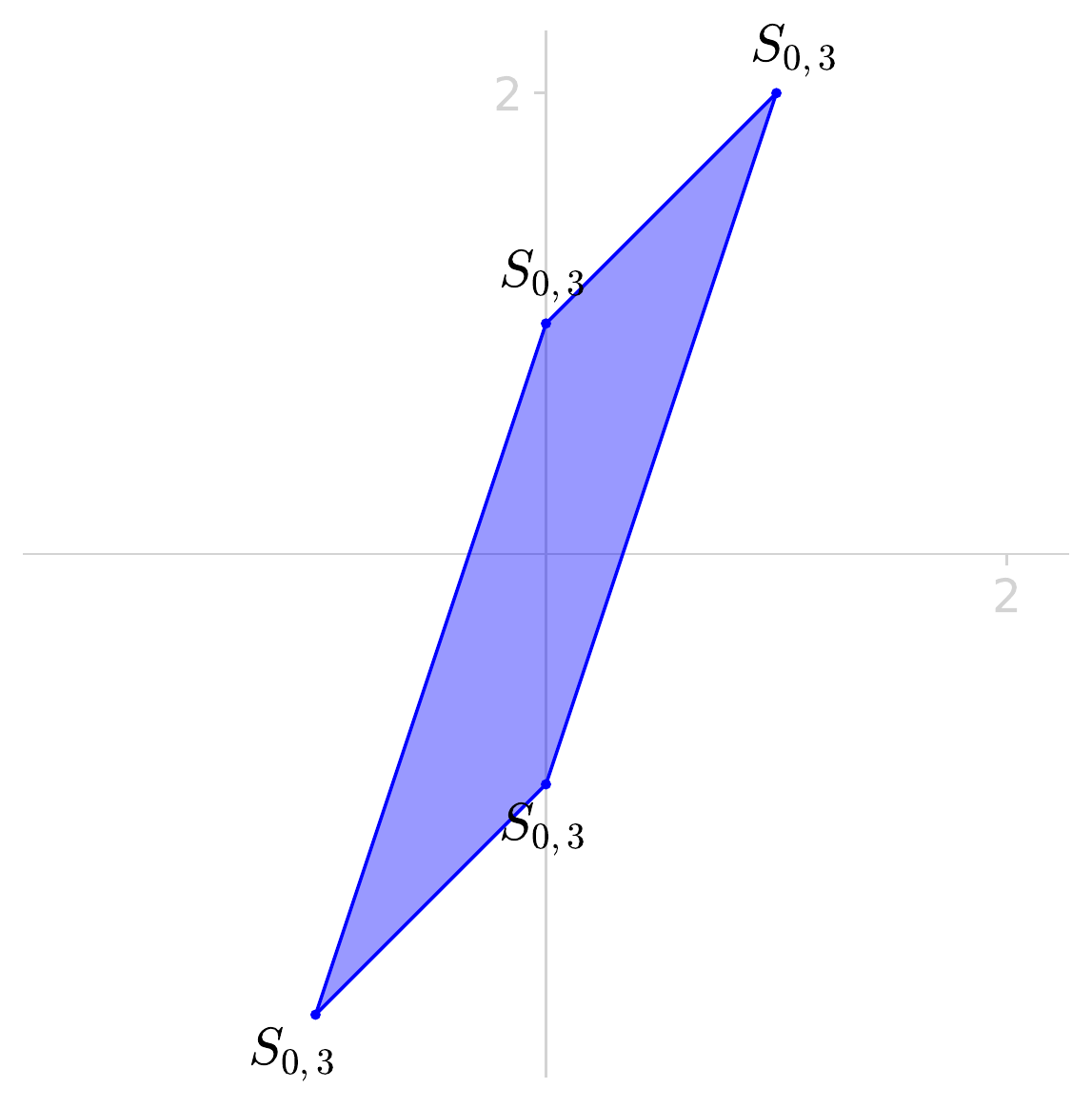}} & \quad & \multirow{6}{*}{\Includegraphics[width=1.8in]{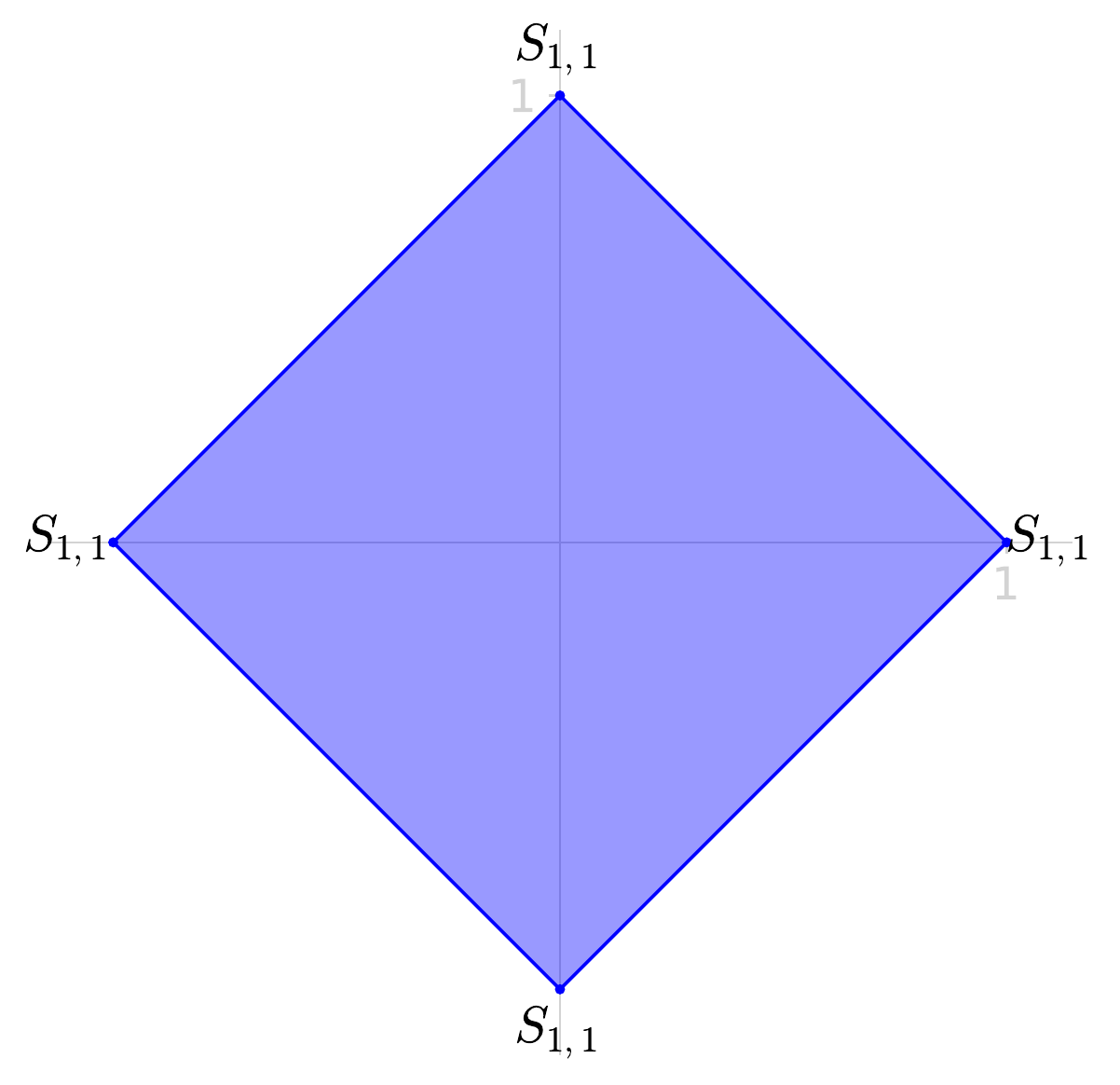}} \\ 
 $L=9^{{2}}_{{45}}$ & & $L=9^{{2}}_{{46}}$ & \\ 
 \quad & & \quad & \\ $\mathrm{Isom}(\mathbb{S}^3\setminus L) = \displaystyle\bigoplus_{i=1}^3 \mathbb{Z}$ & & $\mathrm{Isom}(\mathbb{S}^3\setminus L) = D_4$ & \\ 
 \quad & & \quad & \\ 
 \includegraphics[width=1in]{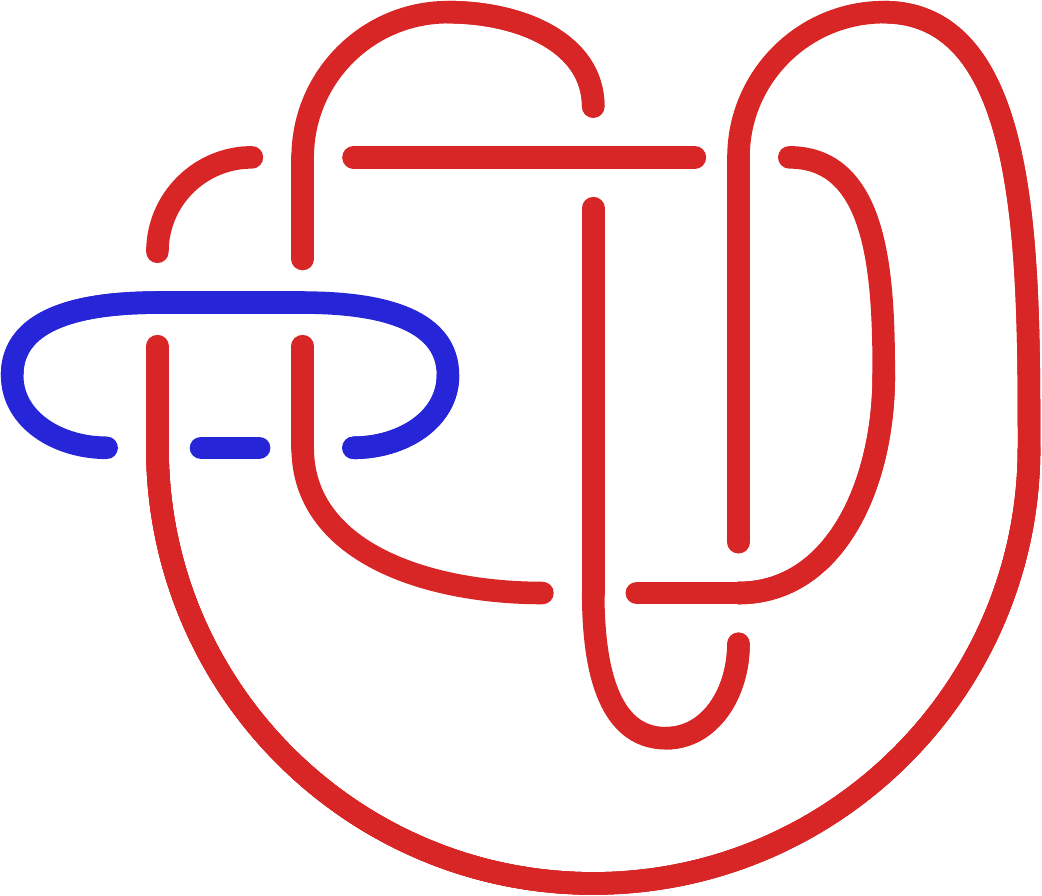}  & & \includegraphics[width=1in]{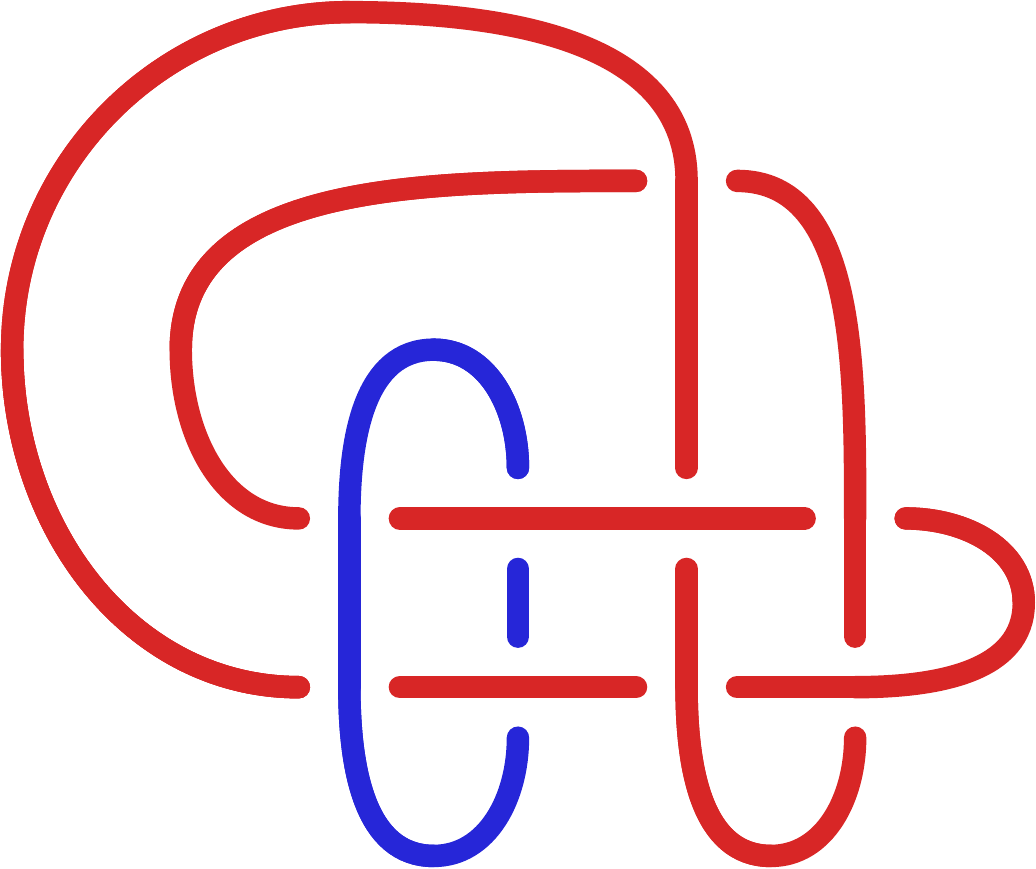} & \\ 
 \quad & & \quad & \\ 
 \hline  
\quad & \multirow{6}{*}{\Includegraphics[width=1.8in]{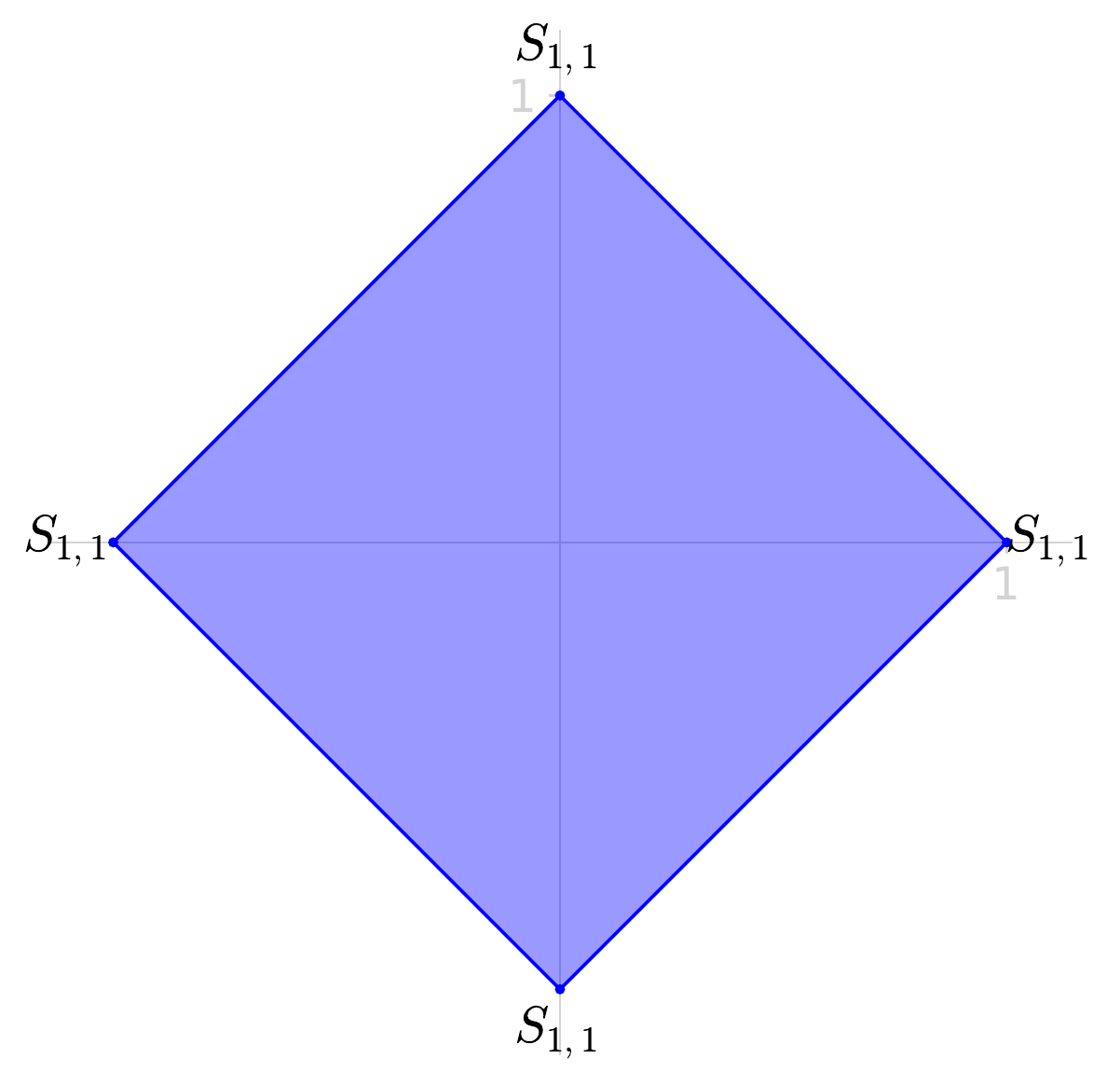}} & \quad & \multirow{6}{*}{\Includegraphics[width=1.8in]{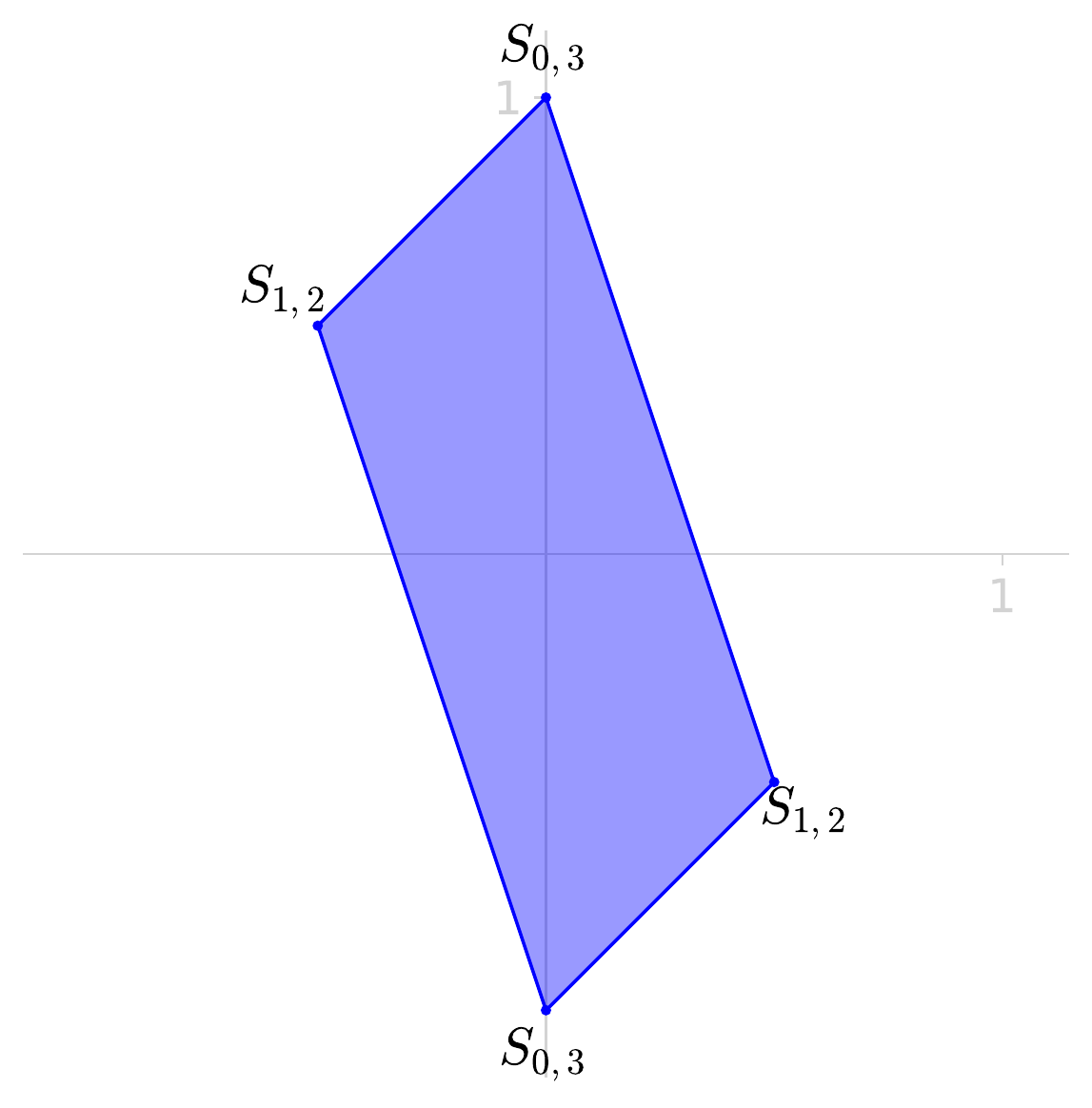}} \\ 
 $L=9^{{2}}_{{47}}$ & & $L=9^{{2}}_{{48}}$ & \\ 
 \quad & & \quad & \\ $\mathrm{Isom}(\mathbb{S}^3\setminus L) = D_4$ & & $\mathrm{Isom}(\mathbb{S}^3\setminus L) = \mathbb{{Z}}_2\oplus\mathbb{{Z}}_2$ & \\ 
 \quad & & \quad & \\ 
 \includegraphics[width=1in]{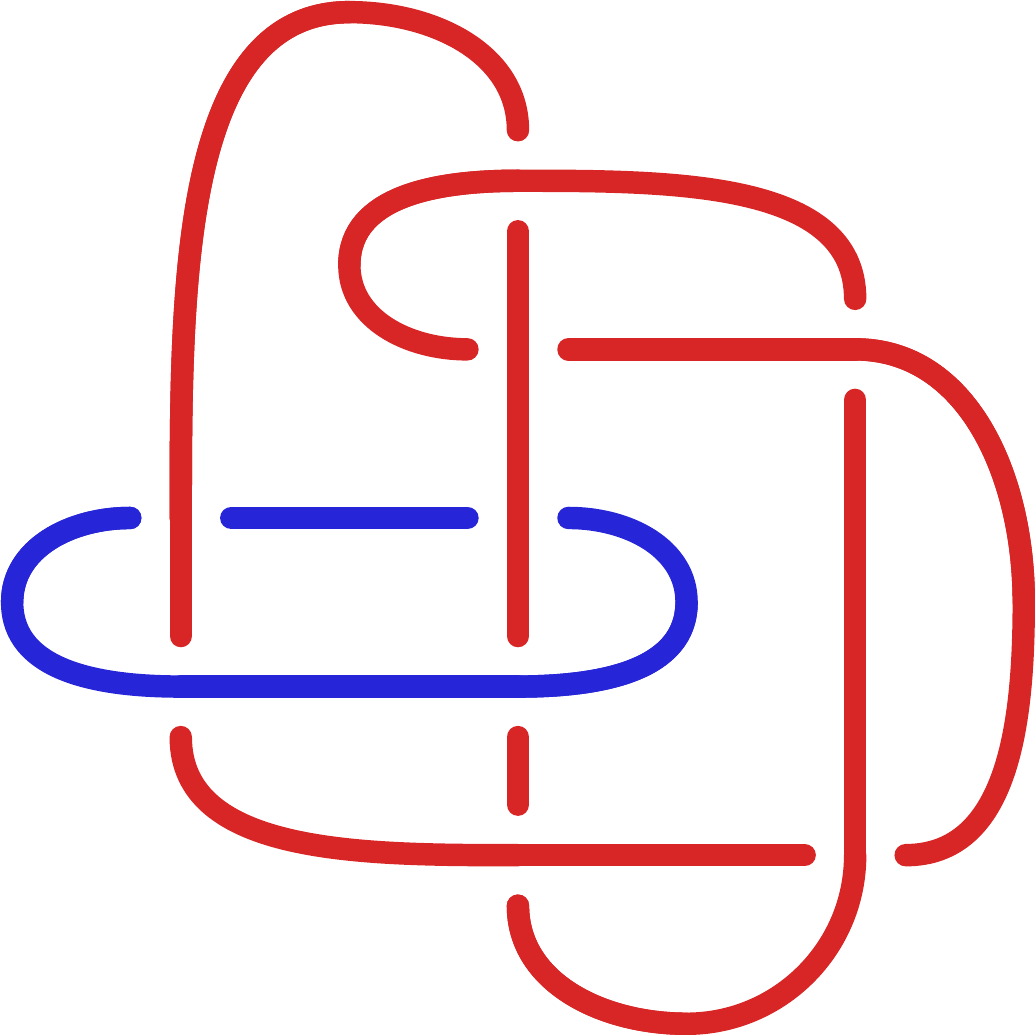}  & & \includegraphics[width=1in]{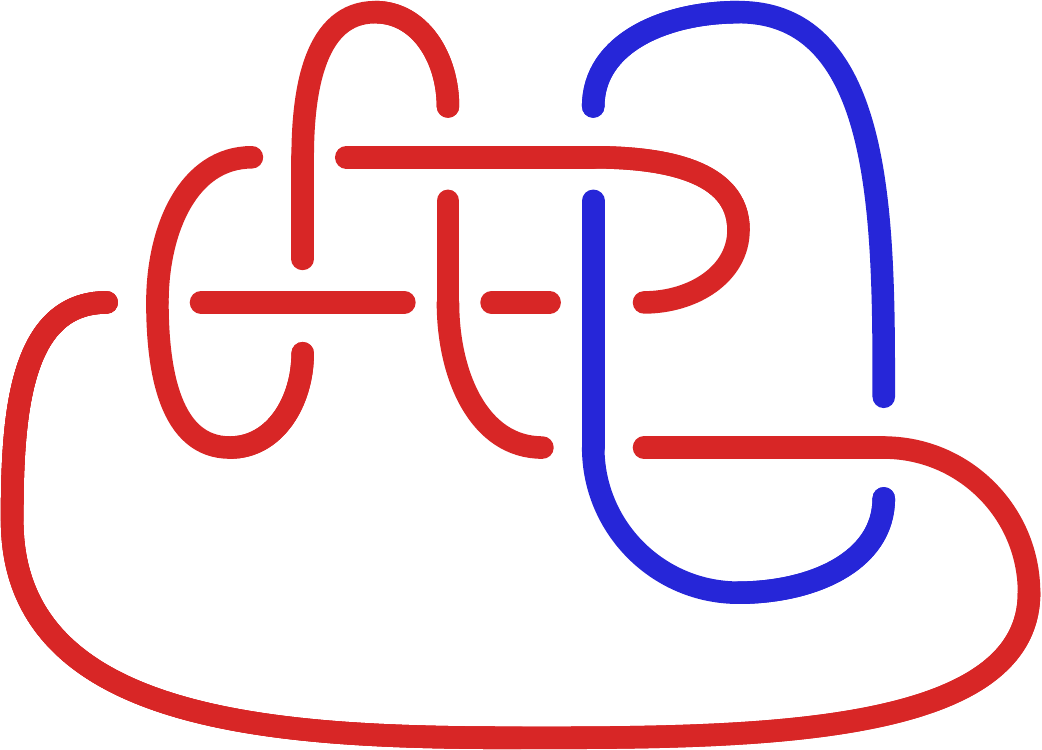} & \\ 
 \quad & & \quad & \\ 
 \hline  
\quad & \multirow{6}{*}{\Includegraphics[width=1.8in]{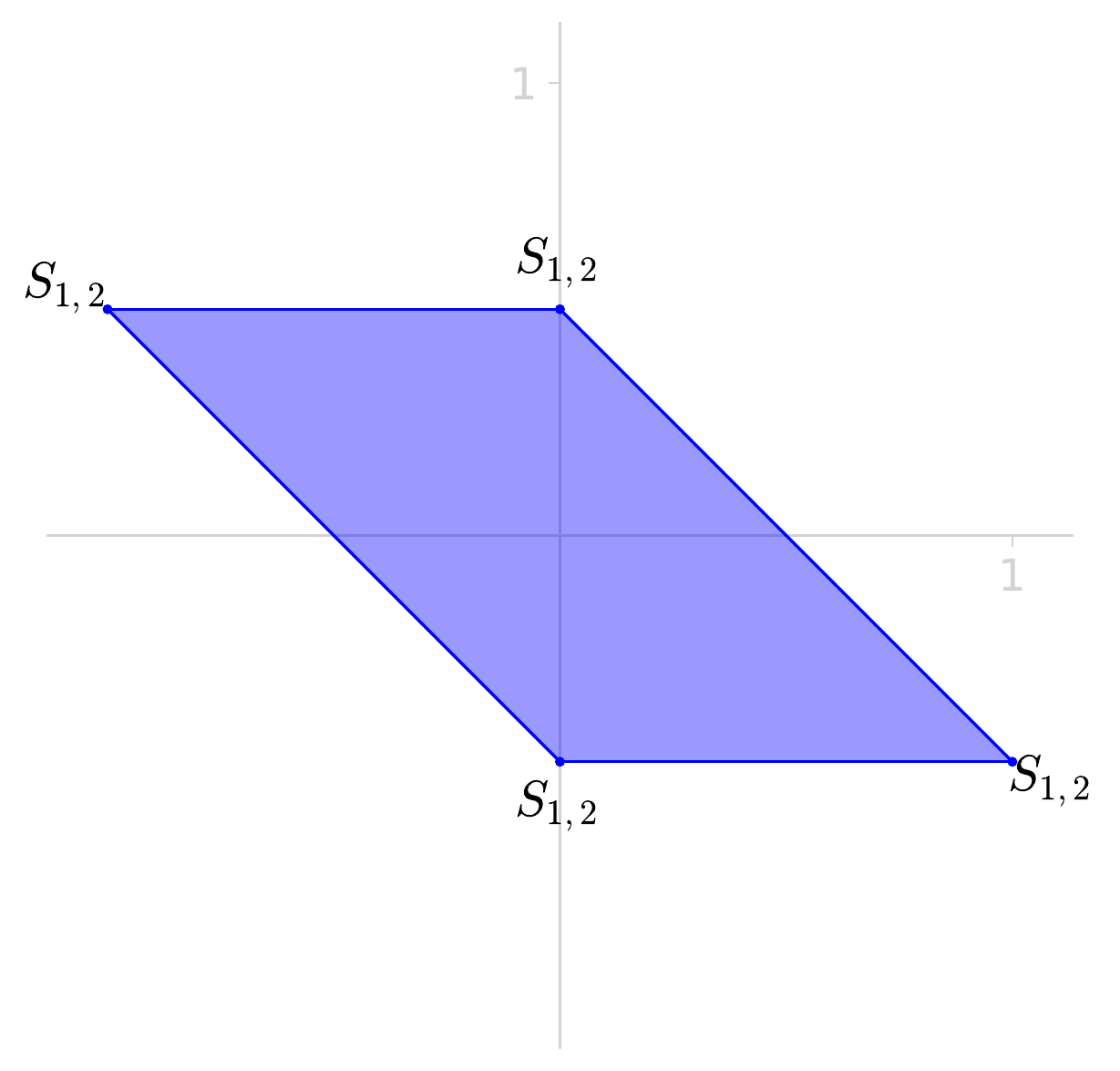}} & \quad & \multirow{6}{*}{\Includegraphics[width=1.8in]{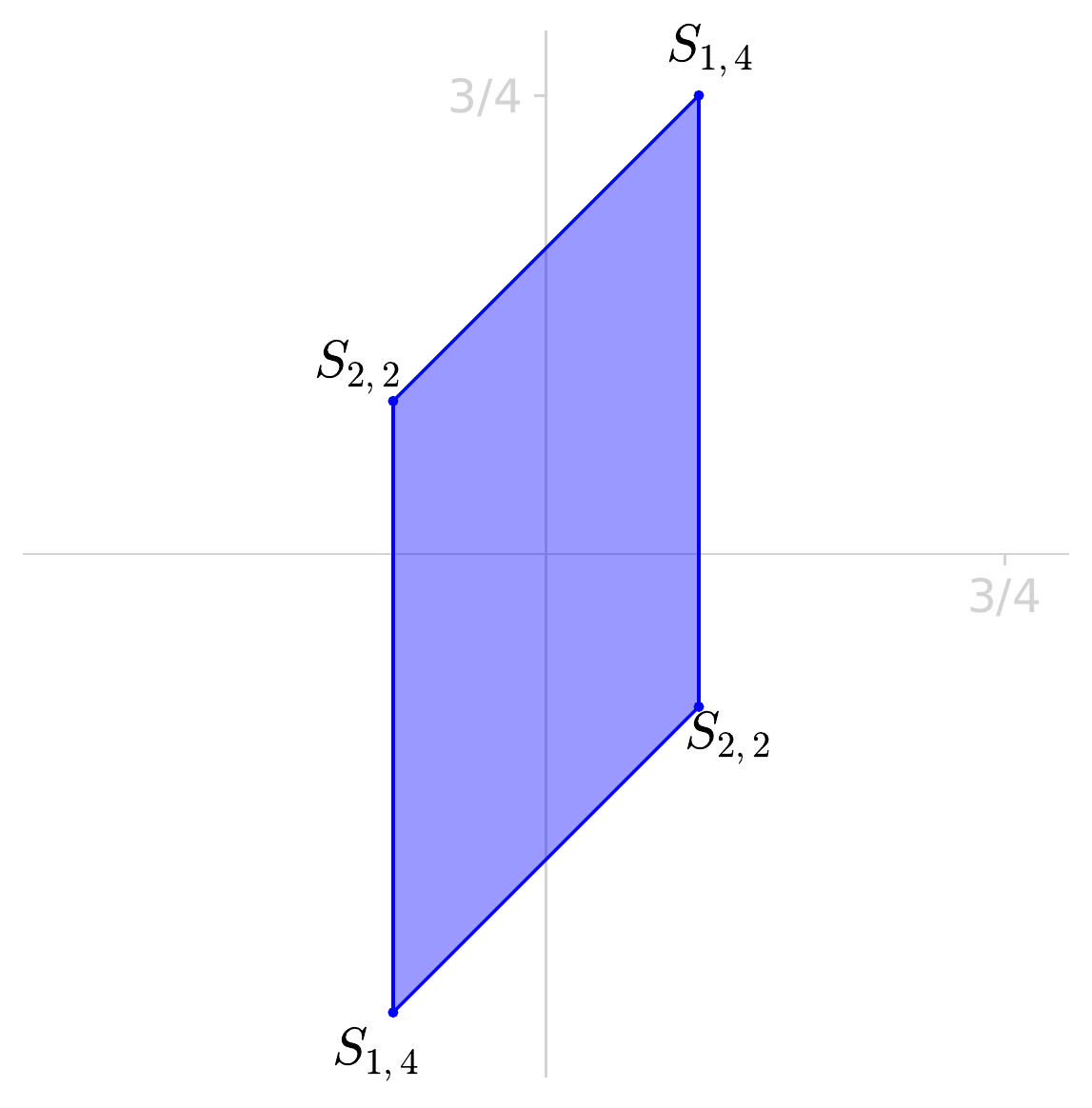}} \\ 
 $L=9^{{2}}_{{50}}$ & & $L=9^{{2}}_{{51}}$ & \\ 
 \quad & & \quad & \\ $\mathrm{Isom}(\mathbb{S}^3\setminus L) = \mathbb{{Z}}_2\oplus\mathbb{{Z}}_2$ & & $\mathrm{Isom}(\mathbb{S}^3\setminus L) = \mathbb{{Z}}_2$ & \\ 
 \quad & & \quad & \\ 
 \includegraphics[width=1in]{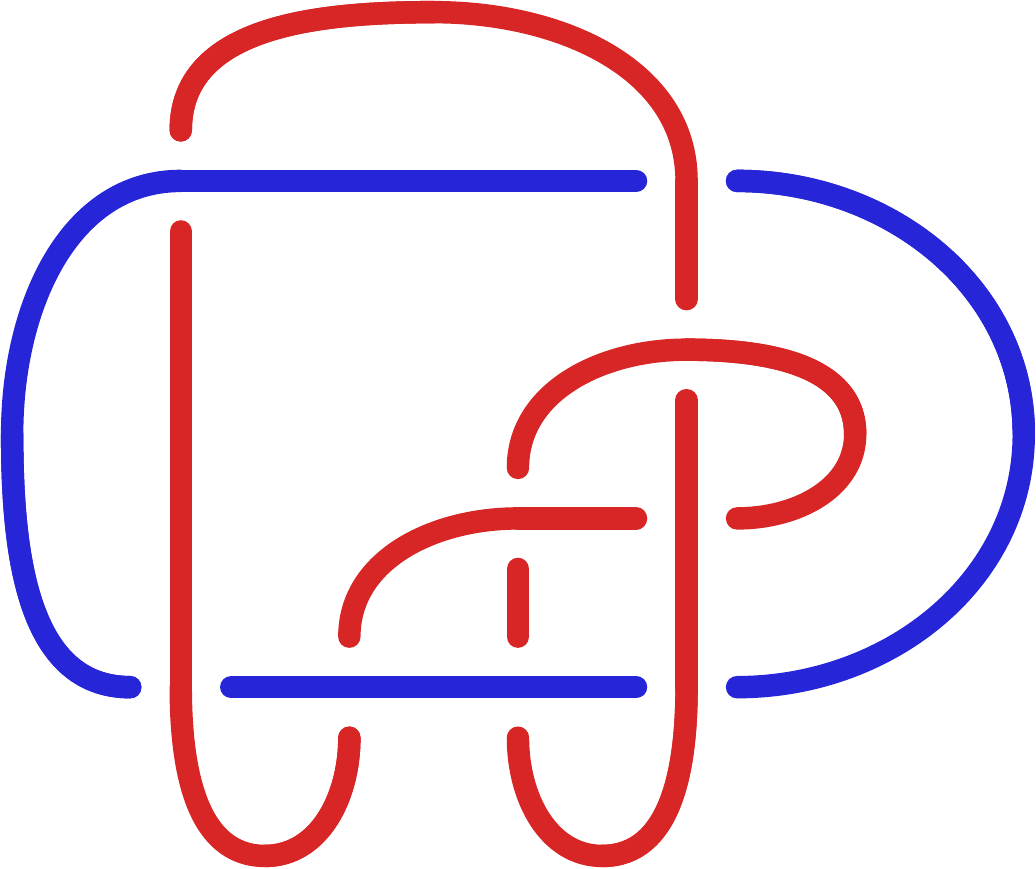}  & & \includegraphics[width=1in]{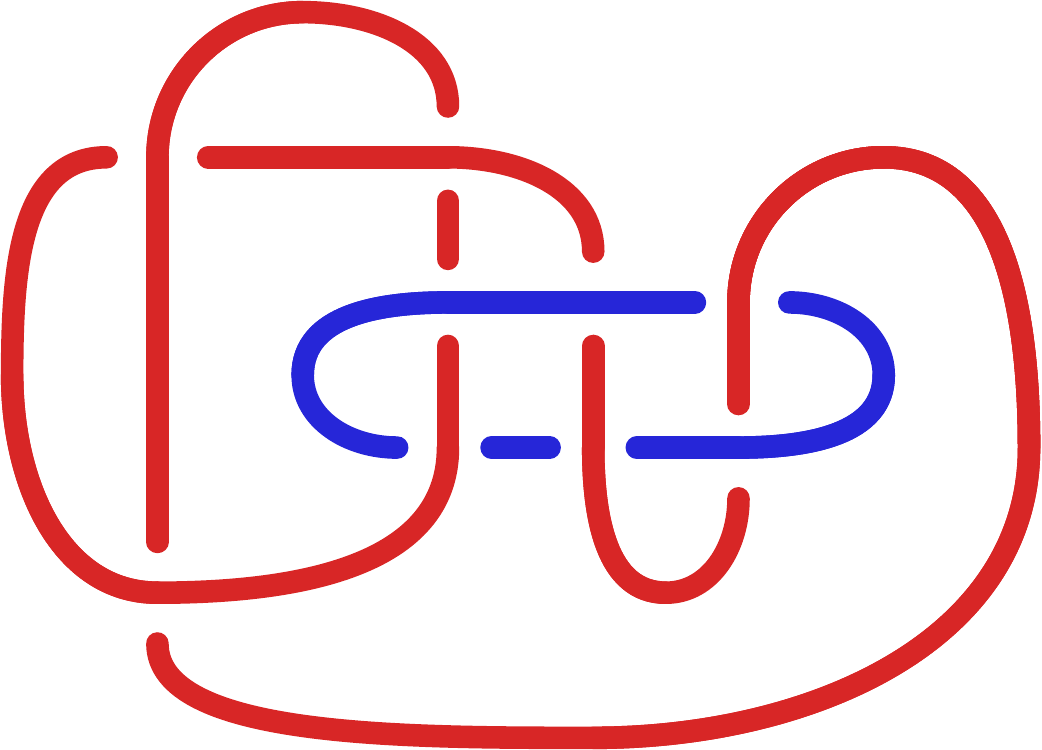} & \\ 
 \quad & & \quad & \\ 
 \hline  
\quad & \multirow{6}{*}{\Includegraphics[width=1.8in]{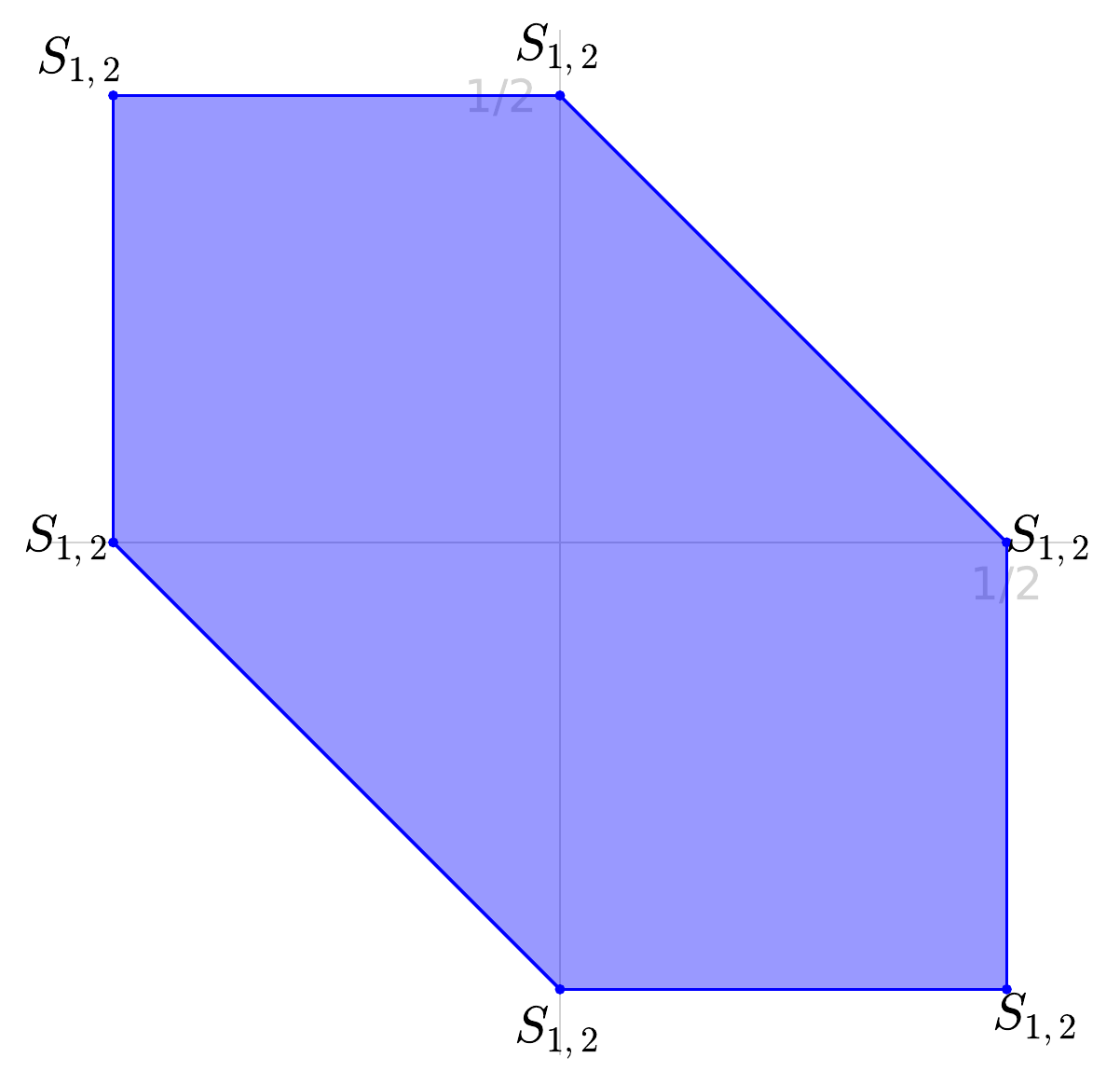}} & \quad & \multirow{6}{*}{\Includegraphics[width=1.8in]{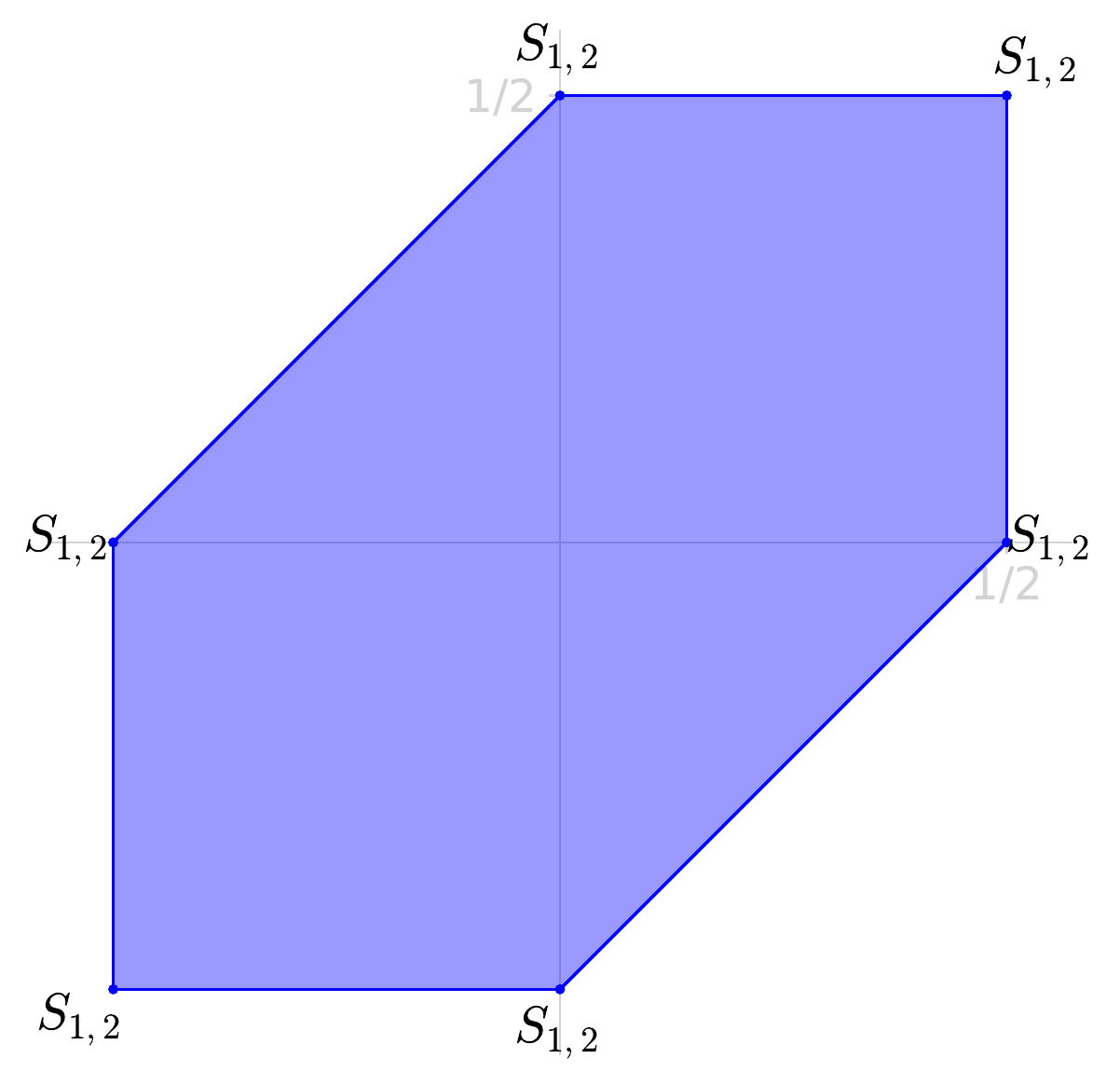}} \\ 
 $L=9^{{2}}_{{52}}$ & & $L=9^{{2}}_{{54}}$ & \\ 
 \quad & & \quad & \\ $\mathrm{Isom}(\mathbb{S}^3\setminus L) = \mathbb{{Z}}_2$ & & $\mathrm{Isom}(\mathbb{S}^3\setminus L) = \mathbb{{Z}}_2\oplus\mathbb{{Z}}_2$ & \\ 
 \quad & & \quad & \\ 
 \includegraphics[width=1in]{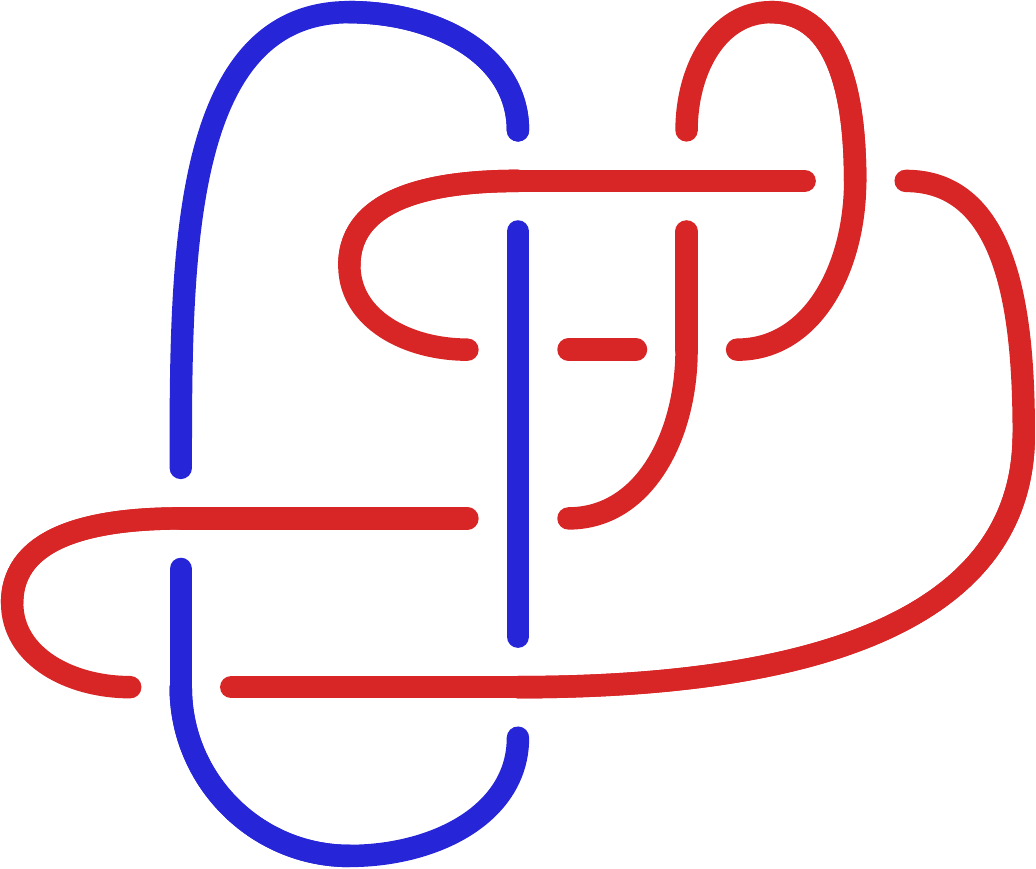}  & & \includegraphics[width=1in]{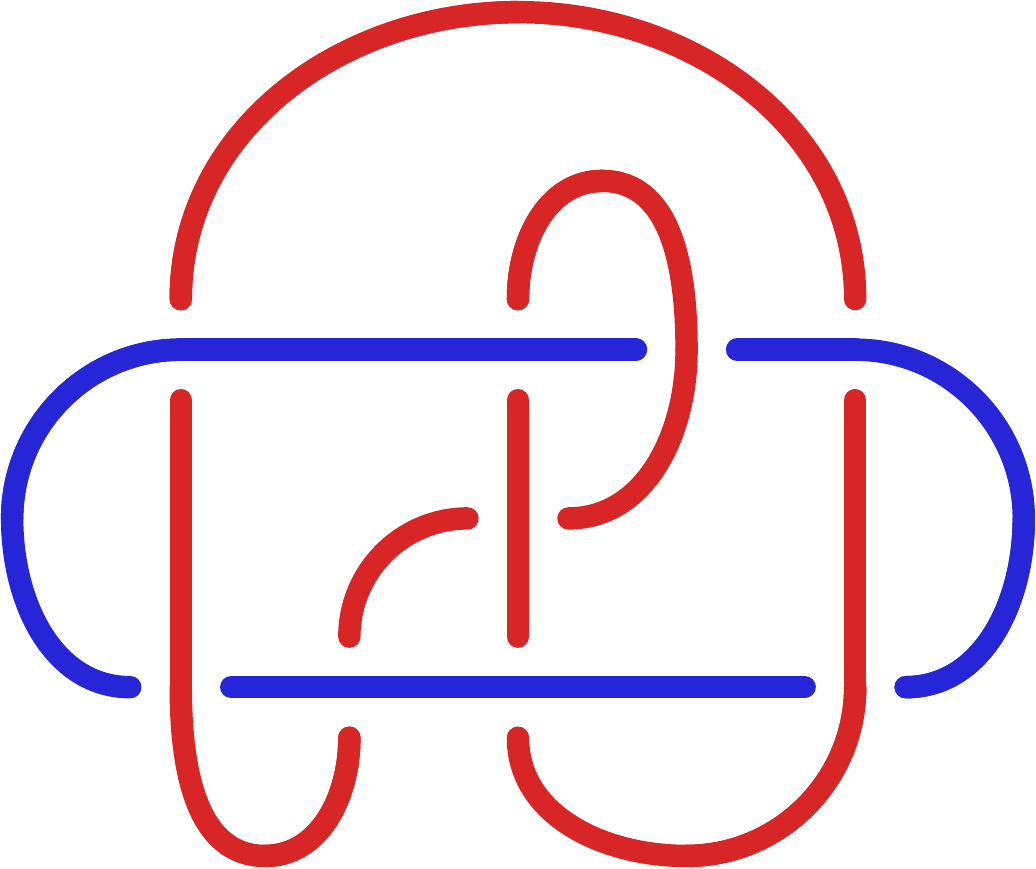} & \\ 
 \quad & & \quad & \\ 
 \hline  
\end{tabular} 
 \newpage \begin{tabular}{|c|c|c|c|} 
 \hline 
 Link & Norm Ball & Link & Norm Ball \\ 
 \hline 
\quad & \multirow{6}{*}{\Includegraphics[width=1.8in]{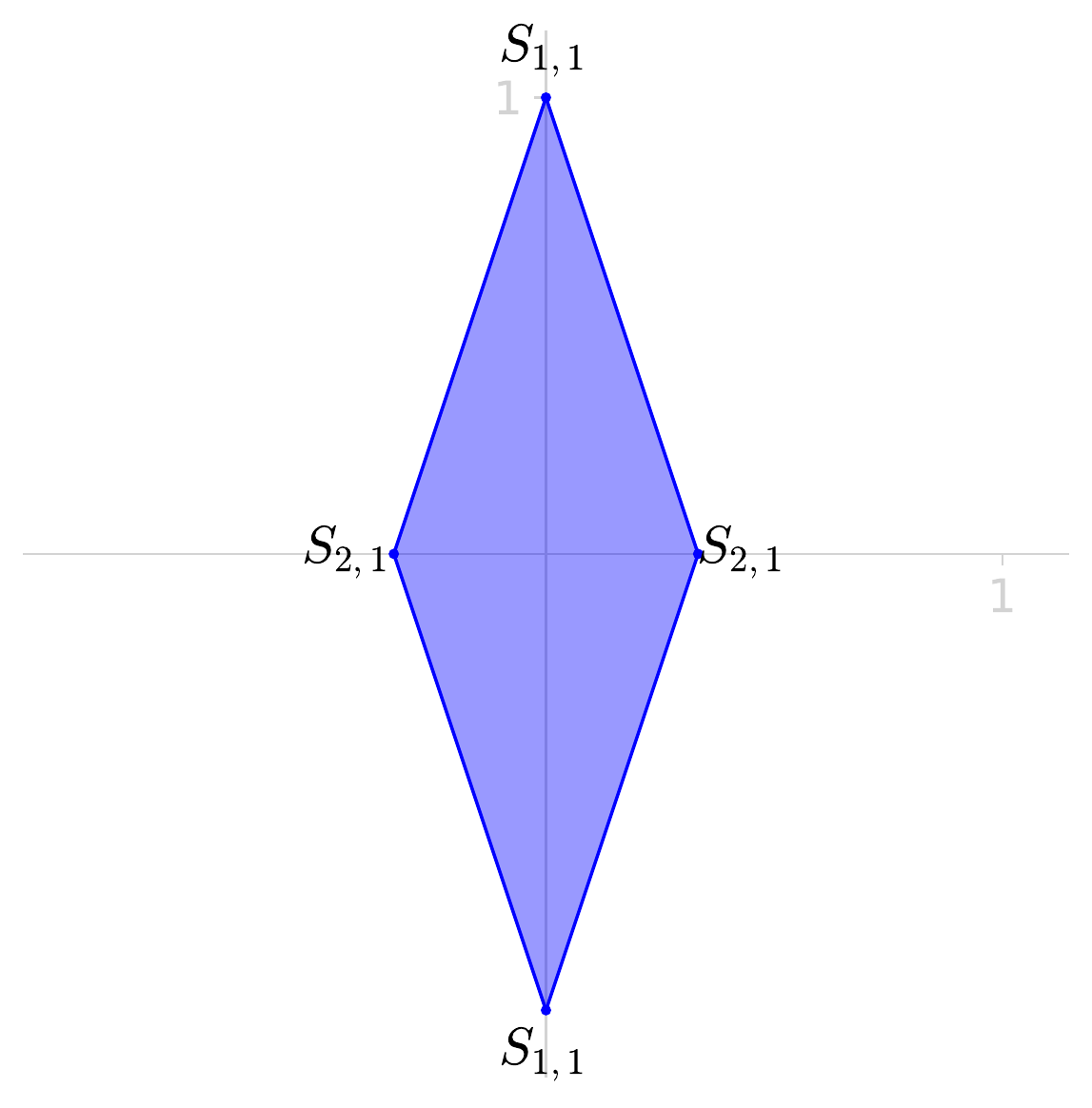}} & \quad & \multirow{6}{*}{\Includegraphics[width=1.8in]{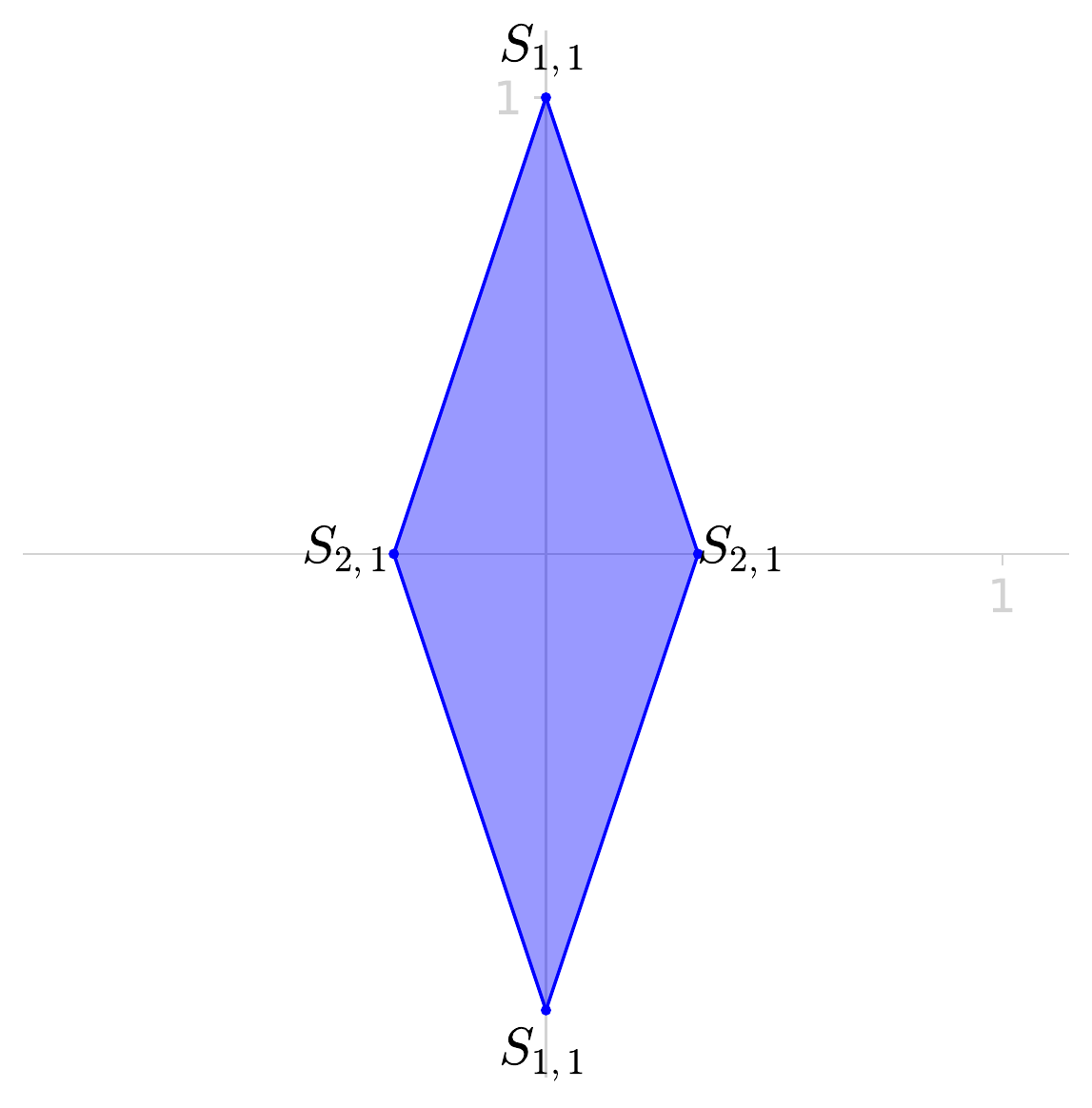}} \\ 
 $L=9^{{2}}_{{55}}$ & & $L=9^{{2}}_{{56}}$ & \\ 
 \quad & & \quad & \\ $\mathrm{Isom}(\mathbb{S}^3\setminus L) = \mathbb{{Z}}_2\oplus\mathbb{{Z}}_2$ & & $\mathrm{Isom}(\mathbb{S}^3\setminus L) = \mathbb{{Z}}_2\oplus\mathbb{{Z}}_2$ & \\ 
 \quad & & \quad & \\ 
 \includegraphics[width=1in]{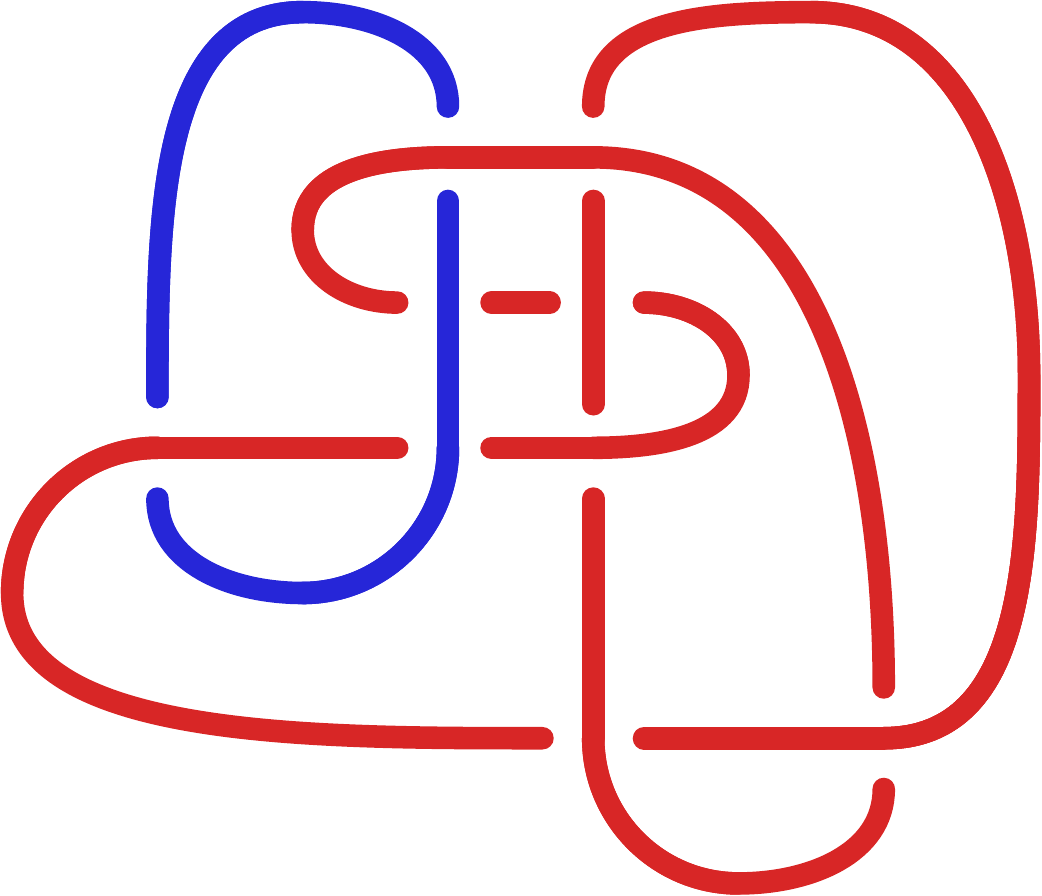}  & & \includegraphics[width=1in]{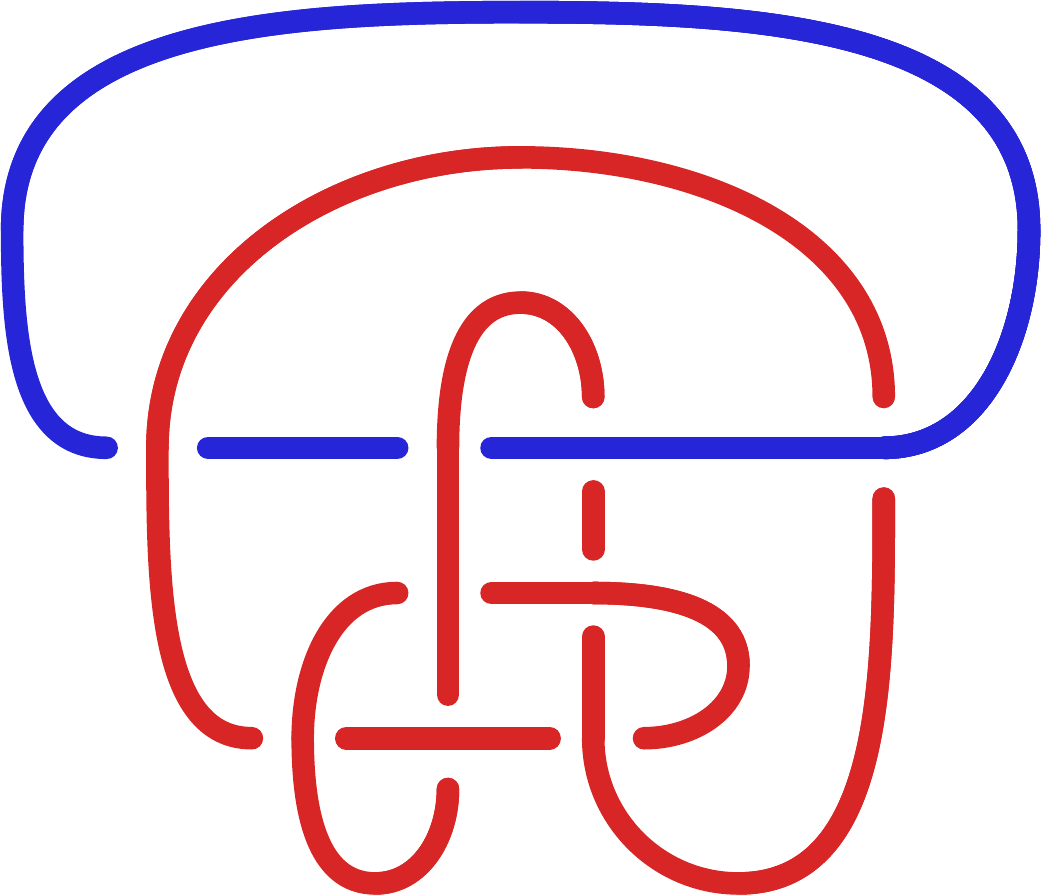} & \\ 
 \quad & & \quad & \\ 
 \hline  
\quad & \multirow{6}{*}{\Includegraphics[width=1.8in]{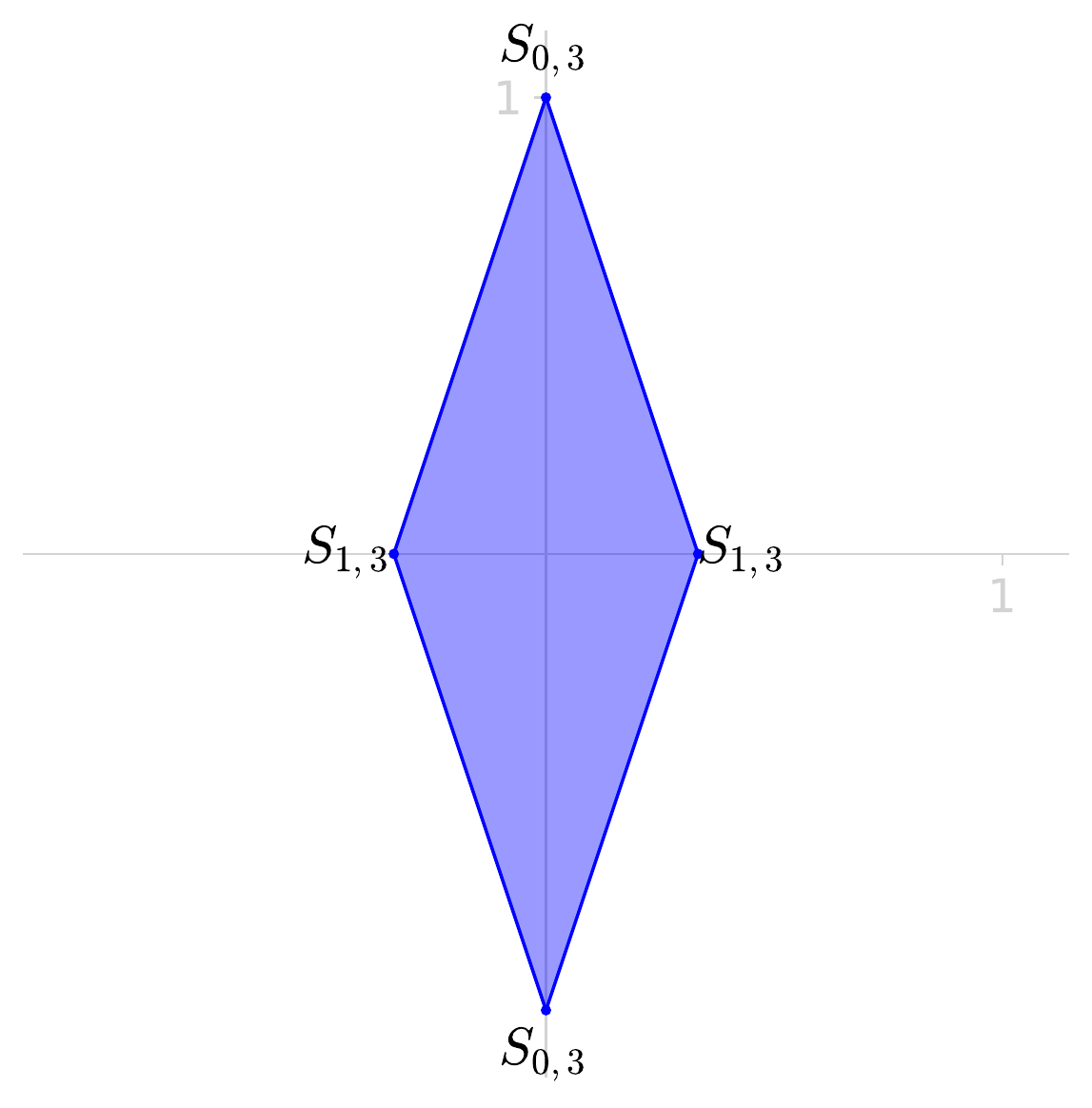}} & \quad & \multirow{6}{*}{\Includegraphics[width=1.8in]{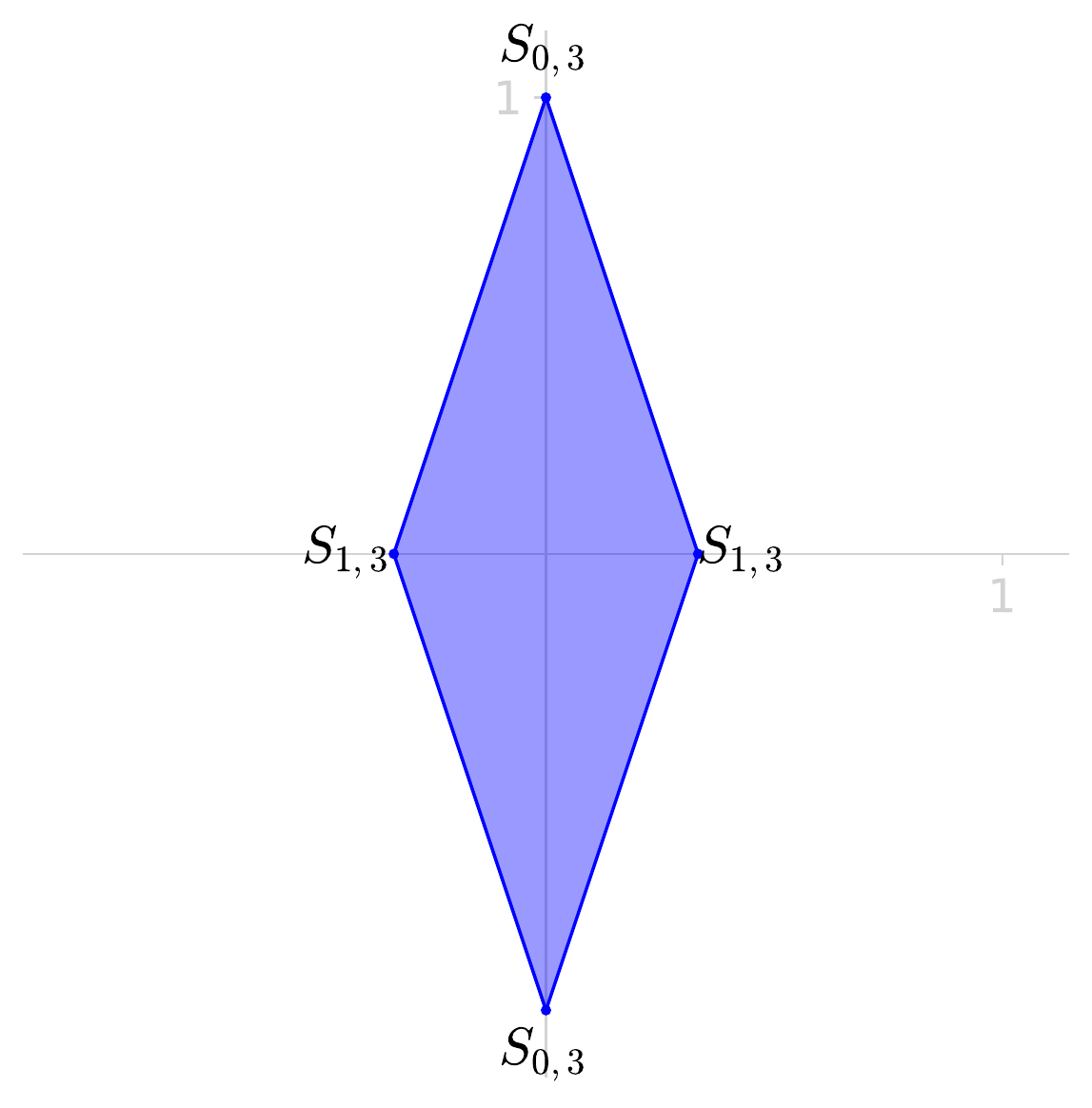}} \\ 
 $L=9^{{2}}_{{57}}$ & & $L=9^{{2}}_{{58}}$ & \\ 
 \quad & & \quad & \\ $\mathrm{Isom}(\mathbb{S}^3\setminus L) = \mathbb{{Z}}_2$ & & $\mathrm{Isom}(\mathbb{S}^3\setminus L) = \mathbb{{Z}}_2$ & \\ 
 \quad & & \quad & \\ 
 \includegraphics[width=1in]{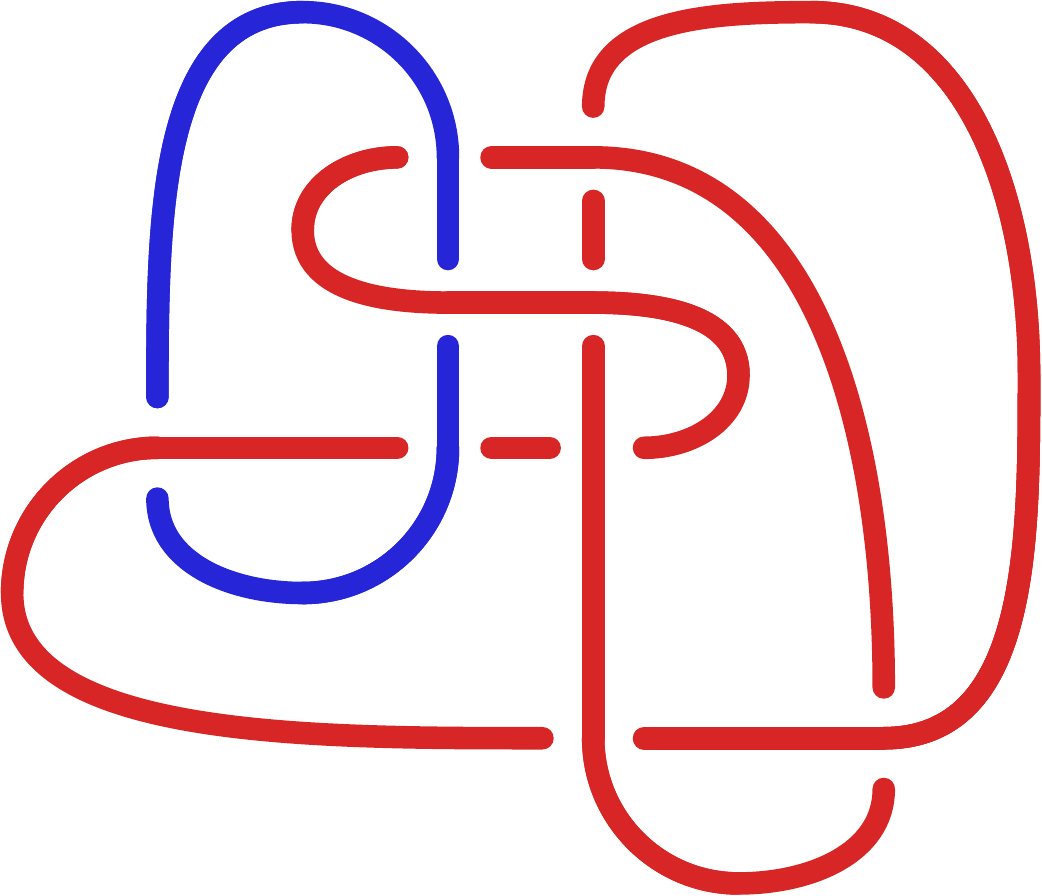}  & & \includegraphics[width=1in]{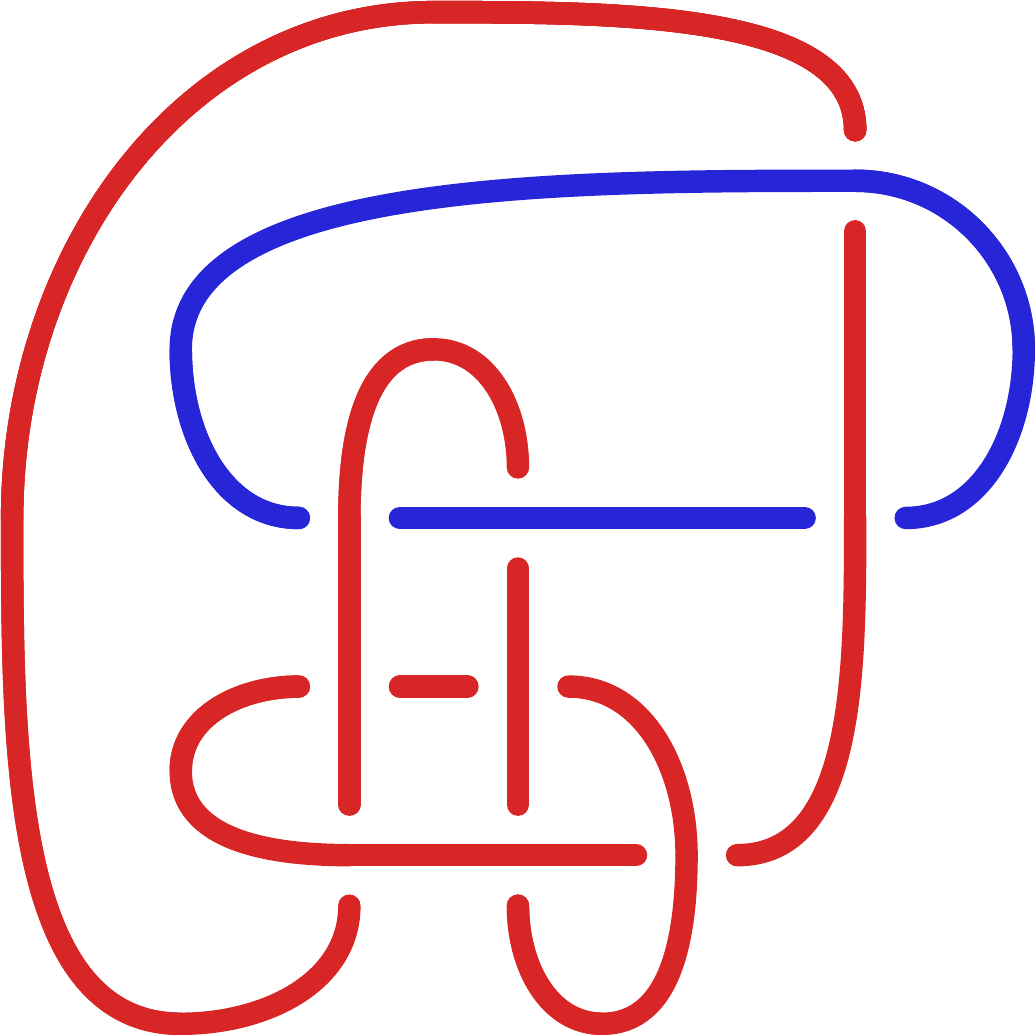} & \\ 
 \quad & & \quad & \\ 
 \hline  
\quad & \multirow{6}{*}{\Includegraphics[width=1.8in]{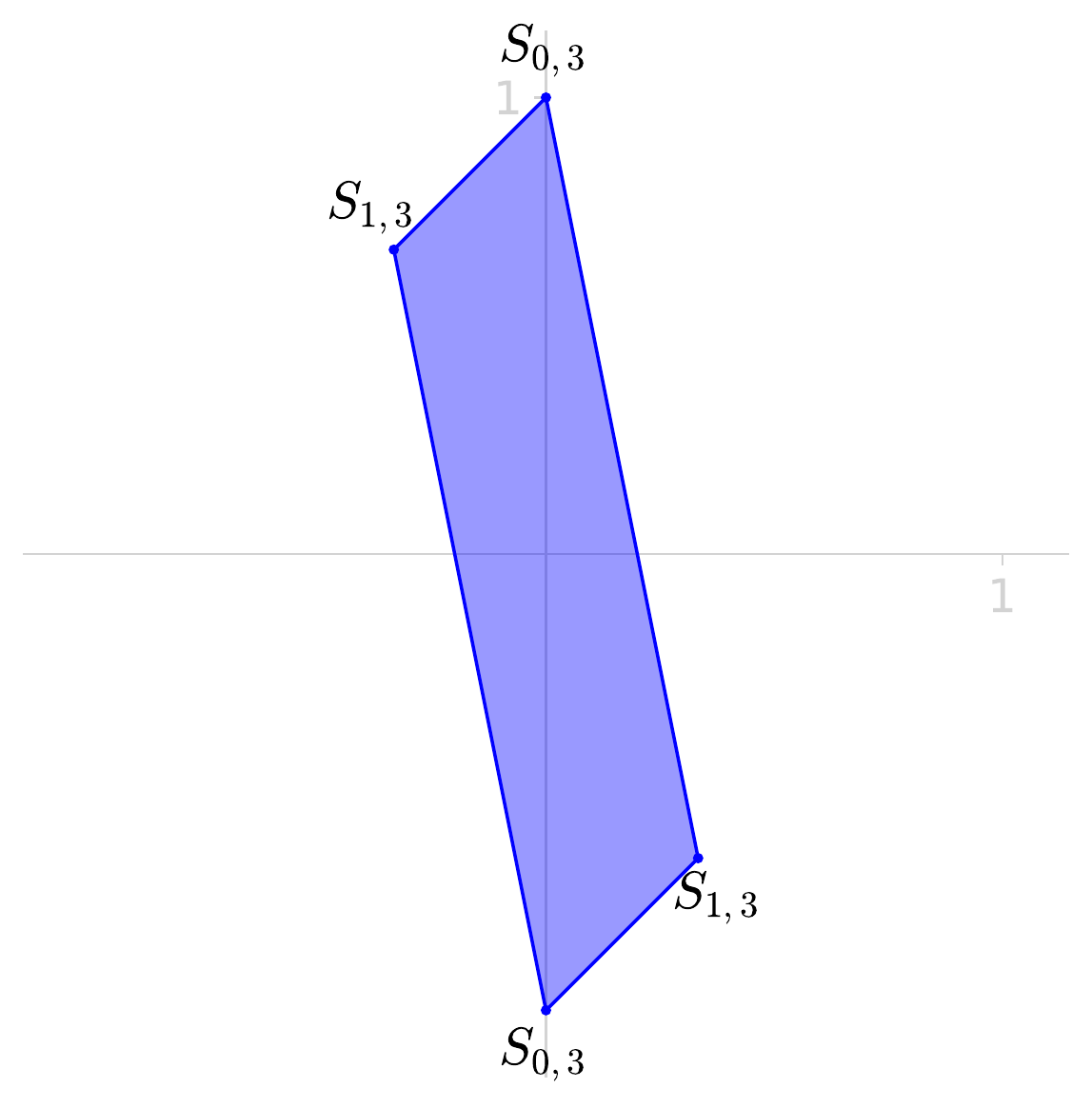}} & \quad & \multirow{6}{*}{\Includegraphics[width=1.8in]{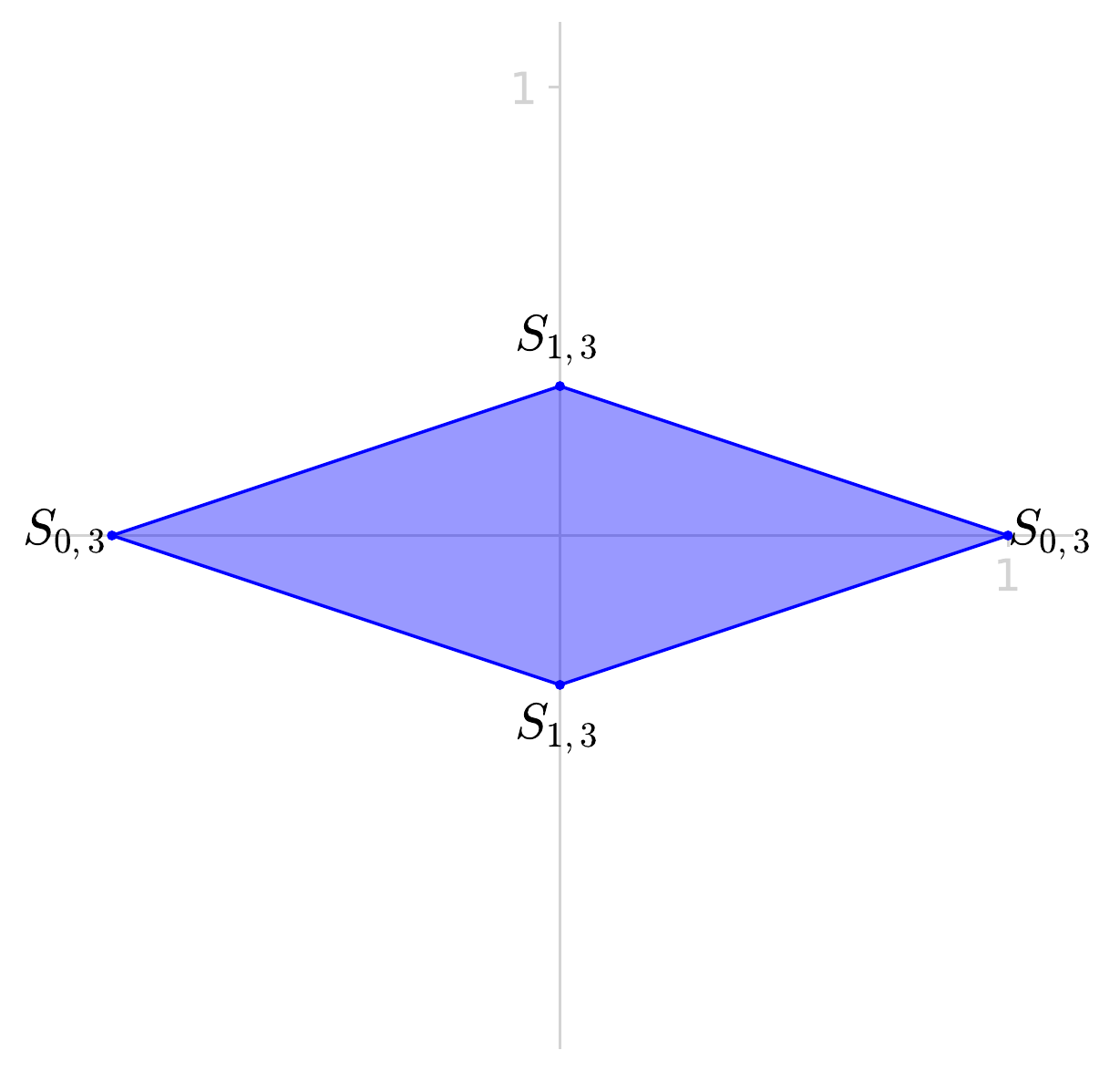}} \\ 
 $L=9^{{2}}_{{59}}$ & & $L=9^{{2}}_{{60}}$ & \\ 
 \quad & & \quad & \\ $\mathrm{Isom}(\mathbb{S}^3\setminus L) = \mathbb{{Z}}_2$ & & $\mathrm{Isom}(\mathbb{S}^3\setminus L) = \mathbb{{Z}}_2$ & \\ 
 \quad & & \quad & \\ 
 \includegraphics[width=1in]{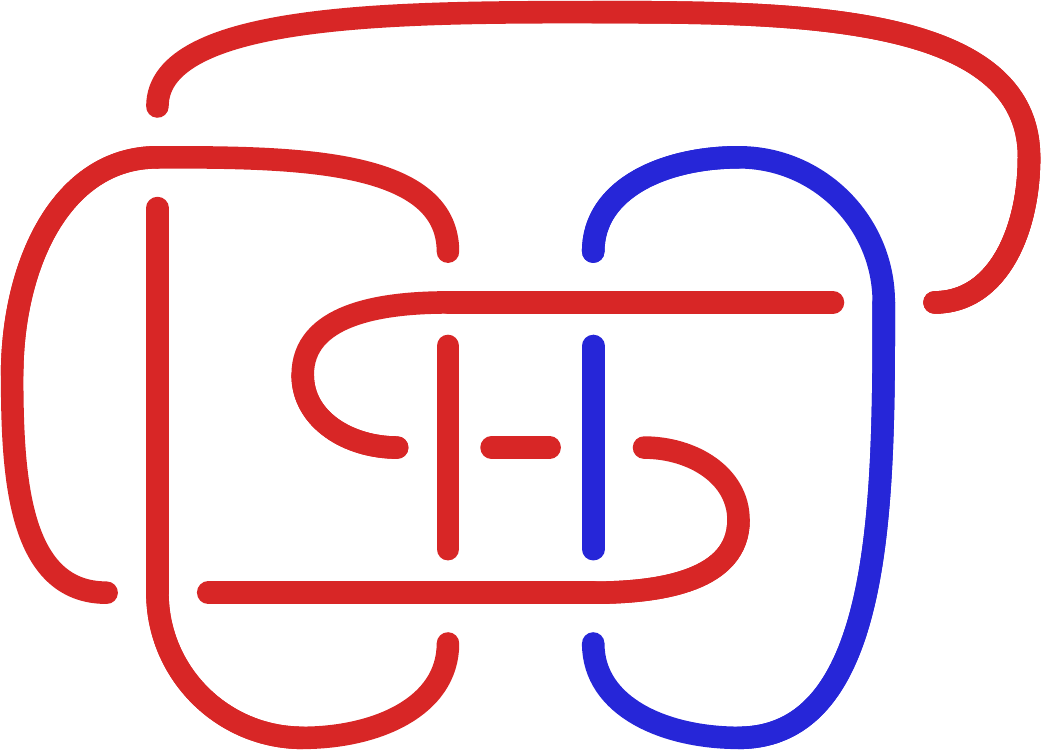}  & & \includegraphics[width=1in]{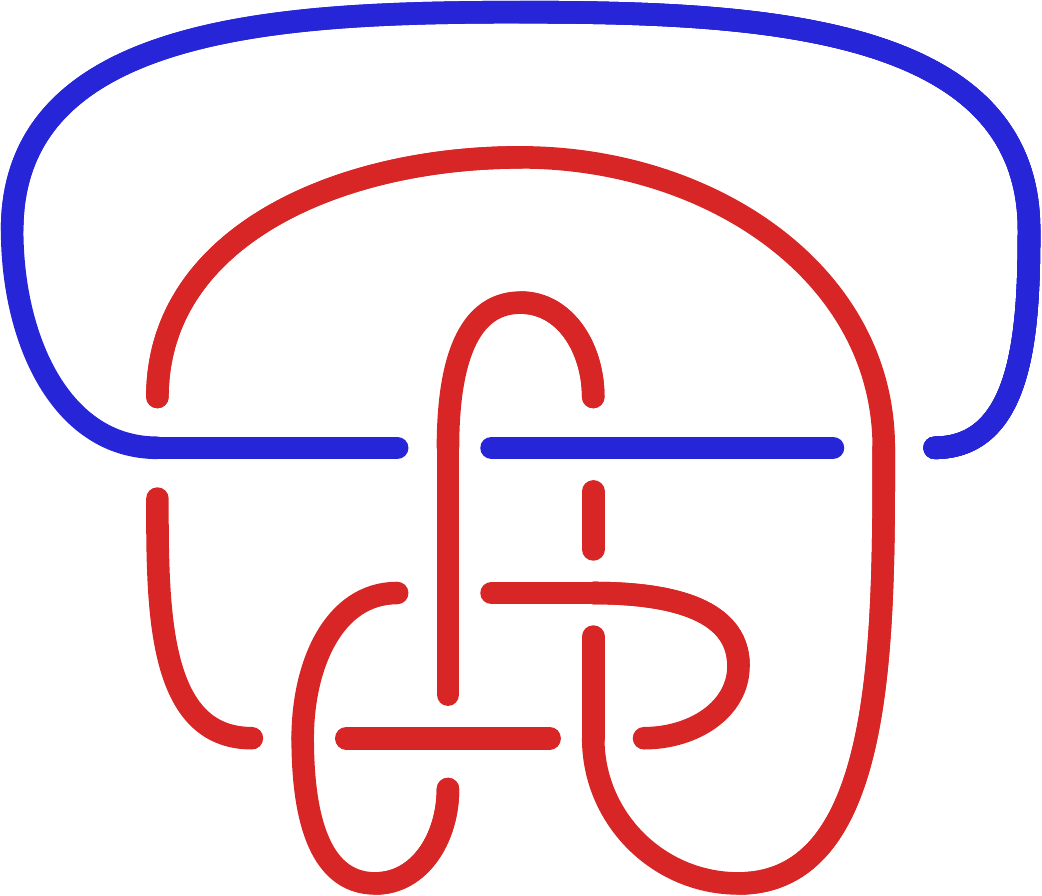} & \\ 
 \quad & & \quad & \\ 
 \hline  
\quad & \multirow{6}{*}{\Includegraphics[width=1.8in]{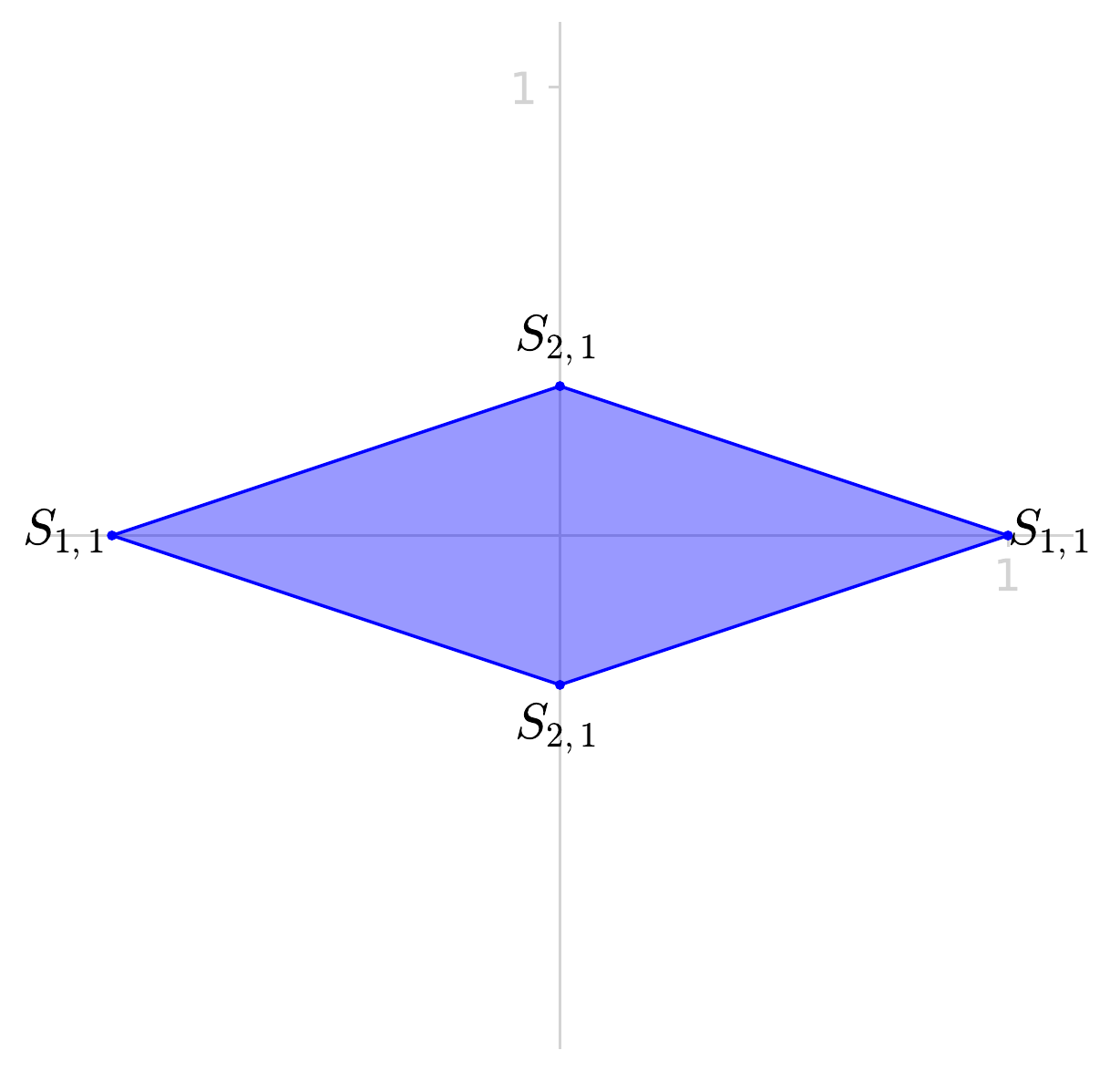}} & \quad & \multirow{6}{*}{\Includegraphics[width=1.8in]{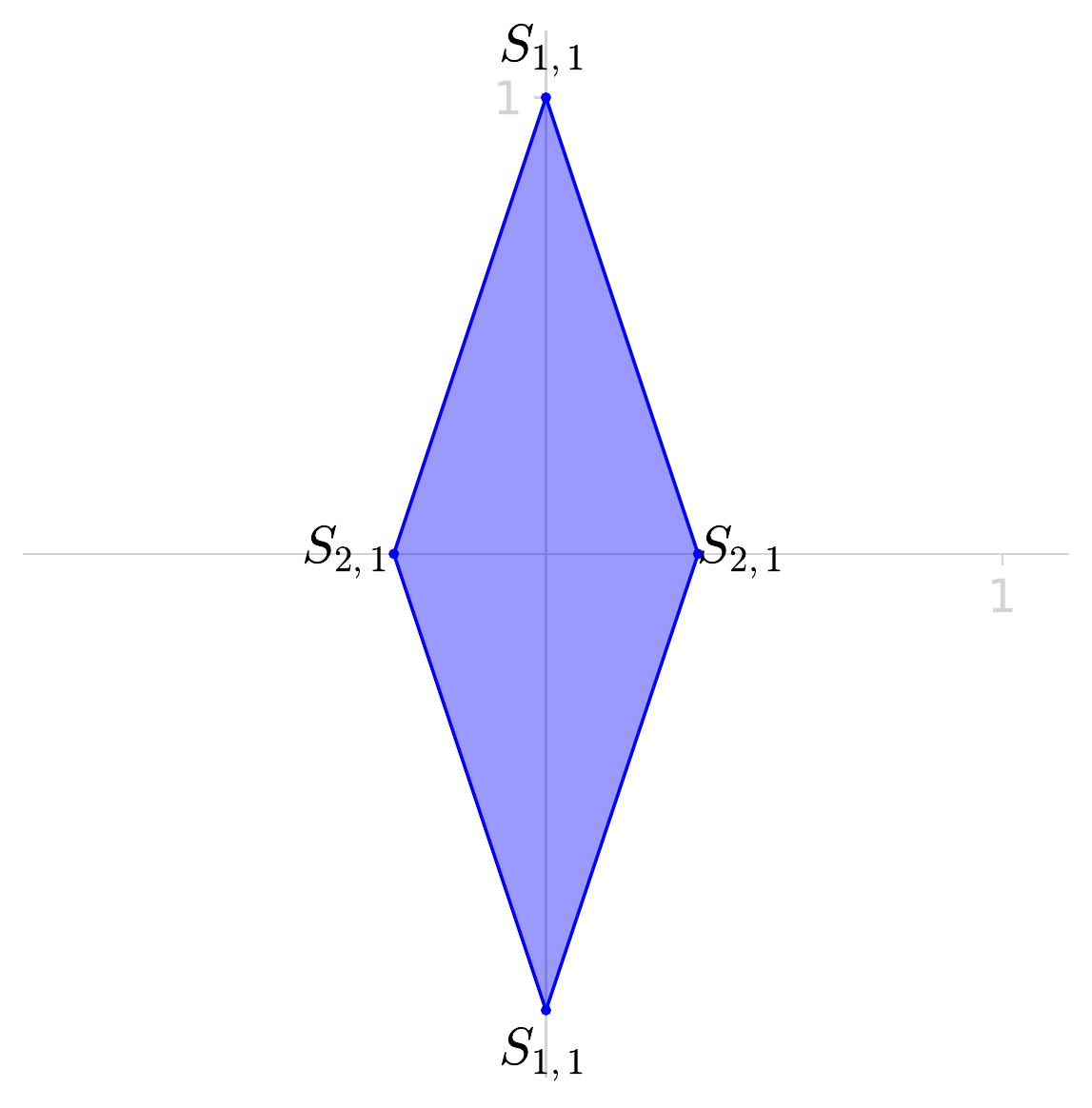}} \\ 
 $L=10^{{2}}_{{1}}$ & & $L=10^{{2}}_{{2}}$ & \\ 
 \quad & & \quad & \\ $\mathrm{Isom}(\mathbb{S}^3\setminus L) = \mathbb{{Z}}_2$ & & $\mathrm{Isom}(\mathbb{S}^3\setminus L) = \mathbb{{Z}}_2$ & \\ 
 \quad & & \quad & \\ 
 \includegraphics[width=1in]{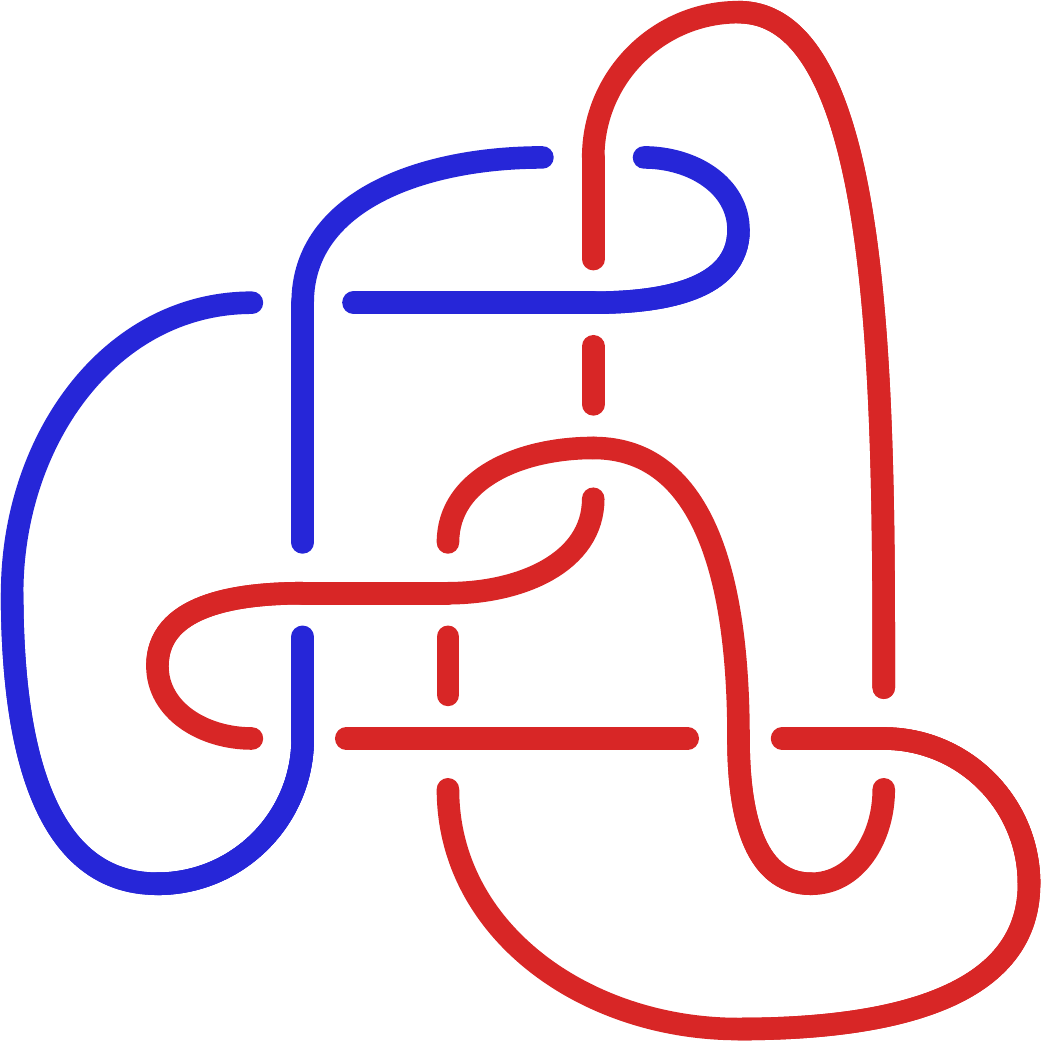}  & & \includegraphics[width=1in]{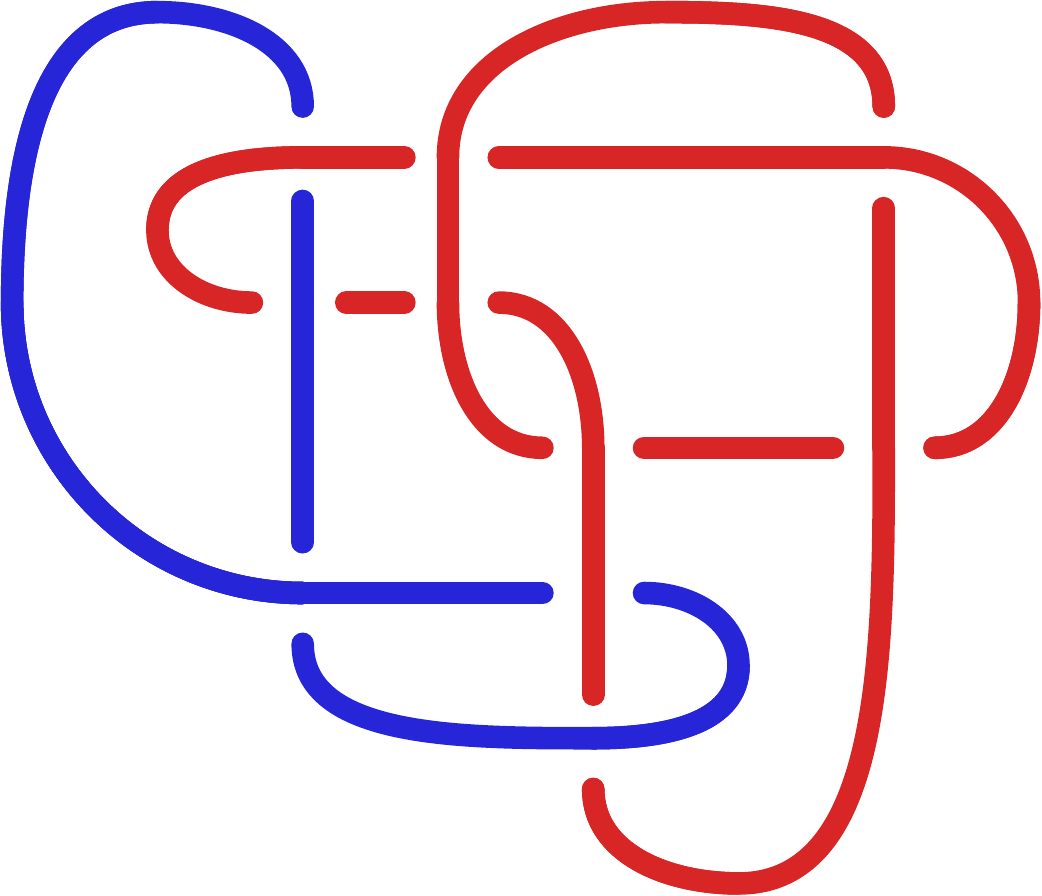} & \\ 
 \quad & & \quad & \\ 
 \hline  
\quad & \multirow{6}{*}{\Includegraphics[width=1.8in]{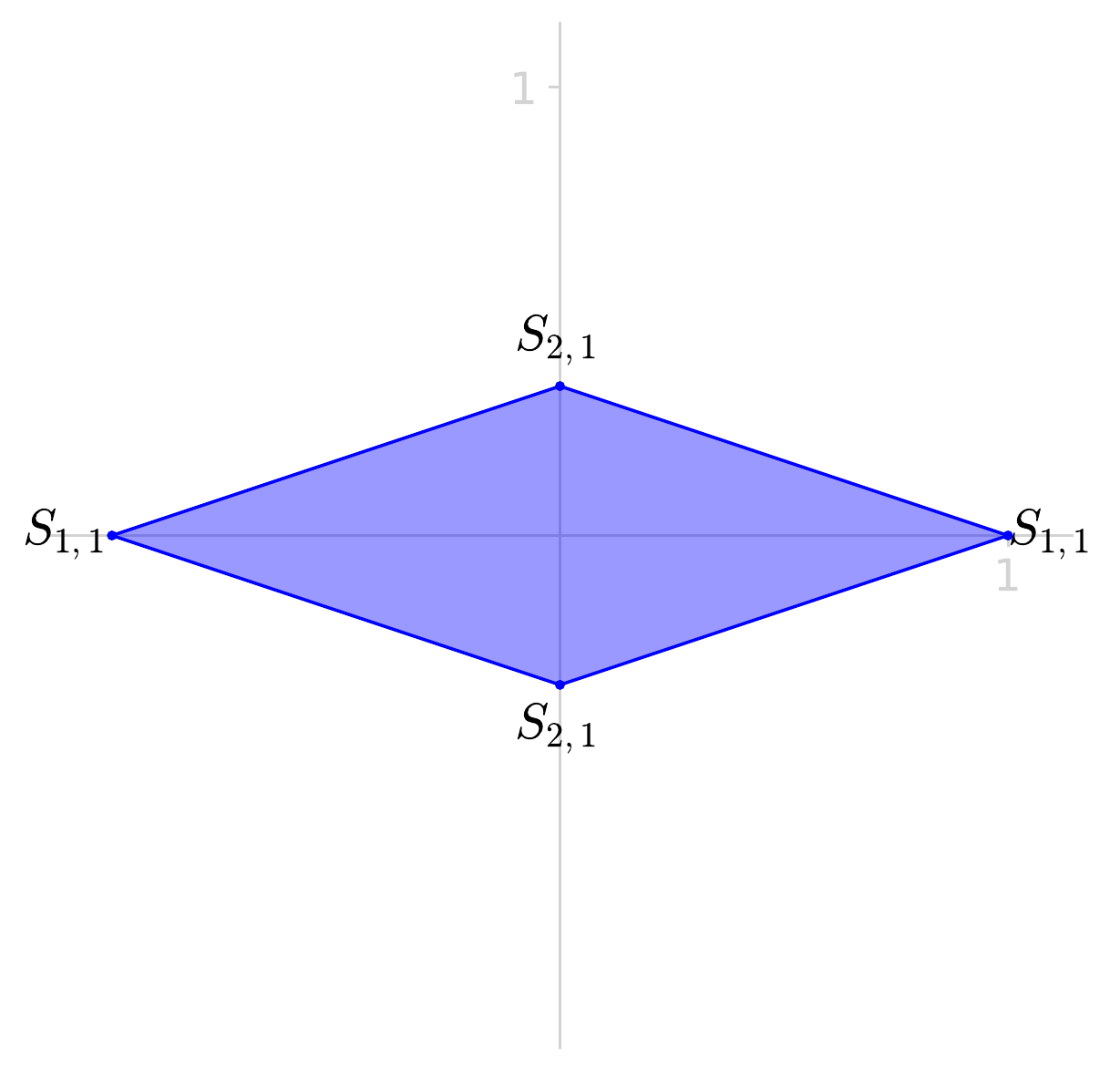}} & \quad & \multirow{6}{*}{\Includegraphics[width=1.8in]{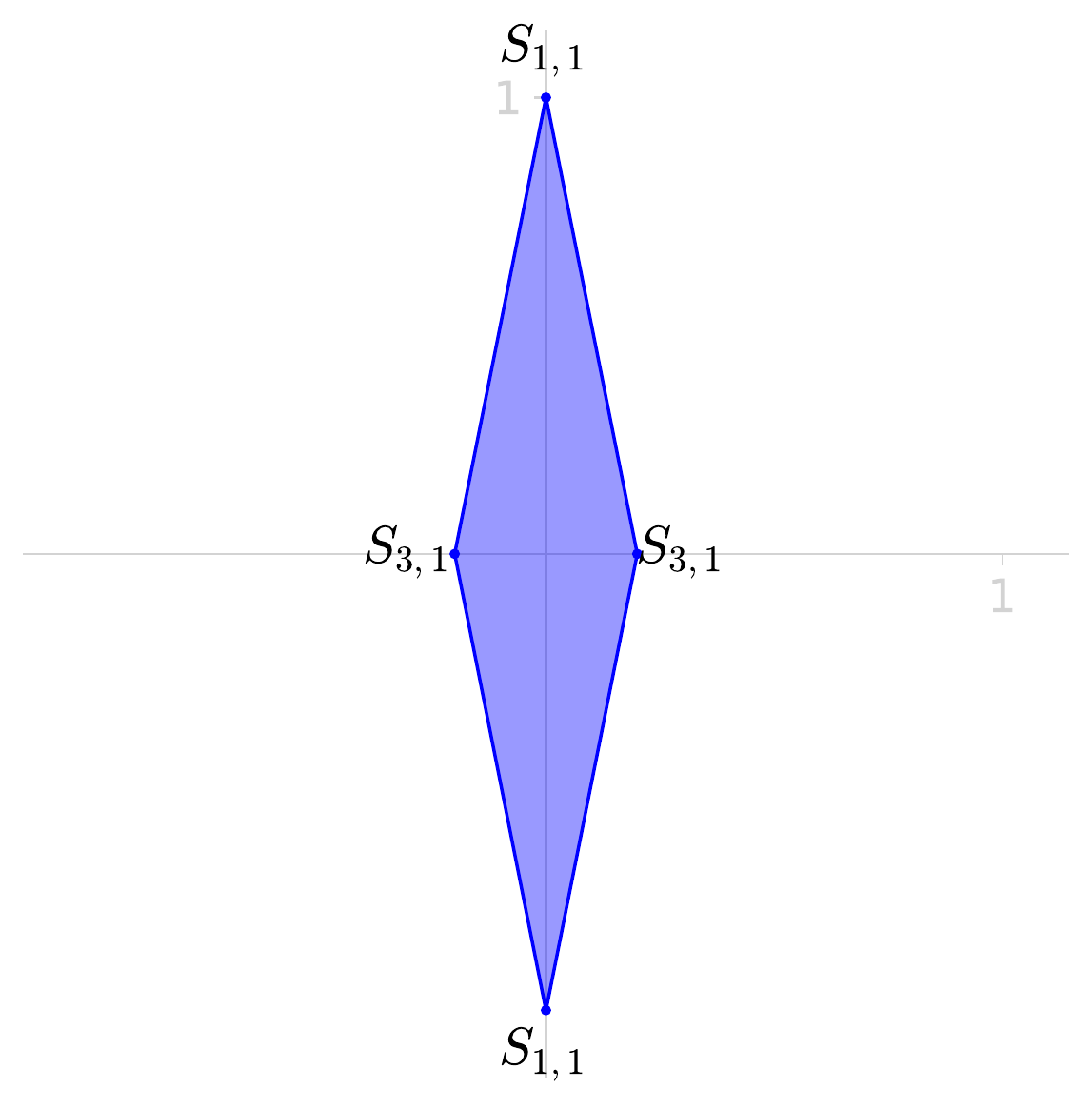}} \\ 
 $L=10^{{2}}_{{3}}$ & & $L=10^{{2}}_{{4}}$ & \\ 
 \quad & & \quad & \\ $\mathrm{Isom}(\mathbb{S}^3\setminus L) = \mathbb{{Z}}_2$ & & $\mathrm{Isom}(\mathbb{S}^3\setminus L) = \mathbb{{Z}}_2$ & \\ 
 \quad & & \quad & \\ 
 \includegraphics[width=1in]{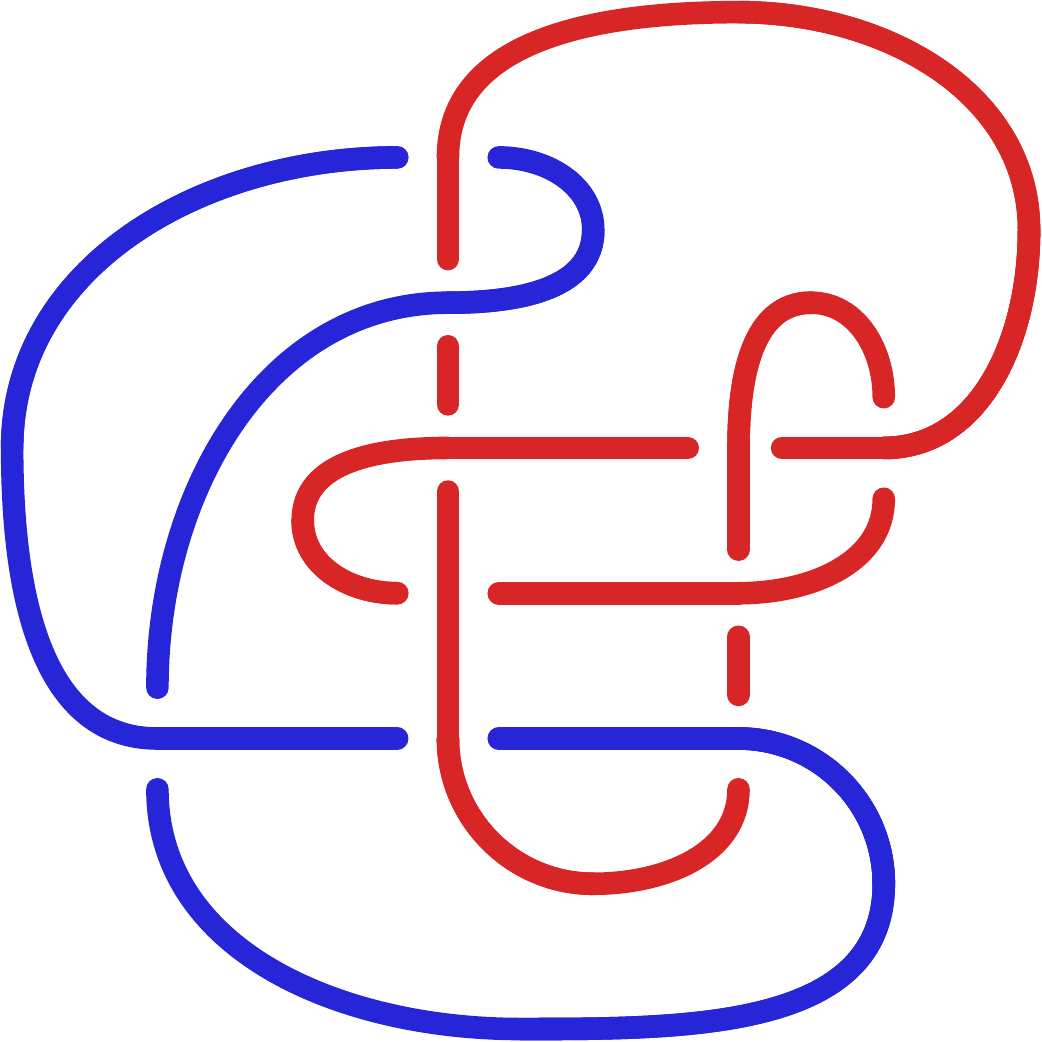}  & & \includegraphics[width=1in]{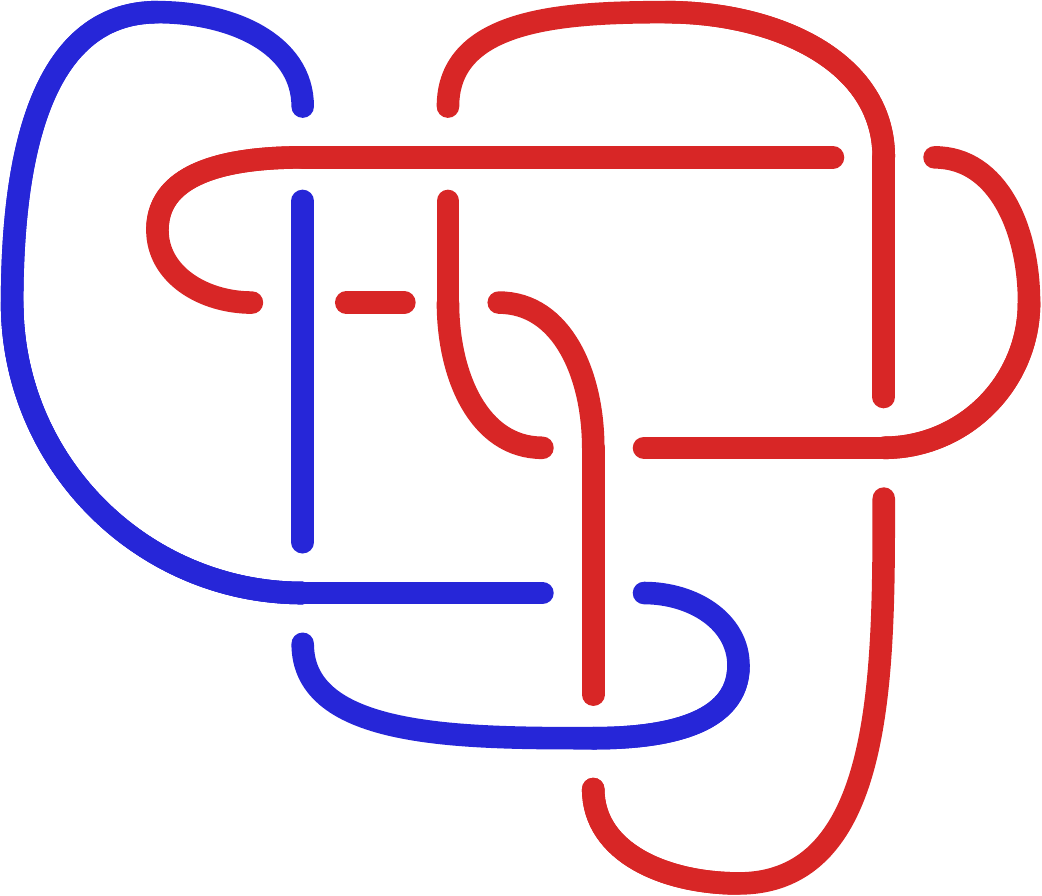} & \\ 
 \quad & & \quad & \\ 
 \hline  
\end{tabular} 
 \newpage \begin{tabular}{|c|c|c|c|} 
 \hline 
 Link & Norm Ball & Link & Norm Ball \\ 
 \hline 
\quad & \multirow{6}{*}{\Includegraphics[width=1.8in]{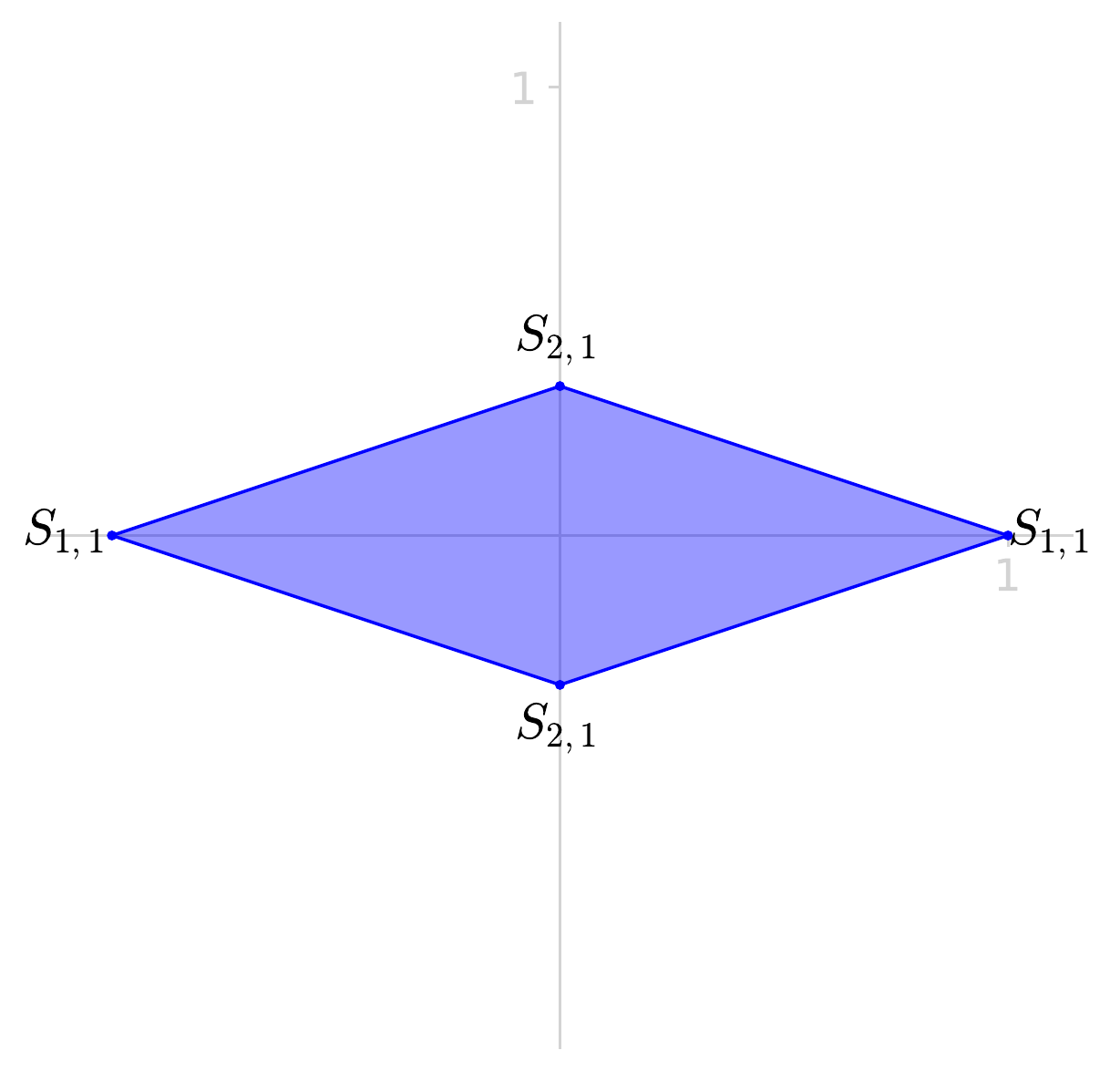}} & \quad & \multirow{6}{*}{\Includegraphics[width=1.8in]{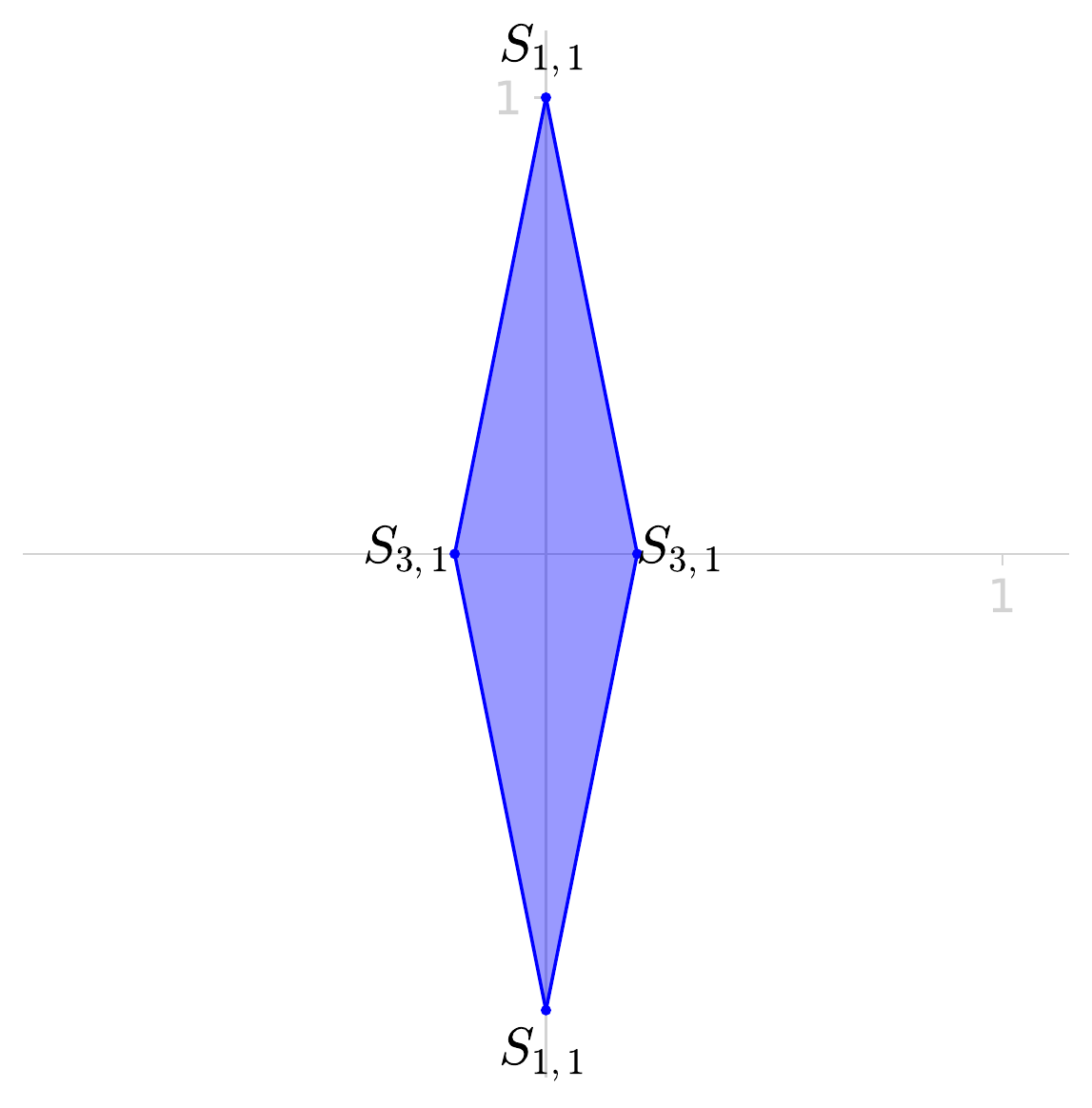}} \\ 
 $L=10^{{2}}_{{5}}$ & & $L=10^{{2}}_{{6}}$ & \\ 
 \quad & & \quad & \\ $\mathrm{Isom}(\mathbb{S}^3\setminus L) = \mathbb{{Z}}_2$ & & $\mathrm{Isom}(\mathbb{S}^3\setminus L) = \mathbb{{Z}}_2$ & \\ 
 \quad & & \quad & \\ 
 \includegraphics[width=1in]{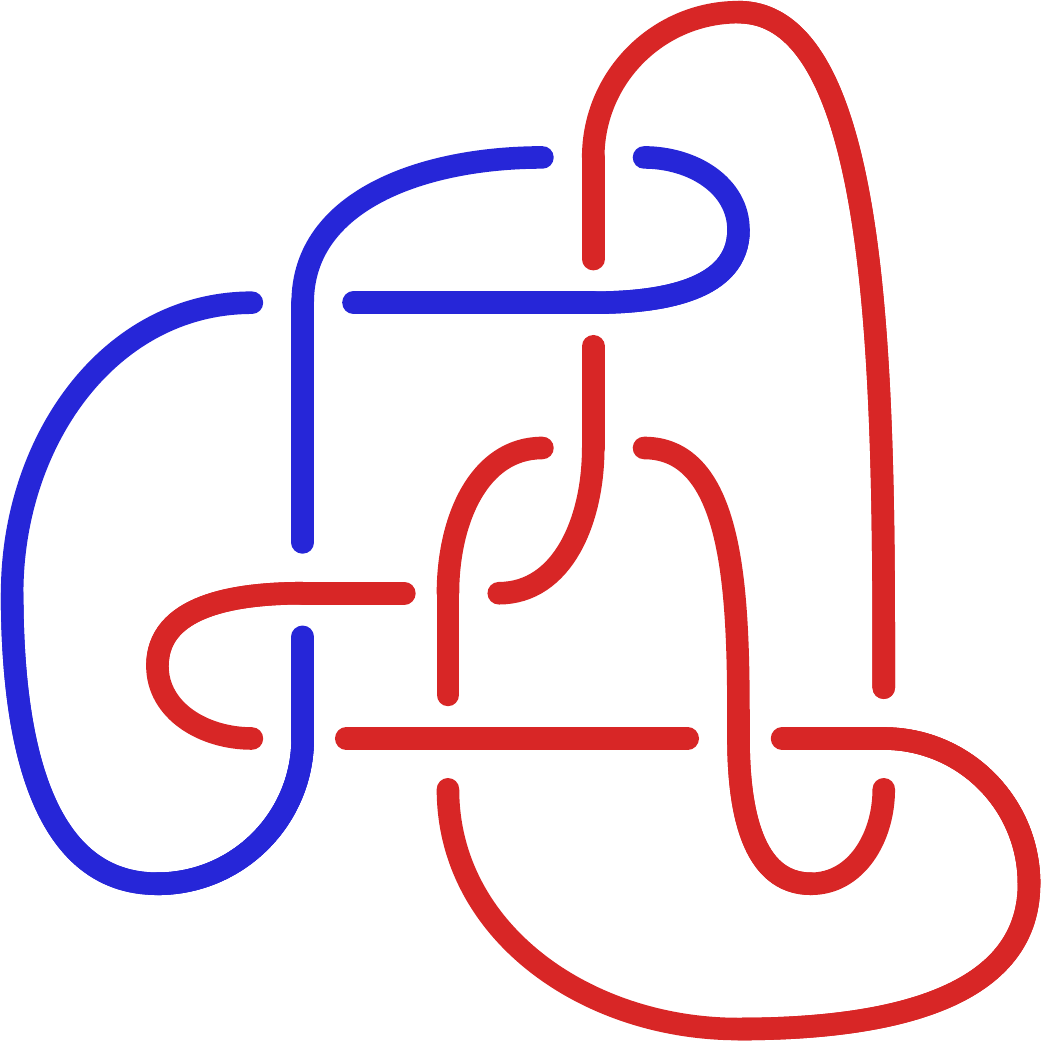}  & & \includegraphics[width=1in]{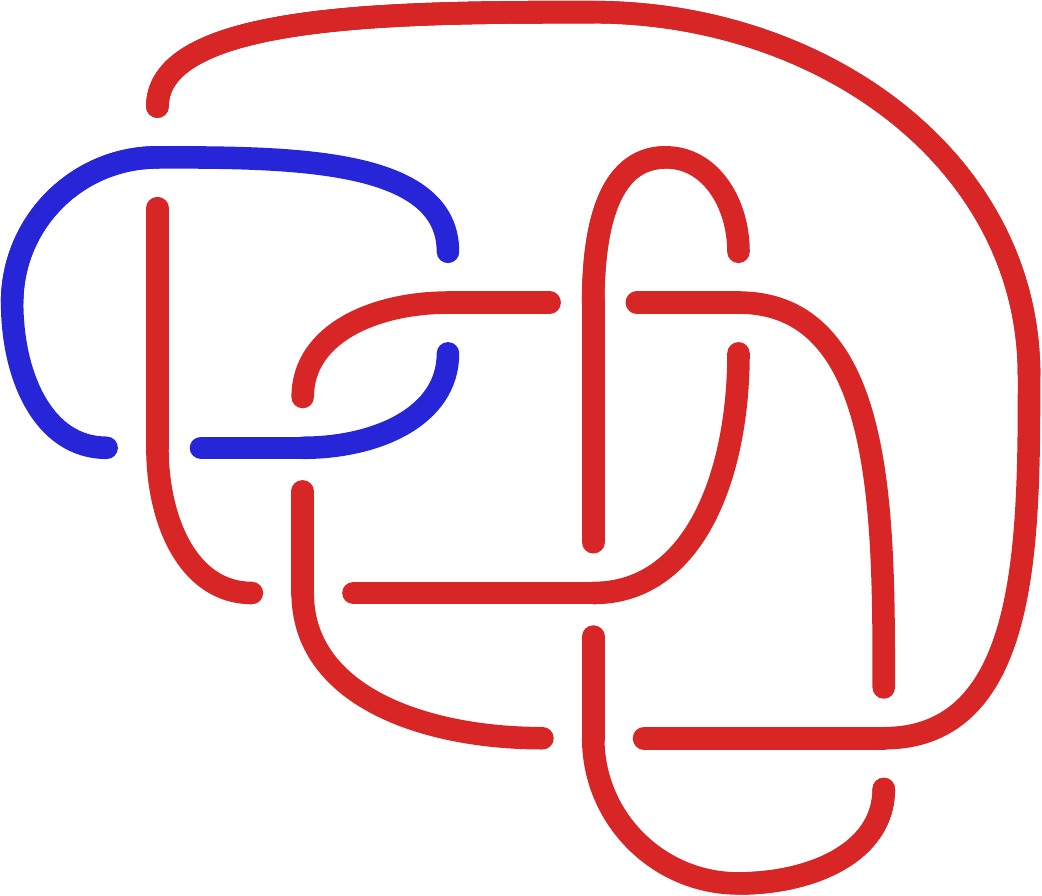} & \\ 
 \quad & & \quad & \\ 
 \hline  
\quad & \multirow{6}{*}{\Includegraphics[width=1.8in]{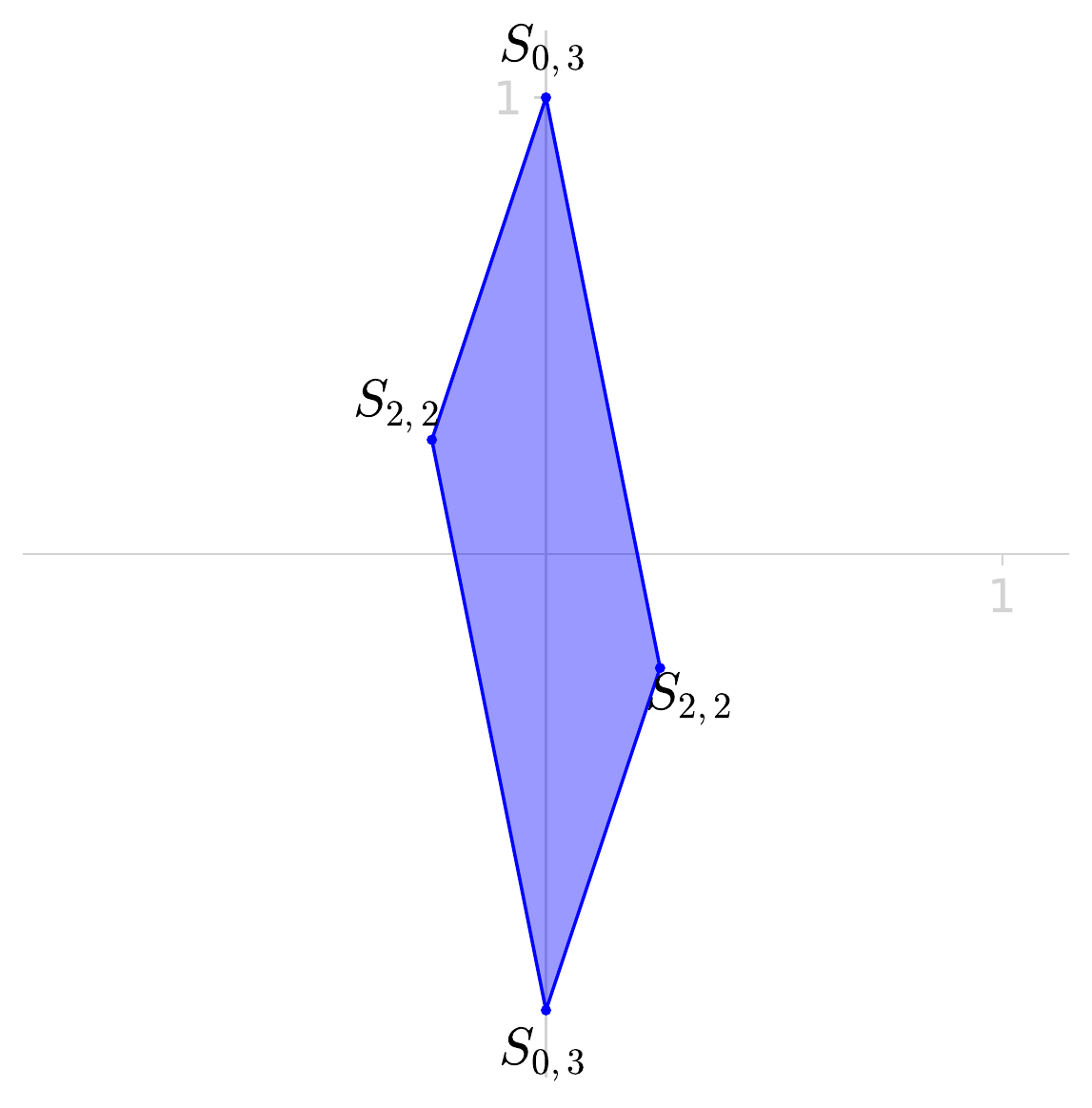}} & \quad & \multirow{6}{*}{\Includegraphics[width=1.8in]{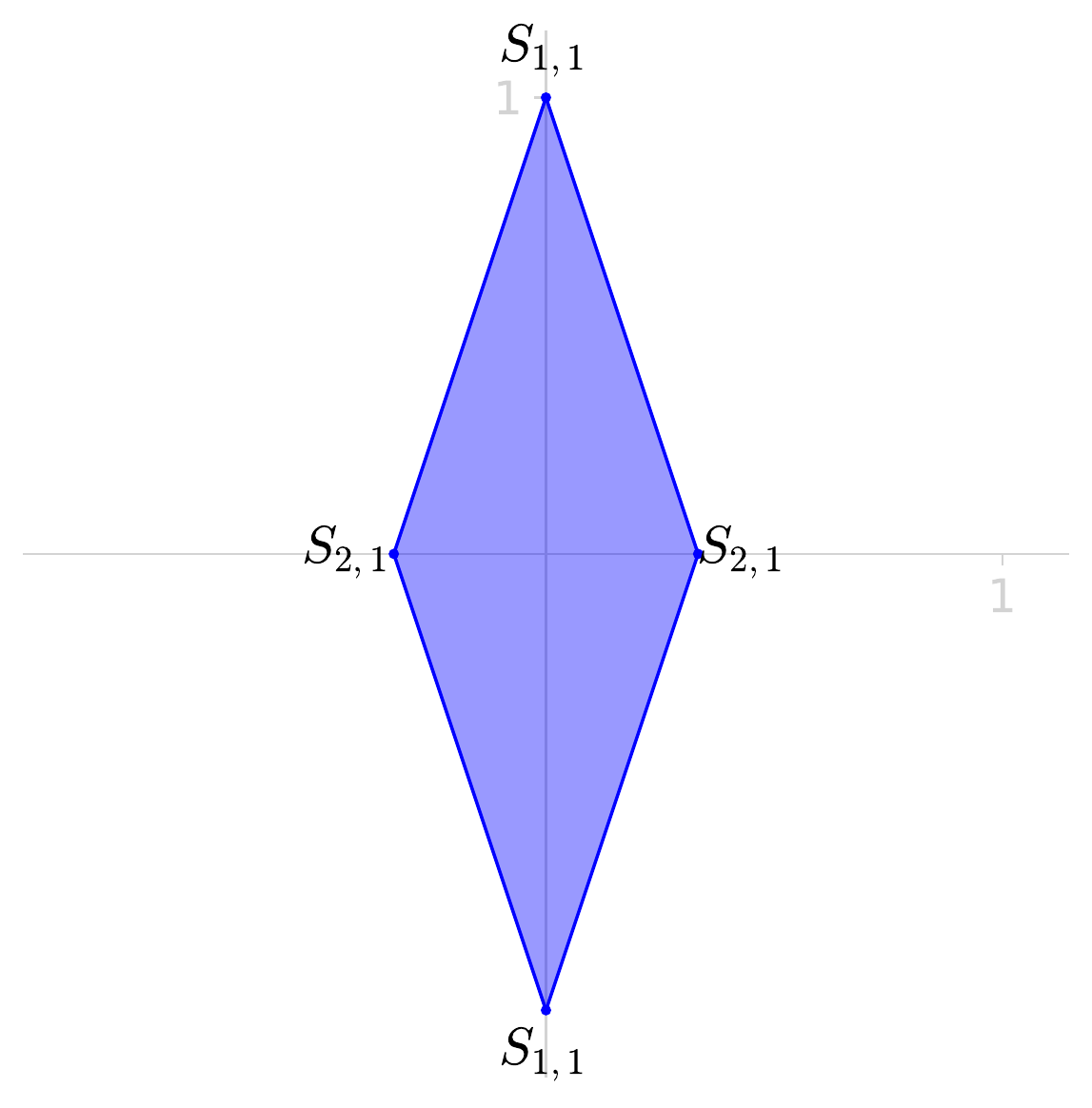}} \\ 
 $L=10^{{2}}_{{7}}$ & & $L=10^{{2}}_{{8}}$ & \\ 
 \quad & & \quad & \\ $\mathrm{Isom}(\mathbb{S}^3\setminus L) = \mathbb{{Z}}_2$ & & $\mathrm{Isom}(\mathbb{S}^3\setminus L) = \mathbb{{Z}}_2\oplus\mathbb{{Z}}_2$ & \\ 
 \quad & & \quad & \\ 
 \includegraphics[width=1in]{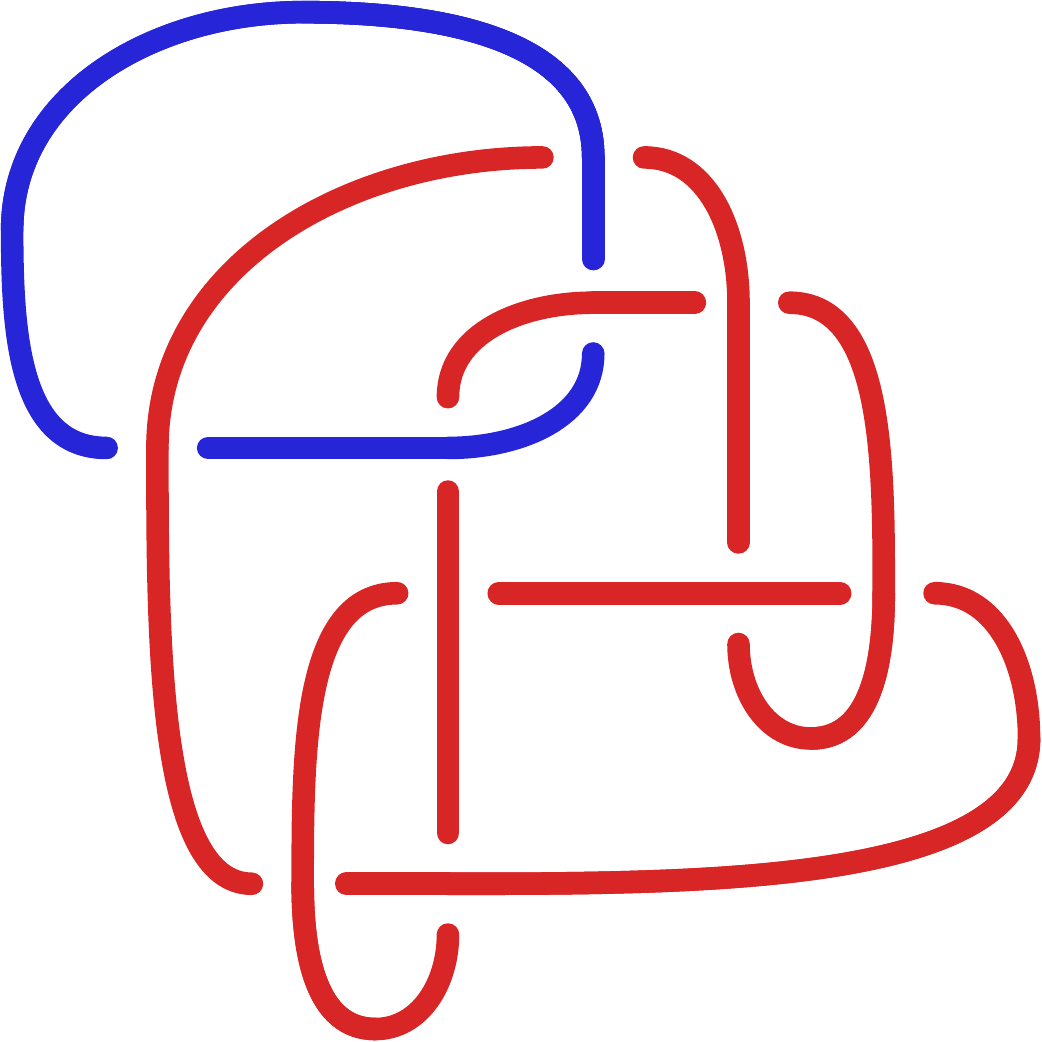}  & & \includegraphics[width=1in]{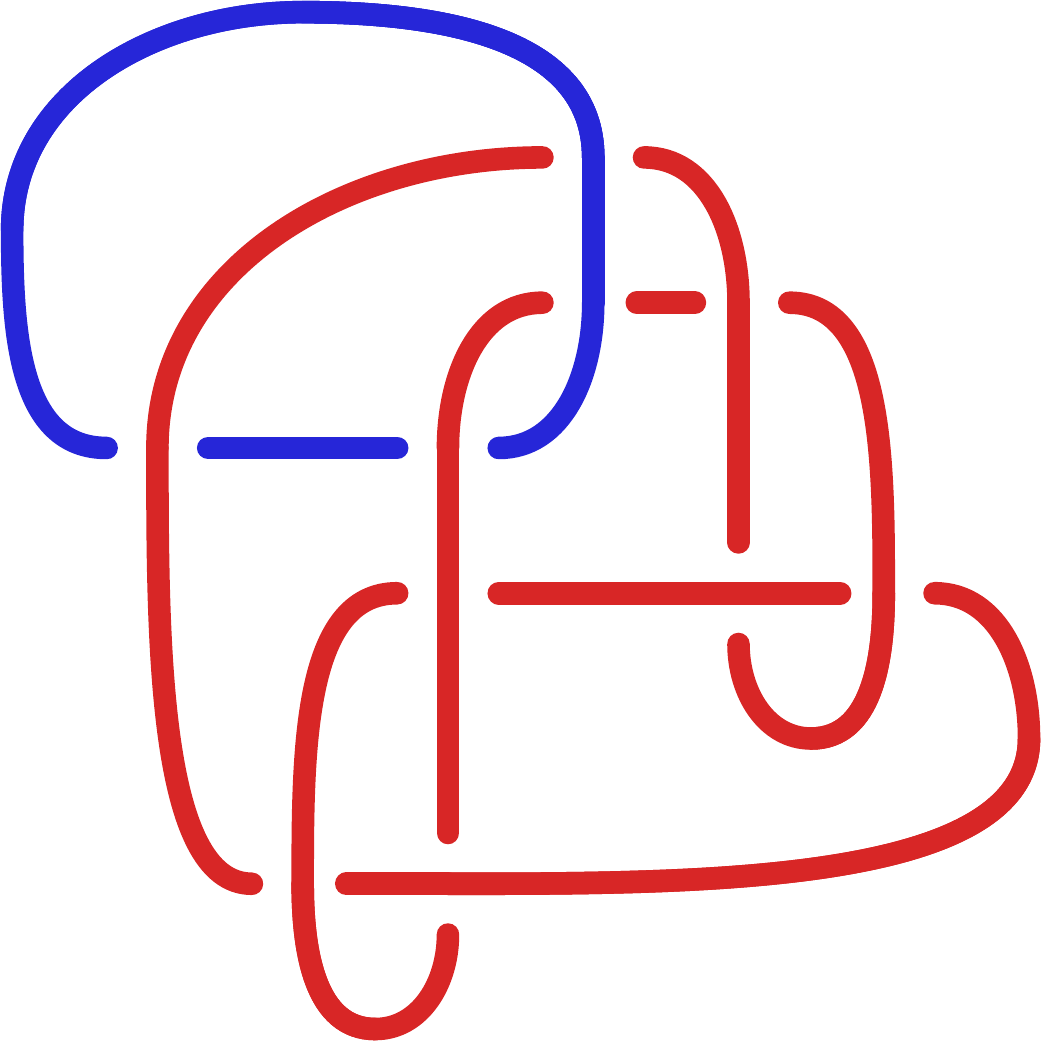} & \\ 
 \quad & & \quad & \\ 
 \hline  
\quad & \multirow{6}{*}{\Includegraphics[width=1.8in]{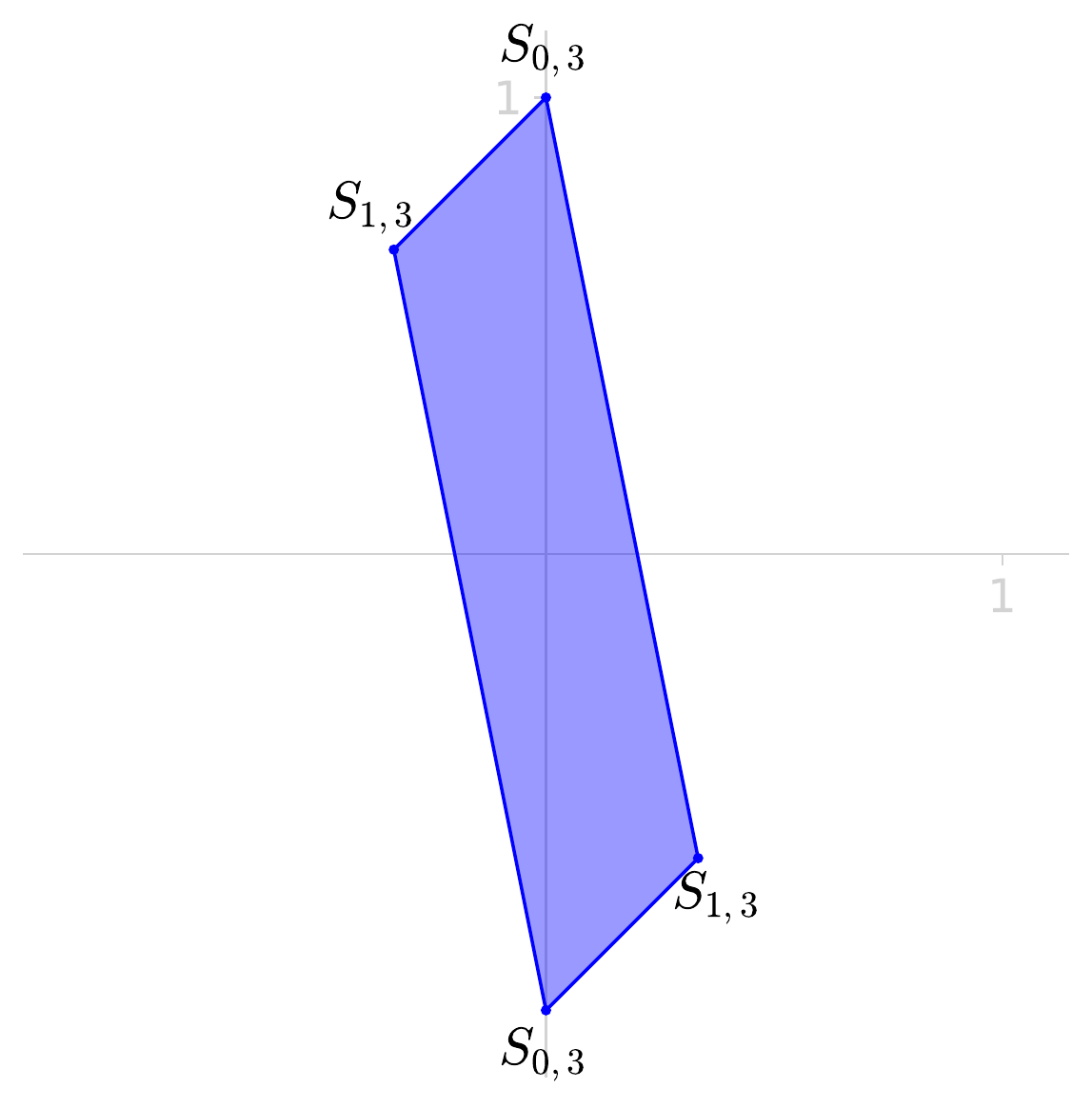}} & \quad & \multirow{6}{*}{\Includegraphics[width=1.8in]{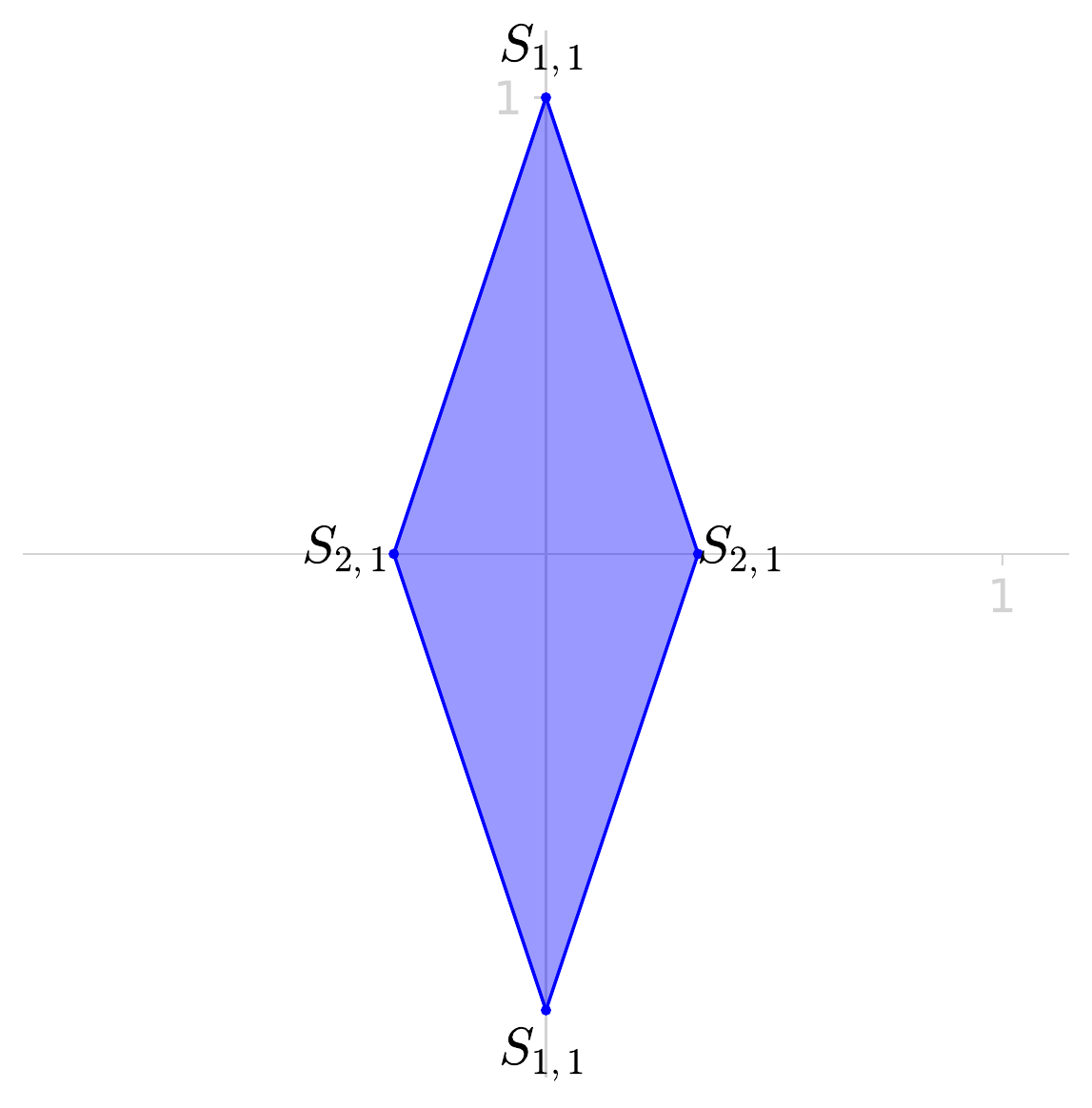}} \\ 
 $L=10^{{2}}_{{9}}$ & & $L=10^{{2}}_{{10}}$ & \\ 
 \quad & & \quad & \\ $\mathrm{Isom}(\mathbb{S}^3\setminus L) = \mathbb{{Z}}_2$ & & $\mathrm{Isom}(\mathbb{S}^3\setminus L) = \mathbb{{Z}}_2\oplus\mathbb{{Z}}_2$ & \\ 
 \quad & & \quad & \\ 
 \includegraphics[width=1in]{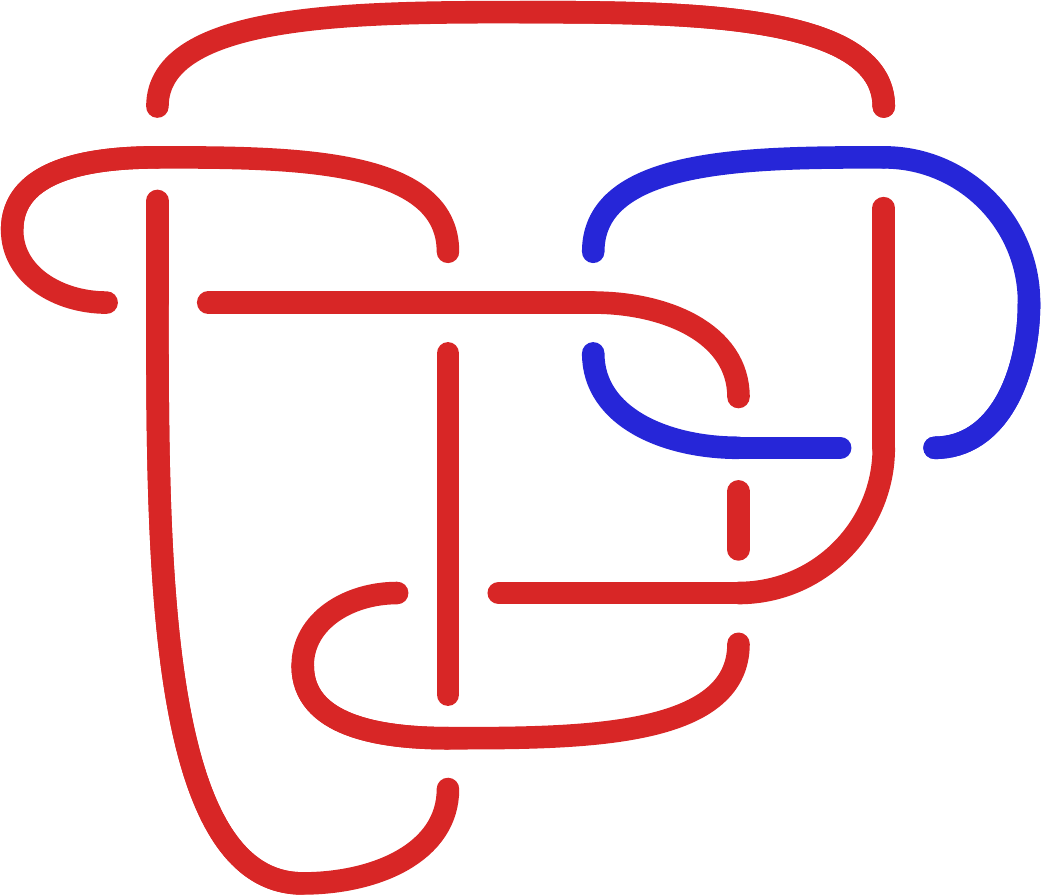}  & & \includegraphics[width=1in]{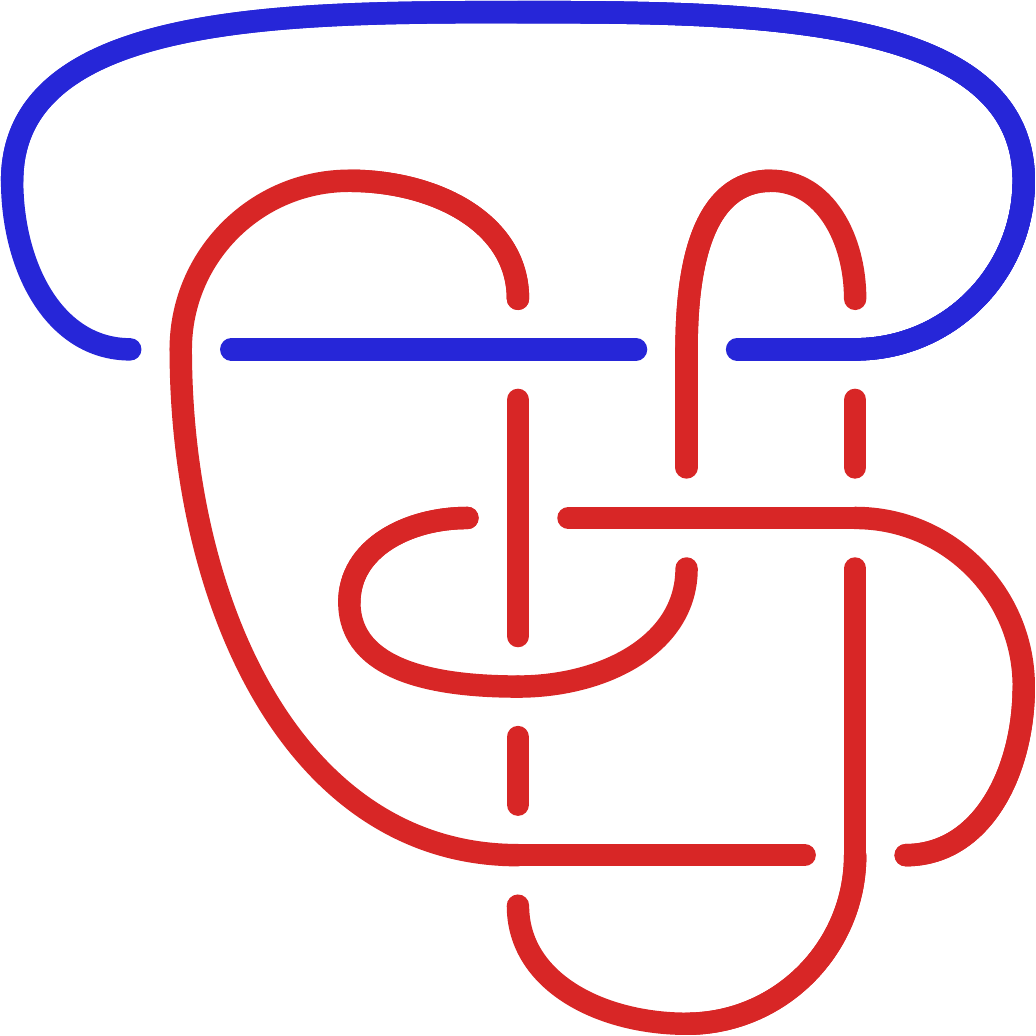} & \\ 
 \quad & & \quad & \\ 
 \hline  
 \quad & \multirow{6}{*}{\Includegraphics[width=1.8in]{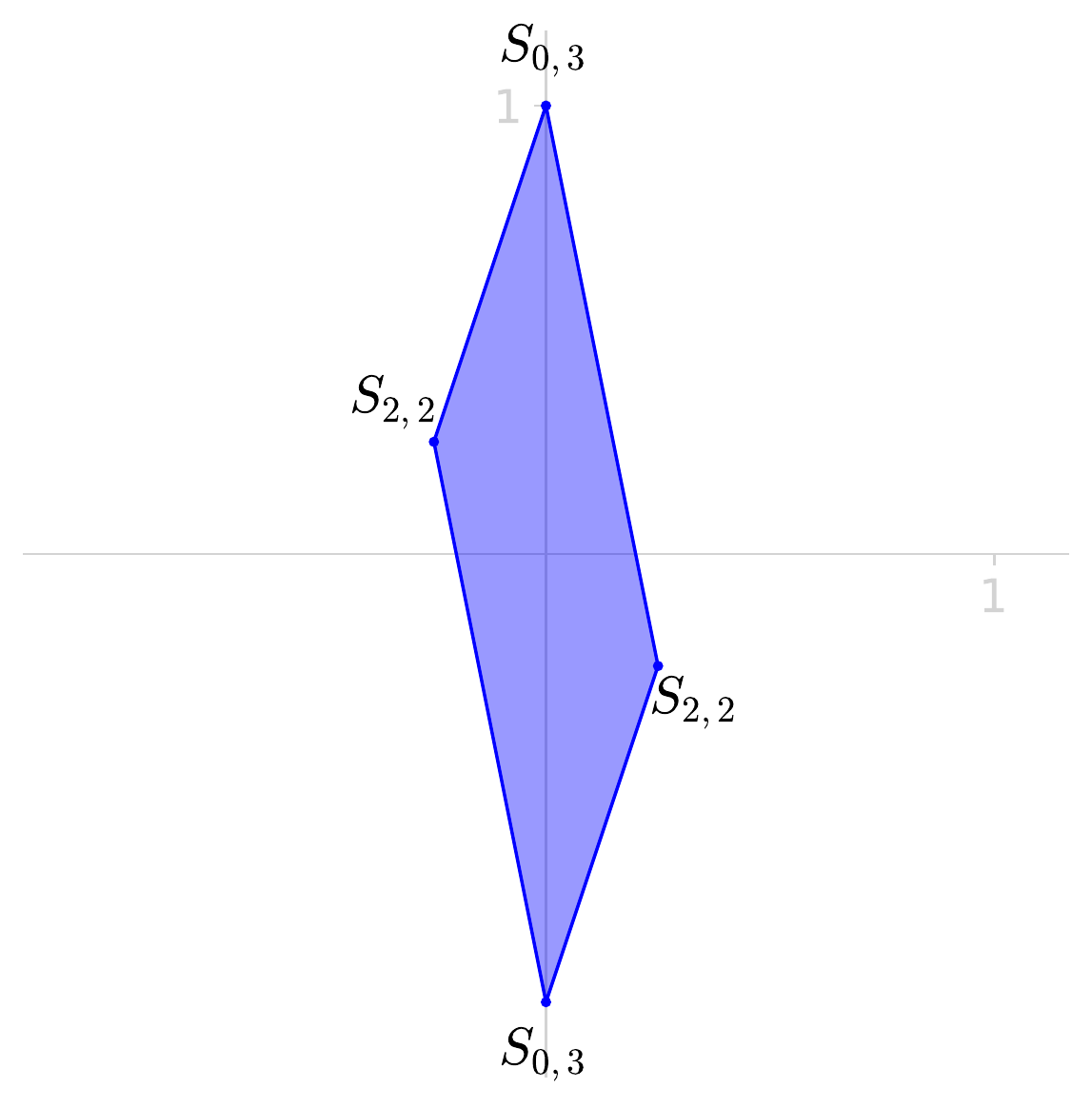}} & \quad & \multirow{6}{*}{\Includegraphics[width=1.8in]{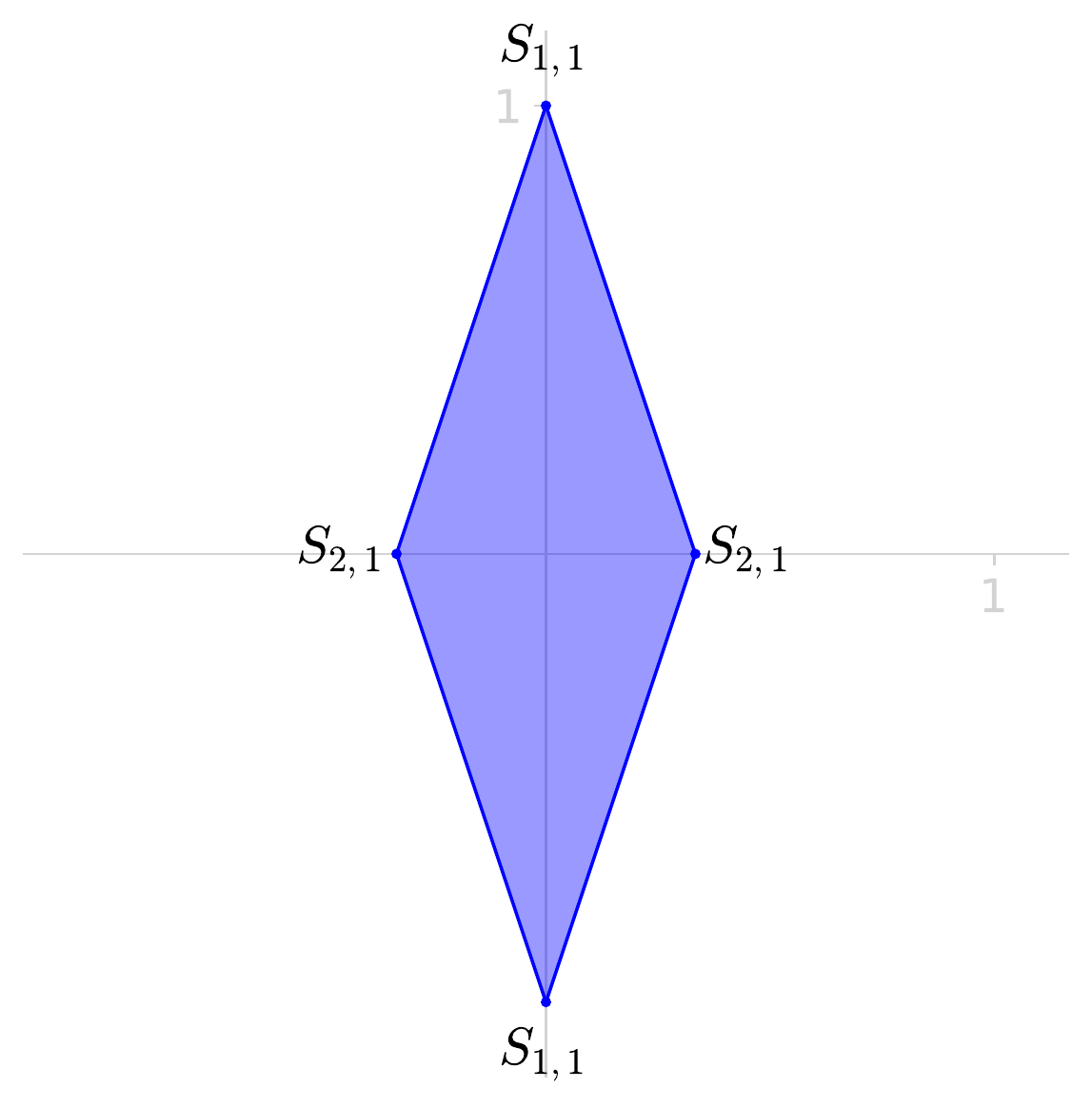}} \\ 
 $L=10^{{2}}_{{11}}$ & & $L=10^{{2}}_{{12}}$ & \\ 
 \quad & & \quad & \\ $\mathrm{Isom}(\mathbb{S}^3\setminus L) = \mathbb{{Z}}_2$ & & $\mathrm{Isom}(\mathbb{S}^3\setminus L) = \mathbb{{Z}}_2\oplus\mathbb{{Z}}_2$ & \\ 
 \quad & & \quad & \\ 
 \includegraphics[width=1in]{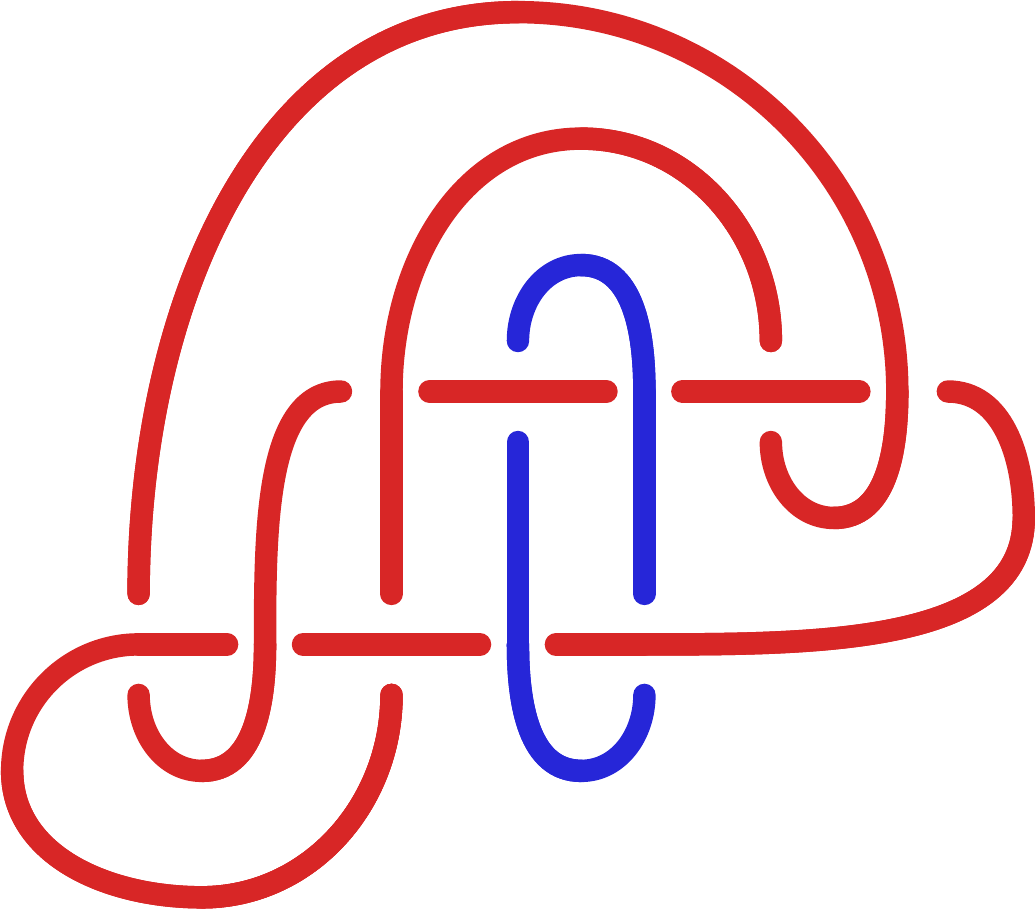}  & & \includegraphics[width=1in]{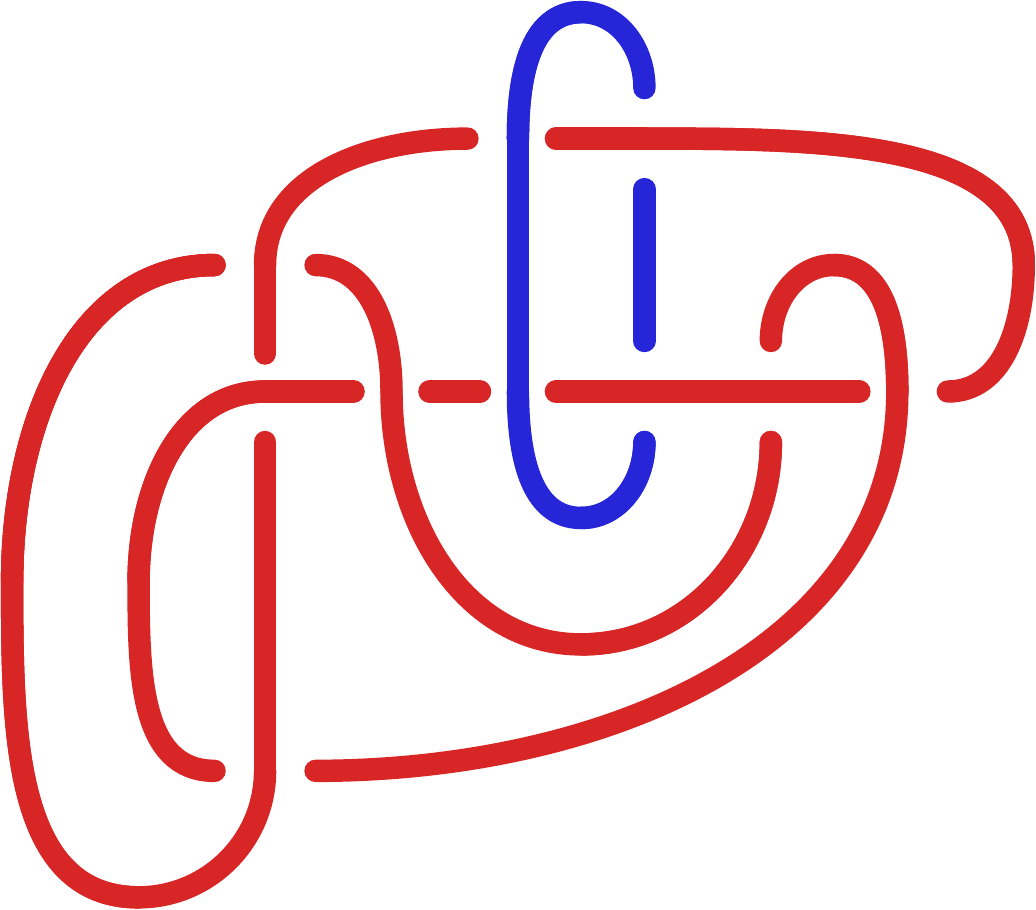} & \\ 
 \quad & & \quad & \\ 
 \hline  
\quad & \multirow{6}{*}{\Includegraphics[width=1.8in]{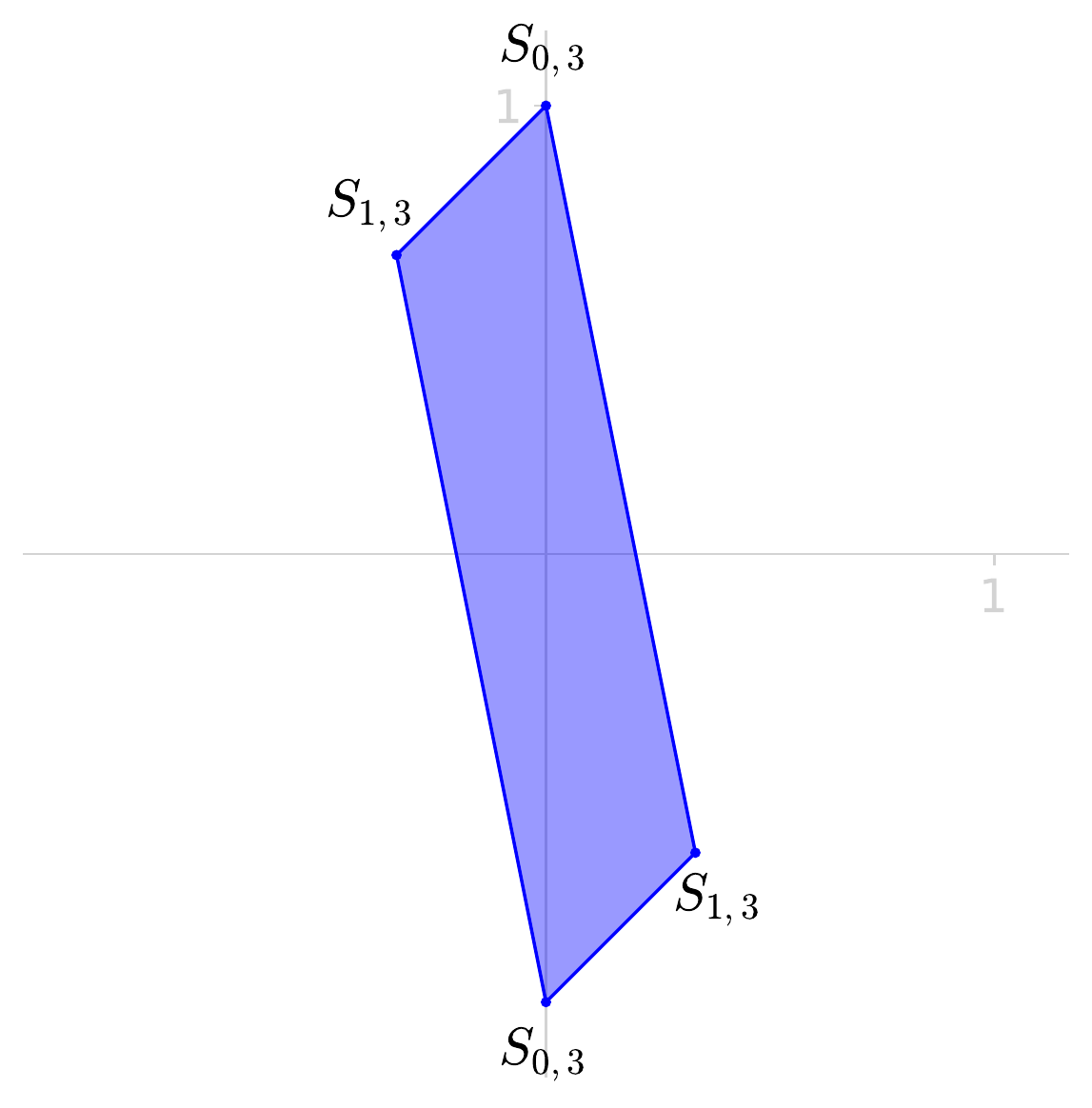}} & \quad & \multirow{6}{*}{\Includegraphics[width=1.8in]{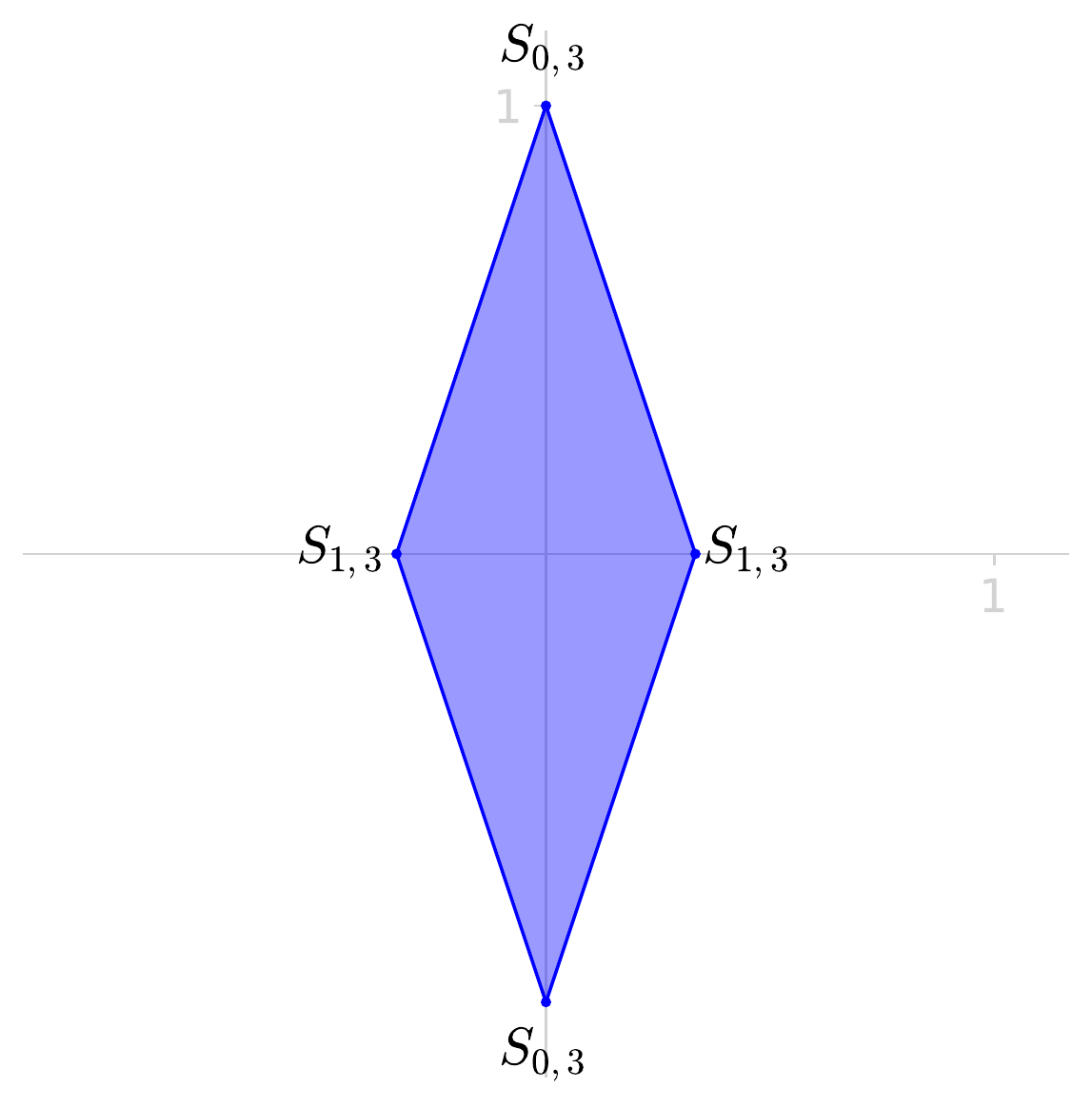}} \\ 
 $L=10^{{2}}_{{13}}$ & & $L=10^{{2}}_{{14}}$ & \\ 
 \quad & & \quad & \\ $\mathrm{Isom}(\mathbb{S}^3\setminus L) = \mathbb{{Z}}_2$ & & $\mathrm{Isom}(\mathbb{S}^3\setminus L) = \mathbb{{Z}}_2$ & \\ 
 \quad & & \quad & \\ 
 \includegraphics[width=1in]{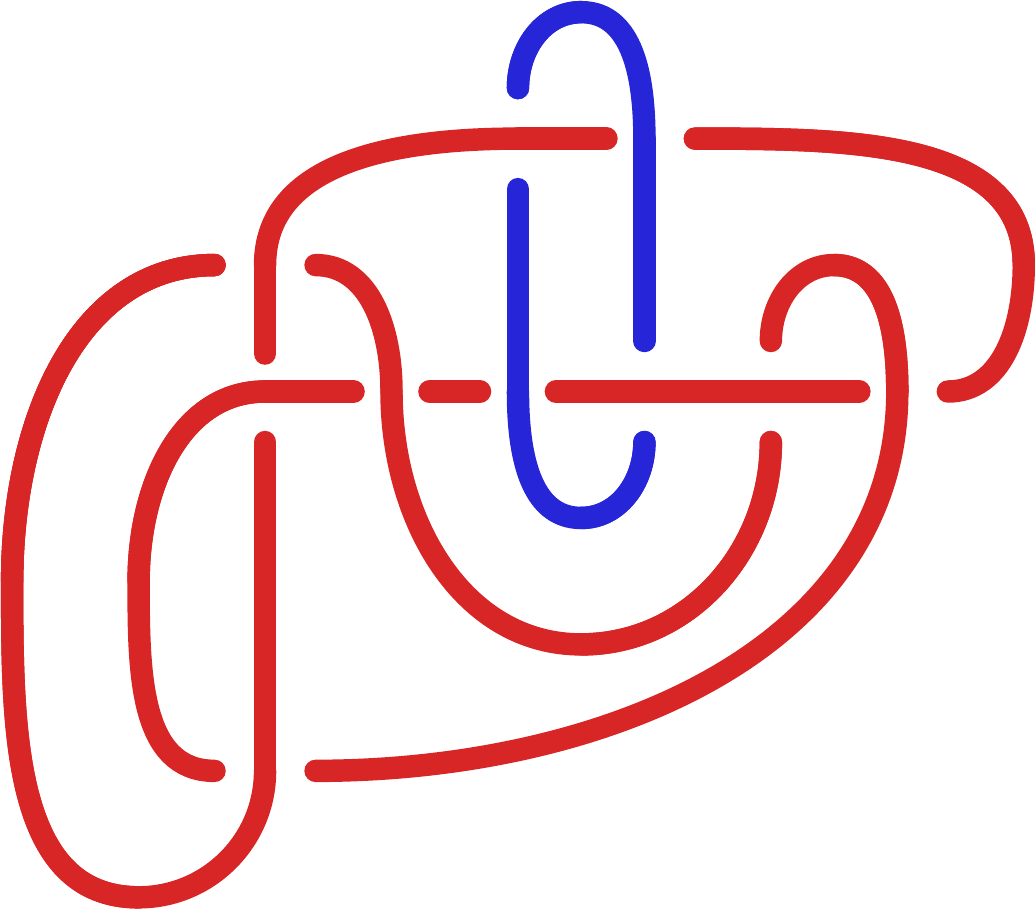}  & & \includegraphics[width=1in]{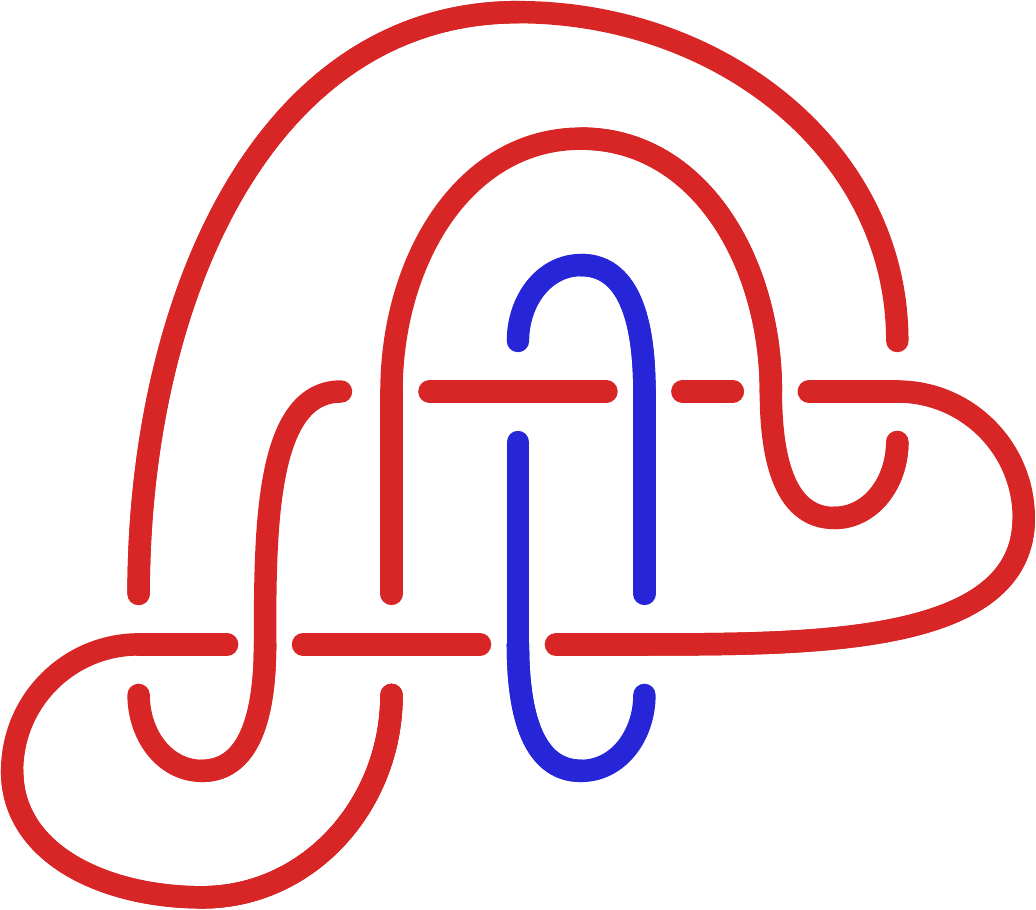} & \\ 
 \quad & & \quad & \\ 
 \hline
\end{tabular}
 \newpage \begin{tabular}{|c|c|c|c|} 
 \hline 
 Link & Norm Ball & Link & Norm Ball \\ 
 \hline 
\quad & \multirow{6}{*}{\Includegraphics[width=1.8in]{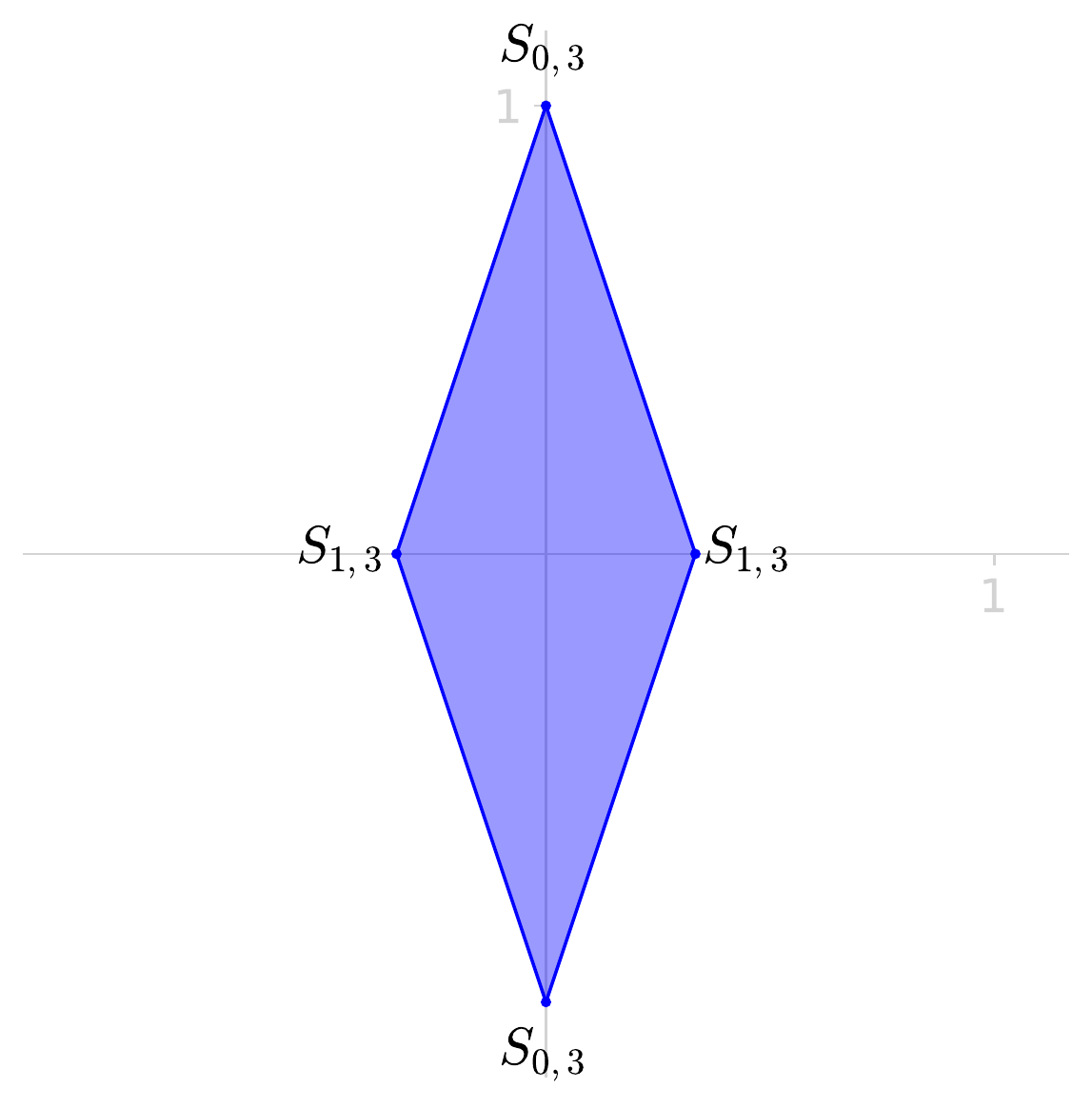}} & \quad & \multirow{6}{*}{\Includegraphics[width=1.8in]{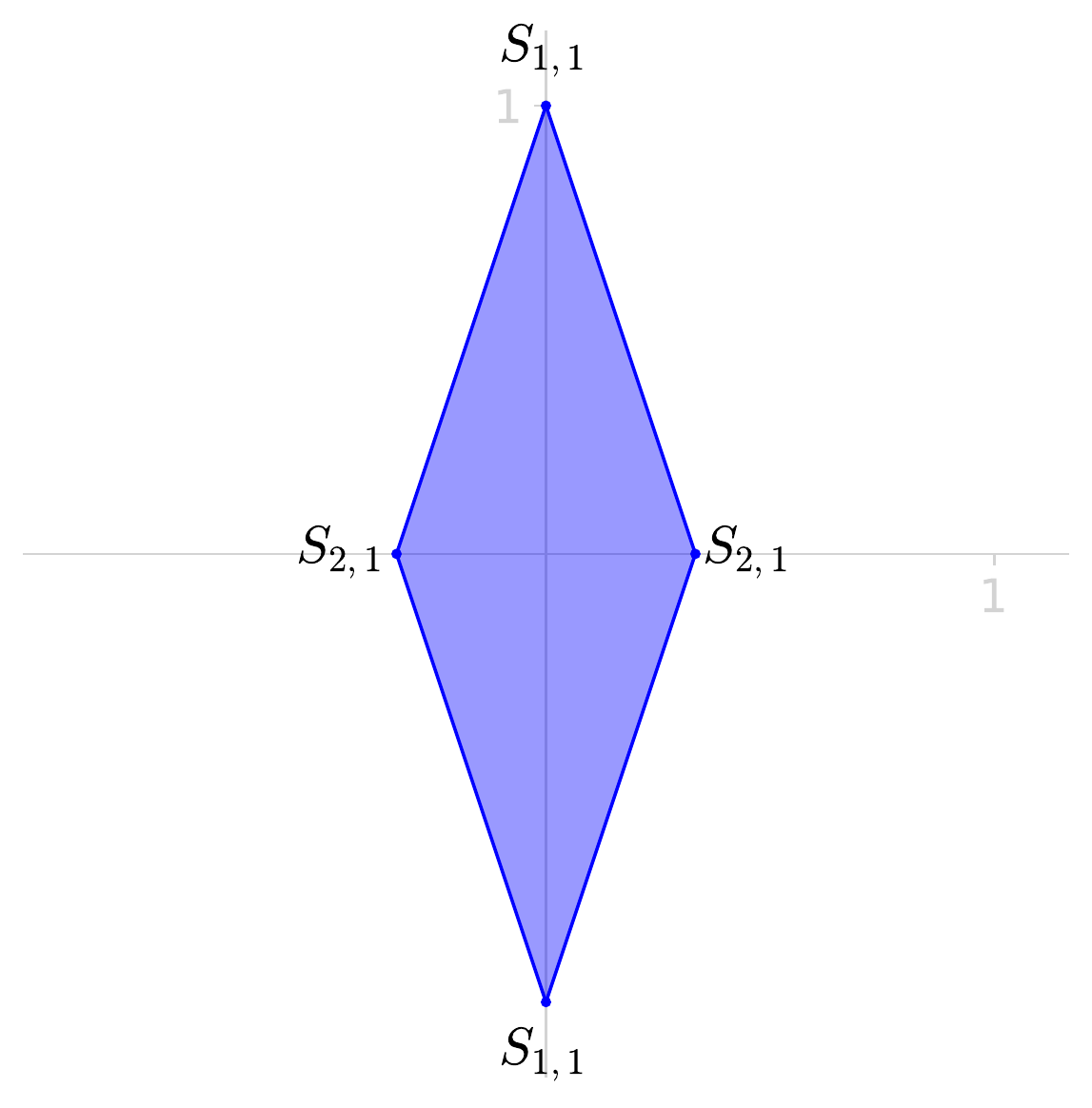}} \\ 
 $L=10^{{2}}_{{15}}$ & & $L=10^{{2}}_{{16}}$ & \\ 
 \quad & & \quad & \\ $\mathrm{Isom}(\mathbb{S}^3\setminus L) = \mathbb{{Z}}_2$ & & $\mathrm{Isom}(\mathbb{S}^3\setminus L) = \mathbb{{Z}}_2\oplus\mathbb{{Z}}_2$ & \\ 
 \quad & & \quad & \\ 
 \includegraphics[width=1in]{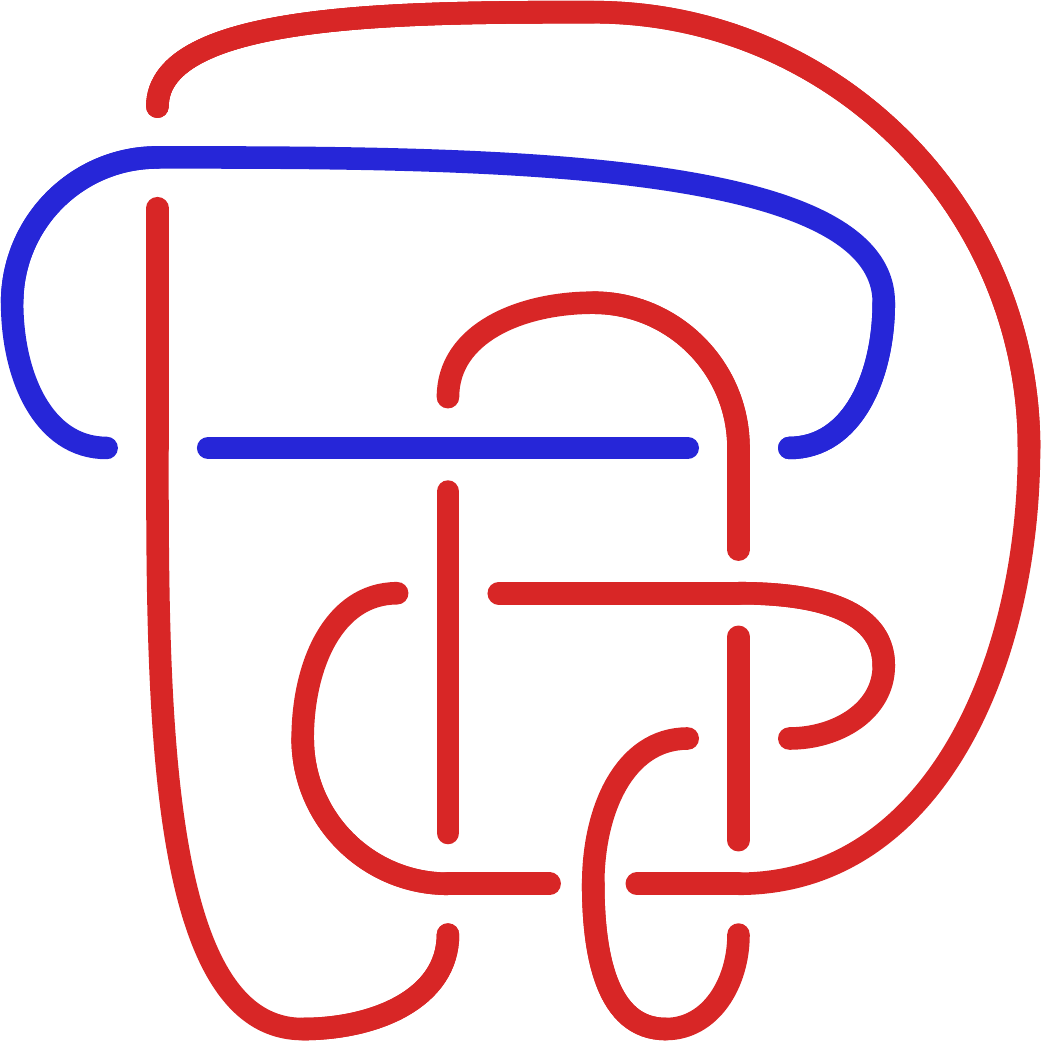}  & & \includegraphics[width=1in]{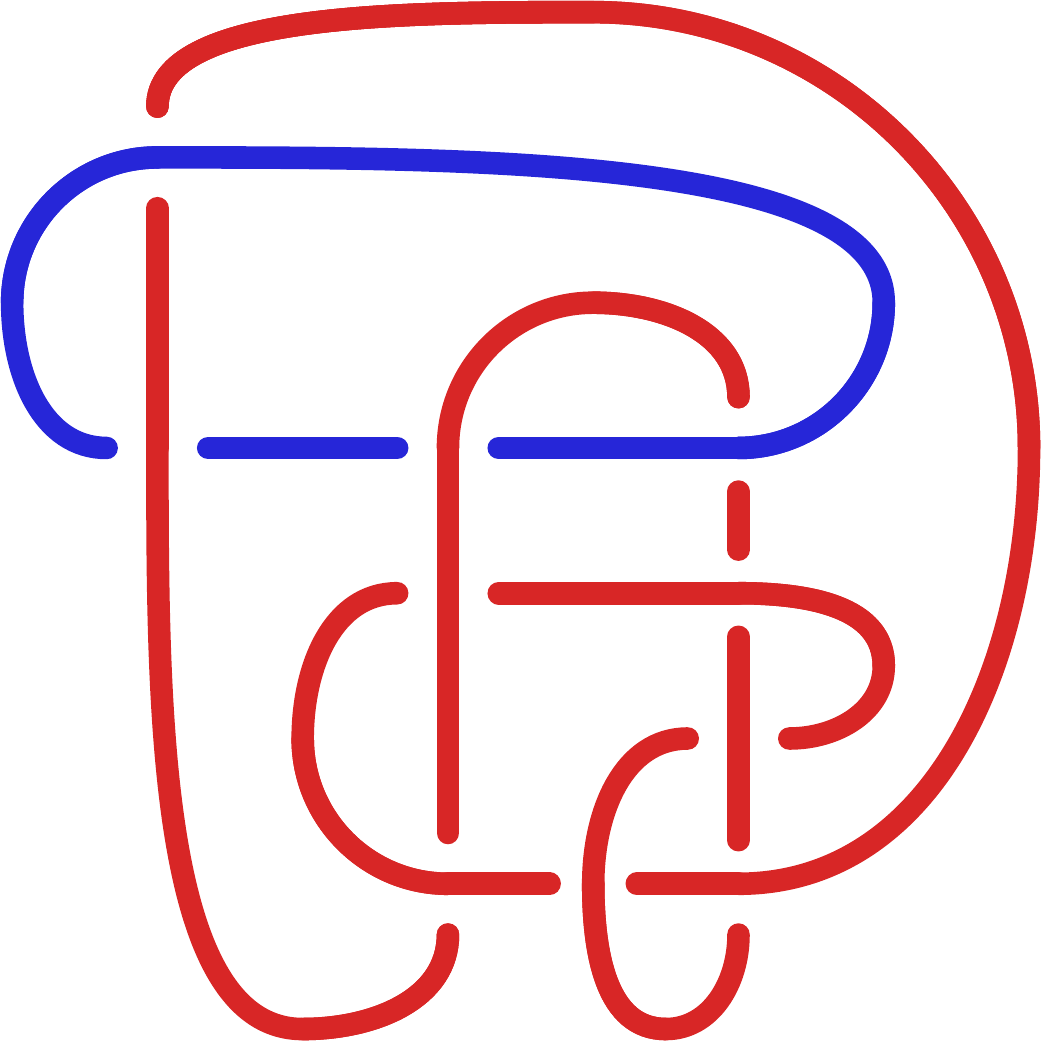} & \\ 
 \quad & & \quad & \\ 
 \hline  
\quad & \multirow{6}{*}{\Includegraphics[width=1.8in]{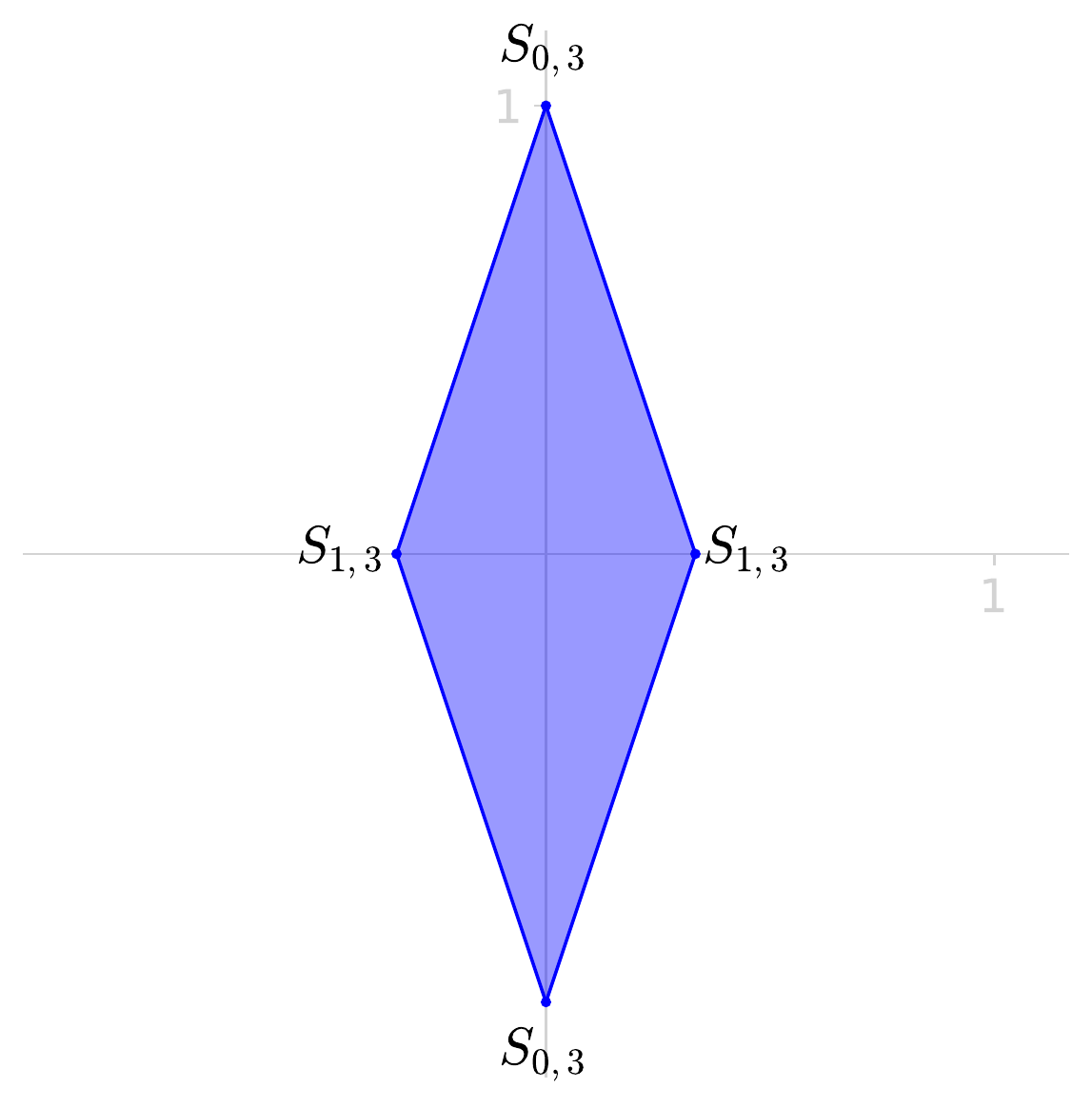}} & \quad & \multirow{6}{*}{\Includegraphics[width=1.8in]{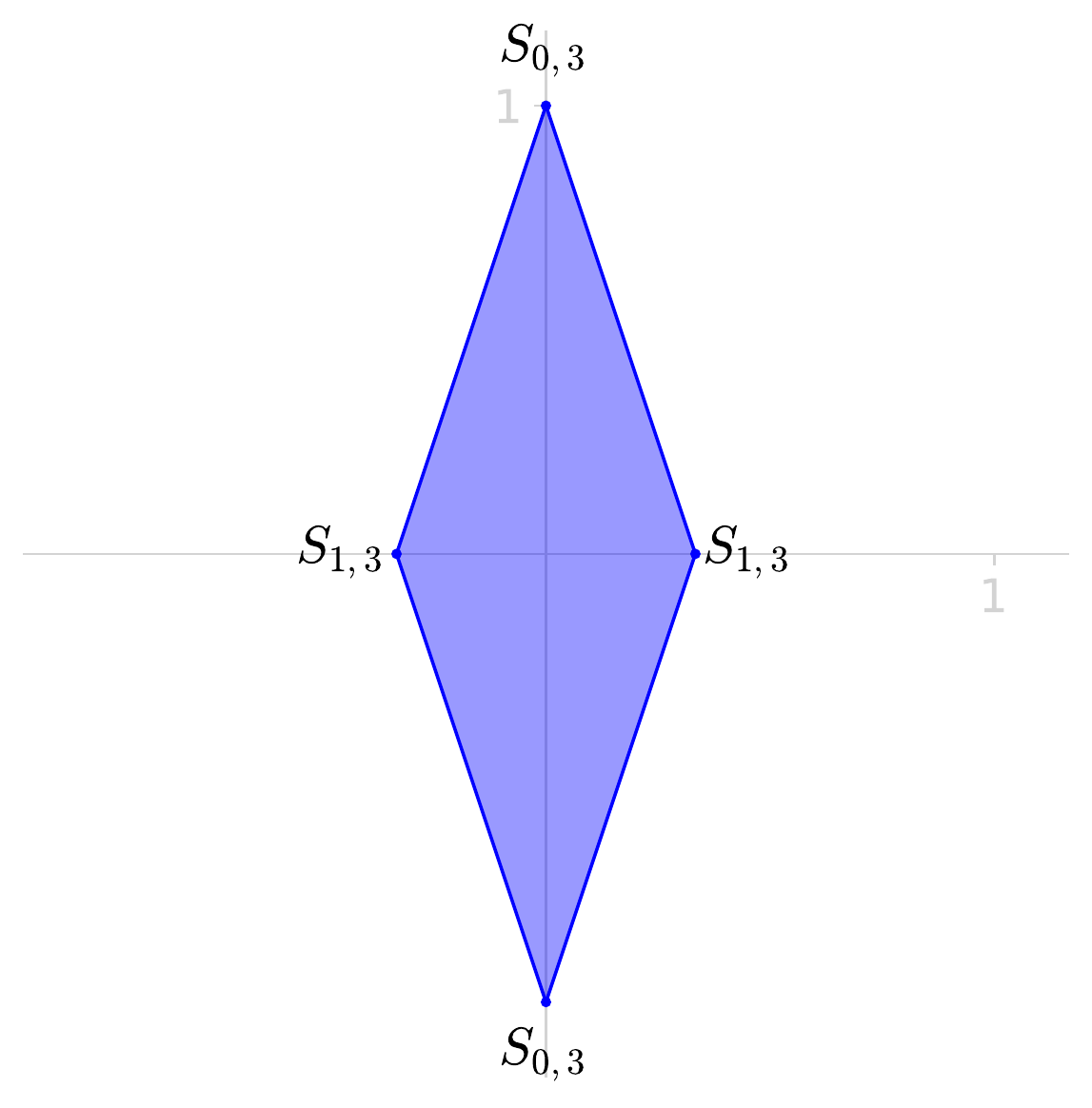}} \\ 
 $L=10^{{2}}_{{17}}$ & & $L=10^{{2}}_{{18}}$ & \\ 
 \quad & & \quad & \\ $\mathrm{Isom}(\mathbb{S}^3\setminus L) = \mathbb{{Z}}_2$ & & $\mathrm{Isom}(\mathbb{S}^3\setminus L) = \mathbb{{Z}}_2$ & \\ 
 \quad & & \quad & \\ 
 \includegraphics[width=1in]{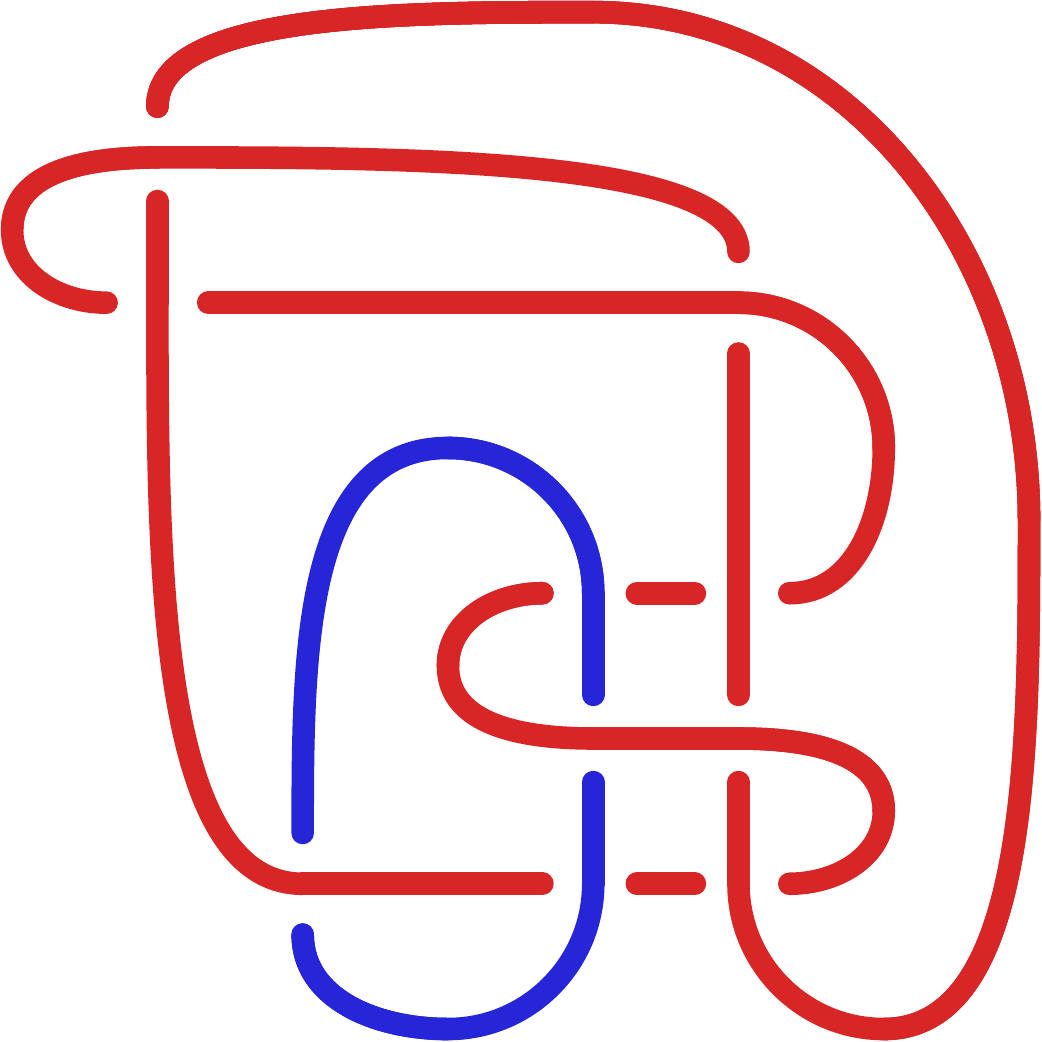}  & & \includegraphics[width=1in]{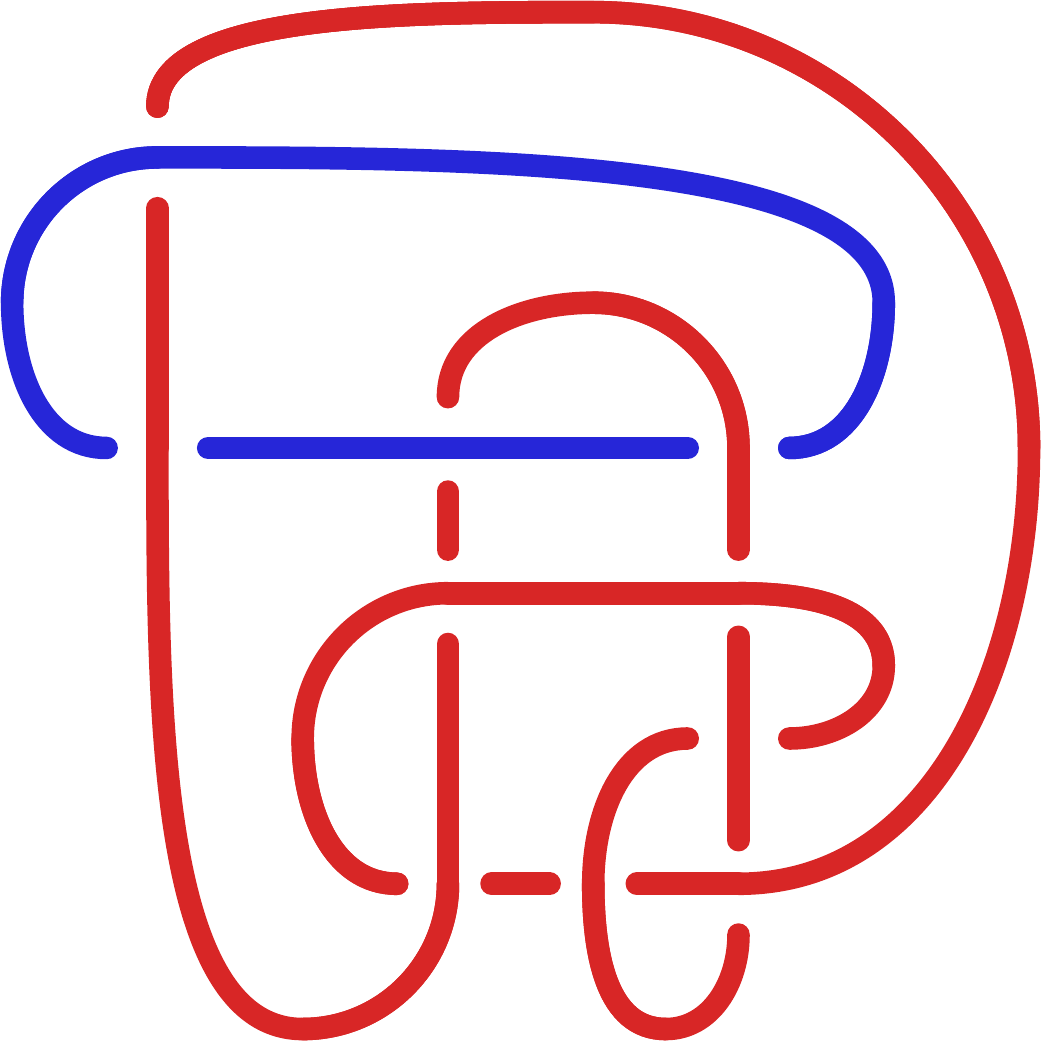} & \\ 
 \quad & & \quad & \\ 
 \hline  
\end{tabular}

%\end{comment}

%%%%%%%%%%

\end{document}